\theoremstyle{definition}
\newtheorem{thm}{Theorem}[section]
\newtheorem{conj}[thm]{Conjecture}
\newtheorem{lemma}[thm]{Lemma}
\newtheorem{corollary}[thm]{Corollary}
\newtheorem{remark}[thm]{Remark}
\newtheorem{dfn}[thm]{Definition}
\newtheorem{prop}[thm]{Proposition}
\newtheorem{exa}[thm]{Example}
\newtheorem{ass}[thm]{Assumption}
\def\Ad{\mathop{\mathrm{Ad}}\nolimits}
\def\Attr{\mathop{\mathrm{Attr}}\nolimits}
\def\Frac{\mathop{\mathrm{Frac}}\nolimits}
\def\Pic{\mathop{\mathrm{Pic}}\nolimits}
\def\DCoh{\mathop{D^{b}\mathrm{Coh}}\nolimits}
\def\DCohT{\mathop{D^{b}\mathrm{Coh}^{\bb{T}}}\nolimits}
\def\Spec{\mathop{\mathrm{Spec}}\nolimits}
\def\Stab{\mathop{\mathrm{Stab}}\nolimits}
\def\Supp{\mathop{\mathrm{Supp}}\nolimits}
\def\dim{\mathop{\mathrm{dim}}\nolimits}
\def\rank{\mathop{\mathrm{rk}}\nolimits}
\def\wt{\mathop{\mathrm{wt}}\nolimits}
\def\Ker{\mathop{\mathrm{Ker}}\nolimits}
\def\Hom{\mathop{\mathrm{Hom}}\nolimits}
\def\RHom{\mathop{R\mathrm{Hom}}\nolimits}
\def\Ext{\mathop{\mathrm{Ext}}\nolimits}
\def\ext{\mathop{\mathrm{ext}}\nolimits}
\def\End{\mathop{\mathrm{End}}\nolimits}
\def\det{\mathop{\mathrm{det}}\nolimits}
\def\gr{\mathop{\mathrm{gr}}\nolimits}
\newcommand{\mf}[1]{{\mathfrak{#1}}}
\newcommand{\mb}[1]{{\mathbf{#1}}}
\newcommand{\bb}[1]{{\mathbb{#1}}}
\newcommand{\mca}[1]{{\mathcal{#1}}}
\newcommand{\mr}[1]{{\mathrm{#1}}}
\newcommand{\msc}[1]{{\mathscr{#1}}}
\newcommand{\da}[1]{\big\downarrow\raise.5ex\rlap{$\scriptstyle#1$}}
\begin{document}

\title{Elliptic canonical bases for toric hyper-K\"ahler manifolds}
\author{Tatsuyuki Hikita\footnote{thikita@kurims.kyoto-u.ac.jp}}
\date{}

\maketitle

\begin{abstract}
Lusztig defined certain involutions on the equivariant $K$-theory of Slodowy varieties and gave a characterization of certain bases called canonical bases. In this paper, we give a conjectural generalization of these involutions and $K$-theoretic canonical bases to conical symplectic resolutions which have good Hamiltonian torus actions and state several conjectures related to them which we check for toric hyper-K\"ahler manifolds. We also propose an elliptic analogue of these bar involutions. As a verification of our proposal, we explicitly construct elliptic lifts of $K$-theoretic canonical bases and prove that they are invariant under elliptic bar involutions for toric hyper-K\"ahler manifolds. 
\end{abstract}

\section{Introduction}

Lusztig \cite{Lu1,Lu2} defined certain modules of affine Hecke algebras called periodic modules and defined the notion of canonical bases in them. In \cite{Lu3,Lu4}, Lusztig gave a geometric construction of these modules in terms of equivariant $K$-theory of Springer resolutions or Slodowy varieties, and gave a geometric characterization of (signed) canonical bases using certain involution called bar involution. Basic properties of canonical bases including their existence and relation to modular representation theory of semisimple Lie algebras in large enough characteristic were also conjectured by Lusztig and proved by Bezrukavnikov-Mirkovi\'c \cite{BM}.  

\subsection{$K$-theoretic canonical bases}

One of the aim of this paper is to give an analogue of the notion of bar involutions and canonical bases to equivariant $K$-theory of conical symplectic resolutions which have Hamiltonian torus actions with finitely many fixed points. For $ADE$ type quiver varieties, analogous bar involutions and canonical bases were proposed by Lusztig in \cite{Lu5}, and constructed by Varagnolo-Vasserot in \cite{VV}. In all the previous works, bar involutions are defined by composing several automorphisms and it seems complicated at least to the author. Our main observation in this paper is that the bar involution for Springer resolutions (and conjecturally for other known cases satifying our assumptions) have a very simple characterization by using the $K$-theoretic analogue of the stable bases introduced by Maulik-Okounkov \cite{MO}. 

In this introduction, we give a rough definition of the $K$-theoretic bar involution and the $K$-theoretic canonical bases. Let $X$ be a conical symplectic resolution with conical action of $\bb{S}\coloneqq\bb{C}^{\times}$. Throughout this paper, we always assume that the symplectic form has weight 2 and there exists a Hamiltonian torus action $H\curvearrowright X$ commuting with the $\bb{S}$-action such that the fixed point set $X^{H}$ is finite. 

The $K$-theoretic stable basis (see e.g. \cite{O1,OS}) depends on some data called chamber $\mf{C}\subset\bb{X}_{\ast}(H)\otimes_{\bb{Z}}\bb{R}$, polarization $T^{1/2}\in K_{H\times\bb{S}}(X)$, and slope $s\in\Pic(X)\otimes_{\bb{Z}}\bb{R}$. Associated with these data, one can give a characterization of certain element $\Stab^{K}_{\mf{C},T^{1/2},s}(p)\in K_{H\times\bb{S}}(X)$ in the equivariant $K$-theory of $X$ corresponding to each fixed point $p\in X^{H}$. These elements form a basis in the localized equivariant $K$-theory $K_{H\times\bb{S}}(X)_{\mr{loc}}\coloneqq K_{H\times\bb{S}}(X)\otimes_{K_{H\times\bb{S}}(\mr{pt})}\Frac(K_{H\times\bb{S}}(\mr{pt}))$. 

Let $v\in K_{\bb{S}}(\mr{pt})\cong\bb{Z}[v,v^{-1}]$ be the element corresponding to the natural representation of $\bb{S}$. We want to define $K_{H}(\mr{pt})$-linear and $K_{\bb{S}}(\mr{pt})$-antilinear (i.e. $\beta^{K}(v\cdot-)=v^{-1}\beta^{K}(-)$) map $\beta^{K}_{X,T^{1/2},s}=\beta^{K}:K_{H\times\bb{S}}(X)_{\mr{loc}}\rightarrow K_{H\times\bb{S}}(X)_{\mr{loc}}$ by the formula
\begin{align}\label{roughbar}
\beta^{K}(\Stab^{K}_{\mf{C},T^{1/2},s}(p))=\pm v^{?}\Stab^{K}_{-\mf{C},T^{1/2},s}(p)
\end{align}
for any $p\in X^{H}$. For a more precise formula about the normalization, we refer to the main body of the paper. We only note here that in order to write down the formula, one needs the information about the tangent bundle of the symplectic dual $X^{!}$ of $X$ in the sense of Braden-Licata-Proudfoot-Webster \cite{BLPW3}.  

One of the drawback of this approach is that it is not clear if $\beta^{K}$ is actually an involution, and it preserves the integral form $K_{H\times\bb{S}}(X)\subset K_{H\times\bb{S}}(X)_{\mr{loc}}$. In fact, these requirements give a strong restriction on the normalization in (\ref{roughbar}). One of the main conjecture in this paper is that in the explicitly specified normalization, these properties for $\beta$ hold. 

\begin{conj}
The above $\beta^{K}$ is an involution and preserves $K_{H\times\bb{S}}(X)\subset K_{H\times\bb{S}}(X)_{\mr{loc}}$. Moreover, this does not depend on the choice of $\mf{C}$. 
\end{conj}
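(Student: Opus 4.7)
The plan is to verify the three claims---involution, integrality, and independence of $\mf{C}$---by analyzing the change-of-basis matrix between the two stable bases $\{\Stab^{K}_{\mf{C},T^{1/2},s}(p)\}_{p \in X^{H}}$ and $\{\Stab^{K}_{-\mf{C},T^{1/2},s}(p)\}_{p \in X^{H}}$ of $K_{H \times \bb{S}}(X)_{\mr{loc}}$. Its diagonal entries can be read off from the triangularity of stable bases by restricting at fixed points, and should have the form $(-1)^{a_{p}} v^{b_{p}}$ for integers $a_{p}, b_{p}$ determined by the weights of $T_{p}X$, the polarization $T^{1/2}$, and---through the normalization in (\ref{roughbar})---by the weights of the tangent space $T_{p^{!}}X^{!}$ at the symplectic-dual fixed point. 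The first task is to identify these exponents explicitly and thereby fix the precise form of the normalization $\pm v^{?}$.

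For the involution property, I would apply $\beta^{K}$ twice to $\Stab^{K}_{\mf{C}}(p)$. The first application yields $\pm v^{a_{1}}\,\Stab^{K}_{-\mf{C}}(p)$; the second, using (\ref{roughbar}) at chamber $-\mf{C}$, returns a scalar multiple of $\Stab^{K}_{\mf{C}}(p)$. By antilinearity the composite scalar equals $(\pm v^{a_{1}}) \cdot \overline{(\pm v^{a_{2}})} = (\pm 1)(\pm 1)\, v^{a_{1}-a_{2}}$, so $(\beta^{K})^{2} = \mr{id}$ reduces to matching $a_{1} = a_{2}$ and verifying that the product of signs is $+1$. Both requirements are identities between tangent-weight data at $p \in X^{H}$ and at $p^{!} \in (X^{!})^{H^{!}}$, and should hold because the normalization in (\ref{roughbar}) is chosen symmetrically with respect to $X$ and $X^{!}$.

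For integrality, the point is that the two opposite-chamber stable bases both lie in the integral lattice $K_{H \times \bb{S}}(X)$, so if the normalization $\pm v^{?}$ is a sign times an integer power of $v$, then $\beta^{K}$ maps the $\bb{Z}[v^{\pm 1}]$-span of $\{\Stab^{K}_{\mf{C}}(p)\}$ into that of $\{\Stab^{K}_{-\mf{C}}(p)\}$. Combined with the fact (to be checked explicitly, and true for toric hyper-K\"ahlers via the arrangement combinatorics) that $K_{H \times \bb{S}}(X)$ is generated over $K_{H}(\mr{pt})[v^{\pm 1}]$ by classes whose $\beta^{K}$-images are manifestly integral, this gives the conclusion. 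For independence of $\mf{C}$, I would use the $K$-theoretic wall-crossing formulas: passing between adjacent chambers $\mf{C}, \mf{C}'$ is governed by a geometric R-matrix, and the bar involutions defined by $\mf{C}$ and $\mf{C}'$ coincide iff this R-matrix satisfies a unitarity-type compatibility with the R-matrix built from $-\mf{C}, -\mf{C}'$.

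The main obstacle I anticipate is chamber independence, since it couples the R-matrix wall-crossing on $X$ with the normalization factor coming from the tangent bundle of $X^{!}$. In the toric hyper-K\"ahler verification that this paper pursues, I would exploit Gale duality, which exchanges the arrangements defining $X$ and $X^{!}$ together with their wall structures; then both R-matrices admit explicit combinatorial formulas and the required compatibility can be checked directly on the hyperplane-arrangement data.
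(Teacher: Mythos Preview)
First, note that this statement is a \emph{conjecture}; the paper does not prove it in general. It verifies the claim only for toric hyper-K\"ahler manifolds (Corollary~\ref{cor_toric_indep_chamber_K}, via Proposition~\ref{prop_toric_bar_invariance_E(A)}) and, implicitly, for Springer resolutions. So the right comparison is between your proposal and the paper's verification in the toric case.

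Your argument for the involution property has a genuine gap. When you ``apply $\beta^{K}$ twice'' to $\Stab^{K}_{\mf{C}}(p)$ and invoke the defining formula (\ref{roughbar}) \emph{at chamber $-\mf{C}$} for the second step, you are computing $\beta^{K}_{-\mf{C}}\circ\beta^{K}_{\mf{C}}$, not $(\beta^{K}_{\mf{C}})^{2}$. The former is trivially the identity---the paper notes this explicitly in Section~\ref{bar_involution_section}---but this says nothing about whether $\beta^{K}_{\mf{C}}$ is an involution. To apply $\beta^{K}_{\mf{C}}$ a second time you must know how $\beta^{K}_{\mf{C}}$ acts on $\Stab^{K}_{-\mf{C}}(p)$, which the defining formula does not tell you directly; you would need to expand $\Stab^{K}_{-\mf{C}}(p)$ in the $\mf{C}$-basis via the full R-matrix, and your ``scalar matching'' reduction collapses. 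In fact, as the paper observes, the involution property is \emph{equivalent} to the special case $\beta^{K}_{\mf{C}}=\beta^{K}_{-\mf{C}}$ of chamber independence, so it cannot be established separately by the normalization check you propose.

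The paper's toric verification follows a completely different route. Rather than studying the R-matrix between opposite-chamber stable bases, it \emph{first} constructs an explicit basis $\{\mca{E}(A)\}_{A\in\mr{Alc}_{s}}$ of $K_{\bb{T}}(X)$ by line bundles indexed by alcoves of a periodic hyperplane arrangement in $\mf{h}^{\ast}_{\bb{R}}$---a basis that is visibly independent of $\mf{C}$. It then defines a $K_{H}(\mr{pt})$-linear, $v\mapsto v^{-1}$ antilinear map $\beta'$ by declaring $\beta'(\mca{E}(A))=\mca{E}(A)$, and checks by direct computation (an elementary manipulation of the Koszul-type expansion (\ref{eqn_toric_std_plus})) that $\beta'(\mca{S}_{\mf{C},s}(p_{I}))=(-v)^{d}\,\mca{S}_{-\mf{C},s}(p_{I})$, forcing $\beta'=\beta^{K}_{\mf{C}}$. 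Chamber independence, the involution property, and integrality then all follow at once, because $\beta'$ is defined to fix a chamber-independent integral basis. Your R-matrix/wall-crossing strategy is plausible as a general approach (and is closer in spirit to how the paper handles the \emph{elliptic} bar involution via Conjecture~\ref{conj_ell_indep_chamber}), but it is not what the paper does in $K$-theory, and your sketch does not address the core difficulty that the wall-crossing R-matrix for $X$ must be compatible with the dual normalization coming from $X^{!}$.
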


We note that the $K$-theoretic bar involution does depend on the choice of polarization and slope. When $X$ is a Springer resolution, we will check that this bar involution essentially coincides with the bar involution defined by Lusztig for a specific choice of polarization and slope. 

Once we have defined the bar involution, we can give a characterization of the (signed) canonical basis following Lusztig \cite{Lu3,Lu4}. Let $\bb{D}_{X}:K_{H\times\bb{S}}(X)\rightarrow K_{H\times\bb{S}}(X)$ be the Serre duality. We define an inner product $(-:-):K_{H\times\bb{S}}(X)\times K_{H\times\bb{S}}(X)\rightarrow\Frac(K_{H\times\bb{S}}(\mr{pt}))$ by 
\begin{align}\label{usualinnerprod}
(\mca{F}:\mca{G})\coloneqq[R\Gamma(X,\mca{F}\otimes^{\bb{L}}_{X}\mca{G})],
\end{align}
and another inner product $(-||-):K_{H\times\bb{S}}(X)\times K_{H\times\bb{S}}(X)\rightarrow\Frac(K_{H\times\bb{S}}(\mr{pt}))$ by 
\begin{align}\label{innerprod}
(\mca{F}||\mca{G})\coloneqq (\mca{F}:\bb{D}_{X}\beta^{K}\mca{G}).
\end{align}
Using this, we define 
\begin{align}\label{signedcanonicalbasis}
\bb{B}^{\pm}_{X,T^{1/2},s}&\coloneqq\left\{m\in K_{H\times\bb{S}}(X)\middle| \beta^{K}(m)=m, (m||m)\in 1+v^{-1}K_{H}(\mr{pt})[\![v^{-1}]\!]\right\}. 
\end{align}
We note that for any $\lambda\in\mathbb{X}^{\ast}(H)$ and $m\in\mathbb{B}^{\pm}_{X,T^{1/2},s}$, we have $\pm\lambda m\in\mathbb{B}^{\pm}_{X,T^{1/2},s}$. In Proposition~\ref{expansion}, we give a conjectural Kazhdan-Lusztig type algorithm to compute $\bb{B}^{\pm}_{X,T^{1/2},s}$, and in particular, fix the sign in it which gives a subset $\bb{B}_{X,T^{1/2},s}\subset\bb{B}^{\pm}_{X,T^{1/2},s}$ such that $\bb{B}^{\pm}_{X,T^{1/2},s}=\bb{B}_{X,T^{1/2},s}\sqcup-\bb{B}_{X,T^{1/2},s}$. Another main conjecture in this paper is the following. 

\begin{conj}
There exists an $H\times\bb{S}$-equivariant tilting bundle $\mca{T}_{T^{1/2},s}$ on $X$ such that $\bb{B}_{X,T^{1/2},s}$ coincides with the set of $K$-theory classes of indecomposable summands of $\mca{T}_{T^{1/2},s}$ up to shifts by $\bb{X}^{\ast}(H)$. In particular, $\bb{B}_{X,T^{1/2},s}$ is a basis of $K_{H\times\bb{S}}(X)$ as a $\bb{Z}[v,v^{-1}]$-module.
\end{conj}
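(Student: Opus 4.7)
The plan is to proceed in two stages: first, construct a candidate tilting bundle $\mca{T}_{T^{1/2},s}$ from the input data $(T^{1/2},s)$; second, identify the $K$-theory classes of its indecomposable summands with the elements of $\bb{B}_{X,T^{1/2},s}$ defined in~(\ref{signedcanonicalbasis}). The logic is to take an abstractly known tilting generator on $X$ and then \emph{verify} the two characterizing properties (bar-invariance and diagonal normalization) for the classes of its summands.

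For the construction stage, I would follow the Bezrukavnikov--Kaledin strategy: reduce $X$ to a symplectic resolution over $\Spec\bb{Z}[1/N]$ and invoke the derived localization theorem for the deformation quantization with parameter matching the slope $s$ in sufficiently large positive characteristic. This should produce a tilting generator whose indecomposable summands, up to tensoring by characters of $H$, are in bijection with the simple modules over the central specialization of the quantization; the $H\times\bb{S}$-equivariant structure is then normalized using the polarization $T^{1/2}$, which provides the square root of the canonical needed to rigidify the $\bb{S}$-grading. For toric hyper-K\"ahler manifolds this can be done much more concretely by taking a direct sum of line bundles $\mca{O}_X(\lambda)$ for $\lambda$ running over lattice points in a polytope specified by $s$, which is a tilting bundle by Stafford--Van den Bergh type arguments adapted to the hypertoric setting.

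For the identification stage, the main point is to show $[\mca{T}_i]\in\bb{B}_{X,T^{1/2},s}$ for each indecomposable summand $\mca{T}_i$. Bar-invariance should follow from expanding $[\mca{T}_i]$ in the stable basis $\{\Stab^{K}_{\mf{C},T^{1/2},s}(p)\}_{p}$, computing the coefficients by Steinberg-type pairing with the dual basis $\{\Stab^{K}_{-\mf{C},T^{1/2},s}(p)\}_{p}$, and comparing with the expansion after applying $\beta^{K}$ via~(\ref{roughbar}); here the polarization twist and the $\bb{D}_X$-factor built into the definition of $\beta^{K}$ should precisely absorb the Serre-duality flip between attracting and repelling directions. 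The normalization $(m\|m)\in 1+v^{-1}K_H(\mr{pt})[\![v^{-1}]\!]$ should reduce via~(\ref{innerprod}) and~(\ref{usualinnerprod}) to the statement that $R\Gamma(X,\End(\mca{T}_i))$ is concentrated in degree zero with leading contribution $\bb{C}\cdot\mr{id}$, which is exactly the indecomposability of $\mca{T}_i$ together with the Ext-vanishing of a tilting summand.

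The main obstacle is the exact matching of cardinalities and the control of wall-crossing in $s$. Proposition~\ref{expansion} should give $|\bb{B}_{X,T^{1/2},s}|=|X^{H}|$ via a Kazhdan--Lusztig type triangular algorithm in the stable basis, and one must show that this equals the number of indecomposable summands of $\mca{T}_{T^{1/2},s}$; for toric hyper-K\"ahler manifolds this is a lattice-point count in the slope polytope to be compared directly with the Morse-theoretic count of $H$-fixed points. More delicately, as $s$ crosses a wall in $\Pic(X)\otimes_{\bb{Z}}\bb{R}$ the tilting bundle changes by a mutation or derived equivalence, while the stable basis transforms by a $K$-theoretic wall $R$-matrix; one must check that these two transformations are compatible, so that the bar involution $\beta^{K}$ on either side of the wall really selects the summands of the new tilting bundle. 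Pinning down this compatibility, and hence the integrality of $\bb{B}_{X,T^{1/2},s}\subset K_{H\times\bb{S}}(X)$ rather than merely in the localized $K$-theory, is where the main work lies.
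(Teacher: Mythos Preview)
This statement is a \emph{conjecture} in the paper, not a theorem; the paper does not prove it in general. The only case settled in the paper is that of toric hyper-K\"ahler manifolds (Corollary~\ref{cor_toric_tilting}), and for Springer resolutions it is deduced from Bezrukavnikov--Mirkovi\'c via the comparison Proposition~\ref{prop_Springer_comparison}. Your general construction via Bezrukavnikov--Kaledin/derived localization in positive characteristic is exactly what is \emph{expected} (see the Remark after Definition~\ref{real_variation}), but the paper does not carry this out, and neither does your proposal: the step ``verify bar-invariance of $[\mca{T}_i]$ by expanding in the stable basis and comparing with $\beta^K$'' presupposes control of these expansions that is not available in general.

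For the toric case your sketch points in the right direction but misses the paper's two main devices. First, bar-invariance of the line bundles $\mca{E}(A)$ is not obtained by expanding them in stable bases; instead the paper defines an auxiliary involution $\beta'$ fixing all $\mca{E}(A)$ and checks $\beta'=\beta^K$ by evaluating both on the \emph{standard} basis, which has a short explicit Koszul expansion in the $\mca{E}(B)$'s (Proposition~\ref{prop_toric_bar_invariance_E(A)}). This inverts the logic of your step~2 and avoids any direct computation of $\beta^K(\mca{E}(A))$. Second, the tilting property is not obtained by ``Stafford--Van den Bergh type arguments'' on $X$ but by passing to the Lawrence toric variety $\mca{X}$, translating $\Ext$-vanishing into vanishing of local cohomology via Musta\c{t}\v{a}'s simplicial description (Lemma~\ref{lem_coh_local}, Lemma~\ref{lem_local_simplicial}), and then invoking oriented-matroid/linear-programming facts (Minty's Lemma, Proposition~\ref{prop_Minty}) together with the inequality (\ref{eqn_ineq_mu}) to exclude bad sign patterns (Proposition~\ref{prop_toric_higher_vanishing_tilting}). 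Weak generation is handled separately via the wall-crossing exact sequence of Lemma~\ref{lem_toric_wall_crossing_formula} (Corollary~\ref{cor_weak_generator}). None of these ingredients appear in your plan.

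Your ``main obstacle'' paragraph correctly identifies the wall-crossing compatibility issue, but the paper's route bypasses it: rather than matching an $R$-matrix against a mutation, it shows directly that each $\mca{E}(A)$ is bar-invariant and that $\oplus_A\mca{E}(A)$ is tilting, from which the basis statement and Conjecture~\ref{K-theoretic_canonical_basis} follow (Corollary~\ref{cor_toric_canonical_basis}). Also, your reduction of the norm condition to ``$R\Gamma(\End(\mca{T}_i))$ concentrated in degree~$0$'' is not quite right: the pairing $(-\|-)$ involves $\beta^K$, so unpacking it to an $\End$-computation already requires bar-invariance (compare the proof of Corollary~\ref{cor_Koszulity}).
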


When $X$ is a Springer resolution and for the specific choice of the data as above, this conjecture follows from a result of Bezrukavnikov-Mirkovi\'c \cite{BM} combined with our comparison result Proposition~\ref{prop_Springer_comparison}. In this paper, we also check it for toric hyper-K\"ahler manifolds, see Corollary~\ref{cor_toric_tilting}. We remark that this tilting bundle essentially coincides with the one constructed by McBreen-Webster \cite{MW} and \v{S}penko-Van den Bergh \cite{SV}.  

We remark that by the definition of $K$-theoretic canonical basis, the endomorphism ring $\mca{A}_{T^{1/2},s}\coloneqq\End(\mca{T}_{T^{1/2},s})$ of the conjectural tilting bundle has nonnegative grading with respect to $\bb{S}$. In particular, this is Koszul by the Kaledin's argument in \cite{BM}. We will formulate a conjecture on the highest weight category structure on the equivariant module category of the Koszul dual of $\mca{A}_{T^{1/2},s}$, see Conjecture~\ref{categorical_stable_basis}. A natural duality functor on it should lift the $K$-theoretic bar involution to an involution on the derived category $\DCoh^{H\times\bb{S}}(X)$ of $H\times\bb{S}$-equivariant coherent sheaves on $X$, see Conjecture~\ref{anti-involution}. 

We also note that by varying the slope parameters, we would obtain a family of tilting bundles and hence $t$-structures on $\DCoh^{H\times\bb{S}}(X)$. This should be a part of the data defining the real variations of stability conditions in the sense of Anno-Bezrukavnikov-Mirkovi\'c \cite{ABM}, see Conjecture~\ref{conj_real_variation}. We will give a conjecture on the wall-crossing of $K$-theoretic canonical bases, which is also given by a Kazhdan-Lusztig type algorithm, see Conjecture~\ref{conj_wall_crossing}. 

\subsection{Elliptic bar involutions}

The second and the main aim of this paper is to give an elliptic analogue of the $K$-theoretic bar involution. Since Aganagic-Okounkov \cite{AO} defined the elliptic analogue of the stable basis, it seems natural to consider the elliptic analogue of (\ref{roughbar}) replacing $K$-theoretic stable bases by elliptic stable bases. 

Let $\Stab^{AO}_{\mf{C},T^{1/2}}(p)$ be the elliptic stable basis associated with $p\in X^{H}$ in the sense of Aganagic-Okounkov \cite{AO}. This is a section of some line bundle on equivariant elliptic cohomology of $X$ extended by adding K\"ahler parameters. In order to obtain an involution, it seems natural to shift the K\"ahler parameters by $v^{\det T^{1/2}}$ and then multiply it by $\Theta(N^{!}_{p^{!},-})$, the Thom class of the negative part of the tangent bundle of $X^{!}$ at the fixed point $p^{!}\in (X^{!})^{H^{!}}$ corresponding to $p\in X^{H}$ under symplectic duality. In order to consider its $K$-theory limits, we also twist it by $v^{?}\cdot\sqrt{\det T^{1/2}\cdot\det T^{1/2,!}_{p^{!}}}^{-1}$. Let us denote by $\mf{S}_{X,\mf{C}}(p)$ the resulting renormalized elliptic stable basis. Here, we omit the polarization from the notation but they depends on the choice. 

We want to define the elliptic bar involution $\beta^{ell}_{X}=\beta^{ell}$ by the formula 
\begin{align*}
\beta^{ell}(\mf{S}_{X,\mf{C}}(p))=(-1)^{\frac{\dim X}{2}}\mf{S}_{X,-\mf{C}}(p)
\end{align*}
for any $p\in X^{H}$. Here, $\beta^{ell}$ should be considered as an involution on some space of meromorphic functions on $\Spec\left(K_{H\times\bb{S}}(X)\otimes_{\bb{Z}}\bb{C}[\Pic(X)^{\vee}]\right)$. We also conjecture that $\beta^{ell}$ does not depend on the choice of $\mf{C}$, and hence it is an involution. We will check this for toric hyper-K\"ahler manifolds in Corollary~\ref{cor_ell_bar_invariance}. 

We remark that in the above normalization, following symmetry under the symplectic duality is expected (cf. \cite{RSVZ1,RSVZ2}): 
\begin{align*}
\mf{S}_{X,\mf{C}}(p_{1})|_{p_{2}}=\pm\mf{S}_{X^{!},\mf{C}^{!}}(p_{2}^{!})|_{p_{1}^{!}}.
\end{align*}
Here, we identified the equivariant parameters for $X$ and K\"ahler parameters for $X^{!}$ and vice versa, as predicted by symplectic duality. In some sense, this symmetry explains the naturality of the above normalization. By taking the $K$-theory limits, this also explains the appearance of symplectic duality in the normalization of our $K$-theoretic bar involutions. For more detail, see section 4.3.

The main problem toward a definition of the notion of elliptic canonical basis is to generalize other conditions such as asymptotic norm one property or triangular property of $K$-theoretic canonical bases to the elliptic case. Although we do not know how to deal with this problem in general, we give a candidate for the elliptic canonical bases for toric hyper-K\"ahler manifolds by explicitly constructing an elliptic lift of $K$-theoretic canonical bases which are invariant under the elliptic bar involution.

\subsection{Elliptic canonical bases for toric hyper-K\"ahler manifolds}

Let us explain the main result of this paper briefly. Let $1\rightarrow S\rightarrow T\rightarrow H\rightarrow 1$ be an exact sequence of algebraic tori. We will identify the character lattice $\bb{X}^{\ast}(T)\cong\bb{Z}^{n}$ with its dual by taking the standard pairing $(-,-)$ on $\bb{Z}^{n}$. The toric hyper-K\"ahler manifold $X\coloneqq\mu^{-1}(0)^{\mr{ss}}/S$ is defined by the Hamiltonian reduction of $T^{\ast}\bb{C}^{n}$ by $S$. Here, $\mu$ is the moment map for the $S$-action and the GIT stability parameter is taken generically. We assume that $X$ is smooth. 

By considering the induced line bundles on the quotient, we obtain a natural homomorphism $\mca{L}:\bb{X}^{\ast}(T)\rightarrow\Pic^{H\times\bb{S}}(X)$. By considering $\bb{X}_{\ast}(S)\subset\bb{X}_{\ast}(T)$ as a subset of $\bb{X}^{\ast}(T)$ by using the pairing, we define the provisional elliptic canonical basis by the following formula. 

\begin{dfn}
For each $\lambda\in\bb{X}^{\ast}(T)$, we set 
\begin{align}\label{ecb}
\Theta_{X}(\lambda)\coloneqq(q;q)_{\infty}^{-\rank S}\sum_{\beta\in\bb{X}_{\ast}(S)}(-1)^{(\kappa,\beta)}q^{\frac{1}{2}(\beta,\beta)+(\lambda+\frac{1}{2}\kappa,\beta)}\mca{L}(\lambda+\beta)z^{\beta}.
\end{align}
Here, $(q;q)_{\infty}\coloneqq\prod_{m\geq 1}(1-q^{m})$, $\kappa=(1,1,\ldots,1)\in\bb{Z}^{n}$, and $z^{\beta}$ is the K\"ahler parameter corresponding to $\beta$. 
\end{dfn}

One can easily check that for $\beta\in\bb{X}_{\ast}(S)$ and $\alpha\in\bb{X}^{\ast}(H)\subset\bb{X}^{\ast}(T)$, we have 
\begin{align*}
\Theta_{X}(\lambda+\beta)&=(-1)^{(\kappa,\beta)}q^{-\frac{1}{2}(\beta,\beta)-(\lambda+\frac{1}{2}\kappa,\beta)}z^{-\beta}\Theta_{X}(\lambda),\\
\Theta_{X}(\lambda+\alpha)&=a^{\alpha}\Theta_{X}(\lambda).
\end{align*}
Here, $a^{\alpha}$ is the equivariant parameter corresponding to $\alpha\in\bb{X}^{\ast}(H)$. Therefore, the number of independent elements in $\{\Theta_{X}(\lambda)\}_{\lambda\in\bb{X}^{\ast}(T)}$ is at most the number of elements of $\Xi\coloneqq\bb{X}^{\ast}(T)/(\bb{X}_{\ast}(S)+\bb{X}^{\ast}(H))\cong\bb{X}^{\ast}(S)/\bb{X}_{\ast}(S)\cong\bb{X}_{\ast}(H)/\bb{X}^{\ast}(H)$, which turns out to be equal to the rank of $K(X)$. We also remark that the formula (\ref{ecb}) has the form of the theta function associated with the lattice $\bb{X}_{\ast}(S)$, where the pairing is induced from $\bb{X}^{\ast}(T)$. We point out that if we specialize all the equivariant parameters to 1 in (\ref{ecb}), we get the character of irreducible module corresponding to $\lambda\in\bb{X}^{\ast}(S)/\bb{X}_{\ast}(S)$ for the lattice vertex operator superalgebra $\mca{V}_{\bb{X}_{\ast}(S)}$ associated with $\bb{X}_{\ast}(S)$ for certain choice of conformal vector.

One can also define the elliptic canonical basis $\Theta_{X^{!}}(\lambda)$ for the symplectic dual $X^{!}$, which is defined analogously by the dual exact sequence of algebraic tori $1\rightarrow H^{\vee}\rightarrow T^{\vee}\rightarrow S^{\vee}\rightarrow1$. We note that we identify $\bb{X}^{\ast}(T)\cong\bb{X}^{\ast}(T^{\vee})$ by using the pairing. We also remark that the choice of GIT parameter for $X^{!}$ is given by the choice of chamber for $X$. The main result of this paper is the following, see Theorem~\ref{thm_Schur_Weyl} and Corollary~\ref{cor_ell_bar_invariance}. 

\begin{thm}
In the above setting, we have $\beta^{ell}_{X}(\Theta_{X}(\lambda))=\Theta_{X}(\lambda)$ and $\beta^{ell}_{X^{!}}(\Theta_{X^{!}}(\lambda))=\Theta_{X^{!}}(\lambda)$ for any $\lambda\in\bb{X}^{\ast}(T)$. We also have the following expansion of the elliptic stable bases into the elliptic canonical bases. 
\begin{align}\label{SWduality}
\pm\mf{S}_{X,\mf{C}}(p)=\sum_{\lambda\in\Xi}(-1)^{(\lambda,\kappa)}q^{\frac{1}{2}(\lambda,\lambda+\kappa)}\Theta_{X^{!}}(\lambda)|_{p^{!}}\cdot\Theta_{X}(\lambda).
\end{align}
\end{thm}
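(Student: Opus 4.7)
The natural plan is to prove the expansion formula (\ref{SWduality}) first and then deduce the bar-invariance of $\Theta_{X}(\lambda)$ and $\Theta_{X^{!}}(\lambda)$ from it. For a toric hyper-K\"ahler manifold, the $H$-fixed points $p\in X^{H}$ correspond to bases of the defining hyperplane arrangement, and the elliptic stable basis $\mf{S}_{X,\mf{C}}(p)$ (in the renormalization fixed in the introduction) admits an explicit product formula in terms of odd theta functions at the tangent weights at $p$, combined with the Thom class $\Theta(N^{!}_{p^{!},-})$ and the square-root normalization from the polarization $T^{1/2}$. The first step is to write out this product in a form amenable to comparison with the theta-series (\ref{ecb}); in particular the exponents in the Gaussian factor must be recognized as contributions of the tangent weights at $p$ and $p^{!}$.

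To prove (\ref{SWduality}), I would substitute (\ref{ecb}) for both $\Theta_{X}(\lambda)$ and $\Theta_{X^{!}}(\lambda)|_{p^{!}}$ into the right-hand side. The result is a triple theta series indexed by $\Xi\times\bb{X}_{\ast}(S)\times\bb{X}_{\ast}(H^{\vee})$. After interchanging the order of summation and combining the $\Xi$-sum with the two inner lattice sums, this rearranges into a theta series on the ambient lattice $\bb{X}^{\ast}(T)\cong\bb{Z}^{n}$. With respect to the standard pairing, the sublattices $\bb{X}_{\ast}(S)$ and $\bb{X}_{\ast}(H^{\vee})\cong\bb{X}^{\ast}(H)$ are orthogonal complements in $\bb{Z}^{n}$, so the theta series on $\bb{Z}^{n}$ factorizes as a product of $n$ Jacobi theta functions in single variables. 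The final step is then to match this product, coordinate by coordinate, with the explicit product formula for $\mf{S}_{X,\mf{C}}(p)$, tracking the factor $\kappa$, the polarization, and the overall sign.

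Once (\ref{SWduality}) is established, the bar-invariance follows essentially formally. Apply $\beta^{ell}_{X}$ to both sides: the left-hand side becomes $\pm(-1)^{\dim X/2}\mf{S}_{X,-\mf{C}}(p)$ by definition of $\beta^{ell}_{X}$, while the right-hand side, since $\beta^{ell}_{X}$ does not act on the $\Theta_{X^{!}}(\lambda)|_{p^{!}}$ factors (which depend only on the K\"ahler parameters of $X$, i.e.\ the equivariant parameters of $X^{!}$), becomes the same sum with $\Theta_{X}(\lambda)$ replaced by $\beta^{ell}_{X}\Theta_{X}(\lambda)$. Comparing with (\ref{SWduality}) applied to $-\mf{C}$ and using the symplectic-duality symmetry $\mf{S}_{X,\mf{C}}(p_{1})|_{p_{2}}=\pm\mf{S}_{X^{!},\mf{C}^{!}}(p_{2}^{!})|_{p_{1}^{!}}$ to account for the sign shift in $\mf{C}\mapsto-\mf{C}$, the equation reduces to an identity of the form $\sum_{\lambda}c_{\lambda}\bigl(\beta^{ell}_{X}\Theta_{X}(\lambda)-\Theta_{X}(\lambda)\bigr)\cdot\Theta_{X^{!}}(\lambda)|_{p^{!}}=0$. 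The $|\Xi|=\rank K(X^{!})$ many functions $\Theta_{X^{!}}(\lambda)|_{p^{!}}$ are linearly independent (being restrictions of a basis of the relevant section space), so comparing coefficients yields $\beta^{ell}_{X}\Theta_{X}(\lambda)=\Theta_{X}(\lambda)$; the statement for $\Theta_{X^{!}}$ follows by interchanging the roles of $X$ and $X^{!}$.

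The main obstacle is the theta-function identity in the second paragraph. Both sides of (\ref{SWduality}) are meromorphic sections of the same line bundle with identical quasi-periodicity under both equivariant and K\"ahler lattices, so in principle one could argue by a uniqueness/rigidity statement, but a clean proof almost certainly goes through the direct product factorization above. The subtlest part is the combinatorial bookkeeping of signs — especially matching the $(-1)^{(\lambda,\kappa)}$ and $(-1)^{(\kappa,\beta)}$ signs in (\ref{ecb}) against the attractive/repelling decomposition determined by $\mf{C}$ and $T^{1/2}$ in the product formula for $\mf{S}_{X,\mf{C}}(p)$.
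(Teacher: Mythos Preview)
Your approach to the expansion formula (\ref{SWduality}) is essentially the paper's proof run in reverse: the paper starts from the explicit product $\Stab^{ell}_{X}(p_{I})=(-1)^{|I_{+}|+|I^{c}_{+}|}\prod_{i=1}^{n}\vartheta(\mca{L}(\varepsilon^{\ast}_{i})\cdot i^{\ast}_{p^{!}_{I}}\mca{L}^{!}(\varepsilon_{i}))$, applies the Jacobi triple product to each factor to obtain a sum over $\bb{Z}^{n}$, and then splits $\bb{Z}^{n}=\Xi\times\bb{X}^{\ast}(H)\times\bb{X}_{\ast}(S)$ using orthogonality of the two sublattices. Your plan of substituting (\ref{ecb}) into the right-hand side and collapsing the triple sum back to a product is exactly the same computation read backwards, so this part is fine.

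There is, however, a real gap in your deduction of bar-invariance. You claim that $\beta^{ell}_{X}$ does not act on the coefficients $i^{\ast}_{p^{!}}\Theta_{X^{!}}(\lambda)$ because they ``depend only on the K\"ahler parameters of $X$''. This is false: these restrictions depend on $v$ through $i^{\ast}_{p^{!}_{I}}\mca{L}^{!}(\mu)$, and $\beta^{ell}_{X}$ is only $\mca{M}_{X}$-\emph{semilinear}, acting on scalars by $v\mapsto v^{-1}$. So when you apply $\beta^{ell}_{X}$ to the right-hand side you get $\sum_{\lambda}\overline{c_{\lambda}}\cdot\beta^{ell}_{X}(\Theta_{X}(\lambda))$, not $\sum_{\lambda}c_{\lambda}\cdot\beta^{ell}_{X}(\Theta_{X}(\lambda))$. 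The symplectic-duality symmetry you invoke is not what absorbs this; what is actually needed is the identity $\overline{i^{\ast}_{p^{!}_{I}}\Theta_{X^{!}}(\lambda)}=i^{\ast}_{p^{!}_{I}}\Theta_{X^{!}_{\mr{flop}}}(\lambda)$, which follows from $i^{\ast}_{p^{!}}\mca{L}^{!}_{\mr{flop}}=\overline{i^{\ast}_{p^{!}}\mca{L}^{!}}$. Once you have this, the $v\mapsto v^{-1}$ on the coefficients matches exactly the expansion (\ref{SWduality}) written for $-X$ (whose dual is $X^{!}_{\mr{flop}}$), the signs $(-1)^{|I_{+}|+|I^{c}_{+}|}$ versus $(-1)^{|I_{-}|+|I^{c}_{+}|}$ produce the factor $(-1)^{d}$, and the linear-independence argument you give then goes through. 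The paper organizes this slightly differently, defining an auxiliary semilinear map $\beta'_{X}$ by $\beta'_{X}(\Theta_{X}(\lambda))=\Theta_{X}(\lambda)$ and checking directly that $\beta'_{X}$ satisfies the defining relation of $\beta^{ell}_{X}$; this avoids applying $\beta^{ell}_{X}$ to the as-yet-unknown $\Theta_{X}(\lambda)$ and makes the sign bookkeeping cleaner.
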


We note that in the sum of (\ref{SWduality}), each term does not depend on the choice of representatives for $\lambda$. We remark that this formula resembles the irreducible decomposition of the lattice vertex operator superalgebra $\mca{V}_{\bb{Z}^{n}}$ as a module for the commuting action of vertex operator subalgebras $\mca{V}_{\bb{X}_{\ast}(S)}$ and $\mca{V}_{\bb{X}^{\ast}(H)}$, which forms a dual pair in the sense that they are commutant to each other in $\mca{V}_{\bb{Z}^{n}}$. The author is not sure if this kind of phenomenon happens in general or not, but we hope that the results of this paper would give some hints for the investigation of elliptic canonical bases for other conical symplectic resolutions. 

The plan of this paper is as follows. In section 2, we check that our definition of $K$-theoretic bar involution essentially coincides with Lusztig's definition for Springer resolutions. In section 3, we propose the definition of $K$-theoretic bar involutions and state several conjectures related to $K$-theoretic canonical bases. In section 4, we propose the definition of elliptic bar involutions. In section 5, we specialize to the case of toric hyper-K\"ahler manifolds and calculate $K$-theoretic canonical bases. We also prove all the conjectures stated in section 3 in these cases. In section 6, we prove our main result on the elliptic canonical bases for toric hyper-K\"ahler manifolds. 

\subsection{Acknowledgment}

The author thanks Tomoyuki Arakawa, Akishi Ikeda, Hiroshi Iritani, Syu Kato, Hitoshi Konno, Toshiro Kuwabara, Michael McBreen, Takahiro Nagaoka and Andrei Okounkov for valuable discussions related to this work. Especially, the author thanks Andrei Okounkov for his suggestion to consider the elliptic stable envelope. This work was supported by JSPS KAKENHI Grant Number 17K14163. 

\section{Reformulation}

In this section, we reformulate Lusztig's definition of the bar involution on the equivariant $K$-theory of Springer resolutions. We should remark that the Lusztig's definition works for Slodowy varieties associated with any nilpotent element. If the nilpotent element is regular in some Levi algebras, then we can use our approach to define $K$-theoretic bar involutions. The results of this section are included for motivational purposes and will not be used elsewhere in this paper. Hence we restrict ourselves to the case of Springer resolutions for simplicity. Comparison of our definition with Lusztig's definition for other Slodowy varieties should be straight-forward once some formulas for the $K$-theoretic stable bases analogous to Proposition~\ref{SZZ} are available. 

We first list some standard notations used in this section. Let $G$ be a semisimple algebraic group over $\bb{C}$ of adjoint type. We fix $B\subset G$ a Borel subgroup and $H\subset B$ a Cartan subgroup. We will denote by $\mf{g}, \mf{b},\mf{h}$ their Lie algebras. We use the convention that the nonzero $H$-weights appearing in $\mf{b}$ is negative. We denote by $\bb{X}^{\ast}(H)$ (resp. $\bb{X}_{\ast}(H)$) the character lattice (resp. cocharacter lattice) of $H$. We set $\mf{h}^{\ast}_{\bb{R}}:=\bb{X}^{\ast}(H)\otimes_{\bb{Z}}\bb{R}$ and $\mf{h}_{\bb{R}}:=\bb{X}_{\ast}(H)\otimes_{\bb{Z}}\bb{R}$. Let $\{\alpha_{i}\}_{i\in I}$ be the set of simple roots and $\{\alpha^{\vee}_{i}\}_{i\in I}$ be the set of simple coroots. Let $W$ be the Weyl group of $G$ and $s_{i}\in W$ be the simple reflection corresponding to $i\in I$. For $w\in W$, we denote by $l(w)$ the length of $w$. Let $w_{0}\in W$ be the longest element and $e\in W$ be the identity element. We denote by $R_{+}$ (resp. $R_{+}^{\vee}$) the set of positive roots (resp. positive coroots) and $\rho=\frac{1}{2}\sum_{\alpha\in R_{+}}\alpha\in\mf{h}^{\ast}_{\bb{R}}$ the half sum of positive roots. 

\subsection{Lusztig's bar involution}

We briefly recall Lusztig's bar involution on the equivariant $K$-theory of Springer resolutions. For more details, we refer to \cite{Lu3,Lu4}. 

Let $\mca{B}\coloneqq G/B$ be the flag variety and $X\coloneqq T^{\ast}\mca{B}\cong\{(gB,y)\in\mca{B}\times\mf{g}^{\ast}\mid \Ad(g)^{-1}(y)\in(\mf{g}/\mf{b})^{\ast}\}$ be the Springer resolution. The torus $\bb{T}\coloneqq H\times\bb{S}$ acts naturally on $X$ by $(h,\sigma)\cdot (gB,y)=(hgB,\sigma^{-2}\Ad(h)y)$ and this action preserves the subvariety $\mca{B}$. By the pushforward along zero section, we obtain an inclusion $K_{\bb{T}}(\mca{B})\subset K_{\bb{T}}(X)$. Let $\mr{pr}:X\rightarrow\mca{B}$ be the natural projection. 

Let $\msc{H}$ be the affine Hecke algebra associated with the Langlands dual of $G$. I.e., $\msc{H}$ is the $\bb{Z}[v,v^{-1}]$-algebra generated by $T_{w}$ ($w\in W$) and $\theta_{\lambda}$ ($\lambda\in\bb{X}^{\ast}(H)$) with the following relations: 
\begin{itemize}
\item $(T_{s_{i}}-v)(T_{s_{i}}+v^{-1})=0$ ($\forall i\in I$), 
\item $T_{w}T_{w'}=T_{ww'}$ if $l(ww')=l(w)+l(w')$ ($w,w'\in W$), 
\item $\theta_{\lambda}T_{s_{i}}-T_{s_{i}}\theta_{s_{i}(\lambda)}=(v-v^{-1})\theta_{\frac{[\lambda]-[s_{i}(\lambda)]}{1-[-\alpha_{i}]}}$ ($\forall i\in I$, $\lambda\in\bb{X}^{\ast}(H)$), 
\item $\theta_{\lambda}\theta_{\lambda'}=\theta_{\lambda+\lambda'}$ ($\forall \lambda,\lambda'\in\bb{X}^{\ast}(H)$), 
\item $\theta_{0}=1$. 
\end{itemize}
Here, for $\lambda\in\bb{X}^{\ast}(H)$, we denote by $[\lambda]\in\bb{Z}[\bb{X}^{\ast}(H)]\cong K_{H}(\mr{pt})$ the corresponding element and for $c=\sum_{\lambda\in \bb{X}^{\ast}(H)}c_{\lambda}[\lambda]\in K_{H}(\mr{pt})$, we set $\theta_{c}=\sum_{\lambda\in\bb{X}^{\ast}(H)}c_{\lambda}\theta_{\lambda}$. It is known (see e.g. \cite{CG,Lu3}) that $\msc{H}$ acts on $K_{\bb{T}}(\mca{B})$ and $K_{\bb{T}}(X)$, and the inclusion $K_{\bb{T}}(\mca{B})\subset K_{\bb{T}}(X)$ is compatible with the $\msc{H}$-actions. We do not recall its construction, but in order to fix the convention, we write down its action on the fixed point basis. Our convention mainly follows that of \cite{Lu3}. 

The fixed points of $\mca{B}$ and $X$ with respect to the $H$-action are parametrized by $W$. For each $w\in W$, we also denote by $w\coloneqq wB\in\mca{B}$ the corresponding fixed point. We denote by $\mca{O}_{w}\in K_{\bb{T}}(\mca{B})\subset K_{\bb{T}}(X)$ the $K$-theory class of the structure sheaf of the fixed point $w$. The $\msc{H}$-action on the fixed point basis is given by 
\begin{itemize}
\item $T_{s_{i}}\mca{O}_{w}=\frac{v-v^{-1}}{1-[-w(\alpha_{i})]}\mca{O}_{w}+\frac{v^{-1}[-w(\alpha_{i})]-v}{1-[-w(\alpha_{i})]}\mca{O}_{ws_{i}}$ ($\forall i\in I$)
\item $\theta_{\lambda}\mca{O}_{w}=[w(\lambda)]\cdot \mca{O}_{w}$ ($\forall \lambda\in\bb{X}^{\ast}(H)$)
\end{itemize}
We note that the action of $\theta_{\lambda}$ is given by tensor product of an equivariant line bundle $\mca{L}_{\lambda}\coloneqq (G\times\bb{C}_{\lambda})/B$ on $\mca{B}$ (or its pullback to $X$), where the action of $B$ is given by $b\cdot(g,x)=(gb^{-1},\lambda(b)x)$. 

We fix a Lie algebra automorphism $\varpi:\mf{g}\rightarrow\mf{g}$ such that $\varpi(h)=-h$ for any $h\in\mf{h}$. This induces an automorphism of $\mca{B}$ and $X$, which is denoted by the same letter. Lusztig's bar involution $\beta^{L}:K_{\bb{T}}(X)\rightarrow K_{\bb{T}}(X)$ is then defined by 
\begin{align*}
\beta^{L}\coloneqq(-v)^{l(w_{0})}T_{w_{0}}^{-1}\varpi^{\ast}\bb{D}_{X}.
\end{align*}
One can define an inner product $(-||-)_{L}:K_{\bb{T}}(X)\times K_{\bb{T}}(X)\rightarrow\Frac(K_{\bb{T}}(\mr{pt}))$ in the same way as (\ref{innerprod}) replacing $\beta^{K}$ by $\beta^{L}$. In order to compute the image of $K$-theoretic stable bases by $\beta^{L}$, the following lemma proved in \cite{Lu4} is useful. 

\begin{lemma}[\cite{Lu4}, Lemma 8.13]\label{L_orth}
Let $\mf{Z}_{1}\coloneqq\mr{pr}^{-1}(e)\subset X$. Then for any $f,f'\in\bb{X}^{\ast}(H)$ and $w,w'\in W$, we have
\begin{align*}
(fT_{w}^{-1}\mca{O}_{\mf{Z}_{1}}||f'T_{w'}^{-1}\mca{O}_{\mf{Z}_{1}})_{L}=v^{-2l(w_{0})}ff'^{-1}\delta_{w,w'}.
\end{align*}
\end{lemma}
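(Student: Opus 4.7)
The plan is to reduce the claim to a Hecke-algebra adjointness property of the pairing $(-||-)_L$ together with a base-case computation for $\mca{O}_{\mf{Z}_1}$ via fixed-point localization. By definition, $(m||m')_L=(m:\bb{D}_X\beta^Lm')$, so the first task is to commute the generators $\theta_\lambda$ and $T_{s_i}$ of $\msc{H}$ past the composition $\bb{D}_XT_{w_0}^{-1}\varpi^*\bb{D}_X$. For this I would establish three identities: (a) $\bb{D}_X(\mca{L}_\lambda\otimes-)=\mca{L}_\lambda^{-1}\otimes\bb{D}_X(-)$, so that $\bb{D}_X$ conjugates $\theta_\lambda$ to $\theta_{-\lambda}$; (b) an analogous conjugation $\varpi^*\theta_\lambda\varpi^*=\theta_{-\lambda}$ coming from $\varpi|_\mf{h}=-1$; and (c) a Serre-self-duality identity of the form $\bb{D}_XT_{s_i}\bb{D}_X=v^{?}T_{s_i}^{-1}$, arising from the symmetry of the convolution kernel on $G/B\times_{G/P_i}G/B$, together with a compatibility of $\varpi^*$ with $T_{s_i}$ (possibly twisted by a Dynkin diagram symmetry). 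Combined, these should yield a sesquilinear adjointness
\begin{align*}
(T_wm||m')_L=(m||T_w^{-1}m')_L,\qquad (\theta_\lambda m||m')_L=(m||\theta_\lambda m')_L,
\end{align*}
with the normalization $(-v)^{l(w_0)}$ in $\beta^L$ precisely arranged to cancel the accumulated $v$-shifts.

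With adjointness in hand, the general pairing collapses to
\begin{align*}
(fT_w^{-1}\mca{O}_{\mf{Z}_1}||f'T_{w'}^{-1}\mca{O}_{\mf{Z}_1})_L=ff'^{-1}(\mca{O}_{\mf{Z}_1}||T_wT_{w'}^{-1}\mca{O}_{\mf{Z}_1})_L,
\end{align*}
so the problem reduces to proving $(\mca{O}_{\mf{Z}_1}||T_u\mca{O}_{\mf{Z}_1})_L=v^{-2l(w_0)}\delta_{u,e}$ for $u\in W$. The fiber $\mf{Z}_1=\mr{pr}^{-1}(e)\cong(\mf{g}/\mf{b})^*$ carries a Koszul resolution by exterior powers of its conormal bundle as an $\mca{O}_X$-module, so $\bb{D}_X\mca{O}_{\mf{Z}_1}$ is set-theoretically supported over $eB\in\mca{B}$. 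Via fixed-point localization at the unique $H$-fixed point in $\mf{Z}_1$, the Euler pairing $(\mca{O}_{\mf{Z}_1}:\mca{G})$ extracts $\mca{G}|_e$ divided by the equivariant Euler class of $T_eX$. The combined Euler-class contributions from $\bb{D}_X$ and $\varpi^*\bb{D}_X$, together with the explicit action of $T_{w_0}^{-1}$ at the fixed point $e$, multiply to exactly $v^{-2l(w_0)}$. The Kronecker factor $\delta_{u,e}$ arises because $T_u\mca{O}_{\mf{Z}_1}$ is a convolution class supported on the Steinberg component indexed by $u$, whose fixed-point restriction at $e$ is nonzero only when $u=e$.

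The main obstacle is establishing the Serre-self-duality identity in step (c) with the correct $v$-normalization. Verifying it requires tracing through the convolution-kernel construction of the $\msc{H}$-action and checking how Serre duality interacts with the structure sheaf of the two-step flag correspondence $G/B\times_{G/P_i}G/B$. The calculation is purely formal but tedious, and it is precisely what pins down the prefactor $(-v)^{l(w_0)}$ in the definition of $\beta^L$. Once this compatibility is in place, the base-case localization and the collapse to $\delta_{u,e}$ are routine.
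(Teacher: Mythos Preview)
The paper does not supply its own proof of this lemma; it simply quotes \cite[Lemma~8.13]{Lu4} and remarks that Lusztig proves it more generally for Slodowy varieties. So there is no in-paper argument to compare against. That said, your outline has a genuine gap that would make the strategy fail as written.

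The problem is the reduction step. You collapse the pairing to $(\mca{O}_{\mf{Z}_1}\,||\,T_wT_{w'}^{-1}\mca{O}_{\mf{Z}_1})_L$ and then assert that it suffices to show $(\mca{O}_{\mf{Z}_1}\,||\,T_u\mca{O}_{\mf{Z}_1})_L=v^{-2l(w_0)}\delta_{u,e}$. But $T_wT_{w'}^{-1}$ is not a single $T_u$; it is a $\bb{Z}[v,v^{-1}]$-combination of several $T_u$'s, and the coefficient of $T_e$ in that expansion is \emph{not} $\delta_{w,w'}$. Concretely, take $w=e$, $w'=s_i$: then $T_eT_{s_i}^{-1}=T_{s_i}-(v-v^{-1})$, whose $T_e$-coefficient is $-(v-v^{-1})\neq 0$. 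In fact your proposed base case is inconsistent with the lemma itself: from the lemma one computes
\[
(\mca{O}_{\mf{Z}_1}\,||\,T_{s_i}\mca{O}_{\mf{Z}_1})_L
=(\mca{O}_{\mf{Z}_1}\,||\,T_{s_i}^{-1}\mca{O}_{\mf{Z}_1})_L+(v-v^{-1})(\mca{O}_{\mf{Z}_1}\,||\,\mca{O}_{\mf{Z}_1})_L
=0+(v-v^{-1})v^{-2l(w_0)}\neq 0,
\]
contradicting your claim that $(\mca{O}_{\mf{Z}_1}\,||\,T_u\mca{O}_{\mf{Z}_1})_L$ vanishes for $u\neq e$. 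The support heuristic you invoke for the vanishing does not survive the application of $\bb{D}_X\beta^L=\bb{D}_X(-v)^{l(w_0)}T_{w_0}^{-1}\varpi^*\bb{D}_X$ to the second argument: the $T_{w_0}^{-1}\varpi^*$ part moves support around, so restriction at $e$ need not kill it.

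What actually makes Lusztig's argument go is not an adjointness of the form $(T_w m\,||\,m')=(m\,||\,T_w^{-1}m')$, but rather the intertwining $\beta^L(h\cdot m)=\bar h\cdot\beta^L(m)$ with the Hecke-algebra bar involution $\overline{T_w}=T_{w^{-1}}^{-1}$, together with an explicit identification of $\beta^L(\mca{O}_{\mf{Z}_1})$ and the compatibility of $\bb{D}_X$ with convolution. If you want to salvage the adjointness route, you must track that bar involution through both occurrences of $\bb{D}_X$ and the $T_{w_0}^{-1}\varpi^*$, and the resulting identity will not reduce to a single base case indexed by $u\in W$ in the way you wrote.
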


We remark that in \cite{Lu4}, this formula is proved more generally for Slodowy varieties associated with nilpotent elements which are regular in some Levi subalgebras. 

\subsection{Stable bases}

Next we recall a result of Su-Zhao-Zhong \cite{SZZ} describing the $K$-theoretic stable bases for Springer resolutions. For the definition and basic properties of $K$-theoretic stable bases used in this paper, we refer to section 3.2. In particular, the sign convention is slightly different from \cite{O1,OS}. 

We first fix the data defining the $K$-theoretic stable basis. For the chamber, we take the negative Weyl chamber $\mf{C}=\{x\in\mf{h}_{\bb{R}}\mid\forall i\in I, \langle x,\alpha_{i}\rangle<0 \}$. For the polarization $T^{1/2}$, we take the pullback of the tangent bundle of $\mca{B}$ by $X\rightarrow\mca{B}$. For the slope, we take $s\in\rho-A^{+}_{0}\subset\Pic(X)\otimes_{\bb{Z}}\bb{R}\cong\mf{h}^{\ast}_{\bb{R}}$, where $A^{+}_{0}=\{x\in\mf{h}_{\bb{R}}^{\ast}\mid\forall\alpha^{\vee}\in R_{+}^{\vee}, 0<\langle x,\alpha^{\vee}\rangle<1\}$ is the fundamental alcove. We note that $\rho$ corresponds to an actual (nonequivariant) line bundle on $X$ and we take an $\bb{T}$-equivariant lift $\mca{L}_{\rho}$. The following description of the $K$-theoretic stable basis is essentially proved in type $A$ by Rim\'anyi-Tarasov-Varchenko \cite{RTV} and in general by Su-Zhao-Zhong \cite{SZZ}. 

\begin{prop}[\cite{SZZ}, Theorem~0.1]\label{SZZ}
For any $w\in W$, we have 
\begin{align*}
\Stab^{K}_{\mf{C},T^{1/2},s}(w)=(-1)^{l(w)}[\rho-w\rho]\cdot T_{w}^{-1}\mca{O}_{\mf{Z}_{1}}
\end{align*}
\end{prop}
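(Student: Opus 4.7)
The plan is to verify that the element $M_w := (-1)^{l(w)}[\rho - w\rho]\cdot T_{w}^{-1}\mca{O}_{\mf{Z}_{1}}$ satisfies the three defining axioms of the K-theoretic stable basis: the support condition on the closure of the attracting cell at $w$, the correct normalization at $w$ (matching the Thom class of the negative part of the tangent bundle prescribed by $T^{1/2}$ and $\mf{C}$), and the Newton-polytope/slope condition on its restrictions $M_w|_u$ for all other fixed points $u \neq w$. Uniqueness of the stable basis with these properties then forces $M_w = \Stab^{K}_{\mf{C},T^{1/2},s}(w)$.

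First I would treat the base case $w = e$, where $M_e = \mca{O}_{\mf{Z}_{1}}$. With $\mf{C}$ the negative Weyl chamber, the attracting cell at the fixed point $e = eB \in \mca{B}$ is precisely the cotangent fibre $\mr{pr}^{-1}(eB) = \mf{Z}_1$, so the support condition is immediate; the normalization at $e$ follows from the identification of $\mf{Z}_1$ with the $\bb{T}$-attracting subspace whose Koszul resolution gives exactly the expected Thom class; and the slope $s \in \rho - A^{+}_{0}$ is designed so that the Newton polytope of $\mca{O}_{\mf{Z}_1}|_u$ (computed by the standard character formula for structure sheaves of attracting loci in $\mca{B}$) sits in the prescribed region.

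For the inductive step on $l(w)$, I would take $w = w' s_i$ with $l(w) = l(w') + 1$ and use the $\msc{H}$-action to pass from $M_{w'}$ to $M_w$. Concretely, applying $T_{s_i}^{-1}$ to $[\rho-w'\rho]\cdot T_{w'}^{-1}\mca{O}_{\mf{Z}_1}$ and combining with the commutation relation $\theta_\lambda T_{s_i} - T_{s_i}\theta_{s_i(\lambda)} = (v-v^{-1})\theta_{([\lambda]-[s_i\lambda])/(1-[-\alpha_i])}$ moves the $[\rho - w'\rho]$ factor past $T_{s_i}^{-1}$ and produces $[\rho - w\rho]\cdot T_{w}^{-1}\mca{O}_{\mf{Z}_1}$ plus a correction term proportional to $T_{w'}^{-1}\mca{O}_{\mf{Z}_1}$. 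On the stable-basis side, the known wall-crossing/recursion relation for K-theoretic stable bases across the wall corresponding to $\alpha_i$ takes the same shape, with the correction term supported on the attracting cell of $w'$. Matching these two recursions, together with the sign $(-1)^{l(w)}$ tracking the weight of $T_{s_i}^{-1}$, identifies $M_w$ with $\Stab^{K}_{\mf{C},T^{1/2},s}(w)$ step by step along a reduced expression.

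The main obstacle will be the slope condition: one must check that at every other fixed point $u$, the restriction $M_w|_u$ has its Newton polytope contained in the translate by $w \cdot s$ of the polytope specified by $A^{+}_{0}$. The factor $[\rho - w\rho]$, which equals $\prod_{\alpha \in R_+ \cap w R_-}[\alpha]$ up to a sign shift, is exactly the weight needed to move the polytope of $T_w^{-1}\mca{O}_{\mf{Z}_1}|_u$ into position, and verifying this hinges on a careful bookkeeping of how the Demazure-Lusztig operators $T_{s_i}^{-1}$ shift $H$-weights at each fixed point, combined with positivity of the roots contributing to $\rho - w\rho$. Once the inductive recursion is cleanly aligned with the Hecke relations and the slope condition is verified for $w = e$ (where it is essentially an alcove geometry computation), the inductive step preserves it automatically.
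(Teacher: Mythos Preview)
Your approach is a genuine from-scratch verification of the stable-basis axioms via induction on length and the Hecke recursion. The paper does something entirely different: since the proposition is a citation of \cite{SZZ}, the paper's proof is purely a convention translation. It takes the formula of \cite{SZZ} (stated for the opposite Borel, opposite polarization, slope in $-A_0^+$, and a differently normalized Hecke action $v^{l(w)}\mca{L}_\rho T_{w_0 w w_0}\mca{L}_{-\rho}$), identifies the base case $\Stab^K_{\mf{C},T^{1/2}_{\mr{opp}},-\rho+s}(e)$ explicitly, and then applies the polarization-change formula (Lemma~\ref{polarization}) and the slope-shift formula (Lemma~\ref{slope}) to pass from $(T^{1/2}_{\mr{opp}},-\rho+s)$ to $(T^{1/2},s)$, which produces the stated formula with the correct sign and the factor $[\rho-w\rho]$.

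Your route is essentially how \cite{SZZ} itself proves the theorem, so it is correct in spirit, but it is much more work than what the paper needs here. One caution: you assert that ``the inductive step preserves [the slope condition] automatically,'' but this is precisely the delicate part of the argument in \cite{SZZ}---the Demazure--Lusztig operator $T_{s_i}^{-1}$ does not obviously preserve the Newton-polytope bound, and one has to check carefully that the correction term and the weight shift $[\rho-w\rho]$ conspire to keep the restrictions inside the required window for the specific alcove $\rho-A_0^+$. If you were to carry out your plan in full, that verification would be the real content; the paper sidesteps it entirely by quoting \cite{SZZ}.
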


\begin{proof}
We only need to compare the convention in \cite{SZZ} with ours. First, we note that the choice of the Borel subgroup in \cite{SZZ} is opposite to ours, hence the fixed point corresponding to $w$ in \cite{SZZ} is $ww_{0}$ in our notation. We also note that the line bundle corresponding to $\lambda\in\bb{X}^{\ast}(H)$ is $\mca{L}_{w_{0}\lambda}$ in our notation. Hence the chamber is the same  and the slope in \cite{SZZ} is taken in $-A^{+}_{0}$. Moreover, the polarization is opposite to ours. 
Finally, one can check that the affine Hecke algebra action denoted by $T_{w}$ in \cite{SZZ} is given by $v^{l(w)}\mca{L}_{\rho}T_{w_{0}ww_{0}}\mca{L}_{-\rho}$ in our notation. This does not depend on the choice of $\mca{L}_{\rho}$. 

One can easily check that 
\begin{align*}
(-1)^{l(w_{0})}\Stab^{K}_{\mf{C},T^{1/2}_{\mr{opp}},-\rho+s}(e)=(-v)^{l(w_{0})}[2\rho]\cdot\mca{O}_{\mf{Z}_{1}}. 
\end{align*}
Therefore, Theorem 0.1 of \cite{SZZ} is 
\begin{align*}
(-1)^{l(w_{0})-l(w)}\Stab^{K}_{\mf{C},T^{1/2}_{\mr{opp}},-\rho+s}(w)=\mca{L}_{\rho}T_{w}^{-1}\mca{L}_{-\rho}\cdot (-v)^{l(w_{0})}[2\rho]\cdot\mca{O}_{\mf{Z}_{1}} 
\end{align*}
in our notation. Here, the sign in the left hand side comes from our convention on the sign of $K$-theoretic stable basis. By using Lemma \ref{polarization} and Lemma \ref{slope}, we obtain 
\begin{align*}
\Stab^{K}_{\mf{C},T^{1/2},s}(w)&=v^{-l(w_{0})}(\det T^{1/2}_{w})^{-1}i^{\ast}_{w}\mca{L}_{\rho}\cdot\mca{L}_{-\rho}\otimes\Stab^{K}_{\mf{C},T^{1/2}_{\mr{opp}},-\rho+s}(w)\\
&=(-1)^{l(w)}[\rho-w\rho]\cdot T_{w}^{-1}\mca{O}_{\mf{Z}_{1}}
\end{align*}
as required.
\end{proof}

\subsection{Comparison}

We now prove the following formula which was our starting point of this work. 

\begin{prop}\label{prop_Springer_comparison}
For any $w\in W$, we have 
\begin{align}\label{bar_reformulation}
\beta^{L}(\Stab^{K}_{\mf{C},T^{1/2},s}(w))=(-v)^{3l(w_{0})}\Stab^{K}_{-\mf{C},T^{1/2},s}(w). 
\end{align}
\end{prop}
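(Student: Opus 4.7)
The plan is to identify $\beta^{L}(\Stab^{K}_{\mf{C},T^{1/2},s}(w))$ as a scalar multiple of $\Stab^{K}_{-\mf{C},T^{1/2},s}(w)$ using the orthogonality formula of Lemma~\ref{L_orth} combined with the explicit formula from Proposition~\ref{SZZ}, and then to pin down the scalar by tracking the $v$-powers and signs. The first computational input is immediate: substituting $\Stab^{K}_{\mf{C},T^{1/2},s}(w)=(-1)^{l(w)}[\rho-w\rho]T_{w}^{-1}\mca{O}_{\mf{Z}_{1}}$ into Lemma~\ref{L_orth}, and using $\overline{[\rho-w\rho]}=[\rho-w\rho]^{-1}$ together with $(-1)^{2l(w)}=1$, one obtains
\[
(\Stab^{K}_{\mf{C},T^{1/2},s}(w)||\Stab^{K}_{\mf{C},T^{1/2},s}(w'))_{L}=v^{-2l(w_{0})}\delta_{w,w'}.
\]

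The next step is to translate this computation into an identification of $\beta^{L}\Stab^{K}_{\mf{C},T^{1/2},s}(w)$ itself. Unwinding the definition $(\mca{F}||\mca{G})_{L}=(\mca{F}:\bb{D}_{X}\beta^{L}\mca{G})$, the orthogonality above means that $\bb{D}_{X}\beta^{L}\Stab^{K}_{\mf{C},T^{1/2},s}(w)$ is the dual basis element to $\Stab^{K}_{\mf{C},T^{1/2},s}(w)$ with respect to the Mukai pairing $(-:-)$, rescaled by $v^{-2l(w_{0})}$. By the standard opposite-chamber duality of $K$-theoretic stable bases (provable by localization at fixed points using the support and leading-term characterizations), this dual basis is, up to an explicit scalar $C\in\Frac(K_{H\times\bb{S}}(\mr{pt}))$ coming from the polarizations, the family $\{\Stab^{K}_{-\mf{C},T^{1/2,\mr{opp}},-s}(w)\}$. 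Finally, applying the standard Serre-duality formula $\bb{D}_{X}\Stab^{K}_{-\mf{C},T^{1/2},s}(w)=\gamma\cdot\Stab^{K}_{-\mf{C},T^{1/2,\mr{opp}},-s}(w)$ with $\gamma$ an explicit scalar arising from $\omega_{X}\cong v^{2l(w_{0})}\mca{O}_{X}$ and the cohomological sign, one converts the right-hand side back to $\Stab^{K}_{-\mf{C},T^{1/2},s}(w)$, producing the desired relation.

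The main obstacle will be the careful book-keeping of $v$-powers and signs. Heuristically, the desired constant $(-v)^{3l(w_{0})}$ decomposes as $(-v)^{l(w_{0})}\cdot(-v)^{2l(w_{0})}$: the first contribution is the normalizing factor $(-v)^{l(w_{0})}$ built into the definition of $\beta^{L}=(-v)^{l(w_{0})}T_{w_{0}}^{-1}\varpi^{\ast}\bb{D}_{X}$, and the second is the combination of the $\bb{S}$-weight $v^{2l(w_{0})}$ of $\omega_{X}$ with the cohomological sign $(-1)^{l(w_{0})}$ produced by Serre duality applied to $\mca{O}_{\mf{Z}_{1}}$ (which has codimension $l(w_{0})=\dim\mca{B}$ in $X$). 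The additional $v$-powers and characters introduced by the polarization switch in the Serre-duality formula and by the duality scalar $C$ should cancel against each other, since both sides of the target identity use the same polarization $T^{1/2}$ and slope $s$. Verifying that these cancellations collapse cleanly into exactly $(-v)^{3l(w_{0})}$ is the only serious computation, but each individual ingredient is standard.
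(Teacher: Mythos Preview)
Your approach is essentially the same as the paper's: combine Lemma~\ref{L_orth} with Proposition~\ref{SZZ} to get one orthogonality relation, then use the opposite-chamber duality of stable bases together with Serre duality to extract the scalar. The paper streamlines your final step by directly computing the second pairing
\[
(\Stab^{K}_{\mf{C},T^{1/2},s}(w):\bb{D}_{X}\Stab^{K}_{-\mf{C},T^{1/2},s}(w'))=(-v)^{l(w_{0})}\delta_{w,w'}
\]
from Lemma~\ref{duality} and Lemma~\ref{orthogonality}, and then comparing the two orthogonality relations via perfectness of $(-:-)$; this reads off the constant $(-v)^{3l(w_{0})}$ in one line and avoids the heuristic decomposition and cancellation-tracking you leave open at the end. (A small aside: your justification ``$\overline{[\rho-w\rho]}=[\rho-w\rho]^{-1}$'' is not needed and slightly misleading, since the bar here inverts only $v$; the relevant identity is simply $ff'^{-1}=1$ when $f=f'=[\rho-w\rho]$ in Lemma~\ref{L_orth}.)
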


\begin{proof}
By Lemma \ref{L_orth} and Proposition \ref{SZZ}, we obtain 
\begin{align*}
(\Stab^{K}_{\mf{C},T^{1/2},s}(w):\bb{D}_{X}\beta^{L}\Stab^{K}_{\mf{C},T^{1/2},s}(w'))=v^{-2l(w_{0})}\delta_{w,w'}
\end{align*}
for any $w,w'\in W$. On the other hand, Lemma \ref{duality} and Lemma \ref{orthogonality} imply that 
\begin{align*}
(\Stab^{K}_{\mf{C},T^{1/2},s}(w):\bb{D}_{X}\Stab^{K}_{-\mf{C},T^{1/2},s}(w'))=(-v)^{l(w_{0})}\delta_{w,w'}. 
\end{align*}
By comparing the two formulas, the proposition follows since the pairing $(-:-)$ is perfect after localization and $\{\Stab^{K}_{\mf{C},T^{1/2},s}(w)\}_{w\in W}$ forms a basis of localized equivariant $K$-theory. 
\end{proof}

Our main observation in this paper is that except for the normalization, an analogue of the formula (\ref{bar_reformulation}) makes sense if one can define $K$-theoretic stable basis in order to characterize certain antilinear map. Therefore, we try to make this kind of formula into a definition of $K$-theoretic bar involution. Our remaining task is to fix the normalization in some way, which turns out to be related to the notion of symplectic duality introduced by Braden-Licata-Proudfoot-Webster in \cite{BLPW3}.

\section{Main conjectures}

In this section, we propose a general definition of $K$-theoretic canonical bases for conical symplectic resolutions which have good Hamiltonian torus actions. We also formulate several conjectures about them which will be proved for toric hyper-K\"ahler manifolds in section 5. 

\subsection{Symplectic duality} 

First we briefly recall basic definitions on symplectic resolution and symplectic duality in the form we need later. In this paper, we mainly follow the setting of \cite{BLPW3} for symplectic resolution. For symplectic duality, we partly follow the definition of 3d mirror symmetry in \cite{RSVZ2}. This is designed to give certain symmetry of elliptic stable bases under the duality, see Conjecture~\ref{conj_ell_stab_duality}.

Let $X$ be a connected smooth algebraic variety over $\bb{C}$ with algebraic symplectic form $\omega$ and an $\bb{S}$-action. We assume that the $\bb{S}$-weight of $\omega$ is $2$ and the $\bb{S}$-action is conical, which means that $\bb{S}$-weights appearing in the coordinate ring $\bb{C}[X]$ are nonnegative and the weight $0$ part consists only of constant functions. If the natural morphism $\pi:X\rightarrow\Spec(\bb{C}[X])$ is a resolution of singularity, $X=(X,\omega,\bb{S})$ is called conical symplectic resolution. We denote by $o\in\Spec(\bb{C}[X])$ the unique $\bb{S}$-fixed point and $L\coloneqq\pi^{-1}(o)$ the central fiber of $\pi$. 

In this paper, we always assume for simplicity that there does not exist another conical symplectic resolution $X'$ and symplectic vector space $V\neq0$ such that $X\cong X'\times V$. We also assume that there exists an effective action of another algebraic torus $H$ on $X$ which is Hamiltonian and commute with the $\bb{S}$-action such that the fixed point set $X^{H}$ is finite. We always take maximal $H$ among such torus actions. For $p\in X^{H}$, we will denote by $i_{p}:\{p\}\hookrightarrow X$ the natural inclusion. We set $\bb{T}\coloneqq H\times\bb{S}$. 

For any $p\in X^{H}$, we denote by $\Phi(p)$ the multiset of $H$-weights appearing in the tangent space $T_{p}X$ at $p$. We call it the multiset of equivariant roots at $p$. We also simply call an element in the union (as a set) $\overline{\Phi}\coloneqq\cup_{p\in X^{H}}\Phi(p)$ equivariant root for $X$. An equivariant root $\alpha$ define a hyperplane in $\mf{h}_{\bb{R}}=\bb{X}_{\ast}(H)\otimes_{\bb{Z}}\bb{R}$ by $H_{\alpha}\coloneqq\{\xi\in\mf{h}_{\bb{R}}\mid \langle \xi,\alpha\rangle=0\}$. A connected component $\mf{C}$ in the complement $\mf{h}_{\bb{R}}\setminus\cup_{\alpha\in\overline{\Phi}}H_{\alpha}$ is called chamber, and it gives a decomposition of the tangent space $T_{p}X=N_{p,+}\oplus N_{p,-}$ into attracting and repelling parts. We denote by $\Attr_{\mf{C}}(p)\coloneqq\{x\in X\mid\lim_{t\rightarrow0}\xi(t)\cdot x=p\}$ the attracting set of $p$ with respect to $\mf{C}$, where $\xi\in\bb{X}_{\ast}(H)$ is a one parameter subgroup of $H$ contained in $\mf{C}$. We note that this does not depend on the choice of $\xi$ in $\mf{C}$. The choice of chamber also gives a partial order $\preceq_{\mf{C}}$ on $X^{H}$ generated by $p\in\overline{\Attr_{\mf{C}}(p')}\implies p\preceq_{\mf{C}}p'$. 

We set $P\coloneqq\Pic(X)$ and $P^{\vee}\coloneqq\Hom_{\bb{Z}}(P,\bb{Z})$ its dual. We denote by $P_{\bb{R}}\coloneqq P\otimes_{\bb{Z}}\bb{R}$ and $\mf{A}\subset P_{\bb{R}}$ the ample cone of $X$. We will also need the notion of K\"ahler roots at each fixed point, but unfortunately, we do not know intrinsic definition of this notion. Our temporary definition is to take the equivariant roots at the corresponding fixed point for dual conical symplectic resolution. In this paper, we consider them as additional data and take a multiset $\Psi(p)$ of elements in $P^{\vee}$ for each $p\in X^{H}$. As a condition they should satisfy, we assume that the walls in the slope parameters where the $K$-theoretic stable basis $\Stab^{K}_{\mf{C},T^{1/2},s}(p)$ recalled in the next section jump are contained in the walls of the form $\{s\in P_{\bb{R}}\mid \langle s,\beta\rangle\in\bb{Z}\}$ for some $\beta\in\Psi(p)$\footnote{In this paper, the notations for the bar involutions are always equipped with some upper index, and simple $\beta$ is used to denote a K\"ahler root. We hope this notation does not cause any confusions. }. We also assume that $\mf{A}$ is a connected component of the complement of linear hyperplanes in $P_{\bb{R}}$ defined by the K\"ahler roots in the union $\overline{\Psi}\coloneqq\cup_{p\in X^{H}}\Psi(p)$. 

\begin{remark}
For quiver varieties, one can read off the information about $\Psi(p)$ without using symplectic duality by a conjecture of Dinkins-Smirnov \cite{DS}. However, this does depend on the presentation of $X$ as a GIT quotient. Hence we need to allow some factor of symplectic vector spaces and modify the statement of symplectic duality to include some information about how to present the conical symplectic resolutions. 

Since these differences only affect the overall constants in the $K$-theoretic bar involutions and canonical bases, we do not pursue this point further here. We only note that this freedom on the normalization is important for example when one try to compare our definition to the notion of global crystal bases of level 0 extremal weight modules of quantum affine algebras defined by Kashiwara \cite{K}. For the canonical bases in equivariant $K$-theory of $ADE$ type quiver varieties defined by Varagnolo-Vasserot \cite{VV}, this kind of comparison is given by Nakajima \cite{N}.
\end{remark}

We also take an equivariant lift $\mf{L}:P\rightarrow\Pic^{\bb{T}}(X)$, i.e., a section of the natural homomorphism $\Pic^{\bb{T}}(X)\rightarrow P$ given by forgetting the equivariant structures. In this paper, when we consider symplectic resolutions, they are always equipped with the above additional data such as $\mf{C}$, $\Psi(p)$, and $\mf{L}$. We now formulate a notion of dual pair between conical symplectic resolutions $X=(X,\mf{C},\mf{A},\Phi,\Psi,\mf{L})$ and $X^{!}=(X^{!},\mf{C}^{!},\mf{A}^{!},\Phi^{!},\Psi^{!},\mf{L}^{!})$ as follows. 

\begin{dfn}\label{dual_pair}
We say that a pair of conical symplectic resolutions $X$ and $X^{!}$ forms a dual pair if 
\begin{itemize}
\item There exists an order reversing bijection $(X^{H},\preceq_{\mf{C}})\cong((X^{!})^{H^{!}},\succeq_{\mf{C}^{!}})$. We denote by $p^{!}\in(X^{!})^{H^{!}}$ the fixed point corresponding to $p\in X^{H}$;
\item There exist isomorphisms $\bb{X}_{\ast}(H)\cong P^{!}$ and $P\cong\bb{X}_{\ast}(H^{!})$ such that under this identification, we have $\mf{C}=\mf{A}^{!}$, $\mf{A}=\mf{C}^{!}$, $\Phi(p)=\Psi(p^{!})$, and $\Psi(p)=\Phi(p^{!})$;
\item For any $\lambda\in P$, $\lambda^{!}\in P^{!}$, and $p\in X^{H}$, we have 
\begin{align}
\langle\wt_{H}i^{\ast}_{p}\mf{L}(\lambda),\lambda^{!}\rangle&=-\langle\wt_{H^{!}}i^{\ast}_{p^{!}}\mf{L}^{!}(\lambda^{!}),\lambda\rangle,\label{eqn_dual_pair_H}\\
\wt_{\bb{S}}i^{\ast}_{p}\mf{L}(\lambda)&=-\langle\wt_{H^{!}}\det N^{!}_{p^{!},-},\lambda\rangle,\label{eqn_dual_pair_S}\\
\wt_{\bb{S}^{!}}i^{\ast}_{p^{!}}\mf{L}^{!}(\lambda^{!})&=-\langle\wt_{H}\det N_{p,-},\lambda^{!}\rangle;\label{eqn_dual_pair_S_dual}
\end{align}
\item For any $p\in X^{H}$, we have 
\begin{align}\label{eqn_S_wt}
\wt_{\bb{S}}\det N_{p,-}+\frac{1}{2}\dim X=-\left(\wt_{\bb{S}^{!}}\det N^{!}_{p^{!},-}+\frac{1}{2}\dim X^{!}\right).
\end{align} 
\end{itemize}
\end{dfn}

Let $\{a_{1},\ldots,a_{e}\}$ be a basis of $\bb{X}^{\ast}(H)$ considered as elements of $K_{H}(\mr{pt})$ and $\{c_{1},\ldots,c_{e}\}$ be the dual basis of $\bb{X}_{\ast}(H)\cong P^{!}$. Similarly, let $\{z_{1},\ldots,z_{r}\}$ be a basis of $P^{\vee}\cong\bb{X}^{\ast}(H^{!})$ considered as elements of $K_{H^{!}}(\mr{pt})$ and $\{l_{1},\ldots,l_{r}\}$ be the dual basis of $P$. We also identify $\bb{S}^{!}=\bb{S}$ and their equivariant parameters by $v^{!}=v$. In these notations, the third condition in Definition~\ref{dual_pair} is equivalent to the following:
\begin{align}
i^{\ast}_{p}\mf{L}(\lambda)&=\prod_{i=1}^{e}a_{i}^{-\langle i^{\ast}_{p^{!}}\mf{L}^{!}(c_{i}),\lambda\rangle}\cdot v^{-\langle\det N^{!}_{p^{!},-},\lambda\rangle},\label{eqn_line_bundle}\\
i^{\ast}_{p^{!}}\mf{L}^{!}(\lambda^{!})&=\prod_{i=1}^{r}z_{i}^{-\langle i^{\ast}_{p}\mf{L}(l_{i}),\lambda^{!}\rangle}\cdot v^{-\langle\det N_{p,-},\lambda^{!}\rangle}.\label{eqn_line_bundle_dual}
\end{align}

\begin{exa}
Let $X=T^{\ast}(G/B)$ be the Springer resolution as in section 2. We recall that $G$ is of adjoint type. Let $G^{\vee}$ be the adjoint type semisimple algebraic group whose Lie algebra is the Langlands dual of $\mf{g}$. We claim that the pair $X$ and $X^{!}=T^{\ast}(G^{\vee}/B^{\vee})$ with certain choice of data forms a dual pair in the above sense. Here, we take a Borel subgroup $B^{\vee}\subset G^{\vee}$ as in section 2. 

In this case, it is well-known that the Picard group $P$ of $X$ is given by the weight lattice and the ample cone is given by the positive Weyl chamber in our convention. For example, we take the chamber $\mf{C}$ to be the negative Weyl chamber as in section 2.2. Since the ample cone of $X^{!}$ is the positive Weyl chamber, we twist the isomorphism $P^{!}\cong\bb{X}_{\ast}(H)$ by $-1$ so that $\mf{C}$ and $\mf{A}^{!}$ corresponds to each other. We take $\mf{C}^{!}$ to be the positive Weyl chamber and the isomorphism $P\cong\bb{X}_{\ast}(H^{!})$ to be the natural one. We take $\Psi(w)$ to be the set of all coroots for any $w\in W$. The data $\Psi^{!}$ is chosen similarly. For $\mf{L}:P\rightarrow\Pic^{\bb{T}}(X)$, we take $\mf{L}(\lambda)=v^{\langle2\rho^{\vee},\lambda\rangle}[-\lambda]\cdot\mca{L}_{\lambda}$ for any $\lambda\in P$, where $\mca{L}_{\lambda}$ is defined as in section 2.1 and $2\rho^{\vee}\in P^{\vee}$ is the sum of all positive coroots. We note that the shift $[-\lambda]$ is needed to make them $H$-equivariant. Similarly, we take $\mf{L}^{!}(\lambda^{!})=v^{\langle2\rho,\lambda^{!}\rangle}[-\lambda^{!}]\cdot\mca{L}^{!}_{\lambda^{!}}$. Here, we consider $2\rho$ as an element of $(P^{!})^{\vee}$. 

We identify $X^{H}\cong W$ and $(X^{!})^{H^{!}}\cong W$ by $w\leftrightarrow w^{-1}$. One can check that the partial order on $X^{H}$ given by $\mf{C}$ is the Bruhat order and the partial order on $(X^{!})^{H^{!}}$ given by $\mf{C}^{!}$ is the opposite Bruhat order. Therefore, this gives an order reversing bijection. The second condition in Definition~\ref{dual_pair} is obvious from our choice. We note that $\det N_{w,-}=v^{-2l(w)}[2\rho]$ and $\det N^{!}_{w^{-1},-}=v^{-2l(w_{0})+2l(w)}[-2\rho^{\vee}]$. This easily implies (\ref{eqn_dual_pair_S}), (\ref{eqn_dual_pair_S_dual}), and (\ref{eqn_S_wt}). Here, we note that $2\rho\in\bb{X}^{\ast}(H)$ is identified with $-2\rho\in(P^{!})^{\vee}$. We note that $\wt_{H}i^{\ast}_{w}\mf{L}(\lambda)=w\lambda-\lambda\in\bb{X}^{\ast}(H)$ and $\wt_{H^{!}}i^{\ast}_{w^{-1}}\mf{L}^{!}(\lambda^{!})=w^{-1}\lambda^{!}-\lambda^{!}\in\bb{X}^{\ast}(H^{!})$ for any $\lambda\in P$ and $\lambda^{!}\in P^{!}$. Since $\lambda^{!}\in P^{!}$ is identified with $-\lambda^{!}\in\bb{X}_{\ast}(H)$, (\ref{eqn_dual_pair_H}) follows from the obvious equation $\langle w\lambda-\lambda,-\lambda^{!}\rangle=-\langle w^{-1}\lambda^{!}-\lambda^{!},\lambda\rangle$. 
\end{exa}

We conjecture that if $X$ and $X^{!}$ are symplectic dual in the sense of Braden-Licata-Proudfoot-Webster \cite{BLPW3}, then they form a dual pair in the sense of Definition~\ref{dual_pair}. We will check this for toric hyper-K\"ahler manifolds in Proposition~\ref{prop_toric_dual_pair}. In this paper, we always assume that a symplectic resolution considered in this paper is equipped with a dual symplectic resolution in the above sense. 

\begin{ass}\label{ass_dual_pair}
For any choice of $\mf{C}$, the conical symplectic resolution $X=(X,\mf{C},\mf{A},\ldots)$ has a dual conical symplectic resolution $X^{!}=(X^{!},\mf{C}^{!},\mf{A}^{!},\ldots)$ such that $X$ and $X^{!}$ forms a dual pair in the sense of Definition~\ref{dual_pair}.
\end{ass}

We note that if $\mca{L}\in\Pic^{\bb{T}}(X)$ is a $\bb{T}$-equivariant line bundle and $l\in P$ is its underlying line bundle, then $\mca{L}\otimes\mf{L}(l)^{-1}$ is a trivial as a non-equivariant line bundle on $X$. The Assumption~\ref{ass_dual_pair} implies that 
\begin{align*}
w(\mca{L})\coloneqq\wt_{\bb{S}}i^{\ast}_{p}\mca{L}-\sum_{\beta\in\Psi_{+}(p)}\langle\beta,l\rangle
\end{align*}
does not depend on the choice of $p\in X^{H}$. Here, $\Psi_{+}(p)$ is the sub-multiset of $\Psi(p)$ consisting of K\"ahler roots at $p$ which are positive with respect to the ample cone $\mf{A}$. We note that $w:\Pic^{\bb{T}}(X)\rightarrow\bb{Z}$ gives a homomorphism and the image of $\mf{L}$ lands in the kernel of $w$. 

\subsection{$K$-theoretic standard bases}

In this section, we recall the definition of $K$-theoretic stable basis introduced in \cite{O1,OS}. In order to define this, we need to choose further data. For an element $\mca{F}\in K_{\bb{T}}(X)$, we denote by $\mca{F}^{\vee}\coloneqq [R\Hom(\mca{F},\mca{O}_{X})]\in K_{\bb{T}}(X)$ its dual. For a $\bb{T}$-equivariant vector bundle $\mca{V}$, we set $\bigwedge^{\bullet}_{-}\mca{V}\coloneqq\sum_{i\geq0}(-1)^{i}\bigwedge^{i}\mca{V}\in K_{\bb{T}}(X)$, where $\bigwedge^{i}\mca{V}$ is the $i$-th exterior product of $\mca{V}$. We take an element $T^{1/2}\in K_{\bb{T}}(X)$ called polarization satisfying $T^{1/2}+v^{-2}(T^{1/2})^{\vee}=T_{X}$, where $T_{X}$ is the $K$-theory class of the tangent bundle of $X$. We note that for any $\mca{G}\in K_{\bb{T}}(X)$, $T^{1/2}_{\mca{G}}\coloneqq T^{1/2}-\mca{G}+v^{-2}\mca{G}^{\vee}$ and $T^{1/2}_{\mr{opp}}\coloneqq v^{-2}(T^{1/2})^{\vee}$ are also polarization for $X$.

We also take a generic element $s\in P_{\bb{R}}\setminus\cup_{\beta\in\overline{\Psi}}\{s\in P_{\bb{R}}\mid \langle s,\beta\rangle\in\bb{Z}\}$ called slope. The map $\mf{L}$ naturally extends to give a fractional line bundle $\mf{L}(s)\in\Pic^{\bb{T}}(X)\otimes_{\bb{Z}}\bb{R}$ and each restriction at a fixed point $i^{\ast}_{p}\mf{L}(s)$ gives an element of $\bb{X}^{\ast}(\bb{T})\otimes_{\bb{Z}}\bb{R}$. For an element of the form $m=\sum_{\mu\in\mf{h}^{\ast}_{\bb{R}}}m_{\mu}\mu$, we denote by $\deg_{H}(m)\subset\mf{h}^{\ast}_{\bb{R}}$ the convex hull of $\{\mu\in\mf{h}^{\ast}_{\bb{R}}\mid m_{\mu}\neq0\}$. 

\begin{dfn}[\cite{O1,OS}]\label{stab_def}
A set of elements $\{\Stab^{K}_{\mf{C},T^{1/2},s}(p)\}_{p\in X^{H}}$ of $K_{\bb{T}}(X)$ is called stable basis if it satisfies the following conditions:
\begin{itemize}
\item $\Supp(\Stab^{K}_{\mf{C},T^{1/2},s}(p))\subset\sqcup_{p'\preceq_{\mf{C}}p}\Attr_{\mf{C}}(p')$;
\item $i^{\ast}_{p}\Stab^{K}_{\mf{C},T^{1/2},s}(p)=\sqrt{\frac{\det N_{p,-}}{\det T^{1/2}_{p}}}\cdot\bigwedge^{\bullet}_{-}N^{\vee}_{p,-}$;\footnote{We changed the sign and normalization from \cite{O1,OS}. We note that this only depends on the determinant of $T^{1/2}$.}
\item $\deg_{H}\left(i^{\ast}_{p'}\Stab^{K}_{\mf{C},T^{1/2},s}(p)\cdot i^{\ast}_{p}\mf{L}(s)\right)\subset \deg_{H}\left(i^{\ast}_{p'}\Stab^{K}_{\mf{C},T^{1/2},s}(p')\cdot i^{\ast}_{p'}\mf{L}(s)\right)$ for any $p'\preceq_{\mf{C}}p\in X^{H}$.
\end{itemize}
\end{dfn}

Here, the square root appearing in the normalization is well-defined by \cite{O1,OS}. We note that our normalization is different from \cite{O1,OS} by $\sqrt{\det T^{1/2}_{p,=0}}$ where $T^{1/2}_{p,=0}$ is the $H$-invariant part of $T^{1/2}_{p}\coloneqq i_{p}^{\ast}T^{1/2}$. The existence of $\sqrt{\det T^{1/2}_{p,=0}}$ follows from the equation $T^{1/2}_{p,=0}+v^{-2}(T^{1/2}_{p,=0})^{\vee}=0$. If the slope $s$ is sufficiently generic so that $\wt_{H}i^{\ast}_{p}\mf{L}(s)-\wt_{H}i^{\ast}_{p'}\mf{L}(s)\notin\bb{X}^{\ast}(H)$ for any $p\neq p'\in X^{H}$, then the $K$-theoretic stable basis is unique if it exists by \cite[Proposition~9.2.2]{O1}. The existence is claimed in some generality in \cite{O2} and proved for example when $X$ is a toric hyper-K\"ahler manifold, quiver variety \cite{AO}, or Springer resolution \cite{SZZ}. In this paper, we assume that the $K$-theoretic stable bases exist uniquely for any $X$ and the additional data we consider. 

\begin{ass}\label{ass_stab_existence}
The slope $s\in P_{\bb{R}}$ satisfies $\wt_{H}i^{\ast}_{p}\mf{L}(s)-\wt_{H}i^{\ast}_{p'}\mf{L}(s)\notin\bb{X}^{\ast}(H)$ for any $p\neq p'\in X^{H}$. Moreover, $\Stab^{K}_{\mf{C},T^{1/2},s}(p)$ exists for any $p\in X^{H}$. 
\end{ass}

We next collect some basic results on the $K$-theoretic stable bases for our reference since we have changed the convention slightly. 

\begin{lemma}[\cite{O1}, Exercise 9.1.2]\label{polarization}
For any $\mca{G}\in K_{\bb{T}}(X)$ and $p\in X^{H}$, we have 
\begin{align*}
\Stab^{K}_{\mf{C},T^{1/2}_{\mca{G}},s}(p)=v^{\rank \mca{G}}\det i^{\ast}_{p}\mca{G}\cdot\Stab^{K}_{\mf{C},T^{1/2},s+\det\mca{G}}(p).
\end{align*}
\end{lemma}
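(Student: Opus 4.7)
The plan is to appeal to the uniqueness of the $K$-theoretic stable basis guaranteed by Assumption~\ref{ass_stab_existence} and verify that the class $M(p) \coloneqq v^{\rank\mca{G}}\det i^{\ast}_{p}\mca{G}\cdot\Stab^{K}_{\mf{C},T^{1/2},s+\det\mca{G}}(p)$ satisfies the three characterizing conditions of Definition~\ref{stab_def} for the data $(\mf{C},T^{1/2}_{\mca{G}},s)$. Since $v^{\rank\mca{G}}\det i^{\ast}_{p}\mca{G}$ lies in $K_{\bb{T}}(\mr{pt})$, multiplication by it does not change supports, so the support condition for $M(p)$ is immediate from the support condition on $\Stab^{K}_{\mf{C},T^{1/2},s+\det\mca{G}}(p)$.

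For the normalization at $p$, I would compute determinants in $K_{\bb{T}}(X)$. From $T^{1/2}_{\mca{G}}=T^{1/2}-\mca{G}+v^{-2}\mca{G}^{\vee}$ and $\det(v^{-2}\mca{G}^{\vee})=v^{-2\rank\mca{G}}(\det\mca{G})^{-1}$, one obtains $\det T^{1/2}_{\mca{G}}=\det T^{1/2}\cdot v^{-2\rank\mca{G}}(\det\mca{G})^{-2}$, and hence
\begin{align*}
\sqrt{\det T^{1/2}_{\mca{G},p}}^{\,-1}=v^{\rank\mca{G}}\det i^{\ast}_{p}\mca{G}\cdot\sqrt{\det T^{1/2}_{p}}^{\,-1}
\end{align*}
(after fixing the compatible sign of square roots). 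Multiplying the normalization of $\Stab^{K}_{\mf{C},T^{1/2},s+\det\mca{G}}(p)$ by $v^{\rank\mca{G}}\det i^{\ast}_{p}\mca{G}$ produces exactly the required normalization for polarization $T^{1/2}_{\mca{G}}$.

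The main step, and the one requiring genuine care, is the slope condition. The observation is that $\det\mca{G}\otimes\mf{L}(\det\mca{G})^{-1}$ is a trivial line bundle with an equivariant structure, hence equal to a single $\bb{T}$-character $\chi$ pulled back from a point. Therefore $\det i^{\ast}_{p}\mca{G}=\chi\cdot i^{\ast}_{p}\mf{L}(\det\mca{G})$ for every $p\in X^{H}$. Expanding the slope inclusion for $M(p)$ with slope $s$ and comparing to the slope inclusion for $\Stab^{K}_{\mf{C},T^{1/2},s+\det\mca{G}}$, I would observe that both sides pick up the same shift by $\wt_{H}(\chi)$, while the factors $i^{\ast}_{p}\mf{L}(\det\mca{G})$ and $i^{\ast}_{p'}\mf{L}(\det\mca{G})$ combine with $i^{\ast}_{p}\mf{L}(s)$ and $i^{\ast}_{p'}\mf{L}(s)$ respectively to produce $i^{\ast}_{p}\mf{L}(s+\det\mca{G})$ and $i^{\ast}_{p'}\mf{L}(s+\det\mca{G})$. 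After cancellation of $\wt_{H}(\chi)$, the slope condition for $M(p)$ with data $(T^{1/2}_{\mca{G}},s)$ becomes exactly the slope condition already known to hold for $\Stab^{K}_{\mf{C},T^{1/2},s+\det\mca{G}}(p)$.

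The main obstacle, if any, is keeping the bookkeeping of weights straight: one must distinguish the equivariant lift $\mf{L}(\det\mca{G})$ from the equivariant line bundle $\det\mca{G}$ itself, track that their difference is a global character, and verify that the slope shift $s\rightsquigarrow s+\det\mca{G}$ matches the polarization change $T^{1/2}\rightsquigarrow T^{1/2}_{\mca{G}}$ via that character. Once this is observed, uniqueness in Assumption~\ref{ass_stab_existence} closes the argument.
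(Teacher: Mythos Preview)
Your proposal is correct and follows exactly the approach the paper indicates: after stating this lemma (together with Lemma~\ref{slope} and Lemma~\ref{duality}), the paper simply remarks that ``these formulas easily follow from the definition and the uniqueness of $K$-theoretic stable basis,'' which is precisely what you carry out in detail by verifying the three conditions of Definition~\ref{stab_def} and invoking Assumption~\ref{ass_stab_existence}.
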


\begin{lemma}\label{slope}
For any $l\in P$ and $p\in X^{H}$, we have 
\begin{align*}
\Stab^{K}_{\mf{C},T^{1/2},s+l}(p)=(i^{\ast}_{p}\mf{L}(l))^{-1}\cdot\mf{L}(l)\otimes\Stab^{K}_{\mf{C},T^{1/2},s}(p).
\end{align*}
\end{lemma}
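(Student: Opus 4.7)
The plan is to verify that the proposed right-hand side
$$\Phi_{l,p}\coloneqq(i^{\ast}_{p}\mf{L}(l))^{-1}\cdot\mf{L}(l)\otimes\Stab^{K}_{\mf{C},T^{1/2},s}(p)$$
satisfies the three characterizing properties of Definition~\ref{stab_def} at the shifted slope $s+l$, and then to appeal to uniqueness. Before starting, I would check that Assumption~\ref{ass_stab_existence} is preserved under the shift: since $l\in P$ is integral, the differences $\wt_{H}i^{\ast}_{p}\mf{L}(s+l)-\wt_{H}i^{\ast}_{p'}\mf{L}(s+l)$ and $\wt_{H}i^{\ast}_{p}\mf{L}(s)-\wt_{H}i^{\ast}_{p'}\mf{L}(s)$ differ by an element of $\bb{X}^{\ast}(H)$, so genericity is inherited.

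The first two conditions are essentially immediate. Tensoring by the honest line bundle $\mf{L}(l)$ and rescaling by the scalar $(i^{\ast}_{p}\mf{L}(l))^{-1}\in\bb{X}^{\ast}(\bb{T})$ does not enlarge supports, so $\Supp(\Phi_{l,p})\subset\sqcup_{p'\preceq_{\mf{C}}p}\Attr_{\mf{C}}(p')$. For the normalization at $p$, the two factors involving $\mf{L}(l)$ cancel,
$$i^{\ast}_{p}\Phi_{l,p}=(i^{\ast}_{p}\mf{L}(l))^{-1}\cdot i^{\ast}_{p}\mf{L}(l)\cdot i^{\ast}_{p}\Stab^{K}_{\mf{C},T^{1/2},s}(p)=i^{\ast}_{p}\Stab^{K}_{\mf{C},T^{1/2},s}(p),$$
and the prescribed value $\sqrt{\det N_{p,-}/\det T^{1/2}_{p}}\cdot\bigwedge^{\bullet}_{-}N^{\vee}_{p,-}$ depends only on $T^{1/2}$ and $N_{p,-}$, not on the slope.

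The main point, and the only place where the precise indexing of pullbacks matters, is the degree condition. For $p'\preceq_{\mf{C}}p$ I would expand
$$i^{\ast}_{p'}\Phi_{l,p}\cdot i^{\ast}_{p}\mf{L}(s+l)=i^{\ast}_{p'}\mf{L}(l)\cdot i^{\ast}_{p'}\Stab^{K}_{\mf{C},T^{1/2},s}(p)\cdot i^{\ast}_{p}\mf{L}(s),$$
where the $i^{\ast}_{p}\mf{L}(l)$ factors cancel, and similarly
$$i^{\ast}_{p'}\Phi_{l,p'}\cdot i^{\ast}_{p'}\mf{L}(s+l)=i^{\ast}_{p'}\mf{L}(l)\cdot i^{\ast}_{p'}\Stab^{K}_{\mf{C},T^{1/2},s}(p')\cdot i^{\ast}_{p'}\mf{L}(s).$$
Both expressions have acquired a common overall factor $i^{\ast}_{p'}\mf{L}(l)$, whose $H$-weight $\wt_{H}i^{\ast}_{p'}\mf{L}(l)$ shifts the two $\deg_{H}$-polytopes by the same vector and so cancels in the inclusion. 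The required containment thereby reduces verbatim to the degree condition already satisfied by $\Stab^{K}_{\mf{C},T^{1/2},s}(p)$.

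Having verified the three axioms, uniqueness of the $K$-theoretic stable basis (under Assumption~\ref{ass_stab_existence}) forces $\Phi_{l,p}=\Stab^{K}_{\mf{C},T^{1/2},s+l}(p)$, which is the claimed formula. I do not expect a real obstacle here; the only subtlety is keeping straight the asymmetric pullback indices in the degree condition, since it is precisely the mismatch between $i^{\ast}_{p}\mf{L}(s)$ and $i^{\ast}_{p'}\mf{L}(s)$ that makes the cancellation with $\mf{L}(l)$ come out symmetrically.
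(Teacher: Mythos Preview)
Your proposal is correct and follows precisely the approach indicated by the paper, which simply states that the formula ``easily follow[s] from the definition and the uniqueness of $K$-theoretic stable basis'' without spelling out the details. Your verification of the three axioms of Definition~\ref{stab_def}, in particular the careful bookkeeping of the asymmetric pullback indices in the degree condition, is exactly what is needed to substantiate that remark.
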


\begin{lemma}[\cite{OS}]\label{duality}
For any $p\in X^{H}$, we have 
\begin{align*}
\Stab^{K}_{\mf{C},T^{1/2},s}(p)^{\vee}=(-v)^{-\frac{1}{2}\dim X}\Stab^{K}_{\mf{C},T^{1/2}_{\mr{opp}},-s}(p).
\end{align*}
\end{lemma}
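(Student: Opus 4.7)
The plan is to use the uniqueness of $K$-theoretic stable bases provided by Assumption~\ref{ass_stab_existence} and to verify that $(-v)^{\dim X/2}\Stab^{K}_{\mf{C},T^{1/2},s}(p)^{\vee}$ satisfies the three defining conditions of $\Stab^{K}_{\mf{C},T^{1/2}_{\mr{opp}},-s}(p)$ in Definition~\ref{stab_def}. Only the second condition involves the polarization. The support condition is automatic since $\Supp(\mca{F}^{\vee})=\Supp(\mca{F})$. For the slope condition, dualization negates $H$-weights and $i^{\ast}_{p}\mf{L}(-s)=(i^{\ast}_{p}\mf{L}(s))^{-1}$, so the convex hull $\deg_{H}(i^{\ast}_{p'}\mca{F}^{\vee}\cdot i^{\ast}_{p}\mf{L}(-s))$ equals the negative of $\deg_{H}(i^{\ast}_{p'}\mca{F}\cdot i^{\ast}_{p}\mf{L}(s))$, and the containment in Definition~\ref{stab_def} is preserved by negation applied simultaneously to both sides.

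The main computation is the fixed-point normalization. Since $T^{1/2}$ has rank $\dim X/2$, the identity $T^{1/2}_{\mr{opp}}=v^{-2}(T^{1/2})^{\vee}$ gives $\det(T^{1/2}_{\mr{opp}})_{p}=v^{-\dim X}(\det T^{1/2}_{p})^{-1}$ and hence
\[
\sqrt{\frac{\det N_{p,-}}{\det(T^{1/2}_{\mr{opp}})_{p}}}=v^{\dim X/2}\sqrt{\det N_{p,-}\det T^{1/2}_{p}}.
\]
Because the symplectic form is $H$-invariant and non-degenerate, it pairs $N_{p,+}$ non-degenerately with $N_{p,-}$, giving $\rank N_{p,-}=\dim X/2$. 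Combining this with the elementary identity $\bigwedge^{i}N^{\vee}_{p,-}=\det N_{p,-}^{-1}\otimes\bigwedge^{(\dim X/2)-i}N_{p,-}$ and reindexing yields
\[
(\bigwedge^{\bullet}_{-}N^{\vee}_{p,-})^{\vee}=\bigwedge^{\bullet}_{-}N_{p,-}=(-1)^{\dim X/2}\det N_{p,-}\cdot\bigwedge^{\bullet}_{-}N^{\vee}_{p,-}.
\]
Dualizing the prescribed value of $i^{\ast}_{p}\Stab^{K}_{\mf{C},T^{1/2},s}(p)$ and applying this identity reproduces $(-v)^{-\dim X/2}\cdot i^{\ast}_{p}\Stab^{K}_{\mf{C},T^{1/2}_{\mr{opp}},-s}(p)$ after collecting the factors of $(-1)^{\dim X/2}$ and $v^{\dim X/2}$.

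The main obstacle is the sign bookkeeping produced by the self-duality of $\bigwedge^{\bullet}_{-}N^{\vee}_{p,-}$ up to a determinantal twist; this is where the equality $\rank N_{p,-}=\dim X/2$ coming from the symplectic structure enters crucially, and where the overall normalization constant $(-v)^{-\dim X/2}$ is produced. Once this is in place, the remaining verifications are routine manipulations of characters, and uniqueness of the stable basis concludes the proof.
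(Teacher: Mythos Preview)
Your proof is correct and follows exactly the approach indicated in the paper, which simply notes that the formula ``easily follow[s] from the definition and the uniqueness of $K$-theoretic stable basis.'' You have carried out the routine verification of the three conditions in Definition~\ref{stab_def} for the dualized class, with the sign and $v$-power bookkeeping done correctly.
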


These formulas easily follow from the definition and the uniqueness of $K$-theoretic stable basis. Recall the inner product $(-:-)$ defined in (\ref{usualinnerprod}). The $K$-theoretic stable bases have the following orthogonality property with respect to $(-:-)$.  

\begin{lemma}[\cite{OS}, Proposition 1]\label{orthogonality}
For any $p,p'\in X^{H}$, we have 
\begin{align*}
\left(\Stab^{K}_{\mf{C},T^{1/2},s}(p):\Stab^{K}_{-\mf{C},T^{1/2}_{\mr{opp}},-s}(p')\right)=\delta_{p,p'}.
\end{align*}
\end{lemma}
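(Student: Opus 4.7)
The plan is to prove the orthogonality by equivariant (Atiyah--Bott) localization, using the two support conditions together with the degree condition in Definition~\ref{stab_def}.

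First, I would apply localization to rewrite
\begin{align*}
\left(\Stab^{K}_{\mf{C},T^{1/2},s}(p):\Stab^{K}_{-\mf{C},T^{1/2}_{\mr{opp}},-s}(p')\right)
=\sum_{p''\in X^{H}}\frac{i^{\ast}_{p''}\Stab^{K}_{\mf{C},T^{1/2},s}(p)\cdot i^{\ast}_{p''}\Stab^{K}_{-\mf{C},T^{1/2}_{\mr{opp}},-s}(p')}{\bigwedge^{\bullet}_{-}T^{\vee}_{p''}X}.
\end{align*}
The first support condition in Definition~\ref{stab_def} says the first factor vanishes at any $p''$ with $p''\not\preceq_{\mf{C}}p$; applied to the second stable basis with the opposite chamber, it says the second factor vanishes at any $p''$ with $p''\not\succeq_{\mf{C}}p'$. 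Hence only fixed points $p''$ with $p'\preceq_{\mf{C}}p''\preceq_{\mf{C}}p$ contribute, and in particular the sum is empty (hence zero) unless $p'\preceq_{\mf{C}}p$.

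Next I handle the diagonal case $p=p'$, where only the term $p''=p$ survives. Using the normalization axiom in Definition~\ref{stab_def} and the fact that attracting/repelling parts swap when $\mf{C}\leftrightarrow-\mf{C}$, together with the polarization identity $T^{1/2}+v^{-2}(T^{1/2})^{\vee}=T_{X}$ (which gives $\det T^{1/2}_{p}\cdot\det T^{1/2}_{\mr{opp},p}=v^{-\dim X}=\det N_{p,+}\cdot\det N_{p,-}$), the product of the square roots of the normalizations equals $1$, while the product $\bigwedge^{\bullet}_{-}N^{\vee}_{p,-}\cdot\bigwedge^{\bullet}_{-}N^{\vee}_{p,+}$ equals the denominator $\bigwedge^{\bullet}_{-}T^{\vee}_{p}X$. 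Hence the diagonal term equals $1$, as required. (The signs in the square roots should be consistent with our sign convention, which I would verify at this point.)

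The main obstacle is the off-diagonal case $p'\prec_{\mf{C}}p$, where several fixed points $p''$ contribute and one must show that the sum cancels. Here the third axiom in Definition~\ref{stab_def}, controlling $\deg_{H}$ of the restrictions after twisting by $\mf{L}(s)$, is the essential input. The strategy is the standard rigidity argument used in \cite{OS}: on the one hand, the whole pairing lies in $K_{\bb{T}}(\mr{pt})$ and is therefore a Laurent polynomial in the equivariant parameters (so in particular, all poles coming from the localization denominators cancel); on the other hand, the degree bound at each $p''$ forces the Newton polytope of each summand, after clearing denominators, to be strictly contained in a polytope that does not contain the origin whenever $p\neq p'$. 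Thus the only polynomial with these two properties is zero. Once the off-diagonal vanishing is established, combining with the diagonal computation yields $\delta_{p,p'}$, completing the proof.
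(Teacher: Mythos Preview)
The paper does not supply its own proof of this lemma: it is stated with the citation ``\cite{OS}, Proposition~1'' and no argument is given in the text. So there is nothing in the paper to compare your proposal against directly. That said, your outline is exactly the standard argument from \cite{OS} (and its cohomological predecessor in \cite{MO}): localization, support triangularity to restrict to $p'\preceq_{\mf{C}}p''\preceq_{\mf{C}}p$, the diagonal normalization computation, and a rigidity/degree argument for the off-diagonal terms.

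Your diagonal computation is correct; the identity $\det T^{1/2}_{p}\cdot\det T^{1/2}_{\mr{opp},p}=v^{-\dim X}=\det N_{p,+}\cdot\det N_{p,-}$ indeed makes the product of square roots equal to $1$, and the exterior products assemble to the denominator. For the off-diagonal case your sketch is a bit loose: the precise mechanism is that the slope shifts in the degree axiom for the two factors combine to the \emph{$p''$-independent} quantity $\wt_{H}\bigl(i^{\ast}_{p'}\mf{L}(s)/i^{\ast}_{p}\mf{L}(s)\bigr)$, which is \emph{non-integral} when $p\neq p'$ by Assumption~\ref{ass_stab_existence}. This forces the Newton polytope of each localized summand to sit in a translate of $\deg_{H}\bigl(\bigwedge^{\bullet}_{-}T^{\vee}_{p''}X\bigr)$ shifted by a non-lattice vector; since the total pairing lies in $K_{\bb{T}}(\mr{pt})$ (the supports for $\mf{C}$ and $-\mf{C}$ intersect properly), the only Laurent polynomial compatible with this bound is $0$. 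The paper itself uses exactly this mechanism in the proof of Lemma~\ref{R_triangular}, so you could point there for the shape of the bound. With that sharpening, your argument is complete and matches the cited source.
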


In order to define $K$-theoretic bar involutions, we further renormalize the $K$-theoretic stable bases. For each $p\in X^{H}$, we set $$a_{p}(T^{1/2},s)\coloneqq\sum_{\beta\in\Psi_{+}(p)}\left(\lfloor\langle s,\beta\rangle\rfloor+\frac{1}{2}\right)-\frac{1}{4}\dim X+\frac{1}{2}w(\det T^{1/2})\in\frac{1}{2}\bb{Z}.$$  By numerical experiments, we expect that $a_{p}(T^{1/2},s)\in\bb{Z}$. This is equivalent to the following assumption which is also assumed in this paper. 

\begin{ass}\label{assumption_polarization}
There exists a polarization $T^{1/2}$ on $X$ such that 
\begin{align*}
w(\det T^{1/2})\equiv\frac{1}{2}\dim X+\frac{1}{2}\dim X^{!}\mod 2.
\end{align*}
\end{ass}

We note that if $T^{1/2}$ satisfies the above condition, then $T^{1/2}_{\mca{G}}$ also satisfies the condition for any $\mca{G}\in K_{\bb{T}}(X)$. We set 
\begin{align}\label{label_set}
\bb{F}\coloneqq\{(\lambda,p)\mid \lambda\in\bb{X}^{\ast}(H),p\in X^{H}\}.
\end{align}
This set will label the $K$-theoretic standard bases and $K$-theoretic canonical bases. 

\begin{dfn}\label{general_standard_basis}
For any $(\lambda,p)\in\bb{F}$, we set 
\begin{align*}
\mca{S}_{\mf{C},T^{1/2},s}(\lambda,p)\coloneqq(-1)^{\frac{1}{2}\dim X}v^{a_{p}(T^{1/2},s)}[\lambda]\cdot\Stab^{K}_{\mf{C},T^{1/2},s}(p).
\end{align*}
We call the set $\bb{B}^{\mr{std}}_{\mf{C},T^{1/2},s}\coloneqq\{\mca{S}_{\mf{C},T^{1/2},s}(\lambda,p)\}_{(\lambda,p)\in\bb{F}}\subset K_{\bb{T}}(X)$ \emph{$K$-theoretic standard basis} for $X$. 
\end{dfn} 

We will simply write $\mca{S}_{\mf{C},T^{1/2},s}(p)\coloneqq\mca{S}_{\mf{C},T^{1/2},s}(0,p)$. The standard basis should be considered as a $\bb{Z}[v,v^{-1}]$-basis for the equivariant $K$-theory of the full attracting set $\sqcup_{p\in X^{H}}\Attr_{\mf{C}}(p)$. One explanation of the seemingly strange normalization in this definition will be given in section 4.3 by considering its elliptic analogue. Here, we list some basic properties of the $K$-theoretic standard bases for our reference. 

\begin{lemma}\label{std_polarization}
For any $\mca{G}\in K_{\bb{T}}(X)$ and $(\lambda,p)\in\bb{F}$, we have 
\begin{align*}
\mca{S}_{\mf{C},T^{1/2}_{\mca{G}},s}(\lambda,p)=\mca{S}_{\mf{C},T^{1/2},s+\det\mca{G}}(\lambda+\wt_{H}\det i^{\ast}_{p}\mca{G},p).
\end{align*}
\end{lemma}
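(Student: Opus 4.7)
The plan is to unfold both sides of the claimed equality using Definition~\ref{general_standard_basis} and then reduce to a direct comparison of the three ingredients that differ: the renormalized stable bases $\Stab^{K}$, the $v$-powers $a_{p}$, and the character shifts $[\lambda]$.

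First, I would apply Lemma~\ref{polarization} to rewrite
\[
\Stab^{K}_{\mf{C},T^{1/2}_{\mca{G}},s}(p)=v^{\rank\mca{G}}\,\det i^{\ast}_{p}\mca{G}\cdot\Stab^{K}_{\mf{C},T^{1/2},s+\det\mca{G}}(p),
\]
and then decompose $\det i^{\ast}_{p}\mca{G}\in K_{\bb{T}}(\mr{pt})$ as the monomial $v^{\wt_{\bb{S}}\det i^{\ast}_{p}\mca{G}}[\wt_{H}\det i^{\ast}_{p}\mca{G}]$. This splits the right-hand side of the Lemma~\ref{polarization} formula into an $H$-character shift---exactly the shift $[\lambda+\wt_{H}\det i^{\ast}_{p}\mca{G}]$ appearing on the right of our claim---and a pure $v$-power correction.

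It then remains to check the identity of $v$-exponents, i.e.\ that
\[
a_{p}(T^{1/2}_{\mca{G}},s)+\rank\mca{G}+\wt_{\bb{S}}\det i^{\ast}_{p}\mca{G}=a_{p}(T^{1/2},s+\det\mca{G}).
\]
From the relation $T^{1/2}_{\mca{G}}=T^{1/2}-\mca{G}+v^{-2}\mca{G}^{\vee}$ one reads off $\det T^{1/2}_{\mca{G}}=v^{-2\rank\mca{G}}\,(\det\mca{G})^{-2}\cdot\det T^{1/2}$, so using the additivity of $w$ and $w(v)=1$ one gets
\[
\tfrac{1}{2}w(\det T^{1/2}_{\mca{G}})=\tfrac{1}{2}w(\det T^{1/2})-w(\det\mca{G})-\rank\mca{G}.
\]
Combined with the definition of $w$, namely $w(\det\mca{G})=\wt_{\bb{S}}\det i^{\ast}_{p}\mca{G}-\sum_{\beta\in\Psi_{+}(p)}\langle\beta,\det\mca{G}\rangle$, the identity to be verified collapses to
\[
\sum_{\beta\in\Psi_{+}(p)}\Bigl(\lfloor\langle s+\det\mca{G},\beta\rangle\rfloor-\lfloor\langle s,\beta\rangle\rfloor-\langle\det\mca{G},\beta\rangle\Bigr)=0.
\]

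The final step is the only substantive one: since $\det\mca{G}$ lies in the integral lattice $P$ and each K\"ahler root $\beta$ lies in $P^{\vee}$, the pairing $\langle\det\mca{G},\beta\rangle$ is an integer, so $\lfloor\langle s,\beta\rangle+\langle\det\mca{G},\beta\rangle\rfloor=\lfloor\langle s,\beta\rangle\rfloor+\langle\det\mca{G},\beta\rangle$ and each summand vanishes. There is no real obstacle here; the integrality of the K\"ahler roots against $P$ is built into the setup of section~3.1, and the sign $(-1)^{\frac{1}{2}\dim X}$ is common to the two normalizations so it drops out. The only mild care required is in the bookkeeping of the $v$-weight contained in $\det i^{\ast}_{p}\mca{G}$ versus the $v$-power explicitly extracted through $w(\det\mca{G})$, which is precisely the point where the definition of $w$ is used.
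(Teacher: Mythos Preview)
Your proof is correct and follows exactly the same route as the paper's: invoke Lemma~\ref{polarization} and reduce to the $v$-exponent identity $a_{p}(T^{1/2}_{\mca{G}},s)=a_{p}(T^{1/2},s+\det\mca{G})-\rank\mca{G}-\wt_{\bb{S}}\det i^{\ast}_{p}\mca{G}$. The paper simply asserts this identity, whereas you supply the verification via the computation of $w(\det T^{1/2}_{\mca{G}})$ and the integrality of $\langle\det\mca{G},\beta\rangle$; this is the intended and only natural argument.
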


\begin{proof}
This follows from Lemma~\ref{polarization} and $a_{p}(T^{1/2}_{\mca{G}},s)=a_{p}(T^{1/2},s+\det\mca{G})-\rank\mca{G}-\wt_{\bb{S}}\det i^{\ast}_{p}\mca{G}$. 
\end{proof}

\begin{lemma}\label{std_slope}
For any $l\in P$ and $(\lambda,p)\in\bb{F}$, we have 
\begin{align*}
\mca{S}_{\mf{C},T^{1/2},s+l}(\lambda,p)=\mf{L}(l)\otimes\mca{S}_{\mf{C},T^{1/2},s}(\lambda-\wt_{H}i^{\ast}_{p}\mf{L}(l),p).
\end{align*}
\end{lemma}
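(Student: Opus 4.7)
The plan is to reduce the claim to Lemma~\ref{slope} plus a bookkeeping identity about how the prefactor $a_{p}(T^{1/2},s)$ changes when the slope is shifted by an integral line bundle. Concretely, I would first apply Lemma~\ref{slope} to the stable basis piece of $\mca{S}_{\mf{C},T^{1/2},s+l}(\lambda,p)$, obtaining
\begin{align*}
\mca{S}_{\mf{C},T^{1/2},s+l}(\lambda,p)
=(-1)^{\frac{1}{2}\dim X}v^{a_{p}(T^{1/2},s+l)}[\lambda]\cdot(i^{\ast}_{p}\mf{L}(l))^{-1}\cdot\mf{L}(l)\otimes\Stab^{K}_{\mf{C},T^{1/2},s}(p).
\end{align*}
Then I would split the restriction $i^{\ast}_{p}\mf{L}(l)$ into its $H$- and $\bb{S}$-weight components so that it acts as $[\wt_{H}i^{\ast}_{p}\mf{L}(l)]\cdot v^{\wt_{\bb{S}}i^{\ast}_{p}\mf{L}(l)}$, and compare with the right-hand side
\begin{align*}
\mf{L}(l)\otimes\mca{S}_{\mf{C},T^{1/2},s}(\lambda-\wt_{H}i^{\ast}_{p}\mf{L}(l),p)
=(-1)^{\frac{1}{2}\dim X}v^{a_{p}(T^{1/2},s)}[\lambda-\wt_{H}i^{\ast}_{p}\mf{L}(l)]\cdot\mf{L}(l)\otimes\Stab^{K}_{\mf{C},T^{1/2},s}(p).
\end{align*}
The $H$-weight factors match automatically, so the whole identity reduces to the scalar relation
\begin{align*}
a_{p}(T^{1/2},s+l)-a_{p}(T^{1/2},s)=\wt_{\bb{S}}i^{\ast}_{p}\mf{L}(l).
\end{align*}

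To check this last identity, I would use the explicit formula for $a_{p}(T^{1/2},s)$. Since $l\in P$ and $\beta\in\overline{\Psi}\subset P^{\vee}$, the pairing $\langle l,\beta\rangle$ is an integer, so the floor functions cooperate and
\begin{align*}
a_{p}(T^{1/2},s+l)-a_{p}(T^{1/2},s)=\sum_{\beta\in\Psi_{+}(p)}\langle l,\beta\rangle.
\end{align*}
On the other hand, the invariant $w$ defined in Section~3.1 satisfies $w(\mca{L})=\wt_{\bb{S}}i^{\ast}_{p}\mca{L}-\sum_{\beta\in\Psi_{+}(p)}\langle\beta,l\rangle$ for any $\bb{T}$-equivariant line bundle $\mca{L}$ with underlying class $l\in P$; applied to $\mca{L}=\mf{L}(l)$, together with the fact that the image of $\mf{L}$ lies in the kernel of $w$, this yields $\wt_{\bb{S}}i^{\ast}_{p}\mf{L}(l)=\sum_{\beta\in\Psi_{+}(p)}\langle\beta,l\rangle$. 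Combining the two expressions gives the required equality.

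There is essentially no obstacle here; the lemma is purely formal once Lemma~\ref{slope} and the defining property of $w$ (guaranteed by Assumption~\ref{ass_dual_pair}) are in hand. The only subtle point worth flagging is to verify that $\langle l,\beta\rangle\in\bb{Z}$ so that the floors in $a_{p}(T^{1/2},s+l)$ shift by exact integers without interacting with the constant terms $\frac{1}{2}$, $-\frac{1}{4}\dim X$, and $\frac{1}{2}w(\det T^{1/2})$; this is immediate from $l\in P$ and $\beta\in P^{\vee}$.
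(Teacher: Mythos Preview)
Your proof is correct and follows exactly the same approach as the paper: the paper's one-line proof cites Lemma~\ref{slope} together with the identity $a_{p}(T^{1/2},s+l)=a_{p}(T^{1/2},s)+\wt_{\bb{S}}i^{\ast}_{p}\mf{L}(l)$, which you have additionally spelled out and verified using the floor-function computation and the fact that $\mf{L}$ lands in $\ker w$.
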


\begin{proof}
This follows from Lemma~\ref{slope} and $a_{p}(T^{1/2},s+l)=a_{p}(T^{1/2},s)+\wt_{\bb{S}}\det i^{\ast}_{p}\mf{L}(l)$. 
\end{proof}

\begin{lemma}\label{std_duality}
For any $(\lambda,p)\in\bb{F}$, we have 
\begin{align*}
\mca{S}_{\mf{C},T^{1/2},s}(\lambda,p)^{\vee}=(-v)^{\frac{1}{2}\dim X}\mca{S}_{\mf{C},T^{1/2}_{\mr{opp}},-s}(-\lambda,p).
\end{align*}
\end{lemma}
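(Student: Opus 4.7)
The plan is to expand both sides using the definition $\mca{S}_{\mf{C},T^{1/2},s}(\lambda,p)=(-1)^{\frac{1}{2}\dim X}v^{a_{p}(T^{1/2},s)}[\lambda]\cdot\Stab^{K}_{\mf{C},T^{1/2},s}(p)$ and to apply Lemma \ref{duality} to the stable basis factor, reducing the identity to a numerical equality involving the renormalization exponents $a_{p}$.

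First I would compute the left hand side. The dualization $(-)^{\vee}=[R\Hom(-,\mca{O}_{X})]$ inverts every equivariant character, so
$$\mca{S}_{\mf{C},T^{1/2},s}(\lambda,p)^{\vee}=(-1)^{\frac{1}{2}\dim X}v^{-a_{p}(T^{1/2},s)}[-\lambda]\cdot\Stab^{K}_{\mf{C},T^{1/2},s}(p)^{\vee}.$$
Substituting Lemma \ref{duality} and using that $\dim X$ is even gives
$$\mca{S}_{\mf{C},T^{1/2},s}(\lambda,p)^{\vee}=v^{-\frac{1}{2}\dim X-a_{p}(T^{1/2},s)}[-\lambda]\cdot\Stab^{K}_{\mf{C},T^{1/2}_{\mr{opp}},-s}(p).$$
Expanding the right hand side of the claim similarly yields
$$(-v)^{\frac{1}{2}\dim X}\mca{S}_{\mf{C},T^{1/2}_{\mr{opp}},-s}(-\lambda,p)=v^{\frac{1}{2}\dim X+a_{p}(T^{1/2}_{\mr{opp}},-s)}[-\lambda]\cdot\Stab^{K}_{\mf{C},T^{1/2}_{\mr{opp}},-s}(p).$$
Thus the lemma is equivalent to the scalar identity $a_{p}(T^{1/2}_{\mr{opp}},-s)+a_{p}(T^{1/2},s)=-\dim X$.

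Finally I would verify this numerical identity directly from the definition of $a_{p}$. By the genericity assumption on the slope (Definition \ref{stab_def}), $\langle s,\beta\rangle\notin\bb{Z}$ for every $\beta\in\overline{\Psi}$, so $\lfloor\langle-s,\beta\rangle\rfloor=-\lfloor\langle s,\beta\rangle\rfloor-1$, which makes the $\Psi_{+}(p)$-sums in $a_{p}(T^{1/2},s)$ and $a_{p}(T^{1/2}_{\mr{opp}},-s)$ cancel against each other. Since $\det T^{1/2}_{\mr{opp}}=v^{-\dim X}\cdot(\det T^{1/2})^{-1}$ and $w$ is a homomorphism sending the trivial line bundle $v^{k}$ to $k$, we get $w(\det T^{1/2}_{\mr{opp}})=-\dim X-w(\det T^{1/2})$; combined with the two $-\tfrac{1}{4}\dim X$ terms this gives the required $-\dim X$. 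The only real bookkeeping point — and the place where the seemingly odd normalization pays off — is that the half-integer shifts $+\tfrac{1}{2}$ in each summand of $a_p$ are exactly what is needed to produce integer-valued exponents of $v$ on both sides after flipping the floor function.
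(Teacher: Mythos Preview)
Your proof is correct and follows essentially the same approach as the paper: reduce via Lemma~\ref{duality} to the numerical identity $a_{p}(T^{1/2}_{\mr{opp}},-s)=-a_{p}(T^{1/2},s)-\dim X$. The paper simply asserts this identity, whereas you supply the verification; the only minor point is that the genericity condition $\langle s,\beta\rangle\notin\bb{Z}$ is stated just before Definition~\ref{stab_def} rather than in it.
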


\begin{proof}
This follows from Lemma~\ref{duality} and $a_{p}(T^{1/2}_{\mr{opp}},-s)=-a_{p}(T^{1/2},s)-\dim X$.
\end{proof}

\begin{lemma}\label{std_orthogonality}
For any $p,p'\in X^{H}$, we have 
\begin{align*}
\left(\mca{S}_{\mf{C},T^{1/2},s}(p):\mca{S}_{-\mf{C},T^{1/2}_{\mr{opp}},-s}(p')\right)=v^{-\dim X}\delta_{p,p'}.
\end{align*}
\end{lemma}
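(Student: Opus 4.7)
The plan is to reduce this orthogonality to the known orthogonality of the (unrenormalized) $K$-theoretic stable bases in Lemma~\ref{orthogonality} and bookkeep the scalar factors coming from the $a_p$-shifts in Definition~\ref{general_standard_basis}.

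First, I would unfold the definitions. By Definition~\ref{general_standard_basis} with $\lambda=0$,
\begin{align*}
\mca{S}_{\mf{C},T^{1/2},s}(p) &= (-1)^{\frac{1}{2}\dim X}v^{a_{p}(T^{1/2},s)}\Stab^{K}_{\mf{C},T^{1/2},s}(p),\\
\mca{S}_{-\mf{C},T^{1/2}_{\mr{opp}},-s}(p') &= (-1)^{\frac{1}{2}\dim X}v^{a_{p'}(T^{1/2}_{\mr{opp}},-s)}\Stab^{K}_{-\mf{C},T^{1/2}_{\mr{opp}},-s}(p').
\end{align*}
Since $X$ is symplectic, $\dim X$ is even, so the sign $(-1)^{\dim X}$ from the two prefactors is $+1$. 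Moreover the scalar factors $v^{a_{(-)}(\cdots)}\in K_{\bb{T}}(\mr{pt})$ pull out of the bilinear pairing $(-:-)$ freely.

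Next, I would apply Lemma~\ref{orthogonality}, which gives
\[
\left(\Stab^{K}_{\mf{C},T^{1/2},s}(p):\Stab^{K}_{-\mf{C},T^{1/2}_{\mr{opp}},-s}(p')\right)=\delta_{p,p'},
\]
collapsing everything to the diagonal $p=p'$. On that diagonal, the accumulated $v$-exponent is $a_{p}(T^{1/2},s)+a_{p}(T^{1/2}_{\mr{opp}},-s)$, so it suffices to prove the identity
\[
a_{p}(T^{1/2}_{\mr{opp}},-s)=-a_{p}(T^{1/2},s)-\dim X.
\]
This is exactly the computation already recorded in the proof of Lemma~\ref{std_duality}, and it is the only real content of the argument: starting from the definition $a_{p}(T^{1/2},s)=\sum_{\beta\in\Psi_{+}(p)}(\lfloor\langle s,\beta\rangle\rfloor+\tfrac12)-\tfrac14\dim X+\tfrac12 w(\det T^{1/2})$, I would use $\lfloor -x\rfloor=-\lfloor x\rfloor-1$ for $x\notin\bb{Z}$ (which holds for $x=\langle s,\beta\rangle$ by genericity of $s$ built into Assumption~\ref{ass_stab_existence}), together with $\det T^{1/2}_{\mr{opp}}=v^{-\dim X}(\det T^{1/2})^{\vee}$, so that $w(\det T^{1/2}_{\mr{opp}})=-\dim X-w(\det T^{1/2})$. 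Adding these up, the $\lfloor\cdot\rfloor+\tfrac12$ pieces cancel with $-a_p(T^{1/2},s)$ and the remaining constants produce precisely $-\dim X$.

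I do not anticipate any real obstacle here: the statement is a direct consequence of Lemma~\ref{orthogonality}, and the only thing to verify is the arithmetic identity for $a_p$, which is essentially already used (and implicitly proved) in Lemma~\ref{std_duality}. The mild point to be careful about is that the floor function identity requires $\langle s,\beta\rangle\notin\bb{Z}$ for all $\beta\in\overline{\Psi}$, but this is precisely how a slope was chosen in section 3.2, so it is automatic under Assumption~\ref{ass_stab_existence}.
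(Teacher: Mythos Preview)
Your proposal is correct and follows essentially the same approach as the paper: the paper's proof is the one-liner ``This follows from Lemma~\ref{orthogonality} and $a_{p}(T^{1/2}_{\mr{opp}},-s)=-a_{p}(T^{1/2},s)-\dim X$,'' and you have unpacked exactly these two ingredients, supplying the details of the $a_p$-identity that the paper leaves implicit. One tiny remark: the condition $\langle s,\beta\rangle\notin\bb{Z}$ is already part of how a slope is chosen in section~3.2 (just before Definition~\ref{stab_def}), rather than coming from Assumption~\ref{ass_stab_existence} per se, but this does not affect the argument.
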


\begin{proof}
This follows from Lemma~\ref{orthogonality} and $a_{p}(T^{1/2}_{\mr{opp}},-s)=-a_{p}(T^{1/2},s)-\dim X$.
\end{proof}

\subsection{$K$-theoretic bar involution}\label{bar_involution_section}
 
Now we define the $K$-theoretic bar involution. Since $\{\mca{S}_{\mf{C},T^{1/2},s}(p)\}_{p\in X^{H}}$ forms a basis of $K_{\bb{T}}(X)_{\mr{loc}}$ over $\Frac(K_{\bb{T}}(\mr{pt}))$, we can define a $K_{H}(\mr{pt})$-linear map $\beta^{K}=\beta^{K}_{\mf{C},T^{1/2},s}:K_{\bb{T}}(X)_{\mr{loc}}\rightarrow K_{\bb{T}}(X)_{\mr{loc}}$ by the following conditions:
\begin{itemize}
\item $\beta^{K}(vm)=v^{-1}\beta^{K}(m)$ for any $m\in K_{\bb{T}}(X)_{\mr{loc}}$;
\item $\beta^{K}\left(\mca{S}_{\mf{C},T^{1/2},s}(p)\right)=(-v)^{\frac{\dim X}{2}}\mca{S}_{-\mf{C}, T^{1/2},s}(p)$ for any $p\in X^{H}$.
\end{itemize}
We note that $\beta^{K}_{-\mf{C},T^{1/2},s}\circ\beta^{K}_{\mf{C},T^{1/2},s}=id$. The following conjecture implies that $\beta^{K}$ is an involution, and hence we call it \emph{$K$-theoretic bar involution} associated with the data $\mf{C}$, $T^{1/2}$, and $s$. 

\begin{conj}\label{K-theoretic_bar_involution}
The $K$-theoretic bar involution $\beta^{K}_{\mf{C},T^{1/2},s}$ does not depend on the choice of $\mf{C}$. 
\end{conj}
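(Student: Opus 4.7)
The plan is to reduce Conjecture~\ref{K-theoretic_bar_involution} to a unitarity statement for the wall-crossing matrix of $K$-theoretic standard bases, and then to approach the unitarity via a localization along the wall. Any two chambers in $\mf{h}_{\bb{R}}\setminus\bigcup_{\alpha\in\overline{\Phi}}H_{\alpha}$ can be joined by a sequence of adjacent chambers, and the conclusion $\beta^{K}_{\mf{C}}=\beta^{K}_{\mf{C}'}$ is transitive in $\mf{C},\mf{C}'$, so it suffices to treat the case where $\mf{C}$ and $\mf{C}'$ are separated by a single equivariant wall $H_{\alpha}$ with $\alpha\in\overline{\Phi}$.

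For such an adjacent pair, write the transition between standard bases as
\begin{align*}
\mca{S}_{\mf{C},T^{1/2},s}(p)=\sum_{p'\in X^{H}}R_{p,p'}(\mf{C},\mf{C}')\,\mca{S}_{\mf{C}',T^{1/2},s}(p'),
\end{align*}
with $R_{p,p'}(\mf{C},\mf{C}')\in\Frac(K_{\bb{T}}(\mr{pt}))$. Applying $\beta^{K}_{\mf{C}'}$ to both sides, using its $\bb{Z}[v,v^{-1}]$-antilinearity and the defining equation $\beta^{K}_{\mf{C}'}(\mca{S}_{\mf{C}',T^{1/2},s}(q))=(-v)^{\dim X/2}\mca{S}_{-\mf{C}',T^{1/2},s}(q)$, and then comparing with $\beta^{K}_{\mf{C}}(\mca{S}_{\mf{C},T^{1/2},s}(p))=(-v)^{\dim X/2}\mca{S}_{-\mf{C},T^{1/2},s}(p)$ together with the analogous expansion of $\mca{S}_{-\mf{C},T^{1/2},s}(p)$ in the basis $\{\mca{S}_{-\mf{C}',T^{1/2},s}(p')\}_{p'}$, I reduce the desired equality $\beta^{K}_{\mf{C}}=\beta^{K}_{\mf{C}'}$ to the unitarity condition
\begin{align*}
R_{p,p'}(\mf{C},\mf{C}')=\overline{R_{p,p'}(-\mf{C},-\mf{C}')}\qquad(p,p'\in X^{H}),
\end{align*}
where the bar denotes the $K_{H}(\mr{pt})$-linear involution sending $v$ to $v^{-1}$.

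I would then establish unitarity by localizing along the wall $H_{\alpha}$. In the Maulik-Okounkov and Aganagic-Okounkov framework, crossing $H_{\alpha}$ is controlled by the $K$-theoretic stable bases on the components of the fixed locus $X^{\ker\alpha}$, equipped with induced polarization and slope, and $R(\mf{C},\mf{C}')$ decomposes as an explicit product of rational factors in the normal weights at each fixed point. The chamber reversal $\mf{C}\mapsto -\mf{C}$ interchanges attracting and repelling normal directions, which at the level of each such factor trades $v$ against $v^{-1}$ --- provided the overall normalization is right. The prefactor $v^{a_{p}(T^{1/2},s)}$ in Definition~\ref{general_standard_basis} is introduced precisely for this purpose, and Assumption~\ref{assumption_polarization} guarantees its integrality; the contributions from $\Psi_{+}(p)$ and $\det N^{!}_{p^{!},-}$ entering $a_{p}(T^{1/2},s)$ are dictated by the dual pair Axioms (\ref{eqn_dual_pair_H})--(\ref{eqn_S_wt}), which is what allows one to hope that $v^{a_{p}}/v^{a_{p'}}$ transforms correctly across each wall.

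The main obstacle is to verify this normalization identity uniformly. It asks that Assumption~\ref{ass_dual_pair} be strong enough to pin down $a_{p}-a_{p'}$ up to an expression that is manifestly invariant under $\mf{C}\mapsto -\mf{C}$ combined with $v\mapsto v^{-1}$, and I do not see how to avoid a chamber-local computation of the R-matrix. For the cases already within reach --- Springer resolutions, where Proposition~\ref{SZZ} and Proposition~\ref{prop_Springer_comparison} pin $\beta^{K}$ down to Lusztig's bar involution, and toric hyper-K\"ahler manifolds, treated explicitly in Section~5 --- the stable-basis formulas are concrete enough to check the unitarity directly. A general proof would plausibly require either passing through the elliptic stable envelope of Aganagic-Okounkov and then taking a $K$-theoretic limit (so that the chamber-reversal symmetry is visible on the theta-function level, as in the later sections of this paper), or finding a chamber-free characterization of $\beta^{K}$ via the inner product $(-||-)$ and the orthogonality Lemma~\ref{std_orthogonality}.
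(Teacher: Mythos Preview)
This statement is labeled a \emph{conjecture} in the paper, and the paper does not attempt a general proof. It establishes the conclusion only for toric hyper-K\"ahler manifolds (Corollary~\ref{cor_toric_indep_chamber_K}), so there is no general argument to compare your proposal against. Your reduction to the unitarity condition
\[
\overline{R_{p,p'}(\mf{C},\mf{C}')}=R_{p,p'}(-\mf{C},-\mf{C}')
\]
is a correct and natural reformulation, and you are candid that you cannot close it in general. That honesty is appropriate: the conjecture is open.

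Where the paper does settle it --- the toric case --- the method is entirely different from yours. Rather than analyzing wall $R$-matrices, the paper constructs an explicit chamber-independent family of line bundles $\{\mca{E}(A)\}_{A\in\mr{Alc}_{s}}$, shows in Proposition~\ref{prop_toric_bar_invariance_E(A)} that each $\mca{E}(A)$ is $\beta^{K}_{\mf{C},s}$-invariant, and observes that the $\mca{E}(A)$ form a basis of $K_{\bb{T}}(X)_{\mr{loc}}$. Since an antilinear map is determined by its values on a basis, and since both the basis and the relation $\beta^{K}(\mca{E}(A))=\mca{E}(A)$ are manifestly chamber-free, the conclusion follows without ever writing down $R(\mf{C},\mf{C}')$. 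This is exactly the ``chamber-free characterization of $\beta^{K}$'' you gesture at in your final paragraph; the paper realizes it by first \emph{guessing} the canonical basis and then verifying bar-invariance, rather than deducing chamber-independence abstractly. Your $R$-matrix unitarity route would also succeed in the toric case --- the explicit formulas of Corollary~\ref{cor_toric_standard_basis} make the check feasible --- but it is more laborious than the paper's argument and offers no obvious advantage there. For a general $X$, however, your formulation may well be the more tractable one, particularly via the elliptic limit you mention, which is precisely the mechanism behind Proposition~\ref{prop_ell_indep_chamber}.
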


For toric hyper-K\"ahler manifolds, this conjecture is proved in Corollary~\ref{cor_toric_indep_chamber_K}. Assuming this conjecture, we will sometimes omit $\mf{C}$ from the notation and write $\beta^{K}_{T^{1/2},s}=\beta^{K}_{\mf{C},T^{1/2},s}$. In this section, we prove certain triangular properties of $\beta^{K}$ with respect to the $K$-theoretic standard bases. We first define a partial order on the labeling set $\bb{F}$.

\begin{dfn}\label{dfn_poset}
For $(\lambda,p),(\lambda',p')\in\bb{F}$, we write $(\lambda,p)\leq_{\mf{C},s}(\lambda',p')$ if we have $\langle\lambda-\wt_{H}i^{\ast}_{p}\mf{L}(s),\xi\rangle\leq\langle\lambda'-\wt_{H}i^{\ast}_{p'}\mf{L}(s),\xi\rangle$ for any $\xi\in\mf{C}$.
\end{dfn}

\begin{lemma}\label{partial_order}
The relation $\leq_{\mf{C},s}$ defines a partial order on $\bb{F}$. Moreover, the number of elements $
(\lambda,p)\in\bb{F}$ satisfying $(\lambda',p')\leq_{\mf{C},s}(\lambda,p)\leq_{\mf{C},s}(\lambda'',p'')$ is finite for any $(\lambda',p'),(\lambda'',p'')\in\bb{F}$, i.e., the partial order $\leq_{\mf{C},s}$ is interval finite. 
\end{lemma}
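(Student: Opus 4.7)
The plan is to dispatch the three axioms of a partial order separately and then use convex geometry of the chamber $\mf{C}$ for interval finiteness.

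Reflexivity and transitivity of $\leq_{\mf{C},s}$ are immediate from the corresponding properties of the ordinary $\leq$ on $\bb{R}$ applied pointwise in $\xi\in\mf{C}$. For antisymmetry, if $(\lambda,p)\leq_{\mf{C},s}(\lambda',p')$ and $(\lambda',p')\leq_{\mf{C},s}(\lambda,p)$, then the linear functional $(\lambda-\wt_{H}i^{\ast}_{p}\mf{L}(s))-(\lambda'-\wt_{H}i^{\ast}_{p'}\mf{L}(s))\in\mf{h}^{\ast}_{\bb{R}}$ vanishes on the open subset $\mf{C}\subset\mf{h}_{\bb{R}}$, hence vanishes identically. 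In particular $\wt_{H}i^{\ast}_{p}\mf{L}(s)-\wt_{H}i^{\ast}_{p'}\mf{L}(s)=\lambda-\lambda'\in\bb{X}^{\ast}(H)$, so Assumption~\ref{ass_stab_existence} forces $p=p'$, and then $\lambda=\lambda'$ follows.

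For interval finiteness, the key observation is that each chamber $\mf{C}$ is a connected component of the complement of a finite hyperplane arrangement in $\mf{h}_{\bb{R}}$, hence is an open convex cone of full dimension. Its dual cone $\mf{C}^{\vee}\coloneqq\{\eta\in\mf{h}^{\ast}_{\bb{R}}\mid\langle\eta,\xi\rangle\geq 0\text{ for all }\xi\in\mf{C}\}$ is therefore a closed, pointed (line-free) convex cone. The condition $(\lambda',p')\leq_{\mf{C},s}(\lambda,p)\leq_{\mf{C},s}(\lambda'',p'')$ precisely says that
\begin{equation*}
\lambda-\wt_{H}i^{\ast}_{p}\mf{L}(s)\in\bigl(\lambda'-\wt_{H}i^{\ast}_{p'}\mf{L}(s)+\mf{C}^{\vee}\bigr)\cap\bigl(\lambda''-\wt_{H}i^{\ast}_{p''}\mf{L}(s)-\mf{C}^{\vee}\bigr),
\end{equation*}
and since $\mf{C}^{\vee}$ is pointed this intersection is a bounded subset of $\mf{h}^{\ast}_{\bb{R}}$.

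Given $p\in X^{H}$, the element $\lambda\in\bb{X}^{\ast}(H)$ is recovered from $\lambda-\wt_{H}i^{\ast}_{p}\mf{L}(s)\in\mf{h}^{\ast}_{\bb{R}}$, and the set of such $\lambda$ lying in the above bounded region is finite because $\bb{X}^{\ast}(H)$ is a lattice. Since $X^{H}$ is finite by assumption, summing over the finitely many choices of $p$ gives finiteness of the interval. No step is particularly subtle here; the only point that needs care is to verify that $\mf{C}$ is full-dimensional so that $\mf{C}^{\vee}$ is genuinely pointed, which is automatic from its description as a connected component of a hyperplane complement.
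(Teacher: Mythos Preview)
Your proof is correct and follows essentially the same approach as the paper: antisymmetry via Assumption~\ref{ass_stab_existence}, and interval finiteness via boundedness of the set $\{\mu\in\mf{h}^{\ast}_{\bb{R}}\mid\langle\mu',\xi\rangle\leq\langle\mu,\xi\rangle\leq\langle\mu'',\xi\rangle\text{ for all }\xi\in\mf{C}\}$. The paper simply asserts compactness of this set, whereas you phrase it equivalently in terms of the pointed dual cone $\mf{C}^{\vee}$ and supply the reason (full-dimensionality of $\mf{C}$), which is a helpful elaboration but not a different argument.
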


\begin{proof}
If $(\lambda,p)\leq_{\mf{C},s}(\lambda',p')$ and $(\lambda',p')\leq_{\mf{C},s}(\lambda,p)$, then we have $\lambda-\wt_{H}i^{\ast}_{p}\mf{L}(s)=\lambda'-\wt_{H}i^{\ast}_{p'}\mf{L}(s)$. This implies that $\wt_{H}i^{\ast}_{p}\mf{L}(s)-\wt_{H}i^{\ast}_{p'}\mf{L}(s)\in\bb{X}^{\ast}(H)$ and hence $p=p'$ by Assumption~\ref{ass_stab_existence}. This proves the antisymmetry. The other properties are trivial to check. 

The second claim follows from the compactness of the set
$\bigcap_{\xi\in\mf{C}}\{\mu\in\mf{h}^{\ast}_{\bb{R}}\mid\langle\mu',\xi\rangle\leq\langle\mu,\xi\rangle\leq\langle\mu'',\xi\rangle \}$ for any $\mu',\mu''\in\mf{h}^{\ast}_{\bb{R}}$. 
\end{proof}

Using this partial order, we define a $\bb{Z}[v,v^{-1}]$-module of formal sums 
\begin{align*}
\bb{M}_{\mf{C},s}\coloneqq\left\{\sum_{(\lambda,p)\in\bb{F}}f_{\lambda,p}(v)S_{\lambda,p}\middle| \begin{aligned}f_{\lambda,p}(v)\in\bb{Z}[v,v^{-1}],
 \exists (\lambda_{1},p_{1}),\ldots, (\lambda_{m},p_{m})\in\bb{F}\\\mbox{ s.t. if }f_{\lambda,p}\neq0, \mbox{then }
 (\lambda,p)\geq_{\mf{C},s}(\lambda_{i},p_{i})\mbox{ for some }i
\end{aligned}
\right\}.
\end{align*}
For any $\mca{F}\in K_{\bb{T}}(X)$, we have $\mca{F}=\sum_{p\in X^{H}}v^{\dim X}\left(\mca{F}:\mca{S}_{-\mf{C},T^{1/2}_{\mr{opp}},-s}(p)\right)\cdot\mca{S}_{\mf{C},T^{1/2},s}(p)$ by Lemma~\ref{std_orthogonality} and the possible denominators appearing in the inner product are of the form $\bigwedge^{\bullet}_{-}(T^{\ast}_{p}X)$ for some $p\in X^{H}$. Therefore, by sending $\mca{S}_{\mf{C},T^{1/2},s}(\lambda,p)$ to $S_{\lambda,p}$ and expanding the rational function appearing in the coefficients into formal series in the positive or negative direction with respect to $\mf{C}$, we obtain two natural embeddings $\iota^{\pm}_{\mf{C},T^{1/2},s}:K_{\bb{T}}(X)\hookrightarrow\bb{M}_{\pm\mf{C},s}$. The following lemma together with Lemma~\ref{partial_order} implies that one can extend the $K$-theoretic bar involution to $\bb{M}_{\pm\mf{C},s}$. 

\begin{lemma}\label{R_triangular}
For each $(\lambda,p)\in\bb{F}$, we have
\begin{align*}
\iota^{\pm}_{\mf{C},T^{1/2},s}((-v)^{\mp\frac{\dim X}{2}}\mca{S}_{-\mf{C},T^{1/2},s}(\lambda,p))\in S_{\lambda,p}+\sum_{(\lambda',p')>_{\pm\mf{C},s}(\lambda,p)}\bb{Z}[v,v^{-1}]\cdot S_{\lambda',p'}.
\end{align*}
\end{lemma}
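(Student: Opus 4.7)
My plan is to write
\[
\mca{S}_{-\mf{C}, T^{1/2}, s}(\lambda, p) = \sum_{p' \in X^H} C_{p'} \cdot \mca{S}_{\mf{C}, T^{1/2}, s}(p'), \qquad C_{p'} \in \Frac(K_{\bb{T}}(\mr{pt})),
\]
which is possible since $\{\mca{S}_{\mf{C}, T^{1/2}, s}(p')\}_{p' \in X^H}$ is a basis of $K_\bb{T}(X)_{\mr{loc}}$ by Lemma~\ref{std_orthogonality}, and then expand each $C_{p'}$ formally in the $\pm\mf{C}$-direction. The support axiom of Definition~\ref{stab_def}, applied to both bases simultaneously and combined with the triangularity of the localization matrix $(i^\ast_q \mca{S}_{\mf{C}, T^{1/2}, s}(p'))$, immediately forces $C_{p'} = 0$ unless $p \preceq_\mf{C} p'$.

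For the diagonal $p' = p$, localization at $p$ and the symplectic identity $N_{p,+} = v^{-2} (N_{p,-})^\vee$ (which makes the square-root prefactors in Definition~\ref{stab_def} combine cleanly into $(-v)^{\dim X/2}$) yield
\[
C_p = [\lambda] \cdot (-v)^{\dim X / 2} \prod_i \frac{1 - a_i^{-1} v^{-k_i - 2}}{1 - a_i^{-1} v^{-k_i}},
\]
where $N_{p,-} = \sum_i a_i v^{k_i}$ and $\langle a_i, \xi\rangle < 0$ on $\mf{C}$. Using the identity $\frac{1 - a^{-1} v^{-k-2}}{1 - a^{-1} v^{-k}} = v^{-2} + (1 - v^{-2}) \cdot \frac{1}{1 - a^{-1} v^{-k}}$ and expanding each $(1 - a_i^{-1} v^{-k_i})^{-1}$ in the $\pm\mf{C}$-direction (as $\sum_{m\geq 0} a_i^{-m} v^{-m k_i}$ for $+\mf{C}$ and as $-\sum_{m \geq 1} a_i^{m} v^{m k_i}$ for $-\mf{C}$), a direct check shows that $\iota^\pm((-v)^{\mp \dim X/2} C_p \cdot \mca{S}_{\mf{C}, T^{1/2}, s}(p))$ equals $S_{\lambda, p}$ plus an integer $\bb{Z}[v,v^{-1}]$-linear combination of $S_{\mu, p}$ where $\mu - \lambda$ is a nonzero positive integer combination of elements $\mp a_i \in (\pm\mf{C})^\vee \setminus \{0\}$. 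By Definition~\ref{dfn_poset} these higher terms satisfy $(\mu, p) >_{\pm\mf{C}, s} (\lambda, p)$.

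For $p' \succ_\mf{C} p$, compute $C_{p'}$ inductively along the $\preceq_\mf{C}$-chain from $p$ by localization at $p'$:
\[
C_{p'} \cdot i^\ast_{p'} \mca{S}_{\mf{C}, T^{1/2}, s}(p') = i^\ast_{p'} \mca{S}_{-\mf{C}, T^{1/2}, s}(\lambda, p) - \sum_{p \preceq_\mf{C} p'' \prec_\mf{C} p'} C_{p''} \cdot i^\ast_{p'} \mca{S}_{\mf{C}, T^{1/2}, s}(p'').
\]
The slope axiom (third condition in Definition~\ref{stab_def}), applied for both $\pm\mf{C}$, bounds each Newton polytope $\deg_H(i^\ast_{p'} \Stab^K_{\pm\mf{C}, T^{1/2}, s}(q))$ by $\deg_H(i^\ast_{p'} \Stab^K_{\pm\mf{C}, T^{1/2}, s}(p'))$ shifted by $\wt_H i^\ast_{p'}\mf{L}(s) - \wt_H i^\ast_q \mf{L}(s)$. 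Combining these inclusions with the $\pm\mf{C}$-expansion of $1/\bigwedge^\bullet_-(N_{p',-})^\vee$ from the diagonal step, induction along the chain gives that the $\iota^\pm$-expansion of $(-v)^{\mp \dim X / 2} C_{p'}$ has $H$-weight support contained in $\lambda + (\wt_H i^\ast_{p'}\mf{L}(s) - \wt_H i^\ast_p \mf{L}(s)) + (\pm\mf{C})^\vee$. By Definition~\ref{dfn_poset} this is exactly the condition $(\mu, p') \geq_{\pm\mf{C}, s} (\lambda, p)$; since $p' \neq p$, antisymmetry of the partial order (Lemma~\ref{partial_order}) upgrades this to the strict $>_{\pm\mf{C}, s}$. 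Integrality of coefficients in $\bb{Z}[v, v^{-1}]$ is automatic since all denominators encountered are products of the form $\prod_i (1 - a_i^{-1} v^{-k_i})$, whose $\pm\mf{C}$-expansions have integer Laurent-polynomial coefficients in $v$.

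The chief obstacle is the off-diagonal step: one must carefully transport the slope-polytope inclusions of Definition~\ref{stab_def} through the inductive localization formula to pin down the shift of the expansion polytope by $\wt_H i^\ast_{p'}\mf{L}(s) - \wt_H i^\ast_p \mf{L}(s)$ in $(\pm\mf{C})^\vee$. The diagonal case, by contrast, reduces to an explicit symplectic-type computation once the identity $N_{p, +} = v^{-2}(N_{p, -})^\vee$ is used to fuse the square-root prefactors into a uniform power of $(-v)$.
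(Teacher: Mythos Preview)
Your argument has a genuine gap at the very first step. You claim that the support axiom forces $C_{p'}=0$ unless $p\preceq_{\mf{C}}p'$, but this is false: the transition matrix from the $-\mf{C}$--stable basis to the $\mf{C}$--stable basis (the geometric $R$-matrix) is \emph{not} triangular with respect to $\preceq_{\mf{C}}$. Concretely, restricting $\mca{S}_{-\mf{C}}(\lambda,p)=\sum_{p'}C_{p'}\mca{S}_{\mf{C}}(p')$ to a fixed point $q$ gives $i_{q}^{\ast}\mca{S}_{-\mf{C}}(\lambda,p)=\sum_{p'\succeq_{\mf{C}}q}C_{p'}\,i_{q}^{\ast}\mca{S}_{\mf{C}}(p')$, and the left side vanishes for $q\not\succeq_{\mf{C}}p$; but solving this upper-triangular system for $C_{p'}$ does \emph{not} kill $C_{p'}$ for $p'\prec_{\mf{C}}p$. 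Already for $X=T^{\ast}\bb{P}^{1}$ with $0\prec_{\mf{C}}\infty$, expanding $\mca{S}_{-\mf{C}}(\infty)$ needs a nonzero contribution from $\mca{S}_{\mf{C}}(0)$ to cancel the off-diagonal restriction $i_{0}^{\ast}\mca{S}_{\mf{C}}(\infty)$.

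Because your diagonal computation of $C_{p}$ is obtained by restricting to $p$ under the (false) hypothesis that only the $p'=p$ term survives there, the formula you write down is actually $i_{p}^{\ast}\mca{S}_{-\mf{C}}(\lambda,p)/i_{p}^{\ast}\mca{S}_{\mf{C}}(p)$, not $C_{p}$ itself; the true $C_{p}$ has further contributions from $p'\succ_{\mf{C}}p$. Likewise your inductive step ``for $p'\succ_{\mf{C}}p$'' omits all the $p'$ that are not above $p$ yet still contribute. The paper avoids this entirely: it computes $C_{p'}$ via Lemma~\ref{std_orthogonality} as $v^{\dim X}\bigl(\mca{S}_{-\mf{C}}(\lambda,p):\mca{S}_{-\mf{C},T^{1/2}_{\mr{opp}},-s}(p')\bigr)$, writes this as a localization sum over \emph{all} $p''$, and then bounds the $H$-Newton polytope of each summand using the slope axiom for \emph{both} factors in the numerator (this is where the shift by $\wt_{H}i_{p'}^{\ast}\mf{L}(s)-\wt_{H}i_{p}^{\ast}\mf{L}(s)$ appears, and the two fractional shifts cancel to leave an integral translate of $\deg_{H}\bigwedge_{-}^{\bullet}N_{p'',-}^{\vee}$). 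For the coefficient of $S_{\lambda,p}$ one checks separately that the $p''\neq p$ terms lie strictly inside the polytope and hence do not contribute, so only the diagonal $p''=p$ term matters there---which is where your explicit product $(-v)^{\dim X/2}\prod_{i}\frac{1-w_{i}}{1-v^{2}w_{i}}$ does correctly enter, but as a single summand in the localization formula rather than as all of $C_{p}$.
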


\begin{proof}
We first note that 
\begin{align}
v^{\dim X}\left(\mca{S}_{-\mf{C},T^{1/2},s}(p):\mca{S}_{-\mf{C},T^{1/2}_{\mr{opp}},-s}(p')\right)=\sum_{p''\in X^{H}}\frac{i^{\ast}_{p''}\Stab^{K}_{-\mf{C},T^{1/2},s}(p)\cdot i^{\ast}_{p''}\Stab^{K}_{-\mf{C},T^{1/2}_{\mr{opp}},-s}(p')}{\bigwedge^{\bullet}_{-}(N_{p'',+}^{\vee})\cdot\bigwedge^{\bullet}_{-}(N_{p'',-}^{\vee})}\label{inner_sum}
\end{align}
If we write $N_{p'',+}=\sum_{i}w_{i}$, $w_{i}\in\bb{X}^{\ast}(\bb{T})$, then we have 
\begin{align*}
\frac{i^{\ast}_{p''}\Stab^{K}_{-\mf{C},T^{1/2},s}(p'')\cdot i^{\ast}_{p''}\Stab^{K}_{-\mf{C},T^{1/2}_{\mr{opp}},-s}(p'')}{\bigwedge^{\bullet}_{-}(N_{p'',+}^{\vee})\cdot\bigwedge^{\bullet}_{-}(N_{p'',-}^{\vee})}&=v^{\frac{\dim X}{2}}\det N_{p'',+}\cdot\frac{\bigwedge^{\bullet}_{-}(N_{p'',+}^{\vee})}{\bigwedge^{\bullet}_{-}(N_{p'',-}^{\vee})}\\
&=(-v)^{\frac{\dim X}{2}}\prod_{i}\frac{1-w_{i}}{1-v^{2}w_{i}}.
\end{align*}
If we expand this in positive (resp. negative) direction with respect to $\mf{C}$, then the expansion start from $(-v)^{\frac{\dim X}{2}}$ (resp. $(-v)^{-\frac{\dim X}{2}}$). This formula also implies that if we set $\rho_{p''}\coloneqq\wt_{H}\det N_{p'',+}$, then for any $\xi\in\mf{C}$, we have 
\begin{align*}
\left\langle\xi,\deg_{H}\left(i^{\ast}_{p''}\Stab^{K}_{-\mf{C},T^{1/2},s}(p'')\cdot i^{\ast}_{p''}\Stab^{K}_{-\mf{C},T^{1/2}_{\mr{opp}},-s}(p'')\right)\right\rangle=\left[-\langle\xi,\rho_{p''}\rangle,\langle\xi,\rho_{p''}\rangle\right]
\end{align*}

On the other hand, by the definition of $K$-theoretic stable basis, we have
\begin{align}
\deg_{H}\left(i^{\ast}_{p''}\Stab^{K}_{-\mf{C},T^{1/2},s}(p)\right)&\subset\deg_{H}\left(i^{\ast}_{p''}\Stab^{K}_{-\mf{C},T^{1/2},s}(p'')\cdot\frac{i^{\ast}_{p''}\mf{L}(s)}{i^{\ast}_{p}\mf{L}(s)}\right),\label{deg_bound1}\\
\deg_{H}\left(i^{\ast}_{p''}\Stab^{K}_{-\mf{C},T^{1/2}_{\mr{opp}},-s}(p')\right)&\subset\deg_{H}\left(i^{\ast}_{p''}\Stab^{K}_{-\mf{C},T^{1/2}_{\mr{opp}},-s}(p'')\cdot\frac{i^{\ast}_{p''}\mf{L}(-s)}{i^{\ast}_{p'}\mf{L}(-s)}\right).\label{deg_bound2}
\end{align}
Therefore, we obtain 
\begin{align*}
\left\langle\xi,\deg_{H}\left(i^{\ast}_{p''}\Stab^{K}_{-\mf{C},T^{1/2},s}(p)\cdot i^{\ast}_{p''}\Stab^{K}_{-\mf{C},T^{1/2}_{\mr{opp}},-s}(p')\right)\right\rangle\subset\left[\left\langle\xi,-\rho_{p''}+\wt_{H}\frac{i^{\ast}_{p'}\mf{L}(s)}{i^{\ast}_{p}\mf{L}(s)}\right\rangle,\left\langle\xi,\rho_{p''}+\wt_{H}\frac{i^{\ast}_{p'}\mf{L}(s)}{i^{\ast}_{p}\mf{L}(s)}\right\rangle\right]
\end{align*}
This implies that if $S_{\lambda',p'}$ appears in the expansion of $\iota^{\pm}_{\mf{C},T^{1/2},s}((-v)^{\mp\frac{\dim X}{2}}\mca{S}_{-\mf{C},T^{1/2},s}(\lambda,p))$, then we have $\langle\xi,\lambda'-\lambda\rangle\geq\langle\xi,\wt_{H}i^{\ast}_{p'}\mf{L}(s)-\wt_{H}i^{\ast}_{p}\mf{L}(s)\rangle$ for any $\xi\in\pm\mf{C}$, which means $(\lambda,p)\leq_{\pm\mf{C},s}(\lambda',p')$.

For the coefficient of $S_{\lambda,p}$, we note that when $p''\neq p=p'$, the fractional shift appearing in (\ref{deg_bound1}) and (\ref{deg_bound2}) are opposite and not integral. Hence we have 
\begin{align*}
\left\langle\xi,\deg_{H}\left(i^{\ast}_{p''}\Stab^{K}_{-\mf{C},T^{1/2},s}(p)\cdot i^{\ast}_{p''}\Stab^{K}_{-\mf{C},T^{1/2}_{\mr{opp}},-s}(p)\right)\right\rangle\subset \left(-\langle\xi,\rho_{p''}\rangle,\langle\xi,\rho_{p''}\rangle\right).
\end{align*}
This implies that only $p''=p=p'$ part in (\ref{inner_sum}) contribute to the coefficient of $S_{\lambda,p}$. 
\end{proof}

\subsection{$K$-theoretic canonical basis}

Once we have defined the bar involution, we can follow Lusztig \cite{Lu3,Lu4} to define the notion of signed canonical basis by imposing bar invariance and asymptotic norm one property. Recall the inner product $(-||-):K_{\bb{T}}(X)\times K_{\bb{T}}(X)\rightarrow\Frac(K_{\bb{T}}(\mr{pt}))$ defined in (\ref{innerprod}). We note that this depends on the choice of $\mf{C}$, $T^{1/2}$, and $s$ through $\beta^{K}_{\mf{C},T^{1/2},s}$, but we simply omit the dependence from the notation if there is no chance of confusion. Recall that $L\subset X$ is the central fiber. We set 
\begin{align*}
\bb{B}^{\pm}_{L,T^{1/2},s}&\coloneqq\left\{m\in K_{\bb{T}}(L)\middle| \beta^{K}_{T^{1/2},s}(m)=v^{\dim X}m, (m||m)\in 1+v^{-1}K_{H}(\mr{pt})[v^{-1}]\right\},\\
\bb{B}^{\pm}_{X,T^{1/2},s}&\coloneqq\left\{m\in K_{\bb{T}}(X)\middle| \beta^{K}_{T^{1/2},s}(m)=m, (m||m)\in 1+v^{-1}K_{H}(\mr{pt})[\![v^{-1}]\!]\right\},
\end{align*}
where we expand the rational function in Laurent series of $v^{-1}$ with coefficients in $\Frac(K_{H}(\mr{pt}))$. We note that for any $\lambda\in\bb{X}^{\ast}(H)$ and $m\in\bb{B}^{\pm}_{X,T^{1/2},s}$, we have $\pm[\lambda]\cdot m\in\bb{B}^{\pm}_{X,T^{1/2},s}$. If we assume Conjecture~\ref{K-theoretic_bar_involution}, then this definition does not depend on the choice of $\mf{C}$. We call $\bb{B}^{\pm}_{L,T^{1/2},s}$ and $\bb{B}^{\pm}_{X,T^{1/2},s}$ \emph{signed $K$-theoretic canonical bases} for $L$ and $X$. 

If we assume that $\beta^{K}$ is an involution, we can formally construct a family of bar invariant and asymptotic norm one elements by Kazhdan-Lusztig type algorithm. More precisely, we can prove the following, whose proof provides such an algorithm.  
 
\begin{prop}\label{expansion}
Assume that $\beta^{K}_{\mf{C},T^{1/2},s}$ is an involution. 
\begin{enumerate}
\item For any $(\lambda,p)\in\bb{F}$, there exists unique $C^{T^{1/2},s}_{\lambda,p}\in S_{\lambda,p}+\sum_{(\lambda',p')>_{\mf{C},s}(\lambda,p)}v^{-1}\bb{Z}[v^{-1}]\cdot S_{\lambda',p'}\subset\bb{M}_{\mf{C},s}$ such that $v^{-\dim X}\cdot\beta^{K}_{\mf{C},T^{1/2},s}(C^{T^{1/2},s}_{\lambda,p})=C^{T^{1/2},s}_{\lambda,p}$. 
\item For any $(\lambda,p)\in\bb{F}$, there exists unique $E^{T^{1/2},s}_{\lambda,p}\in S_{\lambda,p}+\sum_{(\lambda',p')<_{\mf{C},s}(\lambda,p)}v^{-1}\bb{Z}[v^{-1}]\cdot S_{\lambda',p'}\subset\bb{M}_{-\mf{C},s}$ such that $\beta^{K}_{\mf{C},T^{1/2},s}(E^{T^{1/2},s}_{\lambda,p})=E^{T^{1/2},s}_{\lambda,p}$.
\end{enumerate}
\end{prop}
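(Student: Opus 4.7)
The plan is a standard Kazhdan-Lusztig induction: transport the bar involution, via $\iota^{\pm}_{\mf{C},T^{1/2},s}$, to an upper (respectively lower) triangular operator on $\bb{M}_{\pm\mf{C},s}$ with unit diagonal relative to $\leq_{\mf{C},s}$, and then solve a triangular fixed-point problem with prescribed negative-degree off-diagonal entries. Both parts proceed in parallel fashion.

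For part (1), use $\iota^{+}_{\mf{C},T^{1/2},s}$. Combining the defining formula $\beta^{K}(\mca{S}_{\mf{C},T^{1/2},s}(\lambda,p)) = (-v)^{\dim X/2}\mca{S}_{-\mf{C},T^{1/2},s}(\lambda,p)$ with Lemma~\ref{R_triangular}, the $v$-antilinear operator $\tilde{\beta} \coloneqq v^{-\dim X}\beta^{K}$ acts on $\bb{M}_{\mf{C},s}$ by
\begin{align*}
\tilde{\beta}(S_{\lambda,p}) = S_{\lambda,p} + \sum_{(\mu,r) >_{\mf{C},s} (\lambda,p)} R^{\mu,r}_{\lambda,p}(v)\, S_{\mu,r},
\end{align*}
with $R^{\mu,r}_{\lambda,p}(v) \in \bb{Z}[v,v^{-1}]$. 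Interval finiteness (Lemma~\ref{partial_order}) ensures $\tilde{\beta}$ extends $v$-antilinearly to a well-defined operator on $\bb{M}_{\mf{C},s}$, and the hypothesis that $\beta^{K}$ is an involution on $K_{\bb{T}}(X)_{\mr{loc}}$ translates, using $v$-antilinearity, into the quadratic identity $\tilde{\beta}^{2} = \mr{id}$, equivalently
\begin{align*}
\sum_{(\lambda,p) \leq_{\mf{C},s} (\nu,q) \leq_{\mf{C},s} (\mu,r)} \overline{R^{\nu,q}_{\lambda,p}}(v)\, R^{\mu,r}_{\nu,q}(v) = \delta_{(\lambda,p),(\mu,r)},
\end{align*}
where bar denotes $v \mapsto v^{-1}$.

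I would then seek $C^{T^{1/2},s}_{\lambda,p} = S_{\lambda,p} + \sum_{(\mu,r) >_{\mf{C},s} (\lambda,p)} c_{\mu,r}(v)\, S_{\mu,r}$ with $c_{\mu,r}(v) \in v^{-1}\bb{Z}[v^{-1}]$ and $\tilde{\beta}(C) = C$. Comparing coefficients of $S_{\mu,r}$ yields the recursion
\begin{align*}
c_{\mu,r}(v) - \overline{c_{\mu,r}}(v) = R^{\mu,r}_{\lambda,p}(v) + \sum_{(\lambda,p) <_{\mf{C},s} (\lambda',p') <_{\mf{C},s} (\mu,r)} \overline{c_{\lambda',p'}}(v)\, R^{\mu,r}_{\lambda',p'}(v),
\end{align*}
which determines $c_{\mu,r}$ by induction along the finite interval $[(\lambda,p),(\mu,r)]$. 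The hard part is showing that at each inductive step the right hand side is bar-antisymmetric; only then can it be written uniquely as $f(v) - \overline{f}(v)$ with $f(v) \in v^{-1}\bb{Z}[v^{-1}]$, giving both existence and uniqueness of $c_{\mu,r}$. This antisymmetry is exactly where the quadratic identity above enters: a direct manipulation using $\tilde{\beta}^{2} = \mr{id}$ together with the induction hypothesis forces the bar-symmetric part of the right hand side to vanish. Uniqueness of $C^{T^{1/2},s}_{\lambda,p}$ then follows from the same mechanism applied to the difference of two candidates, which would have to be a bar-invariant formal sum with coefficients in $v^{-1}\bb{Z}[v^{-1}]$ and zero leading term, forcing it to be zero.

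For part (2), the argument is parallel with $\iota^{-}_{\mf{C},T^{1/2},s}$ in place of $\iota^{+}$. Since $\leq_{-\mf{C},s}$ is the reverse of $\leq_{\mf{C},s}$, Lemma~\ref{R_triangular} (with the lower sign) gives that $\beta^{K}$ itself---not its rescaling $\tilde{\beta}$---becomes lower triangular with unit diagonal with respect to $\leq_{\mf{C},s}$ after transport through $\iota^{-}$, and the corresponding elements of $\bb{M}_{-\mf{C},s}$ have supports naturally contained in lower sets for $\leq_{\mf{C},s}$. The same Kazhdan-Lusztig recursion, now run on intervals $[(\mu,r),(\lambda,p)]$ with $(\mu,r) <_{\mf{C},s} (\lambda,p)$, produces the unique $E^{T^{1/2},s}_{\lambda,p}$ satisfying $\beta^{K}(E) = E$. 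The obstacle at each inductive step is identical to part (1) and is resolved the same way via $(\beta^{K})^{2} = \mr{id}$.
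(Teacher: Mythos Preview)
Your proposal is correct and follows essentially the same Kazhdan--Lusztig argument as the paper. The only cosmetic difference is that the paper organises the induction by building successive partial approximations $C^{\leq(\lambda',p')}_{\lambda,p}$ along a total order refining $\leq_{\mf{C},s}$ and applies $\tilde{\beta}$ directly to the error term (so that $\tilde{\beta}^{2}=\mr{id}$ immediately yields $g(v)=-g(v^{-1})$ for the next coefficient), whereas you unpack everything into the matrix entries $R^{\mu,r}_{\lambda,p}$ and the quadratic identity; for uniqueness the paper expands a putative difference in the already-constructed $C$'s rather than in the $S$'s, but both versions reduce to the same ``bar-invariant element of $v^{-1}\bb{Z}[v^{-1}]$ is zero'' step.
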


\begin{proof}
We only prove the first statement since the second statement can be proved similarly. We first prove the existence. We take any total order on $\bb{F}$ refining $\leq_{\mf{C},s}$ and prove inductively that for any $(\lambda',p')\geq_{\mf{C},s}(\lambda,p)$, there exists an element of the form 
\begin{align*}
C^{\leq(\lambda',p')}_{\lambda,p}=\sum_{(\lambda,p)\leq_{\mf{C},s}(\lambda'',p'')\leq_{\mf{C},s}(\lambda',p')}f^{\lambda'',p''}_{\lambda,p}(v)S_{\lambda'',p''}
\end{align*}
with $f^{\lambda,p}_{\lambda,p}(v)=1$ and $f^{\lambda'',p''}_{\lambda,p}(v)\in v^{-1}\bb{Z}[v^{-1}]$ for any $(\lambda'',p'')\neq(\lambda,p)$ such that 
\begin{align*}
v^{-\dim X}\cdot\beta^{K}_{\mf{C},T^{1/2},s}\left(C^{\leq(\lambda',p')}_{\lambda,p}\right)-C^{\leq(\lambda',p')}_{\lambda,p}\in\sum_{(\lambda'',p'')>_{\mf{C},s}(\lambda',p')}\bb{Z}[v,v^{-1}]\cdot S_{\lambda'',p''}.
\end{align*}
For $(\lambda',p')=(\lambda,p)$, we can take $C^{\lambda,p}_{\lambda,p}=S_{\lambda,p}$ by Lemma~\ref{R_triangular}. Assume that we have constructed $C^{<(\lambda',p')}_{\lambda,p}$ and write 
\begin{align*}
v^{-\dim X}\cdot\beta^{K}_{\mf{C},T^{1/2},s}\left(C^{<(\lambda',p')}_{\lambda,p}\right)-C^{<(\lambda',p')}_{\lambda,p}\in g(v)S_{\lambda',p'}+\sum_{(\lambda'',p'')>_{\mf{C},s}(\lambda',p')}\bb{Z}[v,v^{-1}]\cdot S_{\lambda'',p''}.
\end{align*}
for some $g(v)\in\bb{Z}[v,v^{-1}]$. Since $\beta^{K}_{\mf{C},T^{1/2},s}$ is an involution, we also obtain 
\begin{align*}
v^{-\dim X}\cdot\beta^{K}_{\mf{C},T^{1/2},s}\left(C^{<(\lambda',p')}_{\lambda,p}\right)-C^{<(\lambda',p')}_{\lambda,p}\in -g(v^{-1})S_{\lambda',p'}+\sum_{(\lambda'',p'')>_{\mf{C},s}(\lambda',p')}\bb{Z}[v,v^{-1}]\cdot S_{\lambda'',p''}.
\end{align*}
by Lemma~\ref{R_triangular}. By comparing the coefficient, we obtain $g(v)=-g(v^{-1})$. Hence there exists an element $f^{\lambda',p'}_{\lambda,p}(v)\in v^{-1}\bb{Z}[v^{-1}]$ such that $g(v)=f^{\lambda',p'}_{\lambda,p}(v)-f^{\lambda',p'}_{\lambda,p}(v^{-1})$. Then it suffices to take $C^{\leq(\lambda',p')}_{\lambda,p}=C^{<(\lambda',p')}_{\lambda,p}+f^{\lambda',p'}_{\lambda,p}(v)S_{\lambda',p'}$. This proves the existence of $C^{T^{1/2},s}_{\lambda,p}$. 

For the uniqueness, we assume that there is another $C'_{\lambda,p}$ satisfying the above conditions. Then we can expand 
\begin{align*}
C'_{\lambda,p}=C^{T^{1/2},s}_{\lambda,p}+\sum_{(\lambda',p')>_{\mf{C},s}(\lambda,p)}g^{\lambda',p'}_{\lambda,p}(v)C^{T^{1/2},s}_{\lambda',p'}
\end{align*}
with $g^{\lambda',p'}_{\lambda,p}(v)\in v^{-1}\bb{Z}[v^{-1}]$. By $v^{-\dim X}\cdot\beta^{K}_{\mf{C},T^{1/2},s}(C'_{\lambda,p})=C'_{\lambda,p}$, we also obtain 
\begin{align*}
C'_{\lambda,p}=C^{T^{1/2},s}_{\lambda,p}+\sum_{(\lambda',p')>_{\mf{C},s}(\lambda,p)}g^{\lambda',p'}_{\lambda,p}(v^{-1})C^{T^{1/2},s}_{\lambda',p'}. 
\end{align*}
Hence we have $g^{\lambda',p'}_{\lambda,p}(v)=g^{\lambda',p'}_{\lambda,p}(v^{-1})\in v^{-1}\bb{Z}[v^{-1}]\cap v\bb{Z}[v]=\{0\}$. This proves $C'_{\lambda,p}=C^{T^{1/2},s}_{\lambda,p}$. 
\end{proof}

\begin{conj}\label{K-theoretic_canonical_basis}
The $K$-theoretic bar involution $\beta^{K}_{\mf{C},T^{1/2},s}$ is an involution. For any $(\lambda,p)\in\bb{F}$, there exists $\mca{C}_{\mf{C},T^{1/2},s}(\lambda,p)\in K_{\bb{T}}(L)$ (resp. $\mca{E}_{\mf{C},T^{1/2},s}(\lambda,p)\in K_{\bb{T}}(X)$) such that $\iota^{+}_{\mf{C},T^{1/2},s}(\mca{C}_{\mf{C},T^{1/2},s}(\lambda,p))=C^{T^{1/2},s}_{\lambda,p}$ (resp. $\iota^{-}_{\mf{C},T^{1/2},s}(\mca{E}_{\mf{C},T^{1/2},s}(\lambda, p))=E^{T^{1/2},s}_{\lambda,p}$). Moreover, $\bb{B}_{L,T^{1/2},s}\coloneqq\{\mca{C}_{\mf{C},T^{1/2},s}(\lambda,p)\}_{(\lambda,p)\in\bb{F}}$ (resp. $\bb{B}_{X,T^{1/2},s}\coloneqq\{\mca{E}_{\mf{C},T^{1/2},s}(\lambda,p)\}_{(\lambda,p)\in\bb{F}}$) forms a basis of $K_{\bb{T}}(L)$ (resp. $K_{\bb{T}}(X)$) as a $\bb{Z}[v,v^{-1}]$-module. 
\end{conj}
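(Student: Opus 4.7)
The plan is to split the conjecture into three logically separate pieces and handle them in order: (i) involutivity of $\beta^{K}_{\mf{C},T^{1/2},s}$; (ii) integrality, i.e.\ that the Kazhdan--Lusztig style formal sums $C^{T^{1/2},s}_{\lambda,p}$ and $E^{T^{1/2},s}_{\lambda,p}$ produced by Proposition~\ref{expansion} lift under $\iota^{\pm}_{\mf{C},T^{1/2},s}$ to honest classes in $K_{\bb{T}}(L)$ and $K_{\bb{T}}(X)$ respectively; and (iii) that the resulting families are $\bb{Z}[v,v^{-1}]$-bases.

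For (i) I would deduce involutivity directly from Conjecture~\ref{K-theoretic_bar_involution}: the identity $\beta^{K}_{-\mf{C},T^{1/2},s}\circ\beta^{K}_{\mf{C},T^{1/2},s}=\mr{id}$ recorded in section~\ref{bar_involution_section} upgrades to $(\beta^{K}_{\mf{C},T^{1/2},s})^{2}=\mr{id}$ once $\mf{C}$ and $-\mf{C}$ give the same map. To establish chamber independence I would work wall by wall across the hyperplane arrangement $\{H_{\alpha}\}_{\alpha\in\overline{\Phi}}$ in $\mf{h}_{\bb{R}}$: for any pair of adjacent chambers $\mf{C},\mf{C}'$ compare $\Stab^{K}_{\mf{C},T^{1/2},s}(p)$ and $\Stab^{K}_{\mf{C}',T^{1/2},s}(p)$ via the wall $R$-matrix of Okounkov, and check that, after the symplectic-dual normalising factor $v^{a_{p}(T^{1/2},s)}$ built into $\mca{S}_{\mf{C},T^{1/2},s}(p)$, the change is swallowed by the $v\mapsto v^{-1}$-antilinearity. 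The naturality of this renormalisation under the duality of Definition~\ref{dual_pair} is what makes this cancellation conceivable.

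For (ii) I would take the geometric route outlined in the introduction and construct a $\bb{T}$-equivariant tilting bundle $\mca{T}_{T^{1/2},s}$ on $X$ whose indecomposable summands, twisted by the characters $[\lambda]$, will be the candidate classes $\mca{E}_{\mf{C},T^{1/2},s}(\lambda,p)$, together with a version supported on $L$ providing $\mca{C}_{\mf{C},T^{1/2},s}(\lambda,p)$. The nonnegative $\bb{S}$-grading of $\End(\mca{T}_{T^{1/2},s})$ together with the Kaledin--Koszulity argument of \cite{BM} forces each summand to be bar-invariant and to satisfy the asymptotic norm one property with respect to $(-\|-)$; combined with the triangularity of Lemma~\ref{R_triangular} this identifies each summand with the corresponding element produced by the algorithm of Proposition~\ref{expansion}. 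Basis-ness in (iii) is then automatic: summands of a tilting generator span $K_{\bb{T}}(X)$ over $\bb{Z}[v,v^{-1}]$, and the triangularity with respect to the $K$-theoretic standard basis forces linear independence, with the $L$-version following from Grothendieck duality and support considerations.

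The main obstacle is, unsurprisingly, step (ii): producing the tilting bundle $\mca{T}_{T^{1/2},s}$ in the generality of all conical symplectic resolutions satisfying Assumptions~\ref{ass_dual_pair}--\ref{assumption_polarization}, together with enough control on its $\bb{S}$-weights and on how its summands vary with the slope $s$ to match the combinatorial Kazhdan--Lusztig elements $C^{T^{1/2},s}_{\lambda,p}$ and $E^{T^{1/2},s}_{\lambda,p}$. In the toric hyper-K\"ahler case (section~5) this input is supplied by the tilting generators of \cite{MW,SV} and one obtains Corollary~\ref{cor_toric_tilting}; for Springer resolutions it is supplied by \cite{BM}. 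In general, producing such tilting generators and verifying the precise matching is essentially a Bezrukavnikov--Kaledin--type program and would form the technical heart of any complete proof.
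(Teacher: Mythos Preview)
Your high-level decomposition into (i) involutivity, (ii) integrality, and (iii) basis-ness is sound, and you correctly identify the construction of a tilting bundle as the crux. However, there is a genuine gap in step (ii): you assert that ``the nonnegative $\bb{S}$-grading of $\End(\mca{T}_{T^{1/2},s})$ together with the Kaledin--Koszulity argument of \cite{BM} forces each summand to be bar-invariant''. This does not follow. Nonnegative grading of $\End(\mca{T})$ yields the asymptotic norm-one property for the summands (essentially the computation in Corollary~\ref{cor_Koszulity} read backwards), but bar-invariance under $\beta^{K}_{\mf{C},T^{1/2},s}$ is a separate condition relating the summands to the \emph{opposite-chamber} standard basis, and Koszulity says nothing about it. In the Springer case \cite{BM} establish bar-invariance through the independent Lusztig definition of the involution and substantial additional work; it is not a formal consequence of tilting.

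The paper's toric proof (section~5) inverts your order of operations and thereby sidesteps this problem. Rather than building a tilting bundle and extracting bar-invariance, it first writes down explicit line bundles $\mca{E}(A)=\mca{L}(\mu_{A})$ indexed by alcoves $A\in\mr{Alc}_{s}$ and proves bar-invariance by a direct computation (Proposition~\ref{prop_toric_bar_invariance_E(A)}): one defines an auxiliary antilinear map $\beta'$ fixing every $\mca{E}(A)$, computes $\beta'(\mca{S}_{\mf{C},s}(p_{I}))=(-v)^{d}\mca{S}_{-\mf{C},s}(p_{I})$ from the explicit Koszul-type expansions (\ref{eqn_toric_std_plus})--(\ref{eqn_toric_std_minus}), and concludes $\beta'=\beta^{K}_{\mf{C},s}$. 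Chamber independence (your step (i)) then falls out for free as Corollary~\ref{cor_toric_indep_chamber_K}, since the set $\{\mca{E}(A)\}$ visibly does not depend on $\mf{C}$; no wall-by-wall $R$-matrix analysis is needed. Only after this does the paper verify that $\oplus_{I}\mca{E}_{\mf{C},s}(p_{I})$ is tilting (Corollary~\ref{cor_toric_tilting}) via a cohomology-vanishing argument on the Lawrence toric variety, and deduce basis-ness (Corollary~\ref{cor_toric_canonical_basis}). So the tilting generators of \cite{MW,SV} confirm the conjecture \emph{a posteriori} rather than supply the bar-invariance input your plan requires.
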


For toric hyper-K\"ahler manifolds, this conjecture is proved in Proposition~\ref{prop_toric_bar_invariance_E(A)}, Lemma~\ref{lem_toric_bar_invariance_C(A)}, and Corollary~\ref{cor_toric_canonical_basis}. We note that $\mca{C}_{\mf{C},T^{1/2},s}(\lambda,p)=[\lambda]\cdot\mca{C}_{\mf{C},T^{1/2},s}(p)$ and $\mca{E}_{\mf{C},T^{1/2},s}(\lambda,p)=[\lambda]\cdot\mca{E}_{\mf{C},T^{1/2},s}(p)$ for any $(\lambda,p)\in\bb{F}$, where we set $\mca{C}_{\mf{C},T^{1/2},s}(p)\coloneqq\mca{C}_{\mf{C},T^{1/2},s}(0,p)$ and $\mca{E}_{\mf{C},T^{1/2},s}(p)\coloneqq\mca{E}_{\mf{C},T^{1/2},s}(0,p)$. We call $\bb{B}_{L,T^{1/2},s}$ and $\bb{B}_{X,T^{1/2},s}$ \emph{$K$-theoretic canonical bases} for $L$ and $X$ associated with the data $T^{1/2}$ and $s$. Conjecture~\ref{K-theoretic_canonical_basis} implies that the $K$-theoretic bar involutions preserve $K_{\bb{T}}(L)$ and $K_{\bb{T}}(X)$, which are already quite nontrivial from our definition. We should remark that $\mca{C}_{\mf{C},T^{1/2},s}(\lambda,p)$ and $\mca{E}_{\mf{C},T^{1/2},s}(\lambda,p)$ does depend on the choice of $\mf{C}$, but as we will show, the sets $\bb{B}_{L,T^{1/2},s}$ and $\bb{B}_{X,T^{1/2},s}$ will not depend on it (possibly up to sign). If we change $\mf{C}$, then the parametrization of the canonical basis by the fixed points will change. 

We also note that since $L$ is contained in the full attracting sets, the conjecture implies that the sum in the definition of $C^{T^{1/2},s}_{\lambda,p}$ is actually a finite sum. In particular, the above Kazhdan-Lusztig type algorithm gives a way to calculate $K$-theoretic canonical bases for $L$ if we know some formula for the $K$-theoretic stable bases. The sum in the definition of $E^{T^{1/2},s}_{\lambda,p}$ is always an infinite sum except for the trivial cases, but as Proposition~\ref{dual_basis} shows, $\bb{B}_{L,T^{1/2},s}$ and $\bb{B}_{X,T^{1/2},s}$ are dual basis with respect to $(-||-)$, hence one can also calculate $\bb{B}_{X,T^{1/2},s}$ if we know $\bb{B}_{L,T^{1/2},s}$. 

We also conjecture the following positivity property for the expansion of the $K$-theoretic canonical bases in terms of the $K$-theoretic standard bases. This is an analogue of the positivity of the Kazhdan-Lusztig polynomials. 

\begin{conj}\label{positivity}
For any $(\lambda,p)\in\bb{F}$, we have 
\begin{align*}
C^{T^{1/2},s}_{\lambda,p}&\in\sum_{(\lambda'.p')\geq_{\mf{C},s}(\lambda,p)}\bb{Z}_{\geq0}[(-v)^{-1}]\cdot S_{\lambda',p'},\\
E^{T^{1/2},s}_{\lambda,p}&\in\sum_{(\lambda'.p')\leq_{\mf{C},s}(\lambda,p)}\bb{Z}_{\geq0}[v^{-1}]\cdot S_{\lambda',p'}.
\end{align*}
\end{conj}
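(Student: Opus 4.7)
The plan is to deduce positivity from the conjectural categorification encoded in Conjectures 1.2 and \ref{K-theoretic_canonical_basis}: namely, that the $K$-theoretic canonical basis for $X$ is realized by indecomposable summands of an $\bb{S}$-nonnegatively graded tilting bundle $\mca{T}_{T^{1/2},s}$, and that of $L$ by simple modules of $\mca{A}_{T^{1/2},s}=\End(\mca{T}_{T^{1/2},s})$, the whole package forming a Koszul graded highest weight category. Under this categorification, the standard basis $\mca{S}_{\mf{C},T^{1/2},s}$ should be identified (up to sign and $v$-shift) with the classes of standard objects, and positivity for $E$ and $C$ becomes positivity of graded multiplicities.

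For the positivity of $E^{T^{1/2},s}_{\lambda,p}$, the key step is to recognize that indecomposable tiltings admit standard filtrations by Ringel's theorem. The coefficients in the expansion $E^{T^{1/2},s}_{\lambda,p}=\sum f_{\lambda',p'}(v)S_{\lambda',p'}$ then become graded multiplicities of standards in indecomposable tiltings; since the grading is the $\bb{S}$-weight, $\mca{A}_{T^{1/2},s}$ has nonnegative $\bb{S}$-grading, and the top term is attained exactly at $(\lambda',p')=(\lambda,p)$ with multiplicity $1$, these multiplicities live in $\bb{Z}_{\geq 0}[v^{-1}]$. The triangularity and bar-invariance characterizing $E^{T^{1/2},s}_{\lambda,p}$ in Proposition~\ref{expansion} then pin down the identification, the bar-invariance matching the Serre self-duality of tiltings.

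For $C^{T^{1/2},s}_{\lambda,p}$, the argument uses Koszul duality in the spirit of Beilinson--Ginzburg--Soergel and Kaledin. In a Koszul graded highest weight category, the expansion of a simple into standards is governed by inverse Kazhdan--Lusztig-type polynomials; the coefficient of $[\Delta_{\lambda',p'}]$ in $[L_{\lambda,p}]$ is up to sign $\sum_i (-v)^{-i}\dim\Ext^i_{\mca{A}}(\Delta_{\lambda',p'},L_{\lambda,p})_i$, where Koszulity forces $\Ext^i$ between simples to be concentrated in internal degree $i$, so the homological and internal degrees combine into a single variable $(-v)^{-1}$ with nonnegative coefficients. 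The parity shift between simples and tiltings accounts for the difference between $\bb{Z}_{\geq 0}[(-v)^{-1}]$ for $C$ and $\bb{Z}_{\geq 0}[v^{-1}]$ for $E$.

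The main obstacle is that Conjecture 1.2 is unproved in general, so the conceptual argument above is conditional. For unconditional progress I would fall back on the toric hyper-K\"ahler case treated in section 5, where the canonical bases are given explicitly by the theta-type formula (\ref{ecb}) and its dual, and $K$-theoretic stable bases have combinatorial descriptions. There one can unwind the Kazhdan--Lusztig-type recursion of Proposition~\ref{expansion} directly: the alternating signs $(-1)^{(\kappa,\beta)}$ and the Gaussian factors $q^{\frac{1}{2}(\beta,\beta)+(\lambda+\frac{1}{2}\kappa,\beta)}$ should combine to give the predicted sign pattern, once one observes that the relevant $v$-degrees are controlled by the convex geometry of the hyperplane arrangement defining $X$. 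A secondary, delicate difficulty is tracking normalization conventions --- the parity of $\dim X$, the shift $v^{a_p(T^{1/2},s)}$ in Definition~\ref{general_standard_basis}, and the factor $(-1)^{\frac{1}{2}\dim X}$ --- to confirm that the correct variant of positivity ($v^{-1}$ vs. $(-v)^{-1}$) falls out of the recursion on each side.
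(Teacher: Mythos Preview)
The paper does not attempt a general proof of this conjecture; it is stated as a conjecture in full generality and proved only for toric hyper-K\"ahler manifolds (Corollary~\ref{cor_toric_positivity2}). Your conditional general argument is therefore outside the scope of what the paper achieves, and it also contains a conceptual slip: you invoke Ringel's theorem to get standard filtrations of ``indecomposable tiltings,'' but in the paper's framework $\mca{T}_{T^{1/2},s}$ is a \emph{tilting bundle} in the sense of a derived generator with vanishing higher self-Ext, not a tilting object in a highest weight category. Under Koszul duality (Section~3.6), the $\mca{E}$'s correspond to \emph{simple} $\mca{B}$-modules and the $\mca{C}$'s to \emph{injective} $\mca{B}$-modules, so the relevant positivity is not Ringel-type but rather that of composition multiplicities $[\nabla^{\mca{B}}:L^{\mca{B}}]$ and costandard filtration multiplicities $(I^{\mca{B}}:\nabla^{\mca{B}})$ in a graded highest weight category. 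Your Koszul argument for $C$ is closer to this, but the details you sketch (Ext between $\Delta$ and $L$) would need to be rerouted accordingly.

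Your fallback for the toric case is also off-target. The paper's proof of Corollary~\ref{cor_toric_positivity2} does not touch the elliptic formula~(\ref{ecb}) or the Kazhdan--Lusztig recursion at all. Instead it uses the explicit alcove model of Section~5.4: the formula $\mca{C}(A)=\sum_{B\in N^{-}(A)}(-v)^{-\ell(A,B)}\mca{S}(B)$ is obtained directly by dualizing the finite expansion $\mca{S}(A)=\sum_{B\in N(A)}(-v)^{-\ell(A,B)}\mca{E}(B)$, while $\mca{E}(A)=\sum_{B\in M^{-}(A)}v^{-\ell(A,B)}\mca{S}(B)$ comes from Proposition~\ref{prop_nabla_heart}, a direct computation of $\Hom(\mca{E}(B),\nabla_{\mf{C},s}(A))^{T}$ via the Koszul resolution of $\mca{O}_{L_I}$. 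Positivity is then read off by inspection: the coefficients are single monomials $(-v)^{-\ell}$ and $v^{-\ell}$ with $\ell\geq 0$. So the actual mechanism is explicit combinatorics of alcoves plus a concrete Hom calculation, rather than sign-tracking through Gaussian factors or unwinding a recursion.
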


For toric hyper-K\"ahler manifolds, this conjecture is proved in Corollary~\ref{cor_toric_positivity2}. 

\subsection{Basic properties}

In this section, we prove some basic properties of the $K$-theoretic canonical bases assuming Conjecture~\ref{K-theoretic_canonical_basis}. We first check that $\bb{B}^{\pm}_{L,T^{1/2},s}=\bb{B}_{L,T^{1/2},s}\sqcup-\bb{B}_{L,T^{1/2},s}$ and $\bb{B}^{\pm}_{X,T^{1/2},s}=\bb{B}_{X,T^{1/2},s}\sqcup-\bb{B}_{X,T^{1/2},s}$. The asymptotic norm one property follows from the following lemma. 

\begin{lemma}\label{std_orthonormality}
For any $p,p'\in X^{H}$, we have 
\begin{align*}
\left(\mca{S}_{\mf{C},T^{1/2},s}(p)||\mca{S}_{\mf{C},T^{1/2},s}(p')\right)=\delta_{p,p'}.
\end{align*}
\end{lemma}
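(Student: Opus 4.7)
The plan is to compute $\left(\mca{S}_{\mf{C},T^{1/2},s}(p)\|\mca{S}_{\mf{C},T^{1/2},s}(p')\right)$ by chaining the earlier lemmas on standard bases. Unfolding the definition of the inner product gives
\[
\left(\mca{S}_{\mf{C},T^{1/2},s}(p):\bb{D}_{X}\beta^{K}\mca{S}_{\mf{C},T^{1/2},s}(p')\right).
\]
Substituting the defining property $\beta^{K}\mca{S}_{\mf{C},T^{1/2},s}(p')=(-v)^{\dim X/2}\mca{S}_{-\mf{C},T^{1/2},s}(p')$ and using that $\bb{D}_{X}$ is antilinear in $v$ (duality reverses $\bb{T}$-weights), the scalar $(-v)^{\dim X/2}$ becomes $(-v)^{-\dim X/2}$ after passing through $\bb{D}_{X}$.

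Next I identify $\bb{D}_{X}$ concretely via Serre duality, $\bb{D}_{X}(\mca{F})=\mca{F}^{\vee}\otimes K_{X}[\dim X]$: the cohomological shift contributes $(-1)^{\dim X}=1$ since $\dim X$ is even, and because the symplectic form $\omega$ has $\bb{S}$-weight $2$, the section $\omega^{\dim X/2}$ trivializes $K_{X}$ equivariantly with $\bb{S}$-weight $\dim X$, giving $[K_{X}]=v^{\dim X}$ in $K_{\bb{T}}(X)$. Hence $\bb{D}_{X}(\mca{F})=v^{\dim X}\mca{F}^{\vee}$, so the expression becomes
\[
(-v)^{-\dim X/2}\cdot v^{\dim X}\cdot\left(\mca{S}_{\mf{C},T^{1/2},s}(p):\mca{S}_{-\mf{C},T^{1/2},s}(p')^{\vee}\right).
\]

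Finally, I would apply Lemma~\ref{std_duality} to rewrite $\mca{S}_{-\mf{C},T^{1/2},s}(p')^{\vee}$ as $(-v)^{\dim X/2}\mca{S}_{-\mf{C},T^{1/2}_{\mr{opp}},-s}(p')$. The two factors $(-v)^{\pm\dim X/2}$ cancel, and the remaining $v^{\dim X}$ multiplies $\left(\mca{S}_{\mf{C},T^{1/2},s}(p):\mca{S}_{-\mf{C},T^{1/2}_{\mr{opp}},-s}(p')\right)$, which Lemma~\ref{std_orthogonality} evaluates to $v^{-\dim X}\delta_{p,p'}$. The two opposite powers of $v$ cancel to yield $\delta_{p,p'}$.

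The proof is essentially bookkeeping. The only subtle points are the antilinearity of $\bb{D}_{X}$ (easy to overlook, since it is this antilinearity that inverts $(-v)^{\dim X/2}$ to $(-v)^{-\dim X/2}$ at the crucial step) and the identification $[K_{X}]=v^{\dim X}$ coming from the weight of the symplectic form. The specific numerical normalizations built into Definition~\ref{general_standard_basis} (through the exponent $a_{p}(T^{1/2},s)$ and the sign $(-1)^{\dim X/2}$) and into the definition of $\beta^{K}$ were evidently arranged precisely so that all the $v$-powers and signs conspire to give this clean orthonormality.
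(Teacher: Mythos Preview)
Your proof is correct and follows the same route as the paper's: unfold $(-\|-)$, apply the defining relation for $\beta^{K}$, use Lemma~\ref{std_duality} to pass to the opposite polarization and slope, and finish with Lemma~\ref{std_orthogonality}. The only difference is presentational: the paper collapses your two factors $(-v)^{-\dim X/2}$ and $v^{\dim X}$ into a single $(-v)^{\dim X/2}$ in one step, whereas you make explicit the intermediate identities $\bb{D}_{X}(c\,\mca{F})=c^{-1}\bb{D}_{X}(\mca{F})$ and $[\bb{D}_{X}\mca{F}]=v^{\dim X}[\mca{F}^{\vee}]$ that the paper leaves implicit.
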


\begin{proof}
By Lemma~\ref{std_duality} and Lemma~\ref{std_orthogonality}, we have 
\begin{align*}
\left(\mca{S}_{\mf{C},T^{1/2},s}(p)||\mca{S}_{\mf{C},T^{1/2},s}(p')\right)&=\left(\mca{S}_{\mf{C},T^{1/2},s}(p):\bb{D}_{X}\beta^{K}_{\mf{C},T^{1/2},s}(\mca{S}_{\mf{C},T^{1/2},s}(p'))\right)\\
&=(-v)^{\frac{\dim X}{2}}\left(\mca{S}_{\mf{C},T^{1/2},s}(p):\mca{S}_{-\mf{C},T^{1/2},s}(p')^{\vee}\right)\\
&=v^{\dim X}\cdot\left(\mca{S}_{\mf{C},T^{1/2},s}(p):\mca{S}_{-\mf{C},T^{1/2}_{\mr{opp}},-s}(p')\right)\\
&=\delta_{p,p'}.
\end{align*}
\end{proof}

We denote by $\dag:K_{\bb{T}}(\mr{pt})\rightarrow K_{\bb{T}}(\mr{pt})$ the involution induced from the inverse for $H$. 

\begin{corollary}\label{asymptotic_orthonormality}
Assume Conjecture~\ref{K-theoretic_canonical_basis}. For any $p,p'\in X^{H}$, we have 
\begin{align*}
(\mca{C}_{\mf{C},T^{1/2},s}(p)||\mca{C}_{\mf{C},T^{1/2},s}(p'))&\in\delta_{p,p'}+v^{-1}K_{H}(\mr{pt})[v^{-1}],\\
(\mca{E}_{\mf{C},T^{1/2},s}(p)||\mca{E}_{\mf{C},T^{1/2},s}(p'))&\in\delta_{p,p'}+v^{-1}K_{H}(\mr{pt})[\![v^{-1}]\!].
\end{align*}
\end{corollary}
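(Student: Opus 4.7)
The strategy is to expand both arguments in the $K$-theoretic standard basis via Conjecture \ref{K-theoretic_canonical_basis}, apply Lemma \ref{std_orthonormality}, and track the leading behavior in $v^{-1}$. First I would check that $(-||-)$ is $K_{H}(\mr{pt})$-sesquilinear with respect to the involution $\dag$ and bilinear in $v$: this follows from the $K_{H}(\mr{pt})$-linearity and $K_{\bb{S}}(\mr{pt})$-antilinearity of $\beta^{K}$, combined with the fact that Serre duality inverts equivariant weights, $\bb{D}_{X}([\mu]\mca{F}) = [-\mu]\bb{D}_{X}(\mca{F})$ and $\bb{D}_{X}(v^{n}\mca{F}) = v^{-n}\bb{D}_{X}(\mca{F})$. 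Extending Lemma \ref{std_orthonormality} by this sesquilinearity yields
\[
(\mca{S}_{\mf{C},T^{1/2},s}(\lambda_{1},q_{1})\,||\,\mca{S}_{\mf{C},T^{1/2},s}(\lambda_{2},q_{2})) = [\lambda_{1}-\lambda_{2}]\,\delta_{q_{1},q_{2}}.
\]

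For the first formula, Conjecture \ref{K-theoretic_canonical_basis} provides a finite expansion
\[
\mca{C}_{\mf{C},T^{1/2},s}(p) = \mca{S}_{\mf{C},T^{1/2},s}(p) + \sum_{(\lambda',q)>_{\mf{C},s}(0,p)} f^{p}_{\lambda',q}(v)\,\mca{S}_{\mf{C},T^{1/2},s}(\lambda',q),
\]
with $f^{p}_{\lambda',q}(v) \in v^{-1}\bb{Z}[v^{-1}]$, and the sum finite because $\mca{C}_{\mf{C},T^{1/2},s}(p) \in K_{\bb{T}}(L)$. Substituting (with the convention $f^{p}_{0,p}(v) = 1$) and using the orthonormality above,
\[
(\mca{C}_{\mf{C},T^{1/2},s}(p)\,||\,\mca{C}_{\mf{C},T^{1/2},s}(p')) = \sum_{q\in X^{H}}\sum_{\lambda_{1},\lambda_{2}} f^{p}_{\lambda_{1},q}(v)\,f^{p'}_{\lambda_{2},q}(v)\,[\lambda_{1}-\lambda_{2}].
\]
The unique summand in which both Kazhdan--Lusztig coefficients are $1$ requires $q = p = p'$ and $\lambda_{1} = \lambda_{2} = 0$, contributing $\delta_{p,p'}$. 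Every other summand has at least one factor in $v^{-1}\bb{Z}[v^{-1}]$ and another in $\bb{Z}[v^{-1}]$, whence the remainder lies in $v^{-1}K_{H}(\mr{pt})[v^{-1}]$, proving the first formula.

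For $\mca{E}$ the same computation applies, except that the triangular expansion of $\mca{E}_{\mf{C},T^{1/2},s}(p)$ in $\bb{M}_{-\mf{C},s}$ is typically infinite. Since $(\mca{E}_{\mf{C},T^{1/2},s}(p)\,||\,\mca{E}_{\mf{C},T^{1/2},s}(p'))$ is an honest element of $\Frac(K_{\bb{T}}(\mr{pt}))$, the formal substitution yields its $v^{-1}$-adic expansion with leading term $\delta_{p,p'}$ and remainder formally in $v^{-1}(\cdots)[\![v^{-1}]\!]$. The hard part will be to verify that at each fixed power $v^{-n}$ only finitely many triples $(\lambda_{1},\lambda_{2},q)$ contribute, so that the coefficient lies in $K_{H}(\mr{pt})$ rather than merely in $\Frac(K_{H}(\mr{pt}))$; I expect this to follow from the interval-finiteness of $\leq_{\mf{C},s}$ in Lemma \ref{partial_order} together with the integrality of $\mca{E}_{\mf{C},T^{1/2},s}(p) \in K_{\bb{T}}(X)$ and the structure of the coefficients $g^{p}_{\lambda',q}(v) \in v^{-1}\bb{Z}[v^{-1}]$.
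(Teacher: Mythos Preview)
Your argument for the first formula is correct and matches the paper's proof: the expansion of $\mca{C}_{\mf{C},T^{1/2},s}(p)$ in the standard basis is finite, so the substitution into Lemma~\ref{std_orthonormality} is unproblematic.

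For the second formula, however, there is a genuine gap, and your proposed fix does not work as stated. The formal expansion of $\mca{E}_{\mf{C},T^{1/2},s}(p)$ in $\bb{M}_{-\mf{C},s}$ is indexed by $\{(\lambda',q) : (\lambda',q) <_{\mf{C},s} (0,p)\}$, which is not an interval but a half-infinite cone. Interval-finiteness of $\leq_{\mf{C},s}$ says nothing about such sets, and there is no reason a priori why only finitely many $(\lambda',q)$ should contribute to a fixed power $v^{-n}$; the Kazhdan--Lusztig algorithm by itself gives no such bound. So the infinite double sum you write down is not obviously convergent in $K_{H}(\mr{pt})[\![v^{-1}]\!]$ coefficient by coefficient.

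The paper avoids this problem by working with a \emph{finite} expansion over the fraction field rather than the infinite formal one. Write
\[
\mca{E}_{\mf{C},T^{1/2},s}(p) = \sum_{p''\in X^{H}} f_{p,p''}\cdot \mca{S}_{\mf{C},T^{1/2},s}(p''), \qquad f_{p,p''}\in\Frac(K_{\bb{T}}(\mr{pt})).
\]
The formal construction of $E^{T^{1/2},s}_{0,p}$ shows that the Laurent expansion of $f_{p,p''}$ in $v^{-1}$ lies in $\delta_{p,p''} + v^{-1}\Frac(K_{H}(\mr{pt}))[\![v^{-1}]\!]$. The missing integrality input is geometric: since $X^{\bb{S}} = L^{\bb{S}}$ is smooth and proper, the coefficients $f_{p,p''}$ actually lie in $K_{H}(\mr{pt})(\!(v^{-1})\!)$, i.e.\ no denominators in the $H$-equivariant parameters occur. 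Combining the two gives $f_{p,p''}\in\delta_{p,p''}+v^{-1}K_{H}(\mr{pt})[\![v^{-1}]\!]$, and then Lemma~\ref{std_orthonormality} yields
\[
(\mca{E}_{\mf{C},T^{1/2},s}(p)\,||\,\mca{E}_{\mf{C},T^{1/2},s}(p')) = \sum_{p''} f_{p,p''}\, f_{p',p''}^{\dag} \in \delta_{p,p'} + v^{-1}K_{H}(\mr{pt})[\![v^{-1}]\!],
\]
which is now a finite sum over $p''\in X^{H}$.
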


\begin{proof}
For the first statement, this follows from Lemma~\ref{std_orthonormality} since the expansion of $\mca{C}_{\mf{C},T^{1/2},s}(p)$ in terms of the standard basis is a finite sum. For the second statement, let us write 
\begin{align*}
\mca{E}_{\mf{C},T^{1/2},s}(p)=\sum_{p''\in X^{H}}f_{p,p''}\cdot\mca{S}_{\mf{C},T^{1/2},s}(p'')
\end{align*}
for some $f_{p,p''}\in\Frac(K_{\bb{T}}(\mr{pt}))$. The formal construction of $E^{T^{1/2},s}_{0,p}$ implies that if we expand $f_{p,p''}$ in $v^{-1}$, we have $f_{p,p''}\in\delta_{p,p''}+v^{-1}\Frac(K_{H}(\mr{pt}))[\![v^{-1}]\!]$. On the other hand, since $X^{\bb{S}}=L^{\bb{S}}$ is smooth and proper, we have $f_{p,p''}\in K_{H}(\mr{pt})(\!(v^{-1})\!)$. Hence we obtain $f_{p,p''}\in\delta_{p,p''}+v^{-1}K_{H}(\mr{pt})[\![v^{-1}]\!]$ and 
\begin{align*}
(\mca{E}_{\mf{C},T^{1/2},s}(p)||\mca{E}_{\mf{C},T^{1/2},s}(p'))=\sum_{p''}f_{p,p''}\cdot f^{\dag}_{p',p''}\in\delta_{p,p'}+v^{-1}K_{H}(\mr{pt})[\![v^{-1}]\!]
\end{align*}
by Lemma~\ref{std_orthonormality}. 
\end{proof}

We denote by $\overline{(-)}:K_{\bb{T}}(\mr{pt})\rightarrow K_{\bb{T}}(\mr{pt})$ the involution induced from the inverse map for $\bb{S}$

\begin{corollary}\label{signed_canonical_basis}
Assume Conjecture~\ref{K-theoretic_canonical_basis}. We have 
\begin{align*}
\bb{B}^{\pm}_{L,T^{1/2},s}&=\bb{B}_{L,T^{1/2},s}\sqcup-\bb{B}_{L,T^{1/2},s},\\
\bb{B}^{\pm}_{X,T^{1/2},s}&=\bb{B}_{X,T^{1/2},s}\sqcup-\bb{B}_{X,T^{1/2},s}.
\end{align*}
\end{corollary}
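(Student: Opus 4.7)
The plan is to prove each equality by verifying both inclusions; I treat $\bb{B}^{\pm}_{X,T^{1/2},s}$ in detail, since the $\bb{B}^{\pm}_{L,T^{1/2},s}$ case is entirely analogous after replacing $\mca{E}$ by $\mca{C}$, the bar-eigenvalue $1$ by $v^{\dim X}$, and the power-series ring $v^{-1}K_{H}(\mr{pt})[\![v^{-1}]\!]$ by the polynomial ring $v^{-1}K_{H}(\mr{pt})[v^{-1}]$.

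For the inclusion $\bb{B}_{X,T^{1/2},s}\sqcup(-\bb{B}_{X,T^{1/2},s})\subseteq\bb{B}^{\pm}_{X,T^{1/2},s}$, each $\mca{E}_{\mf{C},T^{1/2},s}(\lambda,p)=[\lambda]\cdot\mca{E}_{\mf{C},T^{1/2},s}(p)$ is bar-invariant by the $K_{H}$-linearity of $\beta^{K}$ together with Conjecture~\ref{K-theoretic_canonical_basis}, and the sesquilinearity of $(-||-)$ (it is $K_{\bb{T}}$-linear in the first slot and sends $[\lambda]$ to $[-\lambda]$ in the second, after combining the $K_{H}$-antilinearity of $\bb{D}_{X}$ with the $K_{H}$-linearity of $\beta^{K}$) collapses the self-pairing to $(\mca{E}(p)||\mca{E}(p))\in 1+v^{-1}K_{H}(\mr{pt})[\![v^{-1}]\!]$ by Corollary~\ref{asymptotic_orthonormality}. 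Signs square to $1$, and the linear independence of $\bb{B}_{X,T^{1/2},s}$ over $\bb{Z}[v,v^{-1}]$ makes the union disjoint.

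For the reverse inclusion, I would expand any $m\in\bb{B}^{\pm}_{X,T^{1/2},s}$ in the canonical basis as $m=\sum_{(\lambda,p)}c_{\lambda,p}\mca{E}_{\mf{C},T^{1/2},s}(\lambda,p)$ with $c_{\lambda,p}\in\bb{Z}[v,v^{-1}]$ (finitely many nonzero), and then deduce from $\beta^{K}(m)=m$, the bar-invariance of each $\mca{E}(\lambda,p)$, and the $K_{\bb{S}}$-antilinearity of $\beta^{K}$ that every $c_{\lambda,p}$ is bar-invariant as a Laurent polynomial in $v$. The crux is then a ``sum of squares'' argument: let $N=\max_{(\lambda,p)}\deg_{v}c_{\lambda,p}\geq0$ (nonnegativity is automatic from bar-invariance of the $c_{\lambda,p}$), and set $A_{p}\coloneqq\sum_{\lambda}c^{(N)}_{\lambda,p}[\lambda]\in\bb{Z}[\bb{X}^{\ast}(H)]$, where $c^{(N)}_{\lambda,p}$ is the coefficient of $v^{N}$ in $c_{\lambda,p}$. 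Using sesquilinearity together with the expansion $(\mca{E}(\lambda,p)||\mca{E}(\lambda',p'))=[\lambda-\lambda'](\mca{E}(p)||\mca{E}(p'))$ and Corollary~\ref{asymptotic_orthonormality}, only the leading term $\delta_{p,p'}[\lambda-\lambda']$ contributes to the $v^{2N}$-coefficient of $(m||m)$, which therefore equals $\sum_{p\in X^{H}}A_{p}A_{p}^{\dag}$ in $\bb{Z}[\bb{X}^{\ast}(H)]$.

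If $N>0$, the norm condition $(m||m)\in 1+v^{-1}K_{H}(\mr{pt})[\![v^{-1}]\!]$ allows no positive power of $v$, so this sum must vanish; its $[0]$-coefficient, however, is $\sum_{p,\lambda}(c^{(N)}_{\lambda,p})^{2}$, a sum of integer squares, which forces every $c^{(N)}_{\lambda,p}$ to be zero, contradicting the choice of $N$. Therefore $N=0$ and each $c_{\lambda,p}\in\bb{Z}$. The constant-term condition on $(m||m)$ then becomes $\sum_{p}A_{p}A_{p}^{\dag}=1$ with $A_{p}=\sum_{\lambda}c_{\lambda,p}[\lambda]$, whose $[0]$-coefficient reads $\sum_{(\lambda,p)}c_{\lambda,p}^{2}=1$ in $\bb{Z}$, forcing exactly one $c_{\lambda,p}$ to equal $\pm1$ and the others to vanish. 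Hence $m=\pm\mca{E}_{\mf{C},T^{1/2},s}(\lambda,p)\in\pm\bb{B}_{X,T^{1/2},s}$, which completes the reverse inclusion. The main subtlety---though not deep---is the careful bookkeeping of the sesquilinearity of $(-||-)$ and the identification of the precise leading $v$-power contributions; positivity of integer sums of squares, applied to the $[0]$-weight component, then closes the argument cleanly in both the $X$ and $L$ cases.
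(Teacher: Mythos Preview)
Your proof is correct and takes essentially the same approach as the paper's: expand in the canonical basis, use bar-invariance to force the coefficients to be symmetric in $v$, read off the leading $v$-term of $(m\|m)$ via Corollary~\ref{asymptotic_orthonormality} as $\sum_{p}A_{p}A_{p}^{\dag}$, and conclude by the positivity of the $[0]$-weight component (a sum of integer squares). The only difference is packaging: the paper writes the expansion as $m=\sum_{p}f_{p}\,\mca{E}_{\mf{C},T^{1/2},s}(p)$ with $f_{p}\in K_{\bb{T}}(\mr{pt})$ rather than separating out the $[\lambda]$-shifts, and leaves the final sum-of-squares step implicit in the line ``This implies that $f_{p}=\pm[\lambda]\cdot\delta_{p,p'}$.''
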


\begin{proof}
We only prove the second statement since the proof is the same for the first. By Corollary~\ref{asymptotic_orthonormality}, we have $\bb{B}_{X,T^{1/2},s}\sqcup-\bb{B}_{X,T^{1/2},s}\subset\bb{B}^{\pm}_{X,T^{1/2},s}$. Hence we only need to prove the other inclusion. 

Let $\mca{E}\in\bb{B}^{\pm}_{X,T^{1/2},s}$. Since $\bb{B}_{X,T^{1/2},s}$ is a basis of $K_{\bb{T}}(X)$, one can write $\mca{E}=\sum_{p\in X^{H}}f_{p}\mca{E}_{\mf{C},T^{1/2},s}(p)$ for some $f_{p}\in K_{\bb{T}}(\mr{pt})$. By $\beta_{\mf{C},T^{1/2},s}$-invariance, we obtain $\overline{f_{p}}=f_{p}$. Let $N$ be the maximal degree in $v$ of $f_{p}$, $p\in X^{H}$, and write $f_{p}=\sum_{i=-N}^{N}f_{p,i}v^{i}$ for some $f_{p,i}\in K_{H}(\mr{pt})$. By Corollary~\ref{asymptotic_orthonormality}, we obtain 
\begin{align*}
(\mca{E}||\mca{E})=\sum_{p\in X^{H}}f_{p,N}f^{\dag}_{p,N}v^{2N}+\cdots.	
\end{align*}
By the asymptotic norm one property of $\mca{E}$, we have $N=0$ and $\sum_{p\in X^{H}}f_{p,0}f^{\dag}_{p,0}=1$. This implies that $f_{p}=\pm[\lambda]\cdot\delta_{p,p'}$ for some $\lambda\in\bb{X}^{\ast}(H)$  and $p'\in X^{H}$, hence $\mca{E}\in\bb{B}_{X,T^{1/2},s}\sqcup-\bb{B}_{X,T^{1/2},s}$.  
\end{proof}

We note that Corollary~\ref{signed_canonical_basis} together with Conjecture~\ref{K-theoretic_bar_involution} implies that the $K$-theoretic canonical bases $\bb{B}_{L,T^{1/2},s}$ and $\bb{B}_{X,T^{1/2},s}$ does not depend on the choice of $\mf{C}$ up to sign.  Recall that under the assumption of Conjecture~\ref{K-theoretic_canonical_basis}, we have $\beta^{K}_{\mf{C},T^{1/2},s}=\beta^{K}_{-\mf{C},T^{1/2},s}$ and hence the signed $K$-theoretic canonical bases are the same for $\mf{C}$ and $-\mf{C}$. In particular, for each $(\lambda,p)\in\bb{F}$, there exists $(\lambda^{-},p^{-})\in\bb{F}$ such that $\mca{C}_{\mf{C},T^{1/2},s}(\lambda,p)=\pm\mca{C}_{-\mf{C},T^{1/2},s}(\lambda^{-},p^{-})$. This implies that 
\begin{align*}
\mca{C}_{\mf{C},T^{1/2},s}(\lambda,p)\in\pm\mca{S}_{-\mf{C},T^{1/2},s}(\lambda^{-},p^{-})+\sum_{(\lambda',p')<_{\mf{C},s}(\lambda^{-},p^{-})}v^{-1}\bb{Z}[v^{-1}]\cdot\mca{S}_{-\mf{C},T^{1/2},s}(\lambda',p').
\end{align*}
By the bar invariance, we also obtain 
\begin{align*}
\mca{C}_{\mf{C},T^{1/2},s}(\lambda,p)\in\pm(-v)^{-\frac{\dim X}{2}}\mca{S}_{\mf{C},T^{1/2},s}(\lambda^{-},p^{-})+\sum_{(\lambda',p')<_{\mf{C},s}(\lambda^{-},p^{-})}v^{-\frac{\dim X}{2}+1}\bb{Z}[v]\cdot\mca{S}_{\mf{C},T^{1/2},s}(\lambda',p').
\end{align*}
If we further assume Conjecture~\ref{positivity}, then the sign above must be positive. This proves the following statement. 

\begin{prop}\label{degree_bound}
Assume Conjecture~\ref{K-theoretic_canonical_basis}. For each $(\lambda,p)\in\bb{F}$, there exists $(\lambda^{-},p^{-})\in\bb{F}$ such that 
\begin{align*}
\mca{C}_{\mf{C},T^{1/2},s}(\lambda,p)=\sum_{(\lambda,p)\leq_{\mf{C},s}(\lambda',p')\leq_{\mf{C},s}(\lambda^{-},p^{-})}P^{\lambda',p'}_{\lambda,p}(v)\cdot \mca{S}_{\mf{C},T^{1/2},s}(\lambda',p'),
\end{align*}
where $P^{\lambda,p}_{\lambda,p}(v)=1$, $P^{\lambda^{-},p^{-}}_{\lambda,p}(v)=\pm(-v)^{-\frac{\dim X}{2}}$, and 
\begin{align*}
P^{\lambda',p'}_{\lambda,p}(v)\in v^{-1}\bb{Z}[v^{-1}]\cap v^{-\frac{\dim X}{2}+1}\bb{Z}[v]
\end{align*}
for $(\lambda,p)<_{\mf{C},s}(\lambda',p')<_{\mf{C},s}(\lambda^{-},p^{-})$. If we further assume Conjecture~\ref{positivity}, then we have $P^{\lambda^{-},p^{-}}_{\lambda,p}(v)=(-v)^{-\frac{\dim X}{2}}$.
\end{prop}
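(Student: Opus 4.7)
The plan is to follow the blueprint sketched in the paragraph immediately preceding the statement: produce $(\lambda^-, p^-)$ by matching $\mca{C}_{\mf{C}, T^{1/2}, s}(\lambda, p)$ with an element of the signed canonical basis for the opposite chamber, and then obtain the expansion by comparing two representations of the same canonical basis element. Concretely, I would first invoke Corollary~\ref{signed_canonical_basis} together with Conjecture~\ref{K-theoretic_bar_involution}: since $\beta^K_{\mf{C}} = \beta^K_{-\mf{C}}$, the signed $K$-theoretic canonical bases associated with $\mf{C}$ and $-\mf{C}$ coincide, so there exist $(\lambda^-, p^-) \in \bb{F}$ and a sign with $\mca{C}_{\mf{C}, T^{1/2}, s}(\lambda, p) = \pm \mca{C}_{-\mf{C}, T^{1/2}, s}(\lambda^-, p^-)$. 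Applying Conjecture~\ref{K-theoretic_canonical_basis} to the chamber $-\mf{C}$ gives the defining expansion of the right-hand side in the $(-\mf{C})$-standard basis, with leading coefficient $1$ at $(\lambda^-,p^-)$, remaining coefficients in $v^{-1}\bb{Z}[v^{-1}]$, and support contained in $\{(\lambda',p') \geq_{-\mf{C},s}(\lambda^-,p^-)\} = \{(\lambda',p') \leq_{\mf{C},s}(\lambda^-,p^-)\}$, yielding
\[ \mca{C}_{\mf{C}, T^{1/2}, s}(\lambda, p) \in \pm \mca{S}_{-\mf{C}}(\lambda^-, p^-) + \sum_{(\lambda',p')<_{\mf{C},s}(\lambda^-,p^-)} v^{-1}\bb{Z}[v^{-1}] \cdot \mca{S}_{-\mf{C}}(\lambda', p'). \]

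Next I would convert this into an expansion in the $\mf{C}$-standard basis by applying $\beta^K$ to the right-hand side and invoking bar invariance of $\mca{C}_{\mf{C}, T^{1/2}, s}(\lambda, p)$ on the left. Combined with $v$-antilinearity and the identity $\beta^K(\mca{S}_{-\mf{C}}(\lambda', p')) = (-v)^{\dim X/2}\mca{S}_\mf{C}(\lambda', p')$, a direct unwinding yields the desired $\mf{C}$-expansion with the leading coefficient at $(\lambda^-, p^-)$ of the predicted form and with the middle coefficients lying in $v^{-\dim X/2+1}\bb{Z}[v]$. Comparing this with the expansion supplied by Conjecture~\ref{K-theoretic_canonical_basis} applied to $\mf{C}$ itself (support contained in $\{(\lambda',p') \geq_{\mf{C},s}(\lambda,p)\}$, coefficients in $v^{-1}\bb{Z}[v^{-1}]$, leading coefficient $1$ at $(\lambda,p)$), and using uniqueness of basis expansions in $K_{\bb{T}}(X)$, the intersection of the two supports gives $(\lambda,p) \leq_{\mf{C},s}(\lambda',p') \leq_{\mf{C},s}(\lambda^-,p^-)$, while the intersection of the two coefficient constraints gives $P^{\lambda',p'}_{\lambda,p}(v) \in v^{-1}\bb{Z}[v^{-1}] \cap v^{-\dim X/2+1}\bb{Z}[v]$ at the middle indices.

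The final assertion under Conjecture~\ref{positivity} is then immediate: positivity forces $P^{\lambda^-,p^-}_{\lambda,p}(v) \in \bb{Z}_{\geq 0}[(-v)^{-1}]$, which together with the already-established form $\pm(-v)^{-\dim X/2}$ fixes the sign as positive. The only nontrivial bookkeeping I anticipate is tracking the exact power of $(-v)$ that is picked up when converting between the $(-\mf{C})$- and $\mf{C}$-standard bases via the bar involution, since this determines the precise exponent appearing in the top coefficient; this is mechanical from the definitions but must be carried out carefully.
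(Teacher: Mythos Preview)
Your proposal is correct and follows essentially the same approach as the paper's own argument, which is laid out in the paragraph immediately preceding the proposition: identify $(\lambda^-,p^-)$ via the coincidence of signed canonical bases for $\mf{C}$ and $-\mf{C}$, expand in the $(-\mf{C})$-standard basis, apply bar invariance to convert to the $\mf{C}$-standard basis, and then compare with the defining $\mf{C}$-expansion. Your bookkeeping of the $(-v)$ powers and the final positivity argument also match the paper exactly.
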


Using this, one can also give a characterization of $\mathbb{B}_{L,T^{1/2},s}$ which is similar to Kashiwara's theory of global crystal bases. 

\begin{corollary}\label{global_crystal_basis}
Assume Conjecture~\ref{K-theoretic_canonical_basis}. We have
\begin{align*}
K_{\bb{T}}(L)\cap\left\{\mca{S}_{\mf{C},T^{1/2},s}(\lambda,p)+\sum_{(\lambda',p')\in\bb{F}}\left(v^{-1}\bb{Z}[v^{-1}]\cap v^{-\frac{\dim X}{2}}\bb{Z}[v]\right)\cdot\mca{S}_{\mf{C},T^{1/2},s}(\lambda',p')\right\}=\left\{\mca{C}_{\mf{C},T^{1/2},s}(\lambda,p)\right\}. 
\end{align*}
\end{corollary}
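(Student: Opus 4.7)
The inclusion $\{\mca{C}_{\mf{C},T^{1/2},s}(\lambda,p)\}\subseteq$ the displayed set is immediate from Conjecture~\ref{K-theoretic_canonical_basis} (which puts $\mca{C}_{\mf{C},T^{1/2},s}(\lambda,p)$ in $K_{\bb{T}}(L)$) combined with Proposition~\ref{degree_bound}: each nondiagonal coefficient in its standard-basis expansion lies either in $v^{-1}\bb{Z}[v^{-1}]\cap v^{-\frac{\dim X}{2}+1}\bb{Z}[v]$ (strict interior) or equals $\pm(-v)^{-\frac{\dim X}{2}}$ (top), both sitting inside $v^{-1}\bb{Z}[v^{-1}]\cap v^{-\frac{\dim X}{2}}\bb{Z}[v]$. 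For the reverse inclusion, I take $m$ in the intersection and, using that $\bb{B}_{L,T^{1/2},s}$ is a $\bb{Z}[v,v^{-1}]$-basis, expand $m=\sum_{(\mu,q)\in\bb{F}}g_{\mu,q}(v)\,\mca{C}_{\mf{C},T^{1/2},s}(\mu,q)$ with only finitely many $g_{\mu,q}\in\bb{Z}[v,v^{-1}]$ nonzero (by finite $\mca{S}$-support of $m$).

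An upward induction along $\leq_{\mf{C},s}$ (interval-finite by Lemma~\ref{partial_order}) restricted to the finite support set shows $g_{\mu,q}\in\delta_{(\mu,q),(\lambda,p)}+v^{-1}\bb{Z}[v^{-1}]$. Indeed, the $\mca{S}_{\mf{C},T^{1/2},s}(\mu_0,q_0)$-coefficient of $m$, which by the membership condition lies in $\delta_{(\mu_0,q_0),(\lambda,p)}+v^{-1}\bb{Z}[v^{-1}]$, equals $g_{\mu_0,q_0}+\sum_{(\mu,q)<_{\mf{C},s}(\mu_0,q_0)}g_{\mu,q}\,P^{\mu_0,q_0}_{\mu,q}$; since $P^{\mu_0,q_0}_{\mu,q}\in v^{-1}\bb{Z}[v^{-1}]$ by Proposition~\ref{expansion}(1), the inductive hypothesis makes the sum lie in $v^{-1}\bb{Z}[v^{-1}]$, forcing $g_{\mu_0,q_0}$ into the claimed set. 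Setting $h_{\mu,q}:=g_{\mu,q}-\delta_{(\mu,q),(\lambda,p)}\in v^{-1}\bb{Z}[v^{-1}]\cap\bb{Z}[v,v^{-1}]$, the remaining task is to prove every $h_{\mu,q}$ vanishes.

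Suppose for contradiction $\mca{I}:=\{(\mu,q):h_{\mu,q}\neq 0\}$ is nonempty; choose $(\mu^{\ast},q^{\ast})\in\mca{I}$ with $(\mu^{\ast-},q^{\ast-})$ maximal, and set $\mca{I}^{\ast}:=\{(\mu,q)\in\mca{I}:(\mu^-,q^-)=(\mu^{\ast-},q^{\ast-})\}$. The $\mca{S}_{\mf{C},T^{1/2},s}(\mu^{\ast-},q^{\ast-})$-coefficient of $n:=m-\mca{C}_{\mf{C},T^{1/2},s}(\lambda,p)$ receives contributions only from $(\mu,q)\in\mca{I}^{\ast}$ by the maximality of $(\mu^{\ast-},q^{\ast-})$, and each enters with the top coefficient $\pm(-v)^{-\frac{\dim X}{2}}$ by Proposition~\ref{degree_bound}; it therefore equals $(-v)^{-\frac{\dim X}{2}}\sum_{\mca{I}^{\ast}}\epsilon_{\mu,q}h_{\mu,q}$ for signs $\epsilon_{\mu,q}\in\{\pm 1\}$. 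On the other hand, by the membership condition for $m$ and Proposition~\ref{degree_bound} applied to $\mca{C}_{\mf{C},T^{1/2},s}(\lambda,p)$, this coefficient lies in $v^{-1}\bb{Z}[v^{-1}]\cap v^{-\frac{\dim X}{2}}\bb{Z}[v]$. Since multiplying $v^{-1}\bb{Z}[v^{-1}]$ by $(-v)^{-\frac{\dim X}{2}}$ produces Laurent polynomials concentrated in $v$-degrees $\leq-\frac{\dim X}{2}-1$, which meets $v^{-\frac{\dim X}{2}}\bb{Z}[v]$ only at $0$, we conclude $\sum_{\mca{I}^{\ast}}\epsilon_{\mu,q}h_{\mu,q}=0$.

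The hard part is to upgrade this single linear relation among the $h_{\mu,q}$ to the vanishing of $h_{\mu^{\ast},q^{\ast}}$ itself. I would handle it either by establishing that the assignment $(\mu,q)\mapsto(\mu^-,q^-)$ is injective on the canonical basis (so that $\mca{I}^{\ast}$ automatically reduces to the singleton $\{(\mu^{\ast},q^{\ast})\}$ and the relation reads $\pm h_{\mu^{\ast},q^{\ast}}=0$), or by a secondary induction on $|\mca{I}|$ using the relation to subtract off a $\bb{Z}[v,v^{-1}]$-multiple of one canonical basis contribution and strictly decrease $|\mca{I}|$. The edge cases where $(\mu^{\ast},q^{\ast})=(\lambda,p)$ or $(\mu^{\ast-},q^{\ast-})=(\lambda,p)$ are handled by the same degree-of-$v$ mismatch applied directly to $h_{\lambda,p}\in v^{-1}\bb{Z}[v^{-1}]$ and present no additional obstacle.
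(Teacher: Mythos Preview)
Your approach is essentially the paper's: expand an element of the left-hand side in the canonical basis and use the degree information from Proposition~\ref{degree_bound} to pin down the coefficients. Your step~3 (upward induction to get $g_{\mu,q}\in\delta+v^{-1}\bb{Z}[v^{-1}]$) is a correct, explicit version of the paper's ``maximal degree $\le 0$'' half. The paper then argues symmetrically that the \emph{minimal} degree is $\ge 0$ using the trailing term $P^{\mu^-,q^-}_{\mu,q}=\pm(-v)^{-\dim X/2}$, concluding each $f_{\mu,q}$ is a constant and reading it off from the degree-$0$ part of the $\mca{S}(\mu,q)$-coefficient. Your steps~4--7 are this same ``minimal degree'' half, phrased contrapositively.

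The only genuine gap is the one you flag yourself: the injectivity of $(\mu,q)\mapsto(\mu^-,q^-)$. This is not hard, and in fact is implicit in the paragraph preceding Proposition~\ref{degree_bound}: under Conjecture~\ref{K-theoretic_canonical_basis} one has $\mca{C}_{\mf{C},T^{1/2},s}(\mu,q)=\pm\,\mca{C}_{-\mf{C},T^{1/2},s}(\mu^-,q^-)$, and both families $\{\mca{C}_{\mf{C}}\}$ and $\{\mca{C}_{-\mf{C}}\}$ are $\bb{Z}[v,v^{-1}]$-bases of $K_{\bb{T}}(L)$; hence $(\mu,q)\mapsto(\mu^-,q^-)$ is a bijection. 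With this in hand your $\mca{I}^{\ast}$ is a singleton and the relation you derive gives $h_{\mu^{\ast},q^{\ast}}=0$ directly, finishing the argument. (The paper's compressed proof uses exactly the same injectivity, hidden in the appeal to Proposition~\ref{degree_bound}.) Your ``secondary induction on $|\mca{I}|$'' alternative is not a real substitute: a single linear relation among several $h_{\mu,q}$ does not let you decrease $|\mca{I}|$ while preserving the degree constraints on the $\mca{S}$-coefficients, so that route still needs injectivity.
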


\begin{proof}
Let $\mca{C}$ be an element of the LHS. Since $\mathbb{B}_{L,T^{1/2},s}$ is a $\bb{Z}[v,v^{-1}]$-basis of $K_{\bb{T}}(L)$, we can write 
\begin{align*}
\mca{C}=\sum_{(\lambda',p')\in\bb{F}}f_{\lambda',p'}(v)\cdot\mca{C}_{\mf{C},T^{1/2},s}(\lambda',p')
\end{align*}
for some $f_{\lambda',p'}(v)\in\bb{Z}[v,v^{-1}]$. By Proposition~\ref{degree_bound}, the maximal and minimal degree of $f_{\lambda',p'}(v)$ must be 0. By comparing the constant term of the coefficient of $\mca{S}_{\mf{C},T^{1/2},s}(\lambda',p')$, we obtain $f_{\lambda',p'}(v)=\delta_{(\lambda,p),(\lambda',p')}$ and hence $\mca{C}=\mca{C}_{\mf{C},T^{1/2},s}(p)$. 
\end{proof}

We next compute the pairing of $\mathbb{B}_{L,T^{1/2},s}$ and $\mathbb{B}_{X,T^{1/2},s}$. We first list some basic properties of the pairing. Recall that $\dag:K_{\bb{T}}(\mr{pt})\rightarrow K_{\bb{T}}(\mr{pt})$ is the involution induced from the inverse for $H$ and $\overline{(-)}:K_{\bb{T}}(\mr{pt})\rightarrow K_{\bb{T}}(\mr{pt})$ is the involution induced from the inverse for $\bb{S}$.

\begin{lemma}\label{inner_symmetry}
For any $\mca{F},\mca{G}\in K_{\bb{T}}(X)_{\mr{loc}}$, we have  $(\mca{G}||\mca{F})=(\mca{F}||\mca{G})^{\dag}$. If we denote by $(-||-)_{\mr{opp}}$ the inner product defined by $\beta^{K}_{\mf{C},T^{1/2}_{\mr{opp}},-s}$, then we have $(\mca{F}^{\vee}||\mca{G}^{\vee})_{\mr{opp}}=v^{\dim X}\cdot(\mca{F}||\mca{G})^{\vee}$
\end{lemma}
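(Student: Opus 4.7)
The plan is to verify both identities on the $K$-theoretic standard basis $\{\mca{S}_{\mf{C},T^{1/2},s}(p)\}_{p\in X^{H}}$, which spans $K_{\bb{T}}(X)_{\mr{loc}}$ over $\Frac(K_{\bb{T}}(\mr{pt}))$, and then extend by the appropriate (anti)linearity over $K_{\bb{T}}(\mr{pt})$. First I would record the (anti)linearity properties of $(-||-)$: the pairing $(-:-)$ is $K_{\bb{T}}(\mr{pt})$-bilinear; $\bb{D}_{X}$ is antilinear under the full torus dual $\vee = \dag \circ \overline{(-)}$; and $\beta^{K}$ is $K_{H}(\mr{pt})$-linear and $K_{\bb{S}}(\mr{pt})$-antilinear via $v\mapsto v^{-1}$. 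Composing, $\bb{D}_{X}\beta^{K}$ is $\dag$-antilinear on $K_{H}(\mr{pt})$ and $K_{\bb{S}}(\mr{pt})$-linear. Hence $(-||-)$ is $K_{\bb{T}}(\mr{pt})$-linear in the first slot, and satisfies $(\mca{F}||\alpha\mca{G}) = \alpha^{\dag}(\mca{F}||\mca{G})$ for $\alpha \in K_{H}(\mr{pt})$ and $(\mca{F}||\alpha\mca{G}) = \alpha(\mca{F}||\mca{G})$ for $\alpha \in K_{\bb{S}}(\mr{pt})$. Both sides of the claimed identity $(\mca{G}||\mca{F}) = (\mca{F}||\mca{G})^{\dag}$ transform identically under rescaling by characters of $\bb{T}$, so it suffices to check on standard basis elements.

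For the first identity, Lemma~\ref{std_orthonormality} gives $(\mca{S}_{\mf{C},T^{1/2},s}(p)||\mca{S}_{\mf{C},T^{1/2},s}(p')) = \delta_{p,p'}$, and the Kronecker symbol is obviously $\dag$-invariant. Swapping $p$ and $p'$ yields the same $\delta_{p,p'}$, so the identity holds on the basis. Extending by the (anti)linearity above yields $(\mca{G}||\mca{F}) = (\mca{F}||\mca{G})^{\dag}$ on all of $K_{\bb{T}}(X)_{\mr{loc}}$.

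For the second identity, I would use Lemma~\ref{std_duality} to compute $\mca{S}_{\mf{C},T^{1/2},s}(p)^{\vee} = (-v)^{\dim X/2}\,\mca{S}_{\mf{C},T^{1/2}_{\mr{opp}},-s}(p)$. Applying Lemma~\ref{std_orthonormality} to the opposite data $(T^{1/2}_{\mr{opp}},-s)$ and using that $(-v)^{\dim X/2} \in K_{\bb{S}}(\mr{pt})$ is $\dag$-invariant, one obtains
\begin{align*}
\bigl(\mca{S}_{\mf{C},T^{1/2},s}(p)^{\vee} \,\big|\!\big|\, \mca{S}_{\mf{C},T^{1/2},s}(p')^{\vee}\bigr)_{\mr{opp}}
= (-v)^{\dim X}\delta_{p,p'} = v^{\dim X}\delta_{p,p'},
\end{align*}
where the last equality uses that $\dim X$ is even since $X$ is symplectic. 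On the other hand, $v^{\dim X}(\mca{S}_{\mf{C},T^{1/2},s}(p)||\mca{S}_{\mf{C},T^{1/2},s}(p'))^{\vee} = v^{\dim X}\delta_{p,p'}^{\vee} = v^{\dim X}\delta_{p,p'}$, so the two sides of the claimed identity agree on the basis.

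To conclude, I would extend by (anti)linearity using $(\alpha\mca{F})^{\vee} = \alpha^{\vee}\mca{F}^{\vee}$ together with the compatibilities $(\alpha^{\dag})^{\vee} = (\alpha^{\vee})^{\dag}$ and $\overline{\alpha}^{\vee} = \alpha^{\dag}$; a quick check shows that both sides of $(\mca{F}^{\vee}||\mca{G}^{\vee})_{\mr{opp}} = v^{\dim X}(\mca{F}||\mca{G})^{\vee}$ respond identically when either $\mca{F}$ or $\mca{G}$ is rescaled by an element of $K_{\bb{T}}(\mr{pt})$. The only real task is bookkeeping for the three involutions $\dag$, $\overline{(-)}$ and $\vee$; both statements are then formal consequences of Lemma~\ref{std_orthonormality} and Lemma~\ref{std_duality}, so I do not anticipate a substantive obstacle.
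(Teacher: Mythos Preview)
Your proposal is correct and follows essentially the same approach as the paper: reduce to the standard basis via (anti)linearity, then invoke Lemma~\ref{std_orthonormality} for the first identity and combine Lemma~\ref{std_duality} with Lemma~\ref{std_orthonormality} (applied to the opposite data) for the second. The paper's proof is simply a two-line version of what you wrote, omitting the explicit bookkeeping of the involutions $\dag$, $\overline{(-)}$, and $\vee$ that you carefully spelled out.
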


\begin{proof}
We only need to check the formulas for $\mca{F}=\mca{S}_{\mf{C},T^{1/2},s}(p)$ and $\mca{G}=\mca{S}_{\mf{C},T^{1/2},s}(p')$.  By Lemma~\ref{std_orthonormality}, this is obvious for the first one and the second one follows from Lemma~\ref{std_duality}. 
\end{proof}

\begin{lemma}\label{inner_bar}
Assume that $\beta^{K}_{\mf{C},T^{1/2},s}$ is an involution. For any $\mca{F},\mca{G}\in K_{\bb{T}}(X)_{\mr{loc}}$, we have $(\beta^{K}(\mca{F})||\beta^{K}(\mca{G}))=v^{\dim X}\overline{(\mca{F}||\mca{G})}$. 
\end{lemma}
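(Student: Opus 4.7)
The plan is to reduce the identity, via sesquilinearity, to a single check on standard basis elements and then invoke the orthonormality established in Lemma~\ref{std_orthonormality}.

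First, I will verify that both sides transform identically under rescaling $\mca{F}$ and $\mca{G}$ by $K_\bb{T}(\mr{pt})$-scalars. The pairing $(-||-)$ is $K_\bb{T}(\mr{pt})$-linear in the first slot. In the second slot, using that $\beta^K$ is $K_H$-linear and $K_\bb{S}$-antilinear while $\bb{D}_X$ is $K_H$-antilinear and effectively $K_\bb{S}$-linear (as can be read off from the computation in the proof of Lemma~\ref{std_orthonormality}), one checks $(\mca{F}||c\mca{G}) = c^\dag(\mca{F}||\mca{G})$ for $c\in K_\bb{T}(\mr{pt})$. A short calculation then shows that both $(\beta^K\mca{F}||\beta^K\mca{G})$ and $v^{\dim X}\overline{(\mca{F}||\mca{G})}$ rescale by the same factor in each of $\mca{F}$ and $\mca{G}$. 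Since $\{\mca{S}_{\mf{C},T^{1/2},s}(p)\}_{p\in X^H}$ is a basis of $K_\bb{T}(X)_{\mr{loc}}$ over $\Frac(K_\bb{T}(\mr{pt}))$, it suffices to verify the identity when $\mca{F}$ and $\mca{G}$ are such standard basis elements.

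The hypothesis that $\beta^K_{\mf{C},T^{1/2},s}$ is an involution, combined with the relation $\beta^K_{-\mf{C},T^{1/2},s}\circ\beta^K_{\mf{C},T^{1/2},s}=\mathrm{id}$ noted in Section~\ref{bar_involution_section}, forces $\beta^K_{\mf{C},T^{1/2},s}=\beta^K_{-\mf{C},T^{1/2},s}$. Hence $\beta^K$ acts symmetrically on the two chambers,
\begin{align*}
\beta^K\left(\mca{S}_{\pm\mf{C},T^{1/2},s}(p)\right)=(-v)^{\dim X/2}\mca{S}_{\mp\mf{C},T^{1/2},s}(p),
\end{align*}
and pulling the scalar prefactors out of both slots of $(-||-)$ gives
\begin{align*}
\left(\beta^K\mca{S}_{\mf{C}}(p)\,||\,\beta^K\mca{S}_{\mf{C}}(p')\right) = (-v)^{\dim X}\left(\mca{S}_{-\mf{C}}(p)\,||\,\mca{S}_{-\mf{C}}(p')\right) = v^{\dim X}\left(\mca{S}_{-\mf{C}}(p)\,||\,\mca{S}_{-\mf{C}}(p')\right),
\end{align*}
where I use that $\dim X$ is even.

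Finally, once $\beta^K_\mf{C}=\beta^K_{-\mf{C}}$, the proof of Lemma~\ref{std_orthonormality} goes through verbatim with $\mf{C}$ and $-\mf{C}$ exchanged, using Lemma~\ref{std_duality} and Lemma~\ref{std_orthogonality} with relabeled chambers, yielding $(\mca{S}_{-\mf{C}}(p)||\mca{S}_{-\mf{C}}(p'))=\delta_{p,p'}$. The left-hand side of the lemma therefore equals $v^{\dim X}\delta_{p,p'}$, which matches the right-hand side $v^{\dim X}\overline{\delta_{p,p'}}=v^{\dim X}\delta_{p,p'}$. The main point requiring care is the bookkeeping of the scalar prefactors $(-v)^{\dim X/2}$ in both slots, which combine (rather than cancelling) to produce the factor $v^{\dim X}$ in front; this asymmetric behaviour of $(-||-)$ under scalar multiplication is exactly what produces the $v^{\dim X}$ on the right-hand side of the lemma.
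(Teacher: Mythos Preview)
Your proof is correct and follows essentially the same approach as the paper's. The paper's proof is terser --- it simply says to check on standard basis elements, invokes Lemma~\ref{std_orthonormality} ``applied to the opposite chamber'' (which implicitly uses $\beta^{K}_{\mf{C}}=\beta^{K}_{-\mf{C}}$ from the involution hypothesis, exactly as you spell out), and then concludes ``from the definition of $\beta^{K}$'' --- but the content is the same as yours.
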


\begin{proof}
We only need to check the formula for $\mca{F}=\mca{S}_{\mf{C},T^{1/2},s}(p)$ and $\mca{G}=\mca{S}_{\mf{C},T^{1/2},s}(p')$. By the assumption and Lemma~\ref{std_orthonormality} applied to the opposite chamber, we obtain $(\mca{S}_{-\mf{C},T^{1/2},s}(p)||\mca{S}_{-\mf{C},T^{1/2},s}(p'))=\delta_{p,p'}$. Then the statement follows from the definition of $\beta^{K}$.  
\end{proof}

\begin{prop}\label{dual_basis}
Assume Conjecture~\ref{K-theoretic_canonical_basis}. We have $(\mca{C}_{\mf{C},T^{1/2},s}(p)||\mca{E}_{\mf{C},T^{1/2},s}(p'))=\delta_{p,p'}$ for any $p,p'\in X^{H}$.
\end{prop}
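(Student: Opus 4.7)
The plan is to combine the bar invariance of the pairing with an asymptotic analysis of the two expansions in the standard basis. First, under Conjecture~\ref{K-theoretic_canonical_basis}, $\beta^K$ preserves $K_{\bb{T}}(X)$, so $(-||-)$ takes values in $K_{\bb{T}}(\mr{pt})=K_H(\mr{pt})[v,v^{-1}]$, a Laurent polynomial ring in $v$. Using $\beta^K\mca{C}_{\mf{C},T^{1/2},s}(p)=v^{\dim X}\mca{C}_{\mf{C},T^{1/2},s}(p)$ and $\beta^K\mca{E}_{\mf{C},T^{1/2},s}(p')=\mca{E}_{\mf{C},T^{1/2},s}(p')$ together with Lemma~\ref{inner_bar} and the $v$-bilinearity of $(-||-)$, a direct calculation gives $(\mca{C}_{\mf{C},T^{1/2},s}(p)||\mca{E}_{\mf{C},T^{1/2},s}(p'))=\overline{(\mca{C}_{\mf{C},T^{1/2},s}(p)||\mca{E}_{\mf{C},T^{1/2},s}(p'))}$, so the pairing is invariant under $v\mapsto v^{-1}$.

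Next I expand both elements in the $\Frac(K_{\bb{T}}(\mr{pt}))$-basis $\{\mca{S}_{\mf{C},T^{1/2},s}(q)\}_{q\in X^H}$ of $K_{\bb{T}}(X)_{\mr{loc}}$:
\begin{align*}
\mca{C}_{\mf{C},T^{1/2},s}(p)=\sum_{q\in X^H}\gamma_{p,q}(v,H)\,\mca{S}_{\mf{C},T^{1/2},s}(q),\qquad \mca{E}_{\mf{C},T^{1/2},s}(p')=\sum_{q\in X^H}\epsilon_{p',q}(v,H)\,\mca{S}_{\mf{C},T^{1/2},s}(q).
\end{align*}
Since $\iota^+(\mca{C}_{\mf{C},T^{1/2},s}(p))=C^{T^{1/2},s}_{0,p}$ is a \emph{finite} formal sum (by the remark that $L$ lies in the full attracting set), each $\gamma_{p,q}$ is a genuine Laurent polynomial lying in $\delta_{p,q}+v^{-1}K_H(\mr{pt})[v^{-1}]$. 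For $\mca{E}_{\mf{C},T^{1/2},s}(p')$, the argument of Corollary~\ref{asymptotic_orthonormality}---combining the formal construction of $E^{T^{1/2},s}_{0,p'}$ with the fact that $X^{\bb{S}}=L^{\bb{S}}$ is smooth and proper---shows that as a formal Laurent series in $v^{-1}$, $\epsilon_{p',q}(v,H)\in\delta_{p',q}+v^{-1}K_H(\mr{pt})[\![v^{-1}]\!]$.

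By Lemma~\ref{std_orthonormality} together with the $v$-bilinearity and $H$-sesquilinearity of $(-||-)$,
\begin{align*}
(\mca{C}_{\mf{C},T^{1/2},s}(p)||\mca{E}_{\mf{C},T^{1/2},s}(p'))=\sum_{q\in X^H}\gamma_{p,q}(v,H)\,\epsilon_{p',q}(v,H)^{\dag}.
\end{align*}
Multiplying the two asymptotic expansions term by term, the right-hand side lies in $\delta_{p,p'}+v^{-1}K_H(\mr{pt})[\![v^{-1}]\!]$ as a formal Laurent series in $v^{-1}$. But the left-hand side is a Laurent polynomial in $v$, so this forces the vanishing of all strictly positive $v$ coefficients and fixes the $v^0$ coefficient to be $\delta_{p,p'}$. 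Combining this with the bar invariance $v\leftrightarrow v^{-1}$ established in the first step, the negative $v$ coefficients must vanish as well, yielding $(\mca{C}_{\mf{C},T^{1/2},s}(p)||\mca{E}_{\mf{C},T^{1/2},s}(p'))=\delta_{p,p'}$.

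The main obstacle I expect is the asymptotic step: verifying that the $v^{-1}$ expansion of $\epsilon_{p',q}(v,H)$ has coefficients in $K_H(\mr{pt})$ (rather than only in $\Frac(K_H(\mr{pt}))$) with leading constant $\delta_{p',q}$. Fortunately this integrality is exactly what is established inside the proof of Corollary~\ref{asymptotic_orthonormality}, so essentially no new work is needed. Once it is granted, the rest of the argument is a clean combination of bar invariance, orthonormality of the standard basis, and the triangular/integrality structure hard-coded into the definitions of $\mca{C}_{\mf{C},T^{1/2},s}$ and $\mca{E}_{\mf{C},T^{1/2},s}$.
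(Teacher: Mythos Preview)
Your proof follows essentially the same strategy as the paper's, but there is a small gap in your first sentence. The fact that $\beta^{K}$ preserves $K_{\bb{T}}(X)$ does \emph{not} by itself imply that $(-||-)$ lands in $K_{\bb{T}}(\mr{pt})$: since $X$ is not proper, the pairing $(\mca{F}:\mca{G})=[R\Gamma(X,\mca{F}\otimes^{\bb{L}}\mca{G})]$ is in general only an element of $\Frac(K_{\bb{T}}(\mr{pt}))$ even when both $\mca{F},\mca{G}\in K_{\bb{T}}(X)$ (for instance $(\mca{O}_{X}:\mca{O}_{X})=[\bb{C}[X]]$ is not a Laurent polynomial). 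The correct reason, which the paper invokes, is that under Conjecture~\ref{K-theoretic_canonical_basis} we have $\mca{C}_{\mf{C},T^{1/2},s}(p)\in K_{\bb{T}}(L)$, so it has proper support; hence $(\mca{C}_{\mf{C},T^{1/2},s}(p):\bb{D}_{X}\beta^{K}\mca{E}_{\mf{C},T^{1/2},s}(p'))$ is the class of an honest finite-dimensional $\bb{T}$-representation and lies in $K_{\bb{T}}(\mr{pt})$.

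Once this justification is corrected, the remainder of your argument --- bar invariance via Lemma~\ref{inner_bar}, expansion in the standard basis with orthonormality (Lemma~\ref{std_orthonormality}), and the asymptotic bound drawn from Proposition~\ref{expansion} together with the integrality step inside the proof of Corollary~\ref{asymptotic_orthonormality} --- is correct and matches the paper's proof.
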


\begin{proof}
By Conjecture~\ref{K-theoretic_canonical_basis}, $(\mca{C}_{\mf{C},T^{1/2},s}(p)||\mca{E}_{\mf{C},T^{1/2},s}(p'))\in K_{\bb{T}}(\mr{pt})$ since the support of $\mca{C}_{\mf{C},T^{1/2},s}(p)$ is proper. By Proposition~\ref{expansion} and Lemma~\ref{std_orthonormality}, we obtain $(\mca{C}_{\mf{C},T^{1/2},s}(p)||\mca{E}_{\mf{C},T^{1/2},s}(p'))\in\delta_{p,p'}+v^{-1}K_{H}(\mr{pt})[v^{-1}]$. By Lemma~\ref{inner_bar}, we also obtain $(\mca{C}_{\mf{C},T^{1/2},s}(p)||\mca{E}_{\mf{C},T^{1/2},s}(p'))=\overline{(\mca{C}_{\mf{C},T^{1/2},s}(p)||\mca{E}_{\mf{C},T^{1/2},s}(p'))}$, hence we need to have $(\mca{C}_{\mf{C},T^{1/2},s}(p)||\mca{E}_{\mf{C},T^{1/2},s}(p'))=\delta_{p,p'}$. 
\end{proof}

If we define the map $\partial:K_{\bb{T}}(\mr{pt})\rightarrow \bb{Z}[v,v^{-1}]$ by $\partial(\sum_{\mu\in\bb{X}^{\ast}(H)}f_{\mu}(v)\cdot[\mu])=f_{0}(v)$, then Proposition~\ref{dual_basis} implies that $\bb{B}_{L,T^{1/2},s}$ and $\bb{B}_{X,T^{1/2},s}$ forms a dual basis with respect to $\partial(-||-)$. As a corollary, we also obtain the following result on the expansion of $\bb{B}^{\mr{std}}_{\mf{C},T^{1/2},s}$ in terms of $\bb{B}_{X,T^{1/2},s}$. 

\begin{corollary}\label{std_to_canonical}
Assume Conjecture~\ref{K-theoretic_canonical_basis}. For any $(\lambda',p')\in\bb{F}$, we have 
\begin{align*}
\mca{S}_{\mf{C},T^{1/2},s}(\lambda',p')=\sum_{(\lambda,p)\leq_{\mf{C},s}(\lambda',p')}P^{\lambda',p'}_{\lambda,p}(v)\cdot \mca{E}_{\mf{C},T^{1/2},s}(\lambda,p),
\end{align*}
where $P^{\lambda',p'}_{\lambda,p}(v)$ is the same as Proposition~\ref{degree_bound} and $P^{\lambda',p'}_{\lambda,p}(v)=0$ if $(\lambda',p')\nleq_{\mf{C},s}(\lambda^{-},p^{-})$. 
\end{corollary}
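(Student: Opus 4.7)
The plan is to compute the matrix coefficient $\partial(\mca{C}_{\mf{C},T^{1/2},s}(\lambda,p)||\mca{S}_{\mf{C},T^{1/2},s}(\lambda',p'))$ in two independent ways and read off the expansion coefficients from the resulting identity.

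First I would record the (anti)linearity structure of $(-||-)$: it is $K_{\bb{T}}$-linear in the first slot, while in the second slot the operator $\bb{D}_{X}\beta^{K}$ inverts $H$-characters (via Serre duality) and leaves $\bb{S}$-characters fixed (the $\bb{S}$-inversion contributed by $\bb{D}_{X}$ and by $\beta^{K}$ cancel), so the pairing is $H$-antilinear and $\bb{S}$-linear in the second slot. Using $\mca{S}(\lambda,p)=[\lambda]\mca{S}(p)$, $\mca{C}(\lambda,p)=[\lambda]\mca{C}(p)$, $\mca{E}(\lambda,p)=[\lambda]\mca{E}(p)$ and pulling the characters out accordingly, Lemma~\ref{std_orthonormality} and Proposition~\ref{dual_basis} yield
\begin{align*}
(\mca{S}_{\mf{C},T^{1/2},s}(\lambda,p)||\mca{S}_{\mf{C},T^{1/2},s}(\lambda',p')) &= [\lambda-\lambda']\,\delta_{p,p'},\\
(\mca{C}_{\mf{C},T^{1/2},s}(\lambda,p)||\mca{E}_{\mf{C},T^{1/2},s}(\lambda',p')) &= [\lambda-\lambda']\,\delta_{p,p'},
\end{align*}
so $\partial$ applied to each is $\delta_{\lambda,\lambda'}\delta_{p,p'}$.

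By the basis property from Conjecture~\ref{K-theoretic_canonical_basis}, write uniquely
\[
\mca{S}_{\mf{C},T^{1/2},s}(\lambda',p') = \sum_{(\lambda,p)\in\bb{F}} a^{\lambda',p'}_{\lambda,p}(v)\,\mca{E}_{\mf{C},T^{1/2},s}(\lambda,p)
\]
with $a^{\lambda',p'}_{\lambda,p}(v)\in\bb{Z}[v,v^{-1}]$ and only finitely many nonzero. Pairing on the left with $\mca{C}_{\mf{C},T^{1/2},s}(\lambda,p)$, using $\bb{S}$-linearity of the second slot so that each $a^{\lambda',p'}_{\tilde\lambda,\tilde p}(v)$ pulls out unchanged, and then applying $\partial$, the $\mca{C}$--$\mca{E}$ orthonormality yields exactly $a^{\lambda',p'}_{\lambda,p}(v)$. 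Computing the same pairing the other way---expand $\mca{C}_{\mf{C},T^{1/2},s}(\lambda,p)$ using Proposition~\ref{degree_bound} (a finite sum by Lemma~\ref{partial_order}), use $K_{\bb{T}}$-linearity in the first slot, and invoke the $\mca{S}$--$\mca{S}$ orthonormality---gives $P^{\lambda',p'}_{\lambda,p}(v)$. Equating the two expressions produces $a^{\lambda',p'}_{\lambda,p}(v) = P^{\lambda',p'}_{\lambda,p}(v)$, which is the asserted identity; the support constraints (in particular the vanishing when $(\lambda',p')\nleq_{\mf{C},s}(\lambda^{-},p^{-})$) are inherited directly from Proposition~\ref{degree_bound}.

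The main point requiring care is the asymmetry between the two slots of $(-||-)$: the $\bb{S}$-linearity in the second slot is essential so that the coefficients $a^{\lambda',p'}_{\lambda,p}(v)\in\bb{Z}[v,v^{-1}]$ are not inadvertently conjugated by $v\mapsto v^{-1}$, which would instead produce $a^{\lambda',p'}_{\lambda,p}(v^{-1}) = P^{\lambda',p'}_{\lambda,p}(v)$ and would be incompatible with Proposition~\ref{degree_bound} (whose polynomials are not bar-invariant). Once this bookkeeping is in place, the rest of the argument is purely formal.
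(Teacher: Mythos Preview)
Your proof is correct and follows essentially the same approach as the paper, which records only the one-line justification ``This follows from Proposition~\ref{degree_bound} and Proposition~\ref{dual_basis}.'' You have simply unpacked that sentence: compute $\partial(\mca{C}||\mca{S})$ two ways using the orthonormality of the standard bases (Lemma~\ref{std_orthonormality}), the expansion of $\mca{C}$ into $\mca{S}$'s (Proposition~\ref{degree_bound}), and the $\mca{C}$--$\mca{E}$ duality (Proposition~\ref{dual_basis}). Your careful bookkeeping of the $H$-antilinearity versus $\bb{S}$-linearity in the second slot is a useful clarification, but the underlying argument is the intended one.
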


\begin{proof}
This follows from Proposition~\ref{degree_bound} and Proposition~\ref{dual_basis}. 
\end{proof}

If we change the polarization or shift the slope, then the $K$-theoretic canonical bases change as follows.

\begin{lemma}\label{can_pol}
Assume Conjecture~\ref{K-theoretic_canonical_basis}. We have $\mca{C}_{\mf{C},T^{1/2}_{\mca{G}},s}(\lambda,p)=\mca{C}_{\mf{C},T^{1/2},s+\det\mca{G}}(\lambda+\wt_{H}\det i^{\ast}_{p}\mca{G},p)$ and $\mca{E}_{\mf{C},T^{1/2}_{\mca{G}},s}(\lambda,p)=\mca{E}_{\mf{C},T^{1/2},s+\det\mca{G}}(\lambda+\wt_{H}\det i^{\ast}_{p}\mca{G},p)$ for any $\mca{G}\in K_{\bb{T}}(X)$ and $(\lambda,p)\in\bb{F}$. In particular, we have $\bb{B}_{L,T^{1/2}_{\mca{G}},s}=\bb{B}_{L,T^{1/2},s+\det\mca{G}}$ and $\bb{B}_{X,T^{1/2}_{\mca{G}},s}=\bb{B}_{X,T^{1/2},s+\det\mca{G}}$. 
\end{lemma}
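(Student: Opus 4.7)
The plan is to reduce everything to the behavior of the $K$-theoretic standard bases established in Lemma~\ref{std_polarization}, and then transfer the characterization of canonical bases via Proposition~\ref{expansion} and Conjecture~\ref{K-theoretic_canonical_basis}.

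First I would make precise the sense in which the two setups define the same data. By Lemma~\ref{std_polarization}, the map $(\lambda,p)\mapsto(\lambda+\wt_{H}\det i^{\ast}_{p}\mca{G},p)$ is a bijection $\bb{F}\to\bb{F}$ identifying $\bb{B}^{\mr{std}}_{\mf{C},T^{1/2}_{\mca{G}},s}$ with $\bb{B}^{\mr{std}}_{\mf{C},T^{1/2},s+\det\mca{G}}$. Next I would check that this bijection also intertwines the two partial orders $\leq_{\mf{C},s}$ and $\leq_{\mf{C},s+\det\mca{G}}$. Writing $l\coloneqq\det\mca{G}\in P$ and $\det\mca{G}=[\mu]\cdot\mf{L}(l)$ for the unique $\mu\in\bb{X}^{\ast}(\bb{T})$ relating the two equivariant lifts, one computes
\[
(\lambda+\wt_{H}\det i^{\ast}_{p}\mca{G})-\wt_{H}i^{\ast}_{p}\mf{L}(s+l)=\lambda-\wt_{H}i^{\ast}_{p}\mf{L}(s)+\mu|_{H},
\]
so the defining inequalities in Definition~\ref{dfn_poset} differ only by the constant $\mu|_{H}$ on both sides and hence are equivalent.

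Next I would verify that the two bar involutions coincide, i.e.\ $\beta^{K}_{\mf{C},T^{1/2}_{\mca{G}},s}=\beta^{K}_{\mf{C},T^{1/2},s+\det\mca{G}}$ as maps on $K_{\bb{T}}(X)_{\mr{loc}}$. Indeed, both are $K_{H}(\mr{pt})$-linear and $\bb{Z}[v,v^{\pm1}]$-antilinear in $v$, and applying Lemma~\ref{std_polarization} with both signs of chamber yields
\[
\beta^{K}_{\mf{C},T^{1/2}_{\mca{G}},s}\bigl(\mca{S}_{\mf{C},T^{1/2}_{\mca{G}},s}(p)\bigr)=(-v)^{\frac{\dim X}{2}}[\wt_{H}\det i^{\ast}_{p}\mca{G}]\cdot\mca{S}_{-\mf{C},T^{1/2},s+\det\mca{G}}(p),
\]
which coincides with the image of $[\wt_{H}\det i^{\ast}_{p}\mca{G}]\cdot\mca{S}_{\mf{C},T^{1/2},s+\det\mca{G}}(p)=\mca{S}_{\mf{C},T^{1/2}_{\mca{G}},s}(p)$ under $\beta^{K}_{\mf{C},T^{1/2},s+\det\mca{G}}$ by $K_{H}(\mr{pt})$-linearity.

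With these two compatibilities in hand, the defining conditions in Proposition~\ref{expansion} for $C^{T^{1/2}_{\mca{G}},s}_{\lambda,p}$ and for $C^{T^{1/2},s+\det\mca{G}}_{\lambda+\wt_{H}\det i^{\ast}_{p}\mca{G},\,p}$ become identical after applying the shift bijection on $\bb{F}$: the same standard basis elements, the same partial order, and the same bar involution. Uniqueness in Proposition~\ref{expansion} then gives the equality of $C$'s and of $E$'s, and Conjecture~\ref{K-theoretic_canonical_basis} lifts this to the stated identity of $\mca{C}$'s and $\mca{E}$'s in $K_{\bb{T}}(L)$ and $K_{\bb{T}}(X)$. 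The ``in particular'' assertion about $\bb{B}_{L,T^{1/2}_{\mca{G}},s}=\bb{B}_{L,T^{1/2},s+\det\mca{G}}$ and $\bb{B}_{X,T^{1/2}_{\mca{G}},s}=\bb{B}_{X,T^{1/2},s+\det\mca{G}}$ follows because the shift $(\lambda,p)\mapsto(\lambda+\wt_{H}\det i^{\ast}_{p}\mca{G},p)$ is a bijection of $\bb{F}$, hence induces a bijection of canonical basis elements. The only genuine point to watch is to keep track of the character $\mu$ relating $\det\mca{G}$ and $\mf{L}(\det\mca{G})$ in the comparison of partial orders; this is a routine but essential bookkeeping step rather than a real obstacle.
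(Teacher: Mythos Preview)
Your proposal is correct and follows essentially the same route as the paper: use Lemma~\ref{std_polarization} to identify the standard bases and the bar involutions, check that the shift $(\lambda,p)\mapsto(\lambda+\wt_{H}\det i^{\ast}_{p}\mca{G},p)$ intertwines the partial orders, and then invoke the uniqueness in Proposition~\ref{expansion} (together with Conjecture~\ref{K-theoretic_canonical_basis}) to conclude. Your treatment is slightly more explicit than the paper's, in particular your bookkeeping of the character $\mu$ relating $\det\mca{G}$ to $\mf{L}(\det\mca{G})$, but the argument is the same.
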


\begin{proof}
By Lemma~\ref{std_polarization}, we obtain $\beta^{K}_{\mf{C},T^{1/2}_{\mca{G}},s}=\beta^{K}_{\mf{C},T^{1/2},s+\det\mca{G}}$. Since $(\lambda',p')>_{\mf{C},s}(\lambda,p)$ is equivalent to $(\lambda'+\wt_{H}\det i^{\ast}_{p'}\mca{G},p')>_{\mf{C},s+\det\mca{G}}(\lambda+\wt_{H}\det i^{\ast}_{p}\mca{G},p)$ and 
\begin{align*}
&\mca{C}_{\mf{C},T^{1/2}_{\mca{G}},s}(\lambda,p)\in\mca{S}_{\mf{C},T^{1/2}_{\mca{G}},s}(\lambda,p)+\sum_{(\lambda',p')>_{\mf{C},s}(\lambda,p)}v^{-1}\bb{Z}[v^{-1}]\cdot\mca{S}_{\mf{C},T^{1/2}_{\mca{G}},s}(\lambda',p')\\
&=\mca{S}_{\mf{C},T^{1/2},s+\det\mca{G}}(\lambda+\wt_{H}\det i^{\ast}_{p}\mca{G},p)+\sum_{(\lambda',p')>_{\mf{C},s}(\lambda,p)}v^{-1}\bb{Z}[v^{-1}]\cdot\mca{S}_{\mf{C},T^{1/2},s+\det\mca{G}}(\lambda'+\wt_{H}\det i^{\ast}_{p'}\mca{G},p'),
\end{align*}
$\mca{C}_{\mf{C},T^{1/2}_{\mca{G}},s}(\lambda,p)$ also satisfies the characterizing properties of $\mca{C}_{\mf{C},T^{1/2},s+\det\mca{G}}(\lambda+\wt_{H}\det i^{\ast}_{p}\mca{G},p)$. The proof of $\mca{E}_{\mf{C},T^{1/2}_{\mca{G}},s}(\lambda,p)=\mca{E}_{\mf{C},T^{1/2},s+\det\mca{G}}(\lambda+\wt_{H}\det i^{\ast}_{p}\mca{G},p)$ is similar. 
\end{proof}

\begin{lemma}\label{can_slope}
Assume Conjecture~\ref{K-theoretic_canonical_basis}. For any $l\in P$ and $(\lambda,p)\in\bb{F}$, we have $\mca{C}_{\mf{C},T^{1/2},s+l}(\lambda,p)=\mf{L}(l)\otimes\mca{C}_{\mf{C},T^{1/2},s}(\lambda-\wt_{H}i^{\ast}_{p}\mf{L}(l),p)$ and $\mca{E}_{\mf{C},T^{1/2},s+l}(\lambda,p)=\mf{L}(l)\otimes\mca{E}_{\mf{C},T^{1/2},s}(\lambda-\wt_{H}i^{\ast}_{p}\mf{L}(l),p)$. In particular, we have $\bb{B}_{L,T^{1/2},s+l}=\mf{L}(l)\otimes\bb{B}_{L,T^{1/2},s}$ and $\bb{B}_{X,T^{1/2},s+l}=\mf{L}(l)\otimes\bb{B}_{X,T^{1/2},s}$.
\end{lemma}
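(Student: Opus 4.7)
The plan is to follow the pattern of Lemma~\ref{can_pol}, with one adjustment: since tensoring by $\mf{L}(l)$ genuinely changes the $K$-theory class, I obtain an \emph{intertwining} of bar involutions rather than an equality, but this still suffices to transport canonical bases.

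The first step is to establish
\begin{align*}
\beta^{K}_{\mf{C},T^{1/2},s+l}(\mf{L}(l)\otimes m)=\mf{L}(l)\otimes\beta^{K}_{\mf{C},T^{1/2},s}(m)
\end{align*}
for every $m\in K_{\bb{T}}(X)_{\mr{loc}}$. Both sides are $K_{H}(\mr{pt})$-linear and $K_{\bb{S}}(\mr{pt})$-antilinear in $m$, so it suffices to verify equality on the standard basis. Applying Lemma~\ref{std_slope} to rewrite $\mf{L}(l)\otimes\mca{S}_{\mf{C},T^{1/2},s}(\lambda,p)$ as $\mca{S}_{\mf{C},T^{1/2},s+l}(\lambda+\wt_{H}i^{\ast}_{p}\mf{L}(l),p)$, and then the same lemma again with chamber $-\mf{C}$ to unwind the right-hand side, the identity drops out of the defining formula for $\beta^{K}$. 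One then checks immediately from Definition~\ref{dfn_poset} that the reindexing $(\lambda,p)\mapsto(\lambda-\wt_{H}i^{\ast}_{p}\mf{L}(l),p)$ is an order isomorphism $(\bb{F},\leq_{\mf{C},s+l})\cong(\bb{F},\leq_{\mf{C},s})$, since $\wt_{H}i^{\ast}_{p}\mf{L}(s+l)=\wt_{H}i^{\ast}_{p}\mf{L}(s)+\wt_{H}i^{\ast}_{p}\mf{L}(l)$.

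Combining these two observations, the element $\mf{L}(l)\otimes\mca{C}_{\mf{C},T^{1/2},s}(\lambda-\wt_{H}i^{\ast}_{p}\mf{L}(l),p)$ is bar invariant under $v^{-\dim X}\beta^{K}_{\mf{C},T^{1/2},s+l}$, and its image under $\iota^{+}_{\mf{C},T^{1/2},s+l}$ has leading term $S_{\lambda,p}$ with the remaining coefficients lying in $v^{-1}\bb{Z}[v^{-1}]$ (after rewriting each summand $\mf{L}(l)\otimes\mca{S}_{\mf{C},T^{1/2},s}(\mu,q)$ as $\mca{S}_{\mf{C},T^{1/2},s+l}(\mu+\wt_{H}i^{\ast}_{q}\mf{L}(l),q)$). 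The uniqueness clause of Proposition~\ref{expansion}, available under Conjecture~\ref{K-theoretic_canonical_basis}, therefore identifies it with $\mca{C}_{\mf{C},T^{1/2},s+l}(\lambda,p)$. The $\mca{E}$-version is proved by the same argument using $\iota^{-}$ and the reverse order; the set-level statements $\bb{B}_{L,T^{1/2},s+l}=\mf{L}(l)\otimes\bb{B}_{L,T^{1/2},s}$ and $\bb{B}_{X,T^{1/2},s+l}=\mf{L}(l)\otimes\bb{B}_{X,T^{1/2},s}$ are then immediate since the reindexing is a bijection of $\bb{F}$. There is no real obstacle; the whole proof rests on the intertwining in the first step, which is a formal consequence of Lemma~\ref{std_slope}.
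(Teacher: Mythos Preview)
Your proof is correct and follows essentially the same approach as the paper: establish the intertwining $\beta^{K}_{\mf{C},T^{1/2},s+l}=\mf{L}(l)\circ\beta^{K}_{\mf{C},T^{1/2},s}\circ\mf{L}(l)^{-1}$ via Lemma~\ref{std_slope}, note the order isomorphism of $(\bb{F},\leq_{\mf{C},s+l})$ with $(\bb{F},\leq_{\mf{C},s})$, and invoke the uniqueness in Proposition~\ref{expansion}. The only cosmetic difference is that the paper untwists $\mca{C}_{\mf{C},T^{1/2},s+l}(\lambda,p)$ by $\mf{L}(l)^{-1}$ and matches it against the characterization at slope $s$, whereas you twist the slope-$s$ element by $\mf{L}(l)$ and match at slope $s+l$; these are logically equivalent.
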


\begin{proof}
By Lemma~\ref{std_slope}, we obtain $\beta^{K}_{\mf{C},T^{1/2},s+l}=\mf{L}(l)\circ\beta^{K}_{\mf{C},T^{1/2},s}\circ\mf{L}(l)^{-1}$, where $\mf{L}(l)$ is identified with the automorphism of $K_{\bb{T}}(X)$ given by the tensor product of $\mf{L}(l)$. Since $(\lambda',p')>_{\mf{C},s+l}(\lambda,p)$ is equivalent to $(\lambda'-\wt_{H}i^{\ast}_{p'}\mf{L}(l),p')>_{\mf{C},s}(\lambda-\wt_{H}i^{\ast}_{p}\mf{L}(l),p)$ and 
\begin{align*}
&\mca{C}_{\mf{C},T^{1/2},s+l}(\lambda,p)\in\mca{S}_{\mf{C},T^{1/2},s+l}(\lambda,p)+\sum_{(\lambda',p')>_{\mf{C},s+l}(\lambda,p)}v^{-1}\bb{Z}[v^{-1}]\cdot\mca{S}_{\mf{C},T^{1/2},s+l}(\lambda',p')\\
&=\mf{L}(l)\otimes\left(\mca{S}_{\mf{C},T^{1/2},s}(\lambda-\wt_{H}i^{\ast}_{p}\mf{L}(l),p)+\sum_{(\lambda',p')>_{\mf{C},s+l}(\lambda,p)}v^{-1}\bb{Z}[v^{-1}]\cdot\mca{S}_{\mf{C},T^{1/2},s}(\lambda'-\wt_{H}i^{\ast}_{p'}\mf{L}(l),p')\right),
\end{align*}
$\mf{L}(l)^{-1}\otimes\mca{C}_{\mf{C},T^{1/2},s+l}(\lambda,p)$ also satisfies the characterizing properties of $\mca{C}_{\mf{C},T^{1/2},s}(\lambda-\wt_{H}i^{\ast}_{p}\mf{L}(l),p)$. The proof of $\mca{E}_{\mf{C},T^{1/2},s+l}(\lambda,p)=\mf{L}(l)\otimes\mca{E}_{\mf{C},T^{1/2},s}(\lambda-\wt_{H}i^{\ast}_{p}\mf{L}(l),p)$ is similar. 
\end{proof}

For the behavior of $K$-theoretic canonical bases under the duality, we have the following. 

\begin{lemma}
Assume Conjecture~\ref{K-theoretic_canonical_basis} and Conjecture~\ref{positivity}. For any $(\lambda,p)\in\bb{F}$, we have $v^{-\dim X}\mca{C}_{\mf{C},T^{1/2},s}(\lambda,p)^{\vee}=\mca{C}_{\mf{C},T^{1/2}_{\mr{opp}},-s}(-\lambda^{-},p^{-})$ and $\mca{E}_{\mf{C},T^{1/2},s}(\lambda,p)^{\vee}=\mca{E}_{\mf{C},T^{1/2}_{\mr{opp}},-s}(-\lambda^{-},p^{-})$. Here, $(\lambda^{-},p^{-})$ is as in Proposition~\ref{degree_bound}. In particular, we have $v^{-\dim X}\cdot\bb{B}_{L,T^{1/2},s}^{\vee}=\bb{B}_{L,T^{1/2}_{\mr{opp}},-s}$ and $\bb{B}_{X,T^{1/2},s}^{\vee}=\bb{B}_{X,T^{1/2}_{\mr{opp}},-s}$. 
\end{lemma}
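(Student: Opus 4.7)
The plan is to establish the $\mca{C}$ identity by direct verification of the Kazhdan--Lusztig-type characterization from Proposition~\ref{expansion}(1), then deduce the $\mca{E}$ identity from the dual-basis property of Proposition~\ref{dual_basis} combined with Lemma~\ref{inner_symmetry}.

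For the $\mca{C}$ identity I would begin with the expansion
\[
\mca{C}_{\mf{C},T^{1/2},s}(\lambda,p)=\sum_{(\lambda,p)\leq_{\mf{C},s}(\lambda',p')\leq_{\mf{C},s}(\lambda^{-},p^{-})}P^{\lambda',p'}_{\lambda,p}(v)\,\mca{S}_{\mf{C},T^{1/2},s}(\lambda',p')
\]
supplied by Proposition~\ref{degree_bound}, with the top coefficient $P^{\lambda^{-},p^{-}}_{\lambda,p}(v)=(-v)^{-\dim X/2}$ fixed by Conjecture~\ref{positivity}. Applying Serre duality termwise and using Lemma~\ref{std_duality} converts each $\mca{S}_{\mf{C},T^{1/2},s}(\lambda',p')^{\vee}$ into $(-v)^{\dim X/2}\mca{S}_{\mf{C},T^{1/2}_{\mr{opp}},-s}(-\lambda',p')$. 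Relabeling $(\mu,q)=(-\lambda',p')$ and using that this substitution reverses the partial order, from $\leq_{\mf{C},s}$ to $\leq_{\mf{C},-s}$ (immediate from Definition~\ref{dfn_poset}), one obtains an expansion of $v^{-\dim X}\mca{C}_{\mf{C},T^{1/2},s}(\lambda,p)^{\vee}$ over $(-\lambda^{-},p^{-})\leq_{\mf{C},-s}(\mu,q)\leq_{\mf{C},-s}(-\lambda,p)$, with bottom coefficient $v^{-\dim X}\cdot(-v)^{\dim X/2}\cdot(-v)^{\dim X/2}=1$, top coefficient $(-v)^{-\dim X/2}$, and intermediate coefficients lying in $v^{-1}\bb{Z}[v^{-1}]\cap v^{-\dim X/2+1}\bb{Z}[v]$---exactly the shape characterizing $\mca{C}_{\mf{C},T^{1/2}_{\mr{opp}},-s}(-\lambda^{-},p^{-})$. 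Combined with the bar invariance $\beta^{K}_{\mf{C},T^{1/2}_{\mr{opp}},-s}(v^{-\dim X}\mca{C}^{\vee})=v^{\dim X}(v^{-\dim X}\mca{C}^{\vee})$---obtained by applying $\vee$ to $\beta^{K}_{\mf{C},T^{1/2},s}(\mca{C})=v^{\dim X}\mca{C}$ via the intertwining $\beta^{K}_{\mf{C},T^{1/2}_{\mr{opp}},-s}(\mca{F}^{\vee})=\beta^{K}_{\mf{C},T^{1/2},s}(\mca{F})^{\vee}$, which one checks directly on the standard basis from Lemma~\ref{std_duality}---uniqueness in Proposition~\ref{expansion}(1) combined with Corollary~\ref{global_crystal_basis} gives the claimed equality.

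For the $\mca{E}$ identity I would compute the pairings of $\mca{E}_{\mf{C},T^{1/2},s}(\lambda,p)^{\vee}$ against every element of $\bb{B}_{L,T^{1/2}_{\mr{opp}},-s}$. By Lemma~\ref{inner_symmetry}, Proposition~\ref{dual_basis}, and the $[\mu]$-equivariance $\mca{C}_{\mf{C}}(\lambda',p')=[\lambda']\mca{C}_{\mf{C}}(p')$ and $\mca{E}_{\mf{C}}(\lambda,p)=[\lambda]\mca{E}_{\mf{C}}(p)$,
\[
(v^{-\dim X}\mca{C}_{\mf{C},T^{1/2},s}(\lambda',p')^{\vee}\,||\,\mca{E}_{\mf{C},T^{1/2},s}(\lambda,p)^{\vee})_{\mr{opp}}=[\lambda-\lambda']\,\delta_{p,p'}.
\]
By the $\mca{C}$ identity already proved, the left factor runs through the basis $\bb{B}_{L,T^{1/2}_{\mr{opp}},-s}$, and these pairings coincide with those of $\mca{E}_{\mf{C},T^{1/2}_{\mr{opp}},-s}(-\lambda^{-},p^{-})$ against the same basis (using $\lambda^{-}-(\lambda')^{-}=\lambda-\lambda'$ when $p=p'$, which follows because $\lambda^{-}=\lambda+c(p)$ for some shift $c(p)$ depending only on $p$). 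Proposition~\ref{dual_basis} applied to the opposite data then forces $\mca{E}_{\mf{C},T^{1/2},s}(\lambda,p)^{\vee}=\mca{E}_{\mf{C},T^{1/2}_{\mr{opp}},-s}(-\lambda^{-},p^{-})$.

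The main obstacle I anticipate is the support issue in the $\mca{C}$ step: Serre duality does not in general preserve support on the central fiber $L$, so the containment $v^{-\dim X}\mca{C}^{\vee}\in K_{\bb{T}}(L)$ needed to invoke Corollary~\ref{global_crystal_basis} is not automatic from the finite expansion in $\bb{B}^{\mr{std}}_{\mf{C},T^{1/2}_{\mr{opp}},-s}$. I would handle this by first producing $\mca{C}_{\mf{C},T^{1/2}_{\mr{opp}},-s}(-\lambda^{-},p^{-})\in K_{\bb{T}}(L)$ via Conjecture~\ref{K-theoretic_canonical_basis}, expanding it by Proposition~\ref{degree_bound}, and matching it against the dualized expansion as an identity in $K_{\bb{T}}(X)$; the $\mca{E}$ identity carries no such support constraint since both sides already lie in $K_{\bb{T}}(X)$.
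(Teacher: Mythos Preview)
Your approach is essentially the same as the paper's: expand $\mca{C}_{\mf{C},T^{1/2},s}(\lambda,p)$ via Proposition~\ref{degree_bound}, dualize termwise with Lemma~\ref{std_duality}, and invoke Corollary~\ref{global_crystal_basis}; then deduce the $\mca{E}$ identity from Lemma~\ref{inner_symmetry} and Proposition~\ref{dual_basis}. Two minor remarks. First, the support worry is unfounded: $\mca{F}^{\vee}=[R\Hom_{X}(\mca{F},\mca{O}_{X})]$ is computed locally, so if $\mca{F}$ is supported on $L$ then so is $\mca{F}^{\vee}$; hence $v^{-\dim X}\mca{C}^{\vee}\in K_{\bb{T}}(L)$ automatically and you may apply Corollary~\ref{global_crystal_basis} directly. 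Second, once you have the expansion shape with leading coefficient $1$ and remaining coefficients in $v^{-1}\bb{Z}[v^{-1}]\cap v^{-\dim X/2}\bb{Z}[v]$, Corollary~\ref{global_crystal_basis} alone forces the equality---your separate verification of bar invariance and appeal to Proposition~\ref{expansion}(1) are correct but redundant.
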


\begin{proof}
By Lemma~\ref{std_duality} and Proposition~\ref{degree_bound}, one can check that $v^{-\dim X}\mca{C}_{\mf{C},T^{1/2},s}(\lambda,p)^{\vee}\in K_{\bb{T}}(L)$ is contained in $\mca{S}_{\mf{C},T^{1/2}_{\mr{opp}},-s}(-\lambda^{-},p^{-})+\sum_{(\lambda',p')\in\bb{F}}\left(v^{-\frac{\dim X}{2}+1}\bb{Z}[v]\cap v^{-1}\bb{Z}[v^{-1}]\right)\cdot\mca{S}_{\mf{C},T^{1/2}_{\mr{opp}},-s}(\lambda',p')$. By Corollary~\ref{global_crystal_basis}, this implies $v^{-\dim X}\mca{C}_{\mf{C},T^{1/2},s}(\lambda,p)^{\vee}=\mca{C}_{\mf{C},T^{1/2}_{\mr{opp}},-s}(-\lambda^{-},p^{-})$. The second statement follows from the first by Lemma~\ref{inner_symmetry} and Proposition~\ref{dual_basis}. 
\end{proof}

Since the fixed point basis often has some representation theoretic and combinatorial meaning, we also give a formula for the transition matrix between the fixed point basis and $\bb{B}_{L,T^{1/2},s}$. For each $p\in X^{H}$, we denote by $\mca{O}_{p}\in K_{\bb{T}}(X)$ the $K$-theory class of the skyscraper sheaf at $p$. 

\begin{prop}\label{fixed_point_basis}
Assume Conjecture~\ref{K-theoretic_canonical_basis}. For any $p\in X^{H}$, we have 
\begin{align*}
\mca{O}_{p}=v^{\dim X}\cdot\sum_{p'\in X^{H}}\left(i^{\ast}_{p}\mca{E}_{\mf{C},T^{1/2},s}(p')\right)^{\vee}\cdot\mca{C}_{\mf{C},T^{1/2},s}(p').
\end{align*}
\end{prop}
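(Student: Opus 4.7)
The plan is to expand $\mca{O}_{p} \in K_{\bb{T}}(L)$ in the canonical basis of $K_{\bb{T}}(L)$ and read off the coefficients using the orthogonality from Proposition~\ref{dual_basis}. Since Conjecture~\ref{K-theoretic_canonical_basis} together with the identity $\mca{C}_{\mf{C},T^{1/2},s}(\lambda,p') = [\lambda]\cdot\mca{C}_{\mf{C},T^{1/2},s}(p')$ shows that $\{\mca{C}_{\mf{C},T^{1/2},s}(p')\}_{p'\in X^{H}}$ is a $K_{\bb{T}}(\mr{pt})$-basis of $K_{\bb{T}}(L)$, one can write uniquely
\[
\mca{O}_{p} = \sum_{p' \in X^{H}} c_{p,p'}\cdot\mca{C}_{\mf{C},T^{1/2},s}(p'), \qquad c_{p,p'}\in K_{\bb{T}}(\mr{pt}).
\]
Pairing both sides with $\mca{E}_{\mf{C},T^{1/2},s}(p'')$ via $(-||-)$ and using $K_{\bb{T}}(\mr{pt})$-linearity in the first argument together with $(\mca{C}_{\mf{C},T^{1/2},s}(p') \,||\, \mca{E}_{\mf{C},T^{1/2},s}(p'')) = \delta_{p',p''}$ from Proposition~\ref{dual_basis}, one extracts $c_{p,p''} = (\mca{O}_{p} \,||\, \mca{E}_{\mf{C},T^{1/2},s}(p''))$.

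Next, I would evaluate this pairing. By the definition (\ref{innerprod}) of $(-||-)$ and the bar-invariance $\beta^{K}_{\mf{C},T^{1/2},s}\mca{E}_{\mf{C},T^{1/2},s}(p'') = \mca{E}_{\mf{C},T^{1/2},s}(p'')$ that is built into the canonical basis,
\[
(\mca{O}_{p} \,||\, \mca{E}_{\mf{C},T^{1/2},s}(p'')) = (\mca{O}_{p} : \bb{D}_{X}\mca{E}_{\mf{C},T^{1/2},s}(p'')).
\]
The projection formula applied to the closed immersion $i_{p}:\{p\}\hookrightarrow X$ gives $(\mca{O}_{p} : \mca{G}) = i_{p}^{*}\mca{G}$ for any $\mca{G}\in K_{\bb{T}}(X)$, so we are reduced to computing $c_{p,p''} = i_{p}^{*}\bb{D}_{X}\mca{E}_{\mf{C},T^{1/2},s}(p'')$.

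The last step is to track the equivariant Serre duality on the smooth symplectic $X$. Since $\omega$ has $\bb{S}$-weight $2$ and $\dim X$ is even, the nowhere vanishing section $\omega^{\dim X/2}$ of $K_{X} = \bigwedge^{\dim X}T^{*}X$ has $\bb{S}$-weight $\dim X$, so $K_{X}\cong\mca{O}_{X}\otimes v^{\dim X}$ as an equivariant line bundle. Combined with $\omega_{X} = K_{X}[\dim X]$ and the sign cancellation $(-1)^{\dim X} = 1$, this yields $\bb{D}_{X}\mca{G} = v^{\dim X}\mca{G}^{\vee}$ in $K_{\bb{T}}(X)$. Commuting derived duality with $i_{p}^{*}$ then gives $c_{p,p''} = v^{\dim X}\cdot(i_{p}^{*}\mca{E}_{\mf{C},T^{1/2},s}(p''))^{\vee}$, which is the desired formula. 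Given Conjecture~\ref{K-theoretic_canonical_basis} and Proposition~\ref{dual_basis}, the argument is essentially direct bookkeeping; the only point that requires care is keeping track of the $v^{\dim X}$ factor coming from the equivariant dualizing complex on a symplectic manifold with weight-$2$ form.
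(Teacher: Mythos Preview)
Your proof is correct and follows essentially the same approach as the paper: expand $\mca{O}_{p}$ in the canonical basis $\{\mca{C}_{\mf{C},T^{1/2},s}(p')\}$, use the orthogonality of Proposition~\ref{dual_basis} to identify the coefficients as $(\mca{O}_{p}\,||\,\mca{E}_{\mf{C},T^{1/2},s}(p'))$, and then compute this pairing via Serre duality on the symplectic $X$ to obtain $v^{\dim X}(i_{p}^{\ast}\mca{E}_{\mf{C},T^{1/2},s}(p'))^{\vee}$. The paper's proof is simply a terse one-line version of exactly this argument.
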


\begin{proof}
This follows from Proposition~\ref{dual_basis} and 
\begin{align*}
(\mca{O}_{p}||\mca{E}_{\mf{C},T^{1/2},s}(p'))=(\mca{O}_{p}:\bb{D}_{X}\mca{E}_{\mf{C},T^{1/2},s}(p'))=v^{\dim X}\cdot\left(i^{\ast}_{p}\mca{E}_{\mf{C},T^{1/2},s}(p')\right)^{\vee}.
\end{align*}
\end{proof}

\begin{remark}
For example, when $X$ is the Hilbert scheme of points in the affine plane and the slope is sufficiently close to $1$, it turns out that $\mca{E}_{\mf{C},T^{1/2},s}(p)$'s are given by the indecomposable summands of the Procesi bundle. If we identify $K_{\bb{T}}(L)$ and the space of symmetric functions as in \cite{Ha}, then $\mca{C}_{\mf{C},T^{1/2},s}(p)$'s corresponds to the Schur functions and $\mca{O}_{p}$'s corresponds to the modified Macdonald polynomials. Transition matrix of these bases are given by the $q,t$-Kostka polynomials which are given by the characters of fibers of indecomposable summands of the Procesi bundle. 
\end{remark}

\subsection{Categorical lifts}

In this section, we state several conjectures about the categorical meaning of canonical and standard bases. We assume all the conjectures and assumptions stated in the previous sections without any comment. 

We first recall the notion of tilting bundle on $X$. Let $\mca{T}$ be a vector bundle on $X$. We say that $\mca{T}$ is a \emph{tilting bundle} on $X$ if it satisfies
\begin{itemize}
\item $\mca{T}$ is a weak generator for $D(\mr{QCoh}(X))$, i.e., $\RHom(\mca{T},\mca{F})=0$ implies $\mca{F}\cong0$ for $\mca{F}\in D(\mr{QCoh}(X))$. 
\item $\Ext^{i}(\mca{T},\mca{T})=0$ for $i\neq0$.
\end{itemize}

When $X$ is a Slodowy variety, Bezrukavnikov-Mirkovi\'c \cite{BM} proved that there exists an $\bb{T}$-equivariant tilting bundle on $X$ such that Lusztig's $K$-theoretic canonical basis for $X$ consists of indecomposable summands of the tilting bundle up to equivariant shifts. Moreover, the structure sheaf of $X$ is contained in the canonical basis. We also expect in general that $\bb{B}_{X,T^{1/2},s}$ is given by the classes of indecomposable summands of some tilting bundle on $X$. 

\begin{conj}\label{conj_general_tilting_bundle}
For each $p\in X^{H}$, there exists an $\bb{T}$-equivariant vector bundle lifting $\mca{E}_{\mf{C},T^{1/2},s}(p)$ (denoted by the same letter) such that $\mca{T}_{\mf{C},T^{1/2},s}\coloneqq\oplus_{p\in X^{H}}\mca{E}_{\mf{C},T^{1/2},s}(p)$ is a tilting bundle on $X$. Moreover, at least one of $\mca{E}_{\mf{C},T^{1/2},s}(p)$ is a line bundle. 
\end{conj}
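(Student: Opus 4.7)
The plan is to follow the pattern established by Bezrukavnikov--Mirkovi\'c for Slodowy varieties and by McBreen--Webster and \v{S}penko--Van den Bergh for the toric case: first construct a candidate tilting bundle $\mca{T}$ by an independent geometric method, then recognize its indecomposable summands as the classes $\mca{E}_{\mf{C},T^{1/2},s}(p)$ by using the uniqueness of canonical basis. Concretely, for each slope $s$ in a chamber of $P_{\bb{R}}\setminus\cup_{\beta}\{\langle -,\beta\rangle\in\bb{Z}\}$, I would build $\mca{T}_{\mf{C},T^{1/2},s}$ via a quantization-in-positive-characteristic argument in the style of Bezrukavnikov--Kaledin: quantize $X$ over a base of large characteristic with a suitable quantization parameter encoding $s$, produce a splitting bundle for the sheaf of Azumaya algebras on the formal neighborhood of the central fiber, and take its $\bb{T}$-equivariant lift. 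The nonnegativity of the $\bb{S}$-grading on $\End(\mca{T})$ (already observed in the introduction) together with Kaledin's Koszulity argument would ensure that $\mca{T}$ is tilting.

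Next I would identify the decomposition. Each indecomposable summand $\mca{E}$ of $\mca{T}$ gives a class $[\mca{E}]\in K_{\bb{T}}(X)$; the tilting/Koszul structure forces the bilinear form $(-||-)$ to restrict to the standard form on the set of summands, so $([\mca{E}]||[\mca{E}])\in 1+v^{-1}K_{H}(\mr{pt})[\![v^{-1}]\!]$. The bar invariance would follow because the duality functor on the heart of the perverse $t$-structure defined by $\mca{T}$ should intertwine with $\beta^{K}_{\mf{C},T^{1/2},s}$ on the $K$-theoretic level, which is the content of the conjectural lift discussed in the paper (see the reference to Conjecture~\ref{anti-involution}). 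Assuming that intertwining, $[\mca{E}]\in\bb{B}^{\pm}_{X,T^{1/2},s}$, and by Corollary~\ref{signed_canonical_basis} one concludes $\pm[\mca{E}]\in\bb{B}_{X,T^{1/2},s}$. A counting/rank argument (the number of indecomposable summands equals $|X^{H}|=\rank K_{\bb{T}}(X)$ over $K_{H}(\mr{pt})[v,v^{-1}]$, modulo shifts) matches the two sets, proving the first assertion.

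For the line-bundle claim, I would argue as follows: among the indecomposable summands of $\mca{T}_{\mf{C},T^{1/2},s}$, consider the one on which the structure-sheaf section $\mca{O}_{X}\to\mca{T}$ (obtained after a twist by an appropriate $\mf{L}(l)$) is nonzero. This summand is of rank one, because $\Hom(\mca{O}_{X},\mca{T})$ in sufficiently positive degree is one-dimensional by the cohomological purity $R\Gamma(X,\mca{O}_{X})=\bb{C}$ (using that $X$ has a contracting $\bb{S}$-action to the unique fixed point $o$), forcing the corresponding summand to have a generically trivial quotient, hence to equal a line bundle.

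The main obstacle is constructing $\mca{T}$ and controlling its summands in the absence of a uniform presentation of $X$: for Slodowy varieties and toric hyper-K\"ahler manifolds one has explicit quantizations (differential operators on $G/B$, hypertoric enveloping algebras) that make the Bezrukavnikov--Kaledin machine run cleanly, but for a general conical symplectic resolution the existence of a quantization over $\mathrm{Spec}(\bb{Z})$ with good reduction mod large $p$, together with the required Azumaya splitting on a formal neighborhood of $L$, is precisely what is missing. Once this splitting is available, the identification with the canonical basis is essentially automatic from the characterization via bar invariance and asymptotic norm one (Proposition~\ref{expansion} and Corollary~\ref{global_crystal_basis}); so the real work lies in producing $\mca{T}$ geometrically rather than in matching it to $\bb{B}_{X,T^{1/2},s}$.
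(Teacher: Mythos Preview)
The statement is a \emph{conjecture} in the paper; the paper does not prove it in general. It is verified only for toric hyper-K\"ahler manifolds (Corollary~\ref{cor_toric_tilting}), and there the argument runs in the \emph{opposite} direction from your proposal. The paper first identifies the $K$-theoretic canonical basis $\bb{B}_{X,s}$ explicitly: every $\mca{E}(A)$ is a concrete line bundle $\mca{L}(\mu_{A})$ (formula~(\ref{eqn_toric_E(A)})), and bar-invariance is checked directly (Proposition~\ref{prop_toric_bar_invariance_E(A)}) without any appeal to a pre-existing tilting bundle or to Conjecture~\ref{anti-involution}. Only afterward does the paper prove that $\oplus_{I}\mca{E}_{\mf{C},s}(p_{I})$ is tilting, by a direct cohomology computation on the Lawrence toric variety (Proposition~\ref{prop_toric_higher_vanishing_tilting}) using oriented-matroid combinatorics, together with weak generation from the wall-crossing argument (Corollary~\ref{cor_weak_generator}). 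The line-bundle clause is trivial in this case since \emph{every} summand is a line bundle.

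Your strategy---construct $\mca{T}$ first via quantization in positive characteristic, then match summands to $\bb{B}_{X,T^{1/2},s}$---is reasonable as a blueprint for the general conjecture, and indeed this is essentially how \cite{BM} proceeds for Slodowy varieties. But beyond the gap you flag (existence of the Azumaya splitting in general), there is a logical issue in your matching step: you invoke Corollary~\ref{signed_canonical_basis}, which already assumes Conjecture~\ref{K-theoretic_canonical_basis}, and you justify bar-invariance of $[\mca{E}]$ via Conjecture~\ref{anti-involution}, which in the paper's architecture is \emph{downstream} of the tilting bundle. So even granted $\mca{T}$, your identification argument is circular as written. Your line-bundle argument is also incomplete: the existence of a nonzero map $\mca{O}_{X}\to\mca{T}$ (after twist) does not by itself force any summand to have rank one; one-dimensionality of $\Hom(\mca{O}_{X},\mca{T})$ in a fixed degree would only say that the map factors through a single summand, not that this summand is a line bundle.
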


For toric hyper-K\"ahler manifolds, this conjecture is proved in Corollary~\ref{cor_toric_tilting}. In particular, this conjecture implies that $\bb{B}_{X,T^{1/2},s}$ consists of $K$-theory classes of actual vector bundles on $X$ and hence our choice of the sign is geometrically natural. 

\begin{corollary}\label{cor_Koszulity}
If we assume Conjecture~\ref{conj_general_tilting_bundle}, then the ring $\mca{A}_{\mf{C},T^{1/2},s}\coloneqq\End(\mca{T}_{\mf{C},T^{1/2},s})^{\mr{opp}}$ is non-negatively graded with respect to the $\bb{S}$-action and its degree 0 part is semisimple. 
\end{corollary}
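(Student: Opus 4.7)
The plan is to identify the $\bb{S}$-character of each block $\Hom(\mca{E}(p'),\mca{E}(p))$ of $\mca{A}^{\mr{opp}}=\End(\mca{T}_{\mf{C},T^{1/2},s})$ with an expansion of the inner product $(\mca{E}(p)||\mca{E}(p'))$, which is controlled on one side by Corollary~\ref{asymptotic_orthonormality} and on the other by the functional symmetry coming from Lemma~\ref{inner_bar}. Together these will force the character to be a power series in $v$ whose constant term is $\delta_{p,p'}$.

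First, since $\mca{T}$ is a tilting bundle we have $\Ext^{i}(\mca{E}(p'),\mca{E}(p))=0$ for $i>0$, hence
$$\chi_{\bb{T}}(\Hom(\mca{E}(p'),\mca{E}(p)))=[R\Hom(\mca{E}(p'),\mca{E}(p))]=(\mca{E}(p)^{\vee}:\mca{E}(p')).$$
Next, using the bar invariance $\beta^{K}\mca{E}(p')=\mca{E}(p')$ together with the identity $[\bb{D}_{X}\mca{E}(p')]=[\mca{E}(p')^{\vee}]\cdot v^{\dim X}$, which follows from $\omega$ having $\bb{S}$-weight $2$ so that $\omega^{\dim X/2}$ trivializes $\mca{K}_{X}$ with $\bb{S}$-weight $\dim X$ (and $\dim X$ is even), I would rewrite
$$(\mca{E}(p)||\mca{E}(p'))=(\mca{E}(p):\bb{D}_{X}\mca{E}(p'))=v^{\dim X}\cdot\chi_{\bb{T}}(\Hom(\mca{E}(p'),\mca{E}(p))).$$
Then Lemma~\ref{inner_bar} applied to $\mca{F}=\mca{E}(p)$, $\mca{G}=\mca{E}(p')$ combined with their bar invariance gives the functional equation $(\mca{E}(p)||\mca{E}(p'))(v)=v^{\dim X}\cdot(\mca{E}(p)||\mca{E}(p'))(v^{-1})$ of rational functions. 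Substituting the $v^{-1}$-expansion provided by Corollary~\ref{asymptotic_orthonormality} yields
$$(\mca{E}(p)||\mca{E}(p'))\in v^{\dim X}\delta_{p,p'}+v^{\dim X+1}K_{H}(\mr{pt})[\![v]\!]$$
as the $v$-expansion at the origin.

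Since $\pi:X\to\Spec(\bb{C}[X])$ is proper and $\mca{T}$ is a vector bundle, $\Hom(\mca{E}(p'),\mca{E}(p))$ is a finitely generated $\bb{C}[X]$-module whose $\bb{S}$-weights are bounded below; its $\bb{T}$-character therefore coincides with the $v$-expansion at the origin of $v^{-\dim X}(\mca{E}(p)||\mca{E}(p'))\in\delta_{p,p'}+v\cdot K_{H}(\mr{pt})[\![v]\!]$. This establishes non-negativity of the grading and shows that the $\bb{S}$-degree $0$ component has $H$-character $\delta_{p,p'}$, which forces it to be $\bigoplus_{p\in X^{H}}\bb{C}\cdot\mr{id}_{\mca{E}(p)}$, a product of copies of $\bb{C}$ and hence a semisimple ring. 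The main subtlety is to justify that the $\bb{T}$-character of the a priori infinite-dimensional Hom-space is correctly read off as the $v$-expansion at the origin of the rational inner product: this rests on finite generation of $R\pi_{*}\mca{H}om(\mca{E}(p'),\mca{E}(p))$ over $\bb{C}[X]$, which should follow from the tilting assumption together with properness of $\pi$.
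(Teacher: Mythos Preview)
Your argument is correct and reaches the same conclusion as the paper, but the technical route differs slightly. The paper writes
\[
(\mca{E}(p)||\mca{E}(p'))=[R\Gamma(\bb{D}_{X}R\mca{H}om(\mca{E}(p),\mca{E}(p')))]=[\Hom(\mca{E}(p),\mca{E}(p'))]^{\vee},
\]
using the localization identity $[R\Gamma(\bb{D}_{X}\mca{G})]=[R\Gamma(\mca{G})]^{\vee}$ (Serre duality for rational equivariant Euler characteristics). This already expresses the inner product as a $v^{-1}$-series, so Corollary~\ref{asymptotic_orthonormality} applies immediately and one simply dualizes back; no functional-equation step is needed. You instead use $[\omega_{X}]=v^{\dim X}$ to write $(\mca{E}(p)||\mca{E}(p'))=v^{\dim X}\cdot[\Hom(\mca{E}(p'),\mca{E}(p))]$ as a $v$-series, and then invoke Lemma~\ref{inner_bar} together with bar invariance to produce the functional equation linking the two expansions. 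Both paths are valid and rest on the same ingredients (bar invariance plus asymptotic orthonormality); the paper's is one step shorter, while yours makes the Calabi--Yau symmetry $[\Hom(\mca{E}(p),\mca{E}(p'))]^{\vee}=v^{\dim X}[\Hom(\mca{E}(p'),\mca{E}(p))]$ more explicit. One minor slip: in your first display the dual is on the wrong factor; $[R\Hom(\mca{E}(p'),\mca{E}(p))]=(\mca{E}(p')^{\vee}:\mca{E}(p))$, not $(\mca{E}(p)^{\vee}:\mca{E}(p'))$. This does not affect the conclusion since $p,p'$ range over all fixed points.
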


\begin{proof}
By the definition of tilting bundle, we have 
\begin{align*}
(\mca{E}_{\mf{C},T^{1/2},s}(p)||\mca{E}_{\mf{C},T^{1/2},s}(p'))&=[R\Gamma(\mca{E}_{\mf{C},T^{1/2},s}(p)\otimes^{\bb{L}}\bb{D}_{X}\mca{E}_{\mf{C},T^{1/2},s}(p'))]\\
&=[R\Gamma(\bb{D}_{X}R\mca{H}om(\mca{E}_{\mf{C},T^{1/2},s}(p),\mca{E}_{\mf{C},T^{1/2},s}(p')))]\\
&=[\Hom(\mca{E}_{\mf{C},T^{1/2},s}(p),\mca{E}_{\mf{C},T^{1/2},s}(p'))^{\vee}]
\end{align*}
as rational functions in equivariant parameters. Since the last term is also contained in $K_{H}(\mr{pt})(\!(v^{-1})\!)$, we obtain $[\Hom(\mca{E}_{\mf{C},T^{1/2},s}(p),\mca{E}_{\mf{C},T^{1/2},s}(p'))]\in\delta_{p,p'}+vK_{H}(\mr{pt})[\![v]\!]$ by Lemma~\ref{asymptotic_orthonormality}. This implies that it is non-negatively graded with respect to $\bb{S}$ and the degree zero part is semisimple.
\end{proof}

By Kaledin's argument in \cite{BM} section 5.5, this also implies that $\mca{A}_{\mf{C},T^{1/2},s}$ is Koszul. We denote by $\mca{B}_{\mf{C},T^{1/2},s}$ its Koszul dual. We note that $\mca{A}_{\mf{C},T^{1/2},s}$ and $\mca{B}_{\mf{C},T^{1/2},s}$ has a natural $H$-action. By the independence of $\bb{B}_{X,T^{1/2},s}$ on $\mf{C}$, $\mca{A}_{\mf{C},T^{1/2},s}$ and $\mca{B}_{\mf{C},T^{1/2},s}$ does not depend on the choice of $\mf{C}$ if we forget the $H$-actions. Since $X$ is smooth, $\mca{A}_{\mf{C},T^{1/2},s}$ has finite global dimension and hence $\mca{B}_{\mf{C},T^{1/2},s}$ is finite dimensional. 

By the standard properties of tilting bundle, there is a derived equivalence 
\begin{align}\label{derived_equiv}
\psi_{\mf{C},T^{1/2},s}:\DCohT(X)\cong D^{b}(\mca{A}_{\mf{C},T^{1/2},s}\mbox{-}\mr{gmod}^{H})
\end{align}
given by the functor $\RHom(\mca{T}_{\mf{C},T^{1/2},s},-)$. Here, we denote by $\mca{A}_{\mf{C},T^{1/2},s}\mbox{-}\mr{gmod}^{H}$ the category of finitely generated graded $H$-equivariant modules of $\mca{A}_{\mf{C},T^{1/2},s}$. We denote the $t$-structure on $\DCohT(X)$ corresponding to the standard $t$-structure on $D^{b}(\mca{A}_{\mf{C},T^{1/2},s}\mbox{-}\mr{gmod}^{H})$ by $\tau_{T^{1/2},s}$. By construction, $\mca{F}\in\DCohT(X)$ is contained in the heart if and only if $R^{\neq0}\mr{Hom}(\mca{T}_{\mf{C},T^{1/2},s},\mca{F})=0$. 

For $(\lambda,p)\in\bb{F}$, we will write $\mca{E}_{\mf{C},T^{1/2},s}(\lambda,p)\coloneqq[\lambda]\cdot\mca{E}_{\mf{C},T^{1/2},s}(p)$ as in the $K$-theoretic one. Each $\psi_{\mf{C},T^{1/2},s}(\mca{E}_{\mf{C},T^{1/2},s}(\lambda,p))$ defines a graded $H$-equivariant indecomposable projective module of $\mca{A}_{\mf{C},T^{1/2},s}$ denoted by $P^{\mca{A}}_{\mf{C},T^{1/2},s}(\lambda,p)$. It has a unique one-dimensional simple quotient denoted by $L^{\mca{A}}_{\mf{C},T^{1/2},s}(\lambda,p)$. We have
\begin{align*}
\partial(\psi_{\mf{C},T^{1/2},s}^{-1}(L^{\mca{A}}_{\mf{C},T^{1/2},s}(\lambda,p))||\mca{E}_{\mf{C},T^{1/2},s}(\lambda',p'))&=\left[\RHom(\mca{E}_{\mf{C},T^{1/2},s}(\lambda',p'),v^{\dim X}\psi_{\mf{C},T^{1/2},s}^{-1}(L^{\mca{A}}_{\mf{C},T^{1/2},s}(\lambda,p))[\dim X])^{H}\right]\\
&=v^{\dim X}\cdot\delta_{(\lambda,p),(\lambda',p')}.
\end{align*}
Hence the $K$-theory class of $v^{-\dim X}\cdot\psi_{\mf{C},T^{1/2},s}^{-1}(L^{\mca{A}}_{\mf{C},T^{1/2},s}(\lambda,p))$ coincides with $\mca{C}_{\mf{C},T^{1/2},s}(\lambda,p)$ by Proposition~\ref{dual_basis}. Using this lift, we sometimes regard $K$-theoretic canonical bases of $K_{\bb{T}}(L)$ as objects in $\DCohT(X)$. 

By the Koszul duality (cf. \cite{BGS,MOS}), we also obtain the following derived equivalence
\begin{align}\label{Koszul_duality}
\msc{K}: D^{b}(\mca{A}_{\mf{C},T^{1/2},s}\mbox{-}\mr{gmod}^{H})\cong D^{b}(\mca{B}_{\mf{C},T^{1/2},s}\mbox{-}\mr{gmod}^{H}).
\end{align}
We note that since $\mca{B}_{\mf{C},T^{1/2},s}$ is finite dimensional, the Koszul duality equivalence preserves the boundedness by \cite[Theorem~2.12.6]{BGS}. The standard $t$-structure on $D^{b}(\mca{B}_{\mf{C},T^{1/2},s}\mbox{-}\mr{gmod}^{H})$ induces a $t$-structure on $D^{b}(\mca{A}_{\mf{C},T^{1/2},s}\mbox{-}\mr{gmod}^{H})$ and its heart consists of linear complex of projective modules, that is, object quasi-isomorphic to a complex of the form 
\begin{align*}
0\rightarrow v^{N}P_{N}\rightarrow v^{N-1}P_{N-1}\rightarrow\cdots\rightarrow v^{M}P_{M}\rightarrow0,
\end{align*}
where each $P_{i}$ sits in the $(-i)$-th term and it is a direct sum of projective modules of the form $P^{\mca{A}}_{\mf{C},T^{1/2},s}(\lambda,p)$. By the construction of $\msc{K}$, we have $\msc{K}\circ v[1]=v^{-1}\circ\msc{K}$ and $L^{\mca{B}}_{\mf{C},T^{1/2},s}(\lambda,p)\coloneqq\msc{K}(P^{\mca{A}}_{\mf{C},T^{1/2},s}(\lambda,p))$ is a graded $H$-equivariant one-dimensional simple module of $\mca{B}_{\mf{C},T^{1/2},s}$. We note that any simple object in $\mca{B}_{\mf{C},T^{1/2},s}\mbox{-}\mr{gmod}^{H}$ is of the form $v^{m}L^{\mca{B}}_{\mf{C},T^{1/2},s}(\lambda,p)$ for some $m\in\bb{Z}$ and $(\lambda,p)\in\bb{F}$. Since $\mca{A}_{\mf{C},T^{1/2},s}$ is Koszul, $L^{\mca{A}}_{\mf{C},T^{1/2},s}(\lambda,p)$ is quasi-isomorphic to a linear complex of projective modules and hence we have $I^{\mca{B}}_{\mf{C},T^{1/2},s}(\lambda,p)\coloneqq\msc{K}(L^{\mca{A}}_{\mf{C},T^{1/2},s}(\lambda,p))\in\mca{B}_{\mf{C},T^{1/2},s}\mbox{-}\mr{gmod}^{H}$. This is the injective hull of $L^{\mca{B}}_{\mf{C},T^{1/2},s}(\lambda,p)$. 

For a categorical lift of standard basis, we conjecture the following. 

\begin{conj}\label{categorical_stable_basis}
For each $(\lambda,p)\in X^{H}$, there exists an object $\nabla^{\mca{B}}_{\mf{C},T^{1/2},s}(\lambda,p)\in\mca{B}_{\mf{C},T^{1/2},s}\mbox{-}\mr{gmod}^{H}$ such that $\Delta^{\mca{A}}_{\mf{C},T^{1/2},s}(\lambda,p)\coloneqq\msc{K}^{-1}(\nabla^{\mca{B}}_{\mf{C},T^{1/2},s}(\lambda,p))$ is contained in the standard heart of $D^{b}(\mca{A}_{\mf{C},T^{1/2},s}\mbox{-}\mr{gmod}^{H})$ and the $K$-theory class of $\nabla_{\mf{C},T^{1/2},s}(\lambda,p)\coloneqq\psi^{-1}_{\mf{C},T^{1/2},s}(\Delta^{\mca{A}}_{\mf{C},T^{1/2},s}(\lambda,p))\in\DCohT(X)$ coincides with $\beta^{K}_{\mf{C},T^{1/2},s}(\mca{S}_{\mf{C},T^{1/2},s}(\lambda,p))$. If we set $\Delta_{\mf{C},T^{1/2},s}(\lambda,p)\coloneqq v^{-\frac{\dim X}{2}}\nabla_{-\mf{C},T^{1/2},s}(\lambda,p)[-\frac{\dim X}{2}]\in\DCohT(X)$, then we have 
\begin{align}\label{derived_orthonormality}
\Hom_{\DCohT(X)}\left(v^{j}\Delta_{\mf{C},T^{1/2},s}(\lambda,p),\nabla_{\mf{C},T^{1/2},s}(\lambda',p')[i]\right)=\begin{cases}
\bb{C} &\mbox{ if }i=j=0\mbox{ and }(\lambda,p)=(\lambda',p'),\\
0 &\mbox{ otherwise}.
\end{cases}
\end{align}
\end{conj}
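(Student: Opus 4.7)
The plan is to equip $\mca{A}_{\mf{C},T^{1/2},s}\mbox{-}\mr{gmod}^{H}$ with a highest weight structure whose poset is the interval-finite poset $(\bb{F},\leq_{\mf{C},s})$ of Lemma~\ref{partial_order}, with simples $L^{\mca{A}}_{\mf{C},T^{1/2},s}(\lambda,p)$ and projective covers $P^{\mca{A}}_{\mf{C},T^{1/2},s}(\lambda,p)$. I would define $\Delta^{\mca{A}}_{\mf{C},T^{1/2},s}(\lambda,p)$ as the maximal quotient of $P^{\mca{A}}(\lambda,p)$ whose simple composition factors are of the form $L^{\mca{A}}(\lambda',p')$ with $(\lambda',p')\leq_{\mf{C},s}(\lambda,p)$; interval-finiteness of $\leq_{\mf{C},s}$ makes this a well-defined object of $\mca{A}\mbox{-}\mr{gmod}^{H}$, and Conjecture~\ref{positivity} ensures the composition multiplicities are finite and bounded. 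Then I set $\nabla^{\mca{B}}_{\mf{C},T^{1/2},s}(\lambda,p):=\msc{K}(\Delta^{\mca{A}}_{\mf{C},T^{1/2},s}(\lambda,p))$.

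The first nontrivial step is to verify that $\nabla^{\mca{B}}(\lambda,p)$ actually lies in the standard heart of $D^{b}(\mca{B}\mbox{-}\mr{gmod}^{H})$. By the description of the image of this heart under $\msc{K}^{-1}$ as linear complexes of projectives, this reduces to exhibiting a resolution
\begin{align*}
\cdots\to v^{1}P_{1}\to v^{0}P_{0}\to \Delta^{\mca{A}}(\lambda,p)\to 0,
\end{align*}
with each $P_{i}$ a finite direct sum of $P^{\mca{A}}(\lambda',p')$ placed in internal $v$-degree $i$. I would build such a resolution inductively, using Koszulity of $\mca{A}_{\mf{C},T^{1/2},s}$ (Corollary~\ref{cor_Koszulity}) to force each syzygy to be generated in the next internal degree, with the bounds of Proposition~\ref{degree_bound} and the positivity of Conjecture~\ref{positivity} controlling which projective summands can appear at each stage.

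For the $K$-theoretic identification, applying $\psi_{\mf{C},T^{1/2},s}^{-1}$ to the composition series of $\Delta^{\mca{A}}(\lambda,p)$ gives
\begin{align*}
[\nabla_{\mf{C},T^{1/2},s}(\lambda,p)]=v^{\dim X}\sum_{(\lambda',p')\leq_{\mf{C},s}(\lambda,p)}c^{\lambda,p}_{\lambda',p'}(v)\,\mca{C}_{\mf{C},T^{1/2},s}(\lambda',p'),
\end{align*}
for some polynomials $c^{\lambda,p}_{\lambda',p'}(v)$. To match this to $\beta^{K}(\mca{S}_{\mf{C},T^{1/2},s}(\lambda,p))=(-v)^{\dim X/2}\mca{S}_{-\mf{C},T^{1/2},s}(\lambda,p)$, I would pair both sides against $\mca{E}_{\mf{C},T^{1/2},s}(\lambda'',p'')$ in $(-||-)$: by Proposition~\ref{dual_basis} the left-hand side collapses to $v^{\dim X}c^{\lambda,p}_{\lambda'',p''}(v)$, while the right-hand side can be expanded via Corollary~\ref{std_to_canonical} and Lemma~\ref{inner_bar}. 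The resulting recursion on $c^{\lambda,p}_{\lambda',p'}(v)$, combined with the linear-resolution constraint on degrees, matches the Kazhdan-Lusztig uniqueness in Proposition~\ref{expansion}, pinning down $c^{\lambda,p}_{\lambda',p'}(v)$ uniquely.

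Finally, the derived orthonormality (\ref{derived_orthonormality}) follows from the standard $(\Delta,\nabla)$-duality in a properly stratified category: Hom vanishing for $(\lambda,p)\neq(\lambda',p')$ comes from incompatibility between the head of $\Delta^{\mca{A}}(\lambda,p)$ and the socle of $\msc{K}^{-1}(\nabla^{\mca{B}}(\lambda',p'))$ under $\leq_{\mf{C},s}$, while the vanishing of higher Exts uses the projective resolution of $\Delta^{\mca{A}}$ together with the fact that the costandard modules in the opposite chamber (which appear via the shift convention $\Delta_{\mf{C}}=v^{-\dim X/2}\nabla_{-\mf{C}}[-\dim X/2]$) are filtered by simples with comparable poset rank. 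The main obstacle is undoubtedly the rigorous construction of the highest weight structure with the correct poset $\leq_{\mf{C},s}$: although Conjectures~\ref{K-theoretic_canonical_basis} and~\ref{positivity} make such a structure combinatorially very plausible, extracting it geometrically---perhaps by characterizing $\Delta_{\mf{C},T^{1/2},s}(\lambda,p)$ as an exotic sheaf in the $t$-structure $\tau_{T^{1/2},s}$ of (\ref{derived_equiv}) and leveraging the wall-crossing/braid actions envisioned in Conjecture~\ref{conj_real_variation}---appears to require new geometric input beyond what is developed so far.
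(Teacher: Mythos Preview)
This statement is a conjecture that the paper does not prove in general; it is established only for toric hyper-K\"ahler manifolds in Theorem~\ref{thm_ext_orthogonality}. There the argument is entirely explicit and geometric: $\nabla_{\mf{C},s}(A)$ and $\Delta_{\mf{C},s}(A)$ are \emph{defined} directly as (shifts of) line bundles tensored with structure sheaves of the closures of attracting sets $L_I^{-}$, $L_I$, the heart condition is checked by hand in Proposition~\ref{prop_nabla_heart}, and the Ext-orthogonality (\ref{derived_orthonormality}) is obtained by applying $\RHom(-,\nabla_{\mf{C},s}(A))$ to the Koszul resolution of $\Delta_{\mf{C},s}(B)$ (Lemma~\ref{lem_toric_resol_L_I}), producing a small explicit complex whose acyclicity for $A\neq B$ is forced by a combinatorial argument that ultimately rests on Minty's Lemma from linear programming (Proposition~\ref{prop_Minty}). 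The highest weight structure on $\mca{B}_{\mf{C},s}\mbox{-}\mr{gmod}^{H}$ is then \emph{deduced} from (\ref{derived_orthonormality}) in the proposition following the conjecture, not used as an input.

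Your proposal reverses this logic, and that reversal is where the genuine gap lies. You define $\Delta^{\mca{A}}(\lambda,p)$ as a maximal quotient of a projective and then assert a linear projective resolution, appealing to Koszulity of $\mca{A}_{\mf{C},T^{1/2},s}$. But Corollary~\ref{cor_Koszulity} only guarantees that \emph{simple} $\mca{A}$-modules have linear resolutions; for your $\Delta^{\mca{A}}$ to be linearly resolved one needs $\mca{A}$ to be \emph{standard Koszul}, which is essentially the content of the conjecture itself. Proposition~\ref{degree_bound} and Conjecture~\ref{positivity} constrain Euler characteristics, not individual Ext-groups, so they cannot rule out off-diagonal cancellation in a minimal resolution of $\Delta^{\mca{A}}$. (A secondary issue: $\mca{A}_{\mf{C},T^{1/2},s}$ is infinite-dimensional, so $\mca{A}_{\mf{C},T^{1/2},s}\mbox{-}\mr{gmod}^{H}$ is not locally Artinian and cannot carry a highest weight structure in the sense of Definition~\ref{graded_highest_weight}; in the paper it is $\mca{B}_{\mf{C},T^{1/2},s}\mbox{-}\mr{gmod}^{H}$ that is highest weight.) You correctly flag the construction of the highest weight structure as the main obstacle in your closing paragraph; the point is that the circularity already enters at the linear-resolution step, not only at the end.
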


For toric hyper-K\"ahler manifolds, this conjecture is proved in Theorem~\ref{thm_ext_orthogonality}. We note that the $K$-theory class of $\Delta_{\mf{C},T^{1/2},s}(\lambda,p)$ coincides with $\mca{S}_{\mf{C},T^{1/2},s}(\lambda,p)$ and the equation (\ref{derived_orthonormality}) lifts the orthonormality of $K$-theoretic standard bases in Lemma~\ref{std_orthonormality}. We expect that $\nabla_{\mf{C},T^{1/2},s}(\lambda,p)$ and $\Delta_{\mf{C},T^{1/2},s}(\lambda,p)$ are essentially given by the theory of categorical stable envelope (c.f. \cite{O2}). Since $\Delta^{\mca{A}}_{\mf{C},T^{1/2},s}(\lambda,p)$ is a Koszul module of $\mca{A}_{\mf{C},T^{1/2},s}$, we have a resolution of the form
\begin{align}\label{proj_res_A_std}
0\rightarrow v^{\frac{\dim X}{2}}P_{\frac{\dim X}{2}}\rightarrow\cdots\rightarrow vP_{1}\rightarrow P^{\mca{A}}_{\mf{C},T^{1/2},s}(\lambda,p)\rightarrow \Delta^{\mca{A}}_{\mf{C},T^{1/2},s}(\lambda,p)\rightarrow0
\end{align}
in $\mca{A}_{\mf{C},T^{1/2},s}\mbox{-}\mr{gmod}^{H}$ by Corollary~\ref{std_to_canonical}, where each $P_{i}$ ($i=1,\ldots,\frac{\dim X}{2}$) is a direct sum of projective modules of the form $P^{\mca{A}}_{\mf{C},T^{1/2},s}(\lambda',p')$ satisfying $(\lambda',p')<_{\mf{C},s}(\lambda,p)$. By the construction of $\msc{K}$, it follows that $\nabla^{\mca{B}}_{\mf{C},T^{1/2},s}(\lambda,p)$ is non-positively graded and we have an inclusion $L^{\mca{B}}_{\mf{C},T^{1/2},s}(\lambda,p)\hookrightarrow\nabla^{\mca{B}}_{\mf{C},T^{1/2},s}(\lambda,p)$ such that any composition factor $v^{j}L^{\mca{B}}_{\mf{C},T^{1/2},s}(\lambda',p')$ of the quotient $\nabla^{\mca{B}}_{\mf{C},T^{1/2},s}(\lambda,p)/L^{\mca{B}}_{\mf{C},T^{1/2},s}(\lambda,p)$ satisfies $j<0$ and $(\lambda',p')<_{\mf{C},s}(\lambda,p)$.

Since the shift $v[1]$ preserves the linear complex of projective modules, $\psi_{\mf{C},T^{1/2},s}(\Delta_{\mf{C},T^{1/2},s}(\lambda,p))$ is quasi-isomorphic to a complex of the form 
\begin{align*}
0\rightarrow P^{\mca{A}}_{\mf{C},T^{1/2},s}(\lambda,p)\rightarrow v^{-1}P_{1}\rightarrow\cdots\rightarrow v^{-\frac{\dim X}{2}}P_{\frac{\dim X}{2}}\rightarrow0
\end{align*}
where $P^{\mca{A}}_{\mf{C},T^{1/2},s}(\lambda,p)$ sits in degree 0 and each $P_{i}$ is the same as (\ref{proj_res_A_std}). If we write $\Delta^{\mca{B}}_{\mf{C},T^{1/2},s}(\lambda,p)\coloneqq\msc{K}(\psi_{\mf{C},T^{1/2},s}(\Delta_{\mf{C},T^{1/2},s}(\lambda,p)))\in\mca{B}_{\mf{C},T^{1/2},s}\mbox{-}\mr{gmod}^{H}$, then this is non-negatively graded and we have a projection $\Delta^{\mca{B}}_{\mf{C},T^{1/2},s}(\lambda,p)\twoheadrightarrow L^{\mca{B}}_{\mf{C},T^{1/2},s}(\lambda,p)$ such that any composition factor $v^{j}L^{\mca{B}}_{\mf{C},T^{1/2},s}(\lambda',p')$ of the kernel satisfies $j>0$ and $(\lambda',p')<_{\mf{C},s}(\lambda,p)$. By (\ref{derived_orthonormality}), we obtain 
\begin{align}\label{ext_orthonormality}
\Ext^{i}_{\mca{B}_{\mf{C},T^{1/2},s}\mbox{-}\mr{gmod}^{H}}\left(v^{j}\Delta^{\mca{B}}_{\mf{C},T^{1/2},s}(\lambda,p),\nabla^{\mca{B}}_{\mf{C},T^{1/2},s}(\lambda',p')\right)=\begin{cases}
\bb{C} &\mbox{ if }i=j=0\mbox{ and }(\lambda,p)=(\lambda',p')\\
0 &\mbox{ otherwise}.
\end{cases}
\end{align}

Next we show that $\mca{B}_{\mf{C},T^{1/2},s}\mbox{-}\mr{gmod}^{H}$ has a structure of graded highest weight category assuming Conjecture~\ref{categorical_stable_basis}. We first recall the definition of graded highest weight category following \cite{CPS1,CPS2}. Let $\mb{C}$ be a $\bb{C}$-linear abelian category with a free $\bb{Z}$-action which is locally Artinian, contains enough injectives, and satisfies Grothendieck's condition AB5. The action of $j\in\bb{Z}$ on an object $M\in\mca{C}$ is denoted by $M\mapsto M\langle j\rangle$. For each $M,N\in\mb{C}$, we set $\hom_{\mb{C}}(M,N)\coloneqq\oplus_{j\in\bb{Z}}\Hom_{\mb{C}}(M\langle j\rangle,N)$ and $\ext^{i}_{\mb{C}}(M,N)\coloneqq\oplus_{j\in\bb{Z}}\Ext^{i}_{\mb{C}}(M\langle j\rangle,N)$. For a simple object $L$, we denote by $[M:L]$ the multiplicity of $L$ in the composition series of $M$. 

\begin{dfn}[\cite{CPS1,CPS2}]\label{graded_highest_weight}
A category $\mb{C}$ as above is called \emph{graded highest weight category} if there exists an interval finite poset $\Lambda$ satisfying the following conditions:
\begin{itemize}
\item For each $\lambda\in\Lambda$, we have a simple object $L(\lambda)$ such that $\{L(\lambda)\langle j\rangle\}_{j\in\bb{Z},\lambda\in\Lambda}$ is a complete set of non-isomorphic simple objects of $\mb{C}$. 
\item For each $\lambda\in\Lambda$, there is an object $\nabla(\lambda)$ (called \emph{costandard object}) with an inclusion $L(\lambda)\hookrightarrow\nabla(\lambda)$ such that any composition factor $L(\mu)\langle j\rangle$ of the quotient $\nabla(\lambda)/L(\lambda)$ satisfies $j<0$ and $\mu<\lambda$. Moreover, for each $\lambda,\mu\in\Lambda$, $\dim_{\bb{C}}\hom_{\mb{C}}(\nabla(\lambda),\nabla(\mu))$ and $\sum_{j\in\bb{Z}}[\nabla(\lambda):L(\mu)\langle j\rangle]$ are finite. 
\item For each $\lambda\in\Lambda$, injective hull $I(\lambda)$ of $L(\lambda)$ has an increasing filtration $0=F_{0}(\lambda)\subset F_{1}(\lambda)\subset\cdots$ with $\cup_{i}F_{i}(\lambda)=I(\lambda)$ such that $F_{1}(\lambda)\cong\nabla(\lambda)$ and $F_{i}(\lambda)/F_{i-1}(\lambda)\cong\nabla(\mu_{i})\langle j\rangle$ for some $j<0$ and $\mu_{i}>\lambda$ if $i>1$. Moreover, the set $\{i\mid \mu_{i}=\mu\}$ is finite for any $\mu\in\Lambda$. 
\end{itemize}
\end{dfn}

\begin{prop}
Assume Conjecture~\ref{categorical_stable_basis}. Then the category $\mca{B}_{\mf{C},T^{1/2},s}\mbox{-}\mr{gmod}^{H}$ has a structure of graded highest weight category with poset $(\bb{F},\leq_{\mf{C},s})$ and the costandard object parametrized by $(\lambda,p)\in\bb{F}$ is given by $\nabla^{\mca{B}}_{\mf{C},T^{1/2},s}(\lambda,p)$. 
\end{prop}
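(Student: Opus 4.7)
The plan is to verify each bullet point of Definition~\ref{graded_highest_weight} for $\mca{B}_{\mf{C},T^{1/2},s}\mbox{-}\mr{gmod}^H$ with poset $(\bb{F},\leq_{\mf{C},s})$, costandards $\nabla^{\mca{B}}_{\mf{C},T^{1/2},s}(\lambda,p)$, and the $v$-shift playing the role of $\langle 1\rangle$. Since $\mca{B}_{\mf{C},T^{1/2},s}$ is a finite-dimensional graded $\bb{C}$-algebra, the ind-completion of $\mca{B}\mbox{-}\mr{gmod}^H$ is a locally Artinian Grothendieck category with enough injectives via graded linear duality, and interval-finiteness of $\leq_{\mf{C},s}$ is Lemma~\ref{partial_order}.

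For the first bullet, any simple object of $\mca{B}\mbox{-}\mr{gmod}^H$ is one-dimensional and hence, up to $v$-shift and $H$-character twist, is a simple of the semisimple degree-zero subalgebra $\mca{B}_0$. Via the Koszul duality $\msc{K}$ and the identification of the indecomposable summands of $\mca{T}_{\mf{C},T^{1/2},s}$ with $X^H$, these correspond bijectively to $\bb{F}\times\bb{Z}$. The second bullet is essentially contained in Conjecture~\ref{categorical_stable_basis} and the discussion following~(\ref{proj_res_A_std}): the socle inclusion $L^{\mca{B}}(\lambda,p)\hookrightarrow\nabla^{\mca{B}}(\lambda,p)$ and the composition-factor bound on the quotient are stated there, and the required finiteness of $\hom_{\mb{C}}(\nabla(\lambda),\nabla(\mu))$ and $\sum_{j}[\nabla(\lambda):L(\mu)\langle j\rangle]$ reduces to each $\nabla^{\mca{B}}(\lambda,p)$ being finite-dimensional, which holds because $\Delta^{\mca{A}}(\lambda,p)$ is finitely generated over the finite-dimensional algebra $\mca{A}_{\mf{C},T^{1/2},s}$ and $\msc{K}$ preserves boundedness.

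The main task, and expected main obstacle, is the third bullet: producing the $\nabla$-filtration of $I^{\mca{B}}(\lambda,p)$. I would proceed via truncation. For any finite down-set $\Lambda'\subset\bb{F}$ containing $(\lambda,p)$, the Serre subcategory $\mca{B}\mbox{-}\mr{gmod}^H_{\Lambda'}\subset\mca{B}\mbox{-}\mr{gmod}^H$ of modules whose composition factors lie in $\Lambda'\times\bb{Z}$ is controlled by a suitable finite-dimensional graded subquotient of $\mca{B}_{\mf{C},T^{1/2},s}$, and the restriction of the ext-orthogonality~(\ref{ext_orthonormality}) combined with the socle/head structure of $\nabla^{\mca{B}}$ and $\Delta^{\mca{B}}$ (both as described after~(\ref{proj_res_A_std})) lets one invoke the classical Cline-Parshall-Scott criterion \cite{CPS1,CPS2} to conclude that this subquotient is graded quasi-hereditary with the prescribed standards and costandards. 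In particular, the injective hull $I^{\mca{B}}_{\Lambda'}(\lambda,p)$ computed inside the truncation admits a $\nabla$-filtration of the shape required by clause (iii).

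Finally, for $\Lambda'\subset\Lambda''$ the truncated injective hull $I^{\mca{B}}_{\Lambda'}(\lambda,p)$ embeds into $I^{\mca{B}}_{\Lambda''}(\lambda,p)$ as the largest submodule supported on $\Lambda'\times\bb{Z}$, and this truncation functor preserves $\nabla$-flags layer by layer; taking the colimit over all finite down-sets $\Lambda'\ni(\lambda,p)$ recovers $I^{\mca{B}}(\lambda,p)$ together with a filtration by costandards starting from $F_1=\nabla^{\mca{B}}(\lambda,p)$ and whose higher layers $F_i/F_{i-1}\cong\nabla^{\mca{B}}(\mu_i,q_i)\langle j_i\rangle$ satisfy $(\mu_i,q_i)>_{\mf{C},s}(\lambda,p)$ with $j_i<0$. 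Interval-finiteness of $\leq_{\mf{C},s}$ (Lemma~\ref{partial_order}) ensures that, for any fixed $\mu\in\bb{F}$, only finitely many layers contribute copies of $\nabla^{\mca{B}}(\mu,q)\langle j\rangle$, completing the verification of Definition~\ref{graded_highest_weight}.
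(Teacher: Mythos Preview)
Your proof is correct but takes a different route from the paper for the third bullet. The paper proves a dedicated ext-criterion (Lemma~\ref{lem_ext_criterion}): an object $M$ of $\mca{B}\mbox{-}\mr{gmod}^H$ admits a $\nabla$-filtration if and only if $\ext^1(\Delta^{\mca{B}}(\lambda,p),M)=0$ for all $(\lambda,p)$, proved by induction on the length of $M$ using (\ref{ext_orthonormality}) directly. Since $I^{\mca{B}}(\lambda,p)$ is injective this applies immediately, and the shape of the filtration is then read off from the $K$-theory identity $[I^{\mca{B}}(\lambda,p)]=\sum P^{\lambda',p'}_{\lambda,p}(-v)[\nabla^{\mca{B}}(\lambda',p')]$ coming from Proposition~\ref{degree_bound}. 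Your approach instead packages the same ext-orthogonality into the CPS criterion on finite truncations and passes to a colimit; this is valid, but note that since $\mca{B}_{\mf{C},T^{1/2},s}$ is finite-dimensional, each $I^{\mca{B}}(\lambda,p)$ is already finite-dimensional, so a single sufficiently large truncation suffices and the colimit step is redundant. Likewise, the finiteness of $\{i\mid\mu_i=\mu\}$ holds trivially for the same reason, rather than following from interval-finiteness as you suggest. The paper's approach is more self-contained and avoids invoking the CPS formalism as a black box, while yours situates the result within the standard quasi-hereditary framework.
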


\begin{proof}
Since $\mca{B}_{\mf{C},T^{1/2},s}$ is finite dimensional over $\bb{C}$, the category $\mca{B}_{\mf{C},T^{1/2},s}\mbox{-}\mr{gmod}^{H}$ is Artinian. The free $\bb{Z}$-action on $\mca{B}_{\mf{C},T^{1/2},s}\mbox{-}\mr{gmod}^{H}$ is given by the grading shifts $M\langle j\rangle\coloneqq v^{j}M$. The poset $\bb{F}$ is interval finite by Lemma~\ref{partial_order}. For each $(\lambda,p)\in\bb{F}$, we associate the simple module $L^{\mca{B}}_{\mf{C},T^{1/2},s}(\lambda,p)$. Then the first two condition in Definition~\ref{graded_highest_weight} has been already checked above. 

On the level of $K$-theory, we have 
\begin{align*}
[I^{\mca{B}}_{\mf{C},T^{1/2},s}(\lambda,p)]=\sum_{(\lambda,p)\leq_{\mf{C},s}(\lambda',p')}P^{\lambda',p'}_{\lambda,p}(-v)\cdot[\nabla^{\mca{B}}_{\mf{C},T^{1/2},s}(\lambda',p')]
\end{align*}
by Proposition~\ref{degree_bound}. Since $P^{\lambda,p}_{\lambda,p}(-v)=1$ and $P^{\lambda',p'}_{\lambda,p}(-v)\in v^{-1}\bb{Z}[v^{-1}]$ if $(\lambda',p')\neq(\lambda,p)$, it is enough to prove that $I^{\mca{B}}_{\mf{C},T^{1/2},s}(\lambda,p)$ has a costandard filtration with $F_{1}(\lambda,p)=\nabla^{\mca{B}}_{\mf{C},T^{1/2},s}(\lambda,p)$. This follows from the following lemma and its proof.
\end{proof}

\begin{lemma}\label{lem_ext_criterion}
An object $M\in\mca{B}_{\mf{C},T^{1/2},s}\mbox{-}\mr{gmod}^{H}$ has a costandard filtration if and only if 
\begin{align}\label{ext_criterion}
\ext^{1}_{\mca{B}_{\mf{C},T^{1/2},s}\mbox{-}\mr{gmod}^{H}}\left(\Delta^{\mca{B}}_{\mf{C},T^{1/2},s}(\lambda,p),M\right)=0
\end{align}
for any $(\lambda,p)\in\bb{F}$. 
\end{lemma}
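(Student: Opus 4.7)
The proof splits into two implications, with the ``if'' direction carrying the bulk of the work.

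For the ``only if'' direction, suppose $M$ admits a costandard filtration $0=M_{0}\subset M_{1}\subset\cdots\subset M_{k}=M$ with successive quotients $\nabla^{\mca{B}}_{\mf{C},T^{1/2},s}(\mu_{i},q_{i})\langle j_{i}\rangle$. The Ext-orthogonality (\ref{ext_orthonormality}) gives $\ext^{k}(\Delta^{\mca{B}}(\lambda,p),\nabla^{\mca{B}}(\mu,q)\langle j\rangle)=0$ for all $k\geq 1$, so iterating the long exact sequence of $\ext$ along the filtration yields $\ext^{1}(\Delta^{\mca{B}}(\lambda,p),M)=0$.

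For the converse, I argue by induction on the composition length of $M$, which is finite since $\mca{B}_{\mf{C},T^{1/2},s}$ is a finite-dimensional $\bb{C}$-algebra. The nontrivial step is to extract a costandard submodule at the bottom of $M$ and iterate. Let $\Sigma\coloneqq\{(\mu,q)\in\bb{F}\mid L^{\mca{B}}_{\mf{C},T^{1/2},s}(\mu,q)\langle j\rangle\hookrightarrow\mathrm{soc}(M)\text{ for some }j\}$, which is nonempty and finite. Pick $(\lambda_{0},p_{0})\in\Sigma$ minimal for $\leq_{\mf{C},s}$, and fix $j_{0}$ with $L^{\mca{B}}(\lambda_{0},p_{0})\langle j_{0}\rangle\hookrightarrow\mathrm{soc}(M)$. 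By minimality, no simple $L^{\mca{B}}(\mu,q)\langle j\rangle$ with $(\mu,q)<_{\mf{C},s}(\lambda_{0},p_{0})$ embeds into $M$ (such an embedding would land in $\mathrm{soc}(M)$), so by d\'evissage $\Hom(N,M)=0$ for any $N$ whose composition factors all have labels strictly below $(\lambda_{0},p_{0})$. Applied to the kernel of $\Delta^{\mca{B}}(\mu,q)\langle j\rangle\twoheadrightarrow L^{\mca{B}}(\mu,q)\langle j\rangle$, together with the hypothesis $\ext^{1}(\Delta^{\mca{B}}(\mu,q)\langle j\rangle,M)=0$, this yields $\Ext^{1}(L^{\mca{B}}(\mu,q)\langle j\rangle,M)=0$ for every $(\mu,q)<_{\mf{C},s}(\lambda_{0},p_{0})$ and every $j$. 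A second d\'evissage then gives $\Ext^{1}(Q,M)=0$, where $Q\coloneqq\nabla^{\mca{B}}(\lambda_{0},p_{0})\langle j_{0}\rangle/L^{\mca{B}}(\lambda_{0},p_{0})\langle j_{0}\rangle$.

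The vanishing $\Ext^{1}(Q,M)=0$ is precisely the obstruction to extending the embedding $L^{\mca{B}}(\lambda_{0},p_{0})\langle j_{0}\rangle\hookrightarrow M$ to a map $\phi\colon\nabla^{\mca{B}}(\lambda_{0},p_{0})\langle j_{0}\rangle\to M$, and this $\phi$ is automatically injective because $\nabla^{\mca{B}}(\lambda_{0},p_{0})\langle j_{0}\rangle$ has simple socle on which $\phi$ is already injective. Setting $M'\coloneqq M/\phi(\nabla^{\mca{B}}(\lambda_{0},p_{0})\langle j_{0}\rangle)$, the long exact sequence attached to $0\to\nabla^{\mca{B}}(\lambda_{0},p_{0})\langle j_{0}\rangle\to M\to M'\to 0$, combined with the vanishing $\ext^{2}(\Delta^{\mca{B}}(\lambda,p),\nabla^{\mca{B}}(\lambda_{0},p_{0})\langle j_{0}\rangle)=0$ from (\ref{ext_orthonormality}), shows that $M'$ still satisfies the Ext-vanishing hypothesis; the inductive hypothesis provides a costandard filtration of $M'$, and prepending $\phi(\nabla^{\mca{B}}(\lambda_{0},p_{0})\langle j_{0}\rangle)$ gives one for $M$. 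The main obstacle is the $\Ext^{1}(Q,M)=0$ step, which rests on the subtle point that minimality of $(\lambda_{0},p_{0})$ must be imposed in $\Sigma$ rather than in the support of $M$: a label strictly below $(\lambda_{0},p_{0})$ may well appear as a non-socle composition factor of $M$, and it is only the absence of such labels from the socle that rules out the simple submodules obstructing the extension.
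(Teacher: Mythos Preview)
Your proof is correct and follows essentially the same strategy as the paper's: induct on length, pick a minimal label appearing in the socle, use the kernel of $\Delta^{\mca{B}}\twoheadrightarrow L^{\mca{B}}$ together with the hypothesis to kill $\Ext^{1}$ of the relevant simples, lift the simple inclusion to a $\nabla^{\mca{B}}$-inclusion, and iterate on the quotient. The only presentational difference is your injectivity step, where you invoke the simple socle of $\nabla^{\mca{B}}$ (which indeed follows from (\ref{ext_orthonormality}), though the paper does not state it beforehand and instead derives injectivity directly from that orthogonality).
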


\begin{proof}
The only if part follows from (\ref{ext_orthonormality}). Let $M$ be an object satisfying (\ref{ext_criterion}). Since $M$ has finite length, we may prove the statement by induction on the length of $M$. If $M\neq0$, then one can take a minimal $(\lambda,p)\in\bb{F}$ such that $\hom\left(L^{\mca{B}}_{\mf{C},T^{1/2},s}(\lambda,p),M\right)\neq0$. For any $(\lambda',p')<_{\mf{C},s}(\lambda,p)$, let $K$ be the kernel of $\Delta^{\mca{B}}_{\mf{C},T^{1/2},s}(\lambda',p')\twoheadrightarrow L^{\mca{B}}_{\mf{C},T^{1/2},s}(\lambda',p')$. By the discussion above, any composition factor $v^{j}L^{\mca{B}}_{\mf{C},T^{1/2},s}(\lambda'',p'')$ of $K$ satisfies $(\lambda'',p'')<_{\mf{C},s}(\lambda',p')<_{\mf{C},s}(\lambda,p)$ and hence we have $\hom(K,M)=0$. By the exact sequence 
\begin{align*}
\hom(K,M)\rightarrow\ext^{1}\left(L^{\mca{B}}_{\mf{C},T^{1/2},s}(\lambda',p'),M\right)\rightarrow\ext^{1}\left(\Delta^{\mca{B}}_{\mf{C},T^{1/2},s}(\lambda',p'),M\right)
\end{align*}
we obtain $\ext^{1}\left(L^{\mca{B}}_{\mf{C},T^{1/2},s}(\lambda',p'),M\right)=0$ for any $(\lambda',p')<_{\mf{C},s}(\lambda,p)$. Since the composition factors of $\nabla^{\mca{B}}_{\mf{C},T^{1/2},s}(\lambda,p)/L^{\mca{B}}_{\mf{C},T^{1/2},s}(\lambda,p)$ are of the form $v^{j}L^{\mca{B}}_{\mf{C},T^{1/2},s}(\lambda',p')$ for $(\lambda',p')<_{\mf{C},s}(\lambda,p)$, we obtain $\ext^{1}\left(\nabla^{\mca{B}}_{\mf{C},T^{1/2},s}(\lambda,p)/L^{\mca{B}}_{\mf{C},T^{1/2},s}(\lambda,p),M\right)=0$ and $\hom\left(\nabla^{\mca{B}}_{\mf{C},T^{1/2},s}(\lambda,p)/L^{\mca{B}}_{\mf{C},T^{1/2},s}(\lambda,p),M\right)=0$. This implies that $\hom\left(\nabla^{\mca{B}}_{\mf{C},T^{1/2},s}(\lambda,p),M\right)\cong\hom\left(L^{\mca{B}}_{\mf{C},T^{1/2},s}(\lambda,p),M\right)$, hence we can lift the inclusion $L^{\mca{B}}_{\mf{C},T^{1/2},s}(\lambda,p)\hookrightarrow M$ to a homomorphism $f:\nabla^{\mca{B}}_{\mf{C},T^{1/2},s}(\lambda,p)\rightarrow M$. 

We claim that $f$ is injective. Otherwise, there is a simple submodule $v^{j}L^{\mca{B}}_{\mf{C},T^{1/2},s}(\lambda',p')$ in $\Ker(f)\subset\nabla^{\mca{B}}_{\mf{C},T^{1/2},s}(\lambda,p)$. Then there is a nontrivial homomorphism between $v^{j}\Delta^{\mca{B}}_{\mf{C},T^{1/2},s}(\lambda',p')$ and $\nabla^{\mca{B}}_{\mf{C},T^{1/2},s}(\lambda,p)$, hence we must have $j=0$ and $(\lambda',p')=(\lambda,p)$ by (\ref{ext_orthonormality}). This implies $f(L^{\mca{B}}_{\mf{C},T^{1/2},s}(\lambda,p))=0$ which contradicts the choice of $f$. 

Therefore, we obtain an inclusion $\nabla^{\mca{B}}_{\mf{C},T^{1/2},s}(\lambda,p)\hookrightarrow M$. Let $N$ be the cokernel of this inclusion. Then $N$ also satisfies the condition (\ref{ext_criterion}) by (\ref{ext_orthonormality}) and the length of $N$ is smaller than $M$. By induction hypothesis, $N$ has a costandard filtration and hence $M$ does.  
\end{proof}

Finally, we also conjecture the following statement which lifts the $K$-theoretic bar involutions to the derived category. Let $\mca{B}_{\mf{C},T^{1/2},s}=\oplus_{\lambda\in\bb{X}^{\ast}(H)}\mca{B}^{\lambda}_{\mf{C},T^{1/2},s}$ be the $H$-weight space decomposition. 

\begin{conj}\label{anti-involution}
There exists an anti-involution $\iota$ on $\mca{B}_{\mf{C},T^{1/2},s}$ which is identity on degree 0 part, compatible with the grading, and satisfies $\iota(\mca{B}^{\lambda}_{\mf{C},T^{1/2},s})=\mca{B}^{-\lambda}_{\mf{C},T^{1/2},s}$ for any $\lambda\in\bb{X}^{\ast}(H)$. 
\end{conj}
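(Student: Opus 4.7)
The plan is to produce $\iota$ by transporting a natural contravariant duality on $\DCoh^{\bb{T}}(X)$ through the Koszul duality equivalence. At the level of derived categories, I would construct a functor $\mca{D}:\DCoh^{\bb{T}}(X)\to\DCoh^{\bb{T}}(X)$ defined, roughly, by $\mca{D}(\mca{F})=(\mca{F}^{\vee})^{\dag}\otimes\mca{L}_{0}$, where $(-)^{\dag}$ denotes the twist by $H$-inversion $h\mapsto h^{-1}$ and $\mca{L}_{0}$ is an auxiliary $\bb{T}$-equivariant line bundle chosen so that the composition preserves the tilting bundle $\mca{T}_{\mf{C},T^{1/2},s}$ up to isomorphism. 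The point is that on $K$-theory, $\mca{F}\mapsto(\mca{F}^{\vee})^{\dag}$ is $v$-antilinear and $K_{H}(\mr{pt})$-linear, matching the formal properties of $\beta^{K}_{\mf{C},T^{1/2},s}$.

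First, I would show that $\mca{D}$ fixes $\mca{T}_{\mf{C},T^{1/2},s}$ (after the chosen correction by $\mca{L}_{0}$). Here one uses that $\bb{B}_{X,T^{1/2},s}^{\vee}=\bb{B}_{X,T^{1/2}_{\mr{opp}},-s}$ by the duality lemma preceding Proposition~\ref{fixed_point_basis}, together with the change-of-polarization formula in Lemma~\ref{can_pol} to reduce back to $\bb{B}_{X,T^{1/2},s}$. Crucially, the $H$-weight twist $(-)^{\dag}$ realigns the $H$-characters that otherwise accumulate. The resulting isomorphism of $\mca{T}_{\mf{C},T^{1/2},s}$ with $\mca{D}(\mca{T}_{\mf{C},T^{1/2},s})$ yields, by functoriality of $\End$, an anti-automorphism $\widetilde{\iota}$ of $\mca{A}_{\mf{C},T^{1/2},s}=\End(\mca{T}_{\mf{C},T^{1/2},s})^{\mr{opp}}$ which preserves the $\bb{S}$-grading, is the identity on the semisimple degree-$0$ part (Corollary~\ref{cor_Koszulity}), and sends the $H$-weight $\lambda$ component to the $H$-weight $-\lambda$ component.

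Next, I would transfer $\widetilde{\iota}$ to the Koszul dual $\mca{B}_{\mf{C},T^{1/2},s}$. For Koszul algebras, an anti-involution of $\mca{A}$ that is the identity on degree $0$ induces an anti-involution of $\mca{B}=\mca{A}^{!}$ with the same property (via the natural identification $\mca{A}^{!}=\Ext^{\ast}_{\mca{A}}(\mca{A}_{0},\mca{A}_{0})^{\mr{opp}}$ and the functoriality of $\Ext$); the properties of $H$-weight flipping and $\bb{S}$-grading compatibility transfer verbatim. Involutivity $\iota^{2}=\mr{id}$ follows once one checks that it holds on a set of generators, which reduces to the semisimple degree-$0$ case and can be arranged by adjusting $\widetilde{\iota}$ by an inner automorphism supported in degree zero.

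The main obstacle will be the very first step: producing the self-identification $\mca{D}(\mca{T}_{\mf{C},T^{1/2},s})\cong\mca{T}_{\mf{C},T^{1/2},s}$ as actual $\bb{T}$-equivariant bundles rather than merely as $K$-theory classes. This amounts to showing that the indecomposable summands of $\mca{T}_{\mf{C},T^{1/2},s}$ are permuted (and at most $H$-twisted) by $\mca{D}$, a nontrivial geometric statement that does not follow formally from the material in this section. For toric hyper-K\"ahler manifolds, one can verify this combinatorially from the explicit presentations of $\mca{T}$ due to McBreen-Webster~\cite{MW} and \v{S}penko-Van den Bergh~\cite{SV}; in general, the claim remains conjectural.
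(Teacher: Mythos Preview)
The statement is a conjecture; the paper proves it only for toric hyper-K\"ahler manifolds (Corollary~\ref{cor_toric_anti_involution}), and its argument there is far more direct than yours. Using the explicit presentation of $\mca{A}_{\mf{C},s}$ obtained just before that corollary---with $\bb{C}[\mf{h}^{\ast}]$-basis $\{m^{\alpha}_{IJ}\}$ and the single multiplication rule~(\ref{eqn_A_relation})---the paper simply sets $\iota(m^{\alpha}_{IJ})=m^{-\alpha}_{JI}$ and observes that (\ref{eqn_A_relation}) is visibly symmetric under this swap. The passage to the quadratic dual $\mca{B}_{\mf{C},s}$ is then formal, exactly as in your last step.

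Your geometric route has genuine gaps even in the toric case. First, the twist $(-)^{\dag}$ by $h\mapsto h^{-1}$ is only an operation on $K_{H}(\mr{pt})$ in the paper; promoting it to a functor on $\DCoh^{\bb{T}}(X)$ requires an automorphism of $X$ intertwining the $H$-action with its inverse, and no such automorphism is provided (in the toric setting the obvious candidate $(x,y)\mapsto(y,x)$ lands in $X_{-\eta}$, not $X$). Second, the chain $\bb{B}_{X,T^{1/2},s}^{\vee}=\bb{B}_{X,T^{1/2}_{\mr{opp}},-s}$ followed by Lemma~\ref{can_pol} and Lemma~\ref{can_slope} only returns to $\bb{B}_{X,T^{1/2},s}$ when $-s-\det T^{1/2}$ and $s$ differ by an element of $P$; for generic slopes this fails, so no single line bundle $\mca{L}_{0}$ makes $\mca{D}(\mca{T}_{\mf{C},T^{1/2},s})\cong\mca{T}_{\mf{C},T^{1/2},s}$. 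The anti-involution you seek in the toric case does exist, but it is not induced by a contravariant autoequivalence of $\DCoh^{\bb{T}}(X)$ of the shape you propose; it lives purely at the level of the endomorphism algebra and comes from the combinatorial symmetry $x^{\mu}\leftrightarrow x^{-\mu}$ of the Cox-ring description in~(\ref{eqn_Lawrence_global_section})--(\ref{eqn_mult_y_i}).
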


For toric hyper-K\"ahler manifolds, this conjecture is proved in Corollary~\ref{cor_toric_anti_involution}. Let $M=\oplus_{i\in\bb{Z},\lambda\in\bb{X}^{\ast}(H)}M_{i}^{\lambda}$ be a graded $H$-equivariant module of $\mca{B}_{\mf{C},T^{1/2},s}$, where $M_{i}^{\lambda}$ has degree $i$ and $H$-weight $\lambda$. We define $\mb{D}M=\oplus_{i\in\bb{Z},\lambda\in\bb{X}^{\ast}(H)}(\mb{D}M)^{\lambda}_{i}\in\mca{B}_{\mf{C},T^{1/2},s}\mbox{-}\mr{gmod}^{H}$ by setting $(\mb{D}M)^{\lambda}_{i}\coloneqq\Hom_{\bb{C}}(M^{\lambda}_{-i},\bb{C})$ equipped with a left $\mca{B}_{\mf{C},T^{1/2},s}$-module structure through $\iota$. The properties of $\iota$ in Conjecture~\ref{anti-involution} implies that $\mb{D}L^{\mca{B}}_{\mf{C},T^{1/2},s}(\lambda,p)\cong L^{\mca{B}}_{\mf{C},T^{1/2},s}(\lambda,p)$. In particular, $\mb{D}$ induces an involution on $\DCohT(X)$ which lifts the $K$-theoretic bar involution.



\subsection{Wall-crossings}

By varying the slope parameter, we obtain a family of $t$-structures $\tau_{T^{1/2},s}$ on $\msc{D}\coloneqq\DCoh_{L}(X)$, the derived category of coherent sheaves on $X$ set-theoretically supported in $L$. It may be natural to expect that this is part of a data defining real variations of stability condition in the sense of Anno-Bezrukavnikov-Mirkovi\'c \cite{ABM}. We first recall the definition of real variations of stability conditions in our situation. 

Let $\mr{Alc}_{K}$ be the set of connected components of $P_{\bb{R}}\setminus\cup_{\beta\in\overline{\Psi}}\{s\in P_{\bb{R}}\mid \langle s,\beta\rangle\in\bb{Z}\}$. We call an element of $\mr{Alc}_{K}$ K\"ahler alcove. For two K\"ahler alcoves $\mb{A}_{-}\neq \mb{A}_{+}\in\mr{Alc}_{K}$ sharing a codimension one face contained in $w_{\beta,n}\coloneqq\{s\in P_{\bb{R}}\mid \langle s,\beta\rangle=m\}$ for some $m\in\bb{Z}$ and $\beta\in\overline{\Psi}$ which is positive with respect to $\mf{A}$, we say that $\mb{A}_{+}$ is \emph{above} $\mb{A}_{-}$ if $\mb{A}_{-}\subset\{s\in P_{\bb{R}}\mid \langle s,\beta\rangle<m\}$ and $\mb{A}_{+}\subset\{s\in P_{\bb{R}}\mid \langle s,\beta\rangle>m\}$. Let $\msc{Z}:P_{\bb{R}}\rightarrow\Hom_{\bb{Z}}(K(\msc{D}),\bb{R})$ be a polynomial map and $\tau$ be a map from $\mr{Alc}$ to the set of bounded $t$-structures on $\msc{D}$. For $\mb{A}\in\mr{Alc}_{K}$, let $\msc{C}_{\mb{A}}$ be the heart of the $t$-structure $\tau(\mb{A})$ on $\msc{D}$. For a hyperplane $w\subset P_{\bb{R}}$ and $n\in\bb{Z}_{\geq0}$, let $\msc{C}^{n}_{\mb{A},w}\subset\msc{C}_{\mb{A}}$ be the full subcategory consisting of objects $M\in\msc{C}_{\mb{A}}$ such that the polynomial function on $P_{\bb{R}}$ defined by $s\mapsto \langle \msc{Z}(s),[M]\rangle$ has zero of order at least $n$ on $w$. This is a Serre subcategory of $\msc{C}_{\mb{A}}$ and let $\msc{D}^{n}_{\mb{A},w}\coloneqq\{\mca{F}\in\msc{D}\mid \forall j,H^{j}_{\tau(\mb{A})}(\mca{F})\in\msc{C}^{n}_{\mb{A},w}\}$ be a thick subcategory of $\msc{D}$. Here, $H^{j}_{\tau(\mb{A})}$ is the $j$-th cohomology functor with respect to the $t$-structure $\tau(\mb{A})$. We set $\mr{gr}^{n}_{\mb{A},w}(\msc{D})\coloneqq\msc{D}^{n}_{\mb{A},w}/\msc{D}^{n+1}_{\mb{A},w}$ and $\mr{gr}^{n}_{w}(\msc{C}_{\mb{A}})\coloneqq\msc{C}^{n}_{\mb{A},w}/\msc{C}^{n+1}_{\mb{A},w}$.

\begin{dfn}[\cite{ABM}]\label{real_variation}
A data $(\msc{Z},\tau)$ as above is called \emph{real variation of stability conditions} on $\msc{D}$ if it satisfies the following conditions: 
\begin{itemize}
\item For any $\mb{A}\in\mr{Alc}_{K}$ and nonzero $M\in\msc{C}_{\mb{A}}$, we have $\langle\msc{Z}(s),[M]\rangle>0$ for any $s\in \mb{A}$. 
\item For any $\mb{A}_{-}\neq \mb{A}_{+}\in\mr{Alc}_{K}$ sharing a codimension one face contained in a hyperplane $w$ with $\mb{A}_{+}$ being above $\mb{A}_{-}$, we have $\msc{D}^{n}_{\mb{A}_{-},w}=\msc{D}^{n}_{\mb{A}_{+},w}$ and $\mr{gr}^{n}_{w}(\msc{C}_{\mb{A}_{-}})=\mr{gr}^{n}_{w}(\msc{C}_{\mb{A}_{+}})[n]$ in $\mr{gr}^{n}_{\mb{A}_{-},w}(\msc{D})=\mr{gr}^{n}_{\mb{A}_{+},w}(\msc{D})$ for any $n\in\bb{Z}_{\geq0}$.\footnote{We change the sign here from \cite{ABM} since we mainly use the slope parameters, which are essentially opposite to the quantization parameters.}
\end{itemize}
The polynomial function $\msc{Z}$ is called \emph{central charge} for the real variation of stability conditions. 
\end{dfn}

\begin{remark}
A part of the conjecture of Bezrukavnikov-Okounkov stated in \cite{ABM} claims that there exists a real variation of stability conditions on $\msc{D}$ as above (we do not need to assume that the fixed point set $X^{H}$ is finite). Moreover, the $t$-structures $\tau$ are given by quantization of $X$ in positive characteristic.  

Let $\ell$ be a prime number and consider our conical symplectic resolutions over an algebraically closed field of characteristic $\ell$ temporarily. For $\lambda\in P$, one can consider Frobenius constant quantization of $\mca{O}_{X}$ with quantization parameter $\lambda$ which gives a sheaf of Azumaya algebras $\msc{A}_{\lambda}$ on the Frobenius twist $X^{(1)}$ of $X$. Then the conjecture says that when $\ell$ is sufficiently large and $-\frac{\lambda}{\ell}\in \mb{A}$, the Azumaya algebra $\msc{A}_{\lambda}$ splits on the formal neighborhood of $L^{(1)}$ and if the splitting bundles are chosen in a compatible way, their $\bb{S}$-equivariant lifts to $X\cong X^{(1)}$ gives a tilting bundle and its dual gives a $t$-structure compatible with $\tau(\mb{A})$ under base change to positive characteristic. See also \cite{W} for another approach to this tilting bundle. 

As an analogue of Lusztig's conjecture on modular representation theory, we expect that the set of vector bundles $\{\mca{E}_{\mf{C},T^{1/2},s}(p)\}_{p\in X^{H}}$ (considered in positive characteristic) give the set of indecomposable summands of the dual of a splitting bundle of $\msc{A}_{\lambda}$ up to equivariant parameter twists when $s=-\frac{\lambda}{\ell}$ and $\ell$ is sufficiently large. We note that if we change the polarization $T^{1/2}$ by $T^{1/2}_{\mca{G}}$ for some $\mca{G}\in K_{\bb{T}}(X)$, then the $K$-theoretic canonical bases will change by tensor product of some line bundle by Lemma~\ref{can_pol} and Lemma~\ref{can_slope}. Since a line bundle twist of a splitting bundle is also a splitting bundle, this change can be absorbed into the choice of splitting bundles.  
\end{remark}

Let $\mca{P}$ be a vector bundle on $X$ and $\msc{Z}_{\mca{P}}:P_{\bb{R}}\rightarrow\Hom_{\bb{Z}}(K(\msc{D}),\bb{R})$ be a polynomial function satisfying 
\begin{align*}
\langle \msc{Z}_{\mca{P}}(\mca{L}),\mca{F}\rangle=\frac{1}{\rank{\mca{P}}}\cdot\chi(X,\mca{F}\otimes\mca{P}\otimes\mca{L}^{-1})
\end{align*}
for any line bundle $\mca{L}\in P$ and $\mca{F}\in\msc{D}$. We note that $\msc{Z}_{\mca{P}^{\oplus n}}=\msc{Z}_{\mca{P}}$ for any $n\in\bb{Z}_{>0}$. In this paper, we only conjecture the following weaker statement. We will check this for toric hyper-K\"ahler manifolds in Corollary~\ref{cor_toric_real_variation}.  

\begin{conj}\label{conj_real_variation}
There exists a vector bundle $\mca{P}$ on $X$ such that $(\msc{Z}_{\mca{P}},\tau)$ gives a real variation of stability conditions on $\msc{D}$, where $\tau$ is given by $\tau(\mb{A})=\tau_{T^{1/2},s}$ for $s\in\mb{A}$. 
\end{conj}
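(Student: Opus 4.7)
The plan is to take $\mca{P}$ to be a tilting bundle of the form $\mca{T}_{\mf{C},T^{1/2},s_{0}}$ for a generic reference slope $s_{0}$ (or, if needed, a finite direct sum of such tilting bundles at slopes chosen one from each alcove relevant to a bounded piece of $\msc{D}$), whose existence is provided by Corollary~\ref{cor_toric_tilting}. The central charge $\msc{Z}_{\mca{P}}(s)$ is polynomial in $s\in P_{\bb{R}}$ by Hirzebruch-Riemann-Roch, and under the derived equivalence $\psi_{\mf{C},T^{1/2},s_{0}}$ of~(\ref{derived_equiv}) it is computed as a weighted Euler characteristic of the $H$-character of a graded module of $\mca{A}_{\mf{C},T^{1/2},s_{0}}$, evaluated at the equivariant character of the line bundle corresponding to $s$. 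In the toric case this expression reduces explicitly to the theta-function sums of~(\ref{ecb}), which is what makes the verification tractable.

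First I would verify positivity. For $s\in\mb{A}$ and a nonzero $M\in\msc{C}_{\mb{A}}$, the class $[M]$ is a nonnegative integral combination of the classes of simple modules, which under $\psi$ correspond to the elements $\mca{C}_{\mf{C},T^{1/2},s}(\lambda,p)$ of the $K$-theoretic canonical basis $\bb{B}_{L,T^{1/2},s}$. Applying Proposition~\ref{dual_basis} together with Corollary~\ref{std_to_canonical} gives an explicit expression for $\langle\msc{Z}_{\mca{P}}(s),[\mca{C}_{\mf{C},T^{1/2},s}(\lambda,p)]\rangle$ as a sum over standard classes whose coefficients are the Kazhdan-Lusztig type polynomials $P^{\lambda',p'}_{\lambda,p}(v)$. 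In the toric hyper-K\"ahler setting one then checks that each standard class contributes a strictly positive term on the interior of $\mb{A}$ via the theta-function formula~(\ref{ecb}), and the positivity of the $P^{\lambda',p'}_{\lambda,p}$ (Conjecture~\ref{positivity}) gives the claim.

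Next I would verify the wall-crossing condition. Let $\mb{A}_{-}$ and $\mb{A}_{+}$ be adjacent alcoves sharing a face on a wall $w=w_{\beta,n}$ with $\mb{A}_{+}$ above $\mb{A}_{-}$, where $\beta\in\overline{\Psi}$ is positive with respect to $\mf{A}$. By Lemma~\ref{can_slope} and the wall-crossing algorithm for $K$-theoretic canonical bases (Conjecture~\ref{conj_wall_crossing}, proven for toric hyper-K\"ahler manifolds in Section~5), the indecomposable summands of $\mca{T}_{\mf{C},T^{1/2},s_{-}}$ and $\mca{T}_{\mf{C},T^{1/2},s_{+}}$ are related by an explicit triangular change of basis indexed by $\beta$. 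The polynomial $\langle\msc{Z}_{\mca{P}}(s),[\mca{E}_{\mf{C},T^{1/2},s_{\pm}}(\lambda,p)]\rangle$ then factors through powers of $(\langle s,\beta\rangle-n)$ whose multiplicities match on the two sides of the wall, yielding the identification $\msc{D}^{n}_{\mb{A}_{-},w}=\msc{D}^{n}_{\mb{A}_{+},w}$. The graded equivalence $\mr{gr}^{n}_{w}(\msc{C}_{\mb{A}_{-}})=\mr{gr}^{n}_{w}(\msc{C}_{\mb{A}_{+}})[n]$ finally follows by tracking the cohomological shift induced by the wall-crossing functor on modules whose central charge vanishes to order exactly $n$ on $w$.

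The hard part will be establishing the precise shift by $[n]$ in the wall-crossing of the graded pieces. This reduces to a statement about how many steps of the Koszul resolution are required to exchange the costandard objects $\nabla^{\mca{B}}_{\mf{C},T^{1/2},s_{-}}(\lambda,p)$ and $\nabla^{\mca{B}}_{\mf{C},T^{1/2},s_{+}}(\lambda,p)$ in the Koszul dual categories of Conjecture~\ref{categorical_stable_basis}, and it requires a careful bookkeeping of the hyperplane arrangement combinatorics at the wall. In particular, one needs that the vanishing order of the central charge on $w$ coincides with the homological length of the wall-crossing triangle, which in the toric case should follow from the explicit expression~(\ref{ecb}) together with the identification of $\bb{B}_{X,T^{1/2},s}$ with indecomposable summands of the McBreen-Webster/\v{S}penko-Van den Bergh tilting bundle.
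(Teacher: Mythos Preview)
Your plan has a genuine gap in the choice of $\mca{P}$ and misses the paper's key organizing idea. You propose $\mca{P}=\mca{T}_{\mf{C},T^{1/2},s_{0}}$, but with that choice each simple object $\mca{C}(A)$ would get central charge $\frac{1}{|\bb{B}|}\chi(\mca{C}(A)\otimes\mca{T}_{\mf{C},s_{0}}\otimes\mca{L}^{-1})$, and there is no reason these values should be positive on the interior of $\mb{A}$ or vanish to the correct order on each wall. The paper instead takes $\mca{P}=m\sum_{A}\mr{Vol}(A(0))\cdot\mca{E}(A(s_{o}))^{\vee}$, a \emph{volume-weighted} sum of duals of the canonical basis elements (Corollary~\ref{cor_toric_central_charge}). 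The whole proof is then organized around the identity
\[
\langle\msc{Z}_{\mca{P}}(s),\mca{C}(A(s))\rangle=\mr{Vol}(A(s)),
\]
established via an Euler-integral argument (Lemma~\ref{lem_Euler_integral}) and the duality $\chi(\mca{C}(A)\otimes\mca{E}(A')^{\vee})=\delta_{A,A'}$. Positivity is then immediate (volumes of full-dimensional polytopes are positive), and the vanishing order on a wall $w_{\beta_{C},m}$ is read off directly from Lemma~\ref{lem_simplex}: it is $0$ or $|C|-1$, nothing in between.

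Your references to the theta function formula~(\ref{ecb}) are misplaced: that formula defines the \emph{elliptic} canonical basis and plays no role in the real-variation-of-stability argument, which is entirely $K$-theoretic. Likewise, your proposed route through Kazhdan--Lusztig polynomials and Koszul-dual costandard objects is far more elaborate than what is needed. The paper's wall-crossing verification (Corollary~\ref{cor_toric_real_variation}) is short and concrete: only two strata occur ($n=0$ and $n=|C|-1$), the sets $\{\mca{E}(A)\mid A\in\mr{Alc}^{0}_{s_{\pm},w}\}$ coincide on the two sides by Proposition~\ref{prop_toric_wall_crossing_characterization}, and the shift by $[|C|-1]$ on the top stratum follows directly from the Koszul exact sequence of Lemma~\ref{lem_toric_wall_crossing_formula}. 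None of the highest-weight or Koszul-dual machinery of Section~3.6 is invoked.
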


\begin{remark}
The vector bundle $\mca{P}$ should be a line bundle when $X$ is a Slodowy variety by the result of \cite{ABM}, but it should be a vector bundle of higher rank in general. This vector bundle is expected to be given by looking at the asymptotic behavior under $\ell\rightarrow\infty$ of the multiplicity of indecomposable summands of the splitting bundle above for a fixed $\lambda$. More precisely, fix a generic $\lambda$ and let $m_{p}(\ell)$ be the multiplicity of $\mca{E}_{\mf{C},T^{1/2},-\frac{\lambda}{\ell}}(p)^{\vee}$ in the splitting bundle of $\msc{A}_{\lambda}$ (as non-equivariant vector bundles). We set $m_{p}\coloneqq\lim_{\ell\rightarrow\infty}\frac{1}{\ell^{\dim X/2}}m_{p}(\ell)$ and take $m\in\bb{Z}_{>0}$ such that $m\cdot m_{p}$ is an integer for any $p\in X^{H}$. Let $\mb{A}\in\mr{Alc}_{K}$ be the alcove containing $-\frac{\lambda}{\ell}$ for any sufficiently large $\ell$ and take $s\in\mb{A}$. Then we expect that one can take
\begin{align*}
\mca{P}=\sum_{p\in X^{H}}m\cdot m_{p}\cdot\mca{E}_{\mf{C},T^{1/2},s}(p)^{\vee}.
\end{align*}
We note that the central charge $\msc{Z}_{\mca{P}}$ does not depend on the choice of $m$. We also expect that this does not depend on the choice of $\lambda$ if we forget the equivariant structures. 
\end{remark}

We now describe the behavior of $K$-theoretic canonical bases under the wall-crossing of the slope parameters. As in the previous section, we expect that the information on the equivariant parameter $v$ has some information on the cohomological shifts appearing in Definition~\ref{real_variation}. The following conjecture comes from numerical experiments. 

\begin{conj}\label{conj_wall_crossing}
Let $\mb{A}_{-}\neq \mb{A}_{+}\in\mr{Alc}_{K}$ be two K\"ahler alcoves sharing a codimension one face contained in a hyperplane $w$ with $\mb{A}_{+}$ being above $\mb{A}_{-}$. For $s_{-}\in \mb{A}_{-}$ and $s_{+}\in \mb{A}_{+}$, there exists a sequence of integers $0\leq n_{0}<n_{1}<\cdots<n_{l}$ and decompositions $\mathbb{B}_{X,T^{1/2},s_{-}}=\sqcup_{i=0}^{l}\mathbb{B}^{i}_{s_{-},w}$ and $\mathbb{B}_{X,T^{1/2},s_{+}}=\sqcup_{i=0}^{l}\mathbb{B}^{i}_{s_{+},w}$ stable under equivariant parameter shifts for $H$ such that for any $\mathcal{E}'\in\mathbb{B}^{i}_{s_{+},w}$, there exists $\mathcal{E}\in\mathbb{B}^{i}_{s_{-},w}$ satisfying 
\begin{align*}
\mathcal{E}'=(-1)^{n_{i}-n_{0}-i}v^{n_{i}}\mathcal{E}+\sum_{\substack{j<i\\\mathcal{F}\in\mathbb{B}^{j}_{s_{-},\mca{H}}}}f_{\mathcal{E},\mathcal{F}}(v)\cdot\mathcal{F}, 
\end{align*}
where $f_{\mathcal{E},\mathcal{F}}(v)\in v^{n_{j}+1}\mathbb{Z}[v]\cap v^{n_{i}-1}\bb{Z}[v^{-1}]$ for any $\mathcal{F}\in\mathbb{B}^{j}_{s_{-},\mca{H}}$. Moreover, if we assume Conjecture~\ref{conj_real_variation} and let $\mathcal{C}\in\mathbb{B}_{L,T^{1/2},s}$ be the element dual to $\mathcal{E}$, then $n_{i}-n_{0}-i$ is the order of vanishing at $w$ of the polynomial function $s\mapsto\langle \msc{Z}_{\mca{P}}(s),\mca{C}\rangle$. 
\end{conj}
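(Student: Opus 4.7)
The plan is to reduce the wall-crossing statement for canonical bases to a wall-crossing formula for the $K$-theoretic standard bases across the hyperplane $w$, and then propagate the result through the Kazhdan-Lusztig type algorithm of Proposition~\ref{expansion}. First, I would establish a wall $R$-matrix identity of the form
\[
\mca{S}_{\mf{C}, T^{1/2}, s_+}(\lambda, p) = \sum_{(\lambda', p')\in\bb{F}} R^{w}_{(\lambda,p),(\lambda',p')}(v)\cdot\mca{S}_{\mf{C}, T^{1/2}, s_-}(\lambda', p'),
\]
expressing standard bases on one side of $w$ in terms of those on the other. By Lemma~\ref{std_slope} it suffices to treat the case where $s_+$ and $s_-$ lie in adjacent alcoves separated by a single integer translate of a K\"ahler root hyperplane, and the defining triangularity of the stable basis (Definition~\ref{stab_def}) should force $R^{w}$ to be upper-triangular with respect to the partial order determined by the direction normal to $w$.

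Next I would analyze how the bar involution interacts with this wall-crossing. Since $\beta^{K}_{\mf{C}, T^{1/2}, s_\pm}$ are characterized via the standard bases, applying $R^{w}$ to a bar-invariant combination at $s_+$ should produce a bar-invariant combination at $s_-$, provided $R^{w}$ satisfies a unitarity property compatible with the bar involution. By Conjecture~\ref{K-theoretic_canonical_basis}, the image of $\mca{E}_{\mf{C}, T^{1/2}, s_+}(p)$ under this transformation must then decompose uniquely in the canonical basis $\bb{B}_{X, T^{1/2}, s_-}$. The stratification $\bb{B}^{i}_{s_\pm, w}$ should be defined by grouping canonical basis elements according to the order of vanishing at $w$ of the polynomial $s\mapsto\langle\msc{Z}_{\mca{P}}(s), \mca{C}\rangle$, where $\mca{C}\in\bb{B}_{L, T^{1/2}, s}$ is the dual element to $\mca{E}$ under Proposition~\ref{dual_basis}, and the integers $n_0<n_1<\cdots<n_l$ should record the corresponding levels in a filtration of $\DCohT(X)$.

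For the identification of $n_i - n_0 - i$ with the order of vanishing, I would invoke Conjecture~\ref{conj_real_variation}. The axiom $\mr{gr}^{n}_{w}(\msc{C}_{\mb{A}_-}) = \mr{gr}^{n}_{w}(\msc{C}_{\mb{A}_+})[n]$ says that the two $t$-structures $\tau_{T^{1/2}, s_\pm}$ agree on their associated graded pieces up to a cohomological shift by $n$ at order-$n$ vanishing. Translating through the derived equivalence (\ref{derived_equiv}) and Koszul duality (\ref{Koszul_duality}), under which a cohomological shift by $1$ corresponds to multiplication by $-v^{-1}$ at the level of $K$-classes, an $n$-fold shift contributes the factor $(-v)^{-n}$. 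Combined with the base normalization index $n_0$ and the ordering index $i$ built into the levels of the tilting resolution and the standard-object resolution (\ref{proj_res_A_std}) of Conjecture~\ref{categorical_stable_basis}, this should produce the precise exponent $n_i$ and sign $(-1)^{n_i - n_0 - i}$ stated in the wall-crossing formula.

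The main obstacle is two-fold. First, no explicit wall-crossing formula for $K$-theoretic stable bases is known at the level of generality asserted here; while such formulas are expected to arise as $K$-theoretic limits of the elliptic wall $R$-matrices of Aganagic-Okounkov, making this precise and verifying the required triangularity and unitarity is already substantial. Second, the sharp integrality condition $f_{\mca{E}, \mca{F}}(v)\in v^{n_j + 1}\bb{Z}[v]\cap v^{n_i - 1}\bb{Z}[v^{-1}]$ encodes a parity-type condition reminiscent of parity sheaves, whose proof should require constructing the perverse equivalence between the hearts attached to $\mb{A}_-$ and $\mb{A}_+$ in the sense of Chuang-Rouquier directly, rather than arguing purely at the level of $K$-theory. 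The hardest step will therefore be bridging the combinatorial description of canonical bases with the categorical wall-crossing functors predicted by the real variation of stability conditions, particularly since the relationship between the stratification index $i$ and the cohomological filtration requires a fine understanding of which canonical basis elements share the same order of vanishing on $w$.
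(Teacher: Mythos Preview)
The statement you are addressing is a \emph{conjecture}, not a theorem: the paper does not prove it in general. Immediately after stating it, the paper writes ``For toric hyper-K\"ahler manifolds, this conjecture follows from Proposition~\ref{prop_toric_wall_crossing_characterization}, Lemma~\ref{lem_toric_wall_crossing_formula}, and Corollary~\ref{cor_toric_central_charge},'' and that is the only proof offered. Your proposal, by contrast, sketches a strategy for the general case---wall $R$-matrices for stable bases, propagation through the Kazhdan--Lusztig algorithm, and an appeal to perverse equivalences---none of which appears in the paper. You correctly identify in your final paragraph that this route faces substantial obstacles; the paper simply does not attempt it.

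The paper's actual argument in the toric case is entirely explicit and bears no resemblance to your outline. There the canonical basis elements $\mca{E}(A)$ are line bundles $\mca{L}(\mu_A)$ indexed by alcoves in a periodic hyperplane arrangement. Crossing a K\"ahler wall corresponds to a single signed circuit $C$, and Proposition~\ref{prop_toric_wall_crossing_characterization} shows directly that an element of $\bb{B}_{X,s_+}$ fails to lie in $\bb{B}_{X,s_-}$ exactly when the corresponding alcove collapses (volume goes to zero) at the wall, with order of vanishing $|C|-1$. Lemma~\ref{lem_toric_wall_crossing_formula} then exhibits the required relation as a Koszul complex of length $|C|$, giving the parameters $l=1$, $n_0=0$, $n_1=|C|$, $\bb{B}^0_{s_\pm,w}=\bb{B}_{X,s_-}\cap\bb{B}_{X,s_+}$, $\bb{B}^1_{s_\pm,w}=\bb{B}_{X,s_\pm}\setminus\bb{B}^0_{s_\pm,w}$. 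Finally Corollary~\ref{cor_toric_central_charge} identifies the central charge with the volume function, matching the order of vanishing to $n_1-n_0-1=|C|-1$. No stable-basis wall-crossing, no unitarity of $R$-matrices, and no categorical shift analysis through Koszul duality is used; the entire verification is a computation with line bundles and Koszul resolutions on a toric variety.
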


For toric hyper-K\"ahler manifolds, this conjecture follows from Proposition~\ref{prop_toric_wall_crossing_characterization}, Lemma~\ref{lem_toric_wall_crossing_formula}, and Corollary~\ref{cor_toric_central_charge}. This conjecture implies that if one can calculate the behavior of $K$-theoretic canonical bases under wall-crossing, one can find the order of zero at various hyperplanes for the central charge of every elements in $\mathbb{B}_{L,T^{1/2},s}$. This information is usually enough to determine $\mca{P}$ in practice.

One can also use this conjecture to compute $\mathbb{B}_{X,T^{1/2},s_{+}}$ from the knowledge of $\mathbb{B}_{X,T^{1/2},s_{-}}$ since this implies that $\beta^{K}_{T^{1/2},s_{+}}$ is triangular with respect to $\mathbb{B}_{X,T^{1/2},s_{-}}$ and the degree condition on $v$ enables us to calculate each $f_{\mathcal{E},\mathcal{F}}(v)$ by Kazhdan-Lusztig type algorithm as in the proof of Proposition~\ref{expansion}. For example, when $X$ is the Hilbert scheme of $n$-points in the affine plane, then $\mathbb{B}_{X,T^{1/2},s}$ is given by the indecomposable summands of the Procesi bundle up to equivariant parameter shifts when $s$ is sufficiently close to $1$. Using this algorithm, one can calculate $\mathbb{B}_{X,T^{1/2},s}$ for any $s$ in principle. We have checked the first part of Conjecture~\ref{conj_wall_crossing} (together with Conjecture~\ref{K-theoretic_canonical_basis}) in this case when $n\leq 8$ by using computer. 

\section{Elliptic bar involutions}

In this section, we discuss an elliptic analogue of the $K$-theoretic bar involutions. Since our definition of the $K$-theoretic bar involution only involves $K$-theoretic stable bases, it seems natural to define elliptic bar involution using elliptic stable bases defined by Aganagic-Okounkov \cite{AO}. We conjecture that under certain natural normalization (contrary to the seemingly ad hoc normalization in the $K$-theory case), this will give us an involution on a certain analytic completion of the localized (extended) equivariant $K$-theory. 

In order to define an elliptic analogue of the $K$-theoretic canonical bases, we need to find an elliptic version of some conditions characterizing canonical bases other than bar invariance. In this paper, we do not investigate this direction further. Instead, we will construct a family of elliptic bar invariant elements which are ``as simple as possible'' and have some nice properties to be called elliptic canonical bases in the case of toric hyper-K\"ahler manifolds in section 6. We leave the problem of finding a characterization of these elements which makes sense in general for a future direction of research. 

\subsection{Elliptic standard bases}

We first briefly recall the definition of elliptic stable basis defined by Aganagic-Okounkov. For more details, we refer to the original paper \cite{AO}. We follow the notations and assumptions of section 3. 

We fix an elliptic curve $E\coloneqq\bb{C}^{\times}/q^{\bb{Z}}$ over $\bb{C}$, where $q$ is a complex number satisfying $0<|q|<1$. Let $Ell_{\bb{T}}(X)$ be the $\bb{T}$-equivariant elliptic cohomology of $X$ associated with $E$. Since odd cohomology of $X$ vanishes, this is a scheme which is affine over the abelian variety $E_{\bb{T}}\coloneqq \bb{X}_{\ast}(\bb{T})\otimes_{\bb{Z}}E=Ell_{\bb{T}}(\mr{pt})$. For a construction of equivariant elliptic cohomology, see for example \cite{Ga}. We also set $E_{P}\coloneqq P\otimes_{\bb{Z}}E$ and $E^{\bb{T}}_{P}\coloneqq \Pic^{\bb{T}}(X)\otimes_{\bb{Z}}E$. We note that there is an exact sequence 
\begin{align*}
0\rightarrow E_{\bb{T}}^{\vee}\rightarrow E^{\bb{T}}_{P}\rightarrow E_{P}\rightarrow0,
\end{align*}
where $E_{\bb{T}}^{\vee}=\bb{X}^{\ast}(\bb{T})\otimes_{\bb{Z}}E$. We define the extended equivariant elliptic cohomology for $X$ by $\mb{E}(X)\coloneqq Ell_{\bb{T}}(X)\times E_{P}$, which is affine over $\mb{B}_{X}\coloneqq E_{\bb{T}}\times E_{P}$. We note that since we have $\bb{X}_{\ast}(H)\cong P^{!}$ and $P\cong\bb{X}_{\ast}(H^{!})$, we can identify $\mb{B}_{X}\cong\mb{B}_{X^{!}}$ by simply exchanging the equivariant and K\"ahler parameters. We also set $\widetilde{\mb{E}}(X)\coloneqq Ell_{\bb{T}}(X)\times E^{\bb{T}}_{P}$, $\widetilde{\mb{B}}_{X}\coloneqq E_{\bb{T}}\times E^{\bb{T}}_{P}$. The structure morphism $\widetilde{\mb{E}}(X)\rightarrow\widetilde{\mb{B}}_{X}$ will be denoted by $\widetilde{\pi}$. 

Let
\begin{align*}
\vartheta(x)\coloneqq(x^{1/2}-x^{-1/2})\prod_{m=1}^{\infty}(1-q^{m}x)(1-q^{m}x^{-1})
\end{align*}
be a theta function. This is a multivalued holomorphic function on $\bb{C}^{\times}$ which satisfies $\vartheta(e^{2\pi i}x)=-\vartheta(x)$ and $\vartheta(qx)=-x^{-1}q^{-1/2}\vartheta(x)$ and can be seen as a section of degree 1 line bundle on $E$. Using this line bundle, we identify $E$ and its dual abelian variety $E^{\vee}$. Then the Poincar\'e line bundle on $E\times E^{\vee}\cong E\times E$ can be defined by the quasi-periods of the function 
\begin{align*}
(x,y)\mapsto\psi(x,y)\coloneqq\frac{\vartheta(xy)}{\vartheta(x)\vartheta(y)},
\end{align*}
i.e., it is single valued on $\bb{C}^{\times}\times\bb{C}^{\times}$ and satisfies $\psi(qx,y)=y^{-1}\psi(x,y)$ and $\psi(x,qy)=x^{-1}\psi(x,y)$. For $r\in\bb{Z}_{>0}$, we define a line bundle $\mca{O}(D)$ on $S^{r}E$, the $r$-th symmetric product of $E$, by the factors of automorphy of the symmetric function $(x_{1},\ldots, x_{r})\mapsto\prod_{i=1}^{r}\vartheta(x_{i})$.  

Let $V$ be a $\bb{T}$-equivariant vector bundle on $X$. Its characteristic classes give us a morphism $c_{V}:Ell_{\bb{T}}(X)\rightarrow S^{r}E$ and $\Theta(V)\coloneqq c_{V}^{\ast}\mca{O}(D)\in\Pic(Ell_{\bb{T}}(X))$ is called the Thoms class of $V$. In particular, by considering the characteristic classes of line bundles, we obtain a morphism $Ell_{\bb{T}}(X)\rightarrow\Hom_{\bb{Z}}(\Pic^{\bb{T}}(X),E)\cong(E^{\bb{T}}_{P})^{\vee}$ and hence $\widetilde{\mb{E}}(X)\rightarrow E^{\bb{T}}_{P}\times(E^{\bb{T}}_{P})^{\vee}$. We denote by $\mca{U}_{X}$ the line bundle on $\widetilde{\mb{E}}(X)$ defined by pulling back the Poincar\'e line bundle on $E^{\bb{T}}_{P}\times(E^{\bb{T}}_{P})^{\vee}$. 

For $\lambda\in\Pic^{\bb{T}}(X)\cong\Hom(E,E^{\bb{T}}_{P})$ and $\mu\in\bb{X}^{\ast}(\bb{T})\cong\Hom(E_{\bb{T}},E)$, let $\tau(\lambda,\mu):\widetilde{\mb{B}}_{X}\rightarrow\widetilde{\mb{B}}_{X}$ be the shift of K\"ahler parameters $(t,z)\mapsto(t,z+\lambda(\mu(t)))$, where $t\in E_{\bb{T}}$ and $z\in E^{\bb{T}}_{P}$. We denote by the same letter for the shift of K\"ahler parameters on $\widetilde{\mb{E}}(X)$.

For each fixed point $p\in X^{H}$, we obtain a natural morphism $i_{p}:\widetilde{\mb{B}}_{X}\cong Ell_{\bb{T}}(p)\times E^{\bb{T}}_{P}\rightarrow\widetilde{\mb{E}}(X)$ coming from the inclusion $i_{p}:\{p\}\hookrightarrow X$ and the functoriality of elliptic cohomology. We set $\mca{U}_{p}\coloneqq i^{\ast}_{p}\mca{U}_{X}$. Recall that we take a chamber $\mf{C}$ and a polarization $T^{1/2}$. Take a sufficiently generic $\xi\in\mf{C}$ and decompose $T^{1/2}_{p}=\mr{ind}_{p}+\mr{ind}^{-}_{p}+T^{1/2}_{p,=0}$ into attracting, repelling, and fixed parts with respect to $\xi$, where we assume that $T^{1/2}_{p,=0}$ coincides with the $H$-fixed part of $T^{1/2}_{p}$. We note that this decomposition might depends on the choice of $\xi$, but the definition of elliptic stable basis does not depend on this choice. 

\begin{dfn}[\cite{AO}]
For each $p\in X^{H}$, the \emph{elliptic stable basis} $\Stab^{AO}_{\mf{C},T^{1/2}}(p)$ is a section of some line bundle on $\widetilde{\mb{E}}(X)$ characterized by the following conditions: 
\begin{itemize}
\item $\Stab^{AO}_{\mf{C},T^{1/2}}(p)$ is a section of $\mca{U}_{X}\otimes\Theta(T^{1/2})\otimes\widetilde{\pi}^{\ast}(\tau(\det\mr{ind}_{p},v^{-2})^{\ast}\mca{U}_{p}^{-1}\otimes\Theta(T^{1/2}_{p,=0})^{-1})\otimes\ldots$, where $\ldots$ is a certain line bundle pulled back from $\widetilde{\mb{B}}_{X}/E_{H}$ and the section is allowed to be meromorphic on this factor. Here, $E_{H}$ acts on $\widetilde{\mb{B}}_{X}$ by the translation on the factor $E_{\bb{T}}$. 
\item The support of $\Stab^{AO}_{\mf{C},T^{1/2}}(p)$ is contained in $\sqcup_{p'\preceq_{\mf{C}}p}\Attr_{\mf{C}}(p')$. 
\item We have $i_{p}^{\ast}\Stab^{AO}_{\mf{C},T^{1/2}}(p)=\vartheta(N_{p,-})\in\Gamma(\widetilde{\mb{B}}_{X},\Theta(N_{p,-}))$, where $\vartheta(N_{p,-})=\prod_{i}\vartheta(w_{i})$ if we write $N_{p,-}=\sum_{i}[w_{i}]\in K_{\bb{T}}(p)$, $w_{i}\in\bb{X}^{\ast}(\bb{T})$.
\end{itemize}
\end{dfn}

By \cite{AO}, this is unique if it exists and the existence is proved for the case where $X$ is a toric hyper-K\"ahler manifold or a quiver variety. We assume the existence for the conical symplectic resolutions we consider in this paper. Moreover, this is constant on the $E_{\bb{T}}^{\vee}$-orbits, hence defines a section of some line bundle on $\mb{E}(X)$ which is also denoted by $\Stab^{AO}_{\mf{C},T^{1/2}}(p)$. 

As in the case of $K$-theory, it might be better to change the normalization of the elliptic stable bases slightly for our purpose. Recall that we always assume the existence of dual conical symplectic resolution $X^{!}=(X^{!},\mf{C}^{!},\mf{A}^{!},\ldots)$ for $X=(X,\mf{C},\mf{A},\ldots)$. 

\begin{dfn}
For each $p\in X^{H}$, we define the \emph{elliptic standard basis} $\Stab^{ell}_{X}(p)$ by 
\begin{align*}
\Stab^{ell}_{X}(p)=\vartheta(N^{!}_{p^{!},-})\cdot\tau(\det T^{1/2},v)^{\ast}(\Stab^{AO}_{\mf{C},T^{1/2}}(p)).
\end{align*}
\end{dfn}

Next we describe the line bundle of which $\Stab^{ell}_{\mf{C}}(p)$ defines a section. More precisely, we give a formula for the factors of automorphy of the restriction $\mb{S}_{X,p',p}=\mb{S}_{p',p}\coloneqq i^{\ast}_{p'}\Stab^{ell}_{X}(p)$ for every $p,p'\in X^{H}$. We will consider $\mb{S}_{p',p}$ as a multivalued meromorphic function on $\mb{B}^{K}_{X}\coloneqq(\bb{X}_{\ast}(\bb{T})\times P)\otimes_{\bb{Z}}\bb{C}^{\times}$. As in section 3.1, we take a basis $\{a_{1},\ldots,a_{e},v\}$ of $\bb{X}^{\ast}(\bb{T})$ which will be considered as a system of coordinates on $\bb{X}_{\ast}(\bb{T})\otimes_{\bb{Z}}\bb{C}^{\times}$. Similarly, we take a basis $\{z_{1},\ldots,z_{r}\}$ of $P^{\vee}$ which will be considered as a system of coordinates on $P\otimes_{\bb{Z}}\bb{C}^{\times}$. For each $\gamma\in \bb{X}_{\ast}(\bb{T})\times P$, we denote by $\theta_{X,p}(\gamma)$ the factor of automorphy of the function $\vartheta(N_{p,-})$ under the translation by $q^{\gamma}\coloneqq\gamma\otimes q\in \mb{B}^{K}_{X}$. We also set $\theta_{X,p',p}(\gamma)=\theta_{p',p}(\gamma)\coloneqq\theta_{X,p'}(\gamma)\cdot\theta_{X^{!},p^{!}}(\gamma)$. Assuming the existence of the dual pair in the sense of Definition~\ref{dual_pair}, we prove the following. 

\begin{prop}\label{prop_factor_ell_std}
In the above situation, $\mb{S}_{p',p}$ satisfies the following:
\begin{itemize}
\item For each $c\in\bb{X}_{\ast}(H)$, we have 
\begin{align}\label{factor_equiv}
\mb{S}_{p',p}(a\mapsto q^{c}a)=i_{p^{!}}^{\ast}\mf{L}^{!}(c)^{-1}\cdot i_{p'^{!}}^{\ast}\mf{L}^{!}(c)\cdot\theta_{p',p}(c)\cdot\mb{S}_{p',p}.
\end{align}
\item For each $l\in P$, we have 
\begin{align}\label{factor_Kahler}
\mb{S}_{p',p}(z\mapsto q^{l}z)=i_{p}^{\ast}\mf{L}(l)\cdot i_{p'}^{\ast}\mf{L}(l)^{-1}\cdot\theta_{p',p}(l)\cdot\mb{S}_{p',p}.
\end{align}
\item For $\delta\in\bb{X}_{\ast}(\bb{S})$ satisfying $\langle\delta,v\rangle=1$, we have 
\begin{align}\label{factor_v}
\mb{S}_{p',p}(v\mapsto qv)=\frac{\det N_{p,-}}{\det N_{p',-}}\frac{\det N^{!}_{p'^{!},-}}{\det N^{!}_{p^{!},-}}\cdot q^{\wt_{\bb{S}}\frac{\det N_{p,-}}{\det N_{p',-}}}\cdot\theta_{p',p}(\delta)\cdot\mb{S}_{p',p}.
\end{align}
\item $\sqrt{\det N_{p',-}\cdot\det N^{!}_{p^{!},-}}^{-1}\cdot\mb{S}_{p',p}$ is single valued on $\mb{B}^{K}_{X}$.
\end{itemize}
\end{prop}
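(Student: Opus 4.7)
The plan is to compute each factor of automorphy of $\mb{S}_{p',p}$ directly from the characterizing properties of $\Stab^{AO}_{\mf{C},T^{1/2}}(p)$ as a section of the explicit line bundle on $\widetilde{\mb{E}}(X)$ recorded in its definition, combined with the adjustments $\vartheta(N^{!}_{p^{!},-})$ and $\tau(\det T^{1/2}, v)^{\ast}$ that upgrade $\Stab^{AO}$ to $\Stab^{ell}$. Restriction along $i_{p'}$ turns the upstairs line bundle into an explicit line bundle on $\widetilde{\mb{B}}_{X}$ whose factors of automorphy under translations by $q^{c}$, $q^{l}$, and $q^{\delta}$ we can read off piece by piece, and then rewrite using Definition~\ref{dual_pair} to bring them into the dual-pair-symmetric form claimed.

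Concretely, for the equivariant shift (\ref{factor_equiv}), the Poincar\'e bundle $\mca{U}_{X}$ restricted to $p'$ contributes, under $a\mapsto q^{c}a$, a factor whose exponent involves $i^{\ast}_{p'}\mf{L}$ paired with $c$. Identity (\ref{eqn_line_bundle_dual}) converts the resulting expressions into the corresponding $i^{\ast}_{p'^{!}}\mf{L}^{!}(c)$; the pulled-back factor $\tau(\det\mr{ind}_{p}, v^{-2})^{\ast}\mca{U}_{p}^{-1}$ produces the opposite contribution $i^{\ast}_{p^{!}}\mf{L}^{!}(c)^{-1}$. The theta factors $\Theta(T^{1/2})$ and $\Theta(T^{1/2}_{p,=0})^{-1}$ transform via $\vartheta(qx)=-x^{-1}q^{-1/2}\vartheta(x)$ and combine with the $\theta_{X^{!},p^{!}}(c)$ contribution coming from the prefactor $\vartheta(N^{!}_{p^{!},-})$ (whose $E_{\bb{T}}$-coordinates are shifted by $c$ via the identification $\bb{X}_{\ast}(H)\cong P^{!}$) to give $\theta_{p',p}(c)$. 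The calculation for (\ref{factor_Kahler}) is symmetric, with the roles of $X$ and $X^{!}$ exchanged and using (\ref{eqn_line_bundle}) in place of (\ref{eqn_line_bundle_dual}).

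For the $v$-shift (\ref{factor_v}), I will separately track the contributions of the $v^{-2}$ shift appearing in $\tau(\det\mr{ind}_{p}, v^{-2})^{\ast}\mca{U}_{p}^{-1}$, the $v$ shift in $\tau(\det T^{1/2}, v)^{\ast}$, the weights of $T^{1/2}$ entering $\Theta(T^{1/2})$, and the explicit $\vartheta(N_{p,-})$ and $\vartheta(N^{!}_{p^{!},-})$ factors; the compatibility (\ref{eqn_S_wt}) of the dual pair is precisely what is needed to collect these contributions into the stated ratio $\det N_{p,-}\cdot\det N^{!}_{p'^{!},-}/(\det N_{p',-}\cdot\det N^{!}_{p^{!},-})$ together with the correct $q$-exponent. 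For single-valuedness, one observes that $\vartheta(x)/\sqrt{x}$ is single-valued, so the only multivaluedness of $\mb{S}_{p',p}$ is concentrated in the half-integer weights of the various theta bundles appearing; the restriction $i^{\ast}_{p'}\Stab^{AO}(p)$ lives in a line bundle whose multivaluedness, after cancellation from $\Theta(T^{1/2}_{p,=0})^{-1}$, is $\sqrt{\det N_{p',-}}$ (consistent with the diagonal value $\vartheta(N_{p,-})$ at $p'=p$ and independent of $p$ since multivaluedness is dictated by the line bundle itself), while the prefactor $\vartheta(N^{!}_{p^{!},-})$ contributes $\sqrt{\det N^{!}_{p^{!},-}}$, so dividing by $\sqrt{\det N_{p',-}\cdot\det N^{!}_{p^{!},-}}$ clears everything.

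The main obstacle will be the careful bookkeeping of $v$-powers, $q$-powers, and sign-contributions from the theta factors, the two $\tau$-shifts, and the Poincar\'e bundle simultaneously, and checking that the dual-pair axioms---particularly (\ref{eqn_S_wt})---are exactly what is needed for the $q$-exponent in (\ref{factor_v}) to come out in the symmetric form $\wt_{\bb{S}}(\det N_{p,-}/\det N_{p',-})$. I do not expect any conceptual difficulty beyond this; the proof is a direct unwinding of definitions organized by the dual pair structure.
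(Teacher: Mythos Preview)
Your plan is essentially the paper's proof: restrict the explicit line bundle underlying $\Stab^{AO}_{\mf{C},T^{1/2}}(p)$ to $p'$, compute the factors of automorphy of each constituent piece (the Poincar\'e-type factors $\mca{U}_{p'}$ and $\mca{U}_{p}$, the theta-bundles $\Theta(T^{1/2})$ and $\Theta(T^{1/2}_{p,=0})$, and the prefactor $\vartheta(N^{!}_{p^{!},-})$) under each of the three shifts, and then simplify via the dual-pair identities (\ref{eqn_line_bundle}), (\ref{eqn_line_bundle_dual}), (\ref{eqn_S_wt}). One small correction to your sketch: under $a\mapsto q^{c}a$ the prefactor $\vartheta(N^{!}_{p^{!},-})$ does not transform at all (it depends only on $z,v$), so $\theta_{X^{!},p^{!}}(c)=1$ and the full $\theta_{p',p}(c)$ comes from $\theta_{X,p'}(c)$ alone---this is harmless but worth keeping straight in the bookkeeping.
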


\begin{proof}
By Proposition 3.1 in \cite{AO} and $T^{1/2}_{p'}=N_{p',-}+\mr{ind}_{p'}-v^{-2}\mr{ind}_{p'}^{\vee}+T^{1/2}_{p',=0}$, $\mb{S}_{p',p}$ is a meromorphic section of the line bundle
\begin{align}\label{ell_line_bundle}
\frac{\tau(\det T^{1/2},v)^{\ast}\mca{U}_{p'}}{\tau(\det T^{1/2}\cdot\det\mr{ind}_{p}^{-2},v)^{\ast}\mca{U}_{p}}\cdot\frac{\Theta(\mr{ind}_{p'})\Theta(v^{2})^{\rank\mr{ind}_{p'}}}{\Theta(v^{-2}\cdot\mr{ind}_{p'}^{\vee})}\cdot\frac{\Theta(T^{1/2}_{p',=0})\Theta(v^{2})^{-\rank\mr{ind}_{p'}}}{\Theta(T^{1/2}_{p,=0})\Theta(v^{2})^{-\rank\mr{ind}_{p}}}\cdot\Theta(N_{p',-})\Theta(N^{!}_{p^{!},-}). 
\end{align}
In particular, this implies the single valuedness of $\sqrt{\det N_{p',-}\cdot\det N^{!}_{p^{!},-}}^{-1}\cdot\mb{S}_{p',p}$. We now describe the factors of automorphy for each factors in (\ref{ell_line_bundle}).  

Let $\{l_{1},\ldots,l_{r}\}\subset P$ be the dual basis of $\{z_{1},\ldots,z_{r}\}$. We set $\mf{L}_{i}\coloneqq\mf{L}(l_{i})\in\Pic^{\bb{T}}(X)$. Then $\{\mf{L}_{1},\ldots,\mf{L}_{r}, a_{1},\ldots,a_{e},v\}$ is a basis of $\Pic^{\bb{T}}(X)$. Let $\{z_{1},\ldots,z_{r},z_{a_{1}},\ldots,z_{a_{e}},z_{v}\}\subset\Pic^{\bb{T}}(X)^{\vee}$ be its dual basis. By definition, the line bundle $\mca{U}_{p}$ on $\widetilde{\mb{B}}_{X}$ is characterized by the factors of automorphy of the function 
\begin{align*}
\prod_{i=1}^{r}\psi(i^{\ast}_{p}\mf{L}_{i},z_{i})\cdot\prod_{i=1}^{e}\psi(a_{i},z_{a_{i}})\cdot\psi(v,z_{v})
\end{align*}
for each $p\in X^{H}$. Therefore, for $\mca{L}=\prod_{i}\mf{L}_{i}^{n_{i}}\cdot\prod_{i}a_{i}^{n_{a_{i}}}\cdot v^{n_{v}}\in\Pic^{\bb{T}}(X)$, the line bundle $\tau(\mca{L},v)^{\ast}\mca{U}_{p}$ is characterized by the factor of automorphy of the function 
\begin{align*}
\prod_{i=1}^{r}\psi(i^{\ast}_{p}\mf{L}_{i},z_{i}v^{n_{i}})\cdot\prod_{i=1}^{e}\psi(a_{i},z_{a_{i}}v^{n_{a_{i}}})\cdot\psi(v,z_{v}v^{n_{v}}).
\end{align*}
By using the formula 
\begin{align*}
\psi(q^{m}x,q^{n}y)=q^{-mn}x^{-n}y^{-m}\psi(x,y),
\end{align*} 
one can check that the factors of automorphy for $\tau(\mca{L},v)^{\ast}\mca{U}_{p}$ are given as follows:
\begin{itemize}
\item For $a\mapsto q^{c}a$, it is given by 
\begin{align*}
\prod_{i=1}^{r}z_{i}^{-\langle i^{\ast}_{p}\mf{L}_{i},c\rangle}\cdot\prod_{i=1}^{e}z_{a_{i}}^{-\langle a_{i},c\rangle}\cdot v^{-\langle i_{p}^{\ast}\mca{L},c\rangle};
\end{align*}
\item For $z\mapsto q^{l}z$, it is given by $i_{p}^{\ast}\mf{L}(l)^{-1}$;
\item For $v\mapsto qv$, it is given by
\begin{align*}
z_{v}^{-1}\prod_{i=1}^{r}z_{i}^{-\wt_{\bb{S}}i^{\ast}_{p}\mf{L}_{i}}\cdot(qv)^{-\wt_{\bb{S}}i_{p}^{\ast}\mca{L}}\cdot i_{p}^{\ast}\mca{L}^{-1}.
\end{align*}
\end{itemize} 
Hence the factors of automorphy for the first factor in (\ref{ell_line_bundle}) is given as follows: 
\begin{itemize}
\item For $a\mapsto q^{c}a$, it is given by 
\begin{align*}
\prod_{i=1}^{r}z_{i}^{\langle i^{\ast}_{p}\mf{L}_{i},c\rangle-\langle i^{\ast}_{p'}\mf{L}_{i},c\rangle}\cdot v^{\langle \det T^{1/2}_{p},c\rangle-\langle \det T^{1/2}_{p'},c\rangle-2\langle\det\mr{ind}_{p},c\rangle};
\end{align*}
\item For $z\mapsto q^{l}z$, it is given by $i_{p}^{\ast}\mf{L}(l)\cdot i_{p'}^{\ast}\mf{L}(l)^{-1}$;
\item For $v\mapsto qv$, it is given by
\begin{align*}
\prod_{i=1}^{r}z_{i}^{\wt_{\bb{S}}i^{\ast}_{p}\mf{L}_{i}-\wt_{\bb{S}}i^{\ast}_{p'}\mf{L}_{i}}\cdot(qv)^{\wt_{\bb{S}}\det T^{1/2}_{p}-\wt_{\bb{S}}\det T^{1/2}_{p'}-2\wt_{\bb{S}}\det\mr{ind}_{p}}\cdot \frac{\det T^{1/2}_{p}}{\det T^{1/2}_{p'}}\cdot(\det\mr{ind}_{p})^{-2}.
\end{align*}
\end{itemize} 
Since the second and the third factor in (\ref{ell_line_bundle}) does not depend on the K\"ahler parameters, the equation (\ref{factor_Kahler}) follows. 

One can also check that the factor of automorphy for the second factor in (\ref{ell_line_bundle}) is given as follows:
\begin{itemize}
\item For $a\mapsto q^{c}a$, it is given by $v^{2\langle\det\mr{ind}_{p'},c\rangle}$;
\item For $v\mapsto qv$, it is given by $(qv)^{2\wt_{\bb{S}}\det\mr{ind}_{p'}}\cdot(\det\mr{ind}_{p'})^{2}$.
\end{itemize}
Since we have 
\begin{align}\label{eqn_pol_normal}
\det N_{p,-}=v^{-2\rank\mr{ind}_{p}}\cdot\det T^{1/2}_{p}\cdot(\det\mr{ind}_{p})^{-2}\cdot(\det T^{1/2}_{p,=0})^{-1}
\end{align}
and the third factor in (\ref{ell_line_bundle}) does not depend on the equivariant parameters, the factor of automorphy of $\mb{S}_{p',p}$ under $a\mapsto q^{c}a$ is given by 
\begin{align*}
\prod_{i=1}^{r}z_{i}^{\langle i^{\ast}_{p}\mf{L}_{i},c\rangle-\langle i^{\ast}_{p'}\mf{L}_{i},c\rangle}\cdot v^{\langle \det N_{p,-},c\rangle-\langle \det N_{p',-},c\rangle}\cdot\theta_{p',p}(c).
\end{align*}
This and (\ref{eqn_line_bundle_dual}) imply (\ref{factor_equiv}). 

Since $T^{1/2}_{p,=0}+v^{-2}(T^{1/2}_{p,=0})^{\vee}=0$, we have $T^{1/2}_{p,=0}=\sum_{i}(v^{m_{i}}-v^{-2-m_{i}})$ for some $m_{i}\in\bb{Z}$. Using this, one can check that the factor of automorphy for the line bundle $\Theta(T^{1/2}_{p,=0})$ under $v\mapsto qv$ is given by $(qv^{2})^{\wt_{\bb{S}}\det T^{1/2}_{p,=0}}$ and hence the factor of automorphy of the third factor in (\ref{ell_line_bundle}) under $v\mapsto qv$ is given by 
\begin{align*}
(qv^{2})^{\wt_{\bb{S}}\det T^{1/2}_{p',=0}+2\rank\mr{ind}_{p'}-\wt_{\bb{S}}\det T^{1/2}_{p,=0}-2\rank\mr{ind}_{p}}
\end{align*}
After some simplification using (\ref{eqn_pol_normal}), one can check that the factor of automorphy of $\mb{S}_{p',p}$ under $v\mapsto qv$ is given by 
\begin{align*}
\prod_{i=1}^{r}z_{i}^{\wt_{\bb{S}}i^{\ast}_{p}\mf{L}_{i}-\wt_{\bb{S}}i^{\ast}_{p'}\mf{L}_{i}}\cdot(qv)^{\wt_{\bb{S}}\frac{\det N_{p,-}}{\det N_{p',-}}}\cdot\frac{\det N_{p,-}}{\det N_{p',-}}\cdot\theta_{p',p}(\delta).
\end{align*}
By (\ref{eqn_S_wt}), we obtain $\wt_{\bb{S}}\det N_{p,-}-\wt_{\bb{S}}\det N_{p',-}=\wt_{\bb{S}}\det N^{!}_{p'^{!},-}-\wt_{\bb{S}}\det N^{!}_{p^{!},-}$ and hence (\ref{eqn_line_bundle}) implies 
\begin{align*}
\prod_{i=1}^{r}z_{i}^{\wt_{\bb{S}}i^{\ast}_{p}\mf{L}_{i}-\wt_{\bb{S}}i^{\ast}_{p'}\mf{L}_{i}}\cdot v^{\wt_{\bb{S}}\frac{\det N_{p,-}}{\det N_{p',-}}}=\frac{\det N^{!}_{p'^{!},-}}{\det N^{!}_{p^{!},-}}. 
\end{align*}
This proves (\ref{factor_v}). 
\end{proof}

In particular, if we set $\mb{S}^{!}_{p^{!},p'^{!}}\coloneqq i^{\ast}_{p^{!}}\Stab^{ell}_{X^{!}}(p'^{!})$, then the line bundle on $\mb{B}_{X}$ defined by the factors of automorphy of $\mb{S}_{p'.p}$ is the same as the line bundle on $\mb{B}_{X^{!}}$ defined by $\mb{S}^{!}_{p^{!},p'^{!}}$ under the identification $\mb{B}_{X}\cong\mb{B}_{X^{!}}$. Following \cite{AO,RSVZ1,RSVZ2}, we conjecture that elliptic standard bases have certain symmetry under the symplectic duality. 

\begin{conj}[\cite{AO,RSVZ1,RSVZ2}]\label{conj_ell_stab_duality}
For any $p,p'\in X^{H}$, $\mb{S}_{p'.p}$ is holomorphic and $\mb{S}_{p'.p}=\pm\mb{S}^{!}_{p^{!},p'^{!}}$.
\end{conj}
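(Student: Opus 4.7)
The strategy is to verify that $\mb{S}^{!}_{p^{!},p'^{!}}$ satisfies the same characterizing properties as $\mb{S}_{p',p}$, and then invoke the uniqueness of the Aganagic-Okounkov elliptic stable envelope. First I would check that the two sides are sections of the same line bundle. Applying Proposition~\ref{prop_factor_ell_std} both to $X$ and to $X^{!}$, and using the identifications $\bb{X}_{\ast}(H^{!})\cong P$, $P^{!}\cong\bb{X}_{\ast}(H)$ together with the compatibility equations (\ref{eqn_dual_pair_H})--(\ref{eqn_S_wt}) of Definition~\ref{dual_pair}, the factors of automorphy (\ref{factor_equiv})--(\ref{factor_v}) for $\mb{S}^{!}_{p^{!},p'^{!}}$ under equivariant (resp.\ K\"ahler) shifts on $\mb{B}_{X^{!}}$ match those for $\mb{S}_{p',p}$ under the corresponding K\"ahler (resp.\ equivariant) shifts on $\mb{B}_{X}$. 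The key symmetry is that $\theta_{p',p}(\gamma)=\theta_{X,p'}(\gamma)\theta_{X^{!},p^{!}}(\gamma)$ is manifestly invariant under the swap $(X,p',p)\leftrightarrow(X^{!},p^{!},p'^{!})$, while the $v$-shift factor (\ref{factor_v}) is symmetric thanks to (\ref{eqn_S_wt}).

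Next I would match supports and diagonal data. By the support property of $\Stab^{AO}$, $\mb{S}_{p',p}$ vanishes unless $p'\preceq_{\mf{C}}p$; by the order-reversing bijection in Definition~\ref{dual_pair}, this is precisely the condition $p^{!}\preceq_{\mf{C}^{!}}p'^{!}$ under which $\mb{S}^{!}_{p^{!},p'^{!}}$ is allowed to be nonzero. For $p=p'$, the third defining property of the Aganagic-Okounkov basis together with the renormalization factor $\vartheta(N^{!}_{p^{!},-})$ yields $\mb{S}_{p,p}=\vartheta(N_{p,-})\vartheta(N^{!}_{p^{!},-})$, which is manifestly symmetric under $(X,p)\leftrightarrow(X^{!},p^{!})$, so $\mb{S}_{p,p}=\mb{S}^{!}_{p^{!},p^{!}}$.

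To upgrade these matches to an equality, I would then establish holomorphicity. The Aganagic-Okounkov basis is only allowed to be meromorphic along divisors pulled back from $\widetilde{\mb{B}}_{X}/E_{H}$, namely divisors involving K\"ahler parameters and $v$ only. The claim is that multiplication by $\vartheta(N^{!}_{p^{!},-})$ supplies zeros matched precisely to the walls of resonance in K\"ahler parameters predicted by symplectic duality, thereby cancelling every residual pole. Once holomorphicity of $\mb{S}_{p',p}$ is established, the collection $\{\pm\mb{S}^{!}_{p^{!},p'^{!}}\}_{p'\in X^{H}}$, after undoing the shift $\tau(\det T^{1/2},v)^{\ast}$ and dividing by the prefactor $\vartheta(N^{!}_{p^{!},-})$, reassembles into a candidate for $\Stab^{AO}_{\mf{C},T^{1/2}}(p)$ that satisfies the three defining conditions (line bundle membership, support, and diagonal normalization), so the uniqueness of the Aganagic-Okounkov basis forces agreement with the genuine stable envelope up to the sign ambiguity inherent in the square root factor $\sqrt{\det N_{p',-}\cdot\det N^{!}_{p^{!},-}}$.

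The hard part will be the holomorphicity step. The Aganagic-Okounkov basis has genuine poles in the K\"ahler and $v$ directions, and verifying that the renormalization by $\vartheta(N^{!}_{p^{!},-})$ cancels them for every matrix entry $(p,p')$ requires either case-by-case explicit formulas---feasible and in fact carried out in this paper for toric hyper-K\"ahler manifolds---or a general residue-theoretic argument at the walls of the K\"ahler arrangement that seems to go beyond what is currently available from the abstract theory of elliptic stable envelopes. The other steps are essentially bookkeeping once Definition~\ref{dual_pair} is in hand, while holomorphicity is what genuinely uses the dual geometry.
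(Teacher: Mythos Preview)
This statement is labeled a \emph{conjecture} in the paper, and the paper does not prove it in general. The only proof offered is for toric hyper-K\"ahler manifolds, in Corollary~\ref{cor_ell_toric_stab_symmetry}, and that proof proceeds by an entirely different route from yours: it uses the explicit closed formula of Corollary~\ref{cor_toric_ell_std},
\[
\Stab^{ell}_{X}(p_{I})=(-1)^{|I_{+}|+|I^{c}_{+}|}\prod_{i=1}^{n}\vartheta\!\left(\mca{L}(\varepsilon^{\ast}_{i})\cdot i^{\ast}_{p^{!}_{I}}\mca{L}^{!}(\varepsilon_{i})\right),
\]
from which both holomorphicity and the symmetry under $(X,p_{J},p_{I})\leftrightarrow(X^{!},p^{!}_{I},p^{!}_{J})$ are read off by inspection. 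No uniqueness argument is invoked.

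Your abstract strategy has a gap in addition to the holomorphicity step you already flagged. The collection $\{\mb{S}^{!}_{p^{!},p'^{!}}\}_{p'\in X^{H}}$, for fixed $p$, is \emph{not} the set of fixed-point restrictions of a single section on $\widetilde{\mb{E}}(X^{!})$: each entry is the restriction of a \emph{different} section $\Stab^{ell}_{X^{!}}(p'^{!})$ at the \emph{same} point $p^{!}$. To ``reassemble'' these into a candidate for $\Stab^{AO}_{\mf{C},T^{1/2}}(p)$ on $\widetilde{\mb{E}}(X)$ you must produce a global section on the elliptic cohomology scheme of $X$ whose restrictions at the $p'$ are these values; matching factors of automorphy pointwise is necessary but not sufficient, since one must also verify the gluing conditions along the non-isolated components of $Ell_{\bb{T}}(X)$. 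The Aganagic--Okounkov uniqueness statement applies to genuine sections, not to arbitrary tuples of fixed-point values, so without this globalization step the appeal to uniqueness is circular. This is precisely why the statement remains conjectural in general and why the paper falls back on the explicit toric formula rather than the structural argument you outline.
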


For toric hyper-K\"ahler manifolds, this conjecture is proved in Corollary~\ref{cor_ell_toric_stab_symmetry}. 

\subsection{Flops}

Before considering an elliptic analogue of the bar involution, we state some conjectures about the elliptic stable bases for the maximal flop of $X$. Recall that our conical symplectic resolution is always equipped with some additional data as in section 3.1. For $X=(X,\mf{C},\mf{A},\Phi,\Psi,\mf{L})$, we define $-X\coloneqq(X,-\mf{C},\mf{A},\Phi,\Psi,\mf{L})$. We also assume that $-X^{!}$ has a dual conical symplectic resolution in the sense of Definition~\ref{dual_pair}, which is denoted by $X_{\mr{flop}}=(X_{\mr{flop}},\mf{C}_{\mr{flop}},\mf{A}_{\mr{flop}},\Phi_{\mr{flop}},\Psi_{\mr{flop}},\mf{L}_{\mr{flop}})$ and called a maximal flop of $X$. By definition, we have an identification $\bb{X}_{\ast}(H)\cong P^{!}\cong\bb{X}_{\ast}(H_{\mr{flop}})$, $P\cong\bb{X}_{\ast}(H^{!})\cong P_{\mr{flop}}$, and $(X^{H},\preceq_{\mf{C}})\cong(X_{\mr{flop}}^{H_{\mr{flop}}},\succeq_{\mf{C}_{\mr{flop}}})$. We simply denote by $p\in X^{H}_{\mr{flop}}$ the fixed point corresponding to $p\in X^{H}$. Under this identification, we also have $\mf{C}_{\mr{flop}}=\mf{C}$, $\mf{A}_{\mr{flop}}=-\mf{A}$, $\Phi_{\mr{flop}}(p)=\Phi(p)$, and $\Psi_{\mr{flop}}(p)=\Psi(p)$. By (\ref{eqn_line_bundle}), we also obtain $i^{\ast}_{p}\mf{L}_{\mr{flop}}(\lambda)=\overline{i^{\ast}_{p}\mf{L}(\lambda)}$ for any $\lambda\in P$. The relation $\Phi_{\mr{flop}}(p)=\Phi(p)$ implies that the tangent spaces $T_{p}X_{\mr{flop}}$ and $T_{p}X$ have the same multiset of $H$-weights, and in particular, we have $\dim X_{\mr{flop}}=\dim X$. For the $\bb{S}$-weights, we expect the following.

\begin{conj}\label{conj_flop_tangent}
For any $p\in X^{H}$, we have $T_{p}X_{\mr{flop}}=v^{-2}\cdot\overline{T_{p}X}$ as $\bb{T}$-modules. 
\end{conj}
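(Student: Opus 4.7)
The assertion splits into two pieces: (i) the multisets of $H$-characters of $T_pX_{\mr{flop}}$ and $T_pX$ coincide; (ii) each $\bb{S}$-weight $k$ on a weight space of $T_pX$ becomes $-k-2$ on the corresponding weight space of $T_pX_{\mr{flop}}$. Part (i) will be immediate from the paper's own setup: since $X_{\mr{flop}}$ is defined as the dual of $-X^!$, one reads off $\Phi_{\mr{flop}}(p)=\Psi^!(p^!)=\Phi(p)$ from the axioms of Definition~\ref{dual_pair}, and the identity $\mf{C}_{\mr{flop}}=\mf{C}$ then ensures the attracting/repelling decompositions $N^{\mr{flop}}_{p,\pm}$ and $N_{p,\pm}$ agree as $H$-representations. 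The real content of the conjecture is therefore (ii).

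My plan for (ii) is to combine the symplectic-form constraint with the duality data. First, because both $X$ and $X_{\mr{flop}}$ carry symplectic forms of $\bb{S}$-weight $2$, the $\bb{T}$-representation $T_pX_{\mr{flop}}$ is self-dual up to a twist by $v^{-2}$: for each weight $av^k$ appearing in $T_pX_{\mr{flop}}$ the symplectic partner $a^{-1}v^{-k-2}$ also appears with equal multiplicity, and similarly for $T_pX$. Together with (i), this already constrains $T_pX_{\mr{flop}}$ and $v^{-2}\overline{T_pX}$ to agree up to possible swaps within each symplectic pair $\{av^k,a^{-1}v^{-k-2}\}$.

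To rule out nontrivial swaps, I would exploit the transformation formula~(\ref{factor_v}) of Proposition~\ref{prop_factor_ell_std} for the diagonal restriction $\mb{S}_{p,p}$ under $v\mapsto qv$. The analogous formula for $X_{\mr{flop}}$, whose symplectic dual is $-X^!$, uses $N^!_{p^!,+}$ (the new negative part for $-X^!$) in place of $N^!_{p^!,-}$; combined with the weight/dimension identity~(\ref{eqn_S_wt}) this pins down the $\bb{S}$-weight of $\det N^{\mr{flop}}_{p,-}$ to match that of $v^{-2}\overline{N_{p,-}}$. To upgrade this determinantal matching to the full tangent $\bb{T}$-module statement, I would invoke Conjecture~\ref{conj_ell_stab_duality} simultaneously for the dual pairs $(X,X^!)$ and $(X_{\mr{flop}},-X^!)$: the off-diagonal symmetries $\mb{S}_{p',p}=\pm\mb{S}^!_{p^!,p'^!}$ and their $X_{\mr{flop}}$-counterparts, matched term by term against the explicit line bundle (\ref{ell_line_bundle}) of Proposition~\ref{prop_factor_ell_std}, should force the individual $\bb{S}$-weights in each symplectic pair to switch exactly as claimed.

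The main obstacle is that Definition~\ref{dual_pair} only records determinantal and line-bundle data of the dual pair, not the full $\bb{T}$-module structure of the tangent spaces. A clean general proof therefore appears to require either enlarging Definition~\ref{dual_pair} to include a full tangent $\bb{T}$-module comparison across the duality (at which point the conjecture becomes essentially tautological) or verifying it family by family. I expect the toric hyper-K\"ahler case to fall out of the explicit hyperplane-arrangement description of tangent weights used in Section~5, while quiver varieties and Slodowy varieties should reduce to direct computation of attracting/repelling weights under the Weyl-group or stability flop that realizes $X_{\mr{flop}}$ concretely.
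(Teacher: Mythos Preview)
This statement is a \emph{conjecture} in the paper, not a theorem; the paper does not attempt a general proof. Its only verification is Corollary~\ref{cor_toric_flop} for toric hyper-K\"ahler manifolds, and that proof is a one-line direct computation: once Proposition~\ref{prop_toric_dual_pair} identifies $X_{\mr{flop}}$ as the toric hyper-K\"ahler manifold with GIT parameter $-\eta$ in place of $\eta$, the effect on fixed-point data is simply to swap $I^c_+$ and $I^c_-$, and plugging this into the explicit formula~(\ref{eqn_toric_tangent_fiber}) for $i^*_{p_I}T_X$ immediately yields $v^{-2}\overline{T_{p_I}X}$.

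Your general argument has a genuine gap. The step ``matched term by term against the explicit line bundle (\ref{ell_line_bundle}) \ldots\ should force the individual $\bb{S}$-weights'' is not substantiated, and the paper itself explains why determinantal data alone cannot do this: it remarks just after Conjecture~\ref{conj_flop_tangent} that the relation $\det N^{\mr{flop}}_{p,-}=v^{-\dim X}\det\overline{N}_{p,-}$ already follows from the axioms of Definition~\ref{dual_pair} \emph{without} assuming the conjecture. So the factors-of-automorphy information you extract from Proposition~\ref{prop_factor_ell_std}, which is packaged via determinants and $\theta_{X,p}(\gamma)$, does not obviously separate the individual symplectic pairs. Moreover, your argument rests on Conjecture~\ref{conj_ell_stab_duality}, itself unproven in general, so even if the deduction could be made rigorous it would only reduce one conjecture to another. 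Your final paragraph is on target: you correctly diagnose that Definition~\ref{dual_pair} is too coarse for a general proof and that the toric case should fall out of the explicit tangent-weight description in Section~5---which is precisely what the paper does.
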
 

For toric hyper-K\"ahler manifolds, this conjecture is checked in Corollary~\ref{cor_toric_flop}. In particular, this conjecture implies $N^{\mr{flop}}_{p,-}=v^{-2}\cdot\overline{N}_{p,-}$, where $N^{\mr{flop}}_{p,-}$ is the repelling part of $T_{p}X_{\mr{flop}}$. This is compatible with the relation $\wt_{\bb{S}}\det N^{\mr{flop}}_{p,-}=-\wt_{\bb{S}}\det N_{p,-}-\dim X$ which comes from (\ref{eqn_S_wt}). We note that from our definition, the relation $\det N^{\mr{flop}}_{p,-}=v^{-\dim X}\cdot\det\overline{N}_{p,-}$ always holds without assuming Conjecture~\ref{conj_flop_tangent}. 

Let $\widetilde{\mca{M}}_{X}$ be the field of multivalued meromorphic functions on $\mb{B}^{K}_{X}$. We set $\mb{S}_{X}\coloneqq(\mb{S}_{X,p,p'})_{p,p'\in X^{H}}$. This is a matrix whose entries are elements in $\widetilde{\mca{M}}_{X}$. By the triangular property of elliptic stable bases, $\mb{S}_{X}$ is invertible. As in $K$-theory, the inverse for $\bb{S}$ induces an involution $\overline{(-)}:\widetilde{\mca{M}}_{X}\rightarrow\widetilde{\mca{M}}_{X}$. We conjeture the following formula expressing $\overline{\mb{S}}_{X}$ geometrically. 

\begin{conj}\label{conj_ell_flop}
$\overline{\mb{S}}_{X}=(-1)^{\frac{\dim X}{2}+\frac{\dim X^{!}}{2}}\cdot\mb{S}_{-X_{\mr{flop}}}$. 
\end{conj}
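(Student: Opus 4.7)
The plan is to verify the identity entry by entry, showing that for each $p, p' \in X^{H}$, both $\overline{\mb{S}_{X, p', p}}$ and $(-1)^{(\dim X + \dim X^{!})/2}\,\mb{S}_{-X_{\mr{flop}}, p', p}$ satisfy the same three characterizing properties of the (pull-back of the) elliptic stable envelope: they are sections of the same line bundle on $\mb{B}^{K}_{X}$, i.e.\ share the same factors of automorphy; they have the same vanishing behavior as $p,p'$ vary; and they agree on the diagonal $p = p'$. Since $\Stab^{AO}$ is uniquely determined by these data by \cite{AO}, matching all three finishes the proof.

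For the line bundle comparison, I would compute the factors of automorphy of $\overline{\mb{S}_{X, p', p}}$ by applying the bar operation $v \mapsto v^{-1}$ (with the elliptic modulus $q$ held fixed) to the formulas (\ref{factor_equiv})--(\ref{factor_v}). In parallel, Proposition~\ref{prop_factor_ell_std} applied to $X_{\mr{flop}}$ with the chamber $-\mf{C}_{\mr{flop}} = -\mf{C}$ and dual $-X^{!}$ gives the factors for $\mb{S}_{-X_{\mr{flop}}, p', p}$. Conjecture~\ref{conj_flop_tangent}, which supplies $N^{X_{\mr{flop}}}_{p,-} = v^{-2}\,\overline{N_{p,-}}$, combined with the dual-pair compatibility equations (\ref{eqn_dual_pair_H})--(\ref{eqn_S_wt}) and the line-bundle formulas (\ref{eqn_line_bundle})--(\ref{eqn_line_bundle_dual}), allows one to rewrite $i_{p}^{\ast}\mf{L}_{\mr{flop}}(l)$ and $i_{p^{!}}^{\ast}\mf{L}^{!}(c)$ as the relevant bar conjugates of the corresponding data for $X$, so that the two sets of automorphy factors coincide.

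For the support condition, $\mb{S}_{X, p', p}$ vanishes unless $p' \preceq_{\mf{C}} p$, a condition preserved by bar conjugation; on the other side, $\mb{S}_{-X_{\mr{flop}}, p', p}$ requires $p' \preceq_{-\mf{C}_{\mr{flop}}} p$ in $X^{H}_{\mr{flop}}$, which after the two order-reversals (one by the $(X_{\mr{flop}}, -X^{!})$ dual-pair bijection and one by negating $\mf{C}_{\mr{flop}} = \mf{C}$) becomes the same condition $p' \preceq_{\mf{C}} p$. For the diagonal, a direct calculation from $\Stab^{AO}_{\mf{C}}(p)|_{p} = \vartheta(N_{p,-})$ and the definition of $\Stab^{ell}$ gives $\mb{S}_{X, p, p} = \vartheta(N_{p, -})\,\vartheta(N^{!}_{p^{!}, -})$; applying the bar operation and using $\vartheta(x^{-1}) = -\vartheta(x)$ on each of the $\dim X / 2$ weights of $N_{p,-}$ and $\dim X^{!}/2$ weights of $N^{!}_{p^{!},-}$ yields exactly the sign $(-1)^{(\dim X + \dim X^{!})/2}$, while the $v^{-2}$ twist in Conjecture~\ref{conj_flop_tangent} absorbs into the $\tau(\det T^{1/2,\mr{flop}}, v)^{\ast}$ shift on the flop side, matching $\mb{S}_{-X_{\mr{flop}}, p, p}$.

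The main obstacle will be the $v \mapsto qv$ automorphy comparison, where bar conjugation interacts nontrivially with the $q$-periodicity and with the exponent $q^{\wt_{\bb{S}}(\det N_{p,-}/\det N_{p',-})}$ in (\ref{factor_v}); reconciling this requires precise use of (\ref{eqn_S_wt}) on both the $X$-side and the $X_{\mr{flop}}$-side. A closely related subtlety is identifying the symplectic dual of $-X_{\mr{flop}}$ within the convention of Definition~\ref{dual_pair} sufficiently explicitly that Proposition~\ref{prop_factor_ell_std} can be applied on that side, which is where Conjecture~\ref{conj_flop_tangent} for the pair $(-X_{\mr{flop}}, X^{!})$ enters as an implicit input.
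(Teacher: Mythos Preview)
The statement you are attempting to prove is a \emph{conjecture} in the paper, not a theorem; the paper does not give a general proof.  What the paper does in general is exactly the two sanity checks you describe as steps:  it verifies the diagonal entries agree (the paragraph immediately after the conjecture) and it proves that each off-diagonal entry of $\overline{\mb{S}}_{X}$ and of $\mb{S}_{-X_{\mr{flop}}}$ is a section of the same line bundle on $\mb{B}_{X}$ (the Proposition that follows).  These are presented only as evidence.  The actual proof the paper gives is for toric hyper-K\"ahler manifolds (Corollary~\ref{cor_ell_toric_flop}), and it proceeds by direct computation from the closed formula $\Stab^{ell}_{X}(p_{I})=(-1)^{|I_{+}|+|I^{c}_{+}|}\prod_{i}\vartheta(\mca{L}(\varepsilon^{\ast}_{i})\cdot i^{\ast}_{p^{!}_{I}}\mca{L}^{!}(\varepsilon_{i}))$ of Corollary~\ref{cor_toric_ell_std}, together with $i^{\ast}_{p}\mca{L}_{\mr{flop}}(\lambda)=\overline{i^{\ast}_{p}\mca{L}(\lambda)}$.

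Your proposed argument has a genuine gap in the step ``matching all three finishes the proof.''  The uniqueness theorem in \cite{AO} characterizes $\Stab^{AO}$ as a section of a specific line bundle on the scheme $\widetilde{\mb{E}}(X_{\mr{flop}})$, not as a matrix of meromorphic functions on $\mb{B}^{K}_{X}$ indexed by pairs of fixed points.  Knowing that $\overline{\mb{S}_{X,p',p}}$ has the correct factors of automorphy, support pattern, and diagonal values tells you only that it lives in the same line bundle as $\mb{S}_{-X_{\mr{flop}},p',p}$ at each fixed point; it does not tell you that the collection $(\overline{\mb{S}_{X,p',p}})_{p'}$ glues to a global section on $\widetilde{\mb{E}}(X_{\mr{flop}})$.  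Since $X$ and $X_{\mr{flop}}$ are in general different varieties (related by a birational map), there is no a priori mechanism that transports a section on $\widetilde{\mb{E}}(X)$ to one on $\widetilde{\mb{E}}(X_{\mr{flop}})$ after applying the bar to the fixed-point restrictions.  This gluing obstruction is precisely why the statement remains a conjecture in the paper and is only established in the toric case via the explicit product formula.
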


For toric hyper-K\"ahler manifolds, this conjecture is proved in Corollary~\ref{cor_ell_toric_flop}. If we assume Conjecture~\ref{conj_flop_tangent}, then we have $\overline{N}_{p,-}=v^{2}\cdot N^{\mr{flop}}_{p,-}=(N^{\mr{flop}}_{p,+})^{\vee}$ and hence for the diagonal entries, we obtain 
\begin{align*}
\overline{\mb{S}}_{X,p,p}=\vartheta(\overline{N}_{p,-})\vartheta(\overline{N}^{!}_{p^{!},-})=(-1)^{\frac{\dim X}{2}+\frac{\dim X^{!}}{2}}\cdot\vartheta(N^{\mr{flop}}_{p,+})\vartheta(N^{!,\mr{flop}}_{p^{!},+})=(-1)^{\frac{\dim X}{2}+\frac{\dim X^{!}}{2}}\cdot\mb{S}_{-X_{\mr{flop}},p,p}.
\end{align*}
As another evidence for Conjecture~\ref{conj_ell_flop}, one can also check that each corresponding entry has the same factors of automorphy. 

\begin{prop}
Assume Conjecture~\ref{conj_flop_tangent}. For any $p,p'\in X^{H}$, both $\overline{\mb{S}}_{X,p',p}$ and $\mb{S}_{-X_{\mr{flop}},p',p}$ can be considered as a section of the same line bundle on $\mb{B}_{X}\cong\mb{B}_{-X_{\mr{flop}}}$. 
\end{prop}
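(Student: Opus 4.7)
The plan is to verify that the three quasi-periodicity factors described in Proposition~\ref{prop_factor_ell_std}, together with the single-valued square-root normalization, agree on both sides under the natural identification $\mb{B}_X\cong\mb{B}_{-X_{\mr{flop}}}$ (induced by $P\cong P_{\mr{flop}}$ and $\bb{X}_\ast(H)\cong\bb{X}_\ast(H_{\mr{flop}})$ via double duality). Since the proposition concerns only the ambient line bundle and not the section itself, the overall sign $(-1)^{\dim X/2+\dim X^!/2}$ predicted by Conjecture~\ref{conj_ell_flop} is irrelevant here.

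First, I would read off the factors of automorphy of $\overline{\mb{S}}_{X,p',p}$ from \eqref{factor_equiv}--\eqref{factor_v} by pushing them through the involution $\sigma\colon v\mapsto v^{-1}$. The $a$- and $z$-shifts commute with $\sigma$, so \eqref{factor_equiv} and \eqref{factor_Kahler} get replaced by their $\overline{(-)}$-conjugates. For the $v$-shift, $\sigma$ intertwines $v\mapsto qv$ with $v\mapsto q^{-1}v$, so the resulting factor is the inverse of the bar-image of the right-hand side of \eqref{factor_v} evaluated at $q^{-1}v^{-1}$; after collecting the $q^{\wt_{\bb{S}}}$ contributions this simplifies to an explicit monomial in the equivariant and K\"ahler parameters.

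Second, I would compute the factors of automorphy of $\mb{S}_{-X_{\mr{flop}},p',p}$ directly from Proposition~\ref{prop_factor_ell_std} applied to $-X_{\mr{flop}}$ in place of $X$, using three ingredients: (a) the chamber $-\mf{C}$ swaps attracting and repelling, so $N^{-X_{\mr{flop}}}_{p,-}=N^{X_{\mr{flop}}}_{p,+}$; (b) the flop identity $i^\ast_p\mf{L}_{\mr{flop}}(l)=\overline{i^\ast_p\mf{L}(l)}$; and (c) Conjecture~\ref{conj_flop_tangent}, which combined with (a) yields $N^{-X_{\mr{flop}}}_{p,-}=v^{-2}\overline{N_{p,+}}$ and hence $\det N^{-X_{\mr{flop}}}_{p,-}=v^{-\dim X}\overline{\det N_{p,+}}$ (using $\dim N_{p,+}=\dim X/2$ from the symplectic pairing $N_{p,+}=v^{2}N_{p,-}^{\vee}$). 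Analogous identities are then used on the dual side $(-X_{\mr{flop}})^!$.

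Third, I would match the two sides factor by factor. The $z$-shift comparison follows from (b) and the observation that $\theta_{p',p}(l)$ depends only on the $H$-weights of $N_{p,-}$ and $N^!_{p^!,-}$, which are preserved by both the bar involution and the flop. The $a$-shift comparison is analogous, using \eqref{eqn_line_bundle_dual} in place of (b). The $v$-shift comparison reconciles the $q^{\wt_{\bb{S}}}$ contributions via \eqref{eqn_S_wt}, and the single-valued normalization factor $\sqrt{\det N^{-X_{\mr{flop}}}_{p',-}\cdot\det N^{!,-X_{\mr{flop}}}_{p^!,-}}$ matches $\sqrt{\overline{\det N_{p',-}\cdot\det N^!_{p^!,-}}}$ via the same flop identities. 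The main obstacle is the $v\mapsto qv$ matching: both sides produce nontrivial $q$-power corrections, and the $v^{-\dim X}$ shift on the flop side, the $\tfrac12(\dim X+\dim X^!)$ correction from \eqref{eqn_S_wt}, and the $q^{-1}v^{-1}$ evaluation from $\sigma$ on the bar side must conspire to produce the same monomial. Verifying this requires careful bookkeeping; the remaining ingredients are essentially unwinding definitions.
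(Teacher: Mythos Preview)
Your proposal is correct and follows essentially the same route as the paper: both arguments read off the factors of automorphy for $\overline{\mb{S}}_{X,p',p}$ by conjugating Proposition~\ref{prop_factor_ell_std} through $v\mapsto v^{-1}$ (the paper writes this as $\overline{\mb{S}}_{X,p',p}(v\mapsto qv)=\overline{\mb{S}_{X,p',p}(v\mapsto q^{-1}v)}$), compute the factors for $\mb{S}_{-X_{\mr{flop}},p',p}$ from the same proposition applied to $-X_{\mr{flop}}$ via the identities $i^{\ast}_{p}\mf{L}_{\mr{flop}}(l)=\overline{i^{\ast}_{p}\mf{L}(l)}$ and $\overline{N}_{p,-}=(N^{\mr{flop}}_{p,+})^{\vee}$ (from Conjecture~\ref{conj_flop_tangent}), and then match the $a$-, $z$-, and $v$-shifts together with the monodromy. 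The paper additionally records the preliminary identity $\overline{\theta_{X,p',p}(\gamma)}=\theta_{-X_{\mr{flop}},p',p}(\gamma)$ for $\gamma\in\bb{X}_{\ast}(H)\times P$, which streamlines the comparison of the $\theta$-factors; you would discover the same identity when carrying out your third step.
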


\begin{proof}
As above, one can check $\overline{\theta_{X,p',p}(\gamma)}=\theta_{-X_{\mr{flop}},p',p}(\gamma)$ for any $\gamma\in\bb{X}_{\ast}(H)\times P$ and $\overline{\theta_{X,p',p}(-\delta)}=\theta_{-X_{\mr{flop}},p',p}(\delta)$ for $\delta\in\bb{X}_{\ast}(\bb{S})$. By Proposition~\ref{prop_factor_ell_std}, the coincidence of factors of automorphy under $a\mapsto q^{c}a$ and $z\mapsto q^{l}z$ follows from $i^{\ast}_{p}\mf{L}^{!}_{\mr{flop}}(c)=\overline{i^{\ast}_{p}\mf{L}^{!}(c)}$ and $i^{\ast}_{p}\mf{L}_{\mr{flop}}(l)=\overline{i^{\ast}_{p}\mf{L}(l)}$. 

For $v\mapsto qv$, we note that by (\ref{eqn_S_wt}) and (\ref{factor_v}), we have
\begin{align*}
\overline{\mb{S}}_{X,p',p}(v\mapsto qv)=\overline{\mb{S}_{X,p',p}(v\mapsto q^{-1}v)}=\frac{\det \overline{N}_{p',-}}{\det \overline{N}_{p,-}}\frac{\det \overline{N}^{!}_{p^{!},-}}{\det \overline{N}^{!}_{p'^{!},-}}\cdot q^{\wt_{\bb{S}}\frac{\det N_{p,-}}{\det N_{p',-}}}\cdot\overline{\theta_{X,p',p}(-\delta)}\cdot\overline{\mb{S}}_{X,p',p}.\end{align*}
Therefore, the coincidence of the factor of automorphy under $v\mapsto qv$ together with the coincidence of monodromies follows from $\overline{N}_{p,-}=(N^{\mr{flop}}_{p,+})^{\vee}$. 
\end{proof}

We now state the main conjecture in this section. This conjecture together with Conjecture~\ref{conj_ell_flop} will imply that the elliptic bar involution defined in the next section is actually an involution. 

\begin{conj}\label{conj_ell_indep_chamber}
The matrix $\mb{M}_{X}\coloneqq\mb{S}_{X_{\mr{flop}}}\cdot\mb{S}_{X}^{-1}$ does not depend on the choice of the chamber $\mf{C}$. 
\end{conj}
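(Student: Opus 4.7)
The plan is to reduce chamber independence to a local wall-crossing statement and then prove the wall identity using Conjecture~\ref{conj_flop_tangent}. Since the space of chambers for $X$ is a connected complement of finitely many hyperplanes, any two chambers $\mf{C}, \mf{C}'$ can be joined by a sequence of adjacent chambers, so it suffices to show $\mb{M}_X^{\mf{C}} = \mb{M}_X^{\mf{C}'}$ for adjacent $\mf{C}, \mf{C}'$ sharing a codimension-one wall $H_\alpha$. Writing the wall-crossing of elliptic stable bases as $\mb{S}_X^{\mf{C}'} = \mathbf{R}^X \cdot \mb{S}_X^{\mf{C}}$, and similarly $\mb{S}_{X_{\mr{flop}}}^{\mf{C}'} = \mathbf{R}^{X_{\mr{flop}}} \cdot \mb{S}_{X_{\mr{flop}}}^{\mf{C}}$ with the analogous elliptic wall R-matrix, the chamber independence of $\mb{M}_X$ is equivalent to the commutation identity
\begin{align*}
\mathbf{R}^{X_{\mr{flop}}} \cdot \mb{M}_X^{\mf{C}} = \mb{M}_X^{\mf{C}} \cdot \mathbf{R}^X.
\end{align*}

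Next, I would analyze the wall R-matrices. An elliptic wall R-matrix across $H_\alpha$ acts nontrivially only on pairs of fixed points whose attracting sets are modified by crossing $H_\alpha$, and its entries are explicit ratios of theta functions in $H$-weights proportional to $\alpha$, in K\"ahler parameters, and in $\bb{S}$-weights of the polarization. The key input is Conjecture~\ref{conj_flop_tangent}: since $T_p X_{\mr{flop}} = v^{-2} \cdot \overline{T_p X}$, the $H$-weights at every fixed point agree between $X$ and $X_{\mr{flop}}$, so the wall structure and its combinatorics match; only the $\bb{S}$-weights differ, and in a way prescribed by a complex conjugation together with a $v^{-2}$ twist. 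The image $\mf{L}_{\mr{flop}}$ relates to $\overline{\mf{L}}$ by a controlled sign via Definition~\ref{dual_pair}. Consequently $\mathbf{R}^{X_{\mr{flop}}}$ and $\mathbf{R}^X$ have the same blockwise combinatorial shape and differ only by explicit $\bb{S}$-twists and K\"ahler-parameter reflections that can be read off Proposition~\ref{prop_factor_ell_std}.

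The commutation identity is then verified by direct calculation: using the symplectic duality symmetry of Conjecture~\ref{conj_ell_stab_duality} together with Conjecture~\ref{conj_ell_flop}, one checks that the $\bb{S}$-twists distinguishing $\mathbf{R}^{X_{\mr{flop}}}$ from $\mathbf{R}^X$ are exactly absorbed into the quasi-periodicity factors of $\mb{M}_X$ computed in Proposition~\ref{prop_factor_ell_std} applied to both $\mb{S}_{X_{\mr{flop}}}$ and $\mb{S}_X^{-1}$. The hardest part will be this last bookkeeping step: the commutation must hold blockwise over all pairs of fixed points, and the cross-term contributions coming from fixed points not on the wall are controlled only via the triangularity of elliptic stable bases with respect to $\preceq_{\mf{C}}$, which is delicate. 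A cleaner alternative, which I would pursue in parallel, is to characterize $\mb{M}_X$ uniquely by its factors of automorphy (chamber-independent by the computation above) together with its diagonal values on the fixed-point locus, namely $\mb{M}_{X,p,p} = \vartheta(N^{\mr{flop}}_{p,-}) \vartheta(N^{!,\mr{flop}}_{p^!,-})/(\vartheta(N_{p,-}) \vartheta(N^!_{p^!,-}))$, which are also manifestly chamber-independent; this reduces the problem to establishing a uniqueness theorem for meromorphic matrices of sections of the relevant line bundle on $\mb{B}^K_X$ with prescribed quasi-periods and prescribed restriction to the diagonal.
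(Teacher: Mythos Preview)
The paper does \emph{not} prove this statement in general: it is explicitly labeled as a conjecture, and the only case established is that of toric hyper-K\"ahler manifolds (Corollary~\ref{cor_ell_toric_indep_flop}). There, the argument is entirely different from yours and quite indirect: the paper first constructs explicit elliptic canonical bases $\Theta_X(\lambda)$ as lattice theta functions, proves via Theorem~\ref{thm_Schur_Weyl} that they expand the $\mf{S}_X(p_I)$ with coefficients that transform correctly under $v\mapsto v^{-1}$, and hence deduces that $\beta^{ell}_X$ fixes each $\Theta_X(\lambda)$. Since the $\Theta_X(\lambda)$ are manifestly chamber-independent, so is $\beta^{ell}_X$; one then reverses the computation in Proposition~\ref{prop_ell_indep_chamber} to extract the chamber independence of $\mb{M}_X$. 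No wall R-matrices appear anywhere.

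Your proposal, by contrast, attempts the general case but has genuine gaps. First, you assume Conjectures~\ref{conj_flop_tangent}, \ref{conj_ell_stab_duality}, and \ref{conj_ell_flop}, which are themselves open outside the toric case; so even if your argument closed, it would establish only a conditional implication, not the conjecture. Second, the heart of your argument---verifying the commutation $\mathbf{R}^{X_{\mr{flop}}}\mb{M}_X=\mb{M}_X\mathbf{R}^X$---is not carried out: you describe the shape of the R-matrices but give no mechanism showing the off-diagonal entries of $\mb{M}_X$ intertwine them, and you yourself flag the cross-terms as ``delicate.'' The paper's computation after Lemma~\ref{lem_factor_inverse} shows that the \emph{line bundle} carrying each entry of $\mb{M}_X$ is chamber-independent, but this controls only quasi-periods, not the sections themselves. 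Your alternative route, characterizing $\mb{M}_X$ by its factors of automorphy plus diagonal restrictions, fails for the same reason: the space of meromorphic sections of a fixed line bundle on $\mb{B}_X$ is infinite-dimensional, and knowing the diagonal entries $\mb{M}_{X,p,p}$ does not pin down the off-diagonal ones without further input (e.g.\ holomorphy or support constraints, which $\mb{M}_X$ does not obviously satisfy). So neither branch of the proposal reaches a proof.
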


For toric hyper-K\"ahler manifolds, this conjecture is proved in Corollary~\ref{cor_ell_toric_indep_flop}. As another evidence for this conjecture, we prove that each entry of $\mb{M}_{X}$ can be considered as a section of some line bundle on $\mb{B}_{X}$ which does not depend on the choice of $\mf{C}$. We write $\mb{U}_{p,p'}$ and $\mb{S}^{\mr{flop}}_{p,p'}$ the $(p,p')$-entry of $\mb{S}_{X}^{-1}$ and $\mb{S}_{X_{\mr{flop}}}$ respectively. We first calculate the factors of automorphy for $\mb{U}_{p,p'}$. 

\begin{lemma}\label{lem_factor_inverse}
$\mb{U}_{p,p'}$ satisfies the following:
\begin{itemize}
\item For each $c\in\bb{X}_{\ast}(H)$, we have 
\begin{align*}
\mb{U}_{p,p'}(a\mapsto q^{c}a)=i_{p^{!}}^{\ast}\mf{L}^{!}(c)\cdot i_{p'^{!}}^{\ast}\mf{L}^{!}(c)^{-1}\cdot\theta_{p',p}(c)^{-1}\cdot\mb{U}_{p,p'};
\end{align*}
\item For each $l\in P$, we have 
\begin{align*}\mb{U}_{p,p'}(z\mapsto q^{l}z)=i_{p}^{\ast}\mf{L}(l)^{-1}\cdot i_{p'}^{\ast}\mf{L}(l)\cdot\theta_{p',p}(l)^{-1}\cdot\mb{U}_{p,p'};
\end{align*}
\item For $\delta\in\bb{X}_{\ast}(\bb{S})$ satisfying $\langle\delta,v\rangle=1$, we have 
\begin{align*}
\mb{U}_{p,p'}(v\mapsto qv)=\frac{\det N_{p',-}}{\det N_{p,-}}\frac{\det N^{!}_{p^{!},-}}{\det N^{!}_{p'^{!},-}}\cdot q^{\wt_{\bb{S}}\frac{\det N_{p',-}}{\det N_{p,-}}}\cdot\theta_{p',p}(\delta)^{-1}\cdot\mb{U}_{p,p'};
\end{align*}
\item $\sqrt{\det N_{p',-}\cdot\det N^{!}_{p^{!},-}}\cdot\mb{U}_{p,p'}$ is single valued on $\mb{B}^{K}_{X}$.
\end{itemize}
\end{lemma}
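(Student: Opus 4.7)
The plan is to exploit a nice product-structure of the factors of automorphy of $\mb{S}_X$: each factor decomposes as a row-dependent times a column-dependent piece. For the shift $a\mapsto q^{c}a$, Proposition~\ref{prop_factor_ell_std} gives the factor $i_{p^{!}}^{\ast}\mf{L}^{!}(c)^{-1}\cdot i_{p'^{!}}^{\ast}\mf{L}^{!}(c)\cdot\theta_{p',p}(c)$, and using $\theta_{p',p}(c)=\theta_{X,p'}(c)\cdot\theta_{X^{!},p^{!}}(c)$ this factors as $A_{p'}(c)\cdot B_{p}(c)$ with $A_{p'}(c)\coloneqq i_{p'^{!}}^{\ast}\mf{L}^{!}(c)\cdot\theta_{X,p'}(c)$ and $B_{p}(c)\coloneqq i_{p^{!}}^{\ast}\mf{L}^{!}(c)^{-1}\cdot\theta_{X^{!},p^{!}}(c)$. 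Similarly for $z\mapsto q^{l}z$ we get $A_{p'}(l)=i_{p'}^{\ast}\mf{L}(l)^{-1}\cdot\theta_{X,p'}(l)$ and $B_{p}(l)=i_{p}^{\ast}\mf{L}(l)\cdot\theta_{X^{!},p^{!}}(l)$, and for $v\mapsto qv$ one splits the factor in (\ref{factor_v}) using (\ref{eqn_S_wt}) to obtain analogous $A_{p'}(\delta),B_{p}(\delta)$.

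Consequently, writing $D^{L}(\gamma)=\mr{diag}(A_{p'}(\gamma))_{p'}$ and $D^{R}(\gamma)=\mr{diag}(B_{p}(\gamma))_{p}$, the transformation rule for $\mb{S}_{X}$ under each of the three lattice translations takes the form
\begin{equation*}
\mb{S}_{X}(\gamma\text{-shift})=D^{L}(\gamma)\cdot\mb{S}_{X}\cdot D^{R}(\gamma).
\end{equation*}
Since $\mb{S}_{X}$ is invertible (by the triangular property) and the diagonal matrices commute trivially with the operation of inversion from the appropriate sides, one immediately deduces
\begin{equation*}
\mb{S}_{X}^{-1}(\gamma\text{-shift})=D^{R}(\gamma)^{-1}\cdot\mb{S}_{X}^{-1}\cdot D^{L}(\gamma)^{-1}.
\end{equation*}
Extracting the $(p,p')$-entry gives $\mb{U}_{p,p'}(\gamma\text{-shift})=B_{p}(\gamma)^{-1}\cdot A_{p'}(\gamma)^{-1}\cdot\mb{U}_{p,p'}$, which after substituting the explicit $A_{p'}$, $B_{p}$ above is exactly the three formulas claimed in the lemma.

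For the final single-valuedness statement, the same trick applies at the level of the diagonal renormalization matrices: by the last item of Proposition~\ref{prop_factor_ell_std}, if we set $D_{L}\coloneqq\mr{diag}\bigl(\sqrt{\det N_{p',-}}\bigr)_{p'}$ and $D_{R}\coloneqq\mr{diag}\bigl(\sqrt{\det N^{!}_{p^{!},-}}\bigr)_{p}$, then $D_{L}^{-1}\cdot\mb{S}_{X}\cdot D_{R}^{-1}$ has single-valued entries on $\mb{B}^{K}_{X}$. Taking the inverse yields that $D_{R}\cdot\mb{S}_{X}^{-1}\cdot D_{L}$ is single-valued, i.e., $\sqrt{\det N_{p',-}\cdot\det N^{!}_{p^{!},-}}\cdot\mb{U}_{p,p'}$ is single-valued, as required. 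There is essentially no obstacle here beyond the bookkeeping of the three factor decompositions; the only step requiring modest care is the $v\mapsto qv$ case, where one must use (\ref{eqn_S_wt}) to check that the $\bb{S}$-weight contributions split cleanly between the row and column factors, but this is exactly the identity used in the proof of (\ref{factor_v}) in Proposition~\ref{prop_factor_ell_std}.
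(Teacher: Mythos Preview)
Your proof is correct and takes a somewhat different, more streamlined route than the paper's. Both hinge on the same structural observation---that by Proposition~\ref{prop_factor_ell_std} the factor of automorphy of $\mb{S}_{p',p}$ under each lattice shift factors as a product of a piece depending only on the row index $p'$ and a piece depending only on the column index $p$ (using $\theta_{p',p}(\gamma)=\theta_{X,p'}(\gamma)\cdot\theta_{X^{!},p^{!}}(\gamma)$). The paper then proceeds by induction on the partial order $\preceq_{\mf{C}}$, using the recursive formula $\mb{U}_{p,p'}=-(\mb{S}_{p',p'})^{-1}\sum_{p\preceq_{\mf{C}}p''\prec_{\mf{C}}p'}\mb{U}_{p,p''}\,\mb{S}_{p'',p'}$ for the inverse of a triangular matrix and checking term by term that the factors combine correctly. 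You instead package the same multiplicative structure as a matrix identity $\mb{S}_{X}(\gamma\text{-shift})=D^{L}(\gamma)\,\mb{S}_{X}\,D^{R}(\gamma)$ and invert in one step, which is cleaner and avoids the induction entirely. The single-valuedness argument via $D_{R}\,\mb{S}_{X}^{-1}\,D_{L}$ is likewise a neat replacement for the paper's inductive bookkeeping. One small remark: your aside that (\ref{eqn_S_wt}) is needed for the $v\mapsto qv$ split is not quite accurate---the factor in (\ref{factor_v}) already separates into $p$- and $p'$-dependent pieces directly, without invoking (\ref{eqn_S_wt}); that identity was used in Proposition~\ref{prop_factor_ell_std} to arrive at the stated form of (\ref{factor_v}), but once you have (\ref{factor_v}) the decomposition is immediate.
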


\begin{proof}
Since $\mb{S}_{p,p'}=\mb{U}_{p,p'}=0$ unless $p\preceq_{\mf{C}}p'$, we prove the statements by induction on the number of $p''\in X^{H}$ satisfying $p\preceq_{\mf{C}}p''\preceq_{\mf{C}}p'$. We note that by Proposition~\ref{prop_factor_ell_std}, the factor of automorphy of $\mb{S}_{p',p}$ under the translation by $q^{\gamma}$ is of the form $f_{p}(\gamma)f_{p'}(\gamma)^{-1}\theta_{p',p}(\gamma)$ for any $\gamma\in\bb{X}_{\ast}(T)\times P$. It is enough to check that the factor of automorphy for $\mb{U}_{p,p'}$ is given by $f_{p}(\gamma)^{-1}f_{p'}(\gamma)\theta_{p',p}(\gamma)^{-1}$. If $p=p'$, then we have $\mb{U}_{p,p}=(\mb{S}_{p,p})^{-1}$ and the claim follows immediately. 

If $p\prec_{\mf{C}}p'$, then we have 
\begin{align*}
\mb{U}_{p,p'}=-(\mb{S}_{p',p'})^{-1}\cdot\sum_{p\preceq_{\mf{C}}p''\prec_{\mf{C}}p'}\mb{U}_{p,p''}\cdot\mb{S}_{p'',p'}. 
\end{align*}
By the induction hypothesis, the factor of automorphy of each term in the RHS is given by 
\begin{align*}
\theta_{p',p'}(\gamma)^{-1}\cdot f_{p}(\gamma)^{-1}f_{p''}(\gamma)\theta_{p'',p}(\gamma)^{-1}\cdot f_{p'}(\gamma)f_{p''}(\gamma)^{-1}\theta_{p'',p'}(\gamma)=f_{p}(\gamma)^{-1}f_{p'}(\gamma)\theta_{p',p}(\gamma)^{-1}.
\end{align*}
This proves the first three statements. The fourth statement can be proved similarly. 
\end{proof}

\begin{prop}
Each entry of $\mb{M}_{X}$ is a section of a line bundle on $\mb{B}_{X}$ which does not depend on the choice of $\mf{C}$.
\end{prop}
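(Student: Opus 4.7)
The plan is to compute the factor of automorphy of each entry
\[
\mb{M}_{X,p,p'}=\sum_{p''\in X^{H}}\mb{S}^{\mr{flop}}_{p,p''}\cdot\mb{U}_{p'',p'}
\]
under translations of the equivariant, K\"ahler, and $v$-parameters, and check (i) that the summand's factor of automorphy is independent of the summation index $p''$ so that the sum is a section of a well-defined line bundle on $\mb{B}_{X}$, and (ii) that the resulting line bundle is expressible purely in terms of data intrinsic to the dual pair $(X,X^{!})$, with no reference to $\mf{C}$.

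The first step is to record the factors of automorphy of $\mb{S}^{\mr{flop}}_{p,p''}$ by applying Proposition~\ref{prop_factor_ell_std} to the conical symplectic resolution $X_{\mr{flop}}$, whose dual is $-X^{!}$. Under the identifications $\mf{L}^{!}_{\mr{flop}}=\mf{L}^{!}$ and $p^{!}_{\mr{flop}}=p^{!}$ arising from the dual pair for $X_{\mr{flop}}$, this factor has the same functional form as in Proposition~\ref{prop_factor_ell_std}, with $\theta_{X,p}(\gamma)$ replaced by $\theta_{X_{\mr{flop}},p}(\gamma)$ and $\theta_{X^{!},p^{!}}(\gamma)$ by $\theta_{-X^{!},p^{!}}(\gamma)$. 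Setting $f_{p}(\gamma)\coloneqq i^{\ast}_{p^{!}}\mf{L}^{!}(\gamma)^{-1}$ and combining with Lemma~\ref{lem_factor_inverse}, the factor of $\mb{S}^{\mr{flop}}_{p,p''}\cdot\mb{U}_{p'',p'}$ under $a\mapsto q^{c}a$ becomes
\[
f_{p}(c)^{-1}f_{p'}(c)\cdot\theta_{X_{\mr{flop}},p}(c)\,\theta_{X,p'}(c)^{-1}\cdot\Bigl[\theta_{-X^{!},p''^{!}}(c)\,\theta_{X^{!},p''^{!}}(c)^{-1}\Bigr],
\]
where the equivariant-lift contributions involving $p''$ cancel automatically because $f^{\mr{flop}}_{p''}=f_{p''}$.

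The next step is to simplify the bracketed factor, which is the factor of automorphy of $\vartheta(N_{X^{!},p''^{!},+})/\vartheta(N_{X^{!},p''^{!},-})$ under $a\mapsto q^{c}a$. Using the symplectic identification $N_{+}\cong v^{-2}N_{-}^{\vee}$ on $T_{p''^{!}}X^{!}$ (valid since the symplectic form has $\bb{S}$-weight $2$ and the fixed point is isolated) together with $\vartheta(qx)=-x^{-1}q^{-1/2}\vartheta(x)$, this ratio reduces to an explicit multiplicative expression depending on $p''$ only through $\det N_{X^{!},p''^{!},-}$ and its $H^{!}$-weights; the dual-pair equations \eqref{eqn_dual_pair_H} and \eqref{eqn_dual_pair_S_dual} applied to $X^{!}$ then rewrite these quantities in terms of $\wt_{H}i^{\ast}_{p''}\mf{L}(-)$ and equivariant parameters, eliminating all genuine $p''$-dependence. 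The K\"ahler translation $z\mapsto q^{l}z$ is handled in parallel using the relation $i^{\ast}_{p}\mf{L}_{\mr{flop}}(\lambda)=\overline{i^{\ast}_{p}\mf{L}(\lambda)}$ recorded in the excerpt, and the $\bb{S}$-translation $v\mapsto qv$ is handled by combining \eqref{eqn_S_wt} with the unconditional identity $\det N^{\mr{flop}}_{p,-}=v^{-\dim X}\cdot\det\overline{N}_{p,-}$ noted after Conjecture~\ref{conj_flop_tangent}. After these cancellations, the surviving factor of automorphy involves only $\mf{L},\mf{L}^{!},\det N,\det N^{!}$, and signs, all of which are intrinsic to the dual pair for $(X,X^{!})$ and do not refer to $\mf{C}$ (the chamber enters only through the ordering of fixed points, which controls support but not the line bundle).

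The main obstacle is the $v\mapsto qv$ bookkeeping, where several $\bb{S}$-weight and sign contributions must conspire to cancel; the discrepancy $\det N^{\mr{flop}}_{p,-}/\det\overline{N}_{p,-}=v^{-\dim X}$ must absorb exactly the shift coming from \eqref{eqn_S_wt}. This step is conceptually clean but computationally intricate, and it is cleanest to dispatch the $a$- and $z$-translations first and then verify the $v$-translation at the end.
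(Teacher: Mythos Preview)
Your overall strategy---compute the factors of automorphy of each summand $\mb{S}^{\mr{flop}}_{p,p''}\cdot\mb{U}_{p'',p'}$, show they are independent of $p''$, and then argue $\mf{C}$-independence---is exactly the paper's approach. Two issues are worth flagging.

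First, a minor overcomplication: for the $a$-translation the bracket $\theta_{-X^{!},p''^{!}}(c)\,\theta_{X^{!},p''^{!}}(c)^{-1}$ is trivially $1$, because $\vartheta(N^{!}_{p''^{!},\pm})$ involves only the K\"ahler parameters $z$ and $v$, never the $H$-equivariant parameters $a$. No symplectic identification or dual-pair formulas are needed here. The genuine $p''$-cancellations occur only for the $z$- and $v$-translations, and the paper handles those by direct computation (using \eqref{eqn_line_bundle} for $z$ and \eqref{eqn_S_wt} for $v$).

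Second, and more seriously, your argument for $\mf{C}$-independence has a gap. You write that the surviving factor ``involves only $\mf{L},\mf{L}^{!},\det N,\det N^{!}$\ldots all of which are intrinsic to the dual pair,'' and that ``the chamber enters only through the ordering of fixed points.'' This is not correct: the decomposition $T_{p}X=N_{p,+}\oplus N_{p,-}$ depends on $\mf{C}$, and hence so do $\det N_{p,-}$ and $\theta_{X,p}(\gamma)$. What the paper actually shows is that the particular combination $i^{\ast}_{p^{!}}\mf{L}^{!}(c)\cdot\theta_{X,p}(c)$ is $\mf{C}$-invariant, and similarly for the $v$-case. This requires writing $N_{p,-}=\sum_{i}w_{i}v^{m_{i}}$, observing that a change of chamber swaps some $(w_{i},m_{i})$ with $(w_{i}^{-1},-m_{i}-2)$, and checking by hand that each factor in the explicit product
\[
v^{-\langle\det N_{p,-},c\rangle}\cdot\theta_{X,p}(c)=\prod_{i}(-1)^{\langle w_{i},c\rangle}q^{-\langle w_{i},c\rangle^{2}/2}w_{i}^{-\langle w_{i},c\rangle}v^{-(m_{i}+1)\langle w_{i},c\rangle}
\]
is invariant under that swap (and analogously for the $v$-translation). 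This computation is the actual content of the proposition and cannot be waved away.
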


\begin{proof}
By Proposition~\ref{prop_factor_ell_std} and Lemma~\ref{lem_factor_inverse}, the factors of automorphy of $\mb{S}^{\mr{flop}}_{p,p''}\cdot\mb{U}_{p'',p'}$ for $p\succeq_{\mf{C}}p''\preceq_{\mf{C}}p'$ are given as follows:
\begin{itemize}
\item For $a\mapsto q^{c}a$, it is given by
\begin{align}\label{eqn_mon_factor_equiv}
\frac{i^{\ast}_{p^{!}}\mf{L}^{!}(c)\cdot\theta_{X_{\mr{flop}},p}(c)}{i^{\ast}_{p'^{!}}\mf{L}^{!}(c)\cdot\theta_{X,p'}(c)};
\end{align}
\item For $z\mapsto q^{l}z$, it is given by 
\begin{align}\label{eqn_mon_factor_Kahler}
\frac{i^{\ast}_{p'}\mf{L}(l)}{i^{\ast}_{p}\mf{L}_{\mr{flop}}(l)}\cdot\frac{i^{\ast}_{p''}\mf{L}_{\mr{flop}}(l)}{i^{\ast}_{p''}\mf{L}(l)}\cdot\frac{\theta_{-X^{!},p''^{!}}(l)}{\theta_{X^{!},p''^{!}}(l)};
\end{align}
\item For $v\mapsto qv$, it is given by 
\begin{align}\label{eqn_mon_factor_v}
\frac{\det N_{p',-}}{\det N^{\mr{flop}}_{p,-}}\cdot\frac{\det N^{!}_{p^{!},+}}{\det N^{!}_{p'^{!},-}}\cdot q^{\wt_{\bb{S}}\frac{\det N_{p',-}}{\det N^{\mr{flop}}_{p,-}}}\cdot\frac{\theta_{X_{\mr{flop}},p}(\delta)}{\theta_{X,p'}(\delta)}\cdot\frac{\det N^{\mr{flop}}_{p'',-}}{\det N_{p'',-}}\cdot\frac{\det N^{!}_{p''^{!},-}}{\det N^{!}_{p''^{!},+}}\cdot q^{\wt_{\bb{S}}\frac{\det N^{\mr{flop}}_{p'',-}}{\det N_{p'',-}}}\cdot\frac{\theta_{-X^{!},p''^{!}}(\delta)}{\theta_{X^{!},p''^{!}}(\delta)}. 
\end{align}
\end{itemize}

We first consider the case of $a\mapsto q^{c}a$. Since (\ref{eqn_mon_factor_equiv}) does not depend on $p''$, $\mb{M}_{X,p,p'}=\sum_{p''}\mb{S}^{\mr{flop}}_{p,p''}\cdot\mb{U}_{p'',p'}$ has (\ref{eqn_mon_factor_equiv}) as the factor of automorphy. In order to check the independence on $\mf{C}$, it is sufficient to check that $i^{\ast}_{p^{!}}\mf{L}^{!}(c)\cdot\theta_{X,p}(c)$ does not depend on the choice of $\mf{C}$. By (\ref{eqn_line_bundle_dual}), the $H$-weight of $i^{\ast}_{p^{!}}\mf{L}^{!}(c)$ does not depend on the choice of $\mf{C}$ and the $\bb{S}$-weight is given by $-\langle\det N_{p,-},c\rangle$. If we write $N_{p,-}=\sum_{i}w_{i}v^{m_{i}}$ for some $w_{i}\in\bb{X}^{\ast}(H)$ and $m_{i}\in\bb{Z}$, then it is enough to check the independence of 
\begin{align*}
v^{-\langle\det N_{p,-},c\rangle}\cdot\theta_{X,p}(c)=\prod_{i}(-1)^{\langle w_{i},c\rangle}q^{-\frac{\langle w_{i},c\rangle^{2}}{2}}w_{i}^{-\langle w_{i},c\rangle}v^{-(m_{i}+1)\langle w_{i},c\rangle}
\end{align*} 
on the choice of $\mf{C}$. If we change $\mf{C}$, then for some $i$, $w_{i}$ is replaced by $w_{i}^{-1}$ and $m_{i}$ is replaced by $-m_{i}-2$ in the formula of $N_{p,-}$. Since this replacement does not change each factor of the RHS, the independence follows. 

We next consider the case of $z\mapsto q^{l}z$. By (\ref{eqn_line_bundle}), we have 
\begin{align*}
\frac{i^{\ast}_{p''}\mf{L}_{\mr{flop}}(l)}{i^{\ast}_{p''}\mf{L}(l)}\cdot\frac{\theta_{-X^{!},p''^{!}}(l)}{\theta_{X^{!},p''^{!}}(l)}=v^{2\langle\det N^{!}_{p''^{!},-},l\rangle}\cdot v^{-2\langle\det N^{!}_{p''^{!},-},l\rangle}=1. 
\end{align*}
Therefore, (\ref{eqn_mon_factor_Kahler}) does not depend on $p''$ and hence it is the factor of automorphy of $\mb{M}_{X,p,p'}$. Its independence on $\mf{C}$ is clear. 

Now we consider the case of $v\mapsto qv$. We first check that (\ref{eqn_mon_factor_v}) does not depend on $p''$. One can check that 
\begin{align*}
\frac{\theta_{-X^{!},p''^{!}}(\delta)}{\theta_{X^{!},p''^{!}}(\delta)}=(qv)^{-2\wt_{\bb{S}}\det N^{!}_{p''^{!},-}-\dim X^{!}}\cdot v^{-\dim X^{!}}\cdot(\det N^{!}_{p''^{!},-})^{-2}.
\end{align*}
On the other hand, we have
\begin{align*}
\frac{\det N^{\mr{flop}}_{p'',-}}{\det N_{p'',-}}=v^{-\dim X-2\wt_{\bb{S}}\det N_{p'',-}}=v^{\dim X^{!}+2\wt_{\bb{S}}\det N^{!}_{p''^{!},-}}
\end{align*}
by (\ref{eqn_S_wt}) and 
\begin{align*}
\frac{\det N^{!}_{p''^{!},-}}{\det N^{!}_{p''^{!},+}}=v^{\dim X^{!}}\cdot(\det N^{!}_{p''^{!},-})^{2}.
\end{align*}
These equations imply that (\ref{eqn_mon_factor_v}) does not depend on $p''$. In order to prove the independence of  (\ref{eqn_mon_factor_v}) on the choice of $\mf{C}$, it is enough to check that 
\begin{align*}
\frac{\det N^{!}_{p^{!},-}}{\det N_{p,-}}\cdot q^{-\wt_{\bb{S}}\det N_{p,-}}\cdot\theta_{X,p}(\delta)
\end{align*}
does not depend on $\mf{C}$. We note that the $H^{!}$-weight of $\det N^{!}_{p^{!},-}$ does not depend on $\mf{C}$ and the $\bb{S}$-weight is $-\frac{\dim X^{!}}{2}-\frac{\dim X}{2}-\wt_{\bb{S}}\det N_{p,-}$ by (\ref{eqn_S_wt}). If we write $N_{p,-}=\sum_{i}w_{i}v^{m_{i}}$ for some $w_{i}\in\bb{X}^{\ast}(H)$ and $m_{i}\in\bb{Z}$, then we have 
\begin{align*}
\frac{v^{\wt_{\bb{S}}\det N^{!}_{p^{!},-}}}{\det N_{p,-}}\cdot q^{-\wt_{\bb{S}}\det N_{p,-}}\cdot\theta_{X,p}(\delta)=v^{-\frac{\dim X^{!}}{2}-\frac{\dim X}{2}}\cdot\prod_{i}(-1)^{m_{i}}(qv^{2})^{-\frac{m_{i}(m_{i}+2)}{2}}w_{i}^{-m_{i}-1}.
\end{align*}
Since each factor in the RHS does not change under $w_{i}\mapsto w_{i}^{-1}$ and $m_{i}\mapsto -m_{i}-2$, the independence on $\mf{C}$ follows. 

Finally, independence of monodromies of $\sqrt{\det N_{p,-}}$ on $\mf{C}$ easily implies the independence of monodromies of $\mb{M}_{X,p,p'}$. 
\end{proof}

\begin{remark}
We note that the matrix $\mb{M}_{X}$ is closely related to the monodromy operator appearing in \cite[Proposition~6.5]{AO} which intertwines the vertex function for $X$ and $X_{\mr{flop}}$. We expect that the dependence on $\mf{C}$ in \cite[Proposition~6.5]{AO} is eliminated by our choice of the normalization on the elliptic standard bases. Since the vertex functions do not depend on the choice of $\mf{C}$, one may hope that these kind of relations would imply Conjecture~\ref{conj_ell_indep_chamber}. We plan to investigate this approach in the future. 
\end{remark}

\subsection{Elliptic bar involutions}

In this section, we give a proposal for a definition of elliptic bar involution. Since our approach gives involution only after localization, we set $\mb{K}(X)_{\mr{loc}}\coloneqq\oplus_{p\in X^{H}}\mca{M}_{X}$, where $\mca{M}_{X}$ is the field of (single-valued) meromorphic functions on $\mb{B}^{K}_{X}$, and we will only work on $\mb{K}(X)_{\mr{loc}}$ in this paper. By restriction to the fixed points, we obtain a natural map $K_{\bb{T}}(X)\otimes_{\bb{Z}}\bb{Z}[P^{\vee}]\rightarrow\mb{K}(X)_{\mr{loc}}$. In order to consider $\Stab^{ell}_{X}(p)$ as an element of $\mb{K}(X)_{\mr{loc}}$ similarly, we need to kill the multi-valuedness. We use the data of polarization to fix this modification. 

We choose a splitting $\Pic^{H}(X)\cong P\oplus\bb{X}^{\ast}(H)$ by using the data $\mf{L}:P\rightarrow\Pic^{\bb{T}}(X)$ and take a polarization $T^{1/2}$ satisfying Assumption~\ref{assumption_polarization}. Let $\kappa\in P\oplus\bb{X}^{\ast}(H)$ be the element corresponding to $\det T^{1/2}\in\Pic^{H}(X)$. We also denote by $\mf{L}:P\oplus\bb{X}^{\ast}(H)\rightarrow\Pic^{\bb{T}}(X)$ the natural extension of $\mf{L}$. We note that $\mf{L}(\kappa)=v^{-w(\det T^{1/2})}\cdot\det T^{1/2}$. 
We also take similar data $T^{1/2,!}$ and $\kappa^{!}\in P^{!}\oplus\bb{X}^{\ast}(H^{!})$ on the dual conical symplectic resolution $X^{!}$. 

\begin{dfn}
For any $p\in X^{H}$, we set 
\begin{align*}
\mf{S}^{\kappa,\kappa^{!}}_{X}(p)\coloneqq\sqrt{\mf{L}(\kappa)\cdot i^{\ast}_{p^{!}}\mf{L}^{!}(\kappa^{!})}^{-1}\cdot\Stab^{ell}_{X}(p).
\end{align*}
\end{dfn}

\begin{lemma}
We have $\left(i^{\ast}_{p'}\mf{S}^{\kappa,\kappa^{!}}_{X}(p)\right)_{p'\in X^{H}}\in\mb{K}(X)_{\mr{loc}}$. 
\end{lemma}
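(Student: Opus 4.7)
The plan is to reduce single-valuedness to an explicit computation of the factor appearing under the square root, using the relation between $\det T^{1/2}_p$ and $\det N_{p,-}$ recorded in \eqref{eqn_pol_normal} and the parity Assumption~\ref{assumption_polarization}. First I would expand the restriction to a fixed point $p'$ as
\begin{align*}
i^{\ast}_{p'}\mf{S}^{\kappa,\kappa^{!}}_{X}(p)=\sqrt{i^{\ast}_{p'}\mf{L}(\kappa)\cdot i^{\ast}_{p^{!}}\mf{L}^{!}(\kappa^{!})}^{-1}\cdot\mb{S}_{p',p}.
\end{align*}
By the last bullet of Proposition~\ref{prop_factor_ell_std}, $\sqrt{\det N_{p',-}\cdot\det N^{!}_{p^{!},-}}^{-1}\cdot\mb{S}_{p',p}$ is already single-valued on $\mb{B}^{K}_{X}$. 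Hence it suffices to show that the prefactor
\begin{align*}
R_{p',p}\coloneqq\frac{\det N_{p',-}\cdot\det N^{!}_{p^{!},-}}{i^{\ast}_{p'}\mf{L}(\kappa)\cdot i^{\ast}_{p^{!}}\mf{L}^{!}(\kappa^{!})}
\end{align*}
admits a well-defined square root as a $\bb{T}$-character (up to the unavoidable sign).

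Next I would compute $R_{p',p}$ explicitly. Using $\mf{L}(\kappa)=v^{-w(\det T^{1/2})}\det T^{1/2}$ and the polarization normalization $\det N_{p',-}/\det T^{1/2}_{p'}=v^{-2\rank\mr{ind}_{p'}}(\det\mr{ind}_{p'})^{-2}(\det T^{1/2}_{p',=0})^{-1}$ from \eqref{eqn_pol_normal}, together with the dual analogue on $X^{!}$, one obtains
\begin{align*}
R_{p',p}=\frac{v^{w(\det T^{1/2})+w(\det T^{1/2,!})-2\rank\mr{ind}_{p'}-2\rank\mr{ind}^{!}_{p^{!}}}}{(\det\mr{ind}_{p'}\cdot\det\mr{ind}^{!}_{p^{!}})^{2}\cdot\det T^{1/2}_{p',=0}\cdot\det T^{1/2,!}_{p^{!},=0}}.
\end{align*}
The character factor $(\det\mr{ind}_{p'}\cdot\det\mr{ind}^{!}_{p^{!}})^{-2}$ is manifestly a square, and each of $\det T^{1/2}_{p',=0}$ and $\det T^{1/2,!}_{p^{!},=0}$ admits a canonical square root via the identity $T^{1/2}_{p,=0}+v^{-2}(T^{1/2}_{p,=0})^{\vee}=0$ as noted following Definition~\ref{stab_def}.

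The remaining point is the parity of the exponent of $v$. Since $\dim X$ and $\dim X^{!}$ are both even, Assumption~\ref{assumption_polarization} applied to both $T^{1/2}$ and $T^{1/2,!}$ gives
\begin{align*}
w(\det T^{1/2})+w(\det T^{1/2,!})\equiv\dim X+\dim X^{!}\equiv 0\pmod{2},
\end{align*}
so the total exponent of $v$ in $R_{p',p}$ is an even integer. Combining the three ingredients, $R_{p',p}^{1/2}$ is a well-defined single-valued element of the character ring (up to a global sign), so $i^{\ast}_{p'}\mf{S}^{\kappa,\kappa^{!}}_{X}(p)$ is a single-valued meromorphic function on $\mb{B}^{K}_{X}$, i.e.\ an element of $\mca{M}_{X}$. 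The main obstacle is purely bookkeeping: tracking the sign/parity contributions of the $v$-weights coming from $\mr{ind}$, $\mr{ind}^{-}$ and the fixed part $T^{1/2}_{p,=0}$ correctly, which is exactly where Assumption~\ref{assumption_polarization} is indispensable; this is the reason for introducing that assumption in the first place.
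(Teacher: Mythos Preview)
Your proof is correct and follows essentially the same approach as the paper: both reduce to the single-valuedness statement in Proposition~\ref{prop_factor_ell_std}, rewrite $i^{\ast}_{p'}\mf{L}(\kappa)$ as $v^{-w(\det T^{1/2})}\det T^{1/2}_{p'}$, and invoke Assumption~\ref{assumption_polarization} to ensure the residual power of $v$ is integral. Your version is simply more explicit, writing out the ratio $R_{p',p}$ via \eqref{eqn_pol_normal} and checking each factor separately, whereas the paper compresses this into the observation that $\sqrt{\det N_{p',-}/\det T^{1/2}_{p'}}$ is already known to be well-defined from the remark following Definition~\ref{stab_def}.
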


\begin{proof}
By Assumption~\ref{assumption_polarization} for $T^{1/2}$ and $T^{1/2,!}$, we obtain $w(\det T^{1/2})\equiv w(\det T^{1/2,!})\mod 2$. Since we have $\sqrt{i^{\ast}_{p'}\mf{L}(\kappa)\cdot i^{\ast}_{p^{!}}\mf{L}^{!}(\kappa^{!})}=v^{-\frac{w(\det T^{1/2})}{2}-\frac{w(\det T^{1/2,!})}{2}}\sqrt{\det T^{1/2}_{p'}\cdot\det T^{1/2,!}_{p^{!}}}$, the statement follows from Proposition~\ref{prop_factor_ell_std}. 
\end{proof}

We will identify $\mf{S}^{\kappa,\kappa^{!}}_{X}(p)$ and $\left(i^{\ast}_{p'}\mf{S}^{\kappa,\kappa^{!}}_{X}(p)\right)_{p'\in X^{H}}\in\mb{K}(X)_{\mr{loc}}$. By the triangular property of the elliptic stable bases, $\{\mf{S}^{\kappa,\kappa^{!}}_{X}(p)\}_{p\in X^{H}}$ forms a basis of $\mb{K}(X)_{\mr{loc}}$ over $\mca{M}_{X}$. 

\begin{dfn}\label{dfn_elliptic_bar_inv}
We define the $\mca{M}_{X}$-semilinear map $\beta^{ell}_{X}=\beta^{ell}_{X,\kappa}:\mb{K}(X)_{\mr{loc}}\rightarrow\mb{K}(X)_{\mr{loc}}$ by 
\begin{align}\label{eqn_elliptic_bar_inv}
\beta^{ell}_{X}(\mf{S}^{\kappa,\kappa^{!}}_{X}(p))=(-1)^{\frac{\dim X}{2}}\mf{S}^{\kappa,\kappa^{!}}_{-X}(p) 
\end{align}
for any $p\in X^{H}$. Here, $\mca{M}_{X}$-semilinear means $\beta^{ell}_{X}(f\cdot m)=\bar{f}\cdot\beta^{ell}_{X}(m)$ for any $f\in\mca{M}_{X}$ and $m\in\mb{K}(X)_{\mr{loc}}$. We also identified $P^{!}_{\mr{flop}}\oplus\bb{X}^{\ast}(H^{!}_{\mr{flop}})$ and $P^{!}\oplus\bb{X}^{\ast}(H)$ in order to take $\kappa^{!}$ for $X^{!}_{\mr{flop}}$ in the RHS. 
\end{dfn} 

We note that $\beta^{ell}_{X}$ does not depend on the choice of $\kappa^{!}$ because of the relation $\mf{L}^{!}_{\mr{flop}}(\kappa^{!})=\overline{\mf{L}^{!}(\kappa^{!})}$. We also conjecture that $\beta^{ell}_{X}$ does not depend on the data $\mf{C}$. As an evidence for this conjecture, we check it by assuming Conjecture~\ref{conj_ell_flop} and Conjecture~\ref{conj_ell_indep_chamber}. 

\begin{prop}\label{prop_ell_indep_chamber}
Assume Conjecture~\ref{conj_ell_flop} and Conjecture~\ref{conj_ell_indep_chamber}. The map $\beta^{ell}_{X}$ does not depend on the choice of $\mf{C}$. 
\end{prop}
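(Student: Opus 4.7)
The plan is to show that $\beta^{ell}_X$ is, up to diagonal normalization factors that are themselves $\mf{C}$-independent, essentially given by the matrix $\overline{\mb{M}_X}$. Since $\mb{M}_X$ is $\mf{C}$-independent by Conjecture~\ref{conj_ell_indep_chamber}, the conclusion will follow immediately.

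Concretely, I would work in the fixed-point basis of $\mb{K}(X)_{\mr{loc}}$ and write $\beta^{ell}_X(v)=B_X\,\bar v$ for an $\mca{M}_X$-linear matrix $B_X$. The defining relation (\ref{eqn_elliptic_bar_inv}) together with $\mca{M}_X$-antilinearity is equivalent to
\begin{align*}
B_X \;=\; (-1)^{\dim X/2}\;S_{-X}\cdot\overline{S_X}^{-1},
\end{align*}
where $S_X$ is the matrix whose $p$-th column expresses $\mf{S}^{\kappa,\kappa^{!}}_X(p)$ in the fixed-point basis. Decomposing $S_X = E\cdot\mb{S}_X\cdot D_X$ into the unnormalized elliptic stable basis matrix $\mb{S}_X$ and the diagonal normalizations $E$ (coming from $\sqrt{\mf{L}(\kappa)}$) and $D_X$ (coming from $\sqrt{i^{\ast}_{p^{!}}\mf{L}^{!}(\kappa^{!})}$), the central factor in $B_X$ is $\mb{S}_{-X}\cdot\overline{\mb{S}_X}^{-1}$.

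Next, I would apply Conjecture~\ref{conj_ell_flop}. One application gives $\overline{\mb{S}_X}^{-1} = (-1)^{(\dim X+\dim X^{!})/2}\,\mb{S}_{-X_{\mr{flop}}}^{-1}$, so the central factor becomes, up to sign, $\mb{S}_{-X}\cdot\mb{S}_{-X_{\mr{flop}}}^{-1}$. Using the identity $(-X)_{\mr{flop}} = -X_{\mr{flop}}$ (which follows from matching chambers and ample cones as in Section~4.2), this is exactly $\mb{M}_{-X}^{-1}$; alternatively, applying Conjecture~\ref{conj_ell_flop} once more to $X_{\mr{flop}}$ and using $(X_{\mr{flop}})_{\mr{flop}} = X$ identifies the same expression with $\overline{\mb{M}_X}$. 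By Conjecture~\ref{conj_ell_indep_chamber} one has $\mb{M}_{-X} = \mb{M}_X$, so the central factor is $\mf{C}$-independent.

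It then remains to handle the surrounding diagonal normalizations. The factor $E$ depends only on $\mf{L}$ and $\kappa$, neither of which involves $\mf{C}$, and $\overline{E}$ is controlled because $\bar{\cdot}$ only inverts the $\bb{S}$-parameter $v$. For $D_{-X}\overline{D_X}^{-1}$ one uses the identification $(-X)^{!} = (X^{!})_{\mr{flop}}$ (valid once $^!$ is treated as involutive on duality data) together with the flop relation $i^{\ast}_p\mf{L}_{\mr{flop}}(l) = \overline{i^{\ast}_p\mf{L}(l)}$ from the discussion preceding Conjecture~\ref{conj_flop_tangent}, after choosing $\kappa^{!,-X}$ to correspond to $\kappa^{!}$ under the flop of $X^{!}$. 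I expect this bookkeeping of the diagonal factors — carefully tracking the $\bb{S}$-weights through (\ref{eqn_dual_pair_S})--(\ref{eqn_S_wt}) on both $X$ and its dual so that every $\mf{C}$-dependent scalar cancels — to be the main technical obstacle of the proof; the conceptual content of the chamber-independence is entirely captured by the two assumed conjectures.
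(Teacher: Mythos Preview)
Your approach is correct and is essentially the same as the paper's: both express the map via a matrix of the form $S_{-X}\cdot\overline{S_X}^{-1}$ (the paper works with its inverse $\overline{\mf{S}}_{\mf{C}}\cdot\mf{S}_{-\mf{C}}^{-1}$), strip off the diagonal normalizations, apply Conjecture~\ref{conj_ell_flop} to turn $\overline{\mb{S}_X}$ into $\mb{S}_{-X_{\mr{flop}}}$, and recognize the result as $\mb{M}_X$ (with chamber $-\mf{C}$), which is chamber-independent by Conjecture~\ref{conj_ell_indep_chamber}.

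The one place where you overestimate the difficulty is the diagonal bookkeeping. No appeal to (\ref{eqn_dual_pair_S})--(\ref{eqn_S_wt}) is needed. The inner factors cancel identically: $D_{-X}$ is built from $i^{\ast}_{p^{!}}\mf{L}^{!}_{\mr{flop}}(\kappa^{!})=\overline{i^{\ast}_{p^{!}}\mf{L}^{!}(\kappa^{!})}$, which is exactly $\overline{D_X}$, so $D_{-X}\,\overline{D_X}^{-1}=1$. The outer factors $E$ and $\overline{E}$ are built from $\mf{L}(\kappa)$ and $\overline{\mf{L}(\kappa)}=\mf{L}_{\mr{flop}}(\kappa)$, neither of which depends on $\mf{C}$. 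So the ``main technical obstacle'' you anticipate is a one-line observation.
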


\begin{proof}
In this proof, we denote by $\beta_{\mf{C}}\coloneqq\beta^{ell}_{X}$. We take another chamber $\mf{C}'$ and define $\beta_{\mf{C}'}$ similarly by using this chamber. We also set $\mb{S}_{\mf{C}}\coloneqq\mb{S}_{X}$ and define $\mb{S}_{\mf{C}'}$ similarly by using $\mf{C}'$ instead of $\mf{C}$. Let us define the matrix $\mf{S}_{\mf{C}}\coloneqq(i^{\ast}_{p}\mf{S}^{\kappa,\kappa^{!}}_{X}(p'))_{p,p'\in X^{H}}$ and $\mf{S}_{\mf{C}'}$ similarly. If we identify $\mf{L}(\kappa)$ and the diagonal matrix $\mr{diag}((i^{\ast}_{p}\mf{L}(\kappa))_{p\in X^{H}})$, then we have 
\begin{align*}
\mf{S}_{\mf{C}}=\mf{L}(\kappa)^{-\frac{1}{2}}\cdot\mb{S}_{\mf{C}}\cdot\mf{L}^{!}_{\mf{C}}(\kappa^{!})^{-\frac{1}{2}}. 
\end{align*}
Here, $\mf{L}^{!}_{\mf{C}}(\kappa^{!})$ is the diagonal matrix defined similarly as $\mf{L}(\kappa)$ by using $\mf{L}^{!}(\kappa)$,  but since it depends on the choice of $\mf{C}$, we put the index to indicate the dependence. We note that $\mf{L}^{!}_{-\mf{C}}(\kappa^{!})=\overline{\mf{L}^{!}_{\mf{C}}(\kappa^{!})}$. By definition, we have 
\begin{align*}
\beta_{\mf{C}}(\mf{S}_{\mf{C}})=(-1)^{\frac{\dim X}{2}}\cdot\mf{S}_{-\mf{C}},
\end{align*}
where we understand that $\beta_{\mf{C}}$ is applied column by column. Since 
\begin{align*}
\beta_{\mf{C}'}(\mf{S}_{\mf{C}})=\beta_{\mf{C}'}(\mf{S}_{\mf{C}'}\cdot\mf{S}_{\mf{C}'}^{-1}\mf{S}_{\mf{C}})=(-1)^{\frac{\dim X}{2}}\cdot\mf{S}_{-\mf{C}'}\cdot\overline{\mf{S}}_{\mf{C}'}^{-1}\overline{\mf{S}}_{\mf{C}}, 
\end{align*}
the statement is equivalent to 
\begin{align*}
\overline{\mf{S}}_{\mf{C}}\cdot\mf{S}_{-\mf{C}}^{-1}=\overline{\mf{S}}_{\mf{C}'}\cdot\mf{S}_{-\mf{C}'}^{-1}.
\end{align*}
By (\ref{eqn_line_bundle_dual}) and Conjecture~\ref{conj_ell_flop}, we have 
\begin{align*}
\overline{\mf{S}}_{\mf{C}}\cdot\mf{S}_{-\mf{C}}^{-1}&=\overline{\mf{L}(\kappa)}^{-\frac{1}{2}}\cdot\overline{\mb{S}}_{\mf{C}}\cdot\overline{\mf{L}^{!}_{\mf{C}}(\kappa^{!})}^{-\frac{1}{2}}\cdot\mf{L}^{!}_{-\mf{C}}(\kappa^{!})^{\frac{1}{2}}\cdot\mb{S}_{-\mf{C}}^{-1}\cdot\mf{L}(\kappa)^{\frac{1}{2}}\\
&=(-1)^{\frac{\dim X}{2}+\frac{\dim X^{!}}{2}}\cdot\mf{L}_{\mr{flop}}(\kappa)^{-\frac{1}{2}}\cdot\mb{S}^{\mr{flop}}_{-\mf{C}}\cdot\mb{S}_{-\mf{C}}^{-1}\cdot\mf{L}(\kappa)^{\frac{1}{2}},
\end{align*}
where we set $\mb{S}^{\mr{flop}}_{-\mf{C}}=\mb{S}_{-X_{\mr{flop}}}$. Therefore, the statement follows from Conjecture~\ref{conj_ell_indep_chamber}. 
\end{proof}

We now explain the relation between elliptic and $K$-theoretic bar involutions. Take a generic slope $s\in P_{\bb{R}}$. By \cite[Proposition~3.6]{AO}, elliptic stable bases and $K$-theoretic stable bases are related by certain limit under $q\rightarrow0$. More precisely, we have 
\begin{align}\label{eqn_K_limit}
\lim_{q\rightarrow0}\left(\sqrt{\det T^{1/2}}^{-1}\cdot\Stab^{AO}_{\mf{C},T^{1/2}}(p)|_{z=q^{-s}}\right)=\Stab^{K}_{\mf{C},T^{1/2},s}(p),
\end{align}
where $z=q^{-s}$ means we specialize the K\"ahler parameter at $q^{-s}\in P\otimes_{\bb{Z}}\bb{C}^{\times}$. We note that (\ref{eqn_elliptic_bar_inv}) is equivalent to 
\begin{align*}
\beta^{ell}_{X}\left(\sqrt{\det T^{1/2}}^{-1}\cdot\Stab^{AO}_{\mf{C},T^{1/2}}(p)'\right)=(-1)^{\frac{\dim X}{2}}v^{w(\det T^{1/2})}\cdot\frac{\vartheta(N^{!,\mr{flop}}_{p^{!},-})}{\vartheta(\overline{N}^{!}_{p^{!},-})}\cdot\sqrt{\det T^{1/2}}^{-1}\cdot\Stab^{AO}_{-\mf{C},T^{1/2}}(p)',
\end{align*}
where we set $\Stab^{AO}_{\mf{C},T^{1/2}}(p)'\coloneqq\tau(\det T^{1/2},v)^{\ast}\Stab^{AO}_{\mf{C},T^{1/2}}(p)$. By using Conjecture~\ref{conj_ell_flop} and 
\begin{align*}
\lim_{q\rightarrow0}\frac{\vartheta(q^{\alpha}x)}{\vartheta(q^{\alpha})}=x^{-\lfloor\alpha\rfloor-\frac{1}{2}}
\end{align*}
for $\alpha\in\bb{R}\setminus\bb{Z}$, we obtain 
\begin{align*}
\lim_{q\rightarrow0}\left(\frac{\vartheta(N^{!,\mr{flop}}_{p^{!},-})|_{z=q^{-s}}}{\vartheta(\overline{N}^{!}_{p^{!},-})|_{z=q^{-s}}}\right)=\prod_{\beta\in\Psi_{+}(p)}v^{2\lfloor\langle s,\beta\rangle\rfloor+1}. 
\end{align*}
Since the limit in (\ref{eqn_K_limit}) does not change if we replace $\Stab^{AO}_{\mf{C},T^{1/2}}(p)$ by $\Stab^{AO}_{\mf{C},T^{1/2}}(p)'$, the $K$-theory limit $\beta^{K}_{T^{1/2},s}$ of $\beta^{ell}_{X}$ satisfies 
\begin{align*}
\beta^{K}_{T^{1/2},s}(\Stab^{K}_{\mf{C},T^{1/2},s}(p))=(-v)^{\frac{\dim X}{2}}\cdot v^{2a_{p}(T^{1/2},s)}\cdot\Stab^{K}_{-\mf{C},T^{1/2},s}(p). 
\end{align*} 
This is equivalent to our definition of $K$-theoretic bar involution and explains seemingly ad hoc normalizations in our definition of the $K$-theoretic standard bases. 

\section{Toric hyper-K\"ahler manifolds}

As an evidence for the main conjectures, we check all the conjectures stated in section 3 for the toric hyper-K\"ahler manifolds in this section. The conjectures stated in section 4 will be proved in the next section. 

\subsection{Preliminaries}

In this section, we briefly recall basic facts about toric hyper-K\"ahler manifolds introduced by Bielawski-Dancer \cite{BD}. For more detail, see for example \cite{BD,HS,Nag,P}. We first prepare some notations used in the following sections. We fix an integer $n\in\bb{Z}_{\geq1}$ and set $T\coloneqq(\bb{C}^{\times})^{n}$. We consider an exact sequence of algebraic tori of the form 
\begin{align}\label{eqn_exact_seq_tori}
1\rightarrow S\rightarrow T\rightarrow H\rightarrow 1,
\end{align}  
where $S\cong(\bb{C}^{\times})^{r}$ and $H\cong(\bb{C}^{\times})^{d}$ for some $r,d\in\bb{Z}_{\geq0}$ with $r+d=n$. Let  
\begin{align}\label{eqn_exact_seq_cocharacter}
0\rightarrow\bb{X}_{\ast}(S)\xrightarrow{^{t}\!\mf{b}}\bb{X}_{\ast}(T)\xrightarrow{\mf{a}}\bb{X}_{\ast}(H)\rightarrow 0
\end{align}
be the associated exact sequence. We fix an identification $\bb{X}_{\ast}(T)\cong\bb{Z}^{n}$ and take the standard basis $\{\varepsilon_{i}\}_{i=1}^{n}$. We set $\mf{a}_{i}\coloneqq \mf{a}(\varepsilon_{i})$ and assume that $\mf{a}_{i}\neq0$ for any $i=1,\ldots,n$. We also assume that $\mf{a}$ is unimodular, i.e., if $\{\mf{a}_{i_{1}},\ldots,\mf{a}_{i_{d}}\}$ is linearly independent, then they always generate $\bb{X}_{\ast}(H)$ over $\bb{Z}$. We set 
\begin{align*}
\bb{B}\coloneqq\left\{I\subset\{1,\ldots,n\}\mid\{\mf{a}_{i}\}_{i\in I}\mbox{ is a basis of }\bb{X}_{\ast}(H)\right\}.	
\end{align*}
We also consider the dual of the above exact sequence 
\begin{align}\label{eqn_exact_seq_character}
0\rightarrow\bb{X}^{\ast}(H)\xrightarrow{^{t}\!\mf{a}}\bb{X}^{\ast}(T)\xrightarrow{\mf{b}}\bb{X}^{\ast}(S)\rightarrow 0.
\end{align}
Let $\{\varepsilon_{i}^{\ast}\}_{i=1}^{n}\subset\bb{X}^{\ast}(T)$ be the dual basis of $\{\varepsilon_{i}\}_{i=1}^{n}$ and set $\mf{b}_{i}\coloneqq \mf{b}(\varepsilon_{i}^{\ast})$. We also assume that $\mf{b}_{i}\neq0$ for any $i=1,\ldots,n$. We note that the unimodularity of $\mf{a}$ is equivalent to the unimodularity of $\mf{b}$ and $\{\mf{b}_{j}\}_{j\in J}$ is a basis of $\bb{X}^{\ast}(S)$ if and only if $J^{c}\coloneqq\{1,\ldots,n\}\setminus J\in\bb{B}$. 

A subset equipped with a decomposition $C=C_{+}\sqcup C_{-}\subset\left\{1,\ldots, n\right\}$ is called signed circuit if $\left\{\mf{a}_{i}\right\}_{i\in C}$ is a minimal linearly dependent subset of $\left\{\mf{a}_{1},\ldots, \mf{a}_{n}\right\}$ and $\sum_{i\in C_{+}}\mf{a}_{i}-\sum_{i\in C_{-}}\mf{a}_{i}=0$. We note that by the unimodularity assumption, any minimal linear relations between $\mf{a}_{i}$'s can be written in this form up to scalar multiplication. For a signed circuit $C$, we denote by $\beta_{C}=(\beta_{1},\ldots, \beta_{n})\in\bb{X}_{\ast}(T)$ the element defined by 
\begin{align*}
\beta_{i}=\begin{cases}0\mbox{ if }i\notin C\\1\mbox{ if }i\in C_{+}\\-1\mbox{ if }i\in C_{-}.\end{cases}
\end{align*}
By definition, we have $\beta_{C}\in\Ker(\mf{a})=\bb{X}_{\ast}(S)$. We similarly define the notion of signed cocircuit using $\mf{b}_{i}$ instead of $\mf{a}_{i}$. For a signed cocircuit $C^{\vee}$, we define $\alpha_{C^{\vee}}\in\bb{X}^{\ast}(H)$ in the same way as $\beta_{C}$.  

We consider the natural $T$-action on $T^{\ast}\bb{C}^{n}$ given by 
\begin{align*}
(t_{1},\ldots,t_{n})\cdot (x_{1},\ldots,x_{n},y_{1},\ldots,y_{n})=(t_{1}x_{1},\ldots,t_{n}x_{n},t_{1}^{-1}y_{1},\ldots,t_{n}^{-1}y_{n}),
\end{align*}
where $(t_{1},\ldots,t_{n})\in T$, $(x_{1},\ldots,x_{n})$ is a point in $\bb{C}^{n}$, and $(y_{1},\ldots,y_{n})$ is a point in the cotangent fiber. This action is Hamiltonian with respect to the standard symplectic structure on $T^{\ast}\bb{C}^{n}$ and its moment map is given by $\mu_{n}(x,y)=\sum_{i=1}^{n}x_{i}y_{i}$. Let $\mu(x,y)\coloneqq\sum_{i=1}^{n}x_{i}y_{i}\mf{b}_{i}\in\mf{s}^{\ast}$ be the moment map for the $S$-action given by restriction, where $\mf{s}$ is the Lie algebra of $S$. We note that in the coordinate ring $\bb{C}[T^{\ast}\bb{C}^{n}]=\bb{C}[x_{1},\ldots,x_{n},y_{1},\ldots,y_{n}]$, the $S$-weights of $x_{i}$ and $y_{i}$ are given by $-\mf{b}_{i}$ and $\mf{b}_{i}$ respectively.  

We fix a generic element $\eta\in\bb{X}^{\ast}(S)$, where generic means for any circuit $C$, we have $\langle\eta,\beta_{C}\rangle\neq 0$. A point $(x,y)\in T^{\ast}\bb{C}^{n}$ is called $\eta$-semistable if there exists a positive integer $m$ and a polynomial $f\in\bb{C}[T^{\ast}\bb{C}^{n}]$ such that $f$ has $S$-weight $-m\eta$ and $f(p)\neq0$. Associated with these data, the toric hyper-K\"ahler manifold $X$ is defined by $X\coloneqq\mu^{-1}(0)^{\eta-\mr{ss}}/S$, where the superscript means the subset consisting of $\eta$-semistable points. We also define the Lawrence toric variety by $\mca{X}\coloneqq(T^{\ast}\bb{C}^{n})^{\eta-\mr{ss}}/S$. By the unimodularity assumption, these varieties are smooth and it is known that $\mathcal{X}$ is the universal Poisson deformation of $X$ in the sense of Namikawa, see \cite{Nag}. 

For $I\in\bb{B}$ and $j\in I^{c}$, we denote by $C^{I}_{j}$ the unique signed circuit contained in $I\cup\{j\}$ and $j\in C^{I}_{j,+}$ and we set $\beta^{I}_{j}\coloneqq\beta_{C^{I}_{j}}\in\bb{X}_{\ast}(S)$. We similarly define $\alpha^{I}_{i}\in\bb{X}^{\ast}(H)$ for $I\in\mathbb{B}$ and $i\in I$ using signed cocircuit contained in $I^{c}\cup\{i\}$. We note the following identity for any $i\in I$ and $j\in I^{c}$:
\begin{align}\label{eqn_alpha_beta}
\langle\alpha^{I}_{i},\mf{a}_{j}\rangle=-\langle\beta^{I}_{j},\mf{b}_{i}\rangle.
\end{align}
We decompose $I=I_{+}\sqcup I_{-}$ and $I^{c}=I^{c}_{+}\sqcup I^{c}_{-}$, where we set $I_{\pm}:=\{i\in I\mid \pm\langle\xi,\alpha^{I}_{i}\rangle>0\}$ and $I^{c}_{\pm}:=\{j\in I^{c}\mid \pm\langle\eta,\beta^{I}_{j}\rangle>0\}$. We note that these decompositions depend on the choice of $\xi$ and $\eta$, but we omit the dependence from the notation.  

\begin{lemma}\label{lem_toric_stability}
A point $(x,y)\in T^{\ast}\bb{C}^{n}$ is $\eta$-semistable if and only if there exists $I\in\bb{B}$ such that $x_{j}\neq0$ ($\forall j\in I^{c}_{+}$) and $y_{j}\neq0$ ($\forall j\in I^{c}_{-}$).
\end{lemma}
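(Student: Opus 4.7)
The plan is to translate the GIT definition of $\eta$-semistability into a condition on the images $\mf{b}_i\in\bb{X}^{\ast}(S)$, and then use unimodularity of $\mf{b}$ together with genericity of $\eta$ to produce a basis $I^{c}$ with the required sign pattern. First I would reduce to monomials: any polynomial $f\in\bb{C}[T^{\ast}\bb{C}^{n}]$ of $S$-weight $-m\eta$ decomposes into monomials of that same weight (weights of $x_{i}, y_{i}$ are $-\mf{b}_{i}, \mf{b}_{i}$), and if $f(x,y)\neq0$ then one such monomial does not vanish. Hence $(x,y)$ is $\eta$-semistable iff there exist integers $a_{i},d_{i}\geq 0$ and $m>0$ with $\sum_{i}(d_{i}-a_{i})\mf{b}_{i}=-m\eta$, $x_{i}\neq 0$ whenever $a_{i}>0$, and $y_{i}\neq 0$ whenever $d_{i}>0$. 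Writing $A\coloneqq\{i\mid x_{i}\neq 0\}$ and $D\coloneqq\{i\mid y_{i}\neq 0\}$, semistability is equivalent to $\eta$ lying (up to a positive rational scalar) in the rational polyhedral cone
\[
\sigma_{A,D}\coloneqq\mr{cone}\bigl(\mf{b}_{i}\mid i\in A\bigr)+\mr{cone}\bigl(-\mf{b}_{i}\mid i\in D\bigr)\subset\bb{X}^{\ast}(S)\otimes_{\bb{Z}}\bb{R}.
\]

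For the ``only if'' direction, assume $\eta\in\sigma_{A,D}$ (after rescaling). By Carath\'eodory's theorem applied in the $r$-dimensional space $\bb{X}^{\ast}(S)\otimes_{\bb{Z}}\bb{R}$, I can write $\eta$ as a strictly positive combination of at most $r$ generators from $\{\mf{b}_{i}\mid i\in A\}\cup\{-\mf{b}_{i}\mid i\in D\}$. Genericity of $\eta$ (i.e.\ $\langle\eta,\beta_{C}\rangle\neq 0$ for every signed circuit $C$) forces the selected generators to be linearly independent and forces each index $i$ to appear with only one sign; unimodularity of $\mf{b}$ then promotes this to a genuine $\bb{Z}$-basis of $\bb{X}^{\ast}(S)$. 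Such a basis is of the form $\{\mf{b}_{j}\}_{j\in I^{c}}$ for a unique $I\in\bb{B}$, with the chosen index set splitting as $J_{A}\sqcup J_{D}$ where $J_{A}\subseteq A$ and $J_{D}\subseteq D$. The key point is then to identify $J_{A}=I^{c}_{+}$ and $J_{D}=I^{c}_{-}$: using the identity $\langle\beta^{I}_{j},\mf{b}_{i}\rangle=\delta_{ij}$ for $i,j\in I^{c}$ (which follows from the definition of $\beta^{I}_{j}$ via (\ref{eqn_alpha_beta}) and the structure of the signed circuit), the unique expansion $\eta=\sum_{j\in I^{c}}c_{j}\mf{b}_{j}$ satisfies $c_{j}=\langle\eta,\beta^{I}_{j}\rangle$, so $c_{j}>0$ exactly on $I^{c}_{+}$ and $c_{j}<0$ exactly on $I^{c}_{-}$, matching the sign pattern of our positive combination.

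The ``if'' direction is then the easy converse: given $I\in\bb{B}$ with the stated non-vanishing, use the same expansion $\eta=\sum_{j\in I^{c}}\langle\eta,\beta^{I}_{j}\rangle\mf{b}_{j}$ to define
\[
f=\prod_{j\in I^{c}_{+}}x_{j}^{\langle\eta,\beta^{I}_{j}\rangle}\prod_{j\in I^{c}_{-}}y_{j}^{-\langle\eta,\beta^{I}_{j}\rangle},
\]
which has $S$-weight $-\eta$ and does not vanish at $(x,y)$ by assumption on the supports.

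The main obstacle is the Carath\'eodory step in the ``only if'' direction: I must ensure that a potentially complicated positive combination involving both $\mf{b}_{i}$ and $-\mf{b}_{i}$ for various $i$, with arbitrary supports inside $A\cup D$, can always be pruned to an expression indexed by the complement of some element of $\bb{B}$. Both the unimodularity hypothesis on $\mf{b}$ and the genericity hypothesis on $\eta$ enter here essentially, the latter to rule out faces of $\sigma_{A,D}$ cut out by circuit relations and the former to guarantee that the resulting linearly independent subset is actually a $\bb{Z}$-basis. Once that combinatorial step is in hand, the rest is bookkeeping via the duality between $\{\beta^{I}_{j}\}_{j\in I^{c}}$ and $\{\mf{b}_{j}\}_{j\in I^{c}}$.
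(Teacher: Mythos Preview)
Your proposal is correct and follows the same overall architecture as the paper: reduce to a single monomial of $S$-weight $-m\eta$, translate the nonvanishing condition into a sign constraint on a relation $\sum_{i}(m_{i}-n_{i})\mf{b}_{i}=m\eta$, and for the converse exhibit exactly the same monomial $f=\prod_{j\in I^{c}_{+}}x_{j}^{\langle\eta,\beta^{I}_{j}\rangle}\prod_{j\in I^{c}_{-}}y_{j}^{-\langle\eta,\beta^{I}_{j}\rangle}$.

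The one genuine difference is in the combinatorial core of the ``only if'' direction. The paper isolates this as Lemma~\ref{lem_lp_1} and proves it by a sliding argument: starting from any real solution of $\sum_{i} m_{i}\mf{b}_{i}=\eta$, one shows $I_{0}=\{i\mid m_{i}=0\}$ is independent (else a circuit $C\subset I_{0}$ gives $\langle\eta,\beta_{C}\rangle=0$), and if $I_{0}\notin\bb{B}$ one perturbs along a direction in $\Ker(\mf{b})$ to push an additional coefficient to zero, iterating until $I_{0}\in\bb{B}$. You instead invoke Carath\'eodory's theorem to select $\le r$ linearly independent generators from $\{\pm\mf{b}_{i}\}$ expressing $\eta$ with positive coefficients, then use genericity to force exactly $r$ (if fewer, extend to some $I^{c}$ and get $\langle\eta,\beta^{I}_{j}\rangle=0$ for the added $j$, contradicting genericity), and finally use the duality $\langle\beta^{I}_{j},\mf{b}_{i}\rangle=\delta_{ij}$ on $I^{c}$ to match signs with $I^{c}_{\pm}$. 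Both arguments use genericity at the same spot---to rule out $\eta$ lying on a circuit hyperplane---and unimodularity to turn a real basis into an element of $\bb{B}$. Your route is slightly more economical in that it cites a standard convexity result rather than running the perturbation by hand; the paper's route is more self-contained and makes the role of the circuit condition a bit more explicit at each step.
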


\begin{proof}
Let $p=(x,y)$ be a $\eta$-semistable point and $f\in\bb{C}[x,y]$ be a polynomial with $S$-weight $-m\eta$ and $f(p)\neq0$ for some $m\in\bb{Z}_{>0}$. We may assume that $f=\prod_{i}x_{i}^{m_{i}}y_{i}^{n_{i}}$ is a monomial. Then we have $\sum_{i}(m_{i}-n_{i})\mf{b}_{i}=m\eta$ and Lemma~\ref{lem_lp_1} implies that there exists $I\in\bb{B}$ such that $\pm(m_{j}-n_{j})>0$ for any $j\in I^{c}_{\pm}$. This implies that $x_{j}\neq0$ for any $j\in I^{c}_{+}$ and $y_{j}\neq0$ for any $j\in I^{c}_{-}$. Conversely, if $p\in T^{\ast}\bb{C}^{n}$ satisfies the latter condition, then $f=\prod_{j\in I^{c}_{+}}x_{j}^{\langle\eta,\beta^{I}_{j}\rangle}\cdot\prod_{j\in I^{c}_{-}}y_{j}^{-\langle\eta,\beta^{I}_{j}\rangle}$ has $S$-weight $-\eta$ and $f(p)\neq0$. 
\end{proof}

\begin{lemma}\label{lem_lp_1}
If we have $\sum_{i}m_{i}\mf{b}_{i}=\eta$ for some $m_{i}\in\bb{R}$, then there exists an $I\in\bb{B}$ such that $\pm m_{j}>0$ for any $j\in I^{c}_{\pm}$. 
\end{lemma}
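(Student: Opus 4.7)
The plan is to unpack the combinatorial meaning of $I^c_\pm$ in terms of basis expansions of $\eta$, and then apply a sign-preserving support reduction of Carath\'eodory type to the given vector $m$.

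First, I would verify that for any $I\in\bb{B}$, the family $\{\beta^I_j\}_{j\in I^c}\subset\bb{X}_{\ast}(S)\otimes\bb{R}$ is the basis dual to $\{\mf{b}_j\}_{j\in I^c}\subset\bb{X}^{\ast}(S)\otimes\bb{R}$ under the natural pairing. This is a direct unwinding of the definition of $\beta^I_j$: as an element of $\bb{X}_{\ast}(T)$ it has its $j$-th coordinate equal to $1$ and all of its other nonzero coordinates supported in $I$, so pairing with $\mf{b}_k$ (the image of $\varepsilon_k^\ast$ in $\bb{X}^{\ast}(S)$) yields $\delta_{jk}$ for every $k\in I^c$. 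Writing the unique basis expansion $\eta=\sum_{j\in I^c}c^I_j\mf{b}_j$ then gives $c^I_j=\langle\eta,\beta^I_j\rangle$, and genericity of $\eta$ is precisely the statement that every $c^I_j$ is nonzero, so $I^c=I^c_+\sqcup I^c_-$. The conclusion of the lemma thus becomes the assertion that there exists $I\in\bb{B}$ with $\mathrm{sign}(m_j)=\mathrm{sign}(c^I_j)$ for every $j\in I^c$.

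The key step is a sign-preserving support reduction of the given $m$. Whenever $\{\mf{b}_i\}_{i\in\Supp(m)}$ is linearly dependent, I would pick a nonzero relation $\sum_{i\in\Supp(m)}d_i\mf{b}_i=0$; after possibly replacing $d$ by $-d$, some $d_{i_0}m_{i_0}>0$, so with $t^*:=\min\{m_i/d_i : d_im_i>0\}>0$ the vector $m-t^*d$ still satisfies $\sum(m-t^*d)_i\mf{b}_i=\eta$, has strictly smaller support, and has every remaining coordinate of the same sign as in $m$ (for coordinates with $d_im_i\le 0$, one directly checks that $m_i-t^*d_i$ keeps the sign of $m_i$). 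Iterating, I arrive at a vector $m'$ with $\sum m'_i\mf{b}_i=\eta$, with $\{\mf{b}_j\}_{j\in\Supp(m')}$ linearly independent, and with $\mathrm{sign}(m'_j)=\mathrm{sign}(m_j)$ for every $j\in\Supp(m')$.

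Finally, the genericity of $\eta$ excludes $\eta$ from the span of any proper subfamily of a basis chosen from the $\mf{b}_j$'s (otherwise some $c^I_j$ would vanish for a suitable $I\in\bb{B}$, contradicting genericity), so in fact $|\Supp(m')|=r$ and $I:=\Supp(m')^c\in\bb{B}$. By uniqueness of the basis expansion, $m'_j=c^I_j$ for every $j\in I^c$, hence $\mathrm{sign}(m_j)=\mathrm{sign}(m'_j)=\mathrm{sign}(c^I_j)$, which is the required conclusion. The main obstacle is the support-reduction step: one must check carefully that the minimal-$t^*$ choice simultaneously preserves the sign of every remaining nonzero coordinate and removes at least one coordinate from the support, which is the heart of the signed Carath\'eodory argument.
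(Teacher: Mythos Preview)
Your proof is correct and follows essentially the same approach as the paper's. Both arguments perform a sign-preserving line search along a linear relation among the $\mf{b}_i$ supported on $\Supp(m)$ until the support becomes a basis; you phrase this as shrinking $\Supp(m)$ via a signed Carath\'eodory step, while the paper phrases it dually as enlarging $I_0=\{i:m_i=0\}$ using a $\lambda\in\mf{h}^*_{\bb{R}}$ orthogonal to $\{\mf{a}_i\}_{i\in I_0}$ (which produces exactly the same relation, since $\ker\mf{b}=\mathrm{Im}\,{}^t\!\mf{a}$), and invokes genericity at the start rather than at the end.
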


\begin{proof}
We set $I_{0}\coloneqq\{i\mid m_{i}=0\}$. If $\{\mf{a}_{i}\}_{i\in I_{0}}$ is linearly dependent, then there exists a circuit $C\subset I_{0}$ and we obtain $\langle\eta,\beta_{C}\rangle=0$, which contradicts the genericity of $\eta$. Hence	$\{\mf{a}_{i}\}_{i\in I_{0}}$ is linearly independent. If $I_{0}\in\bb{B}$, then we have $m_{j}=\langle\eta,\beta^{I_{0}}_{j}\rangle$ and the statement follows. If $I_{0}\notin\bb{B}$, then there exists nonzero $\lambda\in\mf{h}_{\bb{R}}^{\ast}$ such that $\langle\lambda,\mf{a}_{i}\rangle=0$ for any $i\in I_{0}$. Since we have $\sum_{i}\langle\lambda,\mf{a}_{i}\rangle\mf{b}_{i}=0$, we may replace $m_{i}$ by $m_{j}^{t}\coloneqq m_{i}+t\langle\lambda,\mf{a}_{i}\rangle$ for sufficiently small $t\in\bb{R}$ without changing the sign of $m_{j}$ for $j\in I_{0}^{c}$. By taking a minimal $t$ such that some $m^{t}_{j}$ becomes 0, we obtain $t'\in\bb{R}$ such that $\pm m^{t'}_{j}\geq0$ for any $j$ with $\pm m_{j}>0$ and $I_{1}\coloneqq\{i\mid m^{t'}_{i}=0\}\supsetneq I_{0}$. By continuing this process, we obtain $I\in\bb{B}$ such that $I\supset I_{0}$ and $\pm m_{j}>0$ for any $j\in I^{c}_{\pm}$. 
\end{proof}

For $I\in\bb{B}$, we set $\widetilde{\mca{U}}_{I}\coloneqq\{(x,y)\in T^{\ast}\bb{C}^{n}\mid x_{j}\neq0\,(\forall j\in I^{c}_{+}),y_{j}\neq0\,(\forall j\in I^{c}_{-})\}$, $\mca{U}_{I}\coloneqq\widetilde{\mca{U}}_{I}/S\subset\mca{X}$, and $U_{I}\coloneqq(\mu^{-1}(0)\cap\widetilde{\mca{U}}_{I})/S\subset X$. By Lemma~\ref{lem_toric_stability}, $\{\mca{U}_{I}\}_{I\in\bb{B}}$ and $\{U_{I}\}_{I\in\bb{B}}$ are Zariski open affine coverings of $\mca{X}$ and $X$ respectively. For $i\in I$, we set 
\begin{align*}
x^{I}_{i}\coloneqq x_{i}\prod_{j\in I^{c}_{+}}x_{j}^{\langle\alpha^{I}_{i},\mf{a}_{j}\rangle}\prod_{j\in I^{c}_{-}}y_{j}^{-\langle\alpha^{I}_{i},\mf{a}_{j}\rangle}, \,y^{I}_{i}\coloneqq y_{i}\prod_{j\in I^{c}_{+}}x_{j}^{-\langle\alpha^{I}_{i},\mf{a}_{j}\rangle}\prod_{j\in I^{c}_{-}}y_{j}^{\langle\alpha^{I}_{i},\mf{a}_{j}\rangle}.
\end{align*}
These are elements of $\bb{C}[\mca{U}_{I}]=\bb{C}[\widetilde{\mca{U}}_{I}]^{S}$. It is easy to see that $\bb{C}[\mca{U}_{I}]=\bb{C}[x_{j}y_{j}\,(j\in I^{c}),x^{I}_{i},y^{I}_{i}\,(i\in I)]$ and $\bb{C}[U_{I}]=\bb{C}[x^{I}_{i},y^{I}_{i}\,(i\in I)]$. In particular, we have $U_{I}\cong\bb{C}^{2d}$ and $\dim X=2d$. Let $\mu_{\mca{X}}:\mca{X}\rightarrow\mf{s}^{\ast}$ be the morphism induced from the moment map $\mu$. This is well-defined since $\mu$ is $S$-invariant. The above description of open coverings gives the following. 

\begin{lemma}\label{lem_toric_deform}
The morphism $\mu_{\mca{X}}$ is flat and $X\cong\mu_{\mca{X}}^{-1}(0)$ as schemes.
\end{lemma}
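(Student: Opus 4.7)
The plan is to prove both assertions simultaneously by working on the affine open cover $\{\mca{U}_{I}\}_{I\in\bb{B}}$ of $\mca{X}$ introduced above, and exhibiting each restriction $\mu_{\mca{X}}|_{\mca{U}_{I}}$ as (up to linear change of coordinates) a trivial projection onto $\mf{s}^{\ast}\cong\bb{C}^{r}$, whose scheme-theoretic zero fiber is the corresponding chart $U_{I}$ of $X$. This immediately gives flatness (trivial bundles are flat) and the identification $\mu_{\mca{X}}^{-1}(0)\cong X$ at the scheme level by gluing.

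The key local computation is as follows. Fix $I\in\bb{B}$. A direct check using the explicit formulas for $x^{I}_{i}$ and $y^{I}_{i}$ shows that $x^{I}_{i}y^{I}_{i}=x_{i}y_{i}$ for all $i\in I$, so on $\mca{U}_{I}$ the pulled-back moment map reads
\begin{align*}
\mu=\sum_{i\in I}x^{I}_{i}y^{I}_{i}\,\mf{b}_{i}+\sum_{j\in I^{c}}x_{j}y_{j}\,\mf{b}_{j}\in\bb{C}[\mca{U}_{I}]\otimes_{\bb{C}}\mf{s}^{\ast}.
\end{align*}
Since $I\in\bb{B}$, the dual basis statement gives that $\{\mf{b}_{j}\}_{j\in I^{c}}$ is a $\bb{Z}$-basis of $\bb{X}^{\ast}(S)$, so for each $i\in I$ we may write $\mf{b}_{i}=\sum_{j\in I^{c}}c_{ij}\mf{b}_{j}$ (the coefficients $c_{ij}$ are, up to sign, the pairings occurring in (\ref{eqn_alpha_beta})). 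Introducing the new regular functions
\begin{align*}
u_{j}\coloneqq x_{j}y_{j}+\sum_{i\in I}c_{ij}x^{I}_{i}y^{I}_{i}\qquad(j\in I^{c}),
\end{align*}
one sees that $\bb{C}[\mca{U}_{I}]=\bb{C}[x^{I}_{i},y^{I}_{i}\,(i\in I);\,u_{j}\,(j\in I^{c})]$ is a polynomial algebra in these $2d+r$ free generators, and $\mu=\sum_{j\in I^{c}}u_{j}\mf{b}_{j}$. Thus $\mu_{\mca{X}}|_{\mca{U}_{I}}$ factors as $\mca{U}_{I}\cong\bb{A}^{2d}\times\bb{A}^{r}\twoheadrightarrow\bb{A}^{r}\cong\mf{s}^{\ast}$, the projection onto the $u$-coordinates (identified with $\mf{s}^{\ast}$ via the basis $\{\mf{b}_{j}\}_{j\in I^{c}}$). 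In particular $\mu_{\mca{X}}|_{\mca{U}_{I}}$ is a trivial $\bb{A}^{2d}$-bundle, hence flat, and its scheme-theoretic zero fiber is $\Spec\bb{C}[x^{I}_{i},y^{I}_{i}]=U_{I}$.

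Flatness of $\mu_{\mca{X}}$ follows by gluing the local flatness statements over the cover $\{\mca{U}_{I}\}_{I\in\bb{B}}$. For the isomorphism $X\cong\mu_{\mca{X}}^{-1}(0)$, the local identifications $\mu_{\mca{X}}^{-1}(0)\cap\mca{U}_{I}\cong U_{I}$ are compatible with the gluing data defining both $X$ and $\mca{X}$ (since both charts are pulled back from the open cover of $(T^{\ast}\bb{C}^{n})^{\eta\mr{-ss}}$ by the same loci $\widetilde{\mca{U}}_{I}$), so they assemble into a global scheme isomorphism. The main obstacle is really only bookkeeping: verifying the identity $x^{I}_{i}y^{I}_{i}=x_{i}y_{i}$ and the duality relation used to express $\mf{b}_{i}$ in terms of $\{\mf{b}_{j}\}_{j\in I^{c}}$; once these are in hand the flatness and the scheme-theoretic description of the zero fiber are immediate.
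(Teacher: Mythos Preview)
Your proof is correct and follows exactly the approach the paper intends: the paper states the lemma immediately after giving the explicit descriptions $\bb{C}[\mca{U}_{I}]=\bb{C}[x_{j}y_{j}\,(j\in I^{c}),x^{I}_{i},y^{I}_{i}\,(i\in I)]$ and $\bb{C}[U_{I}]=\bb{C}[x^{I}_{i},y^{I}_{i}\,(i\in I)]$, with the proof left implicit in the sentence ``The above description of open coverings gives the following.'' Your write-up simply fills in those omitted details---the change of coordinates to the $u_{j}$ making $\mu_{\mca{X}}|_{\mca{U}_{I}}$ a coordinate projection, and the gluing---so there is no substantive difference in approach.
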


Since $H$-weights of $x^{I}_{i}$ and $y^{I}_{i}$ (as functions on $U_{I}$) are $-\alpha^{I}_{i}$ and $\alpha^{I}_{i}$ respectively, $U_{I}$ has a unique $H$-fixed point which is denoted by $p_{I}$. In particular, we obtain a one-to-one correspondence between $\bb{B}$ and $X^{H}$ given by $I\mapsto p_{I}$.  

We consider the action of $\bb{S}\coloneqq\bb{C}^{\times}$ on $X$ or $\mca{X}$ induced by $\sigma\cdot (x,y)=(\sigma^{-1}x,\sigma^{-1}y)$, ($\sigma\in\bb{S}$, $(x,y)\in T^{\ast}\bb{C}^{n}$). With this $\bb{S}$-action, it is known that $X$ is a conical symplectic resolution. As in previous sections, we set $\bb{T}:=H\times\bb{S}$. We denote by $L$ the central fiber of $X\rightarrow\Spec(\bb{C}[X])$. 

We regard $\lambda\in\bb{X}^{\ast}(T)$ as a $1$-dimensional representation of $T\times\bb{S}$ with trivial $\mathbb{S}$-action and write the associated $\bb{T}$-equivariant line bundle on the quotients $X$ or $\mca{X}$ as $\mca{L}(\lambda)$ or $\widetilde{\mca{L}}(\lambda)$. We note that if $\lambda\in\bb{X}^{\ast}(H)$, then $\mca{L}(\lambda)$ is a trivial line bundle if we forget $H$-equivariant structure and the $H$-action is given by $\lambda$. This map gives an isomorphism $\Pic(X)\cong\bb{X}^{\ast}(S)$ under our assumption that $\mf{b}_{i}\neq0$. We also fix a splitting $\iota:\bb{X}^{\ast}(S)\rightarrow\bb{X}^{\ast}(T)$ of the natural surjection $\bb{X}^{\ast}(T)\twoheadrightarrow\bb{X}^{\ast}(S)$ and set $\mf{L}(l)\coloneqq\mca{L}(\iota(l))\in\Pic^{\bb{T}}(X)$ for any $l\in\bb{X}^{\ast}(S)$. We note that $\mf{L}(\eta)$ is an ample line bundle relative to the projective morphism $X\rightarrow\Spec(\bb{C}[X])$. For any $\lambda\in\mathbb{X}^{\ast}(H)$, we write $a^{\lambda}\in K_{H}(\mr{pt})$ the $K$-theory class corresponding to $\lambda$. The following result can be checked easily.

\begin{lemma}\label{lem_toric_line_general}
For any $\lambda\in\bb{X}^{\ast}(T)$ and $I\in\mathbb{B}$, we have the following identity in $K_{\bb{T}}(p_{I})$:
\begin{align*}
i^{\ast}_{p_{I}}\mca{L}(\lambda)=v^{\sum_{j\in I^{c}_{+}}\langle\lambda,\beta^{I}_{j}\rangle-\sum_{j\in I^{c}_{-}}\langle\lambda,\beta^{I}_{j}\rangle}\cdot a^{\lambda-\sum_{j\in I^{c}}\langle\lambda,\beta^{I}_{j}\rangle\varepsilon^{\ast}_{j}},
\end{align*}
where $\lambda-\sum_{j\in I^{c}}\langle\lambda,\beta^{I}_{j}\rangle\varepsilon^{\ast}_{j}$ is considered as an element of $\Ker(\mf{b})\cong\mathbb{X}^{\ast}(H)$. 
\end{lemma}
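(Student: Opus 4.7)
The plan is to use the GIT-quotient description of $X$ to produce, on the affine chart $\mca{U}_{I}\subset X$, an explicit equivariant trivialization of $\mca{L}(\lambda)|_{\mca{U}_{I}}$ coming from a monomial in the invertible coordinates on $\widetilde{\mca{U}}_{I}$, and then read off the $H\times\bb{S}$-character of its value at the unique fixed point $p_{I}\in U_{I}$.

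First, I would verify the bracketed claim that $\lambda':=\lambda-\sum_{j\in I^{c}}\langle\lambda,\beta^{I}_{j}\rangle\varepsilon^{\ast}_{j}$ actually lies in $\Ker(\mf{b})\cong\bb{X}^{\ast}(H)$. Since $\{\beta^{I}_{j_{0}}\}_{j_{0}\in I^{c}}$ is a $\bb{Z}$-basis of $\bb{X}_{\ast}(S)$, it suffices to pair $\lambda'$ with each $\beta^{I}_{j_{0}}$. By the very definition of the signed circuit $C^{I}_{j_{0}}$, one has $C^{I}_{j_{0}}\cap I^{c}=\{j_{0}\}$ with $j_{0}\in C^{I}_{j_{0},+}$, so $\langle\varepsilon^{\ast}_{j},\beta^{I}_{j_{0}}\rangle=\delta_{j,j_{0}}$ for $j\in I^{c}$; this gives $\langle\lambda',\beta^{I}_{j_{0}}\rangle=\langle\lambda,\beta^{I}_{j_{0}}\rangle-\langle\lambda,\beta^{I}_{j_{0}}\rangle=0$, as required.

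Next, since $\mca{L}(\lambda)=(\widetilde{\mca{U}}_{I}\times\bb{C}_{\lambda})/S$ over $\mca{U}_{I}$, an equivariant trivialization corresponds to a nowhere-vanishing function on $\widetilde{\mca{U}}_{I}$ whose $S$-weight matches the fiber weight of the bundle. Using that $x_{j}$ (for $j\in I^{c}_{+}$) and $y_{j}$ (for $j\in I^{c}_{-}$) are nowhere zero on $\widetilde{\mca{U}}_{I}$, I would set
\[
\widetilde{s}_{I,\lambda}\;:=\;\prod_{j\in I^{c}_{+}}x_{j}^{\langle\lambda,\beta^{I}_{j}\rangle}\;\cdot\;\prod_{j\in I^{c}_{-}}y_{j}^{-\langle\lambda,\beta^{I}_{j}\rangle},
\]
so that, using the $S$-weights $-\mf{b}_{i},\mf{b}_{i}$ of $x_{i},y_{i}$ recorded earlier, a short computation gives $S$-weight of $\widetilde{s}_{I,\lambda}$ equal to $-\sum_{j\in I^{c}}\langle\lambda,\beta^{I}_{j}\rangle\mf{b}_{j}=-\mf{b}(\lambda)$, which is exactly the condition for $\widetilde{s}_{I,\lambda}$ to descend to an equivariant trivialization of $\mca{L}(\lambda)|_{\mca{U}_{I}}$.

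Finally, one reads off $i^{\ast}_{p_{I}}\mca{L}(\lambda)$ as the $H\times\bb{S}$-character of this trivialization at $p_{I}$. The $T$-part combines the fiber weight $\lambda$ of $\bb{C}_{\lambda}$ with the $T$-weight $-\sum_{j\in I^{c}}\langle\lambda,\beta^{I}_{j}\rangle\varepsilon^{\ast}_{j}$ of $\widetilde{s}_{I,\lambda}$ (since the coordinate functions $x_{j},y_{j}$ have $T$-weights $-\varepsilon^{\ast}_{j},\varepsilon^{\ast}_{j}$), producing exactly $\lambda'\in\bb{X}^{\ast}(H)$; the $\bb{S}$-part comes from the fact that each $x_{j},y_{j}$ has $\bb{S}$-weight $+1$, yielding the exponent $\sum_{j\in I^{c}_{+}}\langle\lambda,\beta^{I}_{j}\rangle-\sum_{j\in I^{c}_{-}}\langle\lambda,\beta^{I}_{j}\rangle$ of $v$. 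No step is deep; the main hazard is maintaining consistent sign conventions when passing between the $T\times\bb{S}$-weight of a monomial function on $\widetilde{\mca{U}}_{I}$ and the weight of the section it defines on the descended bundle, since these differ exactly by the character $\lambda$ of the fiber.
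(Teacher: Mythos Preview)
Your argument is correct and is precisely the kind of direct verification the paper has in mind: the paper does not actually give a proof of this lemma, writing only that ``the following result can be checked easily.'' Your explicit monomial trivialization on the chart $\mca{U}_{I}$ is the natural way to make this check, and your computations of the $S$-, $T$-, and $\bb{S}$-weights of $\widetilde{s}_{I,\lambda}$ are all consistent with the paper's conventions (in particular with the stated $S$-weights $-\mf{b}_{i},\mf{b}_{i}$ of $x_{i},y_{i}$ and with the fact that $x_{i}$ is a section of $v^{-1}\mca{L}(\varepsilon^{\ast}_{i})$).
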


By using (\ref{eqn_alpha_beta}),  we obtain the following corollary of Lemma~\ref{lem_toric_line_general}.

\begin{corollary}\label{cor_toric_line_restr}
For $j\in I^{c}_{\pm}$, we have 
\begin{align*}
i^{\ast}_{p_{I}}\mca{L}(\varepsilon^{\ast}_{j})=v^{\pm 1}.
\end{align*}
For $i\in I$, we have 
\begin{align*}
i^{\ast}_{p_{I}}\mca{L}(\varepsilon^{\ast}_{i})=v^{-\langle\alpha^{I}_{i},\sum_{j\in I^{c}_{+}}\mf{a}_{j}-\sum_{j\in I^{c}_{-}}\mf{a}_{j}\rangle}a^{\alpha^{I}_{i}}.
\end{align*}
\end{corollary}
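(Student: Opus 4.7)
The plan is to derive both formulas directly from Lemma~\ref{lem_toric_line_general} by substituting $\lambda = \varepsilon^{\ast}_j$ for $j \in I^c$ and $\lambda = \varepsilon^{\ast}_i$ for $i \in I$ respectively, then simplifying using the combinatorial structure of $\beta^I_k$ and the key identity $(\ref{eqn_alpha_beta})$.

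First I would handle the case $j \in I^c_{\pm}$. The observation to exploit is that, by definition, the signed circuit $C^I_k$ is contained in $I \cup \{k\}$, so $\beta^I_k$ (viewed inside $\bb{X}_{\ast}(T)$ via $^t\!\mf{b}$) has its $\{1,\ldots,n\}$-components supported in $I \cup \{k\}$, with the $k$-th component equal to $+1$. Consequently $\langle \varepsilon^{\ast}_j, \beta^I_k\rangle = \delta_{jk}$ for $k \in I^c$. Plugging $\lambda=\varepsilon^{\ast}_j$ into Lemma~\ref{lem_toric_line_general}, the $v$-exponent collapses to $+1$ if $j \in I^c_+$ and $-1$ if $j \in I^c_-$, while the $a$-exponent becomes $\varepsilon^{\ast}_j - \varepsilon^{\ast}_j = 0$. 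This gives the first formula.

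For $i \in I$, the main move is to rewrite $\langle \varepsilon^{\ast}_i, \beta^I_k\rangle$ using $(\ref{eqn_alpha_beta})$. Since $^t\!\mf{b}$ is adjoint to $\mf{b}$, we have $\langle \varepsilon^{\ast}_i, \beta^I_k\rangle = \langle \mf{b}(\varepsilon^{\ast}_i), \beta^I_k\rangle = \langle \mf{b}_i, \beta^I_k\rangle$ in the $\bb{X}^{\ast}(S)$--$\bb{X}_{\ast}(S)$ pairing, and then $(\ref{eqn_alpha_beta})$ converts this to $-\langle \alpha^I_i, \mf{a}_k\rangle$. Substituting this identity into the $v$-exponent of Lemma~\ref{lem_toric_line_general} immediately yields $-\langle \alpha^I_i, \sum_{k\in I^c_+}\mf{a}_k - \sum_{k \in I^c_-}\mf{a}_k\rangle$, matching the desired formula.

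It remains to check that the $a$-exponent
$\varepsilon^{\ast}_i - \sum_{k \in I^c}\langle \varepsilon^{\ast}_i, \beta^I_k\rangle \varepsilon^{\ast}_k = \varepsilon^{\ast}_i + \sum_{k \in I^c}\langle \alpha^I_i, \mf{a}_k\rangle \varepsilon^{\ast}_k$
equals $\alpha^I_i$ as elements of $\Ker(\mf{b}) \cong \bb{X}^{\ast}(H)$. Writing $\alpha^I_i$ inside $\bb{X}^{\ast}(T)$ via $^t\!\mf{a}$ gives $\alpha^I_i = \sum_{k=1}^n \langle \alpha^I_i, \mf{a}_k\rangle \varepsilon^{\ast}_k$. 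Because the signed cocircuit defining $\alpha^I_i$ lies in $I^c \cup \{i\}$ with $i \in C^{\vee}_+$, the $i$-th component is $1$ and the components at $m \in I \setminus \{i\}$ vanish, so $\alpha^I_i = \varepsilon^{\ast}_i + \sum_{k \in I^c}\langle \alpha^I_i, \mf{a}_k\rangle \varepsilon^{\ast}_k$, exactly matching. The calculation is entirely routine; the only delicate point is tracking the duality chain $\bb{X}_{\ast}(S) \hookrightarrow \bb{X}_{\ast}(T)$ and $\bb{X}^{\ast}(H) \hookrightarrow \bb{X}^{\ast}(T)$ to correctly apply $(\ref{eqn_alpha_beta})$, and I expect no genuine obstacle.
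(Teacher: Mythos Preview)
Your proof is correct and follows exactly the approach the paper indicates: the paper simply states that the corollary follows from Lemma~\ref{lem_toric_line_general} by using (\ref{eqn_alpha_beta}), and you have carried out precisely that computation in full detail.
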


The equivariant $K$-theory class of the tangent bundle $T_{X}$ of $X$ is given by the following formula.
\begin{align*}
T_{X}=\sum_{i=1}^{n}v^{-1}\mca{L}(\varepsilon^{\ast}_{i})+\sum_{i=1}^{n}v^{-1}\mca{L}(-\varepsilon^{\ast}_{i})-r\cdot\mca{O}_{X}-rv^{-2}\cdot\mca{O}_{X}.
\end{align*}
By using Corollary~\ref{cor_toric_line_restr}, we obtain 
\begin{align}\label{eqn_toric_tangent_fiber}
i^{\ast}_{p_{I}}T_{X}=\sum_{i\in I}v^{-1-\langle\alpha^{I}_{i},\sum_{j\in I^{c}_{+}}\mf{a}_{j}-\sum_{j\in I^{c}_{-}}\mf{a}_{j}\rangle}\cdot a^{\alpha^{I}_{i}}+\sum_{i\in I}v^{-1+\langle\alpha^{I}_{i},\sum_{j\in I^{c}_{+}}\mf{a}_{j}-\sum_{j\in I^{c}_{-}}\mf{a}_{j}\rangle}\cdot a^{-\alpha^{I}_{i}}.
\end{align}
This implies that in the notation of section 3, the multiset of equivariant roots at $p_{I}$ is given by $\Phi(p_{I})=\{\pm\alpha^{I}_{i}\}_{i\in I}$ and $\overline{\Phi}=\{\alpha_{C^{\vee}}\mid C^{\vee}\mbox{ : signed cocircuit}\}$. We fix $\xi\in\bb{X}_{\ast}(H)$ satisfying $\langle\xi,\alpha_{C^{\vee}}\rangle\neq0$ for any cocircuit $C^{\vee}$ and take the chamber $\mf{C}$ to be the connected component of $\mf{h}_{\bb{R}}\setminus\cup_{\alpha\in\overline{\Phi}}\{x\in\mf{h}_{\bb{R}}\mid\langle x,\alpha\rangle=0\}$ containing $\xi$. With respect to this choice of chamber, we obtain 
\begin{align}\label{eqn_toric_normal_bundle}
N_{p_{I},-}=\sum_{i\in I_{-}}v^{-1-\langle\alpha^{I}_{i},\sum_{j\in I^{c}_{+}}\mf{a}_{j}-\sum_{j\in I^{c}_{-}}\mf{a}_{j}\rangle}\cdot a^{\alpha^{I}_{i}}+\sum_{i\in I_{+}}v^{-1+\langle\alpha^{I}_{i},\sum_{j\in I^{c}_{+}}\mf{a}_{j}-\sum_{j\in I^{c}_{-}}\mf{a}_{j}\rangle}\cdot a^{-\alpha^{I}_{i}}.
\end{align}
For the multiset of K\"ahler roots, we take $\Psi(p_{I})\coloneqq\{\pm\beta^{I}_{j}\}_{j\in I^{c}}$. We have $\overline{\Psi}=\{\beta_{C}\mid C\mbox{ : signed circuit}\}$. It is known (see for example \cite{Nag}) that the ample cone $\mf{A}\subset\Pic(X)\otimes_{\bb{Z}}\bb{R}\cong\mf{s}^{\ast}_{\bb{R}}$ is given by the connected component of $\mf{s}^{\ast}_{\bb{R}}\setminus\cup_{\beta\in\overline{\Psi}}\{x\in\mf{s}^{\ast}_{\bb{R}}\mid\langle x,\beta\rangle=0\}$ containing $\eta$. As in previous sections, $X$ is always equipped with these additional data $(X,\mf{C},\mf{A},\Phi,\Psi,\mf{L})$.

\subsection{Dual pairs}

In this section, we give a dual pair $(X^{!},\mf{C}^{!},\mf{A}^{!},\Phi^{!},\Psi^{!},\mf{L}^{!})$ for $(X,\mf{C},\mf{A},\Phi,\Psi,\mf{L})$. For $X^{!}$, this is given by a symplectic dual of $(X,\mf{C})$ in the sense of Braden-Licata-Proudfoot-Webster \cite{BLPW3}. I.e., $X^{!}$ is the toric hyper-K\"ahler manifolds defined by the exact sequence of algebraic tori which is dual to (\ref{eqn_exact_seq_tori}): 
\begin{align}\label{eqn_exact_seq_dual_tori}
1\rightarrow H^{\vee}\rightarrow T^{\vee}\rightarrow S^{\vee}\rightarrow 1.
\end{align} 
Here, the GIT parameter $\eta^{!}\in\bb{X}^{\ast}(H^{\vee})\cong\bb{X}_{\ast}(H)$ for $X^{!}$ is taken to be $\xi$. We note that the exact sequence of cocharacter lattices associated with (\ref{eqn_exact_seq_dual_tori}) 
\begin{align*}
0\rightarrow\bb{X}_{\ast}(H^{\vee})\xrightarrow{^{t}\!\mf{a}}\bb{X}_{\ast}(T^{\vee})\xrightarrow{\mf{b}}\bb{X}_{\ast}(S^{\vee})\rightarrow 0.
\end{align*}
is naturally isomorphic to the exact sequence (\ref{eqn_exact_seq_character}) and the exact sequence of character lattices
\begin{align*}
0\rightarrow\bb{X}^{\ast}(S^{\vee})\xrightarrow{^{t}\!\mf{b}}\bb{X}^{\ast}(T^{\vee})\xrightarrow{\mf{a}}\bb{X}^{\ast}(H^{\vee})\rightarrow 0.
\end{align*}
is isomorphic to (\ref{eqn_exact_seq_cocharacter}). In particular, the roles of $\mf{a}_{i}$ and $\mf{b}_{i}$ are exchanged. Hence the set parametrizing the $H^{!}\coloneqq S^{\vee}$-fixed points of $X^{!}$ is given by 
\begin{align*}
\bb{B}^{!}\coloneqq\left\{J\subset\{1,\ldots,n\}\mid\{\mf{b}_{j}\}_{j\in J}\mbox{ is a basis of }\bb{X}^{\ast}(S)\right\}.	
\end{align*}
The map $I\mapsto I^{c}$ gives a natural bijection $\bb{B}\cong\bb{B}^{!}$ and hence gives a bijection $X^{H}\cong (X^{!})^{H^{!}}$. We denote by $p^{!}_{I}$ the fixed point of $X^{!}$ corresponding to $I^{c}\in\bb{B}^{!}$ for any $I\in\bb{B}$. Under the natural identification $\bb{X}^{\ast}(H^{!})\cong\bb{X}_{\ast}(S)$, we obtain $\Phi^{!}(p^{!}_{I})=\{\pm\beta^{I}_{j}\}_{j\in I^{c}}$. For the chamber $\mf{C}^{!}\subset\mf{s}^{\ast}_{\bb{R}}$, we take $\mf{C}^{!}\coloneqq\mf{A}$. For the K\"ahler roots, we take $\Psi^{!}(p^{!}_{I})\coloneqq\{\pm\alpha^{I}_{i}\}_{i\in I}$. By our choice of GIT parameter, the ample cone $\mf{A}^{!}\subset\mf{h}_{\bb{R}}$ is given by $\mf{C}$. Therefore, the second condition of Definition~\ref{dual_pair} is satisfied. The order reversing property of the bijection $X^{H}\cong (X^{!})^{H^{!}}$ will be checked in the next section, see Corollary~\ref{cor_toric_order_reversing}. 

As in the case of $X$, we have a natural map $\mca{L}^{!}:\bb{X}^{\ast}(T^{\vee})\rightarrow\Pic^{\bb{T}^{!}}(X^{!})$. In order to define the lift $\mf{L}^{!}$, we need to take a splitting $\iota^{!}:\Pic(X^{!})\cong\bb{X}_{\ast}(H)\rightarrow\bb{X}_{\ast}(T)$ which is compatible with the splitting $\iota:\bb{X}^{\ast}(S)\rightarrow\bb{X}^{\ast}(T)$. Here, the compatibility means that for any $\lambda\in\bb{X}^{\ast}(S)$ and $\lambda^{!}\in\bb{X}_{\ast}(H)$, we have 
\begin{align}\label{eqn_toric_iota}
\langle\iota(\lambda),\iota^{!}(\lambda^{!})\rangle=0.
\end{align}
Existence of such a splitting is clear. We define $\mf{L}^{!}:\bb{X}_{\ast}(H)\rightarrow\Pic^{\bb{T}^{!}}(X^{!})$ by $\mf{L}^{!}(\lambda^{!})=\mca{L}^{!}(\iota^{!}(\lambda^{!}))$. 

\begin{prop}\label{prop_toric_dual_pair}
The pair $(X,\mf{C},\mf{A},\Phi,\Psi,\mf{L})$ and $(X^{!},\mf{C}^{!},\mf{A}^{!},\Phi^{!},\Psi^{!},\mf{L}^{!})$ forms a dual pair in the sense of Definition~\ref{dual_pair}. 
\end{prop}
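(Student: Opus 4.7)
The plan is to verify the four conditions of Definition~\ref{dual_pair} in turn. The order-reversing bijection in the first condition will follow from Corollary~\ref{cor_toric_order_reversing} in the next section, so I would defer it. The second condition holds essentially by construction: the identifications $\bb{X}_{\ast}(H)=P^{!}$ and $P=\bb{X}^{\ast}(S)=\bb{X}_{\ast}(H^{!})$ are the canonical ones from the dual exact sequence (\ref{eqn_exact_seq_dual_tori}); the equalities $\mf{C}=\mf{A}^{!}$ and $\mf{A}=\mf{C}^{!}$ come from our choices $\eta^{!}=\xi$ and $\mf{C}^{!}=\mf{A}$; and under $I\leftrightarrow I^{c}$ the roles of $\alpha^{I}_{i}$ and $\beta^{I}_{j}$ are swapped, giving $\Phi(p_{I})=\Psi(p^{!}_{I})$ and $\Psi(p_{I})=\Phi(p^{!}_{I})$.

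For the third condition, the central tool is Lemma~\ref{lem_toric_line_general}, which expresses the $H$- and $\bb{S}$-weights of $i^{\ast}_{p_{I}}\mf{L}(\lambda)$ in terms of $\iota(\lambda)$ and the circuits $\beta^{I}_{j}$. To verify (\ref{eqn_dual_pair_H}), I would pair the resulting expression with $\iota^{!}(\lambda^{!})$ and use the compatibility $\langle\iota(\lambda),\iota^{!}(\lambda^{!})\rangle=0$ from (\ref{eqn_toric_iota}) to cancel the ``diagonal'' term; expanding $\beta^{I}_{j}=\varepsilon_{j}-\sum_{k\in I}\langle\alpha^{I}_{k},\mf{a}_{j}\rangle\varepsilon_{k}$ and the dual analogue $\alpha^{I}_{i}=\varepsilon^{\ast}_{i}+\sum_{k\in I^{c}}\langle\alpha^{I}_{i},\mf{a}_{k}\rangle\varepsilon^{\ast}_{k}$ in the standard basis then reduces the claim to a termwise matching of coefficients, where the cross-term equality uses only symmetry of the pairing. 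For (\ref{eqn_dual_pair_S}) and (\ref{eqn_dual_pair_S_dual}), I would compute $\wt_{H^{!}}\det N^{!}_{p^{!}_{I},-}$ directly from the dual analogue of (\ref{eqn_toric_normal_bundle}), obtaining $\sum_{j\in I^{c}_{-}}\beta^{I}_{j}-\sum_{j\in I^{c}_{+}}\beta^{I}_{j}$, and then pair with $\iota(\lambda)$ to match the $\bb{S}$-weight from Lemma~\ref{lem_toric_line_general}.

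For the fourth condition, using (\ref{eqn_toric_normal_bundle}) and the fact that $|I|=d=\tfrac{1}{2}\dim X$, the quantity $\wt_{\bb{S}}\det N_{p_{I},-}+\tfrac{1}{2}\dim X$ telescopes to $\langle A_{I},C_{I}\rangle$, where $A_{I}\coloneqq\sum_{i\in I_{+}}\alpha^{I}_{i}-\sum_{i\in I_{-}}\alpha^{I}_{i}$ and $C_{I}\coloneqq\sum_{j\in I^{c}_{+}}\mf{a}_{j}-\sum_{j\in I^{c}_{-}}\mf{a}_{j}$; symmetrically $\wt_{\bb{S}^{!}}\det N^{!}_{p^{!}_{I},-}+\tfrac{1}{2}\dim X^{!}=\langle B_{I},D_{I}\rangle$ for the analogues $B_{I},D_{I}$ with $\beta$'s and $\mf{b}$'s. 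The equation (\ref{eqn_S_wt}) then reduces to $\langle A_{I},C_{I}\rangle=-\langle B_{I},D_{I}\rangle$, which is immediate termwise from (\ref{eqn_alpha_beta}).

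The main obstacle I anticipate is not any deep structural issue but careful bookkeeping: keeping track of the various identifications (in particular the embedding $\bb{X}^{\ast}(H)\subset\bb{X}^{\ast}(T)$ versus its lift via $\iota$, and the interplay of the $\pm$-decompositions of $I$ and $I^{c}$ with respect to $\xi$ and $\eta$) so that every pairing is computed consistently in the ambient lattice $\bb{X}^{\ast}(T)\times\bb{X}_{\ast}(T)$. Once all quantities are translated into this common framework, the only nontrivial input is the combinatorial duality (\ref{eqn_alpha_beta}), together with the orthogonality (\ref{eqn_toric_iota}) for the splittings, and each of the conditions reduces to a straightforward linear-algebraic identity.
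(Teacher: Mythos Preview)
Your proposal is correct and follows essentially the same route as the paper: both defer the order-reversing bijection to Corollary~\ref{cor_toric_order_reversing}, note that the second condition holds by construction, verify (\ref{eqn_dual_pair_S})--(\ref{eqn_dual_pair_S_dual}) and (\ref{eqn_S_wt}) by computing the normal-bundle determinants from (\ref{eqn_toric_normal_bundle}) and invoking (\ref{eqn_alpha_beta}), and reduce (\ref{eqn_dual_pair_H}) to a lattice identity using Lemma~\ref{lem_toric_line_general} together with the compatibility (\ref{eqn_toric_iota}). The only minor tactical difference is that the paper checks (\ref{eqn_dual_pair_H}) by restricting to the basis $\lambda^{!}=\mf{a}_{i}$ and using the lift $\varepsilon_{i}$, whereas you propose to expand $\beta^{I}_{j}$ and $\alpha^{I}_{i}$ directly in the standard basis; both come to the same thing.
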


\begin{proof}
We need to check (\ref{eqn_dual_pair_H}), (\ref{eqn_dual_pair_S}), (\ref{eqn_dual_pair_S_dual}), and (\ref{eqn_S_wt}) in our situation. By (\ref{eqn_toric_normal_bundle}), we obtain 
\begin{align*}
\det N_{p_{I},-}=v^{-d+\langle\sum_{i\in I_{+}}\alpha^{I}_{i}-\sum_{i\in I_{-}}\alpha^{I}_{i},\sum_{j\in I^{c}_{+}}\mf{a}_{j}-\sum_{j\in I^{c}_{-}}\mf{a}_{j}\rangle}\cdot a^{-\sum_{i\in I_{+}}\alpha^{I}_{i}+\sum_{i\in I_{-}}\alpha^{I}_{i}}. 
\end{align*}
In particular, we have 
\begin{align*}
\wt_{\bb{S}}\det N_{p_{I},-}+\frac{\dim X}{2}=\langle\sum_{i\in I_{+}}\alpha^{I}_{i}-\sum_{i\in I_{-}}\alpha^{I}_{i},\sum_{j\in I^{c}_{+}}\mf{a}_{j}-\sum_{j\in I^{c}_{-}}\mf{a}_{j}\rangle. 
\end{align*}
Similarly, we obtain 
\begin{align*}
\wt_{\bb{S}}\det N^{!}_{p^{!}_{I},-}+\frac{\dim X^{!}}{2}=\langle\sum_{j\in I^{c}_{+}}\beta^{I}_{j}-\sum_{j\in I^{c}_{-}}\beta^{I}_{j},\sum_{i\in I_{+}}\mf{b}_{i}-\sum_{i\in I_{-}}\mf{b}_{i}\rangle. 
\end{align*}
Hence the equation (\ref{eqn_S_wt}) follows from (\ref{eqn_alpha_beta}). 

Since we have 
\begin{align*}
\wt_{H^{!}}\det N^{!}_{p^{!}_{I},-}=-\sum_{j\in I^{c}_{+}}\beta^{I}_{j}+\sum_{j\in I^{c}_{-}}\beta^{I}_{j}, 
\end{align*}
the equation (\ref{eqn_dual_pair_S}) follows from Lemma~\ref{lem_toric_line_general}. The equation (\ref{eqn_dual_pair_S_dual}) can be proved similarly. Now we check (\ref{eqn_dual_pair_H}). By Lemma~\ref{lem_toric_line_general} applied for both $X$ and $X^{!}$, we have to check 
\begin{align}\label{eqn_compatibility_iota}
\langle\iota(\lambda)-\sum_{j\in I^{c}}\langle\lambda,\beta^{I}_{j}\rangle\varepsilon^{\ast}_{j},\lambda^{!}\rangle=-\langle\iota^{!}(\lambda^{!})-\sum_{i\in I}\langle\lambda^{!},\alpha^{I}_{i}\rangle\varepsilon_{i},\lambda\rangle
\end{align}
for any $\lambda\in\bb{X}^{\ast}(S)$, $\lambda^{!}\in\bb{X}_{\ast}(H)
$, and $I\in\bb{B}$. Since $\{\mf{a}_{i}\}_{i\in I}$ is a basis of $\bb{X}_{\ast}(H)$, it suffices to check (\ref{eqn_compatibility_iota}) for any $\lambda^{!}=\mf{a}_{i}$, $i\in I$. Since we have $\iota(\lambda)-\sum_{j\in I^{c}}\langle\lambda,\beta^{I}_{j}\rangle\varepsilon^{\ast}_{j}\in\bb{X}^{\ast}(H)$, we can calculate LHS of (\ref{eqn_compatibility_iota}) by using any lift of $\mf{a}_{i}$ to $\bb{X}_{\ast}(T)$. Therefore, we obtain 
\begin{align*}
\langle\iota(\lambda)-\sum_{j\in I^{c}}\langle\lambda,\beta^{I}_{j}\rangle\varepsilon^{\ast}_{j},\mf{a}_{i}\rangle=\langle\iota(\lambda)-\sum_{j\in I^{c}}\langle\lambda,\beta^{I}_{j}\rangle\varepsilon^{\ast}_{j},\varepsilon_{i}\rangle=\langle\iota(\lambda),\varepsilon_{i}\rangle.
\end{align*}
On the other hand, we may replace $\lambda$ by $\iota(\lambda)$ in the RHS of (\ref{eqn_compatibility_iota}) since $\iota^{!}(\lambda^{!})-\sum_{i\in I}\langle\lambda^{!},\alpha^{I}_{i}\rangle\varepsilon_{i}\in\bb{X}_{\ast}(S)$. Therefore, we obtain
\begin{align*}
-\langle\iota^{!}(\mf{a}_{i})-\sum_{i'\in I}\langle\mf{a}_{i},\alpha^{I}_{i'}\rangle\varepsilon_{i'},\lambda\rangle=-\langle\iota^{!}(\mf{a}_{i}),\iota(\lambda)\rangle+\langle\varepsilon_{i},\iota(\lambda)\rangle. 
\end{align*}
Hence the equation (\ref{eqn_compatibility_iota}) follows from (\ref{eqn_toric_iota}). This proves (\ref{eqn_line_bundle}). The proof of (\ref{eqn_line_bundle_dual}) is similar. 
\end{proof}

Proposition~\ref{prop_toric_dual_pair} implies that a maximal flop $X_{\mr{flop}}$ in the sense of section 4.2 is obtained in the same way as $X$ by replacing $\eta$ by $-\eta$. Since this exchanges $I^{c}_{+}$ and $I^{c}_{-}$, the formula (\ref{eqn_toric_tangent_fiber}) implies Conjecture~\ref{conj_flop_tangent}. 

\begin{corollary}\label{cor_toric_flop}
Conjecture~\ref{conj_flop_tangent} holds for toric hyper-K\"ahler manifolds. 
\end{corollary}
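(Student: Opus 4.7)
The plan is to unwind the definition of $X_{\mr{flop}}$ as the dual of $-X^{!}$ and identify it concretely as a toric hyper-K\"ahler manifold, then compare tangent weights at each fixed point directly via formula (\ref{eqn_toric_tangent_fiber}).

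First I would argue that $X_{\mr{flop}}$ is isomorphic to the toric hyper-K\"ahler manifold obtained from the same exact sequence of tori (\ref{eqn_exact_seq_tori}) with the GIT stability parameter $\eta$ replaced by $-\eta$. The reasoning: Proposition~\ref{prop_toric_dual_pair} shows that the duality pairing sends the GIT parameter of one side to a chamber on the dual side with $\mf{A} = \mf{C}^{!}$. Passing from $X^{!}$ to $-X^{!}$ reverses $\mf{C}^{!}$, which corresponds under duality to reversing the ample cone of $X$, i.e.\ replacing $\eta$ by $-\eta$. Since $X_{\mr{flop}}$ is then the dual of $-X^{!}$, applying the duality construction a second time recovers the toric hyper-K\"ahler manifold for the exact sequence (\ref{eqn_exact_seq_tori}) with parameter $-\eta$. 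This step is the main obstacle: one must verify that the bijection of fixed points used in section 4.2 (induced by the order-reversing bijection with $(X^{!})^{H^{!}}$) agrees with the natural identification via $\bb{B}$ of fixed points of the two toric hyper-K\"ahler manifolds built from the same data with opposite GIT parameters. This should follow from the Bruhat-type order statement (Corollary~\ref{cor_toric_order_reversing}) together with the symmetry of the construction.

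Once this identification is established, I would observe that the decomposition $I = I_{+}\sqcup I_{-}$ depends only on the choice of $\xi$ and is therefore unaltered, whereas the decomposition $I^{c} = I^{c}_{+}\sqcup I^{c}_{-}$ (defined by the sign of $\langle\eta,\beta^{I}_{j}\rangle$) is simply swapped when $\eta$ is replaced by $-\eta$. Let $m_{i}\coloneqq\langle\alpha^{I}_{i},\sum_{j\in I^{c}_{+}}\mf{a}_{j}-\sum_{j\in I^{c}_{-}}\mf{a}_{j}\rangle$. Then formula (\ref{eqn_toric_tangent_fiber}) applied to $X$ and to $X_{\mr{flop}}$ respectively yields
\begin{align*}
i^{\ast}_{p_{I}}T_{X} &= \sum_{i\in I}\left(v^{-1-m_{i}}a^{\alpha^{I}_{i}} + v^{-1+m_{i}}a^{-\alpha^{I}_{i}}\right),\\
i^{\ast}_{p_{I}}T_{X_{\mr{flop}}} &= \sum_{i\in I}\left(v^{-1+m_{i}}a^{\alpha^{I}_{i}} + v^{-1-m_{i}}a^{-\alpha^{I}_{i}}\right).
\end{align*}

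Finally, since the involution $\overline{(\cdot)}$ inverts only $\bb{S}$-weights (so $v\mapsto v^{-1}$ while $a^{\alpha^{I}_{i}}$ is fixed), a direct computation gives $v^{-2}\cdot\overline{i^{\ast}_{p_{I}}T_{X}} = \sum_{i\in I}(v^{-1+m_{i}}a^{\alpha^{I}_{i}} + v^{-1-m_{i}}a^{-\alpha^{I}_{i}})$, which matches $i^{\ast}_{p_{I}}T_{X_{\mr{flop}}}$ term by term. This verifies Conjecture~\ref{conj_flop_tangent} at every fixed point, and since tangent weights at $\bb{T}$-fixed points determine the $\bb{T}$-module structure of the tangent space, the proof is complete.
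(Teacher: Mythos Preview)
Your proposal is correct and follows essentially the same approach as the paper: identify $X_{\mr{flop}}$ as the toric hyper-K\"ahler manifold obtained by replacing $\eta$ with $-\eta$ (via Proposition~\ref{prop_toric_dual_pair}), observe that this swaps $I^{c}_{+}$ and $I^{c}_{-}$, and then read off the result from the explicit tangent formula~(\ref{eqn_toric_tangent_fiber}). Your write-up supplies more of the term-by-term verification than the paper does, and the ``main obstacle'' you flag about matching fixed-point identifications is exactly the content the paper packages into the single sentence before the corollary.
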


\subsection{$K$-theoretic standard bases}

In this section, we recall the description of $K$-theoretic stable bases for toric hyper-K\"ahler manifolds. In this paper, we always take the following polarization for the toric hyper-K\"ahler manifold $X$:
\begin{align}\label{eqn_toric_polarization}
T^{1/2}\coloneqq\sum_{i=1}^{n}v^{-1}\mca{L}(\varepsilon^{\ast}_{i})-r\cdot\mathcal{O}_{X}. 
\end{align}
In particular, we have $\det T^{1/2}=v^{-n}\mca{L}(\varepsilon^{\ast}_{1}+\cdots+\varepsilon^{\ast}_{n})$ and hence we obtain $w(\det T^{1/2})=-n$. Therefore, Assumption~\ref{assumption_polarization} is satisfied. Since we will not use other polarization below, we will omit $T^{1/2}$ from the notations. We set 
\begin{align*}
\mf{s}^{\ast}_{\mr{reg}}\coloneqq\left\{x\in\mf{s}^{\ast}_{\bb{R}}\mid \langle x,\beta_{C}\rangle\notin\bb{Z},\;\forall C\mbox{: circuit}\right\}
\end{align*}
and we take a slope parameter $s\in\mf{s}^{\ast}_{\mr{reg}}$. As in section 3.2, we consider the fractional line bundle $\mf{L}(s)$. By Lemma~\ref{lem_toric_line_general}, we obtain 
\begin{align}\label{eqn_toric_diff_slope}
\wt_{H}i^{\ast}_{p_{I}}\mf{L}(s)-\wt_{H}i^{\ast}_{p_{J}}\mf{L}(s)=\sum_{i\in J^{c}}\langle s,\beta^{J}_{i}\rangle\varepsilon^{\ast}_{i}-\sum_{j\in I^{c}}\langle s,\beta^{I}_{j}\rangle\varepsilon^{\ast}_{j}=-\sum_{j\in I^{c}\cap J}\langle s,\beta^{I}_{j}\rangle\alpha^{J}_{j}
\end{align}
for any $I,J\in\bb{B}$. Here, we have used $\beta^{J}_{i}=\sum_{j\in I^{c}}\langle\beta^{J}_{i},\mf{b}_{j}\rangle\beta^{I}_{j}$ and $\alpha^{J}_{j}=\varepsilon^{\ast}_{j}-\sum_{i\in I^{c}}\langle\beta^{J}_{i},\mf{b}_{j}\rangle\varepsilon^{\ast}_{i}$ in the second equality. In particular, this is not contained in $\bb{X}^{\ast}(H)$ if $I\neq J$. This implies the first part of Assumption~\ref{ass_stab_existence} and hence the uniqueness of $K$-theoretic stable bases. The existence of $K$-theoretic stable bases is proved in Proposition~\ref{prop_toric_K_stab}. 

We note that the coordinate function $x_{i}$ (resp. $y_{i}$) can be considered as a section of $v^{-1}\mca{L}(\varepsilon^{\ast}_{i})$ (resp. $v^{-1}\mca{L}(-\varepsilon^{\ast}_{i})$) on $X$. For $I\in\bb{B}$, let $L_{I}$ be the subvariety of $X$ defined by the equations $x_{i}=0$ ($i\in I_{-}$) and $y_{i}=0$ ($i\in I_{+}$). One can check that these defining equations form a regular sequence and the Koszul resolution gives the following. 
\begin{lemma}\label{lem_toric_resol_L_I}
Let $\mca{V}_{I}\coloneqq\bigoplus_{i\in I_{-}}v^{-1}\mca{L}(\varepsilon^{\ast}_{i})\oplus\bigoplus_{i\in I_{+}}v^{-1}\mca{L}(-\varepsilon^{\ast}_{i})$ be a vector bundle on $X$. We have the following exact sequence 
\begin{align*}
0\rightarrow{\textstyle\bigwedge^{d}}\mca{V}_{I}^{\vee}\rightarrow\ldots\rightarrow{\textstyle\bigwedge^{2}}\mca{V}_{I}^{\vee}\rightarrow\mca{V}_{I}^{\vee}\rightarrow\mca{O}_{X}\rightarrow\mca{O}_{L_{I}}\rightarrow0.
\end{align*}
\end{lemma}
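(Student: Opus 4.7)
The plan is to identify $L_{I}$ with the scheme-theoretic zero locus of a regular global section of $\mca{V}_{I}$, so that the standard Koszul complex of such a section furnishes the claimed exact sequence.

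First, since the coordinate $x_{i}$ (resp.\ $y_{i}$) on $T^{\ast}\bb{C}^{n}$ descends to a $\bb{T}$-equivariant global section of $v^{-1}\mca{L}(\varepsilon^{\ast}_{i})$ (resp.\ $v^{-1}\mca{L}(-\varepsilon^{\ast}_{i})$) on $X$, the collection $\{x_{i}\}_{i\in I_{-}}\cup\{y_{i}\}_{i\in I_{+}}$ assembles into a single $\bb{T}$-equivariant global section $s$ of the rank-$d$ vector bundle $\mca{V}_{I}$, and by definition $L_{I}=V(s)$ as a closed subscheme of $X$. The Koszul complex $\bigwedge^{\bullet}\mca{V}_{I}^{\vee}$ with differentials given by contraction against $s$ is precisely the complex written in the lemma, and it resolves $\mca{O}_{L_{I}}$ provided $s$ is a regular section. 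Since $X$ is smooth (hence Cohen-Macaulay) of dimension $2d$ and $\mca{V}_{I}$ has rank $d$, by the unmixedness theorem it suffices to show that $L_{I}$ has pure codimension $d$ in $X$.

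The upper bound $\dim L_{I}\le d$ I would derive by working upstairs on $T^{\ast}\bb{C}^{n}$. Let $\widetilde{L}_{I}\subset T^{\ast}\bb{C}^{n}$ be the $S$-stable coordinate subspace cut out by $x_{i}=0$ ($i\in I_{-}$) and $y_{i}=0$ ($i\in I_{+}$), of dimension $2n-d$. On $\widetilde{L}_{I}$ the moment map $\mu=\sum_{i}x_{i}y_{i}\mf{b}_{i}$ simplifies to $\sum_{j\in I^{c}}x_{j}y_{j}\mf{b}_{j}$, and since $I\in\bb{B}$ forces $\{\mf{b}_{j}\}_{j\in I^{c}}$ to be a basis of $\bb{X}^{\ast}(S)$, the equation $\mu=0$ on $\widetilde{L}_{I}$ is equivalent to $x_{j}y_{j}=0$ for every $j\in I^{c}$. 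Hence $\widetilde{L}_{I}\cap\mu^{-1}(0)$ is a union of coordinate subspaces each of dimension $2n-d-r=n$, and dividing by the $r$-dimensional $S$-action (after intersecting with the $\eta$-semistable locus) yields $\dim L_{I}\le d$. The opposite bound $\dim L_{I}\ge d$ at every point is automatic from Krull's Hauptidealsatz, since $L_{I}$ is globally cut out by $d$ sections of line bundles on the $2d$-dimensional smooth $X$.

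Combining these two bounds gives pure codimension $d$, and on the Cohen-Macaulay $X$ the unmixedness theorem then implies that the local components of $s$ form a regular sequence in $\mca{O}_{X,p}$ at every $p\in L_{I}$. The Koszul complex of such a regular section is exact, completing the proof. The main subtlety I foresee is in the upper-bound step: one has to track which coordinate-subspace components of $\widetilde{L}_{I}\cap\mu^{-1}(0)$ meet the $\eta$-semistable locus, using Lemma~\ref{lem_toric_stability}, and check that the $S$-quotient is well-defined of the claimed dimension on the surviving components.
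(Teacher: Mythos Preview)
Your proposal is correct and follows the same approach as the paper. The paper's entire argument is the sentence preceding the lemma, ``One can check that these defining equations form a regular sequence and the Koszul resolution gives the following''; your codimension computation upstairs on $T^{\ast}\bb{C}^{n}$ is a clean way to supply the details the paper leaves to the reader, and the subtlety you flag about which components meet the semistable locus is indeed the only thing requiring care (it is harmless for the dimension bound since the $S$-action is free on the semistable locus by smoothness of $X$).
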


In particular, we have 
\begin{align}\label{eqn_str_sheaf_L_I}
\mca{O}_{L_{I}}=\prod_{i\in I_{-}}(1-v\mca{L}(-\varepsilon^{\ast}_{i}))\prod_{i\in I_{+}}(1-v\mathcal{L}(\varepsilon^{\ast}_{i}))
\end{align}
in the equivariant $K$-theory of $X$. Moreover, one can easily check the following. 

\begin{lemma}\label{lem_toric_attr_closure}
For any $I\in\bb{B}$, we have $L_{I}=\overline{\Attr_{\mf{C}}(p_{I})}$. In particular, $p_{J}\in\overline{\Attr_{\mf{C}}(p_{I})}$ is equivalent to $I_{+}\cap J^{c}_{-}=I_{-}\cap J^{c}_{+}=\emptyset$ for any $I,J\in\bb{B}$.
\end{lemma}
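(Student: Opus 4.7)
The plan is to identify $\Attr_{\mf{C}}(p_I)\cap U_I$ with $L_I\cap U_I$ via a weight computation at $p_I$, and then globalize using closedness of $L_I$ together with a purity argument.

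\textbf{Local analysis.} At $p_I$, (\ref{eqn_toric_tangent_fiber}) gives the tangent weights $\{\pm\alpha^I_i\}_{i\in I}$, and the coordinates $x^I_i,y^I_i$ on $U_I$ have $H$-weights $-\alpha^I_i,\alpha^I_i$ respectively as functions. Using the convention that a weight-$w$ function transforms as $f(\xi(t)q)=t^{-\langle\xi,w\rangle}f(q)$, a point $q\in U_I$ lies in $\Attr_{\mf{C}}(p_I)$ if and only if its repelling coordinates vanish, i.e.\ $x^I_i(q)=0$ for $i\in I_-$ and $y^I_i(q)=0$ for $i\in I_+$. Since on $U_I$ the functions $x_i$ and $x^I_i$ (resp.\ $y_i$ and $y^I_i$) for $i\in I$ differ only by products of units (powers of $x_j,y_j$ for $j\in I^c_\pm$), this affine $d$-plane coincides with $L_I\cap U_I$.

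\textbf{Global inclusion.} I will next argue that $\overline{\Attr_{\mf{C}}(p_I)}\subseteq L_I$. The subvariety $L_I$ is closed and $H$-invariant, since each $x_i,y_j$ is an $H$-equivariant section of $\mca{L}(\pm\varepsilon^\ast_i)$. The Bia\l ynicki-Birula theorem, applicable here by semi-projectivity of $X$ over $\Spec(\bb{C}[X])$, yields $\Attr_{\mf{C}}(p_I)\cong\bb{C}^d$ with $\Attr_{\mf{C}}(p_I)\cap U_I$ as a dense open subset; hence $\Attr_{\mf{C}}(p_I)\subseteq L_I$ and $\overline{\Attr_{\mf{C}}(p_I)}\subseteq L_I$. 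For the reverse inclusion, Lemma~\ref{lem_toric_resol_L_I} shows the sections $\{x_i\,(i\in I_-),\,y_j\,(j\in I_+)\}$ form a regular sequence, making $L_I$ a Cohen-Macaulay complete intersection of pure dimension $d$. Thus $\overline{L_I\cap U_I}=\overline{\Attr_{\mf{C}}(p_I)}$ is a $d$-dimensional irreducible closed component of $L_I$, and the desired equality will follow once $L_I$ is shown irreducible.

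\textbf{Irreducibility and the criterion.} To see $L_I$ is irreducible, I will invoke the conical $\bb{S}$-action, which preserves $L_I$ (since each $x_i,y_j$ has $\bb{S}$-weight $1$) and contracts it to its $\bb{S}$-fixed subset inside the central fiber $L$; this reduces connectedness of $L_I$ to a finite combinatorial check using the charts $U_J$ for the $H$-fixed points $p_J\in L_I$. For the combinatorial criterion, a direct computation in $U_J$ yields $x_i(p_J)\neq 0$ exactly when $i\in J^c_+$ (since $x_i$ is a unit on $\widetilde{\mca{U}}_J$ there) and $x_i(p_J)=0$ for $i\in J\cup J^c_-$ (from $x^J_i(p_J)=0$ in the first case, and from the moment-map constraint, which expresses $x_iy_i$ as a quadratic combination of $\{x^J_ky^J_k\}_{k\in J}$ that vanishes at $p_J$, combined with $y_i$ being a unit, in the second). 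The dual statement holds for $y_i$. Hence $p_J\in L_I$ iff $I_-\cap J^c_+=\emptyset$ and $I_+\cap J^c_-=\emptyset$.

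\textbf{Main obstacle.} The delicate step will be verifying irreducibility of $L_I$: on a chart $U_J$ with $J\neq I$, the description of $L_I\cap U_J$ involves the non-linear moment-map relations above, so one must check combinatorially that no spurious irreducible component appears and that each nonempty $L_I\cap U_J$ is connected back to $L_I\cap U_I$. I expect this to be tractable using the combinatorial criterion just derived, but it is the main subtlety of the argument.
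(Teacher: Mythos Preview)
Your local analysis and the combinatorial criterion for $p_J\in L_I$ are correct, but the approach cannot be completed because the first assertion of the lemma is \emph{false} as stated: $L_I$ is not irreducible in general, so the step you flag as the ``main obstacle'' genuinely fails.

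Take $n=2$ with $\mf{a}_1=1,\ \mf{a}_2=-1$ and $\mf{b}_1=\mf{b}_2=1$, so $X=T^{\ast}\bb{P}^1$ with $\eta>0$. For $I=\{1\}$ and $\xi>0$ one has $I_+=\{1\}$, $I_-=\emptyset$, hence $L_I=\{y_1=0\}$. On the chart $U_{\{2\}}=\{x_1\neq 0\}$ the moment map gives $y_1=-x_2y_2$, so
\[
L_I\cap U_{\{2\}}=\{x_2y_2=0\}=\{x_2=0\}\cup\{y_2=0\},
\]
and globally $L_I$ is the union of the zero section $\bb{P}^1$ and the cotangent fibre $T^{\ast}_{[1:0]}\bb{P}^1$. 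On the other hand $\overline{\Attr_{\mf{C}}(p_I)}$ is just the zero section. Thus $L_I\supsetneq\overline{\Attr_{\mf{C}}(p_I)}$ and no irreducibility argument can rescue the equality.

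The paper gives no proof (it says ``one can easily check''), and this appears to be a misstatement. What the subsequent applications actually require is weaker:
\begin{itemize}
\item $\overline{\Attr_{\mf{C}}(p_I)}\subseteq L_I$, which you did prove;
\item $L_I$ is a union of Bia\l{}ynicki--Birula cells $\Attr_{\mf{C}}(p_J)$ for those $J$ with $p_J\in L_I$ (each such intersection $L_I\cap\Attr_{\mf{C}}(p_J)$ is an affine space, as you can read off from your chart computation);
\item the criterion $p_J\in L_I\Leftrightarrow I_+\cap J^c_-=I_-\cap J^c_+=\emptyset$, which your argument in the third paragraph establishes correctly.
\end{itemize}
Together these give $\Supp(\mca{O}_{L_I})\subseteq\bigsqcup_{p_J\in L_I}\Attr_{\mf{C}}(p_J)$, which is what Proposition~\ref{prop_toric_K_stab} needs. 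The sharper claim that the criterion characterises $p_J\in\overline{\Attr_{\mf{C}}(p_I)}$ (rather than $p_J\in L_I$) requires a separate argument, since the two sets differ.
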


\begin{corollary}\label{cor_toric_order_reversing}
For any $I,J\in\bb{B}$, $p_{J}\preceq_{\mf{C}}p_{I}$ is equivalent to $p^{!}_{I}\preceq_{\mf{C}^{!}}p^{!}_{J}$. 
\end{corollary}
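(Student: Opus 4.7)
The plan is to reduce the statement to the ``one-step'' case and observe that the defining conditions from Lemma~\ref{lem_toric_attr_closure} are literally symmetric in $(I,J)$ under the bijection $\mathbb{B}\cong\mathbb{B}^{!}$, $I\mapsto I^{c}$.

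First I would unpack the decompositions on the dual side. For $p^{!}_{J}\in(X^{!})^{H^{!}}$, the fixed point corresponds to $J^{c}\in\mathbb{B}^{!}$, and on $X^{!}$ the roles of $\mathfrak{a}_{i}$ and $\mathfrak{b}_{i}$ are exchanged while the chamber is $\mathfrak{C}^{!}=\mathfrak{A}\ni\eta$ and the GIT parameter is $\eta^{!}=\xi\in\mathfrak{C}$. By the construction, the equivariant roots at $p^{!}_{J}$ are $\Phi^{!}(p^{!}_{J})=\{\pm\beta^{J}_{j}\}_{j\in J^{c}}$ and the K\"ahler roots are $\Psi^{!}(p^{!}_{J})=\{\pm\alpha^{J}_{i}\}_{i\in J}$. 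Hence the $\pm$-decomposition of the index set $J^{c}$ on $X^{!}$ (determined by $\eta\in\mathfrak{C}^{!}$) coincides with our $J^{c}=J^{c}_{+}\sqcup J^{c}_{-}$, and the $\pm$-decomposition of $(J^{c})^{c}=J$ (determined by $\eta^{!}=\xi$) coincides with our $J=J_{+}\sqcup J_{-}$.

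Next, I apply Lemma~\ref{lem_toric_attr_closure} to both $X$ and $X^{!}$. For $X$, it gives
\begin{align*}
p_{J}\in\overline{\mathrm{Attr}_{\mathfrak{C}}(p_{I})}\iff I_{+}\cap J^{c}_{-}=I_{-}\cap J^{c}_{+}=\emptyset.
\end{align*}
For $X^{!}$ applied to the pair $(J^{c},I^{c})\in\mathbb{B}^{!}\times\mathbb{B}^{!}$ (using the identifications above), it gives
\begin{align*}
p^{!}_{I}\in\overline{\mathrm{Attr}_{\mathfrak{C}^{!}}(p^{!}_{J})}\iff (J^{c})_{+}\cap(I^{c})^{c}_{-}=(J^{c})_{-}\cap(I^{c})^{c}_{+}=\emptyset,
\end{align*}
which, after substituting $(I^{c})^{c}=I$, becomes $J^{c}_{+}\cap I_{-}=J^{c}_{-}\cap I_{+}=\emptyset$. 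These two conditions are identical as set-theoretic statements, so the one-step relations match: $p_{J}\in\overline{\mathrm{Attr}_{\mathfrak{C}}(p_{I})}$ iff $p^{!}_{I}\in\overline{\mathrm{Attr}_{\mathfrak{C}^{!}}(p^{!}_{J})}$.

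Finally, since $\preceq_{\mathfrak{C}}$ and $\preceq_{\mathfrak{C}^{!}}$ are the reflexive transitive closures of these one-step attracting relations, a chain $p_{I}=p_{K_{0}},p_{K_{1}},\ldots,p_{K_{m}}=p_{J}$ witnessing $p_{J}\preceq_{\mathfrak{C}}p_{I}$ translates term-by-term, in reversed order, into a chain $p^{!}_{I}=p^{!}_{K_{0}},p^{!}_{K_{1}},\ldots,p^{!}_{K_{m}}=p^{!}_{J}$ witnessing $p^{!}_{I}\preceq_{\mathfrak{C}^{!}}p^{!}_{J}$, and vice versa. There is no real obstacle here; the only thing to be careful about is matching the decompositions on the dual side with those on the original side, which is taken care of by the definitions of $\mathfrak{C}^{!}$, $\eta^{!}$, $\Phi^{!}$, and $\Psi^{!}$.
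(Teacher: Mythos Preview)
Your proposal is correct and follows exactly the same approach as the paper's proof, which also reduces to showing that the one-step relation $p_{J}\in\overline{\Attr_{\mf{C}}(p_{I})}$ is equivalent to $p^{!}_{I}\in\overline{\Attr_{\mf{C}^{!}}(p^{!}_{J})}$ via Lemma~\ref{lem_toric_attr_closure}. The paper states this in a single line without the unpacking; your version makes explicit the identification of the $\pm$-decompositions on the dual side and the passage to the transitive closure, which is a helpful expansion but not a different argument.
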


\begin{proof}
Lemma~\ref{lem_toric_attr_closure} implies that $p_{J}\in\overline{\Attr_{\mf{C}}(p_{I})}$ is equivalent to $p^{!}_{I}\in\overline{\Attr_{\mf{C}^{!}}(p^{!}_{J})}$.
\end{proof}

Now we give an explicit formula for the $K$-theoretic stable bases for $X$. This is an explicit version of Exercise 9.1.15 in \cite{O1}.

\begin{prop}\label{prop_toric_K_stab}
For any $I\in\bb{B}$, we have 
\begin{align}\label{eqn_toric_stable_basis}
\Stab^{K}_{\mf{C},s}(p_{I})=v^{\sum_{j\in I^{c}_{-}}\lceil\langle s,\beta^{I}_{j}\rangle\rceil-\sum_{j\in I^{c}_{+}}\lfloor\langle s,\beta^{I}_{j}\rangle\rfloor}\cdot\mathcal{L}\left(-\sum_{i\in I_{+}}\varepsilon^{\ast}_{i}+\sum_{j\in I^{c}}\lfloor\langle s,\beta^{I}_{j}\rangle\rfloor\varepsilon^{\ast}_{j}\right)\otimes\mathcal{O}_{L_{I}}.
\end{align}
\end{prop}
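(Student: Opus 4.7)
My plan is to verify directly that the right-hand side of (\ref{eqn_toric_stable_basis}) satisfies the three defining properties of $\Stab^{K}_{\mf{C},s}(p_{I})$ listed in Definition~\ref{stab_def}, so that uniqueness (guaranteed by Assumption~\ref{ass_stab_existence}, which was already established for toric hyper-K\"ahler manifolds via (\ref{eqn_toric_diff_slope})) forces the equality. In particular, the support condition is immediate: by Lemma~\ref{lem_toric_attr_closure} we have $\Supp(\mca{O}_{L_{I}})=L_{I}=\overline{\Attr_{\mf{C}}(p_{I})}$, and twisting by a line bundle preserves the support.

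For the normalization at $p_{I}$, I expand $\mca{O}_{L_{I}}$ using the Koszul formula (\ref{eqn_str_sheaf_L_I}) and restrict each factor $1-v\mca{L}(\pm\varepsilon_{i}^{\ast})$ to $p_{I}$ using Corollary~\ref{cor_toric_line_restr}. A direct comparison with the explicit shape of $N_{p_{I},-}$ given in (\ref{eqn_toric_normal_bundle}) shows that
\begin{equation*}
i^{\ast}_{p_{I}}\mca{O}_{L_{I}}=\bigwedge^{\bullet}_{-}N^{\vee}_{p_{I},-}.
\end{equation*}
It then remains to match the scalar prefactor with $\sqrt{\det N_{p_{I},-}/\det T^{1/2}_{p_{I}}}$. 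Using the formula for $\det N_{p_{I},-}$ derived in the proof of Proposition~\ref{prop_toric_dual_pair} together with $\det T^{1/2}=v^{-n}\mca{L}(\sum_{i}\varepsilon^{\ast}_{i})$ and Lemma~\ref{lem_toric_line_general}, the $v$-exponent and the line-bundle twist in the RHS of (\ref{eqn_toric_stable_basis}) (where the floor and ceiling contributions reduce simply to $\pm 1$-shifts coming from $j\in I^{c}_{\pm}$ when one compares with $\wt_{H}i^{\ast}_{p_{I}}\mca{L}(\varepsilon^{\ast}_{j})$) produce exactly the required square-root normalization. I expect this step to be a straightforward, if somewhat bookkeeping-heavy, verification.

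The substantive step is the slope/degree bound: for every $p_{J}\prec_{\mf{C}}p_{I}$ one needs
\begin{equation*}
\deg_{H}\!\bigl(i^{\ast}_{p_{J}}\mr{RHS}(p_{I})\cdot i^{\ast}_{p_{I}}\mf{L}(s)\bigr)\subset\deg_{H}\!\bigl(i^{\ast}_{p_{J}}\Stab^{K}_{\mf{C},s}(p_{J})\cdot i^{\ast}_{p_{J}}\mf{L}(s)\bigr).
\end{equation*}
I would attack this by again using the Koszul product (\ref{eqn_str_sheaf_L_I}) and splitting the index set according to the partition $\{1,\ldots,n\}=J\sqcup J^{c}_{+}\sqcup J^{c}_{-}$. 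Factors indexed by $i\in I_{\pm}\cap J^{c}_{\pm}$ restrict via Corollary~\ref{cor_toric_line_restr} to factors of the form $1-v^{2}$, carrying no $H$-weight; factors indexed by $i\in I_{\mp}\cap J^{c}_{\pm}$ vanish by Lemma~\ref{lem_toric_attr_closure} (this is exactly the condition $p_{J}\preceq_{\mf{C}}p_{I}$), which is consistent with the support condition; and factors indexed by $i\in I\cap J$ produce the genuine $H$-weight contributions $a^{\pm\alpha^{J}_{i}}$, lining up with subsets of the weights of $N^{\vee}_{p_{J},-}$. The difference $i^{\ast}_{p_{I}}\mf{L}(s)-i^{\ast}_{p_{J}}\mf{L}(s)$ is computed in (\ref{eqn_toric_diff_slope}) and equals $-\sum_{j\in I^{c}\cap J}\langle s,\beta^{I}_{j}\rangle\alpha^{J}_{j}$. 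The key point is that the floor/ceiling $v$-prefactor together with the explicit integer shift $\lfloor\langle s,\beta^{I}_{j}\rangle\rfloor\varepsilon^{\ast}_{j}$ in the line-bundle twist is precisely calibrated so that the resulting $H$-weight polytope on the left is a translate-by-integer of the $H$-weight polytope of $\bigwedge^{\bullet}_{-}N^{\vee}_{p_{J},-}$ shifted by $-\sum_{j\in I^{c}\cap J}\langle s,\beta^{I}_{j}\rangle\alpha^{J}_{j}$.

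The main obstacle is thus a polytope containment argument: one must show that the convex hull produced by the Koszul expansion, after the prescribed integer shift, lies inside the symmetric polytope around the slope shift determined by $\deg_{H}\bigwedge^{\bullet}_{-}N^{\vee}_{p_{J},-}$. I would reduce this to a coordinatewise check: pair with an arbitrary $\xi\in\mf{h}_{\bb{R}}$ and verify the inequality monomial-by-monomial, using the elementary fact $\lfloor x\rfloor\leq x\leq\lceil x\rceil$ applied to each $\langle s,\beta^{I}_{j}\rangle$, together with the identity (\ref{eqn_alpha_beta}) to convert between $\alpha^{I}$ and $\beta^{I}$ pairings. Once this containment is established, the uniqueness from \cite[Proposition~9.2.2]{O1} finishes the proof and also yields the existence asserted earlier after Assumption~\ref{ass_stab_existence}.
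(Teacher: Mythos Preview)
Your proposal is correct and follows the same approach as the paper: verify the three conditions of Definition~\ref{stab_def} directly using Lemma~\ref{lem_toric_attr_closure} for support, the Koszul formula~(\ref{eqn_str_sheaf_L_I}) together with Corollary~\ref{cor_toric_line_restr} for the normalization and the off-diagonal restrictions, and (\ref{eqn_toric_diff_slope}) for the slope shift. The only cosmetic difference is in the degree-bound step, where the paper observes directly that for each $j\in I^{c}\cap J$ the shift $(\lfloor\langle s,\beta^{I}_{j}\rangle\rfloor-\langle s,\beta^{I}_{j}\rangle)\alpha^{J}_{j}$ lies in the segment $\deg_{H}(1-a^{-\alpha^{J}_{j}})$ (the coefficient being in $(-1,0)$), so the Minkowski-sum containment is immediate rather than via a support-function check against arbitrary $\xi$.
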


\begin{proof}
Let us denote by $\Stab_{I}$ the RHS of (\ref{eqn_toric_stable_basis}). We check the three conditions in Definition~\ref{stab_def} for $\Stab_{I}$. Since we have $\Supp(\Stab_{I})=L_{I}=\overline{\Attr_{\mf{C}}(p_{I})}$ by Lemma~\ref{lem_toric_attr_closure}, the first condition in Definition~\ref{stab_def} is satisfied. 

We note that by Corollary~\ref{cor_toric_line_restr}, we have 
\begin{align*}
T^{1/2}_{p_{I}}=\sum_{i\in I}v^{-1-\langle\alpha^{I}_{i},\sum_{j\in I^{c}_{+}}\mf{a}_{j}-\sum_{j\in I^{c}_{-}}\mf{a}_{j}\rangle}\cdot a^{\alpha^{I}_{i}}+|I^{c}_{-}|(v^{-2}-1).
\end{align*}
Hence by (\ref{eqn_toric_normal_bundle}) and Corollary~\ref{cor_toric_line_restr}, we obtain 
\begin{align*}
\sqrt{\frac{\det N_{p_{I},-}}{\det T^{1/2}_{p_{I}}}}=v^{|I^{c}_{-}|}\cdot\prod_{i\in I_{+}}v^{\langle\alpha^{I}_{i},\sum_{j\in I^{c}_{+}}\mf{a}_{j}-\sum_{j\in I^{c}_{-}}\mf{a}_{j}\rangle}\cdot a^{-\alpha^{I}_{i}}=v^{|I^{c}_{-}|}\cdot i^{\ast}_{p_{I}}\mca{L}\left(-\sum_{i\in I_{+}}\varepsilon^{\ast}_{i}\right).
\end{align*}
On the other hand, we have $\bigwedge^{\bullet}_{-}(N^{\vee}_{-,p_{I}})=i^{\ast}_{p_{I}}\mca{O}_{L_{I}}$ by (\ref{eqn_str_sheaf_L_I}). Therefore, the second condition in Definition~\ref{stab_def} follows from Corollary~\ref{cor_toric_line_restr}.  

Finally, we check the third condition of Definition~\ref{stab_def}. For $I,J\in\bb{B}$, let us assume that $i^{\ast}_{p_{J}}\Stab_{I}\neq0$. By Lemma~\ref{lem_toric_attr_closure}, we have $I_{+}\cap J^{c}_{-}=I_{-}\cap J^{c}_{+}=\emptyset$. Hence up to the factor of $v$ and sign, we obtain
\begin{align*}
i^{\ast}_{p_{J}}\Stab_{I}=\pm v^{?}(1-v^2)^{|I_{\pm}\cap J^{c}_{\pm}|}\prod_{j\in I^{c}\cap J}a^{\lfloor \langle s,\beta^{I}_{j}\rangle\rfloor\alpha^{J}_{j}}\cdot\prod_{i\in I_{\pm}\cap J}\left(1-v^{\mp 1+\langle\alpha^{J}_{i},\sum_{j\in J^{c}_{+}}\mf{a}_{j}-\sum_{j\in J^{c}_{-}}\mf{a}_{j}\rangle}\cdot a^{-\alpha^{J}_{i}}\right).
\end{align*}
By (\ref{eqn_toric_diff_slope}), we obtain
\begin{align*}
\deg_{H}\left(i^{\ast}_{p_{J}}\Stab_{I}\cdot\,i^{\ast}_{p_{I}}\mf{L}(s)\cdot i^{\ast}_{p_{J}}\mf{L}(s)^{-1}\right)=\sum_{j\in I^{c}\cap J}(\lfloor \langle s,\beta^{I}_{j}\rangle\rfloor-\langle s,\beta^{I}_{j}\rangle)\cdot\alpha^{J}_{j}+\sum_{i\in I\cap J}\deg_{H}(1-a^{-\alpha^{J}_{i}}).
\end{align*} 
Here, the sum means the Minkowski sum. On the other hand, we have 
\begin{align*}
\deg_{H}\left(i^{\ast}_{p_{J}}\Stab_{J}\right)=\sum_{i\in J}\deg_{H}(1-a^{-\alpha^{J}_{i}}).
\end{align*}
Therefore, the third condition in Definition~\ref{stab_def} follows from $(\lfloor \langle s,\beta^{I}_{j}\rangle\rfloor-\langle s,\beta^{I}_{j}\rangle)\cdot\alpha^{J}_{j}\in\deg_{H}(1-a^{-\alpha^{J}_{j}})$ for each $j\in I^{c}\cap J$.
\end{proof}

We next determine the $K$-theoretic standard bases. We note that in the notation of section 3.1, we have $\Psi_{+}(p_{I})=\{\beta^{I}_{j}\}_{j\in I^{c}_{+}}\cup\{-\beta^{I}_{j}\}_{j\in I^{c}_{-}}$. Since we have $w(\det T^{1/2})=-n=-r-d$, we obtain 
\begin{align*}
a_{p_{I}}(s)=\sum_{j\in I^{c}_{+}}\lfloor\langle s,\beta^{I}_{j}\rangle\rfloor-\sum_{j\in I^{c}_{-}}\lceil\langle s,\beta^{I}_{j}\rangle\rceil-d.
\end{align*}
Hence we obtain the following corollary of Proposition~\ref{prop_toric_K_stab}.

\begin{corollary}\label{cor_toric_standard_basis}
For any $I\in\bb{B}$, the standard basis $\mca{S}_{\mf{C},s}(p_{I})$ is given by 
\begin{align*}
\mca{S}_{\mf{C},s}(p_{I})=(-v)^{-d}\cdot\mca{L}\left(-\sum_{i\in I_{+}}\varepsilon^{\ast}_{i}+\sum_{j\in I^{c}}\lfloor\langle s,\beta^{I}_{j}\rangle\rfloor\varepsilon^{\ast}_{j}\right)\otimes\mca{O}_{L_{I}}. 
\end{align*}
\end{corollary}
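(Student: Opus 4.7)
The plan is simply to substitute the explicit formula for $\Stab^{K}_{\mf{C},s}(p_{I})$ from Proposition~\ref{prop_toric_K_stab} into the definition $\mca{S}_{\mf{C},s}(p_{I}) = (-1)^{d}\,v^{a_{p_{I}}(s)}\Stab^{K}_{\mf{C},s}(p_{I})$ given by Definition~\ref{general_standard_basis} (noting $\dim X = 2d$), and then to evaluate $a_{p_{I}}(s)$ explicitly to see that the $v$-exponent collapses to $-d$. No geometric input beyond Proposition~\ref{prop_toric_K_stab} is needed; the content is purely bookkeeping.

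For the computation of $a_{p_{I}}(s)$, I would use that the positive K\"ahler roots at $p_{I}$ are $\Psi_{+}(p_{I}) = \{\beta^{I}_{j}\}_{j\in I^{c}_{+}}\cup\{-\beta^{I}_{j}\}_{j\in I^{c}_{-}}$, and that by (\ref{eqn_toric_polarization}) the chosen polarization has $\det T^{1/2} = v^{-n}\mca{L}(\varepsilon^{\ast}_{1}+\cdots+\varepsilon^{\ast}_{n})$, so that $w(\det T^{1/2}) = -n = -(r+d)$. Combined with $|I^{c}| = r$ and the identity $\lfloor -x\rfloor = -\lceil x\rceil$ for $x\notin\bb{Z}$, one obtains
\begin{align*}
a_{p_{I}}(s) &= \sum_{j\in I^{c}_{+}}\!\left(\lfloor\langle s,\beta^{I}_{j}\rangle\rfloor+\tfrac{1}{2}\right) + \sum_{j\in I^{c}_{-}}\!\left(-\lceil\langle s,\beta^{I}_{j}\rangle\rceil+\tfrac{1}{2}\right) - \tfrac{d}{2} - \tfrac{r+d}{2} \\
&= \sum_{j\in I^{c}_{+}}\lfloor\langle s,\beta^{I}_{j}\rangle\rfloor - \sum_{j\in I^{c}_{-}}\lceil\langle s,\beta^{I}_{j}\rangle\rceil - d.
\end{align*}
This exponent is precisely the negative of the power of $v$ appearing in front of the line bundle factor in (\ref{eqn_toric_stable_basis}), shifted by $-d$. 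Hence $v^{a_{p_{I}}(s)}\cdot\Stab^{K}_{\mf{C},s}(p_{I})$ equals $v^{-d}$ times the line bundle term tensored with $\mca{O}_{L_{I}}$, and the prefactor $(-1)^{d}v^{-d} = (-v)^{-d}$ yields the claimed formula.

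The only obstacle worth flagging is a purely clerical one, namely to keep the ceilings and floors consistent under the sign change $\beta \mapsto -\beta$ on $\Psi_{+}(p_{I})$; the genericity of $s$ (which ensures $\langle s,\beta^{I}_{j}\rangle\notin\bb{Z}$, guaranteed by $s\in\mf{s}^{\ast}_{\mr{reg}}$) is used here so that $\lfloor -x\rfloor = -\lceil x\rceil$ holds without boundary issues. Otherwise the proof is a one-line substitution.
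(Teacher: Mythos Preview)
Your proposal is correct and follows exactly the same approach as the paper: compute $a_{p_{I}}(s)$ explicitly from $\Psi_{+}(p_{I})=\{\beta^{I}_{j}\}_{j\in I^{c}_{+}}\cup\{-\beta^{I}_{j}\}_{j\in I^{c}_{-}}$ and $w(\det T^{1/2})=-n=-(r+d)$, then substitute into Definition~\ref{general_standard_basis} using Proposition~\ref{prop_toric_K_stab}. The paper presents this computation in the paragraph immediately preceding the corollary and then simply states it as a consequence; your write-up spells out the same cancellation of $v$-exponents in slightly more detail, but there is no difference in strategy or content.
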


By using (\ref{eqn_str_sheaf_L_I}), we also obtain 
\begin{align}\label{eqn_toric_std_plus}
\mca{S}_{\mf{C},s}(p_{I})=\sum_{K\subset I}(-v)^{-|K|}\cdot\mca{L}\left(-\sum_{i\in I_{+}\cap K}\varepsilon^{\ast}_{i}-\sum_{i\in I_{-}\cap K^{c}}\varepsilon^{\ast}_{i}+\sum_{j\in I^{c}}\lfloor\langle s,\beta^{I}_{j}\rangle\rfloor\varepsilon^{\ast}_{j}\right).
\end{align}
Here, the sum runs over all subsets of $I$. By exchanging $I_{+}$ and $I_{-}$, we also obtain 
\begin{align}\label{eqn_toric_std_minus}
\mca{S}_{-\mf{C},s}(p_{I})=\sum_{K\subset I}(-v)^{-|K|}\cdot\mca{L}\left(-\sum_{i\in I_{-}\cap K}\varepsilon^{\ast}_{i}-\sum_{i\in I_{+}\cap K^{c}}\varepsilon^{\ast}_{i}+\sum_{j\in I^{c}}\lfloor\langle s,\beta^{I}_{j}\rangle\rfloor\varepsilon^{\ast}_{j}\right).
\end{align}
We note that these formulas depend only on the K\"ahler alcove containing the slope $s$. 

\subsection{Alcove model}

In this section, we reformulate Corollary~\ref{cor_toric_standard_basis} by using certain combinatorics of alcoves in $\mf{h}^{\ast}_{\bb{R}}$ and determine the $K$-theoretic canonical bases for toric hyper-K\"ahler manifolds. 

We write $\iota(s)=(s_{1},\ldots,s_{n})\in\bb{X}^{\ast}(T)\otimes_{\bb{Z}}\bb{R}\cong\bb{R}^{n}$. Using this data, we consider a periodic hyperplane arrangement in $\mf{h}^{\ast}_{\bb{R}}$ defined by $\mca{H}^{s}_{i,m}=\mca{H}_{i,m}\coloneqq\{x\in \mf{h}^{\ast}_{\bb{R}}\mid\langle x,\mf{a}_{i}\rangle+s_{i}=m\}$ for any $i=1,\ldots,n$ and $m\in\bb{Z}$. We note that if we use a different choice of the lift $\iota$, then the resulting hyperplane arrangement is given by a translation of the original one. We also remark that by the condition $s\in\mf{s}^{\ast}_{\mr{reg}}$, we have $\cap_{i\in C}\mca{H}_{i,m_{i}}=\emptyset$ for any circuit $C$ and any choice of $m_{i}\in\bb{Z}$. We denote by $\mr{Alc}_{s}$ the set of connected components of $\mf{h}^{\ast}_{\bb{R}}\setminus\cup_{i,m}\mca{H}_{i,m}$. We simply call an element of $\mr{Alc}_{s}$ alcove. 
  
By using the data $\mf{C}$, one can give a bijection between $\mr{Alc}_{s}$ and $\bb{F}$, where $\bb{F}$ is defined in (\ref{label_set}). For any $A\in\mr{Alc}_{s}$, the closure $\overline{A}$ is a polytope and the linear form $\xi:\mf{h}^{\ast}_{\bb{R}}\rightarrow\bb{R}$ restricted to $\overline{A}$ takes its minimum at a vertex $x_{A}$ by the genericity of $\xi$. We set $\mca{H}^{s,\pm}_{i,m}=\mca{H}^{\pm}_{i,m}\coloneqq\{x\in \mf{h}^{\ast}_{\bb{R}}\mid\pm(\langle x,\mf{a}_{i}\rangle+s_{i}-m)>0\}$. If we write $\{x_{A}\}=\cap_{i\in I}\mca{H}_{i,m_{i}}$ for some $m_{i}\in\bb{Z}$, then we have $I\in\bb{B}$  and $A\subset\bigcap_{i\in I_{+}}\mca{H}^{+}_{i,m_{i}}\cap\bigcap_{i\in I_{-}}\mca{H}^{-}_{i,m_{i}}$. We define a map $\varphi_{\mf{C},s}:\mr{Alc}_{s}\rightarrow\bb{F}$ by $\varphi_{\mf{C},s}(A)\coloneqq(\sum_{i\in I}m_{i}\alpha^{I}_{i},p_{I})$. We note that this does not depend on the choice of $\xi\in\mf{C}$. It is easy to check that $\varphi_{\mf{C},s}$ is a bijection by the genericity of $s$. We denote by $\leq_{\mf{C}}$ the partial order on $\mr{Alc}_{s}$ induced from the partial order $\leq_{\mf{C},s}$ on $\bb{F}$ via $\varphi_{\mf{C},s}$. By using Lemma~\ref{lem_toric_line_general}, we obtain 
\begin{align}\label{eqn_toric_x_A}
\sum_{i\in I}m_{i}\alpha^{I}_{i}-\wt_{H}i^{\ast}_{p_{I}}\mf{L}(s)=x_{A}.
\end{align}
This implies the following lemma.   

\begin{lemma}\label{lem_toric_poset_identification}
For any $A,B\in\mr{Alc}_{s}$, $A\leq_{\mf{C}}B$ if and only if $\langle x_{A},\xi\rangle\leq\langle x_{B},\xi\rangle$ for any $\xi\in\mf{C}$. 
\end{lemma}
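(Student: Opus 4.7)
The plan is to unpack the construction of the partial order $\leq_{\mf{C}}$ on $\mr{Alc}_{s}$ and then invoke equation (\ref{eqn_toric_x_A}), after which the statement reduces to an identity between the two sides. By construction $\leq_{\mf{C}}$ is defined as the pullback of the poset structure $\leq_{\mf{C},s}$ on $\bb{F}$ along the bijection $\varphi_{\mf{C},s}:\mr{Alc}_{s}\rightarrow\bb{F}$. So I would first fix $A\in\mr{Alc}_{s}$, write $\{x_{A}\}=\bigcap_{i\in I_{A}}\mca{H}_{i,m_{i}^{A}}$ with $I_{A}\in\bb{B}$ as in the definition preceding the lemma, and record that $\varphi_{\mf{C},s}(A)=(\lambda_{A},p_{I_{A}})$ with $\lambda_{A}=\sum_{i\in I_{A}}m_{i}^{A}\alpha^{I_{A}}_{i}$, and similarly for $B$.

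Having set up notation, Definition~\ref{dfn_poset} tells us that $\varphi_{\mf{C},s}(A)\leq_{\mf{C},s}\varphi_{\mf{C},s}(B)$ holds precisely when
\begin{align*}
\langle\lambda_{A}-\wt_{H}i^{\ast}_{p_{I_{A}}}\mf{L}(s),\xi\rangle\leq\langle\lambda_{B}-\wt_{H}i^{\ast}_{p_{I_{B}}}\mf{L}(s),\xi\rangle
\end{align*}
for every $\xi\in\mf{C}$. But equation (\ref{eqn_toric_x_A}) identifies the left-hand entry with $x_{A}$ and the right-hand entry with $x_{B}$, so this inequality is exactly $\langle x_{A},\xi\rangle\leq\langle x_{B},\xi\rangle$ for every $\xi\in\mf{C}$, which is what was to be shown. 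There is no substantive obstacle in this argument: the real content lies in equation (\ref{eqn_toric_x_A}), which identifies the vertex $x_{A}$ of the alcove closure with the shifted weight governing the poset on $\bb{F}$. The lemma simply records the resulting geometric reformulation, and its proof amounts to chasing this identification.
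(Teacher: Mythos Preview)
Your proof is correct and follows exactly the same approach as the paper: the paper's proof consists only of the remark that equation (\ref{eqn_toric_x_A}) ``implies the following lemma,'' and your write-up simply makes explicit the unwinding of Definition~\ref{dfn_poset} through the bijection $\varphi_{\mf{C},s}$ that this remark presupposes.
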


For any $A\in\mr{Alc}_{s}$, we set 
\begin{align}\label{eqn_mu_A}
\mu_{A}\coloneqq\sum_{i=1}^{n}\lfloor \langle x,\mf{a}_{i}\rangle+s_{i}\rfloor\varepsilon^{\ast}_{i}\in\bb{X}^{\ast}(T)
\end{align}
for some $x\in A$ and consider the $\bb{T}$-equivariant line bundle $\mca{E}(A)\coloneqq\mca{L}\left(\mu_{A}\right)$. Note that this definition does not depend on the choice of $x\in A$ and also the choice of chamber $\mf{C}$. If $\varphi_{\mf{C},s}(A)=(\sum_{i\in I}m_{i}\alpha^{I}_{i},p_{I})$, then we have $A\subset\bigcap_{i\in I_{+}}\mca{H}^{+}_{i,m_{i}}\cap\bigcap_{i\in I_{-}}\mca{H}^{-}_{i,m_{i}}$. Hence, we obtain $\mca{E}(A)=\mca{L}\left(-\sum_{i\in I_{-}}\varepsilon^{\ast}_{i}+\sum_{j=1}^{n}\lfloor\langle x_{A},\mf{a}_{j}\rangle+s_{j}\rfloor\varepsilon^{\ast}_{j}\right)$. Since $\langle x_{A},\mf{a}_{i}\rangle+s_{i}=m_{i}\in\bb{Z}$ for any $i\in I$ and $\mf{a}_{j}=-\sum_{i\in I}\langle\beta^{I}_{j},\mf{b}_{i}\rangle \mf{a}_{i}$ for any $j\in I^{c}$, we have
\begin{align*}
\lfloor\langle x_{A},\mf{a}_{j}\rangle+s_{j}\rfloor&=\lfloor-\sum_{i\in I}\langle x_{A},\mf{a}_{i}\rangle\langle\beta^{I}_{j},\mf{b}_{i}\rangle+s_{j}\rfloor\\
&=\lfloor-\sum_{i\in I}m_{i}\langle\beta^{I}_{j},\mf{b}_{i}\rangle+s_{j}+\sum_{i\in I}s_{i}\langle\beta^{I}_{j},\mf{b}_{i}\rangle\rfloor\\
&=\sum_{i\in I}m_{i}\langle\alpha^{I}_{i},\mf{a}_{j}\rangle+\lfloor \langle s,\beta^{I}_{j}\rangle\rfloor.
\end{align*}
Therefore, we obtain
\begin{align}\label{eqn_toric_E(A)}
\mca{E}(A)=\prod_{i\in I} a^{m_{i}\alpha^{I}_{i}}\cdot\mca{L}\left(-\sum_{i\in I_{-}}\varepsilon^{\ast}_{i}+\sum_{j\in I^{c}}\lfloor \langle s,\beta^{I}_{j}\rangle\rfloor\varepsilon^{\ast}_{j}\right).
\end{align}
We set $\mca{E}_{\mf{C},s}(p_{I})\coloneqq\mca{L}\left(-\sum_{i\in I_{-}}\varepsilon^{\ast}_{i}+\sum_{j\in I^{c}}\lfloor \langle s,\beta^{I}_{j}\rangle\rfloor\varepsilon^{\ast}_{j}\right)$. Moreover, if $B\in\mr{Alc}_{s}$ is an alcove such that $x_{A}\in \overline{B}$ and $K\subset I$ is a subset such that $\{\mathcal{H}_{k,m_{k}}\}_{k\in K}$ is the set of hyperplanes separating $A$ and $B$, then we have
\begin{align}\label{eqn_toric_E(B)}
\mca{E}(B)=\prod_{i\in I} a^{m_{i}\alpha^{I}_{i}}\cdot
\mca{L}\left(-\sum_{i\in I_{+}\cap K}\varepsilon^{\ast}_{i}-\sum_{i\in I_{-}\cap K^{c}}\varepsilon^{\ast}_{i}+\sum_{j\in I^{c}}\lfloor\langle s,\beta^{I}_{j}\rangle\rfloor\varepsilon^{\ast}_{j}\right).
\end{align}
In particular, the RHS of (\ref{eqn_toric_E(B)}) is of the form $\mca{E}_{\mf{C},s}(p_{I'})$ for some $I'\in\bb{B}$ up to some $H$-equivariant parameter shift.

For any $A,B\in\mr{Alc}_{s}$, we write $\ell(A,B)$ the number of hyperplanes separating $A$ and $B$. We set $N(A)\coloneqq\{B\in\mr{Alc}_{s}\mid x_{A}\in\overline{B}\}$ and define $\mca{S}(A)\in K_{\bb{T}}(X)$ by the following formula:
\begin{align}\label{eqn_toric_S(A)}
\mca{S}(A)\coloneqq\sum_{B\in N(A)}(-v)^{-\ell(A,B)}\mca{E}(B).
\end{align} 
We note that by Lemma~\ref{lem_toric_poset_identification}, we have $B\leq_{\mf{C}}A$ for any $B\in N(A)$. By (\ref{eqn_toric_std_plus}) and (\ref{eqn_toric_E(B)}), we obtain the following formula expressing $\mca{S}_{\mf{C},s}(\varphi_{\mf{C},s}(A))$ defined in Definition~\ref{general_standard_basis}.

\begin{lemma}\label{lem_toric_S(A)}
For any $A\in\mr{Alc}_{s}$, we have $\mathcal{S}(A)=\mca{S}_{\mf{C},s}(\varphi_{\mf{C},s}(A))$. 
\end{lemma}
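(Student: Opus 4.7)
The plan is to reduce the identity to the explicit expansion (\ref{eqn_toric_std_plus}) of $\mca{S}_{\mf{C},s}(p_{I})$ by establishing a length-preserving bijection between the alcoves $B \in N(A)$ and the subsets $K \subset I$, where $I \in \bb{B}$ is determined by $\varphi_{\mf{C},s}(A) = (\sum_{i \in I} m_{i}\alpha^{I}_{i}, p_{I})$ through the equation $\{x_{A}\} = \bigcap_{i \in I}\mca{H}_{i,m_{i}}$.

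First, I would verify that the hyperplanes of the arrangement passing through $x_{A}$ are exactly $\{\mca{H}_{i,m_{i}}\}_{i \in I}$. This uses the genericity condition $s \in \mf{s}^{\ast}_{\mr{reg}}$: if some $\mca{H}_{j,m_{j}}$ with $j \notin I$ also passed through $x_{A}$, then $\{\mf{a}_{i}\}_{i \in I \cup \{j\}}$ would contain a circuit $C$ with $\langle x_{A}, \mf{a}_{\ell}\rangle + s_{\ell} \in \bb{Z}$ for all $\ell \in C$, forcing $\langle s, \beta_{C}\rangle \in \bb{Z}$, contradicting genericity. Since $\{\mf{a}_{i}\}_{i \in I}$ is a basis of $\bb{X}_{\ast}(H)$ by $I \in \bb{B}$, these $|I| = d$ hyperplanes are transverse at $x_{A}$, so in a small neighborhood $U$ of $x_{A}$ the alcoves are in bijection with subsets $K \subset I$ via the rule that $B_{K}$ is on the opposite side of $\mca{H}_{i,m_{i}}$ from $A$ exactly when $i \in K$; this realizes $N(A) = \{B_{K}\}_{K \subset I}$.

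Next I would compute $\ell(A, B_{K})$. Any hyperplane $\mca{H}$ strictly separating $A$ and $B_{K}$ must pass through $x_{A}$, since otherwise $\mca{H}$ would miss a neighborhood of $x_{A}$ containing points of both $A$ and $B_{K}$, contradicting that $\mca{H}$ separates them. By Step 1 the candidates are $\{\mca{H}_{i,m_{i}}\}_{i \in I}$, and by the definition of $B_{K}$ the separating ones are precisely $\{\mca{H}_{i,m_{i}}\}_{i \in K}$, giving $\ell(A, B_{K}) = |K|$.

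Finally, combining the two steps with (\ref{eqn_toric_E(B)}) yields
\begin{align*}
\mca{S}(A) = \sum_{K \subset I}(-v)^{-|K|}\mca{E}(B_{K}) = a^{\sum_{i \in I}m_{i}\alpha^{I}_{i}} \cdot \sum_{K \subset I}(-v)^{-|K|}\mca{L}\!\left(-\sum_{i \in I_{+} \cap K}\!\varepsilon^{\ast}_{i} - \sum_{i \in I_{-} \cap K^{c}}\!\varepsilon^{\ast}_{i} + \sum_{j \in I^{c}}\lfloor\langle s, \beta^{I}_{j}\rangle\rfloor\varepsilon^{\ast}_{j}\right),
\end{align*}
and by Definition~\ref{general_standard_basis} together with (\ref{eqn_toric_std_plus}) the right hand side is precisely $\mca{S}_{\mf{C},s}(\sum_{i \in I} m_{i}\alpha^{I}_{i}, p_{I}) = \mca{S}_{\mf{C},s}(\varphi_{\mf{C},s}(A))$. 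The only nontrivial point is the locality argument in Step 2 ruling out separating hyperplanes that miss $x_{A}$; everything else is bookkeeping, so I do not anticipate a serious obstacle.
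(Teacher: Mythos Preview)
Your proposal is correct and follows essentially the same approach as the paper: the paper simply cites (\ref{eqn_toric_std_plus}) and (\ref{eqn_toric_E(B)}), which together encode exactly the bijection $N(A)\leftrightarrow\{K\subset I\}$ and the identification $\ell(A,B_K)=|K|$ that you spell out. Your version is more detailed (in particular, the genericity argument ruling out extra hyperplanes through $x_A$ and the locality argument for $\ell(A,B_K)$ are left implicit in the paper), but the underlying logic is identical.
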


We now prove $\mca{E}(A)=\mca{E}_{\mf{C},s}(\varphi_{\mf{C},s}(A))$ in the notation of Conjecture~\ref{K-theoretic_canonical_basis}. By (\ref{eqn_toric_S(A)}), we obtain 
\begin{align}\label{eqn_exp_E(A)_to_S(B)_rough}
\mca{E}(A)\in\mca{S}(A)+\sum_{B<_{\mf{C}}A}v^{-1}\bb{Z}[v^{-1}]\cdot\mca{S}(B)
\end{align}
under certain completion as in section 3.3. Therefore, it is enough to check the following. 

\begin{prop}\label{prop_toric_bar_invariance_E(A)}
For any $A\in\mr{Alc}_{s}$, we have $\beta^{K}_{\mf{C},s}(\mca{E}(A))=\mca{E}(A)$. 
\end{prop}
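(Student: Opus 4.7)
The strategy is to compute $\beta^{K}_{\mf{C},s}(\mca{S}(A))$ explicitly in terms of the line bundles $\mca{E}(B)$, then to invert (\ref{eqn_toric_S(A)}) in two parallel ways whose coefficients are related by $v\mapsto v^{-1}$, and to combine.

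First I would show that
\[\beta^{K}_{\mf{C},s}(\mca{S}(A))=\widetilde{\mca{S}}(A):=\sum_{B\in N(A)}(-v)^{\ell(A,B)}\mca{E}(B).\]
Write $\varphi_{\mf{C},s}(A)=(\lambda,p_{I})$ so that $x_{A}=\cap_{i\in I}\mca{H}_{i,m_{i}}$, and let $A''\in\mr{Alc}_{s}$ be the unique alcove lying on the opposite side of $\mca{H}_{i,m_{i}}$ from $A$ for every $i\in I$; then $x_{A}$ is the $\xi$-maximum of $\overline{A''}$, so $\varphi_{-\mf{C},s}(A'')=\varphi_{\mf{C},s}(A)$. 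One checks that the analogously-defined set $N^{-}(A'')$ for the chamber $-\mf{C}$ coincides with $N(A)$, and that $\ell(A'',B)=d-\ell(A,B)$ for each $B\in N(A)$: indeed $A$ and $A''$ differ in exactly the $d$ hyperplanes through $x_{A}$, and no hyperplane off $x_{A}$ can separate $A''$ from any $B\in N(A)$ since $x_{A}\in\overline{A}\cap\overline{A''}\cap\overline{B}$. Combining the $-\mf{C}$-analog of Lemma~\ref{lem_toric_S(A)} with the defining identity $\beta^{K}_{\mf{C},s}(\mca{S}_{\mf{C},s}(p))=(-v)^{d}\mca{S}_{-\mf{C},s}(p)$ (valid since $\dim X=2d$) yields the claim.

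Next, observe that $B\in N(A)\setminus\{A\}$ forces $\xi(x_{B})<\xi(x_{A})$ for all $\xi\in\mf{C}$ (as $x_{A}$ is a non-minimizing vertex of $\overline{B}$), hence $B<_{\mf{C}}A$. Rewriting (\ref{eqn_toric_S(A)}) as
\[\mca{E}(A)=\mca{S}(A)-\sum_{B\in N(A)\setminus\{A\}}(-v)^{-\ell(A,B)}\mca{E}(B)\]
and iterating inside the completion $\bb{M}_{-\mf{C},s}$ of Section~3.3 (legitimate by the interval-finiteness of $\leq_{\mf{C},s}$, Lemma~\ref{partial_order}) produces an expansion
\[\mca{E}(A)=\sum_{B\leq_{\mf{C}}A}\gamma^{A}_{B}(v)\,\mca{S}(B),\qquad\gamma^{A}_{A}(v)=1,\]
in which $\gamma^{A}_{B}(v)$ is the finite sum, over chains $A=B_{0}>_{\mf{C}}\cdots>_{\mf{C}}B_{k}=B$ with $B_{i+1}\in N(B_{i})\setminus\{B_{i}\}$, of $(-1)^{k}\prod_{i}(-v)^{-\ell(B_{i-1},B_{i})}$. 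The identical recursion with $\widetilde{\mca{S}}$ in place of $\mca{S}$ (so $(-v)^{-\ell}$ becomes $(-v)^{\ell}$) produces $\mca{E}(A)=\sum_{B}\widetilde{\gamma}^{A}_{B}(v)\,\widetilde{\mca{S}}(B)$, and a one-line induction on chain length yields $\widetilde{\gamma}^{A}_{B}(v)=\gamma^{A}_{B}(v^{-1})$. Applying $\beta^{K}_{\mf{C},s}$ to the $\mca{S}$-expansion and invoking the first step together with $K_{\bb{S}}$-antilinearity then gives
\[\beta^{K}_{\mf{C},s}(\mca{E}(A))=\sum_{B}\gamma^{A}_{B}(v^{-1})\,\widetilde{\mca{S}}(B)=\mca{E}(A).\]

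The hard part will be the bookkeeping around completions: the iterated substitution yields an infinite series whose support is bounded above but not below with respect to $\leq_{\mf{C},s}$, so one must work throughout inside $\bb{M}_{-\mf{C},s}$ (and its counterpart for the $\widetilde{\mca{S}}$-side), and verify term-by-term via Lemma~\ref{R_triangular} that $\beta^{K}_{\mf{C},s}$ extends consistently to the completion. The combinatorial symmetry $\widetilde{\gamma}^{A}_{B}(v)=\gamma^{A}_{B}(v^{-1})$, which is the algebraic heart of the argument, falls out immediately from the symmetric form of the two recursions.
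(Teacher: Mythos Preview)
Your argument is correct, and its first step is essentially the same computation that drives the paper's proof; the deduction you draw from it, however, is more circuitous than necessary.

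Once you have established that $\beta^{K}_{\mf{C},s}(\mca{S}(A))=\widetilde{\mca{S}}(A)$ for every $A$, the paper finishes in one stroke with no completions at all: since $\{\mca{E}_{\mf{C},s}(p_I)\}_{I\in\bb{B}}$ is a $\Frac(K_{\bb{T}}(\mr{pt}))$-basis of $K_{\bb{T}}(X)_{\mr{loc}}$, one may \emph{define} a semilinear map $\beta'$ by declaring $\beta'(\mca{E}_{\mf{C},s}(p_I))=\mca{E}_{\mf{C},s}(p_I)$ for each $I$. The finite formula (\ref{eqn_toric_S(A)}) then gives $\beta'(\mca{S}(A))=\widetilde{\mca{S}}(A)$, and your step~1 identifies this with $\beta^{K}_{\mf{C},s}(\mca{S}(A))$; hence $\beta'=\beta^{K}_{\mf{C},s}$ on a basis and therefore everywhere. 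The paper carries out step~1 directly from the explicit expansions (\ref{eqn_toric_std_plus}), (\ref{eqn_toric_std_minus}), (\ref{eqn_toric_E(B)}) rather than via the opposite alcove $A''$ and the $-\mf{C}$-analog of Lemma~\ref{lem_toric_S(A)}, but the content is identical.

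What your inversion-and-match argument buys is an explicit chain-sum description of the coefficients $\gamma^{A}_{B}(v)$ together with the palindromy $\widetilde{\gamma}^{A}_{B}(v)=\gamma^{A}_{B}(v^{-1})$; this is pleasant but not needed for the bare statement, and the price is exactly the completion bookkeeping you flag at the end. All of that evaporates under the paper's ``define $\beta'$ first, then compare'' approach. One small caution on notation: the paper has already reserved the symbol $N^{-}(A)$ for $\{B\in\mr{Alc}_{s}\mid A\in N(B)\}$ (just before (\ref{eqn_toric_C(A)})), so your use of $N^{-}(A'')$ for the $-\mf{C}$-version of $N$ collides with that; choose a different decoration.
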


\begin{proof}
Since $\{\mca{S}_{\mf{C},s}(p_{I})\}_{I\in\bb{B}}$ is a basis of $K_{\bb{T}}(X)_{\mr{loc}}$ over $\Frac(K_{\bb{T}}(\mr{pt}))$, $\{\mca{E}_{\mf{C},s}(p_{I})\}_{I\in\bb{B}}$ is also a basis of $K_{\bb{T}}(X)_{\mr{loc}}$ over $\Frac(K_{\bb{T}}(\mr{pt}))$ by (\ref{eqn_toric_E(A)}) and (\ref{eqn_toric_S(A)}). We define $\Frac(K_{H}(\mr{pt}))$-linear involution $\beta'$ on $K_{\bb{T}}(X)_{\mr{loc}}$ by 
\begin{itemize}
\item $\beta'(vm)=v^{-1}\beta'(m)$ for any $m\in K_{\bb{T}}(X)_{\mr{loc}}$,
\item $\beta'(\mca{E}_{\mf{C},s}(p_{I}))=\mca{E}_{\mf{C},s}(p_{I})$ for any $I\in\bb{B}$.
\end{itemize}
By (\ref{eqn_toric_E(A)}), we have $\beta'(\mca{E}(A))=\mca{E}(A)$ for any $A\in\mr{Alc}_{s}$ and hence
\begin{align*}
\beta'(\mca{S}_{\mf{C},s}(p_{I}))&=\sum_{K\subset I}(-v)^{|K|}\mca{L}\left(-\sum_{i\in I_{-}\cap K^{c}}\varepsilon^{\ast}_{i}-\sum_{i\in I_{+}\cap K}\varepsilon^{\ast}_{i}+\sum_{j\in I^{c}}\lfloor \langle s,\beta^{I}_{j}\rangle\rfloor\varepsilon^{\ast}_{j}\right)\\
&=(-v)^{d}\sum_{K\subset I}(-v)^{-|K|}\mathcal{L}\left(-\sum_{i\in I_{-}\cap K}\varepsilon^{\ast}_{i}-\sum_{i\in I_{+}\cap K^{c}}\varepsilon^{\ast}_{i}+\sum_{j\in I^{c}}\lfloor \langle s,\beta^{I}_{j}\rangle\rfloor\varepsilon^{\ast}_{j}\right)\\
&=(-v)^{d}\mca{S}_{-\mf{C},s}(p_{I})
\end{align*}
for any $I\in\mathbb{B}$ by (\ref{eqn_toric_std_plus}), (\ref{eqn_toric_std_minus}), and (\ref{eqn_toric_E(B)}). This implies that $\beta'=\beta^{K}_{\mf{C},s}$ and hence $\mca{E}(A)$ is bar invariant. 
\end{proof}

Since the set $\{\mca{E}(A)\}_{A\in\mr{Alc}_{s}}$ does not depend on the choice of $\mf{C}$, we obtain Conjecture~\ref{K-theoretic_bar_involution} for toric hyper-K\"ahler manifolds. 

\begin{corollary}\label{cor_toric_indep_chamber_K}
The $K$-theoretic bar involution $\beta^{K}_{\mf{C},s}$ does not depend on the choice of $\mf{C}$. 
\end{corollary}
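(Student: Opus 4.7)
The plan is to observe that the set $\{\mca{E}(A)\}_{A\in\mr{Alc}_{s}}$ is defined purely in terms of the slope $s$ and the hyperplane arrangement $\{\mca{H}^{s}_{i,m}\}$ on $\mf{h}^{\ast}_{\bb{R}}$, with no reference whatsoever to the chamber $\mf{C}$: indeed, by (\ref{eqn_mu_A}), the line bundle $\mca{E}(A)=\mca{L}(\mu_{A})$ depends only on $A$ and $s$. So if we can show that for every chamber $\mf{C}$, the bar involution $\beta^{K}_{\mf{C},s}$ is uniquely determined by the properties of being $K_{H}(\mr{pt})$-linear, $v$-antilinear, and fixing every $\mca{E}(A)$, then $\beta^{K}_{\mf{C},s}$ cannot depend on $\mf{C}$.

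First, I would record that Proposition~\ref{prop_toric_bar_invariance_E(A)} already gives the invariance $\beta^{K}_{\mf{C},s}(\mca{E}(A))=\mca{E}(A)$ for every chamber $\mf{C}$ and every $A\in\mr{Alc}_{s}$. Next, I would observe that for the particular chamber $\mf{C}$ under consideration, the subset $\{\mca{E}_{\mf{C},s}(p_{I})\}_{I\in\bb{B}}\subset\{\mca{E}(A)\}_{A\in\mr{Alc}_{s}}$ already forms a basis of $K_{\bb{T}}(X)_{\mr{loc}}$ over $\Frac(K_{\bb{T}}(\mr{pt}))$; this was established during the proof of Proposition~\ref{prop_toric_bar_invariance_E(A)}, using the fact that each $\mca{E}_{\mf{C},s}(p_{I})$ equals the corresponding $K$-theoretic standard basis element $\mca{S}_{\mf{C},s}(p_{I})$ up to a strictly unitriangular $v^{-1}$-expansion as in (\ref{eqn_exp_E(A)_to_S(B)_rough}).

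Given two chambers $\mf{C}$ and $\mf{C}'$, both antilinear maps $\beta^{K}_{\mf{C},s}$ and $\beta^{K}_{\mf{C}',s}$ are $K_{H}(\mr{pt})$-linear, send $v$ to $v^{-1}$, and fix every element of the set $\{\mca{E}(A)\}_{A\in\mr{Alc}_{s}}$ by Proposition~\ref{prop_toric_bar_invariance_E(A)}. In particular, they agree on the $\Frac(K_{\bb{T}}(\mr{pt}))$-basis $\{\mca{E}_{\mf{C}',s}(p_{I})\}_{I\in\bb{B}}$ (which is a subset of $\{\mca{E}(A)\}$), and the antilinearity in $v$ then forces $\beta^{K}_{\mf{C},s}=\beta^{K}_{\mf{C}',s}$ on all of $K_{\bb{T}}(X)_{\mr{loc}}$. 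There is no real obstacle here; the corollary is essentially a packaging statement that extracts chamber-independence from the fact that the fixed set $\{\mca{E}(A)\}_{A\in\mr{Alc}_{s}}$ of each $\beta^{K}_{\mf{C},s}$ is chamber-free and contains a spanning basis.
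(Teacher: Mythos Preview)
Your proposal is correct and follows essentially the same approach as the paper's proof, which is the one-line observation that the set $\{\mca{E}(A)\}_{A\in\mr{Alc}_{s}}$ does not depend on $\mf{C}$; you have simply unpacked what this means, namely that any $K_{H}(\mr{pt})$-linear, $v$-antilinear map fixing a chamber-free set containing a $\Frac(K_{\bb{T}}(\mr{pt}))$-basis is uniquely determined.
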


Next, we determine $\mca{C}_{\mf{C},s}(\varphi_{\mf{C},s}(A))$ for any $A\in\mr{Alc}_{s}$. Recall the map $\partial:K_{\bb{T}}(\mr{pt})\rightarrow\bb{Z}[v,v^{-1}]$ defined in section 3.5. By Lemma~\ref{std_orthonormality} and Lemma~\ref{lem_toric_S(A)}, we have $\partial(\mca{S}(A)||\mca{S}(B))=\delta_{A,B}$ for any $A,B\in\mr{Alc}_{s}$. By induction, this and (\ref{eqn_exp_E(A)_to_S(B)_rough}) implies that for any $A\in\mr{Alc}_{s}$, there exists a unique element
\begin{align*}
\mca{C}(A)\in\mca{S}(A)+\sum_{B>_{\mf{C}}A}v^{-1}\bb{Z}[v^{-1}]\cdot\mca{S}(B)
\end{align*}
such that $\partial(\mca{C}(A)||\mca{E}(B))=\delta_{A,B}$ for any $A,B\in\mr{Alc}_{s}$. Since we have  
\begin{align*}
\partial(\mca{C}(A)||\mca{S}(B))&=\sum_{C\in N(B)}(-v)^{\ell(C,B)}\partial(\mca{C}(A)||\mca{E}(C))\\&=\begin{cases}(-v)^{\ell(A,B)}&\mbox{ if }A\in N(B)\\0&\mbox{ otherwise, }\end{cases}
\end{align*}
we obtain 
\begin{align}\label{eqn_toric_C(A)}
\mca{C}(A)=\sum_{B\in N^{-}(A)}(-v)^{-\ell(A,B)}\mca{S}(B)
\end{align}
where we set $N^{-}(A)\coloneqq\{B\in\mr{Alc}_{s}\mid A\in N(B)\}$. We note that $N^{-}(A)$ is a finite set. 

\begin{lemma}\label{lem_toric_bar_invariance_C(A)}
For any $A\in\mr{Alc}_{s}$, we have $\beta^{K}_{\mf{C},s}(\mca{C}(A))=v^{\dim X}\cdot\mca{C}(A)$. 
\end{lemma}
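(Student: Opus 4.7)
My plan is to deduce the bar-invariance of $\mca{C}(A)$ from the combination of Lemma~\ref{inner_bar}, the bar-invariance $\beta^K(\mca{E}(B))=\mca{E}(B)$ already established in Proposition~\ref{prop_toric_bar_invariance_E(A)}, and the defining orthogonality $\partial(\mca{C}(A)||\mca{E}(B))=\delta_{A,B}$ verified just before the lemma statement. Crucially, Proposition~\ref{prop_toric_bar_invariance_E(A)} shows $\beta^K_{\mf{C},s}$ is an involution (it agrees with the involution $\beta'$ defined there), so Lemma~\ref{inner_bar} is available.

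First, for each $B\in\mr{Alc}_s$, rewriting $\mca{E}(B)=\beta^K(\mca{E}(B))$, applying Lemma~\ref{inner_bar}, and then using the fact that $\partial$ commutes with the $\bb{S}$-inverse $\overline{(-)}$ (since $\partial$ lands in $\bb{Z}[v,v^{-1}]$ while $\overline{(-)}$ only flips $v$), we obtain
\begin{align*}
\partial(\beta^K(\mca{C}(A))||\mca{E}(B))=v^{\dim X}\overline{\partial(\mca{C}(A)||\mca{E}(B))}=v^{\dim X}\delta_{A,B}.
\end{align*}
Setting $\Delta\coloneqq\beta^K(\mca{C}(A))-v^{\dim X}\mca{C}(A)\in K_\bb{T}(L)\subseteq K_\bb{T}(X)$, this gives $\partial(\Delta||\mca{E}(B))=0$ for every $B$. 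Since $\mca{E}(B)=a^{\lambda_B}\mca{E}_{\mf{C},s}(p_{I_B})$ where $\varphi_{\mf{C},s}(B)=(\lambda_B,p_{I_B})$ and the map $B\mapsto(\lambda_B,p_{I_B})$ is a bijection onto $\bb{F}=\bb{X}^\ast(H)\times\bb{B}$, and since $(\mca{F}||a^\mu\mca{G})=a^{-\mu}(\mca{F}||\mca{G})$, the vanishing of $\partial(\Delta||\mca{E}(B))$ as $B$ varies extracts \emph{every} $H$-weight component of each $(\Delta||\mca{E}_{\mf{C},s}(p_I))\in K_\bb{T}(\mr{pt})$. This forces $(\Delta||\mca{E}_{\mf{C},s}(p_I))=0$ in $K_\bb{T}(\mr{pt})$ for every $I\in\bb{B}$.

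To finish, I invoke two facts already available at this point: by the proof of Proposition~\ref{prop_toric_bar_invariance_E(A)}, $\{\mca{E}_{\mf{C},s}(p_I)\}_{I\in\bb{B}}$ is a $\Frac(K_\bb{T}(\mr{pt}))$-basis of $K_\bb{T}(X)_\mr{loc}$; and by Lemma~\ref{std_orthonormality}, $(-||-)$ is non-degenerate on $K_\bb{T}(X)_\mr{loc}$. Together these give $\Delta=0$ in $K_\bb{T}(X)_\mr{loc}$, and the torsion-freeness of $K_\bb{T}(X)$ over $K_\bb{T}(\mr{pt})$ (coming from the decomposition of $X$ into affine cells $\Attr_\mf{C}(p_I)\cong\bb{C}^d$ afforded by the open covering $\{U_I\}$) upgrades this to $\Delta=0$ in $K_\bb{T}(X)$, so $\beta^K(\mca{C}(A))=v^{\dim X}\mca{C}(A)$. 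The main subtlety I anticipate is keeping track of the semilinearity conventions — in particular the $\dag$-linearity $(\mca{F}||a^\mu\mca{G})=a^{-\mu}(\mca{F}||\mca{G})$ — and using it together with the $\partial$-extraction to promote partial orthogonality across all $B\in\mr{Alc}_s$ into a genuine vanishing of $(\Delta||\mca{E}_{\mf{C},s}(p_I))$ in $K_\bb{T}(\mr{pt})$; this step is the reason it is essential that the $\mca{E}(B)$ encode the full $\bb{X}^\ast(H)$-torsor of equivariant shifts rather than only the fixed-point classes.
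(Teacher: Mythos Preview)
Your proof is correct and follows essentially the same approach as the paper: both use Lemma~\ref{inner_bar} together with Proposition~\ref{prop_toric_bar_invariance_E(A)} to show that $v^{-\dim X}\beta^K(\mca{C}(A))$ has the same pairings $\partial(-\,||\,\mca{E}(B))$ as $\mca{C}(A)$, and then conclude. The paper compresses the final step into a single ``Hence we obtain\ldots'', whereas you spell out explicitly why the vanishing of all $\partial(\Delta\,||\,\mca{E}(B))$ forces $\Delta=0$ via the $\dag$-semilinearity of $(-\,||\,-)$ in the second slot and nondegeneracy over $K_{\bb{T}}(X)_{\mr{loc}}$; this extra care is welcome but not a genuinely different route.
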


\begin{proof}
By Lemma~\ref{inner_bar} and Proposition~\ref{prop_toric_bar_invariance_E(A)}, we have 
\begin{align*}
\partial(v^{-\dim X}\cdot\beta^{K}_{\mf{C},s}(\mca{C}(A))||\mca{E}(B))&=v^{-\dim X}\partial(\beta^{K}_{\mf{C},s}(\mca{C}(A))||\beta^{K}_{\mf{C},s}(\mca{E}(B)))\\
&=\overline{\partial(\mca{C}(A)||\mca{E}(B))}\\
&=\delta_{A,B}
\end{align*}
for any $A,B\in\mr{Alc}_{s}$. Hence we obtain $\beta^{K}_{\mf{C},s}(\mca{C}(A))=v^{\dim X}\cdot\mca{C}(A)$. 
\end{proof}

Following the notation in section 3.4, we set $\bb{B}_{X,s}\coloneqq\{\mca{E}(A)\}_{A\in\mr{Alc}_{s}}$ and $\bb{B}_{L,s}\coloneqq\{\mca{C}(A)\}_{A\in\mr{Alc}_{s}}$ and call them $K$-theoretic canonical bases for $K_{\bb{T}}(X)$ and $K_{\bb{T}}(L)$. We will prove later (Corollary~\ref{cor_toric_canonical_basis}) that $\bb{B}_{X,s}$ (resp. $\bb{B}_{L,s}$) is actually a basis of $K_{\bb{T}}(X)$ (resp. $K_{\bb{T}}(L)$). 

\subsection{Wall-crossings}

In this section, we study the behavior of $K$-theoretic canonical bases under the variation of $s\in\mf{s}^{\ast}_{\mr{reg}}$. We fix a signed circuit $C=C_{+}\sqcup C_{-}$ satisfying $\langle\eta,\beta_{C}\rangle>0$ and consider a union of hyperplanes $w_{C}\coloneqq\{x\in\mf{s}^{\ast}_{\bb{R}}\mid\langle x,\beta_{C}\rangle\in\bb{Z}\}$. We take a generic element $s_{0}\in w_{C}$ such that $s_{0}$ does not lie in any $w_{C'}$ for some circuit $C'\neq C$. We consider two slopes $s_{-},s_{+}\in\mf{s}^{\ast}_{\mr{reg}}$ sufficiently close to $s_{0}$ such that they lie in the same connected component of $\mf{s}^{\ast}_{\bb{R}}\setminus\cup_{C'\neq C}w_{C'}$ as $s_{0}$ and satisfy $\langle s_{-},\beta_{C}\rangle<\langle s_{0},\beta_{C}\rangle<\langle s_{+},\beta_{C}\rangle$. We study the difference between $K$-theoretic canonical bases $\bb{B}_{X,s_{-}}$ and $\bb{B}_{X,s_{+}}$. We fix a path $\gamma$ connecting $s_{-}$ and $s_{+}$ in a neighborhood of $s_{0}$ and passing through $s_{0}$. 

For $(\lambda,p_{I})\in\bb{F}$, we consider the vertex $x_{\lambda,I}(s)$ corresponding to $\varphi^{-1}_{\mf{C},s}(\lambda,p_{I})\in\mr{Alc}_{s}$ as in the previous section. By (\ref{eqn_toric_x_A}), we have $x_{\lambda,I}(s)=\lambda-\wt_{H}i^{\ast}_{p_{I}}\mf{L}(s)$. If $s$ goes to $s_{0}$, then it can happen that $\lim_{s\rightarrow s_{0}}x_{\lambda,I}(s)=\lim_{s\rightarrow s_{0}}x_{\mu,J}(s)$ for some $(\mu,p_{J})\neq(\lambda,p_{I})\in\bb{F}$. If this does not happen, then hyperplanes other than $\mca{H}^{s}_{i,m_{i}}$ for $i\in I$ will be away from $x_{\lambda,I}(s)$ along $s\in\gamma$. Hence we obtain $\mca{E}_{\mf{C},s_{-}}(\lambda,p_{I})=\mca{E}_{\mf{C},s_{+}}(\lambda,p_{I})$. 

\begin{lemma}
For $I\neq J\in\bb{B}$, $\lim_{s\rightarrow s_{0}}x_{\lambda,I}(s)=\lim_{s\rightarrow s_{0}}x_{\mu,J}(s)$ for some $\lambda,\mu\in\bb{X}^{\ast}(H)$ if and only if $|I^{c}\cap C|=|J^{c}\cap C|=1$ and $I\cap C^{c}=J\cap C^{c}$.
\end{lemma}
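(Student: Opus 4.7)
The plan is to translate the equality of limits into an integrality condition and then read off the combinatorial content using the genericity of $s_0$. By definition $x_{\lambda,I}(s)=\lambda-\wt_{H}i^{\ast}_{p_{I}}\mf{L}(s)$, so (\ref{eqn_toric_diff_slope}) gives
\begin{align*}
x_{\lambda,I}(s_{0})-x_{\mu,J}(s_{0})=(\lambda-\mu)+\sum_{j\in I^{c}\cap J}\langle s_{0},\beta^{I}_{j}\rangle\,\alpha^{J}_{j}.
\end{align*}
Hence some $\lambda,\mu\in\bb{X}^{\ast}(H)$ make the limits coincide if and only if the sum on the right lies in $\bb{X}^{\ast}(H)$. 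The first step is then to note that $\{\alpha^{J}_{j}\}_{j\in J}$ is the $\bb{Z}$-dual basis of $\{\mf{a}_{j}\}_{j\in J}$ in $\bb{X}^{\ast}(H)$ (by unimodularity of $\mf{a}$, using that $\alpha^{J}_{j}$ is defined by the signed cocircuit in $J^{c}\cup\{j\}$), so the integrality of the whole expression is equivalent to the individual integrality $\langle s_{0},\beta^{I}_{j}\rangle\in\bb{Z}$ for every $j\in I^{c}\cap J$.

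The second step is to characterize this integrality circuit by circuit. By the hypothesis on $s_{0}$, the only wall $w_{C'}$ passing through $s_{0}$ is $w_{C}$, so $\langle s_{0},\beta^{I}_{j}\rangle\in\bb{Z}$ forces $C^{I}_{j}=C$ as unsigned circuits. Since $C^{I}_{j}$ is the unique circuit contained in $I\cup\{j\}$ and no circuit is contained in the basis $I$, this is equivalent to $C\cap I^{c}=\{j\}$. Thus every $j\in I^{c}\cap J$ is forced to equal the (uniquely determined) element $k$ with $C\cap I^{c}=\{k\}$; in particular $|C\cap I^{c}|=1$ and $I^{c}\cap J\subset\{k\}$.

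The third step is to extract the symmetric statement. Since $|I|=|J|=d$ and $I\neq J$, $I^{c}\cap J$ is nonempty, so $I^{c}\cap J=\{k\}$ with $k\in C$. Applying the same reasoning with the roles of $I$ and $J$ swapped (the condition is symmetric in $I,J$) yields $|C\cap J^{c}|=1$, say $C\cap J^{c}=\{k'\}$, and $J^{c}\cap I=\{k'\}$. Consequently $I\triangle J=\{k,k'\}\subset C$, which is the condition $I\cap C^{c}=J\cap C^{c}$. For the converse, given the three combinatorial conditions one immediately gets $I^{c}\cap J=\{k\}$ with $k=C\cap I^{c}$, hence $C^{I}_{k}=C$ and $\langle s_{0},\beta^{I}_{k}\rangle\in\bb{Z}$, producing the required $\lambda,\mu$.

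The only routine obstacle is verifying cleanly the claim that $\{\alpha^{J}_{j}\}_{j\in J}$ is an integer basis dual to $\{\mf{a}_{j}\}_{j\in J}$; this follows by pairing the lift of $\alpha^{J}_{j}$ to $\bb{X}^{\ast}(T)$ (supported on $J^{c}\cup\{j\}$ with a $+1$ at $j$) against the coordinate $\varepsilon_{j'}$ for $j'\in J$, and invoking unimodularity of $\mf{a}$. With this in hand the remaining reasoning is combinatorial matroid bookkeeping as outlined above.
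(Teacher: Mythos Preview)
Your proof is correct and follows essentially the same approach as the paper: both reduce the equality of limits via (\ref{eqn_toric_diff_slope}) to the integrality of each $\langle s_{0},\beta^{I}_{j}\rangle$ for $j\in I^{c}\cap J$, invoke the genericity of $s_{0}$ to force $\beta^{I}_{j}=\pm\beta_{C}$, and then use symmetry in $I,J$ to conclude. You are slightly more explicit than the paper in justifying why integrality of the sum $\sum_{j\in I^{c}\cap J}\langle s_{0},\beta^{I}_{j}\rangle\alpha^{J}_{j}$ forces integrality of each coefficient (via the dual-basis property of $\{\alpha^{J}_{j}\}_{j\in J}$), and you phrase the final combinatorial step as $I\triangle J\subset C$ rather than the paper's direct computation $I\cap C^{c}=(I\setminus\{i\})\cap C^{c}=(J\setminus\{j\})\cap C^{c}=J\cap C^{c}$, but these are cosmetic differences.
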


\begin{proof}
Assume that $\lim_{s\rightarrow s_{0}}x_{\lambda,I}(s)=\lim_{s\rightarrow s_{0}}x_{\mu,J}(s)$ for some $(\mu,p_{J})\neq(\lambda,p_{I})\in\bb{F}$. By (\ref{eqn_toric_diff_slope}), this implies that $\sum_{j\in I^{c}\cap J}\langle s_{0},\beta^{I}_{j}\rangle\alpha^{J}_{j}\in\bb{X}^{\ast}(H)$ and hence $\langle s_{0},\beta^{I}_{j}\rangle\in\bb{Z}$ for any $j\in I^{c}\cap J$. By the choice of $s_{0}$, we must have $|I^{c}\cap J|=1$ and $\beta^{I}_{j}=\pm\beta_{C}$ for $j\in I^{c}\cap J$. In particular, we have $j\in C\subset I\cup\{j\}$ and hence we obtain $|I^{c}\cap C|=1$. By exchanging the role of $I$ and $J$, we also obtain $I\cap J^{c}=\{i\}$ for some $i$ and $i\in C\subset J\cap\{i\}$. This implies that $I\cap C^{c}=(I\setminus\{i\})\cap C^{c}=(J\setminus\{j\})\cap C^{c}=J\cap C^{c}$.

Conversely, we assume that $I\neq J\in\bb{B}$ satisfy $|I^{c}\cap C|=|J^{c}\cap C|=1$ and $I\cap C^{c}=J\cap C^{c}\eqqcolon K$. If we set $I^{c}\cap C=\{j\}$ and $J^{c}\cap C=\{i\}$, then we obtain $I=(C\setminus\{j\})\cup K$ and $J=(C\setminus\{i\})\cup K$ and hence $I^{c}\cap J=\{j\}$ and $I\cap J^{c}=\{i\}$. Since we have $j\in C\subset I\cup\{j\}$ and $i\in C\subset J\cup\{i\}$, we obtain $\beta^{I}_{j}=\pm\beta_{C}$ and $\beta^{J}_{i}=\pm\beta_{C}$. By (\ref{eqn_toric_diff_slope}), we obtain $\lim_{s\rightarrow s_{0}}x_{\lambda,I}(s)=\lim_{s\rightarrow s_{0}}x_{\mu,J}(s)$ for some $\lambda,\mu\in\bb{X}^{\ast}(H)$. 
\end{proof}

Now we study the behavior of $\mca{E}_{\mf{C},s}(\lambda,p_{I})$ under the wall-crossing, where $I$ satisfies $|I^{c}\cap C|=1$ and $I\cap C^{c}=K$ for a fixed subset $K\subset C^{c}$. We may assume that $\{\mf{a}_{i}\}_{i\in K}$ is linearly independent and $\{\mf{a}_{i}\}_{i\in C\cup K}$ spans $\bb{X}_{\ast}(H)$. In this case, the number of such $I\in\bb{B}$ is given by $|C|$. We fix $m_{i}\in\bb{Z}$ for each $i\in C\cup K$ such that $\cap_{i\in C\cup K}\mca{H}^{s_{0}}_{i,m_{i}}\eqqcolon\{x_{0}\}$ is not empty. We note that this implies that $\sum_{i\in C_{+}}m_{i}-\sum_{i\in C_{-}}m_{i}=\langle s_{0},\beta_{C}\rangle$. We consider all $(\lambda,p_{I})\in\bb{F}$ such that $\lim_{s\rightarrow s_{0}}x_{\lambda,I}(s)=x_{0}$. The number of such $(\lambda,p_{I})\in\bb{F}$ is also given by $|C|$. By choosing $s_{+},s_{-}$ sufficiently close to $s_{0}$, we may assume that there exists a convex neighborhood $U$ of $x_{0}\in\mf{h}^{\ast}_{\bb{R}}$ containing all $x_{\lambda,I}(s)$ such that any hyperplane of the form $\mca{H}^{s}_{i,m}$ does not intersect $U$ unless $i\in C\cup K$ and $m=m_{i}$ for any $s\in\gamma$. It is enough to consider the alcoves which intersect with $U$. We note that such an alcove $A$ is characterized by the sign $\epsilon:C\cup K\rightarrow\{\pm\}$ such that $A\subset\cap_{i\in C\cup K}\mca{H}^{s,\epsilon(i)}_{i,m_{i}}$. 

Let $\mf{h}_{C}$ be the $\bb{R}$-span of $\{\mf{a}_{i}\}_{i\in C}$ and $\mf{h}_{K}$ be the $\bb{R}$-span of $\{\mf{a}_{i}\}_{i\in K}$. By the choice of $K$, we obtain a decomposition $\mf{h}_{\bb{R}}\cong\mf{h}_{C}\times\mf{h}_{K}$ and this induces a decomposition $\mf{h}^{\ast}_{\bb{R}}\cong\mf{h}_{C}^{\ast}\times\mf{h}_{K}^{\ast}$ such that $\langle\mf{h}_{C},\mf{h}_{K}^{\ast}\rangle=0$ and $\langle\mf{h}_{K},\mf{h}_{C}^{\ast}\rangle=0$. We may also assume that $U\cong U_{C}\times U_{K}$ for some convex open subsets $U_{C}\subset\mf{h}^{\ast}_{C}$ and $U_{K}\subset\mf{h}^{\ast}_{K}$. Since we have $\mca{H}^{s}_{i,m_{i}}=(\mca{H}^{s}_{i,m_{i}}\cap\mf{h}^{\ast}_{C})\times\mf{h}_{K}^{\ast}$ for each $i\in C$ and $\mca{H}^{s}_{i,m_{i}}=\mf{h}^{\ast}_{C}\times(\mca{H}^{s}_{i,m_{i}}\cap\mf{h}_{K}^{\ast})$ for each $i\in K$, they induce hyperplane arrangements on $\mf{h}^{\ast}_{C}$ and $\mf{h}^{\ast}_{K}$. For each alcove $A$ with $A\cap U\neq\emptyset$, there is an alcove $A_{C}$ in $\mf{h}^{\ast}_{C}$ and an alcove $A_{K}$ in $\mf{h}^{\ast}_{K}$ such that $A\cap U\cong (A_{C}\cap U_{C})\times(A_{K}\cap U_{K})$. In particular, we can consider alcoves for $\mf{h}^{\ast}_{C}$ and $\mf{h}^{\ast}_{K}$ separately in $U$.

Since $\{\mf{a}_{i}\}_{i\in K}$ is linearly independent, $\cap_{i\in K}\mca{H}^{s,\epsilon(i)}_{i,m_{i}}\cap U_{K}$ is not empty and its volume is away from 0 along $s\in\gamma$ for any sign $\epsilon:K\rightarrow\{\pm\}$. For each sign $\epsilon:C\rightarrow\{\pm\}$, we set $\Delta_{\epsilon}(s)\coloneqq\cap_{i\in C}\mca{H}^{s,\epsilon(i)}_{i,m_{i}}\cap\mf{h}^{\ast}_{C}$. We define $\epsilon_{\pm}:C\rightarrow\{\pm\}$ by $\epsilon(i)=\pm$ for any $i\in C_{+}$ and $\epsilon(i)=\mp$ for any $i\in C_{-}$. 

\begin{lemma}\label{lem_simplex}
If $\pm\langle s-s_{0},\beta_{C}\rangle>0$, then $\Delta_{\epsilon}(s)\neq\emptyset$ unless $\epsilon=\epsilon_{\mp}$. Moreover, the volume of $\Delta_{\epsilon}(s)\cap U_{C}$ is away from 0 along $s\in\gamma$ unless $\epsilon=\epsilon_{\pm}$ and the volume of $\Delta_{\epsilon_{\pm}}(s)\cap U_{C}$ is proportional to $|\langle s-s_{0},\beta_{C}\rangle|^{|C|-1}$. 
\end{lemma}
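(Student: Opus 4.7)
The plan is to linearize the problem by using the circuit relation to identify $\Delta_{\epsilon}(s)$ with a slice of the positive orthant $\bb{R}^{C}_{>0}$ by a single affine hyperplane whose coefficients depend only on $\epsilon$ and whose right-hand side equals $\langle s-s_{0},\beta_{C}\rangle$. Once this dictionary is set up, the three assertions of the lemma become elementary statements about simplices and unbounded polyhedra.

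To set this up, introduce the affine functionals $f_{i}(x)\coloneqq\langle x,\mf{a}_{i}\rangle+s_{i}-m_{i}$ for $i\in C$ on $\mf{h}^{\ast}_{C}$. By construction $\mca{H}^{s,\epsilon(i)}_{i,m_{i}}\cap\mf{h}^{\ast}_{C}=\{\epsilon(i)f_{i}(x)>0\}$, so $\Delta_{\epsilon}(s)=\{x\in\mf{h}^{\ast}_{C}\mid\epsilon(i)f_{i}(x)>0,\ \forall i\in C\}$. Because $\{\mf{a}_{i}\}_{i\in C}$ is a circuit with circuit vector $\beta_{C}=\sum_{i\in C_{+}}\varepsilon_{i}-\sum_{i\in C_{-}}\varepsilon_{i}$, we have $\sum_{i\in C_{+}}\mf{a}_{i}-\sum_{i\in C_{-}}\mf{a}_{i}=0$, and the choice of $m_{i}$'s made just before the lemma gives $\sum_{i\in C_{+}}m_{i}-\sum_{i\in C_{-}}m_{i}=\langle s_{0},\beta_{C}\rangle$. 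Consequently
\[
\sum_{i\in C_{+}}f_{i}(x)-\sum_{i\in C_{-}}f_{i}(x)=\langle s-s_{0},\beta_{C}\rangle\eqqcolon\delta.
\]
Setting $y_{i}\coloneqq\epsilon(i)f_{i}(x)$ turns this into a single linear equation on the $y_{i}$'s, and the map $x\mapsto(y_{i})_{i\in C}$ identifies $\mf{h}^{\ast}_{C}$ with the affine hyperplane $\{\sum_{i}c_{i}(\epsilon)y_{i}=\delta\}$ in $\bb{R}^{C}$, where $c_{i}(\epsilon)=\epsilon(i)$ if $i\in C_{+}$ and $c_{i}(\epsilon)=-\epsilon(i)$ if $i\in C_{-}$. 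Under this identification $\Delta_{\epsilon}(s)$ becomes the intersection of this hyperplane with the open positive orthant.

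Now the three claims reduce to sign analysis of the coefficients $(c_{i}(\epsilon))_{i\in C}$. The signs are all positive exactly when $\epsilon=\epsilon_{+}$, all negative exactly when $\epsilon=\epsilon_{-}$, and mixed otherwise. In the all-positive case, $\Delta_{\epsilon_{+}}(s)$ is a standard open simplex in $\bb{R}^{C}_{>0}$ with $\sum_{i}y_{i}=\delta$: it is empty when $\delta\leq0$, and when $\delta>0$ its volume is proportional to $\delta^{|C|-1}$, which inside the fixed neighborhood $U_{C}$ (containing the origin $y=0$, corresponding to $x_{0}$) is exactly the claimed asymptotic $|\langle s-s_{0},\beta_{C}\rangle|^{|C|-1}$. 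The all-negative case is symmetric: $\Delta_{\epsilon_{-}}(s)$ is empty for $\delta\geq0$ and otherwise a simplex with volume $\propto|\delta|^{|C|-1}$. In the mixed case, $\Delta_{\epsilon}(s)$ already at $\delta=0$ contains an unbounded cone through the origin (scaling preserves $\sum c_{i}(\epsilon)y_{i}=0$ inside the positive orthant, and mixed signs make such rays exist), so its intersection with $U_{C}$ has positive $(|C|-1)$-dimensional volume, which depends continuously on $\delta$ and is therefore bounded away from zero along the compact segment $\gamma$.

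I do not see a genuine obstacle here; the argument is a routine reduction to the geometry of the simplex cut out of $\bb{R}^{C}_{>0}$ by a circuit relation. The only mild subtlety is to make sure the neighborhood $U_{C}$ really sees the small simplex $\Delta_{\epsilon_{\pm}}(s)$ in its entirety as $s\to s_{0}$ (so that the full volume $\propto|\delta|^{|C|-1}$ is achieved and not truncated), but this follows from the fact that the simplex shrinks to the point $x_{0}\in U_{C}$ as $\delta\to0$ by the formulas above, so for $s_{\pm}$ chosen sufficiently close to $s_{0}$ it lies entirely inside $U_{C}$.
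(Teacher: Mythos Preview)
Your proof is correct and follows essentially the same approach as the paper: both rest on the single circuit identity $\sum_{i\in C_{+}}f_{i}(x)-\sum_{i\in C_{-}}f_{i}(x)=\langle s-s_{0},\beta_{C}\rangle$ and then split into the three sign cases (all positive, all negative, mixed). Your explicit change of coordinates $y_{i}=\epsilon(i)f_{i}(x)$ makes the simplex/cone dichotomy a bit more transparent than the paper's direct argument in $\mf{h}^{\ast}_{C}$, but the content is the same.
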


\begin{proof}
By definition, $x\in\Delta_{\epsilon}(s)$ if and only if $\epsilon(i)\cdot(\langle x,\mf{a}_{i}\rangle+s_{i}-m_{i})>0$ for any $i\in C$. Since we have 
\begin{align*}
\sum_{i\in C_{+}}(\langle x,\mf{a}_{i}\rangle+s_{i}-m_{i})-\sum_{i\in C_{-}}(\langle x,\mf{a}_{i}\rangle+s_{i}-m_{i})=\langle s-s_{0},\beta_{C}\rangle,
\end{align*}
such an $x$ does not exist for $\epsilon=\epsilon_{\mp}$. If $\epsilon\neq\epsilon_{\pm}$ and $\epsilon\neq\epsilon_{\mp}$, then $\Delta_{\epsilon}(s_{0})$ is also not empty and the volume of $\Delta_{\epsilon}(s_{0})\cap U_{C}$ is positive. This implies the second statement. 

If $\epsilon=\epsilon_{\pm}$, then $\Delta_{\epsilon_{\pm}}(s)$ is a $(|C|-1)$-dimensional simplex. By our choice of $U$, every vertex of $\Delta_{\epsilon_{\pm}}(s)$ is contained in $U_{C}$ and hence $\Delta_{\epsilon_{\pm}}(s)\subset U_{C}$. Since each edge of $\Delta_{\epsilon_{\pm}}(s)$ has length proportional to $|\langle s-s_{0},\beta_{C}\rangle|$, its volume is proportional to $|\langle s-s_{0},\beta_{C}\rangle|^{|C|-1}$. 
\end{proof}

For each sign $\epsilon:C\cup K\rightarrow\{\pm\}$ satisfying $\cap_{i\in C\cup K}\mca{H}^{s,\epsilon(i)}_{i,m_{i}}\neq\emptyset$, we denote by $A_{\epsilon,x_{0}}(s)\in\mr{Alc}_{s}$ the unique alcove which is contained in $\cap_{i\in C\cup K}\mca{H}^{s,\epsilon(i)}_{i,m_{i}}$ and intersect with $U$. By definition, we obtain
\begin{align*}
\mca{E}(A_{\epsilon,x_{0}}(s))=\mca{L}\left(-\sum_{i\in\epsilon^{-1}(-)}\varepsilon^{\ast}_{i}\right)\otimes\mca{L}_{x_{0}}
\end{align*}
for any $s\in\gamma$, where we set 
\begin{align*}
\mca{L}_{x_{0}}\coloneqq\mca{L}\left(\sum_{i\in C\cup K}m_{i}\varepsilon^{\ast}_{i}+\sum_{i\notin C\cup K}\lfloor\langle x_{0},\mf{a}_{i}\rangle+\iota(s_{0})_{i}\rfloor\varepsilon^{\ast}_{i}\right).
\end{align*}
Lemma~\ref{lem_simplex} implies that for any subset $C'\subset C$ and $K'\subset K$, $\mca{L}(-\sum_{i\in C'\cup K'}\varepsilon^{\ast}_{i})\otimes\mca{L}_{x_{0}}$ is contained in $\bb{B}_{X,s_{\pm}}$ if and only if $C'\neq C_{\pm}$. In particular, $\mca{L}(-\sum_{i\in C_{-}\cup K'}\varepsilon^{\ast}_{i})\otimes\mca{L}_{x_{0}}$ is contained in $\bb{B}_{X,s_{+}}$ but not in $\bb{B}_{X,s_{-}}$, and $\mca{L}(-\sum_{i\in C_{+}\cup K'}\varepsilon^{\ast}_{i})\otimes\mca{L}_{x_{0}}$ is contained in $\bb{B}_{X,s_{-}}$ but not in $\bb{B}_{X,s_{+}}$. In summary, we obtained the following.

\begin{prop}\label{prop_toric_wall_crossing_characterization}
For any element $\mca{E}\in\bb{B}_{X,s_{+}}$, $\mca{E}$ is not contained in $\bb{B}_{X,s_{-}}$ if and only if the volume $\mr{Vol}(A(s))$ of $A(s)$ vanishes under the limit $s\rightarrow s_{0}$, where $A(s)\in\mr{Alc}_{s}$ is the alcove satisfying $\mca{E}(A(s))=\mca{E}$ for any $s\in\mf{s}^{\ast}_{\bb{R},\mr{reg}}$ contained in the same K\"ahler alcove as $s_{+}$. If this holds, then the order of vanishing of $\mr{Vol}(A(s))$ at $s=s_{0}$ is given by $|C|-1$ and $\mca{L}\left(\sum_{i\in C_{-}}\varepsilon^{\ast}_{i}-\sum_{i\in C_{+}}\varepsilon^{\ast}_{i}\right)\otimes\mca{E}$ is contained in $\bb{B}_{X,s_{-}}$ but not in $\bb{B}_{X,s_{+}}$. Moreover, $\mca{L}\left(\sum_{i\in C'\cap C_{-}}\varepsilon^{\ast}_{i}-\sum_{i\in C'\cap C_{+}}\varepsilon^{\ast}_{i}\right)\otimes\mca{E}$ is contained in both $\bb{B}_{X,s_{+}}$ and $\bb{B}_{X,s_{-}}$ for any subset $\emptyset\neq C'\subsetneq C$.
\end{prop}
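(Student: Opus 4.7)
The plan is to work entirely within the alcove model from Section 5.4, using the local analysis at the wall $w_C$ developed immediately before the statement. The key observation is that by the choice of $s_0$, hyperplanes of the form $\mca{H}^s_{i,m}$ that change position between $s_-$ and $s_+$ are exactly those with $i \in C$, and for alcoves $A$ away from the points $x_0 = \lim_{s \to s_0} x_{\lambda,I}(s)$ where degenerations happen, one has $\mca{E}_{\mf{C},s_-}(\lambda,p_I) = \mca{E}_{\mf{C},s_+}(\lambda,p_I)$ by the remarks at the beginning of Section 5.5. So the proof reduces to a local study in a neighborhood of each such $x_0$.

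First I would fix $x_0$ together with the data $K \subset C^c$, $\{m_i\}_{i \in C \cup K}$ as in the text, and invoke the product decomposition $U \cong U_C \times U_K$ of the neighborhood into the $\mf{h}_C^*$ and $\mf{h}_K^*$ factors. This reduces the problem to tracking the sign patterns $\epsilon: C \cup K \to \{\pm\}$ that index the alcoves $A_{\epsilon,x_0}(s)$ meeting $U$. Next I would apply Lemma~\ref{lem_simplex}: the $K$-part contributes a nonempty region of bounded volume regardless of the sign, so the entire question is controlled by the $C$-part, where $\Delta_\epsilon(s)$ is empty exactly when $\epsilon = \epsilon_\mp$ (with the sign depending on which side of $w_C$ the slope $s$ lies), and is a small simplex of volume proportional to $|\langle s - s_0, \beta_C \rangle|^{|C|-1}$ when $\epsilon = \epsilon_\pm$.

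From the formula $\mca{E}(A_{\epsilon,x_0}(s)) = \mca{L}\bigl(-\sum_{i \in \epsilon^{-1}(-)} \varepsilon_i^*\bigr) \otimes \mca{L}_{x_0}$ (which is independent of $s \in \gamma$), both claims follow directly. An element $\mca{E} \in \bb{B}_{X,s_+}$ corresponds to a sign pattern $\epsilon$ with $\Delta_\epsilon(s_+) \neq \emptyset$, and it fails to lie in $\bb{B}_{X,s_-}$ if and only if $\Delta_\epsilon(s_-) = \emptyset$, i.e.\ $\epsilon|_C = \epsilon_-$ (so $\epsilon^{-1}(-) \cap C = C_+$). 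In that case, the alcove $A(s)$ has volume proportional to $|\langle s - s_0, \beta_C \rangle|^{|C|-1}$ times the $K$-volume, giving the order of vanishing $|C|-1$. The companion element $\mca{L}(\sum_{i \in C_-} \varepsilon_i^* - \sum_{i \in C_+} \varepsilon_i^*) \otimes \mca{E}$ corresponds to swapping $\epsilon|_C$ from $\epsilon_-$ to $\epsilon_+$, which by Lemma~\ref{lem_simplex} is exactly the sign pattern whose simplex exists on the $s_-$ side but disappears on the $s_+$ side. Finally, for $\emptyset \neq C' \subsetneq C$, the sign pattern that differs from $\epsilon$ on $C'$ corresponds to some $\epsilon' \neq \epsilon_\pm, \epsilon_\mp$, hence has $\Delta_{\epsilon'}(s)$ of volume bounded away from zero on both sides, proving the last sentence.

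There is no serious obstacle; the proof is essentially bookkeeping on top of Lemma~\ref{lem_simplex}. The only subtle point is ensuring that the local analysis at each $x_0$ captures all of the wall-crossing behavior — this is secured by the choice of $s_0$ to be generic on $w_C$ (so $s_0 \notin w_{C'}$ for $C' \neq C$) together with shrinking the neighborhood $U$ to exclude extraneous hyperplanes along the path $\gamma$, both of which were already arranged in the setup before the proposition.
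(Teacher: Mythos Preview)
Your approach is exactly the one the paper takes: the proposition is stated there as a summary of the preceding local analysis near each degeneration point $x_0$, using the product decomposition $U\cong U_C\times U_K$, the formula $\mca{E}(A_{\epsilon,x_0}(s))=\mca{L}(-\sum_{i\in\epsilon^{-1}(-)}\varepsilon^*_i)\otimes\mca{L}_{x_0}$, and Lemma~\ref{lem_simplex}. There is one sign slip in your write-up: the element of $\bb{B}_{X,s_+}\setminus\bb{B}_{X,s_-}$ corresponds to $\epsilon|_C=\epsilon_+$ (so $\epsilon^{-1}(-)\cap C=C_-$), not $\epsilon_-$; with this correction the tensor by $\mca{L}(\sum_{i\in C_-}\varepsilon^*_i-\sum_{i\in C_+}\varepsilon^*_i)$ indeed swaps $\epsilon_+$ to $\epsilon_-$ as you intend.
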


Now the wall-crossing formula relating $\bb{B}_{X,s_{+}}$ and $\bb{B}_{X,s_{-}}$ follows from the following lemma.

\begin{lemma}\label{lem_toric_wall_crossing_formula}
For any signed circuit $C$ satisfying $\langle\eta,\beta_{C}\rangle>0$, there exists an exact sequence
\begin{align}\label{eqn_toric_wall_crossing_exact_seq}
0\rightarrow\mca{W}_{|C|}\rightarrow\cdots\rightarrow\mca{W}_{1}\rightarrow\mca{W}_{0}\rightarrow0
\end{align}
of vector bundles on $X$, where 
\begin{align*}
\mca{W}_{k}\coloneqq\bigoplus_{\substack{C'\subset C\\|C'|=k}}v^{k}\mca{L}\left(\sum_{i\in C'\cap C_{-}}\varepsilon^{\ast}_{i}-\sum_{i\in C'\cap C_{+}}\varepsilon^{\ast}_{i}\right).
\end{align*}
\end{lemma}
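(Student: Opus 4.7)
The plan is to recognize the sequence (\ref{eqn_toric_wall_crossing_exact_seq}) as the Koszul complex associated with the collection of sections $\{x_i\}_{i\in C_+}\cup\{y_i\}_{i\in C_-}$ on $X$, and then show that these sections have no common zero on $X$, so that the Koszul complex is locally contractible and hence globally exact.

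First I would match the terms. Since $x_i$ is a section of $v^{-1}\mca{L}(\varepsilon_i^{\ast})$ and $y_i$ a section of $v^{-1}\mca{L}(-\varepsilon_i^{\ast})$, each element of $\{x_i\}_{i\in C_+}\cup\{y_i\}_{i\in C_-}$ is a section of a line bundle $\mca{L}_i$ whose inverse is $v\mca{L}(-\varepsilon_i^{\ast})$ if $i\in C_+$ and $v\mca{L}(\varepsilon_i^{\ast})$ if $i\in C_-$. The $k$-th term of the Koszul complex is $\bigoplus_{|C'|=k}\bigotimes_{i\in C'}\mca{L}_i^{-1}$, which one directly computes to equal $\mca{W}_k$, and the differential $\mca{W}_1\to\mca{W}_0=\mca{O}_X$ is exactly the map sending a summand $v\mca{L}(-\varepsilon_i^{\ast})$ (or $v\mca{L}(\varepsilon_i^{\ast})$) into $\mca{O}_X$ via multiplication by $x_i$ (resp. $y_i$).

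The heart of the argument is the following non-vanishing claim: there is no $\eta$-semistable point $(x,y)\in\mu^{-1}(0)$ with $x_i=0$ for all $i\in C_+$ and $y_i=0$ for all $i\in C_-$. Suppose such a point exists; by Lemma~\ref{lem_toric_stability} pick $I\in\bb{B}$ with $x_j\neq0$ ($j\in I^c_+$) and $y_j\neq0$ ($j\in I^c_-$). Since $C$ is a circuit and $I\in\bb{B}$, $C\not\subset I$, so $I^c\cap C\neq\emptyset$. Expanding $\beta_C$ in the basis $\{\beta^I_j\}_{j\in I^c}$ of $\bb{X}_\ast(S)$, projection onto each $\varepsilon_j$-coordinate ($j\in I^c$) yields
\begin{align*}
\beta_C=\sum_{j\in I^c\cap C_+}\beta^I_j-\sum_{j\in I^c\cap C_-}\beta^I_j.
\end{align*}
For $j\in I^c\cap C_+$ we have $x_j=0$, forcing $j\notin I^c_+$, hence $\langle\eta,\beta^I_j\rangle<0$ by genericity; similarly for $j\in I^c\cap C_-$ we obtain $\langle\eta,\beta^I_j\rangle>0$. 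Therefore $\langle\eta,\beta_C\rangle<0$, contradicting the hypothesis $\langle\eta,\beta_C\rangle>0$.

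Finally, since the sections $\{x_i\}_{i\in C_+}\cup\{y_i\}_{i\in C_-}$ of line bundles on $X$ have no common zero, at every point of $X$ at least one of them is an invertible section in a neighborhood, and contraction with its inverse gives a nullhomotopy of the Koszul complex locally. Thus the complex is locally homotopy equivalent to zero, so the global sequence (\ref{eqn_toric_wall_crossing_exact_seq}) is exact. The main (only real) obstacle is the non-vanishing claim, which I expect to be dispatched cleanly by the circuit expansion above; the rest is bookkeeping with the identifications $\mca{L}_i^{-1}$ and the standard properties of Koszul complexes built from sections without common zeros.
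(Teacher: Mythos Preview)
Your proposal is correct and follows essentially the same approach as the paper: both identify the sequence as the Koszul complex for the sections $\{x_i\}_{i\in C_+}\cup\{y_i\}_{i\in C_-}$ and prove exactness by showing these sections have no common zero on $X$, using the expansion $\beta_C=\sum_{j\in I^c\cap C_+}\beta^I_j-\sum_{j\in I^c\cap C_-}\beta^I_j$ together with the sign constraints forced by $\langle\eta,\beta_C\rangle>0$. The paper phrases the non-vanishing step as the direct statement ``for every $I\in\bb{B}$ either $C_+\cap I^c_+\neq\emptyset$ or $C_-\cap I^c_-\neq\emptyset$'', while you argue the contrapositive via Lemma~\ref{lem_toric_stability}; these are the same computation.
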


\begin{proof}
Since we have $\langle\eta,\beta_{C}\rangle=\sum_{j\in C\cap I^{c}}\langle\eta,\beta^{I}_{j}\rangle\langle\mf{b}_{j},\beta_{C}\rangle>0$, we must have $C_{+}\cap I^{c}_{+}\neq\emptyset$ or $C_{-}\cap I^{c}_{-}\neq\emptyset$ for any $I\in\bb{B}$. This implies that the subvariety of $X$ defined by $x_{i}=0$ for any $i\in C_{+}$ and $y_{i}=0$ for any $i\in C_{-}$ is empty. By considering the Koszul complex for these equations, we obtain an exact sequence of the form (\ref{eqn_toric_wall_crossing_exact_seq}). 
\end{proof}

In particular, Proposition~\ref{prop_toric_wall_crossing_characterization} and Lemma~\ref{lem_toric_wall_crossing_formula} imply the first part of Conjecture~\ref{conj_wall_crossing} by taking $l=1$, $n_{0}=0$, $n_{1}=|C|$, $\bb{B}^{0}_{s_{\pm},w}=\bb{B}_{X,s_{-}}\cap\bb{B}_{X,s_{+}}$, and $\bb{B}^{1}_{s_{\pm},w}=\bb{B}_{X,s_{\pm}}\setminus\bb{B}^{0}_{s_{\pm},w}$. In section 5.11, we will show that the central charge of $\mca{C}(A(s))$ is given by $\mr{Vol}(A(s))$ for any $A(s)\in\mr{Alc}_{s}$. This implies the second part of Conjecture~\ref{conj_wall_crossing}. 

As another application of these results, we obtain the following. 

\begin{corollary}\label{cor_weak_generator}
For any $s\in\mf{s}^{\ast}_{\mr{reg}}$, the vector bundle $\mca{T}_{\mf{C},s}\coloneqq\oplus_{I\in\bb{B}}\mca{E}_{\mf{C},s}(p_{I})$ weakly generate $D(\mr{QCoh}(X))$.
\end{corollary}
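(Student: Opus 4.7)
The plan is to show that the thick subcategory $\msc{T} \subset D^{b}(\mathrm{Coh}(X))$ generated by the summands $\{\mathcal{E}_{\mathfrak{C},s}(p_{I})\}_{I \in \mathbb{B}}$ of $\mathcal{T}_{\mathfrak{C},s}$ contains every line bundle $\mathcal{L}(\mu)$ for $\mu \in \mathbb{X}^{\ast}(T)$, from which weak generation follows. Indeed, $X$ is projective over $\mathrm{Spec}(\mathbb{C}[X])$ with relatively ample line bundle $\mathfrak{L}(\eta)$, so Serre's theorem ensures that the powers $\{\mathfrak{L}(\eta)^{\otimes n}\}_{n \in \mathbb{Z}}$ compactly generate $D^{b}(\mathrm{Coh}(X))$ and hence weakly generate $D(\mathrm{QCoh}(X))$; these are among the line bundles in question.

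The first step is to establish slope-invariance: for any $s' \in \mathfrak{s}^{\ast}_{\mathrm{reg}}$, every summand of $\mathcal{T}_{\mathfrak{C},s'}$ lies in $\msc{T}$. Connecting $s$ to $s'$ by a generic path through finitely many walls $w_{C}$, at each elementary crossing from $s_{+}$ to $s_{-}$ Proposition~\ref{prop_toric_wall_crossing_characterization} isolates the unique pair of line bundles that are swapped, namely some $\mathcal{L}_{0}$ and $\mathcal{L}_{0} \otimes \mathcal{L}(\sum_{i \in C_{-}}\varepsilon^{\ast}_{i} - \sum_{i \in C_{+}}\varepsilon^{\ast}_{i})$, while the intermediate line bundles $\mathcal{L}_{0} \otimes \mathcal{L}(\sum_{i \in C' \cap C_{-}}\varepsilon^{\ast}_{i} - \sum_{i \in C' \cap C_{+}}\varepsilon^{\ast}_{i})$ for $\emptyset \neq C' \subsetneq C$ are common to $\mathbb{B}_{X,s_{+}}$ and $\mathbb{B}_{X,s_{-}}$. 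Tensoring the exact sequence of Lemma~\ref{lem_toric_wall_crossing_formula} with $\mathcal{L}_{0}$ yields a finite resolution of one swapped line bundle in terms of the other and the shared ones, so $\msc{T}$ is preserved across each wall-crossing. Once slope-invariance is known, for any $\mu = \sum_{i} m_{i}\varepsilon^{\ast}_{i} \in \mathbb{X}^{\ast}(T)$ I would produce a generic slope $s'$ whose lift $\iota(s') = (s'_{1},\ldots,s'_{n})$ satisfies $s'_{i} \in (m_{i}, m_{i}+1)$ and avoids the integer-pairing walls for all signed circuits. Then the alcove $A \in \mathrm{Alc}_{s'}$ containing the origin satisfies $\mu_{A} = \mu$ by (\ref{eqn_mu_A}), and (\ref{eqn_toric_E(A)}) shows that $\mathcal{L}(\mu) = \mathcal{E}(A)$ agrees non-equivariantly with the summand $\mathcal{E}_{\mathfrak{C},s'}(p_{I})$ for the corresponding $I \in \mathbb{B}$, placing $\mathcal{L}(\mu)$ in $\msc{T}$.

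The main obstacle will be the passage from ``$\msc{T}$ contains every line bundle'' to weak generation of the full $D(\mathrm{QCoh}(X))$; Serre's theorem handles the bounded coherent case, but some care is needed for arbitrary quasi-coherent complexes. I would appeal to the $S$-equivariant description $\mathrm{QCoh}(X) \cong \mathrm{QCoh}^{S}(\mu^{-1}(0)^{\eta\text{-ss}})$ combined with the affine cover $\{\widetilde{U}_{I}\}_{I \in \mathbb{B}}$: a complex $\mathcal{F}$ annihilated by $\RHom(\mathcal{L}(\mu), -)$ for every $\mu \in \mathbb{X}^{\ast}(T)$ has vanishing $S$-isotypic components of its derived sections on each $\widetilde{U}_{I}$, whence $\mathcal{F}|_{U_{I}} = 0$ for every $I$ and therefore $\mathcal{F} = 0$. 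A secondary technical point is that the wall-crossing analysis of Proposition~\ref{prop_toric_wall_crossing_characterization} presupposes the path crosses one wall at a time, which is handled by a standard genericity argument on the path in $\mathfrak{s}^{\ast}_{\bb{R}}$.
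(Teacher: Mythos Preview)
Your approach is essentially the same as the paper's: show that the thick subcategory generated by $\{\mca{E}_{\mf{C},s}(p_I)\}$ contains every $\mca{L}(\lambda)$ by first locating $\mca{L}(\lambda)$ in $\bb{B}_{X,s'}$ for some $s'$, then using the wall-crossing exact sequence of Lemma~\ref{lem_toric_wall_crossing_formula} together with Proposition~\ref{prop_toric_wall_crossing_characterization} to transport across each wall along a generic path; finally deduce weak generation from the fact that vanishing of $R\Gamma(\mca{F}\otimes\mca{L})$ for all (or enough) line bundles forces $\mca{F}\cong 0$. The paper's proof is terser, citing Lemma~\ref{can_slope} for the first step and the standard ample-line-bundle criterion for the last, but the logic is the same.

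There is one technical slip in your realization of the first step. You ask for $s'\in\mf{s}^{\ast}_{\mr{reg}}$ with $\iota(s')_i\in(m_i,m_i+1)$ for every $i$, so that the alcove containing the origin has $\mu_A=\mu$. But $\iota(s')$ ranges only over an $r$-dimensional subspace of $\bb{R}^{n}$, so there is no reason it should meet a prescribed unit cube. The easy repair is to drop the requirement $x=0$: since $({}^{t}\mf{a}(x)+\iota(s'))$ ranges over all of $\bb{R}^{n}$ as $(x,s')$ varies over $\mf{h}^{\ast}_{\bb{R}}\times\mf{s}^{\ast}_{\bb{R}}$ (the two summands give the splitting $\bb{X}^{\ast}(T)_{\bb{R}}=\Ker(\mf{b})_{\bb{R}}\oplus\iota(\bb{X}^{\ast}(S))_{\bb{R}}$), you can always arrange $\langle x,\mf{a}_i\rangle+s'_i\in(m_i,m_i+1)$ for all $i$ with $s'$ generic, and then the alcove $A\ni x$ has $\mu_A=\mu$. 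Equivalently, you may quote Lemma~\ref{can_slope} as the paper does: integer shifts of the slope tensor $\bb{B}_{X,s}$ by $\mf{L}(l)$, and together with the $\bb{X}^{\ast}(H)$-shifts already present this reaches every $\mca{L}(\lambda)$. With this correction your argument goes through and matches the paper's.
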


\begin{proof}
We note that any line bundle of the form $\mca{L}(\lambda)$ for $\lambda\in\bb{X}^{\ast}(T)$ is contained in $\bb{B}_{X,s'}$ for some $s'\in\mf{s}^{\ast}_{\mr{reg}}$ by Lemma~\ref{can_slope}. By connecting $s$ and $s'$ by a generic path and applying Proposition~\ref{prop_toric_wall_crossing_characterization} and Lemma~\ref{lem_toric_wall_crossing_formula} each time when the path crosses a wall, we obtain that $\mca{L}(\lambda)$ is contained in the full triangulated subcategory of $\DCoh(X)$ generated by $\{\mca{E}_{\mf{C},s}(p_{I})\}_{I\in\bb{B}}$ for any $\lambda\in\bb{X}^{\ast}(T)$. In particular, $\RHom(\mca{T}_{\mf{C},s},\mca{F})=0$ implies that $R\Gamma(\mca{F}\otimes\mca{L})=0$ for any sufficiently ample line bundles $\mca{L}$. This implies $\mca{F}\cong0$. 
\end{proof}

\subsection{Linear programming}
 
In this section, we collect some results about linear programming which will be used in the proof of Conjecture~\ref{conj_general_tilting_bundle} and Conjecture~\ref{categorical_stable_basis} for toric hyper-K\"ahler manifolds. Our reference for the theory of linear programming is \cite{BaKe}. 

We first prepare some notations about sign vectors. For $x\in\bb{R}$, we set 
\begin{align*}
\sigma(x)\coloneqq\begin{cases}
+ &\mbox{ if } x>0\\
- &\mbox{ if } x<0\\
0 &\mbox{ if } x=0
\end{cases}
\end{align*}
and for $x=(x_{1},\ldots,x_{n})\in\bb{R}^{n}$, we set $\sigma(x)\coloneqq(\sigma(x_{1}),\ldots,\sigma(x_{n}))\in\{+,-,0\}^{n}$. We will be interested in the sign patterns $\sigma(V)\coloneqq\{\sigma(x)\mid x\in V\}\subset\{+,-,0\}^{n}$ for a vector subspace $V\subset\bb{R}^{n}$.  

We set $E\coloneqq\{1,\ldots,n\}$. For a sign vector $Y\in\{+,-,0\}^{E}$, we define $Y^{+}\coloneqq\{i\in E\mid Y_{i}=+\}$, $Y^{-}\coloneqq\{i\in E\mid Y_{i}=-\}$, and $Y^{0}\coloneqq\{i\in E\mid Y_{i}=0\}$. We define its support by $\Supp(Y)\coloneqq Y^{+}\cup Y^{-}$. For a subset $I\subset E$, we write $Y_{I}\geq0$ if $Y_{i}\in\{+,0\}$, $Y_{I}\leq0$ if $Y_{i}\in\{-,0\}$, and $Y_{I}=0$ if $Y_{i}=0$ for any $i\in I$. 

\begin{dfn}
Two sign vectors $Y,Z\in\{+,-,0\}^{E}$ are called \emph{orthogonal} if 
\begin{align*}
(Y^{+}\cap Z^{+})\cup(Y^{-}\cap Z^{-})\neq\emptyset\Leftrightarrow(Y^{+}\cap Z^{-})\cup(Y^{-}\cap Z^{+})\neq\emptyset.
\end{align*}
This is denoted by $Y\perp Z$. For a subset $\mca{F}\in\{+,-,0\}^{E}$, the set 
\begin{align*}
\mca{F}^{\perp}\coloneqq\{Y\in\{+,-,0\}^{E}\mid Y\perp Z\mbox{ for any }Z\in\mca{F}\}
\end{align*}
is called the \emph{orthogonal complement} of $\mca{F}$. 
\end{dfn}

For a vector subspace $V\subset\bb{R}^{n}$, we denote by $V^{\perp}\subset\bb{R}^{n}$ the orthogonal complement of $V\subset\bb{R}^{n}$ with respect to the standard inner product on $\bb{R}^{n}$.

\begin{prop}\label{prop_om_orth_comp}
For any vector subspace $V\subset\bb{R}^{n}$, we have $\sigma(V)^{\perp}=\sigma(V^{\perp})$.  
\end{prop}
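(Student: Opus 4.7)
The plan is to prove the two inclusions separately. The easy direction $\sigma(V^{\perp}) \subseteq \sigma(V)^{\perp}$ follows from a direct sign analysis of $\langle y, z \rangle = 0$ for $y \in V^{\perp}$ and $z \in V$: grouping the summands $y_i z_i$ according to their sign gives
$$\sum_{i \in (Y^+ \cap Z^+) \cup (Y^- \cap Z^-)} y_i z_i \;=\; -\sum_{i \in (Y^+ \cap Z^-) \cup (Y^- \cap Z^+)} y_i z_i,$$
where $Y = \sigma(y)$, $Z = \sigma(z)$, and both sides are sums of strictly positive numbers. Hence one index set is empty iff the other is, which is precisely the condition $Y \perp Z$.

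For the nontrivial direction $\sigma(V)^{\perp} \subseteq \sigma(V^{\perp})$, I would fix $Y \in \sigma(V)^{\perp}$ and aim to construct, for each $i_0 \in \Supp(Y)$, an auxiliary vector $y^{(i_0)} \in V^{\perp}$ whose component at $i_0$ is strictly of sign $Y_{i_0}$, whose components on $\Supp(Y) \setminus \{i_0\}$ have sign in $\{0, Y_j\}$, and which vanishes on $Y^0$. Once these are available, the sum $y = \sum_{i_0 \in \Supp(Y)} y^{(i_0)}$ lies in $V^{\perp}$ and satisfies $\sigma(y) = Y$, because each $i_0 \in \Supp(Y)$ receives the correct strict sign from one summand while no other summand can reverse it (they all contribute sign in $\{0, Y_{i_0}\}$).

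To produce each $y^{(i_0)}$, I would apply Motzkin's transposition theorem to the mixed strict/non-strict linear system defining the required sign constraints together with the equalities $y \in V^{\perp}$ and $y|_{Y^0} = 0$. If no such $y^{(i_0)}$ exists, the alternative furnishes an element $z \in V$ with $Y_{i_0} z_{i_0} < 0$ and $Y_j z_j \leq 0$ for all $j \in \Supp(Y) \setminus \{i_0\}$. Setting $Z = \sigma(z)$, inspection of signs shows $(Y^+ \cap Z^+) \cup (Y^- \cap Z^-) = \emptyset$ while $i_0 \in (Y^+ \cap Z^-) \cup (Y^- \cap Z^+)$, contradicting $Y \perp Z$.

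The main subtlety is arranging the LP alternative so that exactly one strict inequality (at $i_0$) appears, which is what forces the dual witness $z$ to flip sign precisely at $i_0$ and not elsewhere on $\Supp(Y)$; the rest is routine bookkeeping. This result is in any case classical in oriented matroid theory, so one could alternatively invoke it as a known fact rather than spelling out the Motzkin argument.
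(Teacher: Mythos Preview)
Your argument is correct. The easy inclusion is handled cleanly, and for the hard inclusion your strategy of building $y=\sum_{i_0\in\Supp(Y)}y^{(i_0)}$ from partial witnesses is sound: each coordinate in $\Supp(Y)$ receives at least one strictly signed contribution and no contribution of the wrong sign, while coordinates in $Y^{0}$ stay zero. The Motzkin alternative does produce the required $z\in V$; one small cosmetic point is that the raw alternative naturally gives $Y_{i_0}z_{i_0}>0$ and $Y_j z_j\ge 0$ on $\Supp(Y)\setminus\{i_0\}$ rather than the reversed signs you wrote, but since $V$ is a subspace you may replace $z$ by $-z$, and in either case one of the two index sets in the definition of $Y\perp Z$ is empty while the other contains $i_0$, yielding the contradiction.

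The paper itself does not prove this statement: it simply cites Corollary~5.42 of Bachem--Kern, treating the result as a standard fact from oriented matroid theory (which is exactly the option you mention at the end of your proposal). So your write-up is strictly more informative than the paper's. What you gain is self-containment and an explicit mechanism (Motzkin's transposition theorem) that a reader can verify without consulting the reference; what the citation buys is brevity and a pointer to the broader context in which this duality between $\sigma(V)$ and $\sigma(V^{\perp})$ lives. Both are appropriate here, since the proposition is used only as a black box in the subsequent combinatorics.
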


\begin{proof}
See Corollary 5.42 in \cite{BaKe}. 
\end{proof}

\begin{dfn}
For a subset $\mca{F}\subset\{+,-,0\}^{E}$ and disjoint subsets $I,J\subset E$, we set 
\begin{align*}
\mca{F}\setminus I/J\coloneqq\{Y\in\{+,-,0\}^{E\setminus(I\cup J)}\mid \exists Z\in\mca{F}\mbox{ s.t. }Z_{i}=0\mbox{ for }i\in I\mbox{ and }Z_{e}=Y_{e}\mbox{ for }e\in E\setminus(I\cup J) \}. 
\end{align*}
This is called the \emph{minor} of $\mca{F}$ obtained by deleting $I$ and contracting $J$. 
\end{dfn}

\begin{lemma}\label{lem_om_minor}
For any vector subspace $V\subset\bb{R}^{n}$ and disjoint subsets $I,J\subset E$, we have 
\begin{align*}
(\sigma(V)\setminus I/J)^{\perp}=\sigma(V)^{\perp}\setminus J/I.
\end{align*}
\end{lemma}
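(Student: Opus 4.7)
The plan is to realize both minor operations geometrically on the vector subspace $V$ and then deduce the conclusion from Proposition~\ref{prop_om_orth_comp}, which already upgrades the orthogonality of sign vectors to the orthogonality of vector subspaces. The whole proof is essentially the compatibility of (deletion, contraction) with taking sign patterns, combined with the classical duality between intersection with a coordinate subspace and projection to a coordinate subspace.

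More concretely, first I would unwind the paper's definition of the minor: for $V\subset\bb{R}^{E}$, an element of $\sigma(V)\setminus I/J$ is the sign pattern of $v|_{E\setminus(I\cup J)}$ for some $v\in V$ with $v_{I}=0$. Equivalently, if I set
\[
W\ \coloneqq\ \pi_{E\setminus(I\cup J)}\bigl(V\cap\bb{R}^{E\setminus I}\bigr)\ \subset\ \bb{R}^{E\setminus(I\cup J)},
\]
where $\bb{R}^{E\setminus I}\subset\bb{R}^{E}$ denotes the coordinate subspace of vectors with $I$-coordinates zero and $\pi_{E\setminus(I\cup J)}$ denotes the projection that forgets the $J$-coordinates, then $\sigma(V)\setminus I/J=\sigma(W)$. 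In exactly the same way, writing
\[
W'\ \coloneqq\ \pi_{E\setminus(I\cup J)}\bigl(V^{\perp}\cap\bb{R}^{E\setminus J}\bigr),
\]
I have $\sigma(V)^{\perp}\setminus J/I=\sigma(V^{\perp})\setminus J/I=\sigma(W')$ thanks to Proposition~\ref{prop_om_orth_comp}. A single further application of Proposition~\ref{prop_om_orth_comp} reduces the lemma to the purely linear-algebraic identity $W^{\perp}=W'$ in $\bb{R}^{E\setminus(I\cup J)}$.

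To verify $W^{\perp}=W'$, I would use the standard duality that for any subspace $V\subset\bb{R}^{E}$ and any subset $K\subset E$, the orthogonal complement inside $\bb{R}^{E\setminus K}$ of $V\cap\bb{R}^{E\setminus K}$ coincides with $\pi_{E\setminus K}(V^{\perp})$. Applied to $K=I$, this gives $(V\cap\bb{R}^{E\setminus I})^{\perp}=\pi_{E\setminus I}(V^{\perp})$ in $\bb{R}^{E\setminus I}$. Then, for a vector $y\in\bb{R}^{E\setminus(I\cup J)}$, the condition $y\in W^{\perp}$ is, by the adjointness of the inclusion $\bb{R}^{E\setminus(I\cup J)}\hookrightarrow\bb{R}^{E\setminus I}$ (filling in $0$ on $J$) and the projection $\pi_{E\setminus(I\cup J)}$, equivalent to the existence of $u\in V^{\perp}$ with $u_{J}=0$ and $u|_{E\setminus(I\cup J)}=y$; this is exactly the condition $y\in W'$.

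The argument is self-contained and routine; the only conceptual input beyond linear algebra is Proposition~\ref{prop_om_orth_comp}, which is already available. The main thing to be careful about is keeping the roles of $I$ (the indices on which vectors in $V$ are required to vanish) and $J$ (the indices which are simply forgotten) straight on each side of the duality, since the passage to orthogonal complements swaps these two roles; there is no substantial obstacle beyond this bookkeeping.
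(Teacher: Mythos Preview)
Your argument is correct and follows the same approach as the paper: both reduce the statement to Proposition~\ref{prop_om_orth_comp} together with the elementary linear-algebraic duality between intersecting with a coordinate subspace and projecting onto one. The paper's own proof simply cites Proposition~\ref{prop_om_orth_comp} and refers to \cite{BaKe} for the linear-algebra step, whereas you have spelled out that step explicitly.
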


\begin{proof}
This follows from Proposition~\ref{prop_om_orth_comp}. See also Lemma 5.51 and Lemma 5.52 in \cite{BaKe}. 
\end{proof}

\begin{dfn}
For a subset $\mca{F}\subset\{+,-,0\}^{E}$, nonzero $Y\in\mca{F}$ is called \emph{elementary} sign vector of $\mca{F}$ if $\emptyset\neq\Supp(Z)\subset\Supp(Y)$ implies $\Supp(Z)=\Supp(Y)$ for any $Z\in\mca{F}$. The set of all elementary sign vectors of $\mca{F}$ is denoted by $\mr{elem}(\mca{F})$. 
\end{dfn}

\begin{prop}\label{prop_om_elem}
For any vector subspace $V\subset\bb{R}^{n}$, we have $\mr{elem}(\sigma(V))^{\perp}=\sigma(V)^{\perp}$. 
\end{prop}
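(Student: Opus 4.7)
The plan is to prove the non-trivial inclusion $\mr{elem}(\sigma(V))^{\perp}\supseteq\sigma(V)^{\perp}$ (the reverse inclusion follows immediately from $\mr{elem}(\sigma(V))\subset\sigma(V)$) by a standard oriented-matroid argument: every vector in $\sigma(V)$ admits a sign-compatible (conformal) decomposition into elementary vectors, and orthogonality is preserved under such decompositions.

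The key lemma I would establish first is the following conformal decomposition property. For every nonzero $Y\in\sigma(V)$, there exist elementary sign vectors $Y^{(1)},\ldots,Y^{(k)}\in\mr{elem}(\sigma(V))$ such that $\Supp(Y^{(j)})\subset\Supp(Y)$, $Y^{(j)}_i\in\{0,Y_i\}$ for all $i,j$, and $\bigcup_j\Supp(Y^{(j)})=\Supp(Y)$. I would prove this by induction on $|\Supp(Y)|$. Write $Y=\sigma(y)$ for some $y\in V$. If $Y\in\mr{elem}(\sigma(V))$ there is nothing to do. Otherwise pick $Z\in\sigma(V)\setminus\{0\}$ with $\Supp(Z)\subsetneq\Supp(Y)$ chosen to have minimal support, so $Z$ is elementary; write $Z=\sigma(z)$ with $z\in V$. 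By replacing $z$ with $-z$ if necessary and rescaling, we may arrange $z_i=y_i$ for some chosen $i_0\in\Supp(Z)$, and then choose the scalar $\lambda>0$ maximally so that $y-\lambda z$ still satisfies $\sigma(y-\lambda z)_i\in\{0,Y_i\}$ for every $i$. For this maximal $\lambda$, the vector $y-\lambda z$ lies in $V$ with sign vector $Y'$ conformally dominated by $Y$ and with $\Supp(Y')\subsetneq\Supp(Y)$ (because at least one coordinate has been driven to zero). Applying the induction hypothesis to $Y'$ and appending $Z$ to the resulting family gives the desired decomposition.

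With this lemma in hand, the orthogonality argument is straightforward. Suppose $X\in\mr{elem}(\sigma(V))^{\perp}$ and let $Y\in\sigma(V)$; I want to show $X\perp Y$. Assume for contradiction that the orthogonality condition fails, so (after interchanging $X^{+}$ and $X^{-}$ if necessary) we have $(X^{+}\cap Y^{+})\cup(X^{-}\cap Y^{-})\neq\emptyset$ while $(X^{+}\cap Y^{-})\cup(X^{-}\cap Y^{+})=\emptyset$. Pick $i_{0}$ in the nonempty set, and using the decomposition $Y^{(1)},\ldots,Y^{(k)}$ choose $j$ with $i_{0}\in\Supp(Y^{(j)})$; by sign compatibility $Y^{(j)}_{i_{0}}=Y_{i_{0}}$, so $i_{0}\in(X^{+}\cap Y^{(j)+})\cup(X^{-}\cap Y^{(j)-})$. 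Since $X\perp Y^{(j)}$, there must exist $i_{1}\in(X^{+}\cap Y^{(j)-})\cup(X^{-}\cap Y^{(j)+})$, and sign compatibility again forces $Y^{(j)}_{i_{1}}=Y_{i_{1}}$, giving $i_{1}\in(X^{+}\cap Y^{-})\cup(X^{-}\cap Y^{+})$, a contradiction.

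The main obstacle is the conformal decomposition lemma itself: one has to be careful that the maximal scalar $\lambda$ both strictly decreases the support and preserves the sign-compatibility, which is precisely where the fact that $\sigma(V)$ comes from a genuine linear subspace (and not just from an abstract oriented matroid axiomatization) gets used. Everything else is a purely combinatorial manipulation of sign vectors, and the rest of the proof is a direct unwinding of the definition of $\perp$.
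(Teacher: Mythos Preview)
The paper does not supply its own argument here; it simply cites Corollary~5.37 in \cite{BaKe}. Your approach via conformal decomposition into elementary vectors is exactly the standard route behind that citation, and your orthogonality deduction from the lemma is clean and correct. (One typo: your opening sentence has the inclusions swapped --- the immediate direction is $\sigma(V)^{\perp}\subseteq\mr{elem}(\sigma(V))^{\perp}$, and the body of your argument correctly proves the reverse.)

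There is, however, a real gap in your proof of the decomposition lemma. You select an elementary $Z$ with $\Supp(Z)\subsetneq\Supp(Y)$ of minimal support, but nothing forces $Z$ to conform to $Y$. Concretely, take $V=\{(a,b,c,a+b+c):a,b,c\in\bb{R}\}\subset\bb{R}^{4}$ and $y=(1,1,-2,0)$, so $Y=(+,+,-,0)$. Then $z=(1,-1,0,0)$ gives an elementary $Z=(+,-,0,0)$ with minimal support inside $\Supp(Y)$; your subtraction with $\lambda=1$ produces $Y'=(0,+,-,0)$, but appending $Z$ to the decomposition of $Y'$ violates the conformality requirement at the second coordinate. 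This breaks the final orthogonality step precisely at ``sign compatibility again forces $Y^{(j)}_{i_{1}}=Y_{i_{1}}$'', since $i_{1}$ may land at a coordinate where $Z$ and $Y$ disagree.

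The fix is immediate and is already available as Lemma~\ref{lem_om_conform}: rather than minimizing $\Supp(Z)$, take $Z$ to be a $\preceq$-minimal nonzero element of $\{W\in\sigma(V):W\preceq Y\}$. That lemma says such a $Z$ is elementary, and $Z\preceq Y$ is now built in. Writing $Z=\sigma(z)$ with $z\in V$, the signs of $z$ and $y$ now agree on all of $\Supp(Z)$, so your maximal-$\lambda$ subtraction still strictly shrinks the support, and appending $Z$ is legitimate. With this one change the induction and the orthogonality argument go through exactly as you wrote them.
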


\begin{proof}
See Corollary 5.37 in \cite{BaKe}.
\end{proof}

\begin{dfn}
For $Y,Z\in\{+,-,0\}^{E}$, we write $Y\preceq Z$ and say that $Y$ conforms to $Z$ if $Y^{+}\subset Z^{+}$ and $Y^{-}\subset Z^{-}$. This relation defines a partial order $\preceq$ on $\{+,-,0\}^{E}$.  
\end{dfn}

\begin{lemma}\label{lem_om_conform}
For any vector subspace $V\subset\bb{R}^{n}$, $\mr{elem}(\sigma(V))$	 coincides with the set of minimal nonzero elements of $\sigma(V)$ with respect to the partial order $\preceq$. 
\end{lemma}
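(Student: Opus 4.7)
The plan is to prove both inclusions in the equality between $\mr{elem}(\sigma(V))$ and the set of $\preceq$-minimal nonzero elements of $\sigma(V)$.

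For the easier direction (elementary implies $\preceq$-minimal), I would start by observing the following general fact about the partial order: if $Y'\preceq Y$ and $\Supp(Y')=\Supp(Y)$, then $Y'=Y$. Indeed, $Y'^{+}\subset Y^{+}$ and $Y'^{-}\subset Y^{-}$ together with $\Supp(Y')\supset\Supp(Y)$ and $Y'^{+}\cap Y'^{-}=\emptyset$ force $Y'^{+}=Y^{+}$ and $Y'^{-}=Y^{-}$. Given this, if $Y\in\mr{elem}(\sigma(V))$ and $Y'\in\sigma(V)$ is nonzero with $Y'\preceq Y$, then $\emptyset\neq\Supp(Y')\subset\Supp(Y)$ forces $\Supp(Y')=\Supp(Y)$ by elementarity, hence $Y'=Y$.

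For the reverse direction, suppose $Y=\sigma(y)\in\sigma(V)$ (with $y\in V$) is $\preceq$-minimal nonzero and assume for contradiction that there is $Z=\sigma(z)\in\sigma(V)$ with $\emptyset\neq\Supp(Z)\subsetneq\Supp(Y)$ for some $z\in V$. The plan is to construct a nonzero $y'\in V$ with $\sigma(y')\prec Y$ strictly by walking along the line $y+tz$. For $i\notin\Supp(Y)$ one automatically has $z_{i}=0$ (since $\Supp(Z)\subset\Supp(Y)$), so $(y+tz)_{i}=0$ throughout, and hence $\Supp(y+tz)\subset\Supp(Y)$ for every $t$. Set $T^{+}\coloneqq\{-y_{i}/z_{i}\mid i\in\Supp(Z),\ -y_{i}/z_{i}>0\}$. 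If $T^{+}\neq\emptyset$, let $t_{0}=\min T^{+}>0$. Then for $0\leq t<t_{0}$ every coordinate $(y+tz)_{i}$ with $i\in\Supp(Z)$ retains the sign of $y_{i}$, so $\sigma(y+tz)=Y$, while at $t=t_{0}$ at least one coordinate in $\Supp(Z)$ vanishes, giving $\sigma(y+t_{0}z)\prec Y$ strictly. Moreover $y+t_{0}z\neq 0$, since $y+t_{0}z=0$ with $t_{0}>0$ would give $\Supp(y)=\Supp(z)$, contradicting $\Supp(Z)\subsetneq\Supp(Y)$. If instead $T^{+}=\emptyset$, then $\sigma(z_{i})=\sigma(y_{i})$ for every $i\in\Supp(Z)$, and together with the vanishing of $z$ outside $\Supp(Y)$ this gives $\sigma(z)\prec Y$ strictly and $z\neq 0$. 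In both cases we produce a nonzero element of $V$ whose sign vector is strictly below $Y$, contradicting minimality.

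The step I expect to be most delicate is the construction of the point $y+t_{0}z$: one must verify both that $\sigma(y+tz)$ never becomes incomparable with $Y$ along the segment $[0,t_{0}]$ (handled by choosing $t_{0}$ as the first positive zero among the coordinates in $\Supp(Z)$) and that the endpoint is nonzero (handled by the support comparison). After that the argument is purely formal, and no additional input is needed beyond the fact that $V$ is a linear subspace, which is what makes the two collections coincide (for arbitrary $\mca{F}$ the two notions diverge).
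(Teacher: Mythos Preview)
Your proof is correct. The paper does not give its own argument for this lemma; it simply cites Lemma~5.30 in Bachem--Kern, \emph{Linear Programming Duality}. Your direct argument is essentially the standard one that appears in that reference and elsewhere in oriented matroid theory: the forward direction is immediate from the observation that $Y'\preceq Y$ together with $\Supp(Y')=\Supp(Y)$ forces $Y'=Y$, and the reverse direction is handled by the ``conformal elimination'' trick of sliding along the line $y+tz$ until the first coordinate vanishes. Your treatment of the case split ($T^{+}\neq\emptyset$ versus $T^{+}=\emptyset$) is clean, and the check that $y+t_{0}z\neq 0$ via the support comparison is exactly the right way to rule out the degenerate endpoint. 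The only place where the linear-subspace hypothesis enters is in guaranteeing $y+tz\in V$, which you use implicitly; you correctly flag this at the end.
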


\begin{proof}
See Lemma 5.30 in \cite{BaKe}.
\end{proof}

\begin{prop}[Minty's Lemma]\label{prop_Minty}
For any vector subspace $V\subset\bb{R}^{n}$ and every partition $E=R\sqcup G\sqcup B\sqcup W$ with $e\in R\sqcup G$, exactly one of the following holds:
\begin{itemize}
\item There exists $Y\in\sigma(V)$ such that $e\in\Supp(Y)$, $Y_{R}\geq0$, $Y_{G}\leq0$, and $Y_{W}=0$.
\item There exists $Z\in\sigma(V^{\perp})$ such that $e\in\Supp(Z)$, $Z_{R}\geq0$, $Z_{G}\leq0$, and $Z_{B}=0$. 
\end{itemize}
\end{prop}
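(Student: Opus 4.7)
The statement splits into two parts: that the two alternatives are mutually exclusive, and that at least one holds. I will treat them separately.

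For mutual exclusivity, assume both $Y \in \sigma(V)$ and $Z \in \sigma(V^{\perp})$ with the stated properties. Proposition~\ref{prop_om_orth_comp} then forces $Y \perp Z$. Reading off sign patterns index by index: on $R$ both $Y_i, Z_i \in \{+,0\}$; on $G$ both are in $\{-,0\}$; on $W$ we have $Y = 0$; on $B$ we have $Z = 0$. So no index can exhibit the disagreement pattern $(+,-)$ or $(-,+)$. But since $e \in R \sqcup G$ lies in $\Supp(Y) \cap \Supp(Z)$, the values $Y_e$ and $Z_e$ are both nonzero and share the same sign (both $+$ if $e \in R$, both $-$ if $e \in G$), giving an agreement at $e$ with no disagreement anywhere, contradicting orthogonality.

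For existence, suppose the first alternative fails; I will produce $Z$ via LP duality. Assume $e \in R$; the case $e \in G$ is parallel after swapping signs. The failure says that the polyhedral cone $V \cap K$, where $K \coloneqq \{x \in \bb{R}^{n} : x_R \geq 0,\ x_G \leq 0,\ x_W = 0\}$, lies in the halfspace $\{x_e \leq 0\}$. By the Farkas lemma for polyhedral cones, this is equivalent to
\begin{align*}
-e_e^{\ast} \in V^{\perp} + K^{\ast},
\end{align*}
and a direct computation of the dual cone gives $K^{\ast} = \{y \in \bb{R}^n : y_R \geq 0,\ y_G \leq 0,\ y_W \text{ free},\ y_B = 0\}$. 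Decomposing $-e_e^{\ast} = w + y^{\ast}$ with $w \in V^{\perp}$ and $y^{\ast} \in K^{\ast}$ and reading off coordinates of $w = -e_e^{\ast} - y^{\ast}$ gives $w_e \leq -1$, $w_{R \setminus \{e\}} \leq 0$, $w_G \geq 0$, and $w_B = 0$; then $Z \coloneqq \sigma(-w) \in \sigma(V^{\perp})$ satisfies $Z_R \geq 0$, $Z_G \leq 0$, $Z_B = 0$, and $Z_e = +$, so $e \in \Supp(Z)$.

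The main obstacle is merely bookkeeping: one must track the four-way partition $E = R \sqcup G \sqcup B \sqcup W$ and the case split $e \in R$ versus $e \in G$ carefully through the sign computation. The only genuine input is a single application of Farkas/polyhedral LP duality, and I do not anticipate any serious difficulty.
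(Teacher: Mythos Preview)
Your proof is correct. The paper itself does not prove this statement at all; it simply cites \cite{BaKe}, Proposition~5.12. Your argument---mutual exclusivity via the orthogonality relation of Proposition~\ref{prop_om_orth_comp}, and existence via Farkas/LP duality applied to the polyhedral cone $V\cap K$---is the standard route to Minty's lemma and goes through cleanly. The only point worth a second glance is the identity $(V\cap K)^{\ast}=V^{\perp}+K^{\ast}$: this holds without closure because both $V$ and $K$ are polyhedral, so their Minkowski sum of duals is already closed.
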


\begin{proof}
See Proposition 5.12 in \cite{BaKe}. 
\end{proof}

In the below, we will consider the case $V=\Ker(\mf{b})\otimes_{\bb{Z}}\bb{R}\subset\bb{X}^{\ast}(T)\otimes_{\bb{Z}}\bb{R}\cong\bb{R}^{n}$, where the identification $\bb{X}^{\ast}(T)\otimes_{\bb{Z}}\bb{R}\cong\bb{R}^{n}$ is given by the fixed basis $\{\varepsilon^{\ast}_{1},\ldots,\varepsilon^{\ast}_{n}\}$. We note that the sign vectors $\sigma(V)$ does not change without tensoring $\bb{R}$. In this case, we have $V^{\perp}=\Ker(\mf{a})\otimes_{\bb{Z}}\bb{R}$ and hence $\mr{elem}(\sigma(V^{\perp}))=\{\sigma(\beta_{C})\mid C=C_{+}\sqcup C_{-}\mbox{ : signed circuit}\}$. 

\subsection{Toric varieties}

In this section, we recall a description of cohomology of line bundles on toric varieties. Since we only consider semi-projective toric varieties in this paper, we restrict our attention to these cases. Let $0\leq r\leq N$ be nonnegative integers and consider an exact sequence of tori
\begin{align*}
1\rightarrow T^{r}\rightarrow T^{m}\rightarrow T^{m-r}\rightarrow1,
\end{align*}
where $T^{k}\coloneqq(\bb{C}^{\times})^{k}$ for $k\in\bb{Z}_{\geq0}$. Let  
\begin{align*}
0\rightarrow\bb{X}_{\ast}(T^{r})\xrightarrow{^{t}\!\mb{B}}\bb{X}_{\ast}(T^{m})\xrightarrow{\mb{A}}\bb{X}_{\ast}(T^{m-r})\rightarrow0\\
0\rightarrow\bb{X}^{\ast}(T^{m-r})\xrightarrow{^{t}\!\mb{A}}\bb{X}^{\ast}(T^{m})\xrightarrow{\mb{B}}\bb{X}^{\ast}(T^{r})\rightarrow0
\end{align*}
be the associated exact sequence of cocharacter and character lattices. We set  $\mb{a}_{i}\coloneqq\mb{A}(\varepsilon_{i})$ and $\mb{b}_{i}\coloneqq\mb{B}(\varepsilon^{\ast}_{i})$ for a fixed basis $\{\varepsilon_{1},\ldots,\varepsilon_{m}\}$ of $\bb{X}_{\ast}(T^{m})$ and its dual basis $\{\varepsilon^{\ast}_{1},\ldots,\varepsilon^{\ast}_{m}\}$ of $\bb{X}^{\ast}(T^{m})$. We fix a generic element $\eta\in\sum_{i=1}^{m}\bb{Z}_{\geq0}\mb{b}_{i}$ such that if $\eta$ is contained in a cone of the form $\bb{R}_{\geq0}\mb{b}_{i_{1}}+\cdots\bb{R}_{\geq0}\mb{b}_{i_{l}}$ for $\{i_{1},\ldots, i_{l}\}\subset\{1,\ldots, m\}$, then $\{\mb{b}_{i_{1}},\ldots,\mb{b}_{i_{l}}\}$ generates $\bb{X}^{\ast}(T^{r})\otimes_{\bb{Z}}\bb{R}$. We set 
\begin{align*}
\Omega_{\eta}\coloneqq\{I\subset\{1,\ldots,m\}\mid \eta\in\sum_{i\in I}\bb{R}_{\geq0}\mb{b}_{i}\}
\end{align*}
and define a fan $\Sigma$ in $\bb{X}_{\ast}(T^{m-r})\otimes_{\bb{Z}}\bb{R}$ by $\Sigma\coloneqq\left\{\sigma_{I}\mid I\in \Omega_{\eta}\right\}$, where $\sigma_{I}\coloneqq\sum_{j\in I^{c}}\bb{R}_{\geq0}\mb{a}_{j}$. Let $\mca{X}(\Sigma)$ be the toric variety associated with the fan $\Sigma$. By Theorem 2.4 in \cite{HS}, $\mca{X}(\Sigma)$ is isomorphic to the GIT quotient $(\bb{C}^{m})^{\eta-\mr{ss}}/\!/T^{r}$. For simplicity, we assume that $\mca{X}(\Sigma)$ is smooth, i.e., $\{\mb{a}_{j}\}_{j\in I^{c}}$ generates $\bb{X}_{\ast}(T^{m-r})$ over $\bb{Z}$ for any $I\in\Omega_{\eta}$ such that $|I|=r$. In this case, the action of $T^{r}$ on $(\bb{C}^{m})^{\eta-\mr{ss}}$ is free and hence one can define a $T^{m-r}$-equivariant line bundle $\mca{L}(\lambda)$ associated with each character $\lambda\in\bb{X}^{\ast}(T^{m})$.

We set $R(\Sigma)\coloneqq\bb{C}[x_{1},\ldots,x_{m}]$ with an $\bb{X}^{\ast}(T^{m})$-grading given by $\deg(x_{i})=\varepsilon^{\ast}_{i}$.  Let $B(\Sigma)=\left(\prod_{i\in I}x_{i}\mid I\in\Omega_{\eta}\right)$ be a monomial ideal of $R(\Sigma)$. Since $B(\Sigma)$ is generated by homogeneous elements, the local cohomology $H^{i}_{B(\Sigma)}(R(\Sigma))$ of $R(\Sigma)$ with supports in $B(\Sigma)$ is also $\bb{X}^{\ast}(T^{m})$-graded. We denote by $R(\Sigma)_{\lambda}$ and $H^{i}_{B(\Sigma)}(R(\Sigma))_{\lambda}$ the weight $\lambda$ parts of $R(\Sigma)$ and $H^{i}_{B(\Sigma)}(R(\Sigma))$ for any $\lambda\in\bb{X}^{\ast}(T^{m})$.  One can relate them and the cohomology of the line bundle $\mca{L}(\lambda)$ as follows. 

\begin{lemma}\label{lem_coh_local}
For any $\lambda\in\bb{X}^{\ast}(T^{m})$ and $i\geq1$, we have $H^{i}(\mca{X}(\Sigma),\mca{L}(\lambda))^{T^{m}}\cong H^{i+1}_{B(\Sigma)}(R(\Sigma))_{\lambda}$. 
\end{lemma}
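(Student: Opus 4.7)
The plan is to compare the Čech complex computing sheaf cohomology of $\mca{L}(\lambda)$ on $\mca{X}(\Sigma)$ with the Čech complex computing local cohomology of $R(\Sigma)$ with support in $B(\Sigma)$, and observe that they agree up to a shift in degree.

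First I would use the affine open cover $\mf{U} = \{U_{\sigma_{I}}\}_{I \in \Omega_{\eta},\,|I|=r}$ of $\mca{X}(\Sigma)$ corresponding to the maximal cones of $\Sigma$. Under the GIT description $\mca{X}(\Sigma) = (\bb{C}^{m})^{\eta-\mathrm{ss}}/T^{r}$, the open $U_{\sigma_{I}}$ lifts to the principal open $D(x^{I}) \subset (\bb{C}^{m})^{\eta-\mathrm{ss}}$, where $x^{I} \coloneqq \prod_{i \in I} x_{i}$. For any $I_{1},\ldots,I_{k}$ in $\Omega_{\eta}$ of cardinality $r$, unwinding the equivariant structures gives an identification
\begin{align*}
\Gamma\Bigl(\bigcap_{j=1}^{k} U_{\sigma_{I_{j}}},\; \mca{L}(\lambda)\Bigr)^{T^{m}} \;\cong\; \bigl(R(\Sigma)_{x^{I_{1}}\cdots x^{I_{k}}}\bigr)_{\lambda},
\end{align*}
obtained by taking $T^{r}$-semi-invariant functions of weight $\lambda|_{T^{r}}$ on $D(x^{I_{1}}\cdots x^{I_{k}})$ and then picking off the $T^{m}$-weight $\lambda$ piece.

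Next I would observe that, with this identification, the $T^{m}$-weight $\lambda$ part of the Čech complex $\check{C}^{\bullet}(\mf{U}, \mca{L}(\lambda))$ coincides term-by-term with the positive-degree part of the Čech complex
\begin{align*}
R(\Sigma) \longrightarrow \bigoplus_{|I|=r} R(\Sigma)_{x^{I}} \longrightarrow \bigoplus_{|I|=|I'|=r} R(\Sigma)_{x^{I}x^{I'}} \longrightarrow \cdots,
\end{align*}
after restricting to the $\lambda$-graded piece. The latter complex computes $H^{\bullet}_{B(\Sigma)}(R(\Sigma))$, because the $x^{I}$ with $I \in \Omega_{\eta}$, $|I|=r$, generate $B(\Sigma)$ up to radical (any $J \in \Omega_{\eta}$ contains some $I \in \Omega_{\eta}$ with $|I| = r$, since each cone of $\Sigma$ is a face of a maximal cone, so $x^{J} \in (x^{I})$). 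Comparing the two complexes in each cohomological degree immediately yields isomorphisms $H^{i}(\mca{X}(\Sigma), \mca{L}(\lambda))^{T^{m}} \cong H^{i+1}_{B(\Sigma)}(R(\Sigma))_{\lambda}$ for $i \geq 1$, together with a four-term exact sequence in degree zero that the lemma does not concern.

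The main point to verify carefully is the term-by-term identification of sections over intersections of the $U_{\sigma_{I}}$ with the graded localizations $R(\Sigma)_{x^{I_{1}}\cdots x^{I_{k}}}$ in $T^{m}$-weight $\lambda$; this requires tracking the $T^{r}$-equivariant versus $T^{m}$-equivariant structure on $\mca{L}(\lambda)$ together with the fact that the $T^{m}$-weight $\lambda$ piece of a $T^{r}$-semi-invariant of weight $\lambda|_{T^{r}}$ on $(\bb{C}^{m})^{\eta-\mathrm{ss}}$ corresponds exactly to the $\lambda$-graded piece of the localization on the Cox side. The rest of the argument is formal: affineness of the $U_{\sigma_{I}}$ (standard for smooth toric varieties) ensures that $\mf{U}$ is acyclic for the coherent sheaf $\mca{L}(\lambda)$, so Čech cohomology computes sheaf cohomology.
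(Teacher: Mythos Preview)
Your argument is correct and is essentially the standard proof of this fact. The paper itself gives no argument at all: its ``proof'' consists of the single sentence ``See for example Theorem 9.5.7 in [CLS]'', referring to Cox--Little--Schenck's toric varieties book. What you have written is precisely the Čech-complex comparison that underlies that theorem, so you are reproducing the content of the cited reference rather than diverging from the paper's approach. One small remark: the $x^{I}$ with $|I|=r$ actually generate $B(\Sigma)$ on the nose, not merely up to radical, since every $J\in\Omega_{\eta}$ contains such an $I$ and hence $x^{J}$ is a multiple of $x^{I}$; but this only strengthens your point.
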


\begin{proof}
See for example	Theorem 9.5.7 in \cite{CLS}. 
\end{proof}

Next we recall a description of $H^{i}_{B(\Sigma)}(R(\Sigma))_{\lambda}$ in terms of simplicial cohomology due to Musta\c{t}\v{a} \cite{M}. For each $i=1,\ldots,m$, let $\Delta_{i}\coloneqq\{\mca{I}\subset\Omega_{\eta}\mid i\notin\cup_{I\in\mca{I}}I\}$ be a simplicial complex on $\Omega_{\eta}$. For a subset $M\subset\{1,\ldots,m\}$, we set $\Delta_{M}\coloneqq\cup_{i\in M}\Delta_{i}$. Here, we understand that if $M=\emptyset$, then $\Delta_{M}$ is the void complex which has trivial reduced cohomology. For $\lambda=(\lambda_{1},\ldots,\lambda_{m})\in\bb{Z}^{m}$, we define $\mr{neg}(\lambda)\coloneqq\{i\in\{1,\ldots,m\}\mid\lambda_{i}<0\}$. For a simplicial complex $\Delta$, we denote by $\tilde{H}^{i}(\Delta)$ the $i$-th reduced cohomology group of $\Delta$. 

\begin{lemma}[\cite{M}]\label{lem_local_simplicial}
For each $\lambda\in\bb{Z}^{m}$ and $i\in\bb{Z}_{\geq0}$, we have $H^{i}_{B(\Sigma)}(R(\Sigma))_{\lambda}\cong\tilde{H}^{i-2}(\Delta_{\mr{neg}(\lambda)})$. 
\end{lemma}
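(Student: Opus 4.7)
The plan is to compute $H^{i}_{B(\Sigma)}(R(\Sigma))$ by writing down the \v{C}ech complex built from the squarefree monomial generators $g_{I}=\prod_{i\in I}x_{i}$ ($I\in\Omega_{\eta}$) of $B(\Sigma)$. That complex
\begin{align*}
\check{C}^{\bullet}:\;0\to R(\Sigma)\to\bigoplus_{I\in\Omega_{\eta}}R(\Sigma)_{g_{I}}\to\bigoplus_{|\mathcal{I}|=2}R(\Sigma)_{g_{\mathcal{I}}}\to\cdots
\end{align*}
is $\bb{X}^{\ast}(T^{m})$-graded and its cohomology computes $H^{i}_{B(\Sigma)}(R(\Sigma))$, so it is enough to analyze each weight-$\lambda$ strand $(\check{C}^{\bullet})_{\lambda}$ independently.

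The second step is a combinatorial identification of this strand. For $\mathcal{I}\subset\Omega_{\eta}$, localizing at $g_{\mathcal{I}}=\prod_{I\in\mathcal{I}}g_{I}$ inverts precisely the variables $x_{i}$ with $i\in\bigcup_{I\in\mathcal{I}}I$, so the weight-$\lambda$ part of $R(\Sigma)_{g_{\mathcal{I}}}$ is one-dimensional exactly when $\lambda_{i}\ge 0$ for every $i\notin\bigcup_{I\in\mathcal{I}}I$, i.e.\ when $\mathrm{neg}(\lambda)\subset\bigcup_{I\in\mathcal{I}}I$. By the definition of the subcomplexes $\Delta_{i}$, this is the same as saying that $\mathcal{I}$, viewed as a face of the full simplex $\Delta(\Omega_{\eta})$ on vertex set $\Omega_{\eta}$, fails to lie in $\Delta_{\mathrm{neg}(\lambda)}=\bigcup_{i\in\mathrm{neg}(\lambda)}\Delta_{i}$. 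The \v{C}ech differential is (up to signs) the simplicial coboundary on $\Delta(\Omega_{\eta})$, so $(\check{C}^{\bullet})_{\lambda}$ is canonically identified, with a degree shift by one that converts $k$-element subsets into $(k-1)$-dimensional faces, with the relative simplicial cochain complex $C^{\bullet-1}(\Delta(\Omega_{\eta}),\Delta_{\mathrm{neg}(\lambda)})$.

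The final step invokes the long exact sequence of the pair $(\Delta(\Omega_{\eta}),\Delta_{\mathrm{neg}(\lambda)})$. Because $\Delta(\Omega_{\eta})$ is a full simplex, and therefore contractible, its reduced cohomology vanishes and the connecting map yields $H^{i}(\Delta(\Omega_{\eta}),\Delta_{\mathrm{neg}(\lambda)})\cong\tilde{H}^{i-1}(\Delta_{\mathrm{neg}(\lambda)})$. Combining this with the degree shift from the previous step produces the desired $H^{i}_{B(\Sigma)}(R(\Sigma))_{\lambda}\cong\tilde{H}^{i-2}(\Delta_{\mathrm{neg}(\lambda)})$.

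I do not expect a serious obstacle; this is essentially a squarefree Hochster-type formula, and the main technical care is bookkeeping the two degree shifts (from subset-size to simplicial dimension, and from absolute to relative cohomology) so that the final index $i-2$ comes out correctly. The only mildly delicate case is $\mathrm{neg}(\lambda)=\emptyset$ (i.e.\ $\lambda\ge 0$), where $\Delta_{\mathrm{neg}(\lambda)}$ must be interpreted as the void complex; here the strand $(\check{C}^{\bullet})_{\lambda}$ is simply the full augmented \v{C}ech complex of a contractible simplex, which gives the expected vanishing compatible with the convention $\tilde{H}^{\ast}(\text{void})=0$.
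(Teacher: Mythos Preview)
Your argument is correct and is precisely the standard \v{C}ech-complex derivation of Musta\c{t}\v{a}'s formula; the paper itself does not reprove this but simply cites \cite[Theorem~2.1]{M}, whose proof proceeds along the same lines you outline. Your handling of the two degree shifts and of the boundary case $\mathrm{neg}(\lambda)=\emptyset$ is accurate.
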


\begin{proof}
See Theorem 2.1 in \cite{M}.
\end{proof}

We will also need the following special case of Demazure vanishing theorem. 

\begin{lemma}\label{lem_Demazure_vanishing}
We have $H^{>0}(\mca{X}(\Sigma),\mca{O})=0$. 
\end{lemma}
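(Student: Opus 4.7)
The plan is to deduce the statement from the classical Demazure vanishing theorem, which for smooth semi-projective toric varieties asserts $H^{>0}(\mca{X}_{\Sigma},\mca{L})=0$ for every nef line bundle $\mca{L}$. First I would confirm that $\mca{X}(\Sigma)$ is a smooth semi-projective toric variety: this follows from our unimodularity assumption together with the identification $\mca{X}(\Sigma)\cong(\bb{C}^{m})^{\eta\text{-}ss}/\!/T^{r}$ from Theorem~2.4 of \cite{HS}. Since $\mca{O}_{\mca{X}(\Sigma)}$ is trivially globally generated, hence nef, Demazure vanishing (e.g.\ Cox--Little--Schenck \cite{CLS}, Theorem~9.3.5) immediately yields the result.

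A self-contained alternative would proceed through the two preceding lemmas. Decomposing $H^{i}(\mca{X}(\Sigma),\mca{O})$ under the residual $T^{m-r}$-action into weight spaces indexed by $\bb{X}^{\ast}(T^{m-r})\cong\Ker(\mb{B})\subset\bb{X}^{\ast}(T^{m})$, each weight piece at $\lambda\in\Ker(\mb{B})$ is identified with $H^{i+1}_{B(\Sigma)}(R(\Sigma))_{\lambda}\cong\tilde{H}^{i-1}(\Delta_{\mr{neg}(\lambda)})$ by Lemma~\ref{lem_coh_local} followed by Lemma~\ref{lem_local_simplicial}. One then needs to verify $\tilde{H}^{\geq 0}(\Delta_{\mr{neg}(\lambda)})=0$ for every $\lambda\in\Ker(\mb{B})$. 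For $\lambda=0$ the simplicial complex $\Delta_{\emptyset}$ is void and its reduced cohomology in nonnegative degrees vanishes tautologically. For nonzero $\lambda\in\Ker(\mb{B})$ the relation $\sum_{j}\lambda_{j}\mb{b}_{j}=0$ combined with the genericity of $\eta$ should force $\Delta_{\mr{neg}(\lambda)}$ to be contractible.

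The main obstacle, if one pursues the combinatorial route, is this contractibility statement: one must carefully exploit the constraint $\sum_{j}\lambda_{j}\mb{b}_{j}=0$, which prevents $\mr{neg}(\lambda)$ from separating the cones of $\Sigma$ in a cohomologically nontrivial way, and construct an explicit deformation retract onto a cone point by ``pushing'' along a suitable direction compatible with $\eta$. In the interest of brevity, citing Demazure vanishing as a black box is the cleaner and recommended route.
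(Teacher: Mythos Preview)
Your primary approach is correct and essentially matches the paper's own proof: both invoke Demazure vanishing from \cite{CLS} as a black box. The only minor difference is in which hypothesis is verified: the paper observes that the support $|\Sigma|=\sum_{i=1}^{m}\bb{R}_{\geq0}\mb{a}_{i}$ is convex and cites Theorem~9.2.3 of \cite{CLS}, which gives $H^{>0}(\mca{X}(\Sigma),\mca{O})=0$ directly from convexity of the support, whereas you go through semi-projectivity and nefness of $\mca{O}$. The convexity route is slightly more elementary and avoids any worry about whether the nef-line-bundle version is stated in the needed generality for non-complete varieties, but both are valid.

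Your alternative combinatorial route via Lemmas~\ref{lem_coh_local} and~\ref{lem_local_simplicial} is not pursued in the paper; the paper in fact uses the present lemma in the opposite direction (to deduce vanishing of $\tilde{H}^{\geq0}(\Delta_{\mr{neg}(\tilde\lambda)})$ for $\tilde\lambda$ lifting an element of $\Ker(\mf{b})$, inside the proof of Proposition~\ref{prop_toric_higher_vanishing_tilting}). So the combinatorial sketch is interesting but unnecessary here, and as you note the contractibility step would require genuine work.
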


\begin{proof}
Since $|\Sigma|=\sum_{i=1}^{n}\bb{R}_{\geq0}\mb{a}_{i}$ is convex, this follows from Demazure vanishing theorem, see for example Theorem 9.2.3 in \cite{CLS}.
\end{proof}

For example, we may apply the above results for Lawrence toric varieties. For this, we take $m=2n$ and change the index set $\{1,\ldots,m\}$ by $E_{n}\coloneqq\{\pm1,\ldots,\pm n\}$. We take $\mb{b}_{\pm i}=\pm\mf{b}_{i}$ for $i=1,\ldots,n$. In this case, we can take $\mb{a}_{i}=(\mf{a}_{i},e_{i})\in\bb{Z}^{d}\oplus\bb{Z}^{n}$ and $\mb{a}_{-i}=(0,e_{i})\in\bb{Z}^{d}\oplus\bb{Z}^{n}$, where $\{e_{1},\ldots,e_{n}\}$ is a basis of $\bb{Z}^{n}$. We also take the same $\eta$. By definition, the toric variety $\mca{X}(\Sigma)$ associated with these data is the Lawrence toric variety $\mca{X}$. By our assumption that $\mf{a}_{i}\neq0$ for any $i=1,\ldots,n$, we have $E_{n}\setminus\{e\}\in\Omega_{\eta}$ for any $e\in E_{n}$. In particular, the set of one dimensional cones $\Sigma(1)$ in $\Sigma$ is given by $\Sigma(1)=\{\bb{R}_{\geq0}\mb{a}_{e}\}_{e\in E}$ and hence the ring $R(\Sigma)=\bb{C}[x_{1},\ldots,x_{n},y_{1},\ldots, y_{n}]$ coincides with the Cox's homogeneous coordinate ring \cite{C} for the Lawrence toric variety $\mca{X}$, where we write $y_{i}$ for the variable corresponding to $-i\in E_{n}$. Therefore, we obtain a ring isomorphism 
\begin{align}\label{eqn_Cox_ring}
R(\Sigma)\cong\oplus_{\lambda\in\bb{X}^{\ast}(T^{2n})}\Gamma(\mca{X},\widetilde{\mca{L}}(\lambda))^{T^{2n}}
\end{align}
by Proposition 1.1 in \cite{C}. If we restrict the torus action of $T^{2n}$ on $R(\Sigma)$ to $T$ via $T\rightarrow T^{2n}$ given by $t\mapsto(t,t^{-1})$, then the weight $\mu\in\bb{X}^{\ast}(T)$ part of $R(\Sigma)$ is given by $\bb{C}[u_{1},\ldots,u_{n}]\cdot x^{\mu}$, where we set $u_{i}\coloneqq x_{i}y_{i}$ and $x^{\mu}\coloneqq\prod_{i,\mu_{i}>0}x_{i}^{\mu_{i}}\prod_{i,\mu_{i}<0}y_{i}^{-\mu_{i}}$. Combined with (\ref{eqn_Cox_ring}), we obtain 
\begin{align}\label{eqn_Lawrence_global_section}
\Gamma(\mca{X},\widetilde{\mca{L}}(\mu))^{T}\cong\bb{C}[u_{1},\ldots,u_{n}]\cdot x^{\mu}.
\end{align}
We note that the degree of $x^{\mu}$ with respect to $\bb{S}$-action is given by $\sum_{i}|\mu_{i}|$. For each $i=1,\ldots,n$, the section $x_{i}\in\Gamma(\mca{X},\widetilde{\mca{L}}(\varepsilon^{\ast}_{i}))^{T}$ gives a $\bb{C}[u_{1},\ldots,u_{n}]$-module homomorphism $x_{i}\cdot:\Gamma(\mca{X},\widetilde{\mca{L}}(\mu-\varepsilon^{\ast}_{i}))^{T}\rightarrow\Gamma(\mca{X},\widetilde{\mca{L}}(\mu))^{T}$　of degree 1. In terms of the isomorphism (\ref{eqn_Lawrence_global_section}), we have 
\begin{align}\label{eqn_mult_x_i}
x_{i}\cdot x^{\mu-\varepsilon^{\ast}_{i}}=\begin{cases}
x^{\mu} &\mbox{ if }\mu_{i}>0,\\
u_{i}\cdot x^{\mu} &\mbox{ if }\mu_{i}\leq0.
\end{cases}
\end{align}
Similarly, the section $y_{i}\in\Gamma(\mca{X},\widetilde{\mca{L}}(-\varepsilon^{\ast}_{i}))^{T}$ gives a $\bb{C}[u_{1},\ldots,u_{n}]$-module homomorphism $y_{i}\cdot:\Gamma(\mca{X},\widetilde{\mca{L}}(\mu+\varepsilon^{\ast}_{i}))^{T}\rightarrow\Gamma(\mca{X},\widetilde{\mca{L}}(\mu))^{T}$ given by 
\begin{align}\label{eqn_mult_y_i}
y_{i}\cdot x^{\mu+\varepsilon^{\ast}_{i}}=\begin{cases}
x^{\mu} &\mbox{ if }\mu_{i}<0,\\
u_{i}\cdot x^{\mu} &\mbox{ if }\mu_{i}\geq0.
\end{cases}
\end{align}


\subsection{Tilting bundles}

In this section, we check Conjecture~\ref{conj_general_tilting_bundle} for toric hyper-K\"ahler manifolds, i.e., $\mca{T}_{\mf{C},s}\coloneqq\oplus_{I\in\bb{B}}\mca{E}_{\mf{C},s}(p_{I})$ is a tilting bundle on $X$. The main result of this section is proved via different methods by McBreen-Webster \cite{MW} and \v{S}penko-Van den Bergh \cite{SV} independently. We give still another direct proof of it. 

We note that for any $t,t'\in\bb{R}$, we have 
\begin{align*}
\lfloor t\rfloor+\lfloor t' \rfloor\leq\lfloor t+t'\rfloor\leq\lfloor t\rfloor+\lfloor t' \rfloor+1,\\
\lfloor t\rfloor-\lfloor t' \rfloor-1\leq\lfloor t-t'\rfloor\leq\lfloor t\rfloor-\lfloor t' \rfloor.
\end{align*} 
These inequalities easily imply that for any $A\in\mr{Alc}_{s}$ and signed circuit $C$, we have 
\begin{align*}
\lfloor\langle s,\beta_{C}\rangle\rfloor-|C_{+}|+1\leq\langle\mu_{A},\beta_{C}\rangle\leq\lfloor\langle s,\beta_{C}\rangle\rfloor+|C_{-}|,
\end{align*}
where $\mu_{A}$ is defined as in (\ref{eqn_mu_A}). In particular, we have 
\begin{align}\label{eqn_ineq_mu}
-|C|+1\leq\langle\mu_{A}-\mu_{A'},\beta_{C}\rangle\leq|C|-1
\end{align}
for any $A,A'\in\mr{Alc}_{s}$. 

Recall that one can associate a $\bb{T}$-equivariant line bundle $\widetilde{\mca{L}}\left(\lambda\right)$ on the Lawrence toric variety $\mca{X}$ for each $\lambda\in\bb{X}^{\ast}(T)$. We define $\pi:\bb{X}^{\ast}(T^{2n})\rightarrow\bb{X}^{\ast}(T)$ by $\pi(\lambda_{1},\ldots,\lambda_{n},\lambda_{-1},\ldots,\lambda_{-n})=\sum_{i=1}^{n}(\lambda_{i}-\lambda_{-i})\varepsilon^{\ast}_{i}$. By Lemma~\ref{lem_coh_local} and Lemma~\ref{lem_local_simplicial}, we obtain 
\begin{align}\label{eqn_Lawrence_simplicial}
H^{>0}(\mca{X},\widetilde{\mca{L}}(\lambda))^{T}\cong\oplus_{\tilde{\lambda}\in\pi^{-1}(\lambda)}\tilde{H}^{\geq0}(\Delta_{\mr{neg}(\tilde{\lambda})}).	
\end{align}
We first prove the vanishing of higher extensions on the level of Lawrence toric variety. 

\begin{prop}\label{prop_toric_higher_vanishing_tilting}
For any $A,A'\in\mr{Alc}_{s}$, we have $H^{>0}(\mca{X},\widetilde{\mca{L}}(\mu_{A}-\mu_{A'}))=0$.
\end{prop}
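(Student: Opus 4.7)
The plan is to reduce the vanishing statement, via the formula \eqref{eqn_Lawrence_simplicial} and Lemma~\ref{lem_local_simplicial}, to a combinatorial vanishing for the simplicial complexes $\Delta_{\mr{neg}(\tilde\lambda)}$, and then verify it using the nerve lemma together with the oriented-matroid machinery of Section~5.6 and the key bound \eqref{eqn_ineq_mu}. More precisely, the decomposition of $H^{>0}(\mca{X},\widetilde{\mca{L}}(\mu_A-\mu_{A'}))$ by characters of the residual $T^{2n}/T$-action reduces the claim to showing that $\tilde H^{\geq 0}(\Delta_{\mr{neg}(\tilde\lambda)})=0$ for every lift $\tilde\lambda\in\pi^{-1}(\mu_A-\mu_{A'})\subset\bb{X}^{\ast}(T^{2n})$.

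Next, I would fix $M\coloneqq\mr{neg}(\tilde\lambda)\subset E_n$ and analyze $\Delta_M=\bigcup_{e\in M}\Delta_e$ via the nerve lemma. Each $\Delta_e$ is by construction the full simplex on the vertex set $\{I\in\Omega_\eta\mid e\notin I\}$, and more generally each intersection $\bigcap_{e\in M'}\Delta_e$ is the full simplex on $\{I\in\Omega_\eta\mid I\subset E_n\setminus M'\}$, hence empty or contractible. The nerve lemma then identifies $\Delta_M$ up to homotopy with a nerve complex $N_M$ on the vertex set $\{e\in M\mid\Delta_e\neq\emptyset\}$ whose faces are those subsets $M'\subset M$ with $E_n\setminus M'\in\Omega_\eta$, equivalently $\eta\in\mr{cone}\{\mb{b}_e\mid e\in E_n\setminus M'\}$ (using that $\Omega_\eta$ is upward-closed for the Lawrence fan).

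The remaining step is to prove $N_M$ is contractible. When $M$ itself is a face, $N_M$ is a full simplex and there is nothing to do. The key case is when $M\notin N_M$, i.e.\ $\eta\notin\mr{cone}\{\mb{b}_e\mid \tilde\lambda_e\geq 0\}$. Here I would invoke the oriented-matroid Farkas alternative (Proposition~\ref{prop_Minty}, together with Proposition~\ref{prop_om_elem} and Lemma~\ref{lem_om_conform}) applied to $V=\Ker(\mb{B})_{\bb{R}}\subset\bb{R}^{E_n}$: there exists an elementary sign vector of $\sigma(V^{\perp})$, necessarily proportional to some signed circuit $\beta_C\in\bb{X}_{\ast}(S)$, such that $\langle\eta,\beta_C\rangle$ is of one fixed sign and the sign compatibility with $Q=E_n\setminus M$ translates into the constraints ``$i\in C_+\Rightarrow\tilde\lambda_{-i}\leq-1$'' and ``$i\in C_-\Rightarrow\tilde\lambda_i\leq-1$''. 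Substituting $\tilde\lambda_i-\tilde\lambda_{-i}=(\mu_A-\mu_{A'})_i$ into the expansion of $\langle\mu_A-\mu_{A'},\beta_C\rangle$ then forces an inequality in the direction opposing the bound \eqref{eqn_ineq_mu}, yielding the contradiction.

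The hard part will be this last step, namely selecting the circuit $C$ so that the sign constraints produced by Minty's lemma are strong enough to pin down not only the ``repelling'' terms $\tilde\lambda_{-i}$ ($i\in C_+$) and $\tilde\lambda_i$ ($i\in C_-$) but also control the complementary terms $\tilde\lambda_i$ ($i\in C_+$) and $\tilde\lambda_{-i}$ ($i\in C_-$) enough to produce a clean contradiction with \eqref{eqn_ineq_mu}. The delicate point is that the sign compatibility with $Q$ is one-sided: without a further refinement of the partition in Proposition~\ref{prop_Minty} (chosen so that the complementary indices are forced into $Q$), the naive estimate only yields $\langle\mu_A-\mu_{A'},\beta_C\rangle\geq|C|+\bigl(\sum_{i\in C_+}\tilde\lambda_i+\sum_{i\in C_-}\tilde\lambda_{-i}\bigr)$, which requires nonnegativity of the parenthetical sum to contradict the bound. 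Arranging this is where a careful application of Minty's lemma with a partition adapted to the lift $\tilde\lambda$ (together with the minor operations of Lemma~\ref{lem_om_minor}) is essential, and I expect this to be the combinatorial heart of the argument.
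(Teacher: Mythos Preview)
Your reduction to $\tilde H^{\geq0}(\Delta_{\mr{neg}(\tilde\lambda)})=0$ for all lifts $\tilde\lambda\in\pi^{-1}(\mu_A-\mu_{A'})$ is correct, and the nerve description of $\Delta_M$ is fine. The gap is in the ``key case'' $M\notin N_M$: you propose to derive a contradiction from \eqref{eqn_ineq_mu}, but this case genuinely occurs, so no such contradiction exists. For instance, in the $T^*\bb{P}^1$ example ($n=2$, $\mf{b}_1=\mf{b}_2=1$, $\eta=1$) one can realize $\mu=\mu_A-\mu_{A'}=(0,1)$ and take the lift $\tilde\lambda=(-1,-1,-1,-2)$; then $M=E_n$ and $E_n\setminus M=\emptyset\notin\Omega_\eta$, yet \eqref{eqn_ineq_mu} holds. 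The nerve $N_M$ here has maximal faces $\{1,-1,-2\}$ and $\{2,-1,-2\}$: contractible, but not a full simplex. So your dichotomy ``$M$ is a face (done) or contradiction'' is the wrong dichotomy, and the one-sided sign constraints you extract from Minty's lemma cannot be sharpened to rule this case out.

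The paper bypasses any direct analysis of $\Delta_M$ or $N_M$. The key observation is that $\Delta_{\mr{neg}(\tilde\mu)}$ depends only on the subset $\mr{neg}(\tilde\mu)\subset E_n$, so it suffices to realize this same subset as $\mr{neg}(\tilde\lambda)$ for some lift $\tilde\lambda$ of a character $\lambda\in\Ker(\mf{b})$; then $\widetilde{\mca{L}}(\lambda)$ is trivial and Demazure vanishing (Lemma~\ref{lem_Demazure_vanishing}) gives the result for free. Concretely, one checks that the possibilities for $\mr{neg}(\tilde\mu_i,\tilde\mu_{-i})$ depend only on $\sigma(\mu_i)$, and so the problem reduces to finding $\lambda\in\Ker(\mf{b})$ with $\sigma(\lambda)|_{\Supp(\sigma(\mu))}=\sigma(\mu)|_{\Supp(\sigma(\mu))}$, i.e.\ $\sigma(\mu)|_{\Supp(\sigma(\mu))}\in\sigma(\Ker(\mf{b}))/\sigma(\mu)^0$. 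By oriented-matroid duality (Proposition~\ref{prop_om_orth_comp}, Lemma~\ref{lem_om_minor}, Proposition~\ref{prop_om_elem}) this fails only if there is a signed circuit $C\subset\Supp(\sigma(\mu))$ with $\sigma(\mu)\not\perp\sigma(\beta_C)$; but then $\pm\mu_i\geq1$ for all $i\in C_{\pm}$, forcing $\langle\mu,\beta_C\rangle\geq|C|$ and contradicting \eqref{eqn_ineq_mu}. The oriented-matroid input and \eqref{eqn_ineq_mu} are thus used not to prove $N_M$ is a simplex, but to produce a sign-compatible $\lambda\in\Ker(\mf{b})$ and then invoke Demazure.
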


\begin{proof}
It is enough to prove $H^{>0}(\mca{X},\widetilde{\mca{L}}(\mu_{A}-\mu_{A'}))^{T}=0$ since we have
\begin{align*}
H^{>0}(\mca{X},\widetilde{\mca{L}}(\mu_{A}-\mu_{A'}))\cong\oplus_{\alpha\in\bb{X}^{\ast}(H)}H^{>0}(\mca{X},\widetilde{\mca{L}}(\mu_{A+\alpha}-\mu_{A'}))^{T}
\end{align*}
We set $\mu\coloneqq\mu_{A}-\mu_{A'}\in\bb{X}^{\ast}(T)$ and $Y\coloneqq\sigma(\mu)\in\{+,-,0\}^{n}$. We note that if $\lambda\in\bb{X}^{\ast}(T)$ satisfies $\mf{b}(\lambda)=0$, then Lemma~\ref{lem_Demazure_vanishing} and (\ref{eqn_Lawrence_simplicial}) imply that $\tilde{H}^{\geq0}(\Delta_{\mr{neg}(\tilde{\lambda})})=0$ for any lift $\tilde{\lambda}\in\pi^{-1}(\lambda)$. Therefore, it is enough to prove that for any lift $\tilde{\mu}\in\pi^{-1}(\mu)$, there exists $\lambda\in\Ker(\mf{b})$ and $\tilde{\lambda}\in\pi^{-1}(\lambda)$ such that $\mr{neg}(\tilde{\mu})=\mr{neg}(\tilde{\lambda})$. If $\pm\mu_{i}>0$, then the possibilities for the set $\mr{neg}(\tilde{\mu}_{i},\tilde{\mu}_{-i})$ is $\emptyset$, $\{\mp i\}$, or $\{i,-i\}$. If $\mu_{i}=0$, then the possibilities for the set $\mr{neg}(\tilde{\mu}_{i},\tilde{\mu}_{-i})$ is $\emptyset$ or $\{i,-i\}$, which is contained in the possibilities for $\mu_{i}\neq0$. This implies that it is enough to prove the existence of $\lambda\in\Ker(\mf{b})$ such that $\sigma(\mu_{i})=\sigma(\lambda_{i})$ if $\mu_{i}\neq0$, i.e., $Y|_{\Supp(Y)}\in\sigma(\Ker(\mf{b}))/Y^{0}$. By Proposition~\ref{prop_om_orth_comp}, Lemma~\ref{lem_om_minor}, and Proposition~\ref{prop_om_elem}, we have 
\begin{align*}
\sigma(\Ker(\mf{b}))/Y^{0}=\mr{elem}(\sigma(\Ker(\mf{a}))\setminus Y^{0})^{\perp}.
\end{align*}
Since $\mr{elem}(\sigma(\Ker(\mf{a}))\setminus Y^{0})=\{\sigma(\beta_{C})\mid C\mbox{ : signed circuit},C\subset\Supp(Y)\}$, if $Y|_{\Supp(Y)}\notin\sigma(\Ker(\mf{b}))/Y^{0}$, then there exists a signed circuit $C\subset\Supp(Y)$ such that $Y$ is not orthogonal to $\sigma(\beta_{C})$. We may assume that $(Y^{+}\cap C_{-})\cup(Y^{-}\cap C_{+})=\emptyset$. This and $C\subset\Supp(Y)$ imply that for each $i\in C_{\pm}$, we have $\pm\mu_{i}\geq1$ and hence $\langle\mu,\beta_{C}\rangle\geq|C|$ which contradicts (\ref{eqn_ineq_mu}). 
 \end{proof}

\begin{corollary}\label{cor_toric_tilting}
The vector bundle $\mca{T}_{\mf{C},s}$ is a tilting bundle on $X$.
\end{corollary}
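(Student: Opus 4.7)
The plan is to verify the two defining properties of a tilting bundle on $X$: weak generation of $D(\mr{QCoh}(X))$ and the vanishing of $\Ext^{i}(\mca{T}_{\mf{C},s},\mca{T}_{\mf{C},s})$ for $i>0$. The first property has already been established in Corollary~\ref{cor_weak_generator}, so the substantive task is the cohomological vanishing. Since $\mca{T}_{\mf{C},s}=\bigoplus_{I\in\bb{B}}\mca{E}_{\mf{C},s}(p_I)$ is a direct sum of line bundles, and each $\mca{E}_{\mf{C},s}(p_I)$ differs from $\mca{L}(\mu_A)$ for the alcove $A=\varphi_{\mf{C},s}^{-1}(0,p_I)\in\mr{Alc}_s$ only by an $H$-equivariant twist by (\ref{eqn_toric_E(A)}), it suffices to prove
\begin{align*}
H^{>0}(X,\mca{L}(\mu_A-\mu_{A'}))=0
\end{align*}
for every pair $A,A'\in\mr{Alc}_s$. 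The natural strategy is to transfer the vanishing of Proposition~\ref{prop_toric_higher_vanishing_tilting}, which lives on the Lawrence toric variety $\mca{X}$, down to the hyper-K\"ahler subvariety $X\subset\mca{X}$.

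By Lemma~\ref{lem_toric_deform}, the morphism $\mu_{\mca{X}}:\mca{X}\to\mf{s}^{\ast}\cong\bb{A}^{r}$ is flat with scheme-theoretic zero fiber isomorphic to $X$, so its $r$ components form a regular sequence on $\mca{X}$. Choosing a basis of $\bb{X}^{\ast}(S)$ converts these components into $\bb{T}$-equivariant global sections of appropriate $\bb{T}$-weight twists of $\mca{O}_{\mca{X}}$, yielding a $\bb{T}$-equivariant Koszul resolution
\begin{align*}
0\to\mca{K}_{r}\to\cdots\to\mca{K}_{1}\to\mca{O}_{\mca{X}}\to\mca{O}_{X}\to 0
\end{align*}
whose every term $\mca{K}_{i}$ is a finite direct sum of trivial line bundles on $\mca{X}$ twisted by characters of $\bb{T}$. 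Tensoring this resolution with $\widetilde{\mca{L}}(\mu_A-\mu_{A'})$ produces a resolution of $\mca{L}(\mu_A-\mu_{A'})$, viewed as an $\mca{O}_{\mca{X}}$-module, in which every term is isomorphic as an (ordinary) line bundle to a finite direct sum of copies of $\widetilde{\mca{L}}(\mu_A-\mu_{A'})$.

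Proposition~\ref{prop_toric_higher_vanishing_tilting} kills the positive-degree sheaf cohomology of every term of the tensored resolution, and the hypercohomology spectral sequence for $R\Gamma(\mca{X},-)$ therefore collapses to identify $R\Gamma(X,\mca{L}(\mu_A-\mu_{A'}))$ with the cohomology of the complex of $\Gamma(\mca{X},-)$ applied termwise, which is concentrated in non-positive cohomological degree. The claimed vanishing follows. There is no genuine obstacle beyond this equivariant Koszul bookkeeping: the combinatorial heart of the proof is already in Proposition~\ref{prop_toric_higher_vanishing_tilting}, whose argument rests on the sign-vector inequality (\ref{eqn_ineq_mu}) together with oriented-matroid duality, and the remaining tilting condition is supplied by Corollary~\ref{cor_weak_generator}. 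Hence $\mca{T}_{\mf{C},s}$ is a tilting bundle on $X$.
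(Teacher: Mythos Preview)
Your proof is correct and follows essentially the same approach as the paper. Both reduce the $\Ext$-vanishing to Proposition~\ref{prop_toric_higher_vanishing_tilting} on the Lawrence toric variety $\mca{X}$ and descend via the flatness of $\mu_{\mca{X}}$; the paper phrases this descent as the derived base change isomorphism $R\Gamma(X,\mca{L}(\mu_A-\mu_{A'}))\cong Li^{\ast}R\mu_{\mca{X}\ast}\widetilde{\mca{L}}(\mu_A-\mu_{A'})$ for $i:\{0\}\hookrightarrow\mf{s}^{\ast}$, while you unpack the same computation concretely via the Koszul resolution of $\mca{O}_X$ on $\mca{X}$ and the associated hypercohomology spectral sequence.
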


\begin{proof}
By Corollary~\ref{cor_weak_generator}, it is enough to prove $\Ext^{>0}(\mca{T}_{\mf{C},s},\mca{T}_{\mf{C},s})=0$, i.e., $H^{>0}(X,\mca{L}(\mu_{A}-\mu_{A'}))=0$ for any $A,A'\in\mr{Alc}_{s}$. Let $i:\{0\}\hookrightarrow\mf{s}^{\ast}$ be the inclusion and recall the morphism $\mu_{\mca{X}}:\mca{X}\rightarrow\mf{s}^{\ast}$ defined before Lemma~\ref{lem_toric_deform}. Since $\mu_{\mca{X}}$ is flat, $i$ and $\mu_{\mca{X}}$ are Tor independent and hence the base change formula implies 
\begin{align}\label{eqn_tor_indep_bc1}
R\Gamma(X,\mca{L}(\mu_{A}-\mu_{A'}))\cong Li^{\ast}R\mu_{\mca{X}\ast}\widetilde{\mca{L}}(\mu_{A}-\mu_{A'}).
\end{align}
Since $R\mu_{\mca{X}\ast}\widetilde{\mca{L}}(\mu_{A}-\mu_{A'})$ is concentrated on cohomological degree 0 by Proposition~\ref{prop_toric_higher_vanishing_tilting}, the RHS of (\ref{eqn_tor_indep_bc1}) has vanishing cohomology at positive degree. This proves $H^{>0}(X,\mca{L}(\mu_{A}-\mu_{A'}))=0$. 
\end{proof}

Combined with Proposition~\ref{prop_toric_bar_invariance_E(A)} and Lemma~\ref{lem_toric_bar_invariance_C(A)}, we obtain Conjecture~\ref{K-theoretic_canonical_basis} for toric hyper-K\"ahler manifolds. 

\begin{corollary}\label{cor_toric_canonical_basis}
$\bb{B}_{X,s}$ (resp. $\bb{B}_{L,s}$) is a $\bb{Z}[v,v^{-1}]$-basis of $K_{\bb{T}}(X)$ (resp. $K_{\bb{T}}(L)$).  
\end{corollary}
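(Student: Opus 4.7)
The plan is to bootstrap from the basis property of the standard basis $\{\mca{S}(A)\}_{A\in\mr{Alc}_s}$ in $K_{\bb{T}}(X)$ and $K_{\bb{T}}(L)$ via the explicit \emph{finite} triangular changes of basis provided by (\ref{eqn_toric_S(A)}) and (\ref{eqn_toric_C(A)}).

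First I would establish that $\{\mca{S}(A)\}$ is itself a $\bb{Z}[v^{\pm 1}]$-basis of both $K_{\bb{T}}(X)$ and $K_{\bb{T}}(L)$. The Bialynicki--Birula paving $X=\bigsqcup_{I\in\bb{B}}\Attr_{\mf{C}}(p_I)$ by affine cells $\bb{C}^{d}$ with closures $L_I\subset L=\bigcup_I L_I$ makes both $K_{\bb{T}}(X)$ and $K_{\bb{T}}(L)$ free of rank $|\bb{B}|$ over $K_{\bb{T}}(\mr{pt})$, and the triangularity built into Definition~\ref{stab_def} identifies $\{\mca{S}_{\mf{C},s}(p_I)\}_{I\in\bb{B}}$ as such a basis (for $K_{\bb{T}}(L)$ one uses that each $\mca{S}_{\mf{C},s}(p_I)$ is supported in $L_I\subset L$ by Proposition~\ref{prop_toric_K_stab} and Lemma~\ref{lem_toric_attr_closure}). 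Since $K_{\bb{T}}(\mr{pt})=\bb{Z}[v^{\pm 1}]\otimes_{\bb{Z}}\bb{Z}[\bb{X}^{\ast}(H)]$ is $\bb{Z}[v^{\pm 1}]$-free on $\{a^{\lambda}\}_{\lambda\in\bb{X}^{\ast}(H)}$, the refined set $\{a^{\lambda}\mca{S}_{\mf{C},s}(p_I)\}=\{\mca{S}(A)\}_{A\in\mr{Alc}_s}$ is then a $\bb{Z}[v^{\pm 1}]$-basis of both modules.

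Next, for $\bb{B}_{X,s}$, Lemma~\ref{lem_toric_poset_identification} together with (\ref{eqn_toric_S(A)}) gives the finite identity
\[
\mca{S}(A)=\mca{E}(A)+\sum_{B\in N(A)\setminus\{A\}}(-v)^{-\ell(A,B)}\mca{E}(B),
\]
with every such $B$ satisfying $B<_{\mf{C}}A$ (since $x_A\in\overline{B}$ forces $\langle\xi,x_B\rangle\leq\langle\xi,x_A\rangle$ for any $\xi\in\mf{C}$), so $\{\mca{E}(A)\}$ spans $K_{\bb{T}}(X)$ over $\bb{Z}[v^{\pm 1}]$ immediately. For linear independence I would expand each $\mca{E}(A)\in K_{\bb{T}}(X)$ in the $\mca{S}$-basis; this expansion is automatically finite because $K_{\bb{T}}(X)$ is free of finite rank over $K_{\bb{T}}(\mr{pt})$, and uniqueness of coefficients together with the formal triangular inversion (\ref{eqn_exp_E(A)_to_S(B)_rough}) inside $\bb{M}_{-\mf{C},s}$ via $\iota^-_{\mf{C},s}$ forces this finite expansion to have the form $\mca{E}(A)=\mca{S}(A)+\sum_{A'<_{\mf{C}}A}g_{A,A'}(v)\mca{S}(A')$ with unit leading coefficient. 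Consequently, for any finite relation $\sum_{A\in F}c_A(v)\mca{E}(A)=0$, taking $A_0\in F$ maximal in $\{A:c_A\neq 0\}$ and reading off the coefficient of $\mca{S}(A_0)$ forces $c_{A_0}=0$, a contradiction. An entirely parallel argument handles $\bb{B}_{L,s}$, using the analogous finite triangular relation
\[
\mca{C}(A)=\mca{S}(A)+\sum_{B\in N^-(A)\setminus\{A\}}(-v)^{-\ell(A,B)}\mca{S}(B)
\]
from (\ref{eqn_toric_C(A)}) (where $B>_{\mf{C}}A$ for every such $B$) together with the established basis property of $\{\mca{S}(A)\}$ in $K_{\bb{T}}(L)$.

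The main obstacle is the geometric step asserting that $\{\mca{S}(A)\}$ is a $\bb{Z}[v^{\pm 1}]$-basis of $K_{\bb{T}}(L)$, not merely of $K_{\bb{T}}(X)$; this amounts to the natural map $K_{\bb{T}}(L)\to K_{\bb{T}}(X)$ being an isomorphism, which in the toric hyper-K\"ahler setting should follow from the conical $\bb{S}$-contraction of $X$ onto the core $L=\bigcup_I L_I$ combined with an induction on the stratification by the $L_I$. A cleaner alternative sidesteps this via the tilting bundle of Corollary~\ref{cor_toric_tilting}: the derived equivalence $\psi:\DCohT(X)\simeq D^b(\mca{A}\mbox{-gmod}^H)$ identifies $\mca{E}(A)$ with an indecomposable graded projective $P^{\mca{A}}(\lambda,p_I)$ and $v^{\dim X}\mca{C}(A)$ with the simple $L^{\mca{A}}(\lambda,p_I)$; since $\mca{A}_0$ is semisimple by Corollary~\ref{cor_Koszulity}, both families are $\bb{Z}[v^{\pm 1}]$-bases of $K_0(\mca{A}\mbox{-gmod}^H)$, and restricting to $L$-supported sheaves (equivalently, finite-length modules under $\psi$) delivers both basis assertions simultaneously.
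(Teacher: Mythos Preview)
Your primary argument for $K_{\bb{T}}(X)$ via triangularity against the standard basis is correct and gives a genuine alternative to the paper, which instead appeals directly to Corollary~\ref{cor_toric_tilting}: the tilting property forces $\bb{B}_{X,s}$ to generate $K_{\bb{T}}(X)$, and linear independence is already known (it was observed in the proof of Proposition~\ref{prop_toric_bar_invariance_E(A)} that $\{\mca{E}_{\mf{C},s}(p_I)\}_{I\in\bb{B}}$ is a basis of $K_{\bb{T}}(X)_{\mr{loc}}$). Your route avoids the tilting bundle for this half but needs the Bialynicki--Birula cell structure to get the $\mca{S}$-basis first; either is fine.

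For $K_{\bb{T}}(L)$, the obstacle you flag is genuine and more fatal than your phrasing suggests. The classes $\mca{S}(A)$ are supported on $L_I=\overline{\Attr_{\mf{C}}(p_I)}$, and these are \emph{not} contained in the central fiber $L$ in general: already for $X=T^{\ast}\bb{P}^{1}$ one of the two attracting sets is a cotangent fiber, not in the zero section $L\cong\bb{P}^{1}$. So $\mca{S}(A)\notin K_{\bb{T}}(L)$ and the triangular argument via (\ref{eqn_toric_C(A)}) cannot establish that $\{\mca{C}(A)\}$ lies in, let alone spans, $K_{\bb{T}}(L)$. Your reformulation as ``$K_{\bb{T}}(L)\to K_{\bb{T}}(X)$ is an isomorphism'' is the right replacement hypothesis, but proving it independently is essentially the content of the corollary for $L$.

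Your alternative via the tilting bundle is essentially what the paper does, with one difference in packaging. The paper uses that the pairing $(-:-)$ of (\ref{usualinnerprod}) is perfect between $K_{\bb{T}}(X)$ and $K_{\bb{T}}(L)$ (a standard consequence of properness of $L$), hence so is $(-\|-)$; the dual basis to $\bb{B}_{X,s}$ is then automatically a $\bb{Z}[v,v^{-1}]$-basis of $K_{\bb{T}}(L)$, and by construction this dual basis is $\bb{B}_{L,s}$. Your version identifies $\DCoh^{\bb{T}}_{L}(X)$ with finite-length $\mca{A}_{\mf{C},s}$-modules under $\psi_{\mf{C},s}$ and reads off the simple basis; this is correct, but note that the key step---finite-length modules $\leftrightarrow$ $L$-supported sheaves---is precisely the categorical incarnation of the perfect pairing the paper invokes, so the two arguments are equivalent rather than independent.
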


\begin{proof}
Corollary~\ref{cor_toric_tilting} implies that $\bb{B}_{X,s}$ generates $K_{\bb{T}}(X)$ over $\bb{Z}[v,v^{-1}]$ and hence it is a basis. Since the pairing $(-:-)$ defined in (\ref{usualinnerprod}) induces a perfect pairing between $K_{\bb{T}}(X)$ and $K_{\bb{T}}(L)$, the pairing $(-||-)$ also gives a perfect pairing between $K_{\bb{T}}(X)$ and $K_{\bb{T}}(L)$. Therefore, the dual basis $\bb{B}_{L,s}$ of $\bb{B}_{X,s}$ is a $\bb{Z}[v,v^{-1}]$-basis of $K_{\bb{T}}(L)$. 
\end{proof}

As a module over $\bb{C}[u_{1},\ldots,u_{n}]\cong\bb{C}[\mf{t}^{\ast}]$, we have $R(\Sigma)\cong\oplus_{\mu\in\bb{X}^{\ast}(T)}\bb{C}[\mf{t}^{\ast}]\cdot x^{\mu}$ and the $\bb{C}[\mf{s}^{\ast}]$-module structure coming from the morphism $\mu_{\mca{X}}:\mca{X}\rightarrow\mf{s}^{\ast}$ is the one induced from the natural inclusion $\bb{C}[\mf{s}^{\ast}]\hookrightarrow\bb{C}[\mf{t}^{\ast}]$. We also denote by $x^{\mu}\in\Gamma(X,\mca{L}(\mu))^{H}$ the section coming from $x^{\mu}\in\Gamma(\mca{X},\widetilde{\mca{L}}(\mu))^{T}$. As a corollary of (\ref{eqn_tor_indep_bc1}), we obtain the following. 

\begin{lemma}\label{lem_toric_global_section}
If $H^{>0}(\mca{X},\widetilde{\mca{L}}(\mu))=0$ for $\mu\in\bb{X}^{\ast}(T)$, then we have $\Gamma(X,\mca{L}(\mu))^{H}\cong\bb{C}[\mf{h}^{\ast}]\cdot x^{\mu}$. 
\end{lemma}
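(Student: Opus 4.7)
The plan is to use flat base change along the moment map $\mu_{\mca{X}}:\mca{X}\to\mf{s}^{\ast}$. By Lemma~\ref{lem_toric_deform}, $\mu_{\mca{X}}$ is flat and $X$ is identified with the scheme-theoretic fiber $\mu_{\mca{X}}^{-1}(0)$, so the inclusion $i:\{0\}\hookrightarrow\mf{s}^{\ast}$ and $\mu_{\mca{X}}$ are Tor-independent. Hence one obtains the base change isomorphism
\[
R\Gamma(X,\mca{L}(\mu))\cong Li^{\ast}R\mu_{\mca{X}\ast}\widetilde{\mca{L}}(\mu),
\]
which is precisely the computation already used in the proof of Corollary~\ref{cor_toric_tilting}.

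Next, the hypothesis $H^{>0}(\mca{X},\widetilde{\mca{L}}(\mu))=0$ together with the affineness of $\mf{s}^{\ast}$ implies that $R\mu_{\mca{X}\ast}\widetilde{\mca{L}}(\mu)$ is concentrated in cohomological degree $0$ and corresponds to the $\bb{C}[\mf{s}^{\ast}]$-module $\Gamma(\mca{X},\widetilde{\mca{L}}(\mu))$. Because $H$ is reductive and acts trivially on $\mf{s}^{\ast}$, taking $H$-invariants is exact and commutes with both pushforward and the derived pullback $Li^{\ast}$, so it suffices to compute the $\bb{C}[\mf{s}^{\ast}]$-module $\Gamma(\mca{X},\widetilde{\mca{L}}(\mu))^{H}$ and then base change along $i$.

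I then identify $\Gamma(\mca{X},\widetilde{\mca{L}}(\mu))^{H}$ with the $\alpha=0$ summand of the $\bb{X}^{\ast}(H)$-weight decomposition appearing in the proof of Proposition~\ref{prop_toric_higher_vanishing_tilting}, which is exactly $\Gamma(\mca{X},\widetilde{\mca{L}}(\mu))^{T}$ in the notation of (\ref{eqn_Lawrence_global_section}). Thus $\Gamma(\mca{X},\widetilde{\mca{L}}(\mu))^{H}\cong\bb{C}[u_{1},\ldots,u_{n}]\cdot x^{\mu}\cong\bb{C}[\mf{t}^{\ast}]\cdot x^{\mu}$, and its $\bb{C}[\mf{s}^{\ast}]$-structure is the one induced by the inclusion $\bb{C}[\mf{s}^{\ast}]\hookrightarrow\bb{C}[\mf{t}^{\ast}]$ recalled in the paragraph preceding the lemma. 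A choice of splitting of the exact sequence $0\to\mf{h}^{\ast}\to\mf{t}^{\ast}\to\mf{s}^{\ast}\to 0$ gives $\bb{C}[\mf{t}^{\ast}]\cong\bb{C}[\mf{h}^{\ast}]\otimes_{\bb{C}}\bb{C}[\mf{s}^{\ast}]$; in particular $\bb{C}[\mf{t}^{\ast}]$ is free over $\bb{C}[\mf{s}^{\ast}]$. Therefore $Li^{\ast}$ and $i^{\ast}$ agree on this module, and
\[
Li^{\ast}\bigl(\bb{C}[\mf{t}^{\ast}]\cdot x^{\mu}\bigr)=\bigl(\bb{C}[\mf{t}^{\ast}]\otimes_{\bb{C}[\mf{s}^{\ast}]}\bb{C}\bigr)\cdot x^{\mu}=\bb{C}[\mf{h}^{\ast}]\cdot x^{\mu},
\]
which is concentrated in degree $0$ and yields the desired isomorphism $\Gamma(X,\mca{L}(\mu))^{H}\cong\bb{C}[\mf{h}^{\ast}]\cdot x^{\mu}$.

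The main obstacle is purely notational: one has to confirm that the superscript $T$ in (\ref{eqn_Lawrence_global_section}) coincides with honest $H$-invariants on $X$, rather than an ``action-theoretic'' $T$-invariants (which would vanish whenever $\mf{b}(\mu)\neq 0$). The weight decomposition recalled at the beginning of the proof of Proposition~\ref{prop_toric_higher_vanishing_tilting} makes this compatibility transparent, after which the argument is a routine application of flat base change together with the explicit Cox-ring description of global sections on the Lawrence toric variety.
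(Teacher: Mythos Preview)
Your proof is correct and follows exactly the approach the paper indicates: the lemma is stated in the paper simply as ``a corollary of (\ref{eqn_tor_indep_bc1})'', and you have correctly filled in the details of that base-change argument, including the freeness of $\bb{C}[\mf{t}^{\ast}]$ over $\bb{C}[\mf{s}^{\ast}]$ needed to pass from $Li^{\ast}$ to $i^{\ast}$. Your clarification that the superscript $T$ in (\ref{eqn_Lawrence_global_section}) agrees with $H$-invariants (since the $T$-action on $\mca{X}$ and on $\widetilde{\mca{L}}(\mu)$ already factors through $H=T/S$) is a useful remark but, as you note, not a genuine obstacle.
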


As in section 3.6, we set $\mca{A}_{\mf{C},s}\coloneqq\End(\mca{T}_{\mf{C},s})^{\mr{opp}}$. By Corollary~\ref{cor_toric_tilting}, we obtain a derived equivalence
\begin{align}\label{eqn_toric_derived_equiv}
\psi_{\mf{C},s}:\DCohT(X)\cong D^{b}(\mca{A}_{\mf{C},s}\mbox{-}\mr{gmod}^{H})
\end{align}
given by $\psi_{\mf{C},s}(\mca{F})=\RHom(\mca{T}_{\mf{C},s},\mca{F})$ for $\mca{F}\in\DCohT(X)$. 

We next give a presentation of the ring $\mca{A}_{\mf{C},s}$. We set 
\begin{align}\label{eqn_mu_I}
\mu_{I}\coloneqq-\sum_{i\in I_{-}}\varepsilon^{\ast}_{i}+\sum_{j\in I^{c}}\lfloor\langle s,\beta^{I}_{j}\rangle\rfloor\varepsilon^{\ast}_{j}
\end{align}
so that $\mca{T}_{\mf{C},s}=\oplus_{I\in\bb{B}}\mca{L}(\mu_{I})$ for any $I\in\bb{B}$. Let $e_{I}\in\mca{A}_{\mf{C},s}$ be the idempotent corresponding to the identity map in $\Hom(\mca{L}(\mu_{I}),\mca{L}(\mu_{I}))$. For $I,J\in\bb{B}$, the $H$-weight space of $e_{I}\mca{A}_{\mf{C},s}e_{J}$ of weight $\alpha\in\bb{X}^{\ast}(H)$ is given by $\Hom(\mca{L}(\mu_{I}+\alpha),\mca{L}(\mu_{J}))^{H}\cong\bb{C}[\mf{h}^{\ast}]\cdot x^{\mu_{J}-\mu_{I}-\alpha}$ by Lemma~\ref{lem_toric_global_section}. We set $m^{\alpha}_{IJ}\coloneqq x^{\mu_{J}-\mu_{I}-\alpha}\in e_{I}\mca{A}_{\mf{C},s}e_{J}$. We note that $e_{I}=m^{0}_{II}$ and $\bb{C}[\mf{h}^{\ast}]=\bb{C}[X]^{H}$ is contained in the center of $\mca{A}_{\mf{C},s}$. For $I,J,J',K\in\bb{B}$ and $\alpha,\alpha'\in\bb{X}^{\ast}(H)$, we obtain 
\begin{align}\label{eqn_A_relation}
m^{\alpha}_{IJ}\cdot m^{\alpha'}_{J'K}=\delta_{J,J'}\prod_{i:\mu_{i}\mu'_{i}<0}u_{i}^{\min\{|\mu_{i}|,|\mu'_{i}|\}}\cdot m^{\alpha+\alpha'}_{IK},
\end{align}
where we set $\mu=\mu_{J}-\mu_{I}-\alpha$ and $\mu'=\mu_{K}-\mu_{J'}-\alpha'$. In summary, we obtained the following. 

\begin{lemma}
The $\bb{C}[\mf{h}^{\ast}]$-algebra $\mca{A}_{\mf{C},s}$ is isomorphic to 
\begin{align*}
\bigoplus_{\substack{I,J\in\bb{B}\\\alpha\in\bb{X}^{\ast}(H)}}\bb{C}[\mf{h}^{\ast}]\cdot m^{\alpha}_{IJ},
\end{align*}
where the multiplication rule is given by (\ref{eqn_A_relation}). Moreover, $\mf{h}\subset\bb{C}[\mf{h}^{\ast}]$ have $H$-weight 0 and degree 2, and $m^{\alpha}_{IJ}$ has $H$-weight $\alpha$ and degree $\sum_{i}|\mu_{i}|$, where $\mu=\mu_{J}-\mu_{I}-\alpha$. 
\end{lemma}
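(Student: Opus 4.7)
The plan is to decompose $\mca{A}_{\mf{C},s} = \End(\mca{T}_{\mf{C},s})^{\mr{opp}}$ according to the idempotents $e_I$ corresponding to the summands $\mca{L}(\mu_I)$ of $\mca{T}_{\mf{C},s}$ and further along the $H$-weight grading, identify each graded piece with an explicit space of sections via Lemma~\ref{lem_toric_global_section}, and then read off the multiplication rule on the Cox-ring side. Concretely, the weight $\alpha$ component of $e_I\mca{A}_{\mf{C},s}e_J$ equals $\Hom^{H}(\mca{L}(\mu_I + \alpha), \mca{L}(\mu_J)) = \Gamma(X, \mca{L}(\mu_J - \mu_I - \alpha))^{H}$, and the goal is to identify this with $\bb{C}[\mf{h}^{\ast}]\cdot x^{\mu_J - \mu_I - \alpha}$.

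The crucial input is the cohomology vanishing $H^{>0}(\mca{X}, \widetilde{\mca{L}}(\mu_J - \mu_I - \alpha)) = 0$, from which Lemma~\ref{lem_toric_global_section} delivers exactly the desired $\bb{C}[\mf{h}^{\ast}]$-module description. To secure this vanishing I would observe from (\ref{eqn_toric_E(A)}) and the bijection $\varphi_{\mf{C},s}:\mr{Alc}_{s}\to\bb{F}$ that for any $\alpha \in \bb{X}^{\ast}(H)$ one has $\mu_I + \alpha = \mu_A$, where $A = \varphi_{\mf{C},s}^{-1}(\alpha, p_I)$, while $\mu_J = \mu_{A'}$ for $A' = \varphi_{\mf{C},s}^{-1}(0, p_J)$. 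Consequently $\mu_J - \mu_I - \alpha = \mu_{A'} - \mu_A$ and the required vanishing is precisely the content of Proposition~\ref{prop_toric_higher_vanishing_tilting}.

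With the $\bb{C}[\mf{h}^{\ast}]$-module structure in place, the multiplication rule (\ref{eqn_A_relation}) is straightforward. The product $m^{\alpha}_{IJ}\cdot m^{\alpha'}_{J'K}$ vanishes unless $J = J'$ by orthogonality of the idempotents; when $J = J'$, the composition is computed by multiplying the corresponding sections $x^{\mu}\cdot x^{\mu'}$ in the Cox ring $R(\Sigma)$, where $\mu = \mu_J - \mu_I - \alpha$ and $\mu' = \mu_K - \mu_J - \alpha'$. Applying (\ref{eqn_mult_x_i}) and (\ref{eqn_mult_y_i}) coordinate by coordinate produces $x^{\mu+\mu'}$ at indices where $\mu_i\mu'_i \geq 0$ and a factor $u_i^{\min(|\mu_i|,|\mu'_i|)}$ at each index where $\mu_i\mu'_i < 0$, exactly reproducing (\ref{eqn_A_relation}). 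The weight and degree assertions then follow immediately: $m^{\alpha}_{IJ}$ carries $H$-weight $\alpha$ by construction and $\bb{S}$-degree $\sum_{i}|\mu_i|$ by the remark after (\ref{eqn_Lawrence_global_section}), while $\mf{h}\subset\bb{C}[\mf{h}^{\ast}]$ consists of $H$-invariants of $\bb{S}$-degree $2$ since the generators $u_i = x_iy_i$ each have $\bb{S}$-degree $2$. The only point demanding genuine care is extending the vanishing to arbitrary $\alpha$-twists, but the alcove-realization trick above disposes of this cleanly, so no substantive obstacle arises.
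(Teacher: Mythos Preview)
Your proposal is correct and follows essentially the same approach as the paper, which treats the lemma as a summary of the preceding discussion: the paper invokes Lemma~\ref{lem_toric_global_section} to identify the weight spaces and states (\ref{eqn_A_relation}) without further argument. You have usefully made explicit two points the paper leaves tacit—the alcove realization $\mu_I+\alpha=\mu_A$ that reduces the vanishing hypothesis to Proposition~\ref{prop_toric_higher_vanishing_tilting}, and the coordinate-by-coordinate derivation of (\ref{eqn_A_relation}) from (\ref{eqn_mult_x_i}) and (\ref{eqn_mult_y_i})—but the route is the same.
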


For another presentation of the algebra $\mca{A}_{\mf{C},s}$ which is apparently quadratic and a presentation of its Koszul dual $\mca{B}_{\mf{C},s}$, see \cite{MW}. This presentation is enough to check Conjecture~\ref{anti-involution}. 

\begin{corollary}\label{cor_toric_anti_involution}
The algebra $\mca{A}_{\mf{C},s}$ and $\mca{B}_{\mf{C},s}$ has an anti-involution which is identity on degree 0 part, compatible with the grading, and reversing the $H$-weights. 
\end{corollary}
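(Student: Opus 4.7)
The plan is to write down the anti-involution $\iota$ on $\mca{A}_{\mf{C},s}$ by an explicit formula using the presentation given in the preceding lemma, and then deduce the anti-involution on $\mca{B}_{\mf{C},s}$ by transport through Koszul duality.

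First, I will define $\iota \colon \mca{A}_{\mf{C},s} \to \mca{A}_{\mf{C},s}$ as the $\bb{C}[\mf{h}^{\ast}]$-linear map determined by $\iota(m^{\alpha}_{IJ}) = m^{-\alpha}_{JI}$. From the definition it is immediate that $\iota^2$ fixes the generators, so $\iota$ is an involution; it reverses $H$-weights (from $\alpha$ to $-\alpha$); and it preserves the $\bb{S}$-grading, because $m^{-\alpha}_{JI}$ has degree $\sum_i |\mu'_i|$ with $\mu' = \mu_I - \mu_J + \alpha = -\mu$, so $\sum_i |\mu'_i| = \sum_i |\mu_i|$ agrees with the degree of $m^{\alpha}_{IJ}$.

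The anti-homomorphism property is then a direct check against \eqref{eqn_A_relation}. Both $\iota(m^{\alpha}_{IJ} \cdot m^{\alpha'}_{J'K})$ and $\iota(m^{\alpha'}_{J'K}) \cdot \iota(m^{\alpha}_{IJ}) = m^{-\alpha'}_{KJ'} \cdot m^{-\alpha}_{JI}$ vanish unless $J = J'$; when $J=J'$, the prefactor for the first side is $\prod_{i : \mu_i \mu'_i < 0} u_i^{\min\{|\mu_i|, |\mu'_i|\}}$ with $\mu = \mu_J - \mu_I - \alpha$ and $\mu' = \mu_K - \mu_J - \alpha'$, while the prefactor for the second side is the same product with $(\mu, \mu')$ replaced by $(-\mu', -\mu)$, which is manifestly unchanged. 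Both sides thus equal $\delta_{JJ'}$ times the same polynomial times $m^{-\alpha - \alpha'}_{KI}$. For the condition that $\iota$ acts as identity on the degree zero part, I will invoke Corollary~\ref{cor_Koszulity} and Corollary~\ref{asymptotic_orthonormality}: the constant-in-$v$ term of $[\Hom(\mca{E}_{\mf{C},s}(p_I), \mca{E}_{\mf{C},s}(p_J))]$ equals $\delta_{IJ}$, so the degree zero part of $\mca{A}_{\mf{C},s}$ is precisely $\bigoplus_I \bb{C} \cdot e_I$ with $e_I = m^0_{II}$, and $\iota(e_I) = e_I$.

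Finally, to obtain the corresponding anti-involution on $\mca{B}_{\mf{C},s}$, I will use the functoriality of Koszul duality: an $H$-equivariant graded anti-involution of $\mca{A}_{\mf{C},s}$ acting trivially on the semisimple degree zero part induces, through the identification $\mca{B}_{\mf{C},s} \cong \Ext^{\ast}_{\mca{A}_{\mf{C},s}}((\mca{A}_{\mf{C},s})_0, (\mca{A}_{\mf{C},s})_0)$, an anti-involution on $\mca{B}_{\mf{C},s}$ satisfying the same three properties, since the induced isomorphism between the Koszul dual and its opposite is compatible with the grading and the $H$-action by construction. The main point requiring care is the identification of the degree zero part of $\mca{A}_{\mf{C},s}$ as exactly $\bigoplus_I \bb{C} \cdot e_I$ (so that $\iota$ is truly the identity, not merely an automorphism, on this part); once this is in place, the rest of the argument is formal.
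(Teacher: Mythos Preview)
Your proof is correct and follows essentially the same approach as the paper: define $\iota(m^{\alpha}_{IJ}) = m^{-\alpha}_{JI}$, verify directly from \eqref{eqn_A_relation} that this is a $\bb{C}[\mf{h}^{\ast}]$-linear anti-involution with the required properties, and then pass to the Koszul dual. The only cosmetic difference is that the paper phrases the passage to $\mca{B}_{\mf{C},s}$ in terms of the quadratic dual (which coincides with the Koszul dual here), whereas you use the $\Ext$-algebra description; both are standard and equivalent.
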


\begin{proof}
For the algebra $\mca{A}_{\mf{C},s}$, we define the $\bb{C}[\mf{h}^{\ast}]$-algebra anti-involution by sending $m^{\alpha}_{IJ}$ to $m^{-\alpha}_{JI}$. It is easy to check that this preserves the relation (\ref{eqn_A_relation}) and satisfies the required conditions. It is also easy to check that this anti-involution induces a similar anti-involution on its quadratic dual which is isomorphic to $\mca{B}_{\mf{C},s}$. 
\end{proof}

\subsection{Positivity}

In this section, we prove Conjecture~\ref{positivity} and the first part of Conjecture~\ref{categorical_stable_basis} for toric hyper-K\"ahler manifolds. For $A\in\mr{Alc}_{s}$ with $\varphi_{\mf{C},s}(A)=(\lambda,p_{I})$, we define a categorical lift $\Delta_{\mf{C},s}(A)\in\DCohT(X)$ of $\mca{S}(A)$ by the formula 
\begin{align*}
\Delta_{\mf{C},s}(A)\coloneqq v^{-d}a^{\lambda}\cdot\mca{L}\left(-\sum_{i\in I_{+}}\varepsilon^{\ast}_{i}+\sum_{j\in I^{c}}\lfloor\langle s,\beta^{I}_{j}\rangle\rfloor\varepsilon^{\ast}_{j}\right)\otimes\mca{O}_{L_{I}}[-d].
\end{align*}
We set $\nabla_{\mf{C},s}(A)\coloneqq v^{d}\Delta_{-\mf{C},s}(A)[d]$. By definition and (\ref{eqn_toric_E(A)}), we have 
\begin{align*}
\nabla_{\mf{C},s}(A)=\mca{E}(A)\otimes\mca{O}_{L^{-}_{I}},
\end{align*}
where $L^{-}_{I}$ is the subvariety of $X$ defined by $x_{i}=0$ ($i\in I_{+}$) and $y_{i}=0$ ($i\in I_{-}$). For any subset $K\subset I$, we set $\mu_{A,K}\coloneqq\mu_{A}-\sum_{j\in K\cap I_{+}}\varepsilon^{\ast}_{j}+\sum_{j\in K\cap I_{-}}\varepsilon^{\ast}_{j}$. We note that by (\ref{eqn_toric_E(B)}), we have $\mca{L}(\mu_{A,K})\in\bb{B}_{X,s}$ for any $K\subset I$. By Lemma~\ref{lem_toric_resol_L_I}, we have the following exact sequence: 
\begin{align}\label{eqn_toric_resol_nabla}
0\rightarrow v^{d}\mca{L}(\mu_{A,I})\rightarrow \bigoplus_{\substack{K\subset I\\|K|=d-1}}v^{d-1}\mca{L}(\mu_{A,K})\rightarrow\cdots\rightarrow\bigoplus_{i\in I}v\mca{L}(\mu_{A,\{i\}})\rightarrow\mca{L}(\mu_{A})\rightarrow\nabla_{\mf{C},s}(A)\rightarrow0.
\end{align}
We recall that this is given by Koszul resolution. 

\begin{prop}\label{prop_nabla_heart}
For any $A,B\in\mr{Alc}_{s}$, we have $R^{\neq0}\mr{Hom}(\mca{E}(B),\nabla_{\mf{C},s}(A))=0$ and 
\begin{align}\label{eqn_Hom_tilt_std}
\Hom(\mca{E}(B),\nabla_{\mf{C},s}(A))^{T}=\begin{cases}
\bb{C}\cdot x^{\mu} &\mbox{ if }\pm\mu_{i}\leq0\mbox{ for any }i\in I_{\pm},\\
0 &\mbox{ otherwise},
\end{cases}
\end{align}
where $\mu=\mu_{A}-\mu_{B}$ and $x^{\mu}$ is the image of $x^{\mu}\in\Gamma(X,\mca{L}(\mu))^{T}$ under the natural map $\Gamma(X,\mca{L}(\mu))^{T}\rightarrow\Gamma(X,\nabla_{\mf{C},s}(p_{I})\otimes\mca{L}(-\mu_{B}))^{T}$ coming from (\ref{eqn_toric_resol_nabla}). 
\end{prop}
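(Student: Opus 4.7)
The plan is to compute $R\Hom(\mca{E}(B),\nabla_{\mf{C},s}(A))$ directly from the Koszul resolution (\ref{eqn_toric_resol_nabla}). Writing $\mu=\mu_{A}-\mu_{B}$, tensoring that resolution with $\mca{L}(-\mu_{B})$ and applying $R\Gamma(X,-)$ realises $R\Hom(\mca{L}(\mu_{B}),\nabla_{\mf{C},s}(A))$ as the total cohomology of a complex in cohomological degrees $[-d,0]$, whose term in degree $-k$ is $\bigoplus_{|K|=k}v^{k}R\Gamma(X,\mca{L}(\mu_{A,K}-\mu_{B}))$. By the discussion around (\ref{eqn_toric_E(B)}), each $\mca{L}(\mu_{A,K})$ equals $\mca{E}(B_{K})$ (up to an $H$-equivariant shift) for the alcove $B_{K}\in\mr{Alc}_{s}$ obtained from $A$ by flipping across the hyperplanes indexed by $K$. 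Thus Proposition~\ref{prop_toric_higher_vanishing_tilting} combined with the flat base change (\ref{eqn_tor_indep_bc1}) yields $H^{>0}(X,\mca{L}(\mu_{A,K}-\mu_{B}))=0$ for every $K\subset I$, so only the $H^{0}$-part at each spot contributes.

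Passing to $T$-invariants via Lemma~\ref{lem_toric_global_section} (and the isomorphism $\Gamma(X,\mca{L}(\nu))^{T}\cong\bb{C}[\mf{h}^{\ast}]\cdot x^{\nu}$), we obtain a complex of free rank-one $\bb{C}[\mf{h}^{\ast}]$-modules with $C^{-k}=\bigoplus_{|K|=k}v^{k}\bb{C}[\mf{h}^{\ast}]\cdot x^{\mu_{A,K}-\mu_{B}}$, and differentials induced by multiplication by the defining sections $x_{i}$ $(i\in I_{+})$ and $y_{i}$ $(i\in I_{-})$ of $L^{-}_{I}$. Applying (\ref{eqn_mult_x_i}) and (\ref{eqn_mult_y_i}) at the distinguished generators, I will verify that the contribution indexed by $i\in I$ sends $x^{\mu_{A,K\cup\{i\}}-\mu_{B}}\mapsto\epsilon_{i}\cdot x^{\mu_{A,K}-\mu_{B}}$, where
\begin{align*}
\epsilon_{i}=\begin{cases}1 &\mbox{if }(i\in I_{+}\mbox{ and }\mu_{i}>0)\mbox{ or }(i\in I_{-}\mbox{ and }\mu_{i}<0),\\ \bar u_{i} &\mbox{otherwise,}\end{cases}
\end{align*}
and $\bar u_{i}\in\bb{C}[\mf{h}^{\ast}]$ denotes the image of $u_{i}\in\bb{C}[\mf{t}^{\ast}]$. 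Consequently $C^{\bullet}$ is isomorphic to the Koszul complex of $\bb{C}[\mf{h}^{\ast}]$ on the sequence $\{\epsilon_{i}\}_{i\in I}$.

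It then remains to analyze this Koszul complex by cases. If some $\epsilon_{i}$ equals $1$, the corresponding factor is contractible and the whole complex is acyclic, giving $R\Hom=0$; this happens precisely when the sign condition $\pm\mu_{i}\leq 0$ for $i\in I_{\pm}$ fails, matching the ``otherwise'' case of the statement. In the complementary case every $\epsilon_{i}=\bar u_{i}$. Under the identification $\bb{C}[\mf{h}^{\ast}]=\mathrm{Sym}(\mf{t})/(\mf{s})=\mathrm{Sym}(\mf{h})$, the element $\bar u_{i}$ corresponds to $\mf{a}_{i}\in\mf{h}$; since $I\in\bb{B}$ the set $\{\mf{a}_{i}\}_{i\in I}$ is a $\bb{Z}$-basis of $\bb{X}_{\ast}(H)$, so $\{\bar u_{i}\}_{i\in I}$ is a regular sequence of linear generators of $\bb{C}[\mf{h}^{\ast}]$. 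The Koszul complex therefore resolves $\bb{C}[\mf{h}^{\ast}]/(\bar u_{i})_{i\in I}\cong\bb{C}$, so $R^{\neq 0}\Hom=0$ and $R^{0}\Hom$ is one-dimensional, spanned by the image of $x^{\mu}$ coming from the rightmost term $C^{0}$; this image is by construction the distinguished generator described in the statement.

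The main bookkeeping obstacle is converting the Koszul differentials into the clean dichotomy on $\epsilon_{i}$: one must track the shift $\mu_{A,K\cup\{i\}}=\mu_{A,K}\mp\varepsilon^{\ast}_{i}$ (with sign depending on whether $i\in I_{\pm}$) against the two cases in (\ref{eqn_mult_x_i}) and (\ref{eqn_mult_y_i}), and check compatibility with the Koszul signs. Once this identification is in place, the regularity of $\{\bar u_{i}\}_{i\in I}$ together with the standard acyclicity of a Koszul complex containing a unit completes the argument.
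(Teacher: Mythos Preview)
Your proof is correct and follows essentially the same route as the paper's: both arguments tensor the Koszul resolution (\ref{eqn_toric_resol_nabla}) by $\mca{L}(-\mu_{B})$, invoke Proposition~\ref{prop_toric_higher_vanishing_tilting} together with Lemma~\ref{lem_toric_global_section} to reduce $\RHom(\mca{E}(B),\nabla_{\mf{C},s}(A))^{T}$ to a Koszul complex over $\bb{C}[\mf{h}^{\ast}]$, and then use (\ref{eqn_mult_x_i})--(\ref{eqn_mult_y_i}) to see that the sequence is either regular (giving a one-dimensional $H^{0}$) or contains a unit (giving acyclicity). The only cosmetic difference is that you spell out explicitly why $\{\bar u_{i}\}_{i\in I}$ is regular by identifying $\bar u_{i}$ with $\mf{a}_{i}\in\mf{h}$, whereas the paper simply asserts it.
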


\begin{proof}
By Proposition~\ref{prop_toric_higher_vanishing_tilting}, we have $H^{>0}(\mca{X},\widetilde{\mca{L}}(\mu_{A,K}-\mu_{B}))=0$ for any $K\subset I$. Therefore, Lemma~\ref{lem_toric_global_section} and (\ref{eqn_toric_resol_nabla}) imply that $\RHom(\mca{E}(B),\nabla_{\mf{C},s}(A))^{T}$ is given by 
\begin{align*}
0\rightarrow\bb{C}[\mf{h}^{\ast}]\cdot x^{\mu_{A,I}-\mu_{B}}\rightarrow\cdots\rightarrow\bigoplus_{i\in I_{+}}\bb{C}[\mf{h}^{\ast}]\cdot x^{\mu-\varepsilon^{\ast}_{i}}\oplus\bigoplus_{i\in I_{-}}\bb{C}[\mf{h}^{\ast}]\cdot x^{\mu+\varepsilon^{\ast}_{i}}\rightarrow\bb{C}[\mf{h}^{\ast}]\cdot x^{\mu}\rightarrow0,
\end{align*}
where the complex is given by Koszul type complex for $x_{i}$ ($i\in I_{+}$) and $y_{i}$ ($i\in I_{-}$). If $\pm\mu_{i}\leq0$ for any $i\in I_{\pm}$, then by (\ref{eqn_mult_x_i}) and (\ref{eqn_mult_y_i}), this complex is isomorphic to the Koszul complex of $\bb{C}[\mf{h}^{\ast}]$ with respect to the regular sequence $\{u_{i}\}_{i\in I}$. This implies that its $0$-th cohomology is one dimensional and other cohomologies vanish. If $\pm\mu_{i}>0$ for some $i\in I_{\pm}$, then this complex is isomorphic to the Koszul complex of $\bb{C}[\mf{h}^{\ast}]$ with respect to a sequence containing $1$. Therefore, all the cohomologies vanish in this case.
\end{proof}

\begin{corollary}
For any $A\in\mr{Alc}_{s}$, $\psi_{\mf{C},s}(\nabla_{\mf{C},s}(A))\in D^{b}(\mca{A}_{\mf{C},s}\mbox{-}\mr{gmod}^{H})$ is a Koszul module of $\mca{A}_{\mf{C},s}$. 
\end{corollary}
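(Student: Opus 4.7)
The plan is to directly exhibit a linear projective resolution of $\psi_{\mf{C},s}(\nabla_{\mf{C},s}(A))$ by applying the equivalence $\psi_{\mf{C},s}$ to the Koszul resolution (\ref{eqn_toric_resol_nabla}). First, I would note that Proposition~\ref{prop_nabla_heart} already guarantees that $\psi_{\mf{C},s}(\nabla_{\mf{C},s}(A))$ lies in the standard heart, since $R^{\neq 0}\mr{Hom}(\mca{T}_{\mf{C},s},\nabla_{\mf{C},s}(A))=0$. Thus the problem is to produce a linear projective resolution of this module in $\mca{A}_{\mf{C},s}\mbox{-}\mr{gmod}^{H}$.

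Next, I would observe that each term $\mca{L}(\mu_{A,K})$ appearing in (\ref{eqn_toric_resol_nabla}) is of the form $\mca{E}(B)$ for some alcove $B\in\mr{Alc}_{s}$ by (\ref{eqn_toric_E(B)}), and therefore is an indecomposable summand of the tilting bundle $\mca{T}_{\mf{C},s}$. Consequently $\psi_{\mf{C},s}(\mca{L}(\mu_{A,K}))$ is an indecomposable projective object $P_{A,K}\in\mca{A}_{\mf{C},s}\mbox{-}\mr{gmod}^{H}$. Moreover, by Proposition~\ref{prop_toric_higher_vanishing_tilting} combined with Lemma~\ref{lem_toric_global_section}, we have $\Ext^{>0}(\mca{T}_{\mf{C},s},\mca{L}(\mu_{A,K}))=0$, so each $\psi_{\mf{C},s}(\mca{L}(\mu_{A,K}))$ is concentrated in cohomological degree $0$.

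Applying $\psi_{\mf{C},s}$ termwise to (\ref{eqn_toric_resol_nabla}) therefore yields an exact sequence
\begin{align*}
0\rightarrow v^{d}P_{A,I}\rightarrow\bigoplus_{|K|=d-1}v^{d-1}P_{A,K}\rightarrow\cdots\rightarrow\bigoplus_{i\in I}vP_{A,\{i\}}\rightarrow P_{A,\emptyset}\rightarrow\psi_{\mf{C},s}(\nabla_{\mf{C},s}(A))\rightarrow0
\end{align*}
in $\mca{A}_{\mf{C},s}\mbox{-}\mr{gmod}^{H}$. This is a projective resolution whose $k$-th term is a direct sum of projective modules shifted by $v^{k}$, which is precisely the definition of a linear projective resolution, and hence $\psi_{\mf{C},s}(\nabla_{\mf{C},s}(A))$ is a Koszul module of $\mca{A}_{\mf{C},s}$.

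There is no substantial obstacle here: the argument is essentially a routine consequence of the cohomology-vanishing result Proposition~\ref{prop_toric_higher_vanishing_tilting} together with the fact that the Koszul complex (\ref{eqn_toric_resol_nabla}) is already built from summands of the tilting bundle with the correct grading shifts. The only mild care needed is to confirm that the exactness of (\ref{eqn_toric_resol_nabla}) is preserved by $\psi_{\mf{C},s}$, which follows from the vanishing of higher $\Ext$'s so that $R\mr{Hom}(\mca{T}_{\mf{C},s},-)$ reduces to $\mr{Hom}(\mca{T}_{\mf{C},s},-)$ on each term.
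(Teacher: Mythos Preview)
Your proof is correct and follows essentially the same approach as the paper: invoke Proposition~\ref{prop_nabla_heart} to place $\psi_{\mf{C},s}(\nabla_{\mf{C},s}(A))$ in the heart, and then use the Koszul resolution (\ref{eqn_toric_resol_nabla}), whose terms are summands of $\mca{T}_{\mf{C},s}$ with the appropriate grading shifts, to produce the required linear projective resolution. The paper's argument is simply the two-line summary of what you have spelled out in detail.
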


\begin{proof}
Proposition~\ref{prop_nabla_heart} implies that $\psi_{\mf{C},s}(\nabla_{\mf{C},s}(A))$ is contained in the standard heart of $D^{b}(\mca{A}_{\mf{C},s}\mbox{-}\mr{gmod}^{H})$. The exact sequence (\ref{eqn_toric_resol_nabla}) implies that $\psi_{\mf{C},s}(\nabla_{\mf{C},s}(A))$ is a Koszul module of $\mca{A}_{\mf{C},s}$. 
\end{proof}

Next we give a formula expressing $\mca{S}(A)$ (resp. $\mca{E}(A)$) in terms of $\{\mca{C}(B)\}_{B\in\mr{Alc}_{s}}$ (resp. $\{\mca{S}(B)\}_{B\in\mr{Alc}_{s}}$). For $A\in\mr{Alc}_{s}$, let $M(A)$ be the set of alcove $B$ such that for any hyperplane $\mca{H}_{i,m}$ passing through $x_{A}$, $A$ and $B$ are on the same side with respect to $\mca{H}_{i,m}$. We note that $B\in M(A)$ implies $B\geq_{\mf{C}}A$. We also set $M^{-}(A):=\{B\in\mr{Alc}_{s}\mid A\in M(B)\}$. In terms of the combinatorics of alcoves, the condition appearing in (\ref{eqn_Hom_tilt_std}) can be written as $B\in M(A)$ or not. Moreover, the degree of $x^{\mu}$ is given by $\ell(A,B)$.

\begin{corollary}\label{cor_toric_positivity}
For any $A\in\mr{Alc}_{s}$, we have 
\begin{align}
\mca{S}(A)&=\sum_{B\in M(A)}v^{-\ell(A,B)}\mathcal{C}(B),\label{eqn_toric_StoC}\\
\mathcal{E}(A)&=\sum_{B\in M^{-}(A)}v^{-\ell(A,B)}\mathcal{S}(B).\label{eqn_toric_EtoS}
\end{align}	
\end{corollary}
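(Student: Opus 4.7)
The plan is to deduce both formulas from the Ext-computation in Proposition~\ref{prop_nabla_heart} via the duality of the canonical bases. The first key step is to establish the identity $[\nabla_{\mf{C},s}(A)] = \beta^K_{\mf{C},s}(\mca{S}(A))$ in $K_{\bb{T}}(X)$. Taking the alternating $K$-theoretic Euler characteristic of the resolution (\ref{eqn_toric_resol_nabla}) and using (\ref{eqn_toric_E(B)}) to identify each $\mca{L}(\mu_{A,K})$ with $\mca{E}(A_K) \in \bb{B}_{X,s}$ for a unique $A_K \in N(A)$ with $\ell(A, A_K) = |K|$, one obtains $[\nabla_{\mf{C},s}(A)] = \sum_{B \in N(A)}(-v)^{\ell(A,B)}\mca{E}(B)$. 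Comparing with (\ref{eqn_toric_S(A)}) and using the bar-invariance $\beta^K\mca{E}(B) = \mca{E}(B)$ (Proposition~\ref{prop_toric_bar_invariance_E(A)}), the two expansions are related by $v \leftrightarrow v^{-1}$, proving $[\nabla_{\mf{C},s}(A)] = \beta^K\mca{S}(A)$.

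Next I would compute $([\nabla_{\mf{C},s}(A)] || \mca{E}(B))$ in $K_{\bb{T}}(\mr{pt})$. Using bar-invariance of $\mca{E}(B)$ and Serre duality $\bb{D}_X\mca{E}(B) = v^{2d}\mca{E}(B)^{\vee}[2d]$, this equals $v^{2d}\,[R\Hom(\mca{E}(B),\nabla_{\mf{C},s}(A))]$, which by Proposition~\ref{prop_nabla_heart} equals $v^{2d+\ell(A,B)}$ when $B \in M(A)$ and $0$ otherwise; the cohomology generator $x^{\mu_A - \mu_B}$ carries trivial $H$-weight because on $X = \mu^{-1}(0)^{\mr{ss}}/S$ the monomial $x^{\mu}$ is $T$-invariant as a section of $\widetilde{\mca{L}}(\mu)$ on $\mca{X}$ and descends to an $H = T/S$-invariant section of $\mca{L}(\mu)$. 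Applying Lemma~\ref{inner_bar} to $\beta^K\mca{S}(A) = [\nabla_{\mf{C},s}(A)]$ and $\beta^K\mca{E}(B) = \mca{E}(B)$ yields $\partial((\mca{S}(A) || \mca{E}(B))) = v^{-\ell(A,B)}$ for $B \in M(A)$ (and $0$ otherwise); by Proposition~\ref{dual_basis} this is the coefficient of $\mca{C}(B)$ in the expansion of $\mca{S}(A)$, which is exactly (\ref{eqn_toric_StoC}). For (\ref{eqn_toric_EtoS}) I would swap the roles of $A$ and $B$ in the same computation and use the orthonormality $\partial((\mca{S}(B) || \mca{S}(B'))) = \delta_{B,B'}$ from Lemma~\ref{std_orthonormality}, together with the tautological equivalence $A \in M(B) \Leftrightarrow B \in M^-(A)$.

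The main obstacle is the careful bookkeeping of $H$-weights and $v$-powers through the bar involution and Serre duality. The crucial — and somewhat counterintuitive — point is the $H$-invariance of $x^{\mu_A - \mu_B}$ as a global section of $\mca{L}(\mu_A - \mu_B)$; without it, $\partial$ would extract only the $(\lambda_A = \lambda_B)$-piece and the formulas would reduce to a statement about a strict subset of $M(A)$. Once this $H$-invariance is secured via the $S$-quotient construction, the numerology is forced: the $v^{2d}$ from Serre duality is canceled by the $v^{2d}$ from Lemma~\ref{inner_bar}, leaving the clean coefficient $v^{-\ell(A,B)}$ and making the combinatorial matching — that the sign condition $\pm(\mu_A - \mu_B)_i \leq 0$ for $i \in I_{\pm}$ in Proposition~\ref{prop_nabla_heart} is equivalent to $B \in M(A)$, with degree $\sum_i|(\mu_A - \mu_B)_i|=\ell(A,B)$, both read off from the hyperplane arrangement — the only remaining verification.
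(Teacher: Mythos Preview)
Your proof is correct and follows essentially the same approach as the paper's. The paper's argument is slightly more direct: rather than computing $([\nabla_{\mf{C},s}(A)]\,||\,\mca{E}(B))$ and then passing through Lemma~\ref{inner_bar}, it computes $\partial(\mca{E}(B)\,||\,\mca{S}(A))$ in one step ``as in the proof of Corollary~\ref{cor_Koszulity}'', obtaining $[\Hom(\mca{E}(B),\nabla_{\mf{C},s}(A))^{T}]^{\vee}$ directly; your explicit verification that $[\nabla_{\mf{C},s}(A)]=\beta^{K}\mca{S}(A)$ is exactly what is hidden behind that phrase, so the two arguments are the same up to bookkeeping.
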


\begin{proof}
As in the proof of Corollary~\ref{cor_Koszulity}, Proposition~\ref{prop_nabla_heart} implies that 
\begin{align*}
\partial(\mca{E}(B)||\mca{S}(A))&=\left[\Hom(\mca{E}(B),\nabla_{\mf{C},s}(A))^{T}\right]^{\vee}\\
&=\begin{cases}
v^{-\ell(A,B)}&\mbox{ if }B\in M(A)\\
0&\mbox{ if }B\notin M(A).\end{cases}
\end{align*}
This implies (\ref{eqn_toric_StoC}) and (\ref{eqn_toric_EtoS}). 
\end{proof}

\begin{corollary}\label{cor_toric_positivity2}
Conjecture~\ref{positivity} holds for toric hyper-K\"ahler manifolds. 
\end{corollary}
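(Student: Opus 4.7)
The statement asserts that both halves of Conjecture~\ref{positivity} hold for toric hyper-K\"ahler manifolds, and my plan is to read them both off from formulas that are already on the page, after translating through the bijection $\varphi_{\mf{C},s}:\mr{Alc}_{s}\cong\bb{F}$ of Lemma~\ref{lem_toric_poset_identification}. Under this bijection, $\mca{E}(A)$ and $\mca{C}(A)$ correspond to $\mca{E}_{\mf{C},s}(\varphi_{\mf{C},s}(A))$ and $\mca{C}_{\mf{C},s}(\varphi_{\mf{C},s}(A))$, and the poset $\leq_{\mf{C},s}$ on $\bb{F}$ becomes the poset on $\mr{Alc}_{s}$ given by $A\leq_{\mf{C}}B\iff\langle x_{A},\xi\rangle\leq\langle x_{B},\xi\rangle$ for all $\xi\in\mf{C}$. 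So the two claims become statements about the expansion of $\mca{C}(A)$ and $\mca{E}(A)$ in the basis $\{\mca{S}(B)\}_{B\in\mr{Alc}_{s}}$.

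For the $\mca{E}$-side I would invoke equation (\ref{eqn_toric_EtoS}), namely $\mca{E}(A)=\sum_{B\in M^{-}(A)}v^{-\ell(A,B)}\mca{S}(B)$. Every coefficient $v^{-\ell(A,B)}$ already lies in $\bb{Z}_{\geq0}[v^{-1}]$, so all that remains is the indexing: I must check $B\in M^{-}(A)\Rightarrow B\leq_{\mf{C}}A$. By definition, $A\in M(B)$ means $A$ is contained in the open cone $C_{B}$ with apex $x_{B}$ that contains $B$. Since $x_{B}$ is the minimum of $\xi$ on $\overline{B}$ and the hyperplanes through $x_{B}$ determining $C_{B}$ are the ones bounding $B$ at that vertex, $x_{B}$ is also the minimum of $\xi$ on $\overline{C_{B}}$. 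Thus $x_{A}\in\overline{A}\subset\overline{C_{B}}$ gives $\langle x_{A},\xi\rangle\geq\langle x_{B},\xi\rangle$, which is the desired inequality $B\leq_{\mf{C}}A$.

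For the $\mca{C}$-side the expansion is already written down in (\ref{eqn_toric_C(A)}): $\mca{C}(A)=\sum_{B\in N^{-}(A)}(-v)^{-\ell(A,B)}\mca{S}(B)$. Each coefficient is the monomial $(-v)^{-\ell(A,B)}\in\bb{Z}_{\geq0}[(-v)^{-1}]$, so only the indexing needs to be matched. But $B\in N^{-}(A)$ means $x_{B}\in\overline{A}$, and $x_{A}$ is the $\xi$-minimum on $\overline{A}$, so $\langle x_{B},\xi\rangle\geq\langle x_{A},\xi\rangle$, i.e.\ $B\geq_{\mf{C}}A$. Combining the two parts yields Corollary~\ref{cor_toric_positivity2}.

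Since both halves reduce to already-established formulas plus one short convex-geometric observation, I do not expect any substantive obstacle. The one point worth writing carefully is the identification of the cone $C_{B}$ near $x_{B}$ and the fact that $x_{B}$ minimizes $\xi$ on its closure; this is a routine consequence of the genericity of $s\in\mf{s}^{\ast}_{\mr{reg}}$ (which makes the arrangement simple at every vertex) and of the genericity of $\xi\in\mf{C}$ (which makes the $\xi$-minimum of each alcove a unique vertex). Everything else is transport of structure through $\varphi_{\mf{C},s}$.
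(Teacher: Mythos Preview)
Your proposal is correct and follows essentially the same approach as the paper: the paper's proof is a one-liner citing exactly the two formulas (\ref{eqn_toric_C(A)}) and (\ref{eqn_toric_EtoS}) that you use. The partial-order checks you supply ($B\in M^{-}(A)\Rightarrow B\leq_{\mf{C}}A$ and $B\in N^{-}(A)\Rightarrow B\geq_{\mf{C}}A$) are stated earlier in the paper (after (\ref{eqn_toric_S(A)}) and just before Corollary~\ref{cor_toric_positivity}) without detailed argument, so your convex-geometric justification simply makes explicit what the paper takes for granted.
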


\begin{proof}
This follows from (\ref{eqn_toric_C(A)}) and (\ref{eqn_toric_EtoS}). 
\end{proof}

\subsection{Ext-orthogonality}

In this section, we prove the second half of Conjecture~\ref{categorical_stable_basis}. 

\begin{thm}\label{thm_ext_orthogonality}
For any $A,B\in\mr{Alc}_{s}$, we have 
\begin{align*}
\RHom(\Delta_{\mf{C},s}(B),\nabla_{\mf{C},s}(A))^{T}\cong\begin{cases}
\bb{C}&\mbox{ if }A=B\\
0&\mbox{ if }A\neq B
\end{cases}
\end{align*}
as $\bb{S}$-modules, where $\bb{C}$ is considered as a trivial $\bb{S}$-module sitting in cohomological degree 0. In particular, Conjecture~\ref{categorical_stable_basis} holds for toric hyper-K\"ahler manifolds. 
\end{thm}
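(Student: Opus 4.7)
The plan is to compute $\RHom(\Delta_{\mf{C},s}(B),\nabla_{\mf{C},s}(A))^T$ by resolving $\Delta_{\mf{C},s}(B)=v^{-d}\mca{L}(\mu_{B,I_B})\otimes\mca{O}_{L_{I_B}}[-d]$ using the Koszul complex from Lemma~\ref{lem_toric_resol_L_I}. After twisting by $\mca{L}(\mu_{B,I_B})$ and shifting by $[-d]$, the resolution becomes a complex whose term in cohomological degree $j\in[0,d]$ is $v^{-j}\bigoplus_{K'\subset I_B,\,|K'|=j}\mca{L}(\mu_{B,K'})$, with differentials that add an element to $K'$ and act by multiplication by the defining sections $x_i$ (for $i\in I_{B,-}$) or $y_i$ (for $i\in I_{B,+}$).

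Applying $\RHom(-,\nabla_{\mf{C},s}(A))^T$ termwise and invoking Proposition~\ref{prop_nabla_heart}, each factor $\RHom(\mca{L}(\mu_{B,K'}),\nabla_{\mf{C},s}(A))^T$ is concentrated in cohomological degree $0$ and equals $\bb{C}\cdot x^{\mu_A-\mu_{B,K'}}$ precisely when $\pm(\mu_A-\mu_{B,K'})_i\le 0$ holds for every $i\in I_{A,\pm}$, and vanishes otherwise. The total complex therefore collapses to a finite complex $\mca{K}^{\bullet}(A,B)$ of one-dimensional pieces, and its differentials come from multiplication by $x_i,y_i$ restricted to $L^-_{I_A}$. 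Two observations reduce this to a purely combinatorial problem: first, $u_i=x_iy_i$ is identically zero on $L^-_{I_A}$, because $\bb{C}[L^-_{I_A}]^H\cong\bb{C}$ while $u_i$ carries non-zero $\bb{S}$-weight; second, formulas~(\ref{eqn_mult_x_i})--(\ref{eqn_mult_y_i}) together show that every non-trivial differential in $\mca{K}^{\bullet}(A,B)$ is a unit scalar, and every ``$u_i$''-case differential is zero.

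When $A=B$ one has $\delta\coloneqq\mu_A-\mu_B=0$, the sign conditions force $K'=\emptyset$, and $\mca{K}^{\bullet}(A,A)$ reduces to a single copy of $\bb{C}\cdot x^0$ in cohomological degree $0$ with trivial $v$-shift, giving the trivial $\bb{S}$-module $\bb{C}$ as claimed. When $A\neq B$, I partition each $i\in I_A$ into ``dead'', ``required-in'', ``required-out'' or ``free'' categories according to the value of $\delta_i$ against the sign patterns $I_{A,\pm},I_{B,\pm}$; if any index is dead or if the sign condition fails on $I_A\cap I_B^c$ the complex vanishes identically, and otherwise the valid $K'$'s form a sub-cube of $2^{I_B}$ parametrized by the free indices together with $I_B\cap I_A^c$. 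The task is then to show this sub-cube complex carries at least one acyclic Koszul factor. In the case $I_A=I_B$, the condition $A\neq B$ forces $\delta_I$ to be nonzero (else $x_A=x_B$ would give $A=B$), and one checks that the strict-sign requirement selects a free index in an A+B+ or A-B- pairing whose differential is a unit; in the case $I_A\neq I_B$, the non-empty set $I_B\cap I_A^c$, together with the GIT-stability-enforced non-vanishing of $x_i$ for $i\in I^c_+$ (or $y_i$ for $i\in I^c_-$) on $L^-_{I_A}$, is used to produce an acyclic factor.

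The main obstacle is this last step: verifying that an acyclic Koszul factor always exists in the $I_A\neq I_B$ case, in every configuration of signs and $\delta$. Technically, one must show that some $i\in I_B$ has both the corresponding section surviving on $L^-_{I_A}$ and the correct sign of $\delta_i$ to produce the ``natural'' (unit) multiplication rather than the $u_i$-case (which is zero on $L^-_{I_A}$). I expect this to follow from combining the moment-map relation $\sum_i u_i\mf{b}_i=0$ (which constrains the signs of $x_i,y_i$ on $L^-_{I_A}$) with the GIT data governing $I^c_\pm$; carefully matching the sign conditions to the surviving generators of $\bb{C}[L^-_{I_A}]$ should exhibit the required unit differential in every surviving configuration. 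Once acyclicity is established, the $\bb{S}$-module statement is automatic: the surviving $A=B$ term lies in cohomological degree $0$ with $v^0$-shift, so the resulting $\bb{C}$ is an $\bb{S}$-trivial module in degree $0$.
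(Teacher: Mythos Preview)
Your setup is correct and matches the paper: resolve $\Delta_{\mf{C},s}(B)$ by the Koszul complex, apply Proposition~\ref{prop_nabla_heart} termwise, and reduce to a finite complex of one-dimensional pieces whose differentials are either units or zero depending on the sign of the relevant coordinate of $\mu_{K'}$. Your treatment of $A=B$ and of the case $I_A=I_B$, $A\neq B$ is essentially right.

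The genuine gap is exactly where you flag it: the case $I_A\neq I_B$. Your proposed mechanism (``GIT-stability-enforced non-vanishing of $x_i$ or $y_i$ on $L^{-}_{I_A}$, plus the moment-map relation'') does not do the job. Whether the differential at an index $j$ is a unit is governed by the sign of $(\mu_{K'})_j$, not by whether the section $x_j$ or $y_j$ is nowhere-vanishing on $L^{-}_{I_A}$; and for $j\in I_B\cap I_A^c$ there is no a priori reason that this sign comes out the right way. The moment-map relation $\sum_i u_i\mf{b}_i=0$ only tells you that all $u_i$ vanish on $L^{-}_{I_A}$, which you already used to see that the ``$u_i$-case'' differentials are zero; it does not produce a unit differential.

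The paper closes this gap by a different route. Rather than hunting directly for a unit differential, it assumes none exists and extracts the resulting sign constraints on $\nu\coloneqq\mu_{K_0}$: one gets $\pm\nu_i\le 0$ for $i\in I_{A,\pm}\cap K^c$, $\pm\nu_j\ge 0$ for $j\in I_{B,\pm}\cap K^c$, and $\nu_k=0$ for $k\in K$. It then shows these constraints are incompatible with $I_A\neq I_B$. The key tool is oriented-matroid duality (Minty's Lemma, Proposition~\ref{prop_Minty}): from the sign pattern one either finds a signed circuit $C$ with $C_+\subset I_{A,-}\cup I_{B,+}$ and $C_-\subset I_{A,+}\cup I_{B,-}$, or a signed cocircuit with complementary constraints. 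The cocircuit option is ruled out by comparing $\langle\xi,\alpha_{C^\vee}\rangle$ computed via $I_A$ versus $I_B$ (it would have to be both positive and negative). The circuit option is ruled out by a refinement of the inequality~(\ref{eqn_ineq_mu}) for $\langle\nu,\beta_C\rangle$, which under the circuit's sign pattern forces $\langle\nu,\beta_C\rangle\le -1$, contradicting $\langle\nu,\beta_C\rangle\ge 0$ from the sign constraints. This combinatorial duality argument is the missing ingredient in your proposal.
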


\begin{proof}
Let $A,B\in\mr{Alc}_{s}$ be two alcoves with $\varphi_{\mf{C},s}(A)=(\lambda_{A},p_{I})$ and $\varphi_{\mf{C},s}(B)=(\lambda_{B},p_{J})$. We recall that $\mu_{A}=\lambda_{A}-\sum_{i\in I_{-}}\varepsilon^{\ast}_{i}+\sum_{j\in I^{c}}\lfloor\langle s,\beta^{I}_{j}\rangle\rfloor\varepsilon^{\ast}_{j}$ and $\mu_{B}=\lambda_{B}-\sum_{i\in J_{-}}\varepsilon^{\ast}_{i}+\sum_{j\in J^{c}}\lfloor\langle s,\beta^{J}_{j}\rangle\rfloor\varepsilon^{\ast}_{j}$. We set $\mu\coloneqq\mu_{A}-\mu_{B}$ and $K\coloneqq\{k\in I\cap J\mid x_{A},x_{B}\in\mca{H}_{k,m_{k}}\mbox{ for some }m_{k}\in\bb{Z}\}$. We note that we have
\begin{align*}
\mu_{k}=\begin{cases}
1 &\mbox{ if }k\in K\cap I_{+}\cap J_{-},\\
-1 &\mbox{ if }k\in K\cap I_{-}\cap J_{+},\\
0 &\mbox{ if }k\in K\cap ((I_{+}\cap J_{+})\cup(I_{-}\cap J_{-})).
\end{cases}
\end{align*} 
By Lemma~\ref{lem_toric_resol_L_I}, $\Delta_{\mf{C},s}(B)$ is quasi-isomorphic to 
\begin{align*}
0\rightarrow\mca{E}(B)\rightarrow\bigoplus_{\substack{B'\in N(B)\\\ell(B,B')=1}}v^{-1}\mca{E}(B')\rightarrow\bigoplus_{\substack{B'\in N(B)\\\ell(B,B')=2}}v^{-2}\mca{E}(B')\rightarrow\cdots,
\end{align*}
where $\mca{E}(B)$ sits in cohomological degree 0. If $\Hom(\mca{E}(B'),\nabla_{\mf{C},s}(A))^{T}\neq0$ for $B'\in N(B)$, then we have $B'\in M(A)$ by Proposition~\ref{prop_nabla_heart}. If $\pm\mu_{i}>0$ for some $i\in I_{\pm}\cap K^{c}$, then we have $M(A)\cap N(B)=\emptyset$ and hence $\RHom(\Delta_{\mf{C},s}(B),\nabla_{\mf{C},s}(A))^{T}=0$. Therefore, we may assume $\pm\mu_{i}\leq0$ for any $i\in I_{\pm}\cap K^{c}$. 

Let $K'\subset J$ be the subset satisfying $\mu_{B'}=\mu_{B}-\sum_{j\in K'\cap J_{+}}\varepsilon^{\ast}_{j}+\sum_{j\in K'\cap J_{-}}\varepsilon^{\ast}_{j}$. We set $\mu_{K'}\coloneqq\mu_{A}-\mu_{B'}$. The condition $B'\in M(A)$ implies that we must have $K'\supset K_{0}\coloneqq K\cap ((I_{-}\cap J_{+})\cup(I_{+}\cap J_{-}))$ and $K'\cap K\cap((I_{+}\cap J_{+})\cup(I_{-}\cap J_{-}))=\emptyset$, i.e., $K'\subset K_{1}\coloneqq (J\cap K^{c})\cup K_{0}$. Conversely, the condition $K_{0}\subset K'\subset K_{1}$ implies $B'\in M(A)$ by the assumption that $\pm\mu_{i}\leq0$ for any $i\in I_{\pm}\cap K^{c}$. Therefore, $\RHom(\Delta_{\mf{C},s}(B),\nabla_{\mf{C},s}(A))^{T}$ is quasi-isomorphic to a Koszul type complex
\begin{align}\label{eqn_Koszul_complex_intersection}
0\rightarrow\bb{C}\cdot x^{\mu_{K_{1}}}\rightarrow\cdots\rightarrow\bigoplus_{j\in K_{1}\cap K^{c}_{0}}\bb{C}\cdot x^{\mu_{K_{0}\cup\{j\}}}\rightarrow\bb{C}\cdot x^{\mu_{K_{0}}}\rightarrow0.
\end{align}
Here, each map is induced from $x_{j}\cdot:\bb{C}[\mf{h}^{\ast}]\cdot x^{\mu_{K'}-\varepsilon^{\ast}_{j}}\rightarrow\bb{C}[\mf{h}^{\ast}]\cdot x^{\mu_{K'}}$ for $j\in J_{-}\cap K^{c}$ and $y_{j}\cdot:\bb{C}[\mf{h}^{\ast}]\cdot x^{\mu_{K'}+\varepsilon^{\ast}_{j}}\rightarrow\bb{C}[\mf{h}^{\ast}]\cdot x^{\mu_{K'}}$ for $j\in J_{+}\cap K^{c}$. By (\ref{eqn_mult_x_i}) and (\ref{eqn_mult_y_i}), the map $x_{j}\cdot:\bb{C}\cdot x^{\mu'-\varepsilon^{\ast}_{j}}\rightarrow\bb{C}\cdot x^{\mu'}$ is given by 
\begin{align*}
x_{j}\cdot x^{\mu'-\varepsilon^{\ast}_{j}}=\begin{cases}
x^{\mu'} &\mbox{ if }\mu'_{j}>0\\
0 &\mbox{ if }\mu'_{j}\leq0
\end{cases}
\end{align*}
and $y_{j}\cdot:\bb{C}\cdot x^{\mu'+\varepsilon^{\ast}_{j}}\rightarrow\bb{C}\cdot x^{\mu'}$ is given by 
\begin{align*}
y_{j}\cdot x^{\mu'+\varepsilon^{\ast}_{j}}=\begin{cases}
x^{\mu'} &\mbox{ if }\mu'_{j}<0\\
0 &\mbox{ if }\mu'_{j}\geq0
\end{cases}
\end{align*}
We set $\nu\coloneqq\mu_{K_{0}}$. If $\nu_{j}>0$ for some $j\in J_{-}\cap K^{c}$ or $\nu_{j}<0$ for some $j\in J_{+}\cap K^{c}$, then the complex (\ref{eqn_Koszul_complex_intersection}) is isomorphic to a Koszul complex of $\bb{C}$ with respect to a sequence containing $1$ and hence it is acyclic. Therefore, if $\RHom(\Delta_{\mf{C},s}(B),\nabla_{\mf{C},s}(A))^{T}\ncong0$, then we must have $\pm\nu_{j}\geq0$ for any $j\in J_{\pm}\cap K^{c}$. We also note that $\nu_{k}=0$ for any $k\in K$ and $\pm\nu_{i}\leq0$ for any $i\in I_{\pm}\cap K^{c}$. We claim that these conditions imply $I=J$. If $I=J$, then we have $\nu_{i}=0$ for any $i\in I$ and $K_{0}=\emptyset$. This implies that $\mu_{i}=\nu_{i}=0$ for any $i\in I$ and hence $A=B$. In this case, $K_{1}=\emptyset$ and the complex (\ref{eqn_Koszul_complex_intersection}) is isomorphic to $\bb{C}$ which sits in cohomological degree 0. 

Now we assume $I\neq J$. We first refine the inequalities (\ref{eqn_ineq_mu}). We note that 
\begin{align*}
\nu=\lambda_{A}-\lambda_{B}-\sum_{i\in I_{-}}\varepsilon^{\ast}_{i}+\sum_{i\in J_{-}}\varepsilon^{\ast}_{i}+\sum_{i\in K_{0}\cap J_{+}}\varepsilon^{\ast}_{i}-\sum_{i\in K_{0}\cap J_{-}}\varepsilon^{\ast}_{i}+\sum_{j\in I^{c}}\lfloor\langle s,\beta^{I}_{j}\rangle\rfloor\varepsilon^{\ast}_{j}-\sum_{j\in J^{c}}\lfloor\langle s,\beta^{J}_{j}\rangle\rfloor\varepsilon^{\ast}_{j}
\end{align*}
Since $K_{0}\cap J_{+}=K\cap I_{-}\cap J_{+}$ and $K_{0}\cap J_{-}=K\cap I_{+}\cap J_{-}$, we have 
\begin{align*}
-\sum_{i\in I_{-}}\varepsilon^{\ast}_{i}+\sum_{i\in J_{-}}\varepsilon^{\ast}_{i}+\sum_{i\in K_{0}\cap J_{+}}\varepsilon^{\ast}_{i}-\sum_{i\in K_{0}\cap J_{-}}\varepsilon^{\ast}_{i}&=-\sum_{i\in I_{-}\cap (J_{+}\sqcup J^{c})\cap K^{c}}\varepsilon^{\ast}_{i}+\sum_{J_{-}\cap (I_{+}\sqcup I^{c})\cap K^{c}}\varepsilon^{\ast}_{i}\end{align*}
For any signed circuit $C=C_{+}\sqcup C_{-}$, we have $\beta_{C}=\sum_{j\in C_{+}\cap I^{c}}\beta^{I}_{j}-\sum_{j\in C_{-}\cap I^{c}}\beta^{I}_{j}$. This implies as in (\ref{eqn_ineq_mu}) that 
\begin{align*}
-|C_{+}\cap I^{c}|+1+\lfloor\langle s,\beta_{C}\rangle\rfloor\leq\sum_{j\in C_{+}\cap I^{c}}\lfloor\langle s,\beta^{I}_{j}\rangle\rfloor-\sum_{j\in C_{-}\cap I^{c}}\lfloor\langle s,\beta^{I}_{j}\rangle\rfloor\leq|C_{-}\cap I^{c}|+\lfloor\langle s,\beta_{C}\rangle\rfloor.
\end{align*}
Therefore, we obtain 
\begin{align*}
\langle\nu,\beta_{C}\rangle&\leq-|C_{+}\cap I_{-}\cap (J_{+}\sqcup J^{c})\cap K^{c}|+|C_{-}\cap I_{-}\cap (J_{+}\sqcup J^{c})\cap K^{c}|+|C_{-}\cap I^{c}|\\
&\hspace{2em}+|C_{+}\cap J_{-}\cap(I_{+}\sqcup I^{c})\cap K^{c}|-|C_{-}\cap J_{-}\cap(I_{+}\sqcup I^{c})\cap K^{c}|+|C_{+}\cap J^{c}|-1\\
&\leq-|C_{+}\cap I_{-}\cap J^{c}|+|C_{-}\cap I_{-}\cap J_{+}|+|C_{-}\cap I_{-}\cap J^{c}|+|C_{-}\cap I^{c}|\\
&\hspace{2em}+|C_{+}\cap J_{-}\cap I_{+}|+|C_{+}\cap J_{-}\cap I^{c}|-|C_{-}\cap J_{-}\cap I^{c}|+|C_{+}\cap J^{c}|-1\\
&=|C_{-}\cap(I_{-}\setminus J_{-})|+|C_{-}\setminus(I\cup J_{-})|+|C_{+}\cap (J_{-}\setminus I_{-})|+|C_{+}\setminus(J\cup I_{-})|-1.
\end{align*}
In particular, if we assume the existence of a signed circuit $C$ such that $C_{+}\subset I_{-}\cup J_{+}$ and $C_{-}\subset I_{+}\cup J_{-}$, then the above inequality implies $\langle\nu,\beta_{C}\rangle\leq-1$. On the other hand, we have $\nu_{i}\geq0$ for any $i\in I_{-}\cup J_{+}$ and $\nu_{i}\leq0$ for any $i\in I_{+}\cup J_{-}$. This implies that $\langle\nu,\beta_{C}\rangle\geq0$ which gives a contradiction. 

Therefore, it is enough to prove the existence of a signed circuit as above. We set $V=\Ker(\mf{b})\otimes\bb{R}$ and $E=\{1,\ldots,n\}$ as in section 5.6. We also define a partition $E=R\sqcup G\sqcup B\sqcup W$ by $W=(I_{-}\cup J_{+})\cap(I_{+}\cup J_{-})=(I_{+}\cap J_{+})\cup(I_{-}\cap J_{-})$, $R=(I_{-}\cup J_{+})\setminus W=(I_{-}\setminus J_{-})\cup(J_{+}\setminus I_{+})$, $G=(I_{+}\cup J_{-})\setminus W=(I_{+}\setminus J_{+})\cup(J_{-}\setminus I_{-})$, and $B=I^{c}\cap J^{c}$. We note that the assumption $I\neq J$ implies $R\sqcup G\neq\emptyset$. By Lemma~\ref{lem_om_conform}, it is enough to prove the existence of nonzero $Z\in\sigma(V^{\perp})$ such that $Z_{R}\geq0$, $Z_{G}\leq0$, and $Z_{B}=0$. If we assume that such $Z$ does not exists, then Proposition~\ref{prop_Minty} implies that there exists a nonzero $Y\in\sigma(V)$ such that $Y_{R}\geq0$, $Y_{G}\leq0$, and $Y_{W}=0$. By Lemma~\ref{lem_om_conform} again, there exists signed cocircuit $C^{\vee}=C^{\vee}_{+}\sqcup C^{\vee}_{-}$ such that $C^{\vee}_{+}\subset R\sqcup B$ and $C^{\vee}_{-}\subset G\sqcup B$. Since we have $I\cap C^{\vee}_{+}\subset I_{-}$, $I\cap C^{\vee}_{-}\subset I_{+}$, and $I\cap C^{\vee}\neq\emptyset$, we obtain
\begin{align*}
\langle\xi,\alpha_{C^{\vee}}\rangle=\sum_{i\in I\cap C^{\vee}_{+}}\langle\xi,\alpha^{I}_{i}\rangle-\sum_{i\in I\cap C^{\vee}_{-}}\langle\xi,\alpha^{I}_{i}\rangle<0.
\end{align*}
On the other hand, $J\cap C^{\vee}_{+}\subset J_{+}$, $J\cap C^{\vee}_{-}\subset J_{-}$, and $J\cap C^{\vee}\neq\emptyset$ imply that 
\begin{align*}
\langle\xi,\alpha_{C^{\vee}}\rangle=\sum_{i\in J\cap C^{\vee}_{+}}\langle\xi,\alpha^{J}_{i}\rangle-\sum_{i\in J\cap C^{\vee}_{-}}\langle\xi,\alpha^{J}_{i}\rangle>0.
\end{align*}
This is a contradiction and hence the required circuit exists. This completes the proof of Theorem~\ref{thm_ext_orthogonality} and hence Conjecture~\ref{categorical_stable_basis} for toric hyper-K\"ahler manifolds.
\end{proof}

\subsection{Central charges}

In this section, we prove Conjecture~\ref{conj_real_variation} and the second part of Conjecture~\ref{conj_wall_crossing}. In order to prove them, we need to construct the central charge $\msc{Z}:\mf{s}^{\ast}_{\bb{R}}\rightarrow\Hom_{\bb{Z}}(K(L),\bb{R})$. We claim that for a K\"ahler alcove $\mb{A}$ and $s\in\mb{A}$, the central charge of canonical bases $\mca{C}(A(s))$ corresponding to $A(s)\in\mr{Alc}_{s}$ is given by the volume of the polytope $A(s)$ which is a polynomial function in $s$. In order to check that these polynomial functions do not depend on the choice of $\mb{A}$, we consider their equivariant lifts. 

Recall that we write $\iota(s)=(s_{1},\ldots,s_{n})$ for $s\in\mf{s}^{\ast}_{\bb{R}}$. For $I\in\bb{B}$, we set 
\begin{align*}
\square_{I}(s)\coloneqq\{x\in\mf{h}^{\ast}_{\bb{R}}\mid 0\leq\langle x,\mf{a}_{i}\rangle+s_{i}\leq1\mbox{ for any } i\in I\}.
\end{align*}
Take a generic $c\in\mf{h}$ such that $\langle c,\alpha\rangle\notin\bb{Z}$ for any equivariant root $\alpha\in\bb{X}^{\ast}(H)$. We consider $\bb{C}$ as a module over $K_{H}(\mr{pt})$ by $a^{\lambda}\mapsto e^{2\pi\sqrt{-1}\langle c,\lambda\rangle}$ and define a map $\msc{Z}_{c}:\mf{s}^{\ast}_{\bb{R}}\rightarrow\Hom_{K_{H}(\mr{pt})}(K_{H}(L),\bb{C})$ by assigning 
\begin{align*}
\langle \msc{Z}_{c}(s),\mca{O}_{p_{I}}\rangle=\int_{\square_{I}(s)}e^{2\pi \sqrt{-1}\langle c,x\rangle}dx=\prod_{i\in I}\frac{e^{2\pi\sqrt{-1}\langle c,\alpha^{I}_{i}\rangle}-1}{2\pi \sqrt{-1}\langle c,\alpha^{I}_{i}\rangle}\cdot e^{-2\pi \sqrt{-1}s_{i}\langle c,\alpha^{I}_{i}\rangle}
\end{align*}
for each $I\in\bb{B}$. Here, $\mca{O}_{p_{I}}$ is the skyscraper sheaf at $p_{I}\in X^{H}$. Since $\{\mca{O}_{p_{I}}\}_{I\in\bb{B}}$ is a basis of $K_{H}(L)$ after localization, the genericity of $c$ implies that this extends to a $K_{H}(\mr{pt})$-linear map $\msc{Z}_{c}(s):K_{H}(L)\rightarrow\bb{C}$. We note that these functions are analytic in $s$.

\begin{lemma}\label{lem_Euler_integral}
For any $s\in\mf{s}^{\ast}_{\mr{reg}}$ and $A(s)\in\mr{Alc}_{s}$, we have 
\begin{align}\label{eqn_Euler_integral}
\langle \msc{Z}_{c}(s),\mca{C}(A(s))\rangle=\int_{A(s)}e^{2\pi\sqrt{-1}\langle c,x\rangle}dx.
\end{align}
\end{lemma}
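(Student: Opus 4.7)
The plan is to compute both sides in a basis of vertices of $\overline{A(s)}$ and match them term-by-term. Since $\mathscr{Z}_c(s)$ is $K_H(\mr{pt})$-linear, I may safely specialize $v\to 1$ throughout; then by (\ref{eqn_toric_C(A)}),
\[
\langle\mathscr{Z}_c(s),\mca{C}(A(s))\rangle = \sum_{B\in N^-(A(s))}(-1)^{\ell(A(s),B)}\langle\mathscr{Z}_c(s),\mca{S}(B)|_{v=1}\rangle.
\]
The first key claim is that $\mca{S}(p_J)|_{v=1}$ is supported at the single fixed point $p_J$ in $K_H(X)_{\mr{loc}}$. Indeed, using Corollary~\ref{cor_toric_standard_basis}, (\ref{eqn_str_sheaf_L_I}), and Corollary~\ref{cor_toric_line_restr}, a direct check shows that whenever $I'\neq J$ but $p_{I'}\in L_J$, the set $(J_+\cap(I')^c_+)\cup(J_-\cap(I')^c_-)$ is nonempty and the corresponding Koszul factor becomes $1-v^2$, which vanishes at $v=1$; hence $i^\ast_{p_{I'}}\mca{S}(p_J)|_{v=1}=0$ for $I'\neq J$. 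Equivariant localization together with the identity $\tfrac{a^{-\alpha}}{1-a^{-\alpha}}=-\tfrac{1}{1-a^{\alpha}}$ then yields
\[
\mca{S}(p_J)|_{v=1}=\frac{(-1)^{d+|J_+|}}{\prod_{i\in J}(1-a^{\alpha^J_i})}\,\mca{O}_{p_J}.
\]

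For $B\in N^-(A(s))$ with $\varphi_{\mf{C},s}(B)=(\lambda_B,p_J)$, the shift $\mca{S}(B)=a^{\lambda_B}\mca{S}(p_J)$ combined with $x_B=-\sum_{i\in J}\iota(s)_i\alpha^J_i+\lambda_B$ and the explicit formula for $f_J(s)$ gives, after cancelling $(e^{\zeta_i^J}-1)/(1-e^{\zeta_i^J})=-1$ factor-by-factor (with $\zeta_i^J\coloneqq 2\pi\sqrt{-1}\langle c,\alpha^J_i\rangle$, nonzero by genericity of $c$),
\[
\langle\mathscr{Z}_c(s),\mca{S}(B)|_{v=1}\rangle=(-1)^{|J_+|}\,e^{2\pi\sqrt{-1}\langle c,x_B\rangle}\prod_{i\in J}\frac{1}{\zeta_i^J}.
\]
On the other hand, by genericity of $s$ the alcove $A(s)$ is a simple polytope whose vertex set is exactly $\{x_B\}_{B\in N^-(A(s))}$; at $x_B$ the edge directions into $A(s)$ are $\epsilon_A(i)\alpha^{J(B)}_i$ for $i\in J(B)$, where $\epsilon_A(i)\in\{\pm\}$ encodes the side of $\mca{H}^s_{i,m_i(B)}$ on which $A(s)$ lies, and the Jacobian is $1$ by the unimodularity of $\mf{a}$. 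Brion's formula for integrals of exponentials over simple polytopes therefore gives
\[
\int_{A(s)}e^{2\pi\sqrt{-1}\langle c,x\rangle}\,dx=\sum_{B\in N^-(A(s))}\frac{e^{2\pi\sqrt{-1}\langle c,x_B\rangle}}{\prod_{i\in J(B)}(-\epsilon_A(i)\zeta_i^{J(B)})}.
\]

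Comparing the two sums reduces the lemma to the termwise sign identity $(-1)^{\ell(A,B)+|J_+|}=(-1)^{|\{i\in J:\epsilon_A(i)=+\}|}$, which follows from $\ell(A,B)=|\{i\in J:\epsilon_A(i)\neq\epsilon_B(i)\}|$ and $\epsilon_B(i)=+$ iff $i\in J_+$ by a short parity count modulo $2$. The main obstacle will be the careful sign bookkeeping together with verifying the collapse of $\mca{S}(p_J)|_{v=1}$ to a single fixed-point class: this collapse is a genuinely $v=1$ phenomenon driven by the vanishings of the $1-v^2$ factors, and is precisely what makes the specialization $\mathscr{Z}_c(s)$ the right tool for extracting the polytope-geometric content of $\mca{C}(A(s))$.
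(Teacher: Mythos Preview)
Your proof is correct, and it takes a genuinely different route from the paper's argument. The paper does not compute either side directly; instead it observes that the identity $\mca{O}_{p_I}=\sum_{A\subset\square_I(s)}\mca{C}(A)$ in $K_H(L)$ (which follows from Proposition~\ref{fixed_point_basis}) has exactly the same shape as the tiling identity $\int_{\square_I(s)}e^{2\pi\sqrt{-1}\langle c,x\rangle}dx=\sum_{A\subset\square_I(s)}\int_A e^{2\pi\sqrt{-1}\langle c,x\rangle}dx$. Since $\{\mca{O}_{p_I}\}_{I\in\bb{B}}$ is a basis after specializing at the generic $c$, the two functionals $\mca{C}(A)\mapsto\langle\msc{Z}_c(s),\mca{C}(A)\rangle$ and $\mca{C}(A)\mapsto\int_A e^{2\pi\sqrt{-1}\langle c,x\rangle}dx$ must coincide. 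This is a short, soft argument that uses no polytope machinery beyond the decomposition of a box into alcoves.

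Your approach, by contrast, expands $\mca{C}(A)$ in the standard basis via (\ref{eqn_toric_C(A)}), exploits the collapse of $\mca{S}_{\mf{C},s}(p_J)|_{v=1}$ to a multiple of $\mca{O}_{p_J}$, and matches the resulting vertex sum against Brion's formula for $\int_{A}e^{2\pi\sqrt{-1}\langle c,x\rangle}dx$. The sign check at the end is correct. What your route buys is an explicit identification of the standard-basis expansion with Brion's vertex decomposition of the alcove: the $v=1$ specialization of $\mca{S}(B)$ literally becomes the contribution of the vertex $x_B$. The paper's route buys brevity and avoids importing Brion's formula, at the cost of hiding this vertex-by-vertex correspondence.
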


\begin{proof}
We note that if we change $A(s)$ by $A(s)+\alpha$ for some $\alpha\in\bb{X}^{\ast}(H)$, then both sides of (\ref{eqn_Euler_integral}) are multiplied by $e^{2\pi\sqrt{-1}\langle c,\alpha\rangle}$. For any $A(s)\in\mr{Alc}_{s}$, there exists unique $\alpha\in\bb{X}^{\ast}(H)$ such that $A(s)+\alpha\subset\square_{I}$ since $\{\alpha^{I}_{i}\}_{i\in I}$ is a basis of $\bb{X}^{\ast}(H)$. If $A(s)\subset\square_{I}(s)$, then we have $\wt_{H}i^{\ast}_{p_{I}}\mca{E}(A(s))=0$ by Corollary~\ref{cor_toric_line_restr}. By Proposition~\ref{fixed_point_basis}, we obtain 
\begin{align}\label{eqn_toric_fix_to_C}
\mca{O}_{p_{I}}=\sum_{\substack{A(s)\in\mr{Alc}_{s}\\A(s)\subset\square_{I}(s)}}\mca{C}(A(s))
\end{align}
as elements in $K_{H}(L)$ for any $I\in\bb{B}$, i.e., after specializing $v=1$. On the other hand, we obviously have 
\begin{align}\label{eqn_integral_fix_to_C}
\langle \msc{Z}_{c}(s),\mca{O}_{p_{I}}\rangle=\sum_{\substack{A(s)\in\mr{Alc}_{s}\\A(s)\subset\square_{I}(s)}}\int_{A(s)}e^{2\pi\sqrt{-1}\langle c,x\rangle}dx
\end{align}
for any $I\in\bb{B}$. One can solve (\ref{eqn_toric_fix_to_C}) to express $\mca{C}(A(s))$ in terms of $\mca{O}_{p_{I}}$. Since $c$ is generic, one can also solve (\ref{eqn_integral_fix_to_C}) to express $\int_{A(s)}e^{2\pi\sqrt{-1}\langle c,x\rangle}dx$ in terms of $\langle \msc{Z}_{c}(s),\mca{O}_{p_{I}}\rangle$ in the same way. Therefore, $\langle \msc{Z}_{c}(s),\mca{C}(A(s))\rangle$ and $\int_{A(s)}e^{2\pi\sqrt{-1}\langle c,x\rangle}dx$ should coincide for any $A(s)\subset\square_{I}(s)$. This proves (\ref{eqn_Euler_integral}).
\end{proof}

\begin{remark}
One can also consider an equivariant lift of $\msc{Z}$ to $K_{\bb{T}}(L)$ by replacing the integral (\ref{eqn_Euler_integral}) by certain Euler type integrals \cite{GKZ} appearing in the theory of Gelfand-Kapranov-Zelevinsky's hypergeometric differential equations. We note that these differential equations come from quantum differential equations of toric hyper-K\"ahler manifolds by \cite{MS}. We do not pursue this direction further here since we do not need it. 
\end{remark}

\begin{corollary}\label{cor_toric_central_charge}
There exists a polynomial map $\msc{Z}:\mf{s}^{\ast}_{\bb{R}}\rightarrow\Hom_{\bb{Z}}(K(L),\bb{R})$ such that $\langle \msc{Z}(s),\mca{C}(A(s))\rangle=\mr{Vol}(A(s))$ for any $s\in\mf{s}^{\ast}_{\mr{reg}}$ and $A(s)\in\mr{Alc}_{s}$. Moreover, there exists a vector bundle $\mca{P}$ such that $\msc{Z}=\msc{Z}_{\mca{P}}$. 
\end{corollary}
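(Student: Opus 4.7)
The plan is to produce $\msc{Z}$ by specializing $\msc{Z}_{c}$ from Lemma~\ref{lem_Euler_integral} at $c=0$, and then to identify a vector bundle $\mca{P}$ for which $\msc{Z}=\msc{Z}_{\mca{P}}$ via Hirzebruch--Riemann--Roch.

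For the first part I would set $\langle\msc{Z}(s),\mca{C}(A(s))\rangle\coloneqq\mr{Vol}(A(s))$ on each canonical basis element, and check that this descends to a well-defined $\bb{Z}$-linear map on $K(L)$. Descent is clean because $\mca{C}(A(s)+\alpha)=a^{\alpha}\mca{C}(A(s))$ while the volume is translation-invariant under $\alpha\in\bb{X}^{\ast}(H)$, so the assignment is compatible with the augmentation $a^{\lambda}\mapsto 1$, $v\mapsto 1$. Polynomiality on a single K\"ahler alcove $\mb{A}$ is then automatic: for $s\in\mb{A}$ the basis $\bb{B}_{L,s}$ modulo $\bb{X}^{\ast}(H)$-shifts is independent of $s$, while the polytope $A(s)$ varies by an affine-linear translation in $s$, making $s\mapsto\mr{Vol}(A(s))$ a polynomial of degree $\le d$. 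Consistency across walls follows from Lemma~\ref{lem_toric_wall_crossing_formula}: the Koszul-type exact sequence (\ref{eqn_toric_wall_crossing_exact_seq}) provides an alternating $K$-theoretic identity whose contraction against $\msc{Z}$ becomes exactly the inclusion--exclusion formula for the degenerating simplex $\Delta_{\epsilon_\pm}(s)$ of Lemma~\ref{lem_simplex} and its faces; the matching of length $|C|$ on both sides, together with the $|\langle s-s_{0},\beta_{C}\rangle|^{|C|-1}$ scaling of the volume of $\Delta_{\epsilon_\pm}(s)$, guarantees that the two polynomial pieces glue on the wall.

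For the second part I would use the recipe suggested in the remark following Conjecture~\ref{conj_real_variation}: fix a generic $s_{0}$ in a K\"ahler alcove, let $m_{p}$ record the asymptotic multiplicity of $\mca{E}_{\mf{C},s_{0}}(p)^{\vee}$ in the non-equivariant splitting bundle of the Azumaya quantization $\msc{A}_{-\ell s_{0}}$ as $\ell\to\infty$, and set $\mca{P}\coloneqq\bigoplus_{p\in X^{H}}m\cdot m_{p}\cdot\mca{E}_{\mf{C},s_{0}}(p)^{\vee}$ for a common denominator-clearing integer $m$. For toric hyper-K\"ahler manifolds these multiplicities are combinatorial and can be computed directly from the hyperplane arrangement. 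To verify $\msc{Z}=\msc{Z}_{\mca{P}}$ I would test it on each $\mca{C}(A(s))$, use Proposition~\ref{dual_basis} together with torus localization to expand $\chi(X,\mca{C}(A(s))\otimes\mca{P}\otimes\mf{L}(-s))$ into a sum of residues at the fixed points $p_{I}$ involving the tangent characters (\ref{eqn_toric_tangent_fiber}) and the line bundle weights coming from (\ref{eqn_mu_I}), and then reduce via a Brion--Khovanskii--Pukhlikov lattice-point identity to the polytope integral $\int_{A(s)}1\,dx=\mr{Vol}(A(s))$.

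The main obstacle is precisely this last verification: matching the Euler characteristic to the alcove volume for the chosen $\mca{P}$. Both maps are polynomial in $s$ of degree $d$, so rigidity reduces the identification to a single explicit computation, but the computation itself is nontrivial. The cleanest route I foresee is to work equivariantly throughout, treating Lemma~\ref{lem_Euler_integral} as a template: one identifies the $c\to 0$ specialization of $\msc{Z}_{c}(s)(\mca{C}(A(s)))$ with $\tfrac{1}{\rank\mca{P}}\chi(X,\mca{C}(A(s))\otimes\mca{P}\otimes\mf{L}(-s))$ via localization at the $H$-fixed points, which simultaneously pins down the correct $m_{p}$'s and furnishes the required comparison. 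All other steps are essentially bookkeeping given the machinery already assembled in this section.
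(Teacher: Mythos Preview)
Your first part is workable but takes an unnecessary detour. You open by saying you will specialize $\msc{Z}_{c}$ at $c=0$, which is exactly what the paper does, but then you switch to defining $\msc{Z}(s)$ on the canonical basis and checking wall-crossing consistency via Lemma~\ref{lem_toric_wall_crossing_formula}. The paper avoids any such check: $\msc{Z}_{c}(s)$ is already defined globally on the \emph{fixed-point} basis $\{\mca{O}_{p_{I}}\}$, which does not depend on the K\"ahler alcove of $s$, so there is nothing to glue. Lemma~\ref{lem_Euler_integral} shows $\langle\msc{Z}_{c}(s),\mca{C}(A(s))\rangle$ is holomorphic in $c$; since $\{\mca{C}(A(s))\}$ is a basis of $K_{H}(L)$, every value $\langle\msc{Z}_{c}(s),\mca{C}\rangle$ is holomorphic in $c$, so one may set $c=0$ directly, and polynomiality in $s$ follows because the values on a basis are polynomial.

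Your second part has a genuine gap. You construct $\mca{P}$ from asymptotic multiplicities of splitting bundles of the Azumaya quantization $\msc{A}_{-\ell s_{0}}$, but the remark after Conjecture~\ref{conj_real_variation} is explicitly speculative, not a result available in the paper; your subsequent verification via localization and a Brion--Khovanskii--Pukhlikov identity inherits this uncertainty, as you yourself acknowledge. The paper's argument is elementary and sidesteps all of this. Take $s_{o}\in\mf{s}^{\ast}_{\mr{reg}}$ near $0$ and set
\[
\mca{P}\coloneqq m\sum_{A(s_{o})\in\mr{Alc}_{s_{o}}/\bb{X}^{\ast}(H)}\mr{Vol}(A(0))\cdot\mca{E}(A(s_{o}))^{\vee},
\]
where $A(0)$ is the limit of $A(s_{o})$ and $m$ clears denominators; then $\rank\mca{P}=m$ since $\sum_{A}\mr{Vol}(A(0))=\mr{Vol}(\square_{I}(0))=1$. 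The duality $\chi(X,\mca{C}(A)\otimes\mca{E}(A')^{\vee})=\delta_{A,A'}$ from Proposition~\ref{dual_basis} gives $\langle\msc{Z}_{\mca{P}}(0),\mca{C}(A(s_{o}))\rangle=\mr{Vol}(A(0))=\langle\msc{Z}(0),\mca{C}(A(s_{o}))\rangle$ at once, so $\msc{Z}_{\mca{P}}(0)=\msc{Z}(0)$. By Lemma~\ref{can_slope}, tensoring by $\mf{L}(l)$ carries $\bb{B}_{L,s_{o}}$ to $\bb{B}_{L,s_{o}+l}$ while the alcove volumes are unchanged, whence $\msc{Z}_{\mca{P}}(l)=\msc{Z}(l)$ for every lattice point $l\in\Pic(X)$; polynomiality in $s$ then forces $\msc{Z}_{\mca{P}}=\msc{Z}$ everywhere. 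No localization, no lattice-point identity, no quantization input is needed.
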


\begin{proof}
Since the RHS of (\ref{eqn_Euler_integral}) is holomorphic in $c$ and $\{\mca{C}(A(s))\}_{A(s)\in\mr{Alc}_{s}}$ forms a basis of $K_{H}(L)$, $\langle \msc{Z}_{c}(s),\mca{C}\rangle$ is holomorphic in $c$ for any $\mca{C}\in K_{H}(L)$. Therefore, one can substitute $c=0$ to obtain a map $\msc{Z}\coloneqq \msc{Z}_{c=0}:\mf{s}^{\ast}_{\bb{R}}\rightarrow\Hom_{\bb{Z}}(K(L),\bb{R})$. Lemma~\ref{lem_Euler_integral} implies that for any $A(s)\in\mr{Alc}_{s}$, we have $\langle \msc{Z}(s),\mca{C}(A(s))\rangle=\mr{Vol}(A(s))$. Since this is a polynomial function in $s$, $\msc{Z}$ is also a polynomial function. 

We next consider the value of the central charges at $s=0$. We take $s_{o}\in\mf{s}^{\ast}_{\mr{reg}}$ in a neighborhood of $0$ and consider $\mca{P}\coloneqq m\sum_{A(s_{o})\in\mr{Alc}_{s_{o}}/\bb{X}^{\ast}(H)}\mr{Vol}(A(0))\cdot\mca{E}(A(s_{o}))^{\vee}$, where $A(0)$ is the limit of $A(s_{o})$ as $s_{o}\rightarrow0$ and $m\in\bb{Z}_{>0}$ is taken so that $m\mr{Vol}(A(0))\in\bb{Z}$ for any $A(s_{o})\in\mr{Alc}_{s_{o}}$. We note that this does not depend on the choice of $s_{o}$ by Proposition~\ref{prop_toric_wall_crossing_characterization}. We note that $\rank\mca{P}=m$ since we have $\sum_{A(s)\in\mr{Alc}_{s}/\bb{X}^{\ast}(H)}\mr{Vol}(A(s))=\mr{Vol}(\square_{I}(s))=1$. 

We note that by Proposition~\ref{dual_basis}, we have $\chi(X,\mca{C}(A)\otimes\mca{E}(A')^{\vee})=\delta_{A,A'}$ for any $A,A'\in\mr{Alc}_{s}/\bb{X}^{\ast}(H)$. This implies $\langle \msc{Z}_{\mca{P}}(0),\mca{C}(A(s_{o}))\rangle=\mr{Vol}(A(0))=\langle \msc{Z}(0),\mca{C}(A(s_{o}))\rangle$ for any $A(s_{o})\in\mr{Alc}_{s_{o}}$ and hence $\msc{Z}_{\mca{P}}(0)=\msc{Z}(0)$. For any $l\in\Pic(X)\cong\bb{X}^{\ast}(S)$, the periodic hyperplane arrangements in $\mf{h}^{\ast}_{\bb{R}}$ defined in section 5.4 does not change if we change $s$ by $s+l$.  This and Lemma~\ref{can_slope} imply that for any $A(s_{o})\in\mr{Alc}_{s_{o}}$, there exists $A(s_{o}+l)\in\mr{Alc}_{s_{o}+l}$ such that $\mr{Vol}(A(s_{o}))=\mr{Vol}(A(s_{o}+l))$ and $\mca{C}(A(s_{o}+l))=\mf{L}(l)\otimes\mca{C}(A(s_{o}))$. Therefore, we obtain $\langle \msc{Z}_{\mca{P}}(l),\mca{C}(A(s_{o}+l))\rangle=\mr{Vol}(A(0))=\mr{Vol}(A(l))=\langle \msc{Z}(l),\mca{C}(A(s_{o}+l))\rangle$ for any $A(s_{o}+l)\in\mr{Alc}_{s_{o}+l}$. This implies that $\msc{Z}_{\mca{P}}(l)=\msc{Z}(l)$ for any $l\in\Pic(X)$. Since both of them are polynomial functions in $s$, we obtain $\msc{Z}_{\mca{P}}(s)=\msc{Z}(s)$ for any $s\in\mf{s}^{\ast}_{\bb{R}}$. 
\end{proof}

We now prove Conjecture~\ref{conj_real_variation} for toric hyper-K\"ahler manifolds. Recall that for any $\mb{A}\in\mr{Alc}_{K}$, we associate a $t$-structure $\tau(\mb{A})$ on $\msc{D}\coloneqq\DCoh_{L}(X)\subset\DCoh(X)$ defined by the tilting bundle $\mca{T}_{\mf{C},s}$ for $s\in\mb{A}$. 

\begin{corollary}\label{cor_toric_real_variation}
The pair $(\msc{Z},\tau)$ gives a real variation of stability conditions on $\msc{D}$. 
\end{corollary}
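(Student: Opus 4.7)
The plan is to verify the two axioms of Definition~\ref{real_variation} for the pair $(\msc{Z},\tau)$ on $\msc{D}=\DCoh_L(X)$.

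For the positivity axiom, I would fix a K\"ahler alcove $\mb{A}$ and $s\in\mb{A}$. Under the derived equivalence induced by the tilting bundle $\mca{T}_{\mf{C},s}$ of Corollary~\ref{cor_toric_tilting}, the heart $\msc{C}_\mb{A}$ has simple objects whose classes in $K(\msc{D})$ are the canonical basis elements $\mca{C}(A(s))$ indexed by $A(s)\in\mr{Alc}_s/\bb{X}^\ast(H)$. Every nonzero $M\in\msc{C}_\mb{A}$ has $[M]=\sum_{A} c_A\cdot[\mca{C}(A(s))]$ with $c_A\in\bb{Z}_{\geq 0}$ and at least one $c_A>0$. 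Specializing Lemma~\ref{lem_Euler_integral} at $c=0$ gives $\langle\msc{Z}(s),\mca{C}(A(s))\rangle=\mr{Vol}(A(s))>0$ for $s\in\mb{A}$, hence $\langle\msc{Z}(s),[M]\rangle>0$.

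For the wall-crossing axiom, fix a wall $w=w_C$ for a signed circuit $C$ with $\langle\eta,\beta_C\rangle>0$ and K\"ahler alcoves $\mb{A}_\pm$ sharing a codimension-one face in $w$ with $\mb{A}_+$ above $\mb{A}_-$. By Corollary~\ref{cor_toric_central_charge} the central charge of a simple equals $\mr{Vol}(A(s))$, and by Proposition~\ref{prop_toric_wall_crossing_characterization} this polynomial is either nonvanishing at $s_0\in w$ or vanishes to order exactly $|C|-1$, the latter singling out the subsets $\bb{B}^1_{s_\pm,w}\subset\bb{B}_{X,s_\pm}$. Consequently the filtrations on $\msc{D}$ satisfy $\msc{D}^0_{\mb{A}_\pm,w}=\msc{D}$, $\msc{D}^n_{\mb{A}_\pm,w}=0$ for $n\geq|C|$, and for $1\leq n\leq|C|-1$ each $\msc{D}^n_{\mb{A}_\pm,w}$ is the thick subcategory of $\msc{D}$ generated by the summands of $\mca{T}_{\mf{C},s_\pm}$ corresponding to $\bb{B}^1_{s_\pm,w}$. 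The equality $\msc{D}^n_{\mb{A}_-,w}=\msc{D}^n_{\mb{A}_+,w}$ is then immediate from Lemma~\ref{lem_toric_wall_crossing_formula}: tensoring (\ref{eqn_toric_wall_crossing_exact_seq}) with the line bundle $\mca{L}_{x_0}$ realizes each element of $\bb{B}^1_{s_+,w}$ as an iterated extension of elements of $\bb{B}^0_{s_\pm,w}\cup\bb{B}^1_{s_-,w}$, and symmetrically.

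The remaining and most delicate step is the graded matching $\mr{gr}^{|C|-1}_w(\msc{C}_{\mb{A}_-})=\mr{gr}^{|C|-1}_w(\msc{C}_{\mb{A}_+})[|C|-1]$ inside $\msc{D}^{|C|-1}_{\mb{A}_\pm,w}$, the only nontrivial graded piece. For a pair $(\mca{E},\mca{E}')\in\bb{B}^1_{s_+,w}\times\bb{B}^1_{s_-,w}$ related by the twist of Proposition~\ref{prop_toric_wall_crossing_characterization}, the Koszul exact sequence (\ref{eqn_toric_wall_crossing_exact_seq}) (suitably tensored) places $\mca{E}$ and $\mca{E}'$ at the two extreme cohomological degrees, while the intermediate terms $\mca{W}_1,\dots,\mca{W}_{|C|-1}$ all lie in $\msc{D}^{|C|}_{\mb{A}_\pm,w}=0$ after passing to the associated graded; this produces the desired shift $[|C|-1]$, matching the grading shift $v^{|C|}$ of Conjecture~\ref{conj_wall_crossing} (verified by Lemma~\ref{lem_toric_wall_crossing_formula}) after passing to non-equivariant cohomology. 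The main obstacle is making this identification rigorous at the categorical level: it requires tracking the actual heart structures through the Koszul mutation of tilting bundles, and concretely verifying that applying $\RHom(\mca{T}_{\mf{C},s_\pm},-)$ to the twisted wall-crossing complex yields a complex whose cohomology is concentrated in the two extreme degrees and realizes the desired mutation functor between $\mca{A}_{\mf{C},s_-}\mbox{-}\mr{gmod}^H$ and $\mca{A}_{\mf{C},s_+}\mbox{-}\mr{gmod}^H$.
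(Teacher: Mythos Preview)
Your treatment of the positivity axiom is correct and matches the paper.

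For the wall-crossing axiom there is a genuine gap. Your description of $\msc{D}^n_{\mb{A}_\pm,w}$ for $1\le n\le |C|-1$ as ``the thick subcategory generated by the summands of $\mca{T}_{\mf{C},s_\pm}$ corresponding to $\bb{B}^1_{s_\pm,w}$'' is not right: those summands are line bundles on all of $X$ and do not lie in $\msc{D}=\DCoh_L(X)$. The correct description is dual. Writing $\mr{Alc}^0_{s,w}$ for the alcoves whose volume does not vanish on $w$ and $\mr{Alc}^1_{s,w}$ for the rest, the Serre subcategory $\msc{C}^1_{\mb{A}_\pm,w}=\msc{C}^{|C|-1}_{\mb{A}_\pm,w}$ is generated by the simples $\mca{C}(A)$ with $A\in\mr{Alc}^1_{s_\pm,w}$, hence equals $\{\mca{F}\in\msc{C}_{\mb{A}_\pm}\mid\Hom(\mca{E}(A),\mca{F})=0\text{ for all }A\in\mr{Alc}^0_{s_\pm,w}\}$, and therefore $\msc{D}^1_{\mb{A}_\pm,w}=\{\mca{F}\in\msc{D}\mid\RHom(\mca{E}(A),\mca{F})=0\text{ for all }A\in\mr{Alc}^0_{s_\pm,w}\}$. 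Proposition~\ref{prop_toric_wall_crossing_characterization} says precisely that $\{\mca{E}(A)\mid A\in\mr{Alc}^0_{s_+,w}\}=\bb{B}_{X,s_+}\cap\bb{B}_{X,s_-}=\{\mca{E}(A)\mid A\in\mr{Alc}^0_{s_-,w}\}$, so $\msc{D}^n_{\mb{A}_+,w}=\msc{D}^n_{\mb{A}_-,w}$ is immediate---no appeal to the Koszul complex is needed here.

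This orthogonal description also dissolves what you call the ``main obstacle''. For $\mca{F}\in\msc{D}^{|C|-1}_{\mb{A}_\pm,w}$, apply $\RHom(-,\mca{F})$ to the tensored Koszul sequence~(\ref{eqn_toric_wall_crossing_exact_seq}): the intermediate terms $\mca{W}_1,\dots,\mca{W}_{|C|-1}$ are direct sums of line bundles in $\bb{B}^0_{s_\pm,w}$, so $\RHom(\mca{W}_k,\mca{F})=0$ by the very definition of $\msc{D}^{|C|-1}_{\mb{A}_\pm,w}$. What remains is an isomorphism $\RHom(\mca{E}(A_+),\mca{F})\cong\RHom(\mca{E}(A_-)[|C|-1],\mca{F})$ for the matched pair $A_\pm\in\mr{Alc}^1_{s_\pm,w}$. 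Since membership of $\mca{F}\in\msc{D}^{|C|-1}_{\mb{A}_\pm,w}$ in $\msc{C}^{|C|-1}_{\mb{A}_\pm,w}$ is detected by the vanishing of $\RHom^{\neq0}(\mca{E}(A),\mca{F})$ for $A\in\mr{Alc}^1_{s_\pm,w}$, this yields $\gr^{|C|-1}_w(\msc{C}_{\mb{A}_-})=\gr^{|C|-1}_w(\msc{C}_{\mb{A}_+})[|C|-1]$ directly (and $\gr^0$ is handled the same way with no shift). Your assertion that the intermediate $\mca{W}_k$ ``lie in $\msc{D}^{|C|}_{\mb{A}_\pm,w}=0$ after passing to the associated graded'' has the roles reversed: those terms belong to the \emph{shallow} part of the filtration; it is $\RHom$ \emph{from} them \emph{into} the deep part that vanishes.
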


\begin{proof}
The first condition in Definition~\ref{real_variation} follows from Corollary~\ref{cor_toric_central_charge} since the volume of a full dimensional polytope is positive. Hence it is enough to check the second condition in Definition~\ref{real_variation}. 

For any $s\in\mb{A}\in\mr{Alc}_{K}$ and a wall $w=\{x\in\mf{s}^{\ast}_{\bb{R}}\mid\langle x,\beta_{C}\rangle=m\}$ of $\mb{A}$, we set $\mr{Alc}^{0}_{s,w}\coloneqq\{A(s)\in\mr{Alc}_{s}\mid \mr{Vol}(A(s))\mbox{ does not vanish on }w\}$ and $\mr{Alc}^{1}_{s,w}\coloneqq\mr{Alc}_{s}\setminus\mr{Alc}^{0}_{s,w}$. By Proposition~\ref{prop_toric_wall_crossing_characterization} and Corollary~\ref{cor_toric_central_charge}, the Serre subcategories $\msc{C}^{n}_{\mb{A}}$ of the heart $\msc{C}_{\mb{A}}$ of $\tau(\mb{A})$ defined in section 3.7 are given by $\msc{C}_{\mb{A}}$ if $n=0$, generated by objects $\{\mca{C}(A)\mid A\in\mr{Alc}^{1}_{s,w}\}$ if $0<n<|C|$, and 0 if $n\geq|C|$. Here, we identified $\mca{C}(A)\in K_{\bb{T}}(X)$ as an object of $\msc{D}$ as in section 3.6 by forgetting equivariant structures. We note that we have $\msc{C}^{1}_{\mb{A}}=\msc{C}^{|C|-1}_{\mb{A}}=\{\mca{F}\in\msc{C}_{\mb{A}}\mid\Hom(\mca{E}(A),\mca{F})=0\mbox{ for any }A\in\mr{Alc}^{0}_{s,w}\}$. This easily implies that $\msc{D}^{1}_{\mb{A},w}=\msc{D}^{|C|-1}_{\mb{A},w}=\{\mca{F}\in\msc{D}\mid\RHom(\mca{E}(A),\mca{F})=0\mbox{ for any }A\in\mr{Alc}^{0}_{s,w}\}$. 

Let $\mb{A}_{+}\neq\mb{A}_{-}\in\mr{Alc}_{K}$ be two K\"ahler alcove sharing the same wall $w$ such that $\mb{A}_{+}$ is above $\mb{A}_{-}$. We take $s_{\pm}\in\mb{A}_{\pm}$. Since $\{\mca{E}(A)\mid A\in\mr{Alc}^{0}_{s_{+},w}\}=\{\mca{E}(A)\mid A\in\mr{Alc}^{0}_{s_{-},w}\}$ by Proposition~\ref{prop_toric_wall_crossing_characterization}, we obtain $\msc{D}^{n}_{\mb{A}_{+},w}=\msc{D}^{n}_{\mb{A}_{-},w}$ for any $n\in\bb{Z}_{\geq0}$. By the definition of $\tau(\mb{A}_{\pm})$, we have $\msc{C}^{|C|-1}_{\mb{A}_{\pm},w}=\{\mca{F}\in\msc{D}^{|C|-1}_{\mb{A}_{\pm},w}\mid\mr{RHom}^{\neq0}(\mca{E}(A),\mca{F})=0\mbox{ for any }A\in\mr{Alc}^{1}_{s_{\pm},w}\}$. By Proposition~\ref{prop_toric_wall_crossing_characterization} and Lemma~\ref{lem_toric_wall_crossing_formula}, we have $\RHom(\mca{E}(A_{+}),\mca{F})\cong\RHom(\mca{E}(A_{-})[|C|-1],\mca{F})$ for any $\mca{F}\in\msc{D}^{|C|-1}_{\mb{A}_{\pm},w}$ and $A_{\pm}\in\mr{Alc}_{s_{\pm}}$ such that $\mca{E}(A_{-})\cong\mca{L}\left(\sum_{i\in C_{-}}\varepsilon^{\ast}_{i}-\sum_{i\in C_{+}}\varepsilon^{\ast}_{i}\right)\otimes\mca{E}(A_{+})$. This implies that $\gr^{|C|-1}_{w}(\msc{C}_{\mb{A}_{-}})=\gr^{|C|-1}_{w}(\msc{C}_{\mb{A}_{+}})[|C|-1]$ in $\gr^{|C|-1}_{\mb{A}_{\pm},w}\msc{D}$. Similarly, we have $\gr^{0}_{w}(\msc{C}_{\mb{A}_{\pm}})=\{\mca{F}\in\gr^{0}_{\mb{A}_{\pm},w}(\msc{D})\mid \mr{RHom}^{\neq0}(\mca{E}(A),\mca{F})=0\mbox{ for any }A\in\mr{Alc}^{0}_{s_{\pm},w}\}$ and hence $\gr^{0}_{w}(\msc{C}_{\mb{A}_{-}})=\gr^{0}_{w}(\msc{C}_{\mb{A}_{+}})$ in $\gr^{0}_{\mb{A}_{\pm},w}\msc{D}$. This proves the second condition in Defnition~\ref{real_variation} and hence $(\msc{Z},\tau)$ gives a real variation of stability conditions. 
\end{proof}

\section{Elliptic canonical bases for toric hyper-K\"ahler manifolds}

In this final section, we define what we call elliptic canonical bases for toric hyper-K\"ahler manifolds and prove some basic properties of them. As a corollary, we prove Conjecture~\ref{conj_ell_indep_chamber} for toric hyper-K\"ahler manifolds. We will follow the notations of section 4 and 5. 

\subsection{Elliptic standard bases}

First we recall the description of elliptic stable bases for toric hyper-K\"ahler manifolds given in \cite{AO,S}. Recall that we fix the polarization $T^{1/2}$ in (\ref{eqn_toric_polarization}) which satisfies $\det T^{1/2}=v^{-n}\mca{L}(\kappa)$, where we set $\kappa\coloneqq\varepsilon^{\ast}_{1}+\cdots+\varepsilon^{\ast}_{n}\in\bb{X}^{\ast}(T)$. We also take similar polarization for $X^{!}$ and $\kappa^{!}\coloneqq\varepsilon_{1}+\cdots+\varepsilon_{n}\in\bb{X}_{\ast}(T)$.

\begin{prop}[\cite{AO,S}]\label{prop_toric_stab_AO}
For any $I\in\bb{B}$, we have 
\begin{align*}
\Stab^{AO}_{\mf{C},T^{1/2}}(p_{I})=\prod_{i\in I_{+}}\vartheta(v^{-1}\mca{L}(-\varepsilon^{\ast}_{i}))\cdot\prod_{i\in I_{-}}\vartheta(v^{-1}\mca{L}(\varepsilon^{\ast}_{i}))\cdot\prod_{j\in I^{c}_{+}}\frac{\vartheta(v^{l_{j}-1}z^{\beta^{I}_{j}}\mca{L}(\varepsilon^{\ast}_{j}))}{\vartheta(v^{l_{j}}z^{\beta^{I}_{j}})}\cdot\prod_{j\in I^{c}_{-}}\frac{\vartheta(v^{-l_{i}-1}z^{-\beta^{I}_{j}}\mca{L}(-\varepsilon^{\ast}_{j}))}{\vartheta(v^{-l_{i}}z^{-\beta^{I}_{j}})},
\end{align*}
where $l_{j}\coloneqq-\langle\beta^{I}_{j},\kappa+\sum_{i\in I_{+}}\mf{b}_{i}-\sum_{i\in I_{-}}\mf{b}_{i}\rangle\pm1$ for any $j\in I^{c}_{\pm}$.
\end{prop}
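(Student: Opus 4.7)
The strategy is to invoke uniqueness of the elliptic stable basis (which is recalled in the paragraph preceding the proposition) and verify, for the element on the right-hand side which we temporarily denote by $\Stab_{I}$, the three Aganagic--Okounkov defining properties: the line bundle/quasi-periodicity property, the support property, and the prescribed diagonal restriction. I would take these in reverse order of difficulty.

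First I would compute the restriction at the diagonal fixed point $p_{I}$. By Corollary~\ref{cor_toric_line_restr}, for every $j\in I^{c}_{+}$ one has $i^{\ast}_{p_{I}}\mca{L}(\varepsilon^{\ast}_{j})=v$, and for every $j\in I^{c}_{-}$ one has $i^{\ast}_{p_{I}}\mca{L}(-\varepsilon^{\ast}_{j})=v$; hence each of the two families of ratios in $\Stab_{I}$ collapses to $1$ at $p_{I}$, as numerator and denominator coincide. The remaining factors $\prod_{i\in I_{+}}\vartheta(v^{-1}\mca{L}(-\varepsilon^{\ast}_{i}))\cdot\prod_{i\in I_{-}}\vartheta(v^{-1}\mca{L}(\varepsilon^{\ast}_{i}))$, again using Corollary~\ref{cor_toric_line_restr}, restrict at $p_{I}$ to precisely the product of $\vartheta$'s of the weights listed in (\ref{eqn_toric_normal_bundle}) for $N_{p_{I},-}$. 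So $i^{\ast}_{p_{I}}\Stab_{I}=\vartheta(N_{p_{I},-})$.

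Second I would check the support condition. By Lemma~\ref{lem_toric_attr_closure}, a fixed point $p_{J}$ fails to lie in $\overline{\Attr_{\mf{C}}(p_{I})}=L_{I}$ precisely when $(I_{+}\cap J^{c}_{-})\cup(I_{-}\cap J^{c}_{+})\neq\emptyset$. If $i\in I_{+}\cap J^{c}_{-}$, then Corollary~\ref{cor_toric_line_restr} gives $i^{\ast}_{p_{J}}\mca{L}(-\varepsilon^{\ast}_{i})=v$, so the factor $\vartheta(v^{-1}\mca{L}(-\varepsilon^{\ast}_{i}))$ in $\Stab_{I}$ restricts to $\vartheta(1)=0$ at $p_{J}$; the symmetric case $i\in I_{-}\cap J^{c}_{+}$ kills the corresponding factor $\vartheta(v^{-1}\mca{L}(\varepsilon^{\ast}_{i}))$. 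Hence $i^{\ast}_{p_{J}}\Stab_{I}=0$ outside $L_{I}$, which is what is required.

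Third, and this is where the real work lies, I would verify that $\Stab_{I}$ is a section of the specific line bundle $\mca{U}_{X}\otimes\Theta(T^{1/2})\otimes\widetilde{\pi}^{\ast}(\tau(\det\mr{ind}_{p_{I}},v^{-2})^{\ast}\mca{U}_{p_{I}}^{-1}\otimes\Theta(T^{1/2}_{p_{I},=0})^{-1})$, modulo the meromorphic ambiguity allowed along $\widetilde{\mb{B}}_{X}/E_{H}$. This amounts to checking the factors of automorphy under the three families of translations $z\mapsto q^{l}z$ for $l\in P$, $a\mapsto q^{c}a$ for $c\in\bb{X}_{\ast}(H)$, and $v\mapsto qv$. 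Each individual theta factor contributes a quasi-periodicity governed by $\vartheta(qx)=-x^{-1}q^{-1/2}\vartheta(x)$; the contributions are then collected using Corollary~\ref{cor_toric_line_restr} and the identity (\ref{eqn_alpha_beta}). The specific choice of exponent $l_{j}=-\langle\beta^{I}_{j},\kappa+\sum_{i\in I_{+}}\mf{b}_{i}-\sum_{i\in I_{-}}\mf{b}_{i}\rangle\pm1$ is precisely what is needed to absorb the $v$-monodromy coming from $\det\mr{ind}_{p_{I}}$ and to reproduce the Kähler monodromy of $\mca{U}_{p_{I}}$. The main obstacle is this bookkeeping step: the first two conditions are essentially formal once Corollary~\ref{cor_toric_line_restr} and Lemma~\ref{lem_toric_attr_closure} are available, whereas the quasi-periodicity calculation---closely parallel to, and guided by, the computation already performed for the renormalized standard bases in Proposition~\ref{prop_factor_ell_std}---requires sustained and careful bookkeeping to confirm that the prescribed $l_{j}$ reproduce exactly the shifts of the AO line bundle.
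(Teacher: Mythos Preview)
Your plan is sound but takes a genuinely different route from the paper. The paper does \emph{not} verify the three Aganagic--Okounkov axioms from scratch; instead it invokes Theorem~5 of \cite{S}, which already supplies the formula in the shape (\ref{eqn_toric_ell_stab_rough}) with \emph{some} parameters $\beta_{j}\in\bb{X}_{\ast}(S)$ and $l_{j}\in\bb{Z}$, and then pins those parameters down by matching factors of automorphy: the K\"ahler shift $z\mapsto q^{l}z$ forces $\beta_{j}=\beta^{I}_{j}$ via (\ref{factor_Kahler}), and the equivariant shift $a\mapsto q^{c}a$ applied to the $\tau(\det T^{1/2},v)$--twisted section forces the stated value of $l_{j}$ via (\ref{factor_equiv}) and (\ref{eqn_alpha_beta}). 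Your approach, by contrast, is self-contained: you check diagonal restriction and support directly (both correct as written), and you would carry out the full quasi-periodicity bookkeeping in all three directions to match the AO line bundle. The paper's route is shorter because Smirnov's theorem absorbs the support and normalization checks and leaves only two monodromy comparisons, at the cost of a black-box citation and a convention translation; your route avoids that dependence but requires the complete factor-of-automorphy computation you flag as the main obstacle. Either works, and the underlying calculation in your step three is essentially the same one the paper performs to extract $\beta_{j}$ and $l_{j}$.
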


\begin{proof}
By Theorem 5 in \cite{S}, we have 
\begin{align}\label{eqn_toric_ell_stab_rough}
\Stab^{AO}_{\mf{C},T^{1/2}}(p_{I})=\prod_{i\in I_{+}}\vartheta(v^{-1}\mca{L}(-\varepsilon^{\ast}_{i}))\cdot\prod_{i\in I_{-}}\vartheta(v^{-1}\mca{L}(\varepsilon^{\ast}_{i}))\cdot\prod_{j\in I^{c}_{+}}\frac{\vartheta(v^{l_{j}-1}z^{\beta_{j}}\mca{L}(\varepsilon^{\ast}_{j}))}{\vartheta(v^{l_{j}}z^{\beta_{j}})}\cdot\prod_{j\in I^{c}_{-}}\frac{\vartheta(v^{-l_{i}-1}z^{-\beta_{j}}\mca{L}(-\varepsilon^{\ast}_{j}))}{\vartheta(v^{-l_{i}}z^{-\beta_{j}})}
\end{align}
for some $\beta_{j}\in\bb{X}_{\ast}(S)$ and $l_{j}\in\bb{Z}$ for $j\in I^{c}$. We only need to determine $\beta_{j}$ and $l_{j}$. For any $l\in\bb{X}^{\ast}(S)$, the factor of automorphy of $i^{\ast}_{p_{J}}\Stab^{AO}_{\mf{C},T^{1/2}}(p_{I})$ under $z\mapsto q^{l}z$ is given by $i^{\ast}_{p_{I}}\mf{L}(l)\cdot i^{\ast}_{p_{J}}\mf{L}(l)^{-1}$ by (\ref{factor_Kahler}). On the other hand, the factor of automorphy of the RHS of (\ref{eqn_toric_ell_stab_rough}) is given by $\prod_{j\in I^{c}}i^{\ast}_{p_{I}}\mca{L}(\varepsilon^{\ast}_{j})^{\langle l,\beta_{j}\rangle}\cdot i^{\ast}_{p_{J}}\mca{L}(\varepsilon^{\ast}_{j})^{-\langle l,\beta_{j}\rangle}$ by Corollary~\ref{cor_toric_line_restr}. This implies that $\sum_{j\in I^{c}}\langle l,\beta_{j}\rangle\mf{b}_{j}=l=\sum_{j\in I^{c}}\langle l,\beta^{I}_{j}\rangle\mf{b}_{j}$ for any $l\in P$. Hence we have $\beta_{j}=\beta^{I}_{j}$ for any $j\in I^{c}$. 

We next consider the factor of automorphy of $\tau(\det T^{1/2},v)^{\ast}(i^{\ast}_{p_{J}}\Stab^{AO}_{\mf{C},T^{1/2}}(p_{I}))$ under $a\mapsto q^{c}a$ for $c\in\bb{X}_{\ast}(H)$. By (\ref{factor_equiv}) and Corollary~\ref{cor_toric_line_restr}, this is given by 
\begin{align*}
v^{-\sum_{i\in I_{+}}\langle c,\alpha^{I}_{i}\rangle+\sum_{i\in I_{-}}\langle c,\alpha^{I}_{i}\rangle}\cdot z^{-\sum_{j\in J}\langle c,\alpha^{J}_{j}\rangle\varepsilon_{j}+\sum_{i\in I}\langle c,\alpha^{I}_{i}\rangle\varepsilon_{i}}\cdot i^{\ast}_{p_{J}}\mca{L}\left(-\sum_{j\in J}\langle c,\alpha^{J}_{j}\rangle\varepsilon^{\ast}_{j}\right).
\end{align*}
On the other hand, (\ref{eqn_toric_ell_stab_rough}) implies that this should be equal to 
\begin{align*}
v^{-\langle c,\sum_{i\in I_{+}}\alpha^{J}_{i}\rangle+\langle c,\sum_{i\in I_{-}}\alpha^{J}_{i}\rangle-\langle c,\sum_{j\in I^{c}_{+}}(l'_{j}-1)\alpha^{J}_{j}\rangle-\langle c,\sum_{j\in I^{c}_{-}}(l'_{j}+1)\alpha^{J}_{j}\rangle}\cdot z^{-\sum_{j\in I^{c}}\langle c,\alpha^{J}_{j}\rangle\beta^{I}_{j}}\cdot i^{\ast}_{p_{J}}\mca{L}\left(-\sum_{j\in J}\langle c,\alpha^{J}_{j}\rangle\varepsilon^{\ast}_{j}\right)
\end{align*}
for any $J\in\bb{B}$, where we set $l'_{j}=l_{j}+\langle\beta^{I}_{j},\kappa\rangle$. By comparing the exponent of $v$, we obtain 
\begin{align*}
-\sum_{i\in I_{+}}\alpha^{J}_{i}+\sum_{i\in I_{-}}\alpha^{J}_{i}-\sum_{j\in I^{c}_{+}}(l'_{j}-1)\alpha^{J}_{j}-\sum_{j\in I^{c}_{-}}(l'_{j}+1)\alpha^{J}_{j}=-\sum_{i\in I_{+}}\alpha^{I}_{i}+\sum_{i\in I_{-}}\alpha^{I}_{i}
\end{align*}
for any $J\in\bb{B}$. For each $j\in I^{c}_{\pm}$, by taking $J\in\bb{B}$ such that $j\in J$ and considering the pairing with $\mf{a}_{j}$, we obtain $l'_{j}=\langle\sum_{i\in I_{+}}\alpha^{I}_{i}-\sum_{i\in I_{-}}\alpha^{I}_{i},\mf{a}_{j}\rangle\pm1$. This implies $l_{j}=-\langle\beta^{I}_{j},\kappa+\sum_{i\in I_{+}}\mf{b}_{i}-\sum_{i\in I_{-}}\mf{b}_{i}\rangle\pm1$ by (\ref{eqn_alpha_beta}).
\end{proof}

\begin{corollary}\label{cor_toric_ell_std}
For any $I\in\bb{B}$, we have
\begin{align*}
\Stab^{ell}_{X}(p_{I})=(-1)^{|I_{+}|+|I^{c}_{+}|}\cdot\prod_{i=1}^{n}\vartheta(\mca{L}(\varepsilon^{\ast}_{i})\cdot i^{\ast}_{p^{!}_{I}}\mca{L}^{!}(\varepsilon_{i})).
\end{align*}
\end{corollary}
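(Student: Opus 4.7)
The plan is to directly compute the right-hand side of the definition
\[
\Stab^{ell}_{X}(p_{I})=\vartheta(N^{!}_{p^{!}_{I},-})\cdot\tau(\det T^{1/2},v)^{\ast}(\Stab^{AO}_{\mf{C},T^{1/2}}(p_{I}))
\]
using Proposition~\ref{prop_toric_stab_AO} for the second factor, and the $X^{!}$-analogue of (\ref{eqn_toric_normal_bundle}) for the first. First, by Proposition~\ref{prop_toric_dual_pair} the exchange $\mf{a}_i\leftrightarrow\mf{b}_i$, $\alpha^{I}_{i}\leftrightarrow\beta^{I}_{j}$, and equivariant-versus-K\"ahler parameter identifies $X^{!}$ at $p^{!}_{I}$ with the toric hyper-K\"ahler description for $I^{c}\in\bb{B}^{!}$; applying (\ref{eqn_toric_normal_bundle}) to $X^{!}$ and using (\ref{eqn_alpha_beta}) produces
\[
N^{!}_{p^{!}_{I},-}=\sum_{j\in I^{c}_{-}} v^{-1-\langle\beta^{I}_{j},\sum_{i\in I_{+}}\mf{b}_{i}-\sum_{i\in I_{-}}\mf{b}_{i}\rangle}\,z^{\beta^{I}_{j}}+\sum_{j\in I^{c}_{+}} v^{-1+\langle\beta^{I}_{j},\sum_{i\in I_{+}}\mf{b}_{i}-\sum_{i\in I_{-}}\mf{b}_{i}\rangle}\,z^{-\beta^{I}_{j}}.
\]
Hence $\vartheta(N^{!}_{p^{!}_{I},-})$ is a product of theta functions indexed by $I^{c}$, whose arguments are, up to adjustment of the $v$-exponents by $\pm 1$, precisely the reciprocals of the denominators appearing in the formula of Proposition~\ref{prop_toric_stab_AO}.

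Next, I analyse the K\"ahler shift $\tau(\det T^{1/2},v)^{\ast}$. Since $\det T^{1/2}=v^{-n}\mca{L}(\kappa)$ with $\kappa=\varepsilon^{\ast}_{1}+\cdots+\varepsilon^{\ast}_{n}$, the shift sends $z^{\beta^{I}_{j}}\mapsto v^{\langle\beta^{I}_{j},\kappa\rangle}z^{\beta^{I}_{j}}$, and inserts no equivariant parameter dependence on the factors of $\Stab^{AO}$ that already involve only equivariant parameters (the $i\in I$ factors). The resulting $v$-exponents $l_j+\langle\beta^{I}_{j},\kappa\rangle$ appearing next to $z^{\pm\beta^{I}_{j}}$ in the denominators of Proposition~\ref{prop_toric_stab_AO} cancel, using $\vartheta(-x)=-\vartheta(x)$, against the factors of $\vartheta(N^{!}_{p^{!}_{I},-})$ computed above. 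This cancellation accounts for a sign $(-1)^{|I^{c}_{+}|+|I^{c}_{-}|}$ contributing to the global sign, and leaves only numerator-type factors of the form $\vartheta(v^{l_{j}-1+\langle\beta^{I}_{j},\kappa\rangle}z^{\pm\beta^{I}_{j}}\mca{L}(\pm\varepsilon^{\ast}_{j}))$ for $j\in I^{c}$, together with $\prod_{i\in I_{\pm}}\vartheta(v^{-1}\mca{L}(\mp\varepsilon^{\ast}_{i}))$ for $i\in I$.

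Finally I identify each remaining factor with $\vartheta(\mca{L}(\varepsilon^{\ast}_{i})\cdot i^{\ast}_{p^{!}_{I}}\mca{L}^{!}(\varepsilon_{i}))$. For $i\in I$, the dual version of Lemma~\ref{lem_toric_line_general} applied to $X^{!}$ at $p^{!}_{I}$ gives $i^{\ast}_{p^{!}_{I}}\mca{L}^{!}(\varepsilon_{i})=v^{-1}$ (since $\varepsilon_{i}$ projects trivially under $\mf{a}$ for $i\in I$ in the relevant sense, with K\"ahler parameter contribution absent), so the factor is $\vartheta(v^{-1}\mca{L}(\varepsilon^{\ast}_{i}))$, matching the $i\in I_{-}$ contribution directly and the $i\in I_{+}$ contribution after one more application of $\vartheta(-x)=-\vartheta(x)$ to pass from $\mca{L}(-\varepsilon^{\ast}_{i})$ to $\mca{L}(\varepsilon^{\ast}_{i})$; this yields the sign $(-1)^{|I_{+}|}$. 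For $j\in I^{c}$, the same dual lemma combined with the explicit value of $l_j$ from Proposition~\ref{prop_toric_stab_AO} identifies $i^{\ast}_{p^{!}_{I}}\mca{L}^{!}(\varepsilon_{j})$ with $v^{l_{j}-1+\langle\beta^{I}_{j},\kappa\rangle}z^{\beta^{I}_{j}}$ (respectively its inverse for $j\in I^{c}_{-}$), so the factor becomes exactly $\vartheta(\mca{L}(\varepsilon^{\ast}_{j})\cdot i^{\ast}_{p^{!}_{I}}\mca{L}^{!}(\varepsilon_{j}))$.

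The main bookkeeping obstacle is the precise determination of the $v$-exponents produced by the K\"ahler shift and by the dual Lemma~\ref{lem_toric_line_general}, ensuring that they match on the nose without leftover factors; the identity $l_{j}=-\langle\beta^{I}_{j},\kappa+\sum_{i\in I_{+}}\mf{b}_{i}-\sum_{i\in I_{-}}\mf{b}_{i}\rangle\pm 1$ from Proposition~\ref{prop_toric_stab_AO} is the key arithmetic ingredient that makes the cancellation exact. The overall sign $(-1)^{|I_{+}|+|I^{c}_{+}|}$ is the sum of contributions from converting $\vartheta(-x)=-\vartheta(x)$ at the $I_{+}$ factors and at the $I^{c}_{+}$ cancellations with $\vartheta(N^{!}_{p^{!}_{I},-})$.
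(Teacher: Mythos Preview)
Your approach is exactly the paper's: start from Proposition~\ref{prop_toric_stab_AO}, apply the K\"ahler shift $\tau(\det T^{1/2},v)^{\ast}$, compute $\vartheta(N^{!}_{p^{!}_{I},-})$ via the $X^{!}$-version of (\ref{eqn_toric_normal_bundle}), and match factors using the restriction formulas for $\mca{L}^{!}$. However, one explicit computation is wrong and, as written, would break the matching.

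You assert that $i^{\ast}_{p^{!}_{I}}\mca{L}^{!}(\varepsilon_{i})=v^{-1}$ for every $i\in I$. Applying Corollary~\ref{cor_toric_line_restr} to $X^{!}$ at the fixed point $p^{!}_{I}$ (which corresponds to $I^{c}\in\bb{B}^{!}$, with $(I^{c})^{c}_{\pm}=I_{\pm}$) gives instead
\[
i^{\ast}_{p^{!}_{I}}\mca{L}^{!}(\varepsilon_{i})=v^{\pm 1}\qquad\text{for }i\in I_{\pm}.
\]
With your uniform value $v^{-1}$, the $i\in I_{+}$ factor $\vartheta(v^{-1}\mca{L}(-\varepsilon^{\ast}_{i}))$ from Proposition~\ref{prop_toric_stab_AO} would have to equal $\pm\vartheta(v^{-1}\mca{L}(\varepsilon^{\ast}_{i}))$, which it does not; the identity $\vartheta(x^{-1})=-\vartheta(x)$ turns it into $-\vartheta(v\,\mca{L}(\varepsilon^{\ast}_{i}))$, which is exactly $-\vartheta(\mca{L}(\varepsilon^{\ast}_{i})\cdot i^{\ast}_{p^{!}_{I}}\mca{L}^{!}(\varepsilon_{i}))$ once the correct value $v^{+1}$ is used. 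So the fix is simply to insert the right value; the sign $(-1)^{|I_{+}|}$ you claim then follows correctly.

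There is also an internal inconsistency in your sign bookkeeping for $j\in I^{c}$. You state that the denominator cancellation produces $(-1)^{|I^{c}_{+}|+|I^{c}_{-}|}$ but then conclude with the correct total $(-1)^{|I_{+}|+|I^{c}_{+}|}$. In fact, for $j\in I^{c}_{-}$ both the numerator (passing from $\mca{L}(-\varepsilon^{\ast}_{j})$ to $\mca{L}(\varepsilon^{\ast}_{j})$) and the denominator (rewriting as $\vartheta(v^{-1}i^{\ast}_{p^{!}_{I}}\mca{L}^{!}(\varepsilon_{j}))$) each contribute a sign, and these cancel; only $j\in I^{c}_{+}$ leaves a net $(-1)$ from the cancellation with $\vartheta(N^{!}_{p^{!}_{I},-})$. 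Once you track this, your outline becomes the paper's proof verbatim.
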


\begin{proof}
We note that by Corollary~\ref{cor_toric_line_restr} applied for $X^{!}$, we have $i^{\ast}_{p^{!}_{I}}\mca{L}^{!}(\varepsilon_{j})=v^{-\langle\beta^{I}_{j},\sum_{i\in I_{+}}\mf{b}_{i}-\sum_{i\in I_{-}}\mf{b}_{i}\rangle}z^{\beta^{I}_{j}}$ for any $j\in I^{c}$ and $i^{\ast}_{p^{!}_{I}}\mca{L}^{!}(\varepsilon_{i})=v^{\pm1}$ for $i\in I_{\pm}$. This and Proposition~\ref{prop_toric_stab_AO} imply that 
\begin{align*}
\tau(\det T^{1/2},v)^{\ast}\Stab^{AO}_{\mf{C},T^{1/2}}(p_{I})=(-1)^{|I_{+}|}\prod_{i=1}^{n}\vartheta(\mca{L}(\varepsilon^{\ast}_{i})\cdot i^{\ast}_{p^{!}_{I}}\mca{L}^{!}(\varepsilon_{i}))\cdot\prod_{j\in I^{c}_{+}}\vartheta(v\cdot i^{\ast}_{p^{!}_{I}}\mca{L}^{!}(\varepsilon_{i}))^{-1}\cdot\prod_{i\in I^{c}_{-}}\vartheta(v^{-1}i^{\ast}_{p^{!}_{I}}\mca{L}^{!}(\varepsilon_{i}))^{-1}.
\end{align*}
Now the statement of the corollary follows from 
\begin{align*}
\vartheta(N^{!}_{p^{!}_{I},-})=\prod_{i\in I^{c}_{+}}\vartheta(v^{-1}i^{\ast}_{p^{!}_{I}}\mca{L}^{!}(-\varepsilon_{i}))\cdot\prod_{i\in I^{c}_{-}}\vartheta(v^{-1}i^{\ast}_{p^{!}_{I}}\mca{L}^{!}(\varepsilon_{i}))
\end{align*}
obtained by applying (\ref{eqn_toric_normal_bundle}) to $X^{!}$.
\end{proof}

In particular, this implies Conjecture~\ref{conj_ell_stab_duality} for toric hyper-K\"ahler manifolds. Recall that we wrote $\mb{S}_{X,p_{J},p_{I}}\coloneqq i^{\ast}_{p_{J}}\Stab^{ell}_{X}(p_{I})$. 

\begin{corollary}\label{cor_ell_toric_stab_symmetry}
For any $I,J\in\bb{B}$, we have $(-1)^{|I_{+}|+|I^{c}_{+}|}\cdot\mb{S}_{X,p_{J},p_{I}}=(-1)^{|J^{c}_{+}|+|J_{+}|}\mb{S}_{X^{!},p^{!}_{I},p^{!}_{J}}$ and they are holomorphic sections of the line bundle on $\mb{B}_{X}\cong\mb{B}_{X^{!}}$ described in Proposition~\ref{prop_factor_ell_std}. 
\end{corollary}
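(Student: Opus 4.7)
The plan is simply to apply Corollary~\ref{cor_toric_ell_std} to both $X$ and $X^{!}$ separately, restrict to the appropriate fixed points, and observe that the two products of theta functions obtained are literally equal.

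First, restricting the formula of Corollary~\ref{cor_toric_ell_std} for $\Stab^{ell}_{X}(p_{I})$ along $i_{p_{J}}:\{p_{J}\}\hookrightarrow X$ immediately gives
\begin{align*}
\mb{S}_{X,p_{J},p_{I}}=(-1)^{|I_{+}|+|I^{c}_{+}|}\cdot\prod_{i=1}^{n}\vartheta\!\left(i^{\ast}_{p_{J}}\mca{L}(\varepsilon^{\ast}_{i})\cdot i^{\ast}_{p^{!}_{I}}\mca{L}^{!}(\varepsilon_{i})\right).
\end{align*}
Next, I would apply Corollary~\ref{cor_toric_ell_std} to the dual variety $X^{!}$. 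The key bookkeeping is to identify what the formula says in the dual: the polarization $T^{1/2,!}$ has $\kappa^{!}=\varepsilon_{1}+\cdots+\varepsilon_{n}$, the chamber for $X^{!}$ is represented by $\eta$ while the GIT parameter is $\xi$, and the fixed point $p^{!}_{J}$ is indexed by the dual basis $J^{c}\in\bb{B}^{!}$. Under the natural identifications $\alpha^{!,J^{c}}_{j}=\beta^{J}_{j}$ for $j\in J^{c}$ and $\beta^{!,J^{c}}_{i}=\alpha^{J}_{i}$ for $i\in J$, the sign decompositions satisfy $(J^{c})^{!}_{\pm}=J^{c}_{\pm}$ and $(J)^{!}_{\pm}=J_{\pm}$, and one has $(p^{!}_{J})^{!}=p_{J}$. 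Therefore, applying the corollary to $X^{!}$ at $p^{!}_{J}$ and restricting at $p^{!}_{I}$ yields
\begin{align*}
\mb{S}_{X^{!},p^{!}_{I},p^{!}_{J}}=(-1)^{|J^{c}_{+}|+|J_{+}|}\cdot\prod_{i=1}^{n}\vartheta\!\left(i^{\ast}_{p^{!}_{I}}\mca{L}^{!}(\varepsilon_{i})\cdot i^{\ast}_{p_{J}}\mca{L}(\varepsilon^{\ast}_{i})\right).
\end{align*}
Since the two products of theta functions on the right-hand sides are term-by-term identical, we conclude
\begin{align*}
(-1)^{|I_{+}|+|I^{c}_{+}|}\cdot\mb{S}_{X,p_{J},p_{I}}=(-1)^{|J^{c}_{+}|+|J_{+}|}\cdot\mb{S}_{X^{!},p^{!}_{I},p^{!}_{J}},
\end{align*}
which is the claimed symmetry.

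For holomorphicity, I would argue that each factor $\vartheta(i^{\ast}_{p_{J}}\mca{L}(\varepsilon^{\ast}_{i})\cdot i^{\ast}_{p^{!}_{I}}\mca{L}^{!}(\varepsilon_{i}))$ is, by the product formula for $\vartheta$, a holomorphic (multivalued by a sign from the square root) function on $\mb{B}^{K}_{X}$, and therefore so is the whole product. To conclude that this holomorphic object is precisely a section of the line bundle described in Proposition~\ref{prop_factor_ell_std}, it suffices to compare the factors of automorphy under $a\mapsto q^{c}a$, $z\mapsto q^{l}z$, and $v\mapsto qv$: using Corollary~\ref{cor_toric_line_restr}, Proposition~\ref{prop_toric_dual_pair}, and the quasi-periodicity $\vartheta(qx)=-x^{-1}q^{-1/2}\vartheta(x)$, one matches the factors term-by-term against those computed in the proof of Proposition~\ref{prop_factor_ell_std}. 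The main obstacle here is not conceptual but notational: the bulk of the work is simply the careful identification of indices and sign decompositions under the symplectic duality swap, plus some mildly tedious bookkeeping of theta function factors of automorphy to confirm that the explicit product lands in the right line bundle.
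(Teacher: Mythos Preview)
Your proposal is correct and takes essentially the same approach as the paper, which leaves the proof implicit (the corollary is stated immediately after Corollary~\ref{cor_toric_ell_std} with only the remark ``In particular, this implies Conjecture~\ref{conj_ell_stab_duality}''). Your bookkeeping of the dual decompositions $(J^{c})_{\pm}=J^{c}_{\pm}$ and $((J^{c})^{c})_{\pm}=J_{\pm}$ is exactly what one needs to apply Corollary~\ref{cor_toric_ell_std} on the $X^{!}$ side; the only redundancy is that you need not re-verify the factors of automorphy, since Proposition~\ref{prop_factor_ell_std} already computes them in general and the explicit theta-product formula is only needed for holomorphicity.
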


Moreover, Corollary~\ref{cor_toric_ell_std} also implies Conjecture~\ref{conj_ell_flop} for toric hyper-K\"ahler manifolds. 

\begin{corollary}\label{cor_ell_toric_flop}
For any $I,J\in\bb{B}$, we have $\overline{\mb{S}}_{X,p_{J},p_{I}}=(-1)^{n}\cdot\mb{S}_{-X_{\mr{flop}},p_{J},p_{I}}$. 
\end{corollary}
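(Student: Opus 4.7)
The plan is to exploit the very explicit factorization of $\Stab^{ell}_{X}(p_{I})$ given by Corollary~\ref{cor_toric_ell_std}, which writes $\mb{S}_{X,p_{J},p_{I}}$ as the product of a sign $(-1)^{|I_{+}|+|I^{c}_{+}|}$ with $\prod_{i=1}^{n}\vartheta\bigl(i^{\ast}_{p_{J}}\mca{L}(\varepsilon^{\ast}_{i})\cdot i^{\ast}_{p^{!}_{I}}\mca{L}^{!}(\varepsilon_{i})\bigr)$. Both sides of the identity to be proved are therefore products of theta functions times overall signs, so I only need to match (i) the signs and (ii) the arguments of the $\vartheta$-factors.

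First I would track the effect of passing from $X$ to $-X_{\mr{flop}}$ on the combinatorial data. Since $X_{\mr{flop}}$ is $X$ with $\eta$ replaced by $-\eta$ (by Proposition~\ref{prop_toric_dual_pair}) and $-X_{\mr{flop}}$ further negates the chamber $\xi\to-\xi$, the decompositions change as $I_{\pm}\mapsto I_{\mp}$ and $I^{c}_{\pm}\mapsto I^{c}_{\mp}$. Consequently the sign prefactor for $-X_{\mr{flop}}$ is $(-1)^{|I_{-}|+|I^{c}_{-}|}$. Using $|I_{+}|+|I_{-}|=d$ and $|I^{c}_{+}|+|I^{c}_{-}|=n-d$, this differs from the sign for $X$ by exactly $(-1)^{n}$, accounting for the overall factor in the statement.

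The main calculation is then step (ii), showing that each factor $i^{\ast}_{p_{J}}\mca{L}_{-X_{\mr{flop}}}(\varepsilon^{\ast}_{i})\cdot i^{\ast}_{p^{!}_{I}}\mca{L}^{!}_{-X_{\mr{flop}}}(\varepsilon_{i})$ coincides with the $\bb{S}$-conjugate of $i^{\ast}_{p_{J}}\mca{L}(\varepsilon^{\ast}_{i})\cdot i^{\ast}_{p^{!}_{I}}\mca{L}^{!}(\varepsilon_{i})$; then $\vartheta$-invariance under $\overline{(-)}$ finishes the argument since the equation $\overline{\vartheta(y)}=\vartheta(\bar y)$ is immediate from the product formula (the nome $q$ is fixed by the inverse map on $\bb{S}$). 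Using Corollary~\ref{cor_toric_line_restr} I would case-split on whether $i\in J$ or $i\in J^{c}$ (for the first factor) and whether $i\in I$ or $i\in I^{c}$ (for the second factor). In each case the restriction is of the form $v^{N}\cdot(\text{monomial in } a\text{ or }z)$, where the exponent $N$ is a linear expression in $\sum_{j\in J^{c}_{+}}\mf{a}_{j}-\sum_{j\in J^{c}_{-}}\mf{a}_{j}$ (respectively $\sum_{j\in I_{+}}\mf{b}_{j}-\sum_{j\in I_{-}}\mf{b}_{j}$). Swapping $\pm$ in these index sets negates $N$, which is exactly the action of $\overline{(-)}$ on a monomial in $v$ with fixed $a$- and $z$-part.

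The potential subtlety is step (ii) for $\mca{L}^{!}_{-X_{\mr{flop}}}$: one must correctly identify $(-X_{\mr{flop}})^{!}$, which as a toric hyper-K\"ahler manifold is the one associated to $(T^{\vee}\twoheadrightarrow S^{\vee})$ with both chamber and GIT parameter negated relative to $X^{!}$, so that when applying the $X^{!}$-version of Corollary~\ref{cor_toric_line_restr} both the analogue of ``$J^{c}_{\pm}$'' (which is $I_{\pm}$) and of ``$J_{\pm}$'' (which is $I^{c}_{\pm}$) get swapped. Once this bookkeeping is done, the explicit formulas match on the nose, and combining with the sign count above yields $\overline{\mb{S}}_{X,p_{J},p_{I}}=(-1)^{n}\cdot\mb{S}_{-X_{\mr{flop}},p_{J},p_{I}}$.
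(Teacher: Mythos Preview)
Your proposal is correct and follows essentially the same route as the paper: apply Corollary~\ref{cor_toric_ell_std} to both $X$ and $-X_{\mr{flop}}$, match the sign prefactors $(-1)^{|I_{+}|+|I^{c}_{+}|}$ versus $(-1)^{|I_{-}|+|I^{c}_{-}|}$ (differing by $(-1)^{n}$), and check the $\vartheta$-arguments agree after applying $\overline{(-)}$. The only cosmetic difference is that the paper invokes the general relation $i^{\ast}_{p_{J}}\mca{L}_{\mr{flop}}(\lambda)=\overline{i^{\ast}_{p_{J}}\mca{L}(\lambda)}$ (and its dual analogue) already recorded in Section~4.2, whereas you rederive it by case-splitting via Corollary~\ref{cor_toric_line_restr}; both establish the same fact.
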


\begin{proof}
Let $\mca{L}_{\mr{flop}}(\lambda)$ be the $\bb{T}$-equivariant line bundle on $X_{\mr{flop}}$ associated with $\lambda\in\bb{X}^{\ast}(T)$. By Corollary~\ref{cor_toric_ell_std} and $i^{\ast}_{p_{I}}\mca{L}_{\mr{flop}}(\lambda)=\overline{i^{\ast}_{p_{I}}\mca{L}(\lambda)}$, we obtain
\begin{align*}
(-1)^{|I_{-}|+|I^{c}_{-}|}\cdot\mb{S}_{-X_{\mr{flop}},p_{J},p_{I}}&=\prod_{i=1}^{n}\vartheta(i^{\ast}_{p_{J}}\mca{L}_{\mr{flop}}(\varepsilon^{\ast}_{i})\cdot i^{\ast}_{p^{!}_{I}}\mca{L}^{!}_{\mr{flop}}(\varepsilon_{i}))\\
&=\prod_{i=1}^{n}\vartheta(\overline{i^{\ast}_{p_{J}}\mca{L}(\varepsilon^{\ast}_{i})}\cdot \overline{i^{\ast}_{p^{!}_{I}}\mca{L}^{!}(\varepsilon_{i})})\\
&=(-1)^{|I_{+}|+|I^{c}_{+}|}\cdot\overline{\mb{S}_{X,p_{J},p_{I}}}.
\end{align*}
\end{proof}

\subsection{Elliptic canonical bases}

Now we construct the elliptic canonical bases for toric hyper-K\"ahler manifolds. From now on, we identify $\bb{X}^{\ast}(T)\cong\bb{Z}^{n}$ and $\bb{X}_{\ast}(T)\cong\bb{Z}^{n}$ by using the standard inner product $(-,-):\bb{Z}^{n}\times\bb{Z}^{n}\rightarrow\bb{Z}$ given by $((\lambda_{1},\ldots,\lambda_{n}),(\mu_{1},\ldots,\mu_{n}))=\sum_{i=1}^{n}\lambda_{i}\mu_{i}$. We note that under this identification, we have $\kappa^{!}=\kappa$. Using this identification, we may consider $\bb{X}_{\ast}(S)\subset\bb{X}^{\ast}(T)$ and $\bb{X}^{\ast}(H)\subset\bb{X}_{\ast}(T)$. We recall that $(q;q)_{\infty}=\prod_{m\geq1}(1-q^{m})$. 

\begin{dfn}
For any $\lambda\in\bb{Z}^{n}$, we define 
\begin{align*}
\Theta_{X}(\lambda)&\coloneqq(q;q)_{\infty}^{-r}\sum_{\beta\in\bb{X}_{\ast}(S)}(-1)^{(\kappa,\beta)}q^{\frac{1}{2}(\beta,\beta+\kappa)+(\lambda,\beta)}\mca{L}(\lambda+\beta)z^{\beta},\\
\Theta_{X^{!}}(\lambda)&\coloneqq(q;q)_{\infty}^{-d}\sum_{\alpha\in\bb{X}^{\ast}(H)}(-1)^{(\kappa^{!},\alpha)}q^{\frac{1}{2}(\alpha,\alpha+\kappa^{!})+(\lambda,\alpha)}\mca{L}^{!}(\lambda+\alpha)a^{\alpha}.
\end{align*}
We can consider $\{\Theta_{X}(\lambda)\}_{\lambda\in\bb{Z}^{n}}$ and $\{\Theta_{X^{!}}(\lambda)\}_{\lambda\in\bb{Z}^{n}}$ as elements of $\mb{K}(X)_{\mr{loc}}\cong\mb{K}(X^{!})_{\mr{loc}}$ and call them \emph{elliptic canonical bases} for $X$ and $X^{!}$ respectively.
\end{dfn}

We note that for any $\alpha\in\bb{X}^{\ast}(H)$ and $\beta\in\bb{X}_{\ast}(S)$, we have $(\alpha,\beta)=0$. Using this, one can easily check the following relations.

\begin{lemma}\label{lem_ell_can_transformation_rule}
For any $\alpha\in\bb{X}^{\ast}(H)$ and $\beta\in\bb{X}_{\ast}(S)$, we have 
\begin{align*}
\Theta_{X}(\lambda+\alpha)&=a^{\alpha}\Theta_{X}(\lambda),\\
\Theta_{X}(\lambda+\beta)&=(-1)^{(\kappa,\beta)}q^{-\frac{1}{2}(\beta,\beta+\kappa)-(\lambda,\beta)}z^{-\beta}\Theta_{X}(\lambda).
\end{align*}
\end{lemma}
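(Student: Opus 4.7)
The plan is to prove both identities by direct calculation from the definition of $\Theta_{X}(\lambda)$, exploiting the orthogonality $(\alpha,\beta)=0$ for $\alpha\in\bb{X}^{\ast}(H)$ and $\beta\in\bb{X}_{\ast}(S)$ (which holds by construction of the two sublattices under the identification $\bb{X}^{\ast}(T)\cong\bb{X}_{\ast}(T)\cong\bb{Z}^{n}$).

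For the first identity, I would substitute $\lambda+\alpha$ for $\lambda$ in the definition of $\Theta_{X}(\lambda+\alpha)$ and observe that the $q$-exponent changes by $(\alpha,\beta')$ in each summand, which is $0$. The line bundle factor transforms as $\mca{L}(\lambda+\alpha+\beta')=\mca{L}(\alpha)\otimes\mca{L}(\lambda+\beta')=a^{\alpha}\cdot\mca{L}(\lambda+\beta')$ because, as already noted in the paper, $\mca{L}(\alpha)$ for $\alpha\in\bb{X}^{\ast}(H)$ is the trivial bundle with $H$-weight $\alpha$. Pulling the constant $a^{\alpha}$ out of the sum gives $a^{\alpha}\Theta_{X}(\lambda)$.

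For the second identity, I would perform the change of summation variable $\beta'\mapsto\beta''-\beta$ in the defining sum of $\Theta_{X}(\lambda+\beta)$. The sign contributes $(-1)^{(\kappa,\beta''-\beta)}=(-1)^{(\kappa,\beta'')}(-1)^{(\kappa,\beta)}$, the K\"ahler parameter contributes $z^{\beta''-\beta}=z^{-\beta}\cdot z^{\beta''}$, and the line bundle factor becomes $\mca{L}(\lambda+\beta'')$. The main bookkeeping is the $q$-exponent: expanding
\begin{align*}
\tfrac{1}{2}(\beta''-\beta,\beta''-\beta+\kappa)+(\lambda+\beta,\beta''-\beta)
\end{align*}
and collecting terms yields
\begin{align*}
\tfrac{1}{2}(\beta'',\beta''+\kappa)+(\lambda,\beta'')-\tfrac{1}{2}(\beta,\beta+\kappa)-(\lambda,\beta),
\end{align*}
so the $\beta''$-independent piece factors out as $q^{-\frac{1}{2}(\beta,\beta+\kappa)-(\lambda,\beta)}$ and the remaining summand reassembles into $\Theta_{X}(\lambda)$.

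There is no real obstacle here beyond careful bookkeeping: both identities are purely formal consequences of the theta-function shape of the defining expression and the orthogonality $(\bb{X}^{\ast}(H),\bb{X}_{\ast}(S))=0$. The analogous identities for $\Theta_{X^{!}}(\lambda)$ follow by the symmetric argument, exchanging the roles of $\alpha\in\bb{X}^{\ast}(H)$ and $\beta\in\bb{X}_{\ast}(S)$ and replacing $\mca{L}$, $z$, $r$ by $\mca{L}^{!}$, $a$, $d$.
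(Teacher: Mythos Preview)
Your proof is correct and follows exactly the approach the paper indicates: the paper simply remarks that $(\alpha,\beta)=0$ for $\alpha\in\bb{X}^{\ast}(H)$ and $\beta\in\bb{X}_{\ast}(S)$, and says ``one can easily check'' the two relations from this. You have supplied precisely those easy checks in full detail, and the bookkeeping in your expansion of the $q$-exponent is correct.
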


In particular, the number of linearly independent elements in $\{\Theta_{X}(\lambda)\}_{\lambda\in\bb{Z}^{n}}$ over $\mca{M}_{X}$ is less that or equal to the number of elements of $\Xi\coloneqq\bb{Z}^{n}/(\bb{X}^{\ast}(H)+\bb{X}_{\ast}(S))\cong\bb{X}^{\ast}(S)/\bb{X}_{\ast}(S)\cong\bb{X}_{\ast}(H)/\bb{X}^{\ast}(H)$. The unimodularity of $\mf{a}$ and $\mf{b}$ implies the following. 

\begin{lemma}\label{lem_Xi}
We have $|\Xi|=|\bb{B}|$. 
\end{lemma}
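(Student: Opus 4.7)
The plan is to realize $\Xi$ as the cokernel of a concrete $d\times d$ integer matrix and then compute its determinant by Cauchy--Binet, with the unimodularity hypothesis ensuring each summand contributes $0$ or $1$.

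First I will identify $\Xi$ with the cokernel of the composition
$$\phi:\mathbb{X}^{\ast}(H)\xhookrightarrow{{}^{t}\!\mathfrak{a}}\mathbb{Z}^{n}\xrightarrow{\mathfrak{a}}\mathbb{X}_{\ast}(H).$$
Indeed, using the standard-pairing identification $\mathbb{X}^{\ast}(T)\cong\mathbb{X}_{\ast}(T)\cong\mathbb{Z}^{n}$, the exact sequence (\ref{eqn_exact_seq_cocharacter}) gives $\mathbb{Z}^{n}/\mathbb{X}_{\ast}(S)\cong\mathbb{X}_{\ast}(H)$, and under this isomorphism the image of $\mathbb{X}^{\ast}(H)+\mathbb{X}_{\ast}(S)$ becomes the image of $\phi$. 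Thus $\Xi\cong\operatorname{coker}(\phi)$.

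Next I will show that $\phi$ has finite cokernel. For any $\alpha\in\mathbb{X}^{\ast}(H)$ and $\beta\in\mathbb{X}_{\ast}(S)$ regarded as elements of $\mathbb{Z}^{n}$, one has
$$(\alpha,\beta)=\langle {}^{t}\!\mathfrak{a}(\alpha),{}^{t}\!\mathfrak{b}(\beta)\rangle=\langle\alpha,\mathfrak{a}\circ{}^{t}\!\mathfrak{b}(\beta)\rangle=0,$$
because $\mathfrak{a}\circ{}^{t}\!\mathfrak{b}=0$. Hence $\mathbb{X}^{\ast}(H)$ and $\mathbb{X}_{\ast}(S)$ are mutually orthogonal in $\mathbb{Z}^{n}$ with respect to the positive-definite standard pairing, and since their ranks $d,r$ sum to $n$, they are $\mathbb{Q}$-orthogonal complements in $\mathbb{Q}^{n}$. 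In particular $\mathbb{X}^{\ast}(H)\cap\mathbb{X}_{\ast}(S)=0$, so $\phi_{\mathbb{Q}}$ is injective and $|\operatorname{coker}(\phi)|=|\det(\phi)|$.

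Finally I compute this determinant. Fix an isomorphism $\mathbb{X}_{\ast}(H)\cong\mathbb{Z}^{d}$ and let $M$ be the $d\times n$ matrix of $\mathfrak{a}$, whose $i$-th column is $\mathfrak{a}_{i}$. Then ${}^{t}\!\mathfrak{a}$ is represented by $M^{T}$, so $\phi$ is represented by the $d\times d$ matrix $MM^{T}$. Cauchy--Binet gives
$$\det(MM^{T})=\sum_{\substack{I\subset\{1,\ldots,n\}\\|I|=d}}\det(M_{I})^{2},$$
where $M_{I}$ denotes the square submatrix of $M$ formed by the columns indexed by $I$. For a subset $I$ of size $d$, $\det(M_{I})\neq 0$ if and only if $\{\mathfrak{a}_{i}\}_{i\in I}$ is linearly independent, and by the unimodularity hypothesis this is equivalent to $I\in\mathbb{B}$, in which case $|\det(M_{I})|=1$. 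Therefore $\det(MM^{T})=|\mathbb{B}|$, yielding $|\Xi|=|\mathbb{B}|$. There is no serious obstacle here; the only delicate point is the matching of conventions (the standard pairing identifies $\mathbb{X}^{\ast}(H)$ with $\mathbb{X}_{\ast}(S)^{\perp}$), after which the result is a direct application of Cauchy--Binet to a totally unimodular matrix.
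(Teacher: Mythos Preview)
Your proof is correct and takes essentially the same approach as the paper: both identify $\Xi$ with $\bb{X}_{\ast}(H)/\bb{X}^{\ast}(H)$ (the paper states this isomorphism just before the lemma, while you supply the details), then compute $|\det(\mf{a}\cdot{}^{t}\!\mf{a})|$ via Cauchy--Binet and unimodularity. The only difference is presentational---you spell out the identification of $\Xi$ with $\operatorname{coker}(\phi)$ and the injectivity of $\phi_{\bb{Q}}$, whereas the paper jumps straight to the determinant computation.
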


\begin{proof}
By taking a basis of $\bb{X}_{\ast}(H)$, we consider each $\mf{a}_{i}\in\bb{X}_{\ast}(H)$ as a column vector and $\mf{a}=(\mf{a}_{1},\ldots,\mf{a}_{n})$ as a $(d\times n)$-matrix. We note that $|\bb{X}_{\ast}(H)/\bb{X}^{\ast}(H)|=|\det(\mf{a}\cdot ^{t}\!\mf{a})|$. For each subset $I=\{i_{1},\ldots,i_{d}\}\subset\{1,\ldots, n\}$ with $|I|=d$, we denote by $\mf{a}_{I}=(\mf{a}_{i_{1}},\ldots,\mf{a}_{i_{d}})$ the $(d\times d)$-matrix obtained by removing certain columns from $\mf{a}$. By the unimodularity of $\mf{a}$, we have $\det(\mf{a}_{I})=\pm1$ if $I\in\bb{B}$ and $\det(\mf{a}_{I})=0$ otherwise. Therefore, we obtain 
\begin{align*}
\det(\mf{a}\cdot ^{t}\!\mf{a})=\sum_{\substack{I\subset\{1,\ldots,n\}\\|I|=d}}\det(\mf{a}_{I}\cdot ^{t}\!\mf{a}_{I})=\sum_{I\in\bb{B}}\det(\mf{a}_{I})^{2}=|\bb{B}|.
\end{align*}
\end{proof}

\begin{corollary}\label{cor_ell_Xi}
For any data $\mf{C}$ and $s\in\mf{s}^{\ast}_{\mr{reg}}$, the map $I\mapsto\mu_{I}$ gives a bijection $\bb{B}\cong\Xi$, where $\mu_{I}$ is defined as in (\ref{eqn_mu_I}).
\end{corollary}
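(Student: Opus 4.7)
By Lemma~\ref{lem_Xi}, $|\bb{B}|=|\Xi|$, so it suffices to prove that $\phi\colon\bb{B}\to\Xi$, $I\mapsto[\mu_I]$, is injective. Suppose $\mu_I-\mu_J=\alpha+\gamma$ with $\alpha\in\bb{X}^{\ast}(H)$ and $\gamma\in\bb{X}_{\ast}(S)$ for some $I,J\in\bb{B}$; the goal is to conclude $I=J$.

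First, I would apply the projection $\mf{b}\colon\bb{Z}^n\twoheadrightarrow\bb{X}^{\ast}(S)$, which annihilates $\bb{X}^{\ast}(H)=\Ker\mf{b}$, to reduce the hypothesis to $\mf{b}(\mu_I)-\mf{b}(\mu_J)\in\mf{b}(\bb{X}_{\ast}(S))$. Using the basis $\{\beta^I_j\}_{j\in I^c}$ of $\bb{X}_{\ast}(S)$ together with the relation $\mf{b}(\beta^I_j)=\mf{b}_j+\sum_{i\in I}(\beta^I_j)_i\mf{b}_i$, one may substitute $\mf{b}_j\equiv-\sum_{i\in I}(\beta^I_j)_i\mf{b}_i$ modulo $\mf{b}(\bb{X}_{\ast}(S))$ for each $j\in I^c$, so that $\mf{b}(\mu_I)$ admits a unique representative supported on $\{\mf{b}_i\}_{i\in I}$, namely
\[
\mf{b}(\mu_I)\equiv -\sum_{i\in I}\Bigl(\delta_{i\in I_-}+\sum_{j\in I^c}\lfloor\langle s,\beta^I_j\rangle\rfloor(\beta^I_j)_i\Bigr)\mf{b}_i\pmod{\mf{b}(\bb{X}_{\ast}(S))}.
\]
Performing the analogous reduction for $J$ and comparing, the duality identity $(\alpha^I_i)_j=-(\beta^I_j)_i$ from (\ref{eqn_alpha_beta}) lets one pass between the $I$-basis and $J$-basis descriptions of $\bb{X}^{\ast}(H)$, and the unimodularity of $\mf{a},\mf{b}$ together with the genericity of $s$ (which ensures all relevant $\langle s,\beta^I_j\rangle\notin\bb{Z}$) constrains the resulting system so tightly that $I=J$ is forced.

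The main obstacle is this last combinatorial step: while the formal reductions are straightforward, a rigorous verification that no nontrivial coincidence $[\mu_I]=[\mu_J]$ can occur for $I\neq J$ requires a careful case analysis across the sign decompositions $I=I_+\sqcup I_-$, $I^c=I^c_+\sqcup I^c_-$ and the way the floor contributions interact with the integer change-of-basis coefficients $(\beta^I_j)_i$. A possibly cleaner alternative would be to bypass the direct combinatorics by establishing linear independence of $\{\Theta_X(\mu_I)\}_{I\in\bb{B}}$ in $\mb{K}(X)_{\mr{loc}}$: since Lemma~\ref{lem_ell_can_transformation_rule} shows that $\Theta_X(\lambda)$ depends on $\lambda$ only through $[\lambda]\in\Xi$ (up to invertible scalar factors), any such linear independence result would immediately imply that the classes $[\mu_I]$ are pairwise distinct, yielding injectivity of $\phi$ and hence the bijection.
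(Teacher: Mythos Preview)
Your proposal is incomplete: you correctly reduce to injectivity via Lemma~\ref{lem_Xi} and set up the equation $\mu_I-\mu_J=\alpha+\gamma$, but you explicitly leave the decisive combinatorial step unfinished. The reduction through $\mf{b}$ that you carry out is correct but does not by itself isolate any contradiction, and the ``case analysis across the sign decompositions'' you allude to is precisely where the content lies. Your alternative via linear independence of $\{\Theta_X(\mu_I)\}_{I\in\bb{B}}$ is also not developed; in the paper the linear independence of $\{\Theta_X(\lambda)\}_{\lambda\in\Xi}$ is deduced from Theorem~\ref{thm_Schur_Weyl} together with Lemma~\ref{lem_Xi}, and using it here would invert the logical order in which this corollary is placed and applied (e.g.\ in Proposition~\ref{prop_K_limit}).

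The paper's argument supplies exactly the missing idea, and it is short. First, if $\mu_J=\mu_I+\alpha+\beta$ with $I\neq J$, then $\beta\neq 0$: otherwise $\mca{L}(\mu_J)=a^{\alpha}\mca{L}(\mu_I)$, contradicting the fact that $\{\mca{L}(\mu_I)\}_{I\in\bb{B}}$ is a basis of $K_{\bb{T}}(X)$ over $K_{\bb{T}}(\mr{pt})$. Second, for any signed circuit $C$ one has $(\alpha,\beta_C)=0$, so $(\beta,\beta_C)=(\mu_J-\mu_I,\beta_C)$, and the inequality~(\ref{eqn_ineq_mu}) (which applies because each $\mu_I$ is a $\mu_A$ and translating by $\alpha\in\bb{X}^{\ast}(H)$ gives another $\mu_{A'}$) forces $(\beta,\beta_C)\leq|C|-1$. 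Third, since $\beta\neq 0$, Lemma~\ref{lem_om_conform} provides a signed circuit $C$ conforming to $\sigma(\beta)$, i.e.\ $\pm\beta_i>0$ for $i\in C_{\pm}$; then $(\beta,\beta_C)=\sum_{i\in C_+}\beta_i-\sum_{i\in C_-}\beta_i\geq|C|$, a contradiction. The crucial ingredients you did not invoke are the uniform bound~(\ref{eqn_ineq_mu}) and the oriented-matroid fact that every nonzero element of $\bb{X}_{\ast}(S)$ dominates some signed circuit.
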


\begin{proof}
It is enough to prove that the map is injective. If there exists $\alpha\in\bb{X}^{\ast}(H)$, $\beta\in\bb{X}_{\ast}(S)$ and $I\neq J\in\bb{B}$ such that $\mu_{J}=\mu_{I}+\alpha+\beta$, then (\ref{eqn_ineq_mu}) implies that $(\beta,\beta_{C})\leq|C|-1$ for any signed circuit $C$. Since $\{\mca{L}(\mu_{I})\}_{I\in\bb{B}}$ forms a basis of $K_{\bb{T}}(X)$ over $K_{\bb{T}}(\mr{pt})$, we must have $\beta\neq0$. By Lemma~\ref{lem_om_conform}, there exists a signed circuit $C=C_{+}\sqcup C_{-}$ such that $\pm\beta_{i}>0$ for any $i\in C_{\pm}$. This implies that $(\beta,\beta_{C})\geq|C|$ and hence gives a contradiction.
\end{proof}

Now we prove the main result of this paper. Recall the Jacobi triple product formula: 
\begin{align}\label{eqn_Jacobi_triple_product}
(q;q)_{\infty}\vartheta(x)=\sum_{m\in\bb{Z}}(-1)^{m}q^{\frac{m(m+1)}{2}}x^{m+\frac{1}{2}}.
\end{align}
We also recall that $\mf{S}_{X}(p_{I})\coloneqq\sqrt{\mca{L}(\kappa)\cdot i^{\ast}_{p^{!}_{I}}\mca{L}^{!}(\kappa^{!})}^{-1}\cdot\Stab^{ell}_{X}(p_{I})$.

\begin{thm}\label{thm_Schur_Weyl}
For any $I\in\bb{B}$, we have
\begin{align}\label{eqn_ell_Schur_Weyl}
(-1)^{|I_{+}|+|I^{c}_{+}|}\cdot\mf{S}_{X}(p_{I})=\sum_{\lambda\in\Xi}(-1)^{(\kappa,\lambda)}q^{\frac{1}{2}(\lambda,\lambda+\kappa)}i^{\ast}_{p^{!}_{I}}\Theta_{X^{!}}(\lambda)\cdot\Theta_{X}(\lambda).
\end{align}
Here, we fix a lift $\Xi\rightarrow\bb{Z}^{n}$ and consider $\lambda\in\Xi$ as an element of $\bb{Z}^{n}$. 
\end{thm}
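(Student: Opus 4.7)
The plan is to reduce both sides of the identity to the same expansion in $\mathcal{M}_X$ via the Jacobi triple product formula (\ref{eqn_Jacobi_triple_product}). First I would rewrite the left hand side using Corollary~\ref{cor_toric_ell_std}: since $\kappa = \sum_i \varepsilon^*_i$ and $\kappa^! = \sum_i \varepsilon_i$, the prefactor $\sqrt{\mathcal{L}(\kappa) \cdot i^*_{p^!_I}\mathcal{L}^!(\kappa^!)}^{-1}$ factorizes as $\prod_i x_i^{-1/2}$ where $x_i \coloneqq \mathcal{L}(\varepsilon^*_i) \cdot i^*_{p^!_I}\mathcal{L}^!(\varepsilon_i)$, giving
\begin{align*}
(-1)^{|I_+|+|I^c_+|} \cdot \mathfrak{S}_X(p_I) = \prod_{i=1}^{n} x_i^{-1/2}\vartheta(x_i).
\end{align*}
Applying (\ref{eqn_Jacobi_triple_product}) in each factor (noting $x^{-1/2}\vartheta(x) = (q;q)_\infty^{-1} \sum_{m \in \mathbb{Z}} (-1)^m q^{m(m+1)/2} x^m$) and using the product $\prod_i x_i^{\mu_i} = \mathcal{L}(\mu) \cdot i^*_{p^!_I}\mathcal{L}^!(\mu)$ for $\mu = (\mu_1,\ldots,\mu_n) \in \mathbb{Z}^n$, I obtain
\begin{align*}
(-1)^{|I_+|+|I^c_+|} \cdot \mathfrak{S}_X(p_I) = (q;q)_\infty^{-n} \sum_{\mu \in \mathbb{Z}^n} (-1)^{(\kappa,\mu)} q^{\frac{1}{2}(\mu,\mu+\kappa)} \mathcal{L}(\mu) \cdot i^*_{p^!_I}\mathcal{L}^!(\mu).
\end{align*}

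Next I would expand the right hand side from the definition of $\Theta_X(\lambda)$ and $\Theta_{X^!}(\lambda)$ (noting $(q;q)_\infty^{-r}(q;q)_\infty^{-d} = (q;q)_\infty^{-n}$) as a triple sum over $(\lambda,\alpha,\beta) \in \Xi \times \mathbb{X}^*(H) \times \mathbb{X}_*(S)$, and rewrite the line bundle classes using two key identities: for $\alpha \in \mathbb{X}^*(H) \subset \mathbb{Z}^n$ one has $\mathcal{L}(\alpha) = a^\alpha$ (the line bundle is trivial since $\mathfrak{b}(\alpha)=0$, carrying only an $H$-action), and dually for $\beta \in \mathbb{X}_*(S) \subset \mathbb{Z}^n$ one has $\mathcal{L}^!(\beta) = z^\beta$. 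Thus $\mathcal{L}(\lambda+\beta) = a^{-\alpha}\mathcal{L}(\lambda+\alpha+\beta)$ and $\mathcal{L}^!(\lambda+\alpha) = z^{-\beta}\mathcal{L}^!(\lambda+\alpha+\beta)$, and the factors $a^\alpha$ and $z^\beta$ in $\Theta_{X^!}(\lambda)$ and $\Theta_X(\lambda)$ cancel these exactly.

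The exponent in $q$ works out because of the orthogonality $(\alpha,\beta)=0$ for $\alpha \in \mathbb{X}^*(H)$ and $\beta \in \mathbb{X}_*(S)$ (which is built into the exact sequences (\ref{eqn_exact_seq_cocharacter}), (\ref{eqn_exact_seq_character}) under the standard inner product identification): expanding
\begin{align*}
\tfrac{1}{2}(\lambda+\alpha+\beta,\lambda+\alpha+\beta+\kappa) &= \tfrac{1}{2}(\lambda,\lambda+\kappa) + \tfrac{1}{2}(\alpha,\alpha+\kappa) + \tfrac{1}{2}(\beta,\beta+\kappa) + (\lambda,\alpha) + (\lambda,\beta),
\end{align*}
I see that this matches the sum of exponents appearing on the RHS once the external factor $q^{\frac{1}{2}(\lambda,\lambda+\kappa)}$ and the internal factors in $\Theta_X(\lambda), \Theta_{X^!}(\lambda)$ are assembled. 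Similarly the sign $(-1)^{(\kappa,\mu)}$ factorizes correctly.

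Finally, I would invoke the fact that the map $(\lambda, \alpha, \beta) \mapsto \lambda+\alpha+\beta$ from $\Xi \times \mathbb{X}^*(H) \times \mathbb{X}_*(S)$ to $\mathbb{Z}^n$ is a bijection; this follows from the definition $\Xi = \mathbb{Z}^n/(\mathbb{X}^*(H)+\mathbb{X}_*(S))$ together with the injectivity of $\mathbb{X}^*(H) \oplus \mathbb{X}_*(S) \hookrightarrow \mathbb{Z}^n$, which in turn follows from $\mathbb{X}^*(H) \perp \mathbb{X}_*(S)$ and positive-definiteness of the standard inner product. Combining, the RHS equals exactly the expression obtained for the LHS. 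As for obstacles: the theorem is essentially a book-keeping unwrapping of Jacobi triple product, so no single step is hard; the only subtle point is a careful verification that each summand in the right hand side is well-defined modulo the choice of representative lift $\Xi \to \mathbb{Z}^n$, but this is precisely the content of Lemma~\ref{lem_ell_can_transformation_rule} (and its analogue for $\Theta_{X^!}$ obtained by symplectic duality), so the statement of the theorem is independent of this choice.
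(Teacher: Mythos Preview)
Your proof is correct and follows essentially the same route as the paper: expand the left hand side via Corollary~\ref{cor_toric_ell_std} and the Jacobi triple product into a sum over $\mu\in\bb{Z}^{n}$, then reorganize that sum via the bijection $\Xi\times\bb{X}^{\ast}(H)\times\bb{X}_{\ast}(S)\to\bb{Z}^{n}$ using $\mca{L}(\alpha)=a^{\alpha}$, $\mca{L}^{!}(\beta)=z^{\beta}$, and the orthogonality $(\alpha,\beta)=0$. The paper's proof differs only cosmetically in that it carries the half-shift $\mca{L}(\mu+\tfrac{1}{2}\kappa)$ through the intermediate expansion rather than absorbing it into $x_i^{-1/2}$ at the outset.
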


\begin{proof}
We first note that by Lemma~\ref{lem_ell_can_transformation_rule}, each term in the RHS of (\ref{eqn_ell_Schur_Weyl}) does not depend on the choice of a lift of $\lambda\in\Xi$ to $\bb{Z}^{n}$. By Corollary~\ref{cor_toric_ell_std} and (\ref{eqn_Jacobi_triple_product}), we have
\begin{align*}
(-1)^{|I_{+}|+|I^{c}_{+}|}\cdot(q;q)_{\infty}^{n}\Stab^{ell}_{X}(p_{I})&=\prod_{i=1}^{n}(q;q)_{\infty}\vartheta(\mca{L}(\varepsilon^{\ast}_{i})\cdot i^{\ast}_{p^{!}_{I}}\mca{L}^{!}(\varepsilon_{i}))\\
&=\sum_{\mu\in\bb{Z}^{n}}(-1)^{(\mu,\kappa)}q^{\frac{1}{2}(\mu,\mu+\kappa)}i^{\ast}_{p^{!}_{I}}\mca{L}^{!}(\mu+\frac{1}{2}\kappa^{!})\cdot\mca{L}(\mu+\frac{1}{2}\kappa)
\end{align*}
We note that any element of $\bb{Z}^{n}$ can be uniquely written as $\lambda+\alpha+\beta$ for $\lambda\in\Xi$, $\alpha\in\bb{X}^{\ast}(H)$, and $\beta\in\bb{X}_{\ast}(S)$. By using $\mca{L}(\alpha)=a^{\alpha}$, $\mca{L}^{!}(\beta)=z^{\beta}$, and $(\alpha,\beta)=0$ for $\alpha\in\bb{X}^{\ast}(H)$ and $\beta\in\bb{X}_{\ast}(S)$, we obtain
\begin{align*}
(-1)^{|I_{+}|+|I^{c}_{+}|}\cdot\mf{S}_{X}(p_{I})&=(q;q)_{\infty}^{-n}\sum_{\substack{\lambda\in\Xi\\ \alpha\in\bb{X}^{\ast}(H)\\ \beta\in\bb{X}_{\ast}(S)}}(-1)^{(\lambda+\alpha+\beta,\kappa)}q^{\frac{1}{2}(\lambda+\alpha+\beta,\lambda+\alpha+\beta+\kappa)}i^{\ast}_{p^{!}_{I}}\mca{L}^{!}(\lambda+\alpha+\beta)\cdot\mca{L}(\lambda+\alpha+\beta)\\
&=\sum_{\lambda\in\Xi}(-1)^{(\kappa,\lambda)}q^{\frac{1}{2}(\lambda,\lambda+\kappa)}\cdot(q;q)_{\infty}^{-d}\sum_{\alpha\in\bb{X}^{\ast}(H)}(-1)^{(\kappa^{!},\alpha)}q^{\frac{1}{2}(\alpha,\alpha+\kappa^{!})+(\lambda,\alpha)}i^{\ast}_{p^{!}_{I}}\mca{L}^{!}(\lambda+\alpha)a^{\alpha}\\
&\hspace{2em}\times(q;q)_{\infty}^{-r}\sum_{\beta\in\bb{X}_{\ast}(S)}(-1)^{(\kappa,\beta)}q^{\frac{1}{2}(\beta,\beta+\kappa)+(\lambda,\beta)}\mca{L}(\lambda+\beta)z^{\beta}\\
&=\sum_{\lambda\in\Xi}(-1)^{(\kappa,\lambda)}q^{\frac{1}{2}(\lambda,\lambda+\kappa)}i^{\ast}_{p^{!}_{I}}\Theta_{X^{!}}(\lambda)\cdot\Theta_{X}(\lambda).
\end{align*}
\end{proof}

Since $\{\mf{S}_{X}(p_{I})\}_{I\in\bb{B}}$ is a basis of $\mb{K}(X)_{\mr{loc}}$ over $\mca{M}_{X}$, Lemma~\ref{lem_Xi} and Theorem~\ref{thm_Schur_Weyl} implies that $\{\Theta_{X}(\lambda)\}_{\lambda\in\Xi}$ is also a a basis of $\mb{K}(X)_{\mr{loc}}$ over $\mca{M}_{X}$. By applying Theorem~\ref{thm_Schur_Weyl} for $-X$, we obtain
\begin{align*}
(-1)^{|I_{-}|+|I^{c}_{+}|}\cdot\mf{S}_{-X}(p_{I})&=\sum_{\lambda\in\Xi}(-1)^{(\kappa,\lambda)}q^{\frac{1}{2}(\lambda,\lambda+\kappa)}i^{\ast}_{p^{!}_{I}}\Theta_{X^{!}_{\mr{flop}}}(\lambda)\cdot\Theta_{-X}(\lambda)\\
&=\sum_{\lambda\in\Xi}(-1)^{(\kappa,\lambda)}q^{\frac{1}{2}(\lambda,\lambda+\kappa)}\overline{i^{\ast}_{p^{!}_{I}}\Theta_{X^{!}}(\lambda)}\cdot\Theta_{X}(\lambda).
\end{align*}
This and (\ref{eqn_ell_Schur_Weyl}) implies that if we define $\mca{M}_{X}$-semilinear map $\beta'_{X}:\mb{K}(X)_{\mr{loc}}\rightarrow\mb{K}(X)_{\mr{loc}}$ by 
\begin{align}\label{eqn_ell_bar_inv}
\beta'_{X}(\Theta_{X}(\lambda))=\Theta_{X}(\lambda)
\end{align}
for any $\lambda\in\Xi$, then we have $\beta'_{X}(\mf{S}_{X}(p_{I}))=(-1)^{d}\mf{S}_{-X}(p_{I})$ for any $I\in\bb{B}$ i.e., $\beta'_{X}=\beta^{ell}_{X}$. This proves the following result which is the main observation in this paper and partly justify our definition of elliptic canonical bases for toric hyper-K\"ahler manifolds.  

\begin{corollary}\label{cor_ell_bar_invariance}
For each $\lambda\in\bb{Z}^{n}$, we have $\beta^{ell}_{X}(\Theta_{X}(\lambda))=\Theta_{X}(\lambda)$ and $\beta^{ell}_{X^{!}}(\Theta_{X^{!}}(\lambda))=\Theta_{X^{!}}(\lambda)$. Moreover, the elliptic bar involution $\beta^{ell}_{X}$ does not depend on the choice of chamber $\mf{C}$. 
\end{corollary}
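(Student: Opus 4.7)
The approach is to read off the corollary directly from Theorem~\ref{thm_Schur_Weyl} together with its version for the flopped data. First I would observe that Theorem~\ref{thm_Schur_Weyl} combined with Lemma~\ref{lem_Xi} and Corollary~\ref{cor_ell_Xi} shows that $\{\Theta_X(\lambda)\}_{\lambda\in\Xi}$ is an $\mca{M}_X$-basis of $\mb{K}(X)_{\mr{loc}}$, since the transition matrix between $\{\mf{S}_X(p_I)\}_{I\in\bb{B}}$ and $\{\Theta_X(\lambda)\}_{\lambda\in\Xi}$ has entries indexed by dual bases $\bb{B}\cong\Xi$ and is upper triangular nondegenerate by the theorem. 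Hence one may define an $\mca{M}_X$-semilinear map $\beta'_X:\mb{K}(X)_{\mr{loc}}\to\mb{K}(X)_{\mr{loc}}$ by $\beta'_X(\Theta_X(\lambda))=\Theta_X(\lambda)$ for $\lambda\in\Xi$; the job is to identify $\beta'_X$ with $\beta^{ell}_X$.

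Next I would apply Theorem~\ref{thm_Schur_Weyl} with $X$ replaced by $X_{\mr{flop}}$ (equivalently, with the opposite choice of GIT parameter $-\eta$, i.e.\ the symplectic dual of $-X$). Since this flop exchanges $I^c_+$ and $I^c_-$ but leaves $I_\pm$ unchanged under the identification of fixed points, the prefactor becomes $(-1)^{|I_-|+|I^c_+|}$, and the dual manifold $(-X)^! = X^!_{\mr{flop}}$ contributes $\Theta_{X^!_{\mr{flop}}}(\lambda)$. A direct comparison of the defining series for $\Theta_{X^!}$ and $\Theta_{X^!_{\mr{flop}}}$ (using $\mca{L}^!_{\mr{flop}}(\mu) = \overline{\mca{L}^!(\mu)}$, which follows from Proposition~\ref{prop_toric_dual_pair}) gives $\Theta_{X^!_{\mr{flop}}}(\lambda)=\overline{\Theta_{X^!}(\lambda)}$. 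Substituting back yields
\begin{align*}
(-1)^{|I_-|+|I^c_+|}\mf{S}_{-X}(p_I)=\sum_{\lambda\in\Xi}(-1)^{(\kappa,\lambda)}q^{\frac12(\lambda,\lambda+\kappa)}\,\overline{i^*_{p^!_I}\Theta_{X^!}(\lambda)}\cdot\Theta_X(\lambda).
\end{align*}

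Applying $\beta'_X$ to the expression of Theorem~\ref{thm_Schur_Weyl} (it is $\mca{M}_X$-semilinear and fixes every $\Theta_X(\lambda)$), the coefficients $(-1)^{(\kappa,\lambda)}q^{\frac12(\lambda,\lambda+\kappa)}i^*_{p^!_I}\Theta_{X^!}(\lambda)$ get conjugated to $(-1)^{(\kappa,\lambda)}q^{\frac12(\lambda,\lambda+\kappa)}\overline{i^*_{p^!_I}\Theta_{X^!}(\lambda)}$ (since $q$ is real and $\lambda\in\bb{Z}^n$), so the two displayed formulas match up to the sign $(-1)^{|I_+|+|I_-|}=(-1)^d$. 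Thus $\beta'_X(\mf{S}_X(p_I))=(-1)^d\mf{S}_{-X}(p_I)$, which is precisely the defining formula (\ref{eqn_elliptic_bar_inv}) of $\beta^{ell}_X$ (recalling $\dim X=2d$). By uniqueness of $\mca{M}_X$-semilinear extensions from a basis, $\beta'_X=\beta^{ell}_X$, giving bar invariance of each $\Theta_X(\lambda)$. The same argument with $X$ and $X^!$ exchanged handles $\Theta_{X^!}(\lambda)$.

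Finally, the independence from $\mf{C}$ is essentially automatic: the definition of $\Theta_X(\lambda)$ makes no reference to a chamber, depending only on the exact sequence (\ref{eqn_exact_seq_tori}), the splitting $\iota$, and $\kappa$. Since $\beta^{ell}_X=\beta'_X$ is determined by its values on the $\mca{M}_X$-basis $\{\Theta_X(\lambda)\}_{\lambda\in\Xi}$ (all of which equal the identity), it cannot depend on $\mf{C}$. I expect no serious obstacle here; the only point of care is verifying $\Theta_{X^!_{\mr{flop}}}(\lambda)=\overline{\Theta_{X^!}(\lambda)}$, which reduces to checking that complex conjugation of equivariant parameters for $X^!$ matches the defining data of $X^!_{\mr{flop}}$ termwise in the theta series — a direct unwinding of definitions.
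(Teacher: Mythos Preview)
Your approach matches the paper's, and your displayed formulas are exactly those appearing there. Two points of exposition need correction, though. First, the second application of Theorem~\ref{thm_Schur_Weyl} is to $-X$ (same variety, opposite \emph{chamber} $-\mf{C}$), not to $X_{\mr{flop}}$ (opposite GIT parameter); passing to $-X$ swaps $I_+$ and $I_-$ while leaving $I^c_\pm$ unchanged, which is precisely why the prefactor becomes $(-1)^{|I_-|+|I^c_+|}$ --- your verbal description has the roles of $I_\pm$ and $I^c_\pm$ reversed. The flop enters only on the dual side, via $(-X)^! = X^!_{\mr{flop}}$, and one also uses $\Theta_{-X}(\lambda)=\Theta_X(\lambda)$ since the definition of $\Theta_X$ is chamber-independent. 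Second, the transition matrix between $\{\mf{S}_X(p_I)\}$ and $\{\Theta_X(\lambda)\}$ is not visibly triangular from the theorem; the paper instead concludes that $\{\Theta_X(\lambda)\}_{\lambda\in\Xi}$ is a basis because it spans (via Theorem~\ref{thm_Schur_Weyl}) and has cardinality $|\Xi|=|\bb{B}|$ by Lemma~\ref{lem_Xi}.
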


\begin{proof}
For the independence on $\mf{C}$, it is enough to note that $\Theta_{X}(\lambda)$ does not depend on the choice of $\mf{C}$ and (\ref{eqn_ell_bar_inv}) uniquely characterize the map $\beta^{ell}_{X}$.  
\end{proof}

\begin{remark}
If one try to prove $\beta^{ell}_{X}(\Theta_{X}(\lambda))=\Theta_{X}(\lambda)$ directly from the definition of $\beta^{ell}_{X}$, then certain nontrivial identities of various theta functions will be needed. Our proof mimics that of Proposition~\ref{prop_toric_bar_invariance_E(A)} and does not involve any nontrivial calculations. In fact, our definition of elliptic canonical bases is designed so that this kind of proof works nicely. 
\end{remark}

\begin{corollary}\label{cor_ell_toric_indep_flop}
Conjecture~\ref{conj_ell_indep_chamber} holds for toric hyper-K\"ahler manifolds. 
\end{corollary}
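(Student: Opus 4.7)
The plan is to essentially reverse the implication established in Proposition~\ref{prop_ell_indep_chamber}, using the two pieces we have already assembled: Corollary~\ref{cor_ell_toric_flop}, which confirms Conjecture~\ref{conj_ell_flop} for toric hyper-K\"ahler manifolds, and Corollary~\ref{cor_ell_bar_invariance}, which shows that $\beta^{ell}_{X}$ is independent of the choice of chamber $\mf{C}$. The goal is to show that independence of the bar involution, combined with the flop identity, forces the matrix $\mb{M}_{X} = \mb{S}_{X_{\mr{flop}}} \cdot \mb{S}_{X}^{-1}$ to be independent of $\mf{C}$.

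First I would set up the comparison between two chambers $\mf{C}$ and $\mf{C}'$. Writing $\mf{S}_{\mf{C}}$ for the renormalized elliptic stable basis matrix, a direct computation of $\beta_{\mf{C}'}^{ell}(\mf{S}_{\mf{C}})$ in terms of $\mf{S}_{\mf{C}'}$ and $\mf{S}_{-\mf{C}'}$ (using only the definition of the elliptic bar involution) gives
\begin{align*}
\beta_{\mf{C}'}^{ell}(\mf{S}_{\mf{C}}) = (-1)^{d}\,\mf{S}_{-\mf{C}'} \cdot \overline{\mf{S}}_{\mf{C}'}^{-1} \overline{\mf{S}}_{\mf{C}}.
\end{align*}
On the other hand, Corollary~\ref{cor_ell_bar_invariance} asserts that $\beta^{ell}_{\mf{C}'} = \beta^{ell}_{\mf{C}}$, which forces the right hand side to equal $(-1)^{d}\mf{S}_{-\mf{C}}$. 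Rearranging yields the chamber-independence of the matrix $\overline{\mf{S}}_{\mf{C}} \cdot \mf{S}_{-\mf{C}}^{-1}$.

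Next I would translate this identity in the stable-basis language. The same manipulation used in the proof of Proposition~\ref{prop_ell_indep_chamber}, namely
\begin{align*}
\overline{\mf{S}}_{\mf{C}} \cdot \mf{S}_{-\mf{C}}^{-1} = (-1)^{\frac{\dim X}{2}+\frac{\dim X^{!}}{2}}\,\mf{L}_{\mr{flop}}(\kappa)^{-\frac{1}{2}} \cdot \overline{\mb{S}}_{\mf{C}} \cdot \mb{S}_{-\mf{C}}^{-1} \cdot \mf{L}(\kappa)^{\frac{1}{2}},
\end{align*}
together with Corollary~\ref{cor_ell_toric_flop} which identifies $\overline{\mb{S}}_{\mf{C}}$ with $(-1)^{\frac{\dim X}{2}+\frac{\dim X^{!}}{2}}\mb{S}_{-X_{\mr{flop}},\mf{C}}$, shows that
\begin{align*}
\overline{\mf{S}}_{\mf{C}} \cdot \mf{S}_{-\mf{C}}^{-1} = \mf{L}_{\mr{flop}}(\kappa)^{-\frac{1}{2}} \cdot \mb{S}^{\mr{flop}}_{-\mf{C}} \cdot \mb{S}_{-\mf{C}}^{-1} \cdot \mf{L}(\kappa)^{\frac{1}{2}}.
\end{align*}
Since the outer factors $\mf{L}_{\mr{flop}}(\kappa)^{-1/2}$ and $\mf{L}(\kappa)^{1/2}$ depend only on the polarization and not on $\mf{C}$, chamber-independence of the left hand side is equivalent to chamber-independence of $\mb{S}^{\mr{flop}}_{-\mf{C}} \cdot \mb{S}_{-\mf{C}}^{-1}$. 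Replacing $\mf{C}$ by $-\mf{C}$ (which is merely a relabeling of the quantifier) yields chamber-independence of $\mb{S}_{X_{\mr{flop}}} \cdot \mb{S}_{X}^{-1} = \mb{M}_{X}$, which is exactly Conjecture~\ref{conj_ell_indep_chamber}.

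There is no real obstacle here beyond bookkeeping: the proof of Proposition~\ref{prop_ell_indep_chamber} was already a chain of equivalences, and we now possess both input ingredients. The only point requiring a little care is to verify that the step $\beta^{ell}_{\mf{C}} = \beta^{ell}_{\mf{C}'}$ $\Leftrightarrow$ $\overline{\mf{S}}_{\mf{C}} \cdot \mf{S}_{-\mf{C}}^{-1}$ is $\mf{C}$-independent is a genuine equivalence (rather than a one-sided implication), which follows because the bar involution is uniquely determined by its action on any basis of $\mb{K}(X)_{\mr{loc}}$, so the matrix $\overline{\mf{S}}_{\mf{C}} \cdot \mf{S}_{-\mf{C}}^{-1}$ determines $\beta^{ell}_{\mf{C}}$ and vice versa.
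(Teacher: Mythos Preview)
Your proposal is correct and takes exactly the approach the paper uses: the paper's one-line proof reads ``This follows from Corollary~\ref{cor_ell_bar_invariance} by reversing the argument of the proof of Proposition~\ref{prop_ell_indep_chamber},'' and you have written out precisely that reversal in detail. One minor bookkeeping slip: the sign $(-1)^{\frac{\dim X}{2}+\frac{\dim X^{!}}{2}}$ belongs only to the step where Conjecture~\ref{conj_ell_flop} is applied (so your first displayed identity should have no sign and your second should carry it), but since this factor is chamber-independent it does not affect the conclusion.
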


\begin{proof}
This follows from Corollary~\ref{cor_ell_bar_invariance} by reversing the argument of the proof of Proposition~\ref{prop_ell_indep_chamber}.
\end{proof}

This completes the proof of all the conjectures stated in section 3 and 4 in the case of toric hyper-K\"ahler manifolds. 

\subsection{$K$-theory limits}

In this section, we check that the elliptic canonical bases for toric hyper-K\"ahler manifolds lift $K$-theoretic canonical bases for any slopes. This result gives another justification of our definition of elliptic canonical bases. 

Let $\lambda\in\bb{Z}^{n}$ and $s\in\mf{s}^{\ast}_{\mr{reg}}$. Since $\Theta_{X}(\lambda)|_{z=q^{-s}}$ might not have well-definded limit under $q\rightarrow0$, we consider its leading term $\mr{LT}_{s}(\Theta_{X}(\lambda))\coloneqq\mr{LT}(\Theta_{X}(\lambda)|_{z=q^{-s}})$. Here, we write $\mr{LT}(f)\coloneqq f_{t_{0}}$ for $f=\sum_{t\in\bb{R}}f_{t}q^{t}$ and $t_{0}\coloneqq\min\{t\mid f_{t}\neq0\}$ if it exists. In order to calculate $\mr{LT}_{s}(\Theta_{X}(\lambda))$, we need to know when the function $\frac{1}{2}(\beta,\beta+\kappa)+(\lambda-s,\beta)$ on $\beta\in\bb{X}_{\ast}(S)$ takes its minimum. We note that $\mr{LT}_{s}(\Theta_{X}(\lambda+\alpha))=a^{\alpha}\cdot\mr{LT}_{s}(\Theta_{X}(\lambda))$ and $\mr{LT}_{s}(\Theta_{X}(\lambda+\beta))=(-1)^{(\kappa,\beta)}\mr{LT}_{s}(\Theta_{X}(\lambda))$ for any $\alpha\in\bb{X}^{\ast}(H)$ and $\beta\in\bb{X}_{\ast}(S)$. In particular, we only need to consider $\mr{LT}_{s}(\Theta_{X}(\lambda))$ for $\lambda\in\Xi$. We will identify $\Xi=\{\mu_{I}\}_{I\in\bb{B}}$ by using Corollary~\ref{cor_ell_Xi}.  

We note that for any $t\in\bb{R}$, the function from $\bb{Z}$ to $\bb{R}$ defined by $\frac{1}{2}m(m+1)-tm$ for $m\in\bb{Z}$ takes its minimum at $m=\lfloor t\rfloor$ if $t\notin\bb{Z}$ and $m=t,t-1$ if $t\in\bb{Z}$. Therefore, the function from $\bb{Z}^{n}$ to $\bb{R}$ given by  
\begin{align}\label{eqn_quadratic_form}
\frac{1}{2}(\mu,\mu+\kappa)-(s,\wt_{H^{!}}i^{\ast}_{p^{!}_{I}}\mca{L}^{!}(\mu))=\sum_{i=1}^{n}\frac{1}{2}\mu_{i}(\mu_{i}+1)-\sum_{j\in I^{c}}\mu_{j}\langle s,\beta^{I}_{j}\rangle
\end{align}
for $\mu\in\bb{Z}^{n}$ takes its minimum when $\mu_{j}=\lfloor\langle s,\beta^{I}_{j}\rangle\rfloor$ for any $j\in I^{c}$ and $\mu_{i}=0,-1$ for any $i\in I$. We remark that for any such $\mu$, we have $\mca{L}(\mu)\in\bb{B}_{X,s}$ by (\ref{eqn_toric_E(B)}). In particular, (\ref{eqn_quadratic_form}) takes its minimum when $\mu=\mu_{I}$. Therefore, we have 
\begin{align*}
\frac{1}{2}(\mu_{I},\mu_{I}+\kappa)-(s,\wt_{H^{!}}i^{\ast}_{p^{!}_{I}}\mca{L}^{!}(\mu_{I}))&\leq\frac{1}{2}(\mu_{I}+\beta,\mu_{I}+\beta+\kappa)-(s,\beta+\wt_{H^{!}}i^{\ast}_{p^{!}_{I}}\mca{L}^{!}(\mu_{I}))\\
&=\frac{1}{2}(\mu_{I},\mu_{I}+\kappa)-(s,\wt_{H^{!}}i^{\ast}_{p^{!}_{I}}\mca{L}^{!}(\mu_{I}))+\frac{1}{2}(\beta,\beta+\kappa)+(\mu_{I}-s,\beta)
\end{align*} 
for any $\beta\in\bb{X}_{\ast}(S)$. I.e., the function $\frac{1}{2}(\beta,\beta+\kappa)+(\mu_{I}-s,\beta)$ on $\bb{X}_{\ast}(S)$ takes its minimum at $\beta=0$. On the other hand, if this function takes its minimum at $0\neq\beta\in\bb{X}_{\ast}(S)$, then the function (\ref{eqn_quadratic_form}) takes its minimum at $\mu_{I}+\beta$ and hence we obtain $\mca{L}(\mu_{I}), \mca{L}(\mu_{I}+\beta)\in\bb{B}_{X,s}$. This contradicts the inequality (\ref{eqn_ineq_mu}) as in the proof of Corollary~\ref{cor_ell_Xi}. Therefore, we obtain $\mr{LT}_{s}(\Theta_{X}(\mu_{I}))=\mca{L}(\mu_{I})\in\bb{B}_{X,s}$. In summary, we obtained the following formula. 

\begin{prop}\label{prop_K_limit}
For any $\lambda\in\bb{Z}^{n}$, there exists unique $A\in\mr{Alc}_{s}$ and $\beta\in\bb{X}_{\ast}(S)$ such that $\lambda=\mu_{A}+\beta$, where $\mu_{A}$ is defined as in (\ref{eqn_mu_A}). Moreover, we have $\mr{LT}_{s}(\Theta_{X}(\lambda))=(-1)^{(\kappa,\beta)}\mca{E}(A)$.  
\end{prop}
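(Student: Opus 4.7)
The plan is to divide the proof into two independent pieces: the combinatorial assertion about unique decomposition $\lambda=\mu_{A}+\beta$, and the computation of the leading term using the already-established transformation rules.

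For the decomposition, I would first observe that the paragraph preceding the proposition establishes two key facts which I intend to leverage: Corollary~\ref{cor_ell_Xi} gives a bijection $\mathbb{B}\cong\Xi$ via $I\mapsto\mu_{I}$, and formula (\ref{eqn_toric_E(A)}) shows that $\mu_{A}=\mu_{I}+\lambda_{A}$ whenever $\varphi_{\mathfrak{C},s}(A)=(\lambda_{A},p_{I})$. Hence $\{\mu_{A}\}_{A\in\mathrm{Alc}_{s}}=\{\mu_{I}+\alpha\mid I\in\mathbb{B},\alpha\in\mathbb{X}^{\ast}(H)\}$, and I want to check that the map
\[
\mathbb{B}\times\mathbb{X}^{\ast}(H)\times\mathbb{X}_{\ast}(S)\longrightarrow\mathbb{Z}^{n},\qquad (I,\alpha,\beta)\longmapsto\mu_{I}+\alpha+\beta
\]
is a bijection. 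Surjectivity is immediate from Corollary~\ref{cor_ell_Xi}, since the $\mu_{I}$'s exhaust the coset representatives of $\Xi=\mathbb{Z}^{n}/(\mathbb{X}^{\ast}(H)+\mathbb{X}_{\ast}(S))$. For injectivity, I only need to know that the sum $\mathbb{X}^{\ast}(H)+\mathbb{X}_{\ast}(S)$ is direct inside $\mathbb{Z}^{n}$; this follows because $(\alpha,\beta)=0$ for $\alpha\in\mathbb{X}^{\ast}(H)$ and $\beta\in\mathbb{X}_{\ast}(S)$, together with non-degeneracy of the standard pairing on $\mathbb{Z}^{n}$, which forces $\mathbb{X}^{\ast}(H)\cap\mathbb{X}_{\ast}(S)=0$. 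Combined with the bijection $\mathbb{B}\cong\Xi$, this gives the unique decomposition $\lambda=\mu_{A}+\beta$.

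For the leading-term formula, the text immediately before the proposition already records the two semilinearity relations
\[
\mathrm{LT}_{s}(\Theta_{X}(\lambda+\alpha))=a^{\alpha}\mathrm{LT}_{s}(\Theta_{X}(\lambda)),\qquad \mathrm{LT}_{s}(\Theta_{X}(\lambda+\beta))=(-1)^{(\kappa,\beta)}\mathrm{LT}_{s}(\Theta_{X}(\lambda))
\]
for $\alpha\in\mathbb{X}^{\ast}(H)$, $\beta\in\mathbb{X}_{\ast}(S)$, as well as the base case $\mathrm{LT}_{s}(\Theta_{X}(\mu_{I}))=\mathcal{L}(\mu_{I})$. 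Writing $\lambda=\mu_{I}+\lambda_{A}+\beta$ via the decomposition above and applying these two rules in succession then yields
\[
\mathrm{LT}_{s}(\Theta_{X}(\lambda))=(-1)^{(\kappa,\beta)}a^{\lambda_{A}}\mathcal{L}(\mu_{I})=(-1)^{(\kappa,\beta)}\mathcal{L}(\mu_{A})=(-1)^{(\kappa,\beta)}\mathcal{E}(A),
\]
which is exactly the claim.

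Since the decisive analytic input (the identification of where the quadratic form $\frac{1}{2}(\mu,\mu+\kappa)+(\mu_{I}-s,\beta)$ attains its minimum, carried out via the inequality (\ref{eqn_ineq_mu}) and Lemma~\ref{lem_om_conform}) is already done right above the proposition, I do not anticipate any genuine obstacle. The only subtle bookkeeping point is to verify that the directness of $\mathbb{X}^{\ast}(H)+\mathbb{X}_{\ast}(S)\subset\mathbb{Z}^{n}$ is enough, once paired with Corollary~\ref{cor_ell_Xi}, to promote surjectivity modulo $\mathbb{X}^{\ast}(H)+\mathbb{X}_{\ast}(S)$ into a genuine direct-sum decomposition, and this is what the orthogonality $(\mathbb{X}^{\ast}(H),\mathbb{X}_{\ast}(S))=0$ delivers at once.
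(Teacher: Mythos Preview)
Your proposal is correct and follows the same approach as the paper: the analytic work (locating the minimum of the quadratic form via the argument leading to (\ref{eqn_ineq_mu})) is done in the discussion preceding the proposition, and the proposition is stated as a summary. Your contribution is to make explicit the unique decomposition argument via Corollary~\ref{cor_ell_Xi} and the orthogonality $(\mathbb{X}^{\ast}(H),\mathbb{X}_{\ast}(S))=0$, which the paper leaves implicit; this is a faithful completion of the paper's sketch rather than a different route.
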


Let us take $s_{+},s_{-}\in\mf{s}^{\ast}_{\mr{reg}}$ as in section 5.5 which are separated by a wall $w=\{x\in\mf{s}^{\ast}_{\bb{R}}\mid\langle x,\beta_{C}\rangle=m\}$ for some signed circuit $C$ and $m\in\bb{Z}$ with $\langle\eta,\beta_{C}\rangle>0$. For any $\lambda\in\bb{Z}^{n}$, there exists unique $A_{\pm}\in\mr{Alc}_{s_{\pm}}$ and $\beta_{\pm}\in\bb{X}_{\ast}(S)$ such that $\lambda=\mu_{A_{\pm}}+\beta_{\pm}$ by Proposition~\ref{prop_K_limit}. If $\mca{L}(\mu_{A_{+}})\in\bb{B}_{X,s_{-}}$, then the uniqueness implies $\mu_{A_{-}}=\mu_{A_{+}}$ and $\beta_{-}=\beta_{+}$. Therefore, we obtain $\mr{LT}_{s_{+}}(\Theta_{X}(\lambda))=\mr{LT}_{s_{-}}(\Theta_{X}(\lambda))\in\bb{B}^{0}_{s_{\pm},w}=\bb{B}_{X,s_{+}}\cap\bb{B}_{X,s_{-}}$. On the other hand, if $\mca{L}(\mu_{A_{+}})\in\bb{B}^{1}_{s_{+},w}=\bb{B}_{X,s_{+}}\setminus\bb{B}^{0}_{s_{+},w}$, then the uniqueness and Proposition~\ref{prop_toric_wall_crossing_characterization} implies that we have $\mu_{A_{-}}=\mu_{A_{+}}-\beta_{C}$ and $\beta_{-}=\beta_{+}+\beta_{C}$. Therefore, Lemma~\ref{lem_toric_wall_crossing_formula} implies that $\mr{LT}_{s_{+}}(\Theta_{X}(\lambda))=-v^{|C|}\cdot\mr{LT}_{s_{-}}(\Theta_{X}(\lambda))$ modulo lower terms spanned by $\bb{B}^{0}_{s_{\pm},w}$. In some sense, the elliptic canonical bases organize part of wall-crossing phenomenon of $K$-theoretic canonical bases in Conjecture~\ref{conj_wall_crossing} in a beautiful way. 


\end{document}